\newtheorem{theorem}[equation]{Theorem}
\newtheorem{lemma}[equation]{Lemma}
\newtheorem{proposition}[equation]{Proposition}
\newtheorem{corollary}[equation]{Corollary}
\newtheorem*{conjecture}{Conjecture}
\theoremstyle{definition}
\newtheorem{definition}[equation]{Definition}
\theoremstyle{remark}
\newtheorem{remark}[equation]{Remark}
\numberwithin{equation}{section}
\DeclareMathOperator{\res}{res}
\DeclareMathOperator{\Ind}{Ind}
\DeclareMathOperator{\Hom}{Hom}
\DeclareMathOperator{\Per}{Per}
\DeclareMathOperator{\sgn}{sgn}
\DeclareMathOperator{\I}{I}
\DeclareMathOperator{\III}{III}
\newcommand{\cR}{\mathcal{R}}
\newcommand{\cC}{\mathcal{C}}
\newcommand{\cD}{\mathcal{D}}
\newcommand{\cE}{\mathcal{E}}
\newcommand{\cP}{\mathcal{P}}
\newcommand{\cQ}{\mathcal{Q}}
\newcommand{\cH}{\mathcal{H}}
\newcommand{\cF}{\mathcal{F}}
\newcommand{\ZZ}{\mathbb{Z}}
\newcommand{\PP}{\mathbb{P}}
\newcommand{\CC}{\mathbb{C}}
\newcommand{\RR}{\mathbb{R}}
\newcommand{\fR}{\mathfrak{R}}
\newcommand{\Id}{\mathrm{Id}}
\newcommand{\centered}[1]{\begin{tabular}{l} #1 \end{tabular}}
\begin{document}

\title[]{Connected components of generalized strata of meromorphic differentials with residue conditions}
\author[]{Myeongjae Lee and Yiu Man Wong}

\begin{abstract}
Generalized strata of meromorphic differentials are loci within the usual strata of differentials where certain sets of residues sum to zero. They naturally appear in the boundary of the multi-scale compactification of the usual strata. The classification of generalized
strata is a key step towards understanding the irreducible
components of the boundary strata of the multi-scale compactification. The connected components of generalized strata of residueless differentials were classified in \cite{lee2023connected}. In the present paper, we classify the connected components of generalized strata of meromorphic differentials in full generality. 
\end{abstract}

\thanks{Research of the first author is supported in part by NSF grant DMS-21-01631.}
\thanks{Research of the second author is supported
by the DFG-project MO 1884/3-1 and by the Collaborative Research Centre TRR 326 ``Geometry and Arithmetic of Uniformized Structures".}
\maketitle
\tableofcontents
\section{Introduction} \label{Sec:Intro}
Let $g$ be a nonnegative integer, and let $\mu \coloneqq (a_1,\dots,a_m, -b_1,\dots, -b_n)$ be a partition of $2g-2$, where $a_i, b_j$ are non-negative integers. We denote by $\cH(\mu)$ the moduli space of tuples $(X,\boldsymbol{z} \sqcup \boldsymbol{p}, \omega)$, where $X$ is a genus $g$ compact Riemann surface with $m+n$ marked points labeled by $\boldsymbol{z} \sqcup \boldsymbol{p} = \{z_1,\dots, z_m\} \sqcup \{p_1,\dots,p_n\}$, and $\omega$ is a nonzero meromorphic $1$-form on $X$ such that
\[
\operatorname{div}(\omega) = \sum_{i=1}^m a_i z_i - \sum_{j=1}^n b_j p_j.
\]
That is, $\omega$ has zeros of order $a_i$ at $z_i$ and poles of order $b_j$ at $p_j$. There is a natural $\CC^*$-action on $\cH(\mu)$ given by scaling $\omega$ by a nonzero constant. Let $\cP(\mu)$ denote the projectivized space $\cH(\mu)/\CC^*$.

In this paper, we refer to a tuple $(X,\boldsymbol{z} \sqcup \boldsymbol{p}, \omega)$ as a \emph{flat surface}, since the differential $\omega$ defines a flat metric on $X \setminus \boldsymbol{p}$ with conical singularities at the points in $\boldsymbol{z}$. For simplicity, we refer to these moduli spaces as (projectivized) \emph{strata} of flat surfaces. A stratum $\cH(\mu)$ is called \emph{holomorphic} if $\mu$ contains no negative entries (i.e., $n = 0$); otherwise, we call it \emph{meromorphic}.

Given a tuple $\mu = (a_1,\dots,a_m, -b_1,\dots, -b_n)$ and a partition $\fR \colon \boldsymbol{p} = R_1 \sqcup \dots \sqcup R_r$ of the set of poles $\boldsymbol{p}$, we denote the corresponding data as $(a_1,\dots,a_m \mid \beta_1 \mid \dots \mid \beta_r)$, where the vertical bars separate the parts of $\fR$, and $\beta_i = (-b_j)_{p_j \in R_i}$. We also write this tuple as $\mu^{\fR}$.

Given such a partition $\fR$, we define the linear residue conditions:
\[
\sum_{p \in R_i} \operatorname{res}_{p} \omega = 0,
\]
for each part $R_i$ of $\fR$. Let $\cH(\mu^{\fR})$ denote the subspace of $\cH(\mu)$ consisting of differentials satisfying these residue conditions. We refer to $\cH(\mu^{\fR})$ as a \emph{generalized stratum} of flat surfaces. As the residue condition is invariant under rescaling $\omega$, we denote by $\cP(\mu^{\fR})$ the projectivized generalized stratum. For simplicity, we will also refer to these as strata when no confusion arises.

For each $R_i$, one can find a simple closed curve $\gamma_i \subset X$ enclosing only the poles in $R_i$ such that $[\gamma_i] = 0 \in H_1(X, \mathbb{Z})$. Then the residue condition for $\fR$ is equivalent to
\[
\int_{\gamma_i} \omega = 0
\]
for each $R_i$. Let $K(\fR) \subset H_1(X \setminus \boldsymbol{p}, \boldsymbol{z}; \mathbb{C})$ be the subgroup generated by the $\gamma_i$. In terms of period coordinates, the generalized stratum $\cH(\mu^{\fR})$ is a linear subspace of $H^1(X \setminus \boldsymbol{p}, \boldsymbol{z}; \mathbb{C})$, the dual of $H_1(X \setminus \boldsymbol{p}, \boldsymbol{z}; \mathbb{Z})/K(\fR)$. In particular, the complex dimension of $\cH(\mu^{\fR})$ is $2g + m + n - r - 1$, where $r = |\fR|$ is the number of parts of $\fR$.

In this paper, we classify the connected components of generalized strata $\cH(\mu^{\fR})$. There is a one-to-one correspondence between connected components of $\cH(\mu^{\fR})$ and certain topological invariants.

When the partition $\fR$ is trivial, the connected components of holomorphic and meromorphic strata $\cH(\mu)$ are classified by Kontsevich--Zorich~\cite{kontsevich2003connected} and Boissy~\cite{boissy2015connected}, respectively. In general, the connected components of a stratum are distinguished by topological invariants such as \emph{spin parity} and \emph{hyperellipticity} in the holomorphic case, and additionally by the \emph{rotation number} in the meromorphic case with $g=1$.

The strategy in these works is to show that any connected component of $\cH(\mu)$ can be obtained via a series of flat geometric surgeries called \emph{breaking up a zero} and \emph{bubbling a handle}, starting from a connected stratum of genus $0$. By analyzing the discrete parameters of the surgeries, they prove that the above invariants distinguish all components.

For the opposite extreme, where $\fR$ is the finest possible partition $\boldsymbol{p} = \{p_1\} \sqcup \dots \sqcup \{p_n\}$, the first author classified the connected components of $\cR(\mu^{\fR})$ in~\cite{lee2023connected}. This corresponds to strata of residueless meromorphic differentials. We follow a similar approach.

Our strategy is:
\begin{itemize}
    \item Classify the connected components of (projectivized) strata of dimension one. This step utilizes properties of the multi-scale compactification $\overline{\cP}(\mu^{\fR})$, recalled in \Cref{subsec:ms}.
    \item Show that, with few exceptions, any connected component of a stratum can be obtained from a connected component of a one-dimensional stratum via a series of breaking up a zero and bubbling a handle surgeries.
    \item Refine the classification by constructing explicit deformations between flat surfaces with the same topological invariants.
\end{itemize}

A key difference from earlier works is that we only allow deformations preserving the residue condition. In~\cite{lee2023connected}, to perform the second step, the author uses the $\mathrm{GL}^+(2,\mathbb{R})$-action to shrink a collection of parallel saddle connections and perform local surgeries to approach certain boundary points of the stratum.

The most technically involved part is the first step, covered in Sections~\ref{Sec:Bsignature}--\ref{Sec:Dsignature}. These serve as base cases for an induction on $\dim \cH(\mu^{\fR})$. Unlike~\cite{lee2023connected}, we account for three types of one-dimensional strata $\cP(\mu^{\fR})$. In this case, the multi-scale compactification $\overline{\cP}(\mu^{\fR})$ is a compact Riemann surface whose boundary $\partial \overline{\cP}(\mu^{\fR})$ consists of finitely many points enumerated by combinatorial data. We construct degeneration--undegeneration sequences to connect boundary points with the same topological invariants (analogous to solving a Rubik's cube).

\subsection{Main theorems}
In the present paper, we prove that the connected components of generalized strata $\cP(\mu^\fR)$ are completely classified by topological invariants: hyperellipticity (and ramification profile), spin parity, rotation number, and index. First, we introduce the definitions of these invariants.

\begin{definition}
    A flat surface $(X,\omega)$ is said to be \emph{hyperelliptic} if there exists an involution $\sigma$ of the curve $X$ such that $X/\sigma \cong \mathbb{P}^1$ and $\sigma^{\ast} \omega = -\omega$.
\end{definition}

\begin{definition}
    A connected component of $\cP(\mu^\fR)$ is said to be \emph{hyperelliptic} if all flat surfaces in the component are hyperelliptic.
\end{definition}

In \Cref{Sec:hyperelliptic}, we show that a stratum can have hyperelliptic components only if it has either a unique zero or two zeroes of the same order. For those strata, we define the following invariant.

\begin{definition}
    Assume that $\cP(\mu^\fR)$ has a unique zero or two zeroes of the same order. A \emph{ramification profile} of $\cP(\mu^\fR)$ is an involution $\Sigma$ on the set of poles $\{p_1,\dots,p_n\}$ such that
    \begin{itemize}
        \item $\Sigma$ fixes at most $2g+1$ (resp.\ $2g+2$) poles if the stratum has a unique zero (resp.\ two zeroes). Moreover, $\Sigma$ only fixes even poles.
        \item If $\Sigma(p_i) = p_j$, then $b_i = b_j$. That is, $\Sigma$ preserves the orders of poles.
        \item For each pole $p_i$, either $\{p_i,\Sigma(p_i)\} \in \fR$ or both $\{p_i\},\{\Sigma(p_i)\} \in \fR$.
    \end{itemize}
\end{definition}

A hyperelliptic involution $\sigma$ on a flat surface induces a ramification profile $\Sigma$, which remains invariant within a hyperelliptic component. It turns out that $\Sigma$ completely determines the hyperelliptic components.

\begin{theorem} \label{Thm:mainhyper}
    There exists a one-to-one correspondence between the set of hyperelliptic components of a stratum $\cP(\mu^\fR)$ of positive dimension and the set of ramification profiles of $\cP(\mu^\fR)$.
\end{theorem}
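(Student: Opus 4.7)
The plan is to establish the bijection in the following steps: define the map sending each hyperelliptic component $\mathcal{C}$ to its ramification profile $\Sigma_\mathcal{C}$, verify well-definedness, prove surjectivity by an explicit $\PP^1$-construction, and prove injectivity via the connectedness of the induced configuration space. For $(X,\omega,\sigma) \in \mathcal{C}$, the identity $\sigma^{\ast}\omega = -\omega$ implies that $\sigma$ permutes the poles while preserving their orders, yielding an involution $\Sigma$ on $\{p_1,\dots,p_n\}$. A fixed pole of $\Sigma$ is a branch point of $\pi\colon X \to X/\sigma \cong \PP^1$, and the local model $t \mapsto t^2$ forces its order to be even; Riemann--Hurwitz then constrains the total count of branch points to $2g+2$, yielding the stated bound on fixed poles according to whether the zero(s) are themselves fixed. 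Condition 3 on $\fR$-compatibility is forced by demanding that $\mathcal{C}$ be a full-dimensional component of $\cP(\mu^\fR)$: an incompatible partition would impose additional independent residue relations on the hyperelliptic locus, cutting out only a proper subvariety. Since $\Sigma$ is discrete and varies continuously on $\mathcal{C}$, it is constant on the component.

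For surjectivity, given $\Sigma$, assign a point of $\PP^1$ to each $\Sigma$-orbit among the poles, and similarly to each $\sigma$-orbit among the zeros (in the two-zero case, the sub-case of swapped-or-fixed zeros is determined by the number of fixed poles via Riemann--Hurwitz). Choose a meromorphic quadratic differential $q$ on $\PP^1$ whose order at a fixed-orbit image of order $k$ is $k-1$ and whose order at a swapped-pair image of order $k$ is $2k$; such a $q$ exists and is unique up to scalar by a degree count on $\PP^1$. Let $\pi\colon X \to \PP^1$ be the double cover branched at the images of fixed orbits, and set $\omega := \sqrt{\pi^{\ast}q}$, which is single-valued because every order of $\pi^{\ast}q$ is even by construction. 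Then $(X,\omega) \in \cP(\mu)$ has profile $\Sigma$, and the residue conditions imposed by $\fR$ hold automatically: for a part $R = \{p_i,\Sigma(p_i)\}$ with $\Sigma(p_i) \neq p_i$, the identity $\operatorname{res}_{p_i}\omega + \operatorname{res}_{\Sigma(p_i)}\omega = 0$ follows from $\sigma^{\ast}\omega = -\omega$; for a singleton part $R = \{p_i\}$ with $\Sigma(p_i) = p_i$, the local expansion of $\omega$ at a branched even-order pole contains only odd powers of the uniformizer, so $\operatorname{res}_{p_i}\omega$ vanishes identically.

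For injectivity, two hyperelliptic surfaces $(X_0,\omega_0),(X_1,\omega_1)$ with the same profile $\Sigma$ descend to two marked configurations on $\PP^1$ equipped with quadratic differentials of identical singularity data. The resulting moduli space is a $\mathrm{PGL}(2,\CC)$-quotient of an ordered configuration space $\mathrm{Conf}_N(\PP^1)$ for an appropriate $N$, hence connected; a path joining the two configurations lifts to a path in the hyperelliptic locus of $\cP(\mu^\fR)$ connecting the two surfaces. The main obstacle is twofold: rigorously verifying condition 3 requires a careful dimension count that distinguishes the three sub-cases (unique zero, two swapped zeros, two fixed zeros), and ensuring that the labeled cover of the configuration space is still connected reduces to controlling the monodromy of label-swaps at swapped pairs, which amounts to showing that any such label swap extends to the global deck transformation $\sigma$ and therefore does not produce additional components.
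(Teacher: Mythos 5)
Your well-definedness and surjectivity steps are essentially the same as the paper's (quotient by $\sigma$, pass to a quadratic differential on $\PP^1$, use the dimension count to pin down the zero structure), but your injectivity argument has a genuine gap, and it sits exactly where the real difficulty of the theorem lies. After quotienting, the parameter space for hyperelliptic surfaces with profile $\Sigma$ is not a configuration space $\mathrm{Conf}_N(\PP^1)/\mathrm{PGL}(2,\CC)$: it is the locus $\cQ_0(\nu)\subset\cQ(\nu)$ of quadratic differentials with the prescribed singularity data \emph{and} vanishing $2$-residues at every pole of order $2b_i$ coming from an interchanged pair of residueless poles (the parts of $\fR$ that are two singletons swapped by $\Sigma$). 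These $2$-residue conditions are nontrivial algebraic constraints on the point configuration; $\cQ_0(\nu)$ is itself a generalized stratum with residue conditions, and the connectedness of such loci is precisely the kind of statement this paper (and its quadratic-differential analogues) must work hard to establish — it cannot be read off from connectedness of an ordered configuration space. Your path-lifting step therefore assumes the conclusion in disguise. The paper avoids this entirely: uniqueness of the hyperelliptic component for a given $\Sigma$ is proved by induction on $\dim\cP(\mu^\fR)$, degenerating to two-level multi-scale differentials (\Cref{breakintotwo}), using the existence of multiplicity-one saddle connections in hyperelliptic components (\Cref{Prop:ssc_he}), with the one-dimensional strata of B- and D-signature as base cases settled by the equatorial-net analysis of Sections~\ref{Sec:Bsignature} and~\ref{Sec:Dsignature}. (Your monodromy remark about label swaps is fine — in the projectivized stratum the swap is realized by $\sigma$ together with $\omega\mapsto-\omega$ — but it does not repair the missing connectedness input.)

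A secondary, fixable omission: in the surjectivity construction you take the double cover branched only at the images of $\Sigma$-fixed marked points. Riemann--Hurwitz forces $2g+2$ branch points, so whenever $\Sigma$ fixes fewer than $2g+1$ (resp.\ $2g+2$) poles you must also prescribe simple poles of $q$ at additional unmarked branch points — the $2g+2-m_1-r$ unlabeled simple poles in the paper's signature $\nu$ — otherwise your cover has the wrong genus and the degree count on $\PP^1$ does not close. With those added, existence goes through as in the paper's remark following the first proposition of \Cref{Sec:hyperelliptic}, but the uniqueness half of the bijection still needs the inductive degeneration argument rather than the configuration-space shortcut.
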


\begin{definition}
    A stratum $\cP(\mu^\fR)$ is said to be of \emph{even type} if every simple pole is paired with another simple pole by $\fR$, and all other poles and zeroes are even.
\end{definition}

For a stratum of even type, there is another topological invariant called the \emph{spin parity}. Recall that the spin parity of any flat surface $(X,\omega)$ with only even poles and zeroes is computed by
\[
\sum_{i=1}^g (\operatorname{Ind}\alpha_i+1)(\operatorname{Ind}\beta_i+1) \pmod{2},
\]
where $\alpha_1,\beta_1,\dots, \alpha_g,\beta_g$ are simple closed curves in $X$ that form a symplectic basis of $H_1(X,\mathbb{Z})$. The index $\operatorname{Ind}\alpha$ of a closed curve $\alpha$ is defined as the degree of the Gauss map $G_\alpha : S^1 \to S^1$ with respect to the flat metric on $X$.

If two simple poles of a flat surface $(X,\omega)$ have opposite residues, then we can construct another flat surface with genus $g+1$ by cutting along the closed geodesics in the two half-infinite cylinders and gluing them, thereby obtaining a finite cylinder. In other words, if $\cP(\mu^\fR)$ is of even type with $k$ pairs of simple poles, then flat surfaces in this stratum inherit spin parity from flat surfaces of genus $g+k$ obtained by gluing $k$ pairs of simple poles.

Assume that $\dim \cP(\mu^\fR) > 0$ and there are $k$ pairs of simple poles in $\fR$. If $g+k>1$, we have the following classification result.

\begin{theorem} \label{Thm:mainhigh}
    Suppose that a stratum $\cP(\mu^\fR)$ has $k$ pairs of simple poles and $g+k>1$. Let $\mu^\fR=(\widetilde{\mu}\mid-1^2\mid\dots\mid-1^2)$, where $\widetilde{\mu}$ is an integral partition of $2(g+k)-2$. Then the stratum $\cP(\mu^\fR)$ of even type, except for a few cases below, has two non-hyperelliptic components. The stratum has a unique non-hyperelliptic component if it is \emph{not} of even type.
    
    The exceptional cases are:
    \begin{itemize}
        \item[(i)] $g+k=2$: $\widetilde{\mu}$ is $(2)$, $(1,1)$, $(4,-2)$, or $(2,2,-2)$;
        \item[(ii)] $g+k=3$: $\widetilde{\mu}$ is $(4)$ or $(2,2)$.
    \end{itemize}
    For these cases, we have the following classification:
    \begin{itemize}
        \item If $\widetilde{\mu}=(2)$ or $(1,1)$, then $\cP(\mu^\fR)$ has \emph{no} non-hyperelliptic component.
        \item If $\widetilde{\mu}$ is $(4,-2)$ or $(2,2,-2)$, then $\cP(\mu^\fR)$ has exactly \emph{one} non-hyperelliptic component of even spin.
        \item If $\widetilde{\mu}$ is $(4)$ or $(2,2)$, then $\cP(\mu^\fR)$ has exactly \emph{one} non-hyperelliptic component of odd spin.
    \end{itemize}
\end{theorem}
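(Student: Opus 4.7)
The plan is to proceed by induction on $\dim \cP(\mu^\fR)$, using the one-dimensional strata classified in Sections~\ref{Sec:Bsignature}--\ref{Sec:Dsignature} as base cases and the two local flat-geometric surgeries ``breaking up a zero'' and ``bubbling a handle'' as the inductive steps. Both surgeries are performed in a small neighbourhood disjoint from the paired simple poles, so they preserve the residue partition $\fR$ and the pairing of simple poles; consequently they send a non-hyperelliptic component of $\cP(\mu^\fR)$ into a non-hyperelliptic component of a smaller generalized stratum of the same type (even or not).

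For the upper bound, when $\cP(\mu^\fR)$ is of even type I would exploit the gluing construction recalled just before the theorem: for each pair $R_i$ of simple poles, excise the two half-infinite cylinders along closed geodesics and glue them into a single finite cylinder, producing a flat surface in $\cH(\widetilde{\mu})$ of genus $g+k$. Since $\widetilde{\mu}$ has only even entries, the standard spin parity formula is well defined on the glued surface, and I would take this as the definition of the spin parity of $(X,\omega) \in \cP(\mu^\fR)$. The construction varies continuously, hence the spin parity is locally constant on $\cP(\mu^\fR)$ and distinguishes components. Combined with hyperellipticity this bounds the number of non-hyperelliptic components by two, and when $\cP(\mu^\fR)$ is not of even type no analogous invariant exists, so hyperellipticity alone gives the bound of one.

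For the lower bound I would realise each parity explicitly. Starting from a flat surface in $\cH(\widetilde{\mu})$ with the prescribed spin parity---which exists outside the exceptional partitions by \cite{kontsevich2003connected} in the holomorphic case and \cite{boissy2015connected} in the meromorphic case---I would reverse the gluing $k$ times (cut along an interior geodesic of a cylinder and insert two half-infinite cylinders) to obtain a surface in $\cP(\mu^\fR)$ of the desired parity. The induction itself shrinks a short saddle connection, or contracts a short handle, in a region disjoint from the paired simple poles, landing in a lower-dimensional $\cP(\mu'^{\fR'})$ with the same pairing and the same type; the inductive hypothesis there together with the reversibility of the surgery inside a fixed component then pins down the original component by its spin parity (or by nothing, in the non-even-type case).

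The main obstacle is the exceptional list (i)--(ii), where $\cH(\widetilde{\mu})$ itself behaves atypically and the spin parity in $\cP(\mu^\fR)$ inherits this behaviour. The strata $\cH(2)$ and $\cH(1,1)$ are entirely hyperelliptic, and the gluing correspondence forces every surface in the associated $\cP(\mu^\fR)$ to be hyperelliptic, explaining the absence of non-hyperelliptic components. The strata $\cH(4)$ and $\cH(2,2)$ admit only a single non-hyperelliptic component of odd spin, which lifts to the unique odd-spin non-hyperelliptic component of the corresponding $\cP(\mu^\fR)$. The genuinely new meromorphic cases $\widetilde{\mu}=(4,-2)$ and $(2,2,-2)$ require a direct analysis of the associated one-dimensional generalized strata via the degeneration--undegeneration machinery of Sections~\ref{Sec:Bsignature}--\ref{Sec:Dsignature}; I expect the enumeration of multi-scale boundary points, and the verification that precisely the even-spin non-hyperelliptic component survives among them, to be the most delicate part of the entire argument.
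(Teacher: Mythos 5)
Your choice of invariant (spin parity pulled back through gluing the paired simple poles) and the general shape (induction with one-dimensional base cases and zero/handle surgeries) match the paper, but there is a genuine gap at the heart of the counting: you never supply an upper bound on the number of non-hyperelliptic components. Topological invariants only separate components; they cannot bound their number from above. In particular the sentence ``when $\cP(\mu^\fR)$ is not of even type no analogous invariant exists, so hyperellipticity alone gives the bound of one'' is a non sequitur --- the absence of an invariant makes uniqueness \emph{harder}, not automatic. The same issue infects the even-type case: to get \emph{exactly} two components you must show that any two non-hyperelliptic surfaces with the same parity are connected, and your appeal to ``reversibility of the surgery inside a fixed component'' does not address the real difficulty, namely that degenerating/undegenerating is multivalued (a component upstairs can be adjacent to several components downstairs, and different surgery parameters upstairs may or may not land in the same component).

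The paper's mechanism for this is precisely what your plan lacks. Instead of gluing the simple-pole pairs into handles and invoking the Kontsevich--Zorich/Boissy classification of $\cH(\widetilde{\mu})$ (a correspondence that does not transfer connectivity of components, since a path in $\cH(\widetilde{\mu})$ need not preserve the cylinders one must cut), the paper introduces the surgery of bubbling a pair of simple poles $\bar{\oplus}\,t$, uses \Cref{Prop:ssc0} to unbubble and prove in \Cref{lm:unbubble_pair} that every non-hyperelliptic component is of the form $\widetilde{\cC}\,\bar{\oplus}\,t_1\bar{\oplus}\dots\bar{\oplus}\,t_k$, and then shows in \Cref{lm:unbubblespin} --- via the two-component D-signature base stratum of \Cref{Prop:Dspin} --- that only the parity of $t_1+t_2$ matters. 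This caps the count at two; in the non-even-type case the two candidates are then identified by a further argument (the index classification of \Cref{Thm:main01} gives $\cD'\bar{\oplus}1=\cD'\bar{\oplus}(1+\delta)$ with $\delta$ odd, hence $\cD'\bar{\oplus}1\bar{\oplus}1=\cD'\bar{\oplus}2\bar{\oplus}1$, and for multiple-zero strata with an odd zero one shows $B(\cD_{\mathrm{even}})=B(\cD_{\mathrm{odd}})$ by breaking up the zero). Without an analogue of these steps your argument establishes only the lower bounds (realizing each parity) and the exceptional-case bookkeeping, not the classification itself.
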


If $g \leq 1$ and $k=0$, we have the following classification of non-hyperelliptic components.

\begin{theorem} \label{Thm:main10}
    Suppose that $g=1$ and $\cP(\mu^\fR)$ has no pair of simple poles. Then the non-hyperelliptic components are classified by the rotation number, except for the following cases:
    \begin{itemize}
        \item If $\mu=(r,-r)$, the stratum does not have a component corresponding to rotation number $r$. In addition, if $2\mid r$, then there is no non-hyperelliptic component corresponding to rotation number $r/2$.
        \item If $\mu^\fR$ is of the form $(2n\mid-2\mid\dots\mid-2)$ or $(n,n\mid-2\mid\dots\mid-2)$, then there is \emph{no} non-hyperelliptic component.
        \item If $\mu$ is $(r,r,-2r)$, $(2r,-r,-r)$, or $(r,r,-r,-r)$, then the stratum has \emph{no} non-hyperelliptic component corresponding to rotation number $r$.
        \item If $\mu^\fR=(12\mid-3\mid-3\mid-3\mid-3)$, the stratum has \emph{two} non-hyperelliptic components corresponding to the rotation number $3$.
    \end{itemize}
\end{theorem}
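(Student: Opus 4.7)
The plan is to induct on $\dim\cP(\mu^\fR)$, with the one-dimensional strata classified in Sections~\ref{Sec:Bsignature}--\ref{Sec:Dsignature} providing the base case. For a genus-one flat surface the rotation number is
\[
\gcd\bigl(a_1,\dots,a_m,b_1,\dots,b_n,\Ind\alpha,\Ind\beta\bigr),
\]
for any symplectic basis $\alpha,\beta$ of $H_1(X,\ZZ)$, and it is integer-valued and continuous in period coordinates, hence constant on each connected component. I would begin by recording this invariance, so that non-hyperelliptic components of $\cP(\mu^\fR)$ subdivide by their rotation number.

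For the inductive step, I would show that any non-hyperelliptic $(X,\omega)\in\cP(\mu^\fR)$ with $\dim\cP(\mu^\fR)>1$ can be connected inside the stratum to a surface obtained from a lower-dimensional non-hyperelliptic stratum by either an inverse breaking-up-a-zero or an inverse bubbling-a-handle surgery, carried out so that the residue conditions of $\fR$ remain satisfied. Concretely, the surgery parameters must be chosen so that the induced period variation lands in the subspace cut out by $K(\fR)$, which can be arranged once one avoids disturbing the residues in the relevant parts of $\fR$. Both surgeries preserve the rotation number, so the inductive hypothesis pins down the component of $(X,\omega)$ from the rotation number alone, away from the exceptional strata. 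Existence of each expected component is obtained symmetrically by applying the forward surgeries to the enumerated base strata.

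The exceptional cases are each handled by direct analysis. For $\mu=(r,-r)$ the stratum is itself a base case, and one checks that the rotation numbers $r$ and (when $r$ is even) $r/2$ are attained only on hyperelliptic surfaces. For $(2n\mid-2\mid\dots\mid-2)$ and $(n,n\mid-2\mid\dots\mid-2)$ every differential in fact admits a hyperelliptic involution: the vanishing residue at each double pole combined with $g=1$ forces the horizontal foliation into a globally symmetric configuration, so no non-hyperelliptic component exists. For $\mu=(r,r,-2r)$, $(2r,-r,-r)$, $(r,r,-r,-r)$, the rotation number $r$ is attained only on surfaces with a symmetry forcing hyperellipticity, checked directly on the one-dimensional base stratum. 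Finally, the sporadic case $\mu^\fR=(12\mid-3\mid-3\mid-3\mid-3)$ is residueless and so belongs to the setting of~\cite{lee2023connected}; the two rotation-$3$ components can be distinguished by exhibiting two inequivalent multi-scale degenerations in $\overline{\cP}(\mu^\fR)$ whose smoothings lie in distinct components.

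The main obstacle is carrying out the reductive surgeries while preserving $\fR$. Unlike in the unconstrained or residueless settings, generically shrinking a saddle connection or collapsing a cylinder alters the residue sums that $\fR$ forces to vanish, so each reduction has to be set up after first moving inside $\cP(\mu^\fR)$ to a configuration where the surgery is admissible, or paired with a compensating modification elsewhere on the surface. A secondary obstacle is the verification that the listed strata are the only exceptions: this comes down to a case-by-case analysis at the boundary of $\overline{\cP}(\mu^\fR)$ together with the base-case results, and in particular requires the explicit construction needed for the sporadic two-component case $(12\mid-3\mid-3\mid-3\mid-3)$.
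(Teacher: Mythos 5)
Your outline follows the same broad program as the paper (surgeries plus the one-dimensional base cases), but the step that actually carries the theorem is missing. The inductive claim that ``both surgeries preserve the rotation number, so the inductive hypothesis pins down the component'' does not typecheck: inverse bubbling a handle lands you in a genus-zero stratum, where there is no rotation number, and breaking up a zero changes the gcd defining it. So after writing a non-hyperelliptic component as $\cC=\cD\oplus s$ (or $\cD\bar{\oplus}s$) with $\cD$ of genus zero, the induction hypothesis tells you nothing about $s$; the whole content of the theorem is to show that $\cD\oplus s_1=\cD\oplus s_2$ whenever $\gcd(s_1,\delta)=\gcd(s_2,\delta)$, and also to handle the case where the unbubbled component $\cD$ is forced to be hyperelliptic. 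The paper does not get this from the surgery induction alone: for strata with no simple pole it proves that every non-hyperelliptic component contains a non-hyperelliptic \emph{residueless} flat surface (\Cref{allcontainresidueless}) and then counts components against $\cR(\mu)$ using \cite{lee2023connected} (\Cref{reductiontoresidueless}); when an unpaired simple pole is present it forces $\delta=1$ and proves uniqueness by passing through the bubbling-a-pair construction $\bar{\oplus}$ and the index classification of \Cref{Thm:main01} on B/D-signature strata. Some replacement for this identification-of-components step (boundary index computations, or the residueless reduction) is indispensable, and your proposal does not supply one.

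The treatment of the exceptional cases is also only asserted. The claim that for $(2n\mid-2\mid\dots\mid-2)$ and $(n,n\mid-2\mid\dots\mid-2)$ ``the vanishing residue at each double pole forces a globally symmetric configuration'' is not an argument, and the correct statement is sensitive to $\fR$: the same $\mu=(2n,-2^n)$ with a coarser residue condition has \emph{two} non-hyperelliptic components (this is proved in the paper precisely by the $\cD\bar{\oplus}s$ mechanism above), so any reasoning that ignores $\fR$ proves too much. These residueless exceptions, as well as the two rotation-number-$3$ components of $(12\mid-3\mid-3\mid-3\mid-3)$, are exactly the E-signature base cases imported from \cite{lee2023connected} and should be cited rather than re-derived by an unspecified pair of multi-scale degenerations. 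Finally, the ``main obstacle'' you name — performing the reductive surgeries inside $\cP(\mu^\fR)$ — is genuinely where the paper spends Sections~\ref{Sec:hyperelliptic}--\ref{Sec:ssc} (existence of multiplicity-one saddle connections, \Cref{lm:mergezeroes}, \Cref{lm:unbubble}), and it has its own exceptional strata; acknowledging the obstacle without an argument leaves the inductive step unproved.
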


\begin{theorem} \label{Thm:main00}
    Suppose that a nonempty stratum $\cP(\mu^\fR)$ with $g=0$ is non-residueless and $\fR$ has no pair of simple poles. Then it has a unique non-hyperelliptic component, except when $\mu^\fR=(a,a\mid-2\mid\dots\mid-2\mid-(a-(n-2)),-(a-(n-2)))$, where every component is hyperelliptic.
\end{theorem}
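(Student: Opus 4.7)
My plan is to prove \Cref{Thm:main00} by induction on $\dim\cP(\mu^{\fR})$, in the spirit of Sections~\ref{Sec:Bsignature}--\ref{Sec:Dsignature} and \cite{lee2023connected}. As a preliminary observation, the exceptional family $(a,a\mid-2\mid\dots\mid-2\mid-(a-(n-2)),-(a-(n-2)))$ consists of two zeros of order $a$, $n-1$ singleton double poles, and one paired part of two order-$(a-(n-2))$ poles, so it has $n+1$ total poles and $r=n$ parts; the dimension formula then gives $\dim\cP(\mu^{\fR})=2g+m+(n+1)-n-2=1$. I would handle this exceptional case by enumerating the ramification profiles allowed by \Cref{Thm:mainhyper}, which yields one or two profiles depending on the parities of $a$ and $n$; each is realized by the involution $\sigma(w)=-w$ on $\PP^{1}$ and parametrizes a $1$-dimensional hyperelliptic component, and the union of these profiles is seen to exhaust the stratum, so every component is hyperelliptic. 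The remaining $1$-dimensional genus-$0$ strata are base cases already treated in Sections~\ref{Sec:Bsignature}--\ref{Sec:Dsignature}, where the unique non-hyperelliptic component is produced by a direct boundary analysis of the multi-scale compactification $\overline{\cP}(\mu^{\fR})$.

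For the inductive step with $\dim\cP(\mu^{\fR})\geq 2$, given any non-hyperelliptic $(X,\omega)\in\cP(\mu^{\fR})$ I would construct a residue-preserving degeneration to a boundary point in $\overline{\cP}(\mu^{\fR})$ whose underlying flat surface lives in a generalized stratum $\cP(\mu'^{\fR'})$ with $g'=0$, $\fR'$ still of the same form, and $\dim\cP(\mu'^{\fR'})<\dim\cP(\mu^{\fR})$. When $m\geq 2$ I would collapse a saddle connection between two distinct zeros, whose length is an unconstrained relative period because $g=0$ forces the residue conditions to involve only absolute pole periods; when $m=1$ the degeneration is produced instead by a node-splitting boundary divisor of the multi-scale compactification. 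By the induction hypothesis $\cP(\mu'^{\fR'})$ has a unique non-hyperelliptic component, and the inverse surgery (breaking up a zero or opening the node) gives a connected family of deformations parametrized by the surgery direction and length; hence any two non-hyperelliptic components of $\cP(\mu^{\fR})$ that meet the same boundary component must coincide. This produces at most one non-hyperelliptic component.

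The main obstacle I anticipate is to ensure that for every non-exceptional $\cP(\mu^{\fR})$ of dimension at least $2$ there is always some residue-preserving degeneration leading to a boundary stratum on which the induction hypothesis directly applies---in particular, not one of the exceptional strata of the present theorem or of \Cref{Thm:mainhigh} and \Cref{Thm:main10}. In the sporadic situations where the obvious degeneration lands in an exceptional family, I would argue as in the base case by a Rubik's-cube sequence of degenerations and undegenerations inside $\overline{\cP}(\mu^{\fR})$, choosing different saddle connections or different boundary divisors to construct an explicit path connecting any two candidate non-hyperelliptic components. Existence of a non-hyperelliptic component (whenever $\mu^{\fR}$ is not exceptional) follows from a dimension count based on \Cref{Thm:mainhyper}, which shows that the hyperelliptic locus of $\cP(\mu^{\fR})$ has strictly positive codimension outside the exceptional family.
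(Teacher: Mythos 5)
Your overall strategy (induction on $\dim\cP(\mu^{\fR})$ with the one-dimensional B/C/D-signature strata of Sections~\ref{Sec:Bsignature}--\ref{Sec:Dsignature} as base cases, degenerating to the boundary and pulling uniqueness back through the inverse surgery) matches the paper's, but two steps of your inductive argument have genuine gaps. First, for $m\geq 2$ you propose to collapse ``a saddle connection between two distinct zeros, whose length is an unconstrained relative period.'' This ignores that all saddle connections in the same $\fR$-homologous class are entangled: if the class has multiplicity $k>1$, shrinking it produces a multi-scale differential whose bottom level is disconnected (a Type I configuration with several pieces), and your induction hypothesis, which concerns connected strata of the same shape, does not apply to that limit. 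What the argument needs is a flat surface in the given non-hyperelliptic component carrying a \emph{multiplicity one} saddle connection joining the two zeroes (and, for $m=1$, one bounding the polar domain of a chosen non-residueless pole); this is exactly \Cref{Thm:ssc} and \Cref{Prop:ssc0}, which constitute a substantial separate induction in the paper and are neither proved nor invoked in your proposal. Second, even granting such a degeneration, the limit may land in a \emph{hyperelliptic} component of the smaller stratum (e.g.\ when the two merged zeroes have orders summing symmetrically), in which case uniqueness of the non-hyperelliptic component downstairs gives you nothing; the content of \Cref{lm:mergezeroes} and of the ``non-hyperelliptic top level'' clause of \Cref{Prop:ssc0} is precisely that the target component can be chosen non-hyperelliptic, with exceptional signatures that must be tracked. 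Your ``main obstacle'' paragraph only worries about landing in exceptional strata, not in hyperelliptic components, and offers no concrete mechanism for either.

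Two further problems. Your existence argument --- that the hyperelliptic locus has positive codimension outside the exceptional family --- is false whenever $\mu^{\fR}$ admits a ramification profile (a single zero, or two zeroes of equal order): hyperelliptic components are entire connected components, hence of full dimension, so no dimension count can force a non-hyperelliptic component to exist; in the paper existence comes from the explicit boundary constructions in the base cases (\Cref{Prop:Cconnected}, \Cref{Prop:Dnonhyper}, \Cref{Prop:Bnonhyper}) propagated upward by breaking up zeroes. Finally, for the exceptional stratum your assertion that the hyperelliptic components ``are seen to exhaust the stratum'' is exactly the statement requiring proof; the paper establishes it (as the special one-dimensional B-signature case in \Cref{Prop:Bnonhyper}) by showing every component contains a Type I boundary whose angle data is forced to be symmetric because all $b_i=2$, so the hyperellipticity criterion of \Cref{TypeBTypeIhyper} applies --- since that stratum is itself a one-dimensional base case, you could simply have cited it rather than relying on an unproved exhaustion claim.
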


In strata of genus zero with one pair of simple poles with opposite residues (i.e., if $g=0$ and $k=1$), we introduce a new topological invariant, called the \emph{index} of a flat surface. By relabeling the poles, we may assume that $p_{n-1}$ and $p_n$ form the pair of simple poles. Denote $\delta \coloneqq \gcd(a_1,\dots,a_m,b_1,\dots,b_{n-2})$. The two simple poles are represented by two half-infinite cylinders in the flat metric. By cutting along closed geodesics around the simple poles and gluing, we obtain a new flat surface $(X',\omega')$ of genus one with a marked closed geodesic $\alpha$, the core curve of the resulting cylinder. Note that a small deformation of $(X,\omega)$ in $\cP(\mu^\fR)$ lifts to a small deformation of $(X',\omega')$ with a marked closed geodesic $\alpha$.

We can extend $\alpha$ to a symplectic basis $\{\alpha,\beta\}$ of $H_1(X')$. Given a class $[\beta]$, $\operatorname{Ind}\beta$ is uniquely determined modulo $\delta$. Suppose that $\{\alpha,\beta'\}$ is another symplectic basis. Then $[\beta']=k[\alpha]+[\beta]$ for some $k\in \mathbb{Z}$. Therefore,
\[
\operatorname{Ind}\beta' \equiv \operatorname{Ind}\beta \pmod{\delta}.
\]
Thus, we define the following invariant.

\begin{definition} \label{Def:index}
    Let $(X,\omega)$ be a flat surface contained in a stratum $\cP(\mu^\fR)$ with $g=0$ and $k=1$. Then the \emph{index} of $(X,\omega)$ is defined as $\operatorname{Ind}(X,\omega)\equiv \operatorname{Ind}\beta \pmod{\delta}$.

    The index of a connected component $\cC$ of $\cP(\mu^\fR)$ is defined as the index of any flat surface in $\cC$.
\end{definition}

The index of a flat surface in $\cP(\mu^\fR)$ is uniquely represented by an integer $1\leq I\leq \delta$. Note that the rotation number of $(X',\omega')$ equals $\gcd(\operatorname{Ind}\alpha,\operatorname{Ind}\beta,\delta)=\gcd(\delta,\operatorname{Ind}(X,\omega))$. In particular, for a fixed rotation number $r\mid \delta$ of $(X',\omega')$, there are $\phi(\delta/r)$ distinct indices for $(X,\omega)$. We now have the following classification result:

\begin{theorem} \label{Thm:main01}
    Suppose that $g=0$ and $\mu^\fR=(\widetilde{\mu}^{\widetilde{\fR}}\mid-1,-1)$ has exactly one pair of simple poles. Then for each $1\leq I\leq \delta$, there exists a unique non-hyperelliptic component of $\cP(\mu^\fR)$ with index $I$, except in the following cases:
    \begin{itemize}
        \item If $\widetilde{\mu}^{\widetilde{\fR}}=(r,-r)$, there exists \emph{no} component with index $r$. Moreover, if $r$ is even, then there is \emph{no} non-hyperelliptic component with index $r/2$.
        \item If $\widetilde{\mu}^{\widetilde{\fR}}$ is of the form $(r,r,-2r)$, $(2r,-r,-r)$, or $(r,r,-r,-r)$, then there is \emph{no} non-hyperelliptic component with index $r$.
    \end{itemize}
\end{theorem}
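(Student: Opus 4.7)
The plan is to reduce the classification to \Cref{Thm:main10} by converting each flat surface in $\cP(\mu^\fR)$ (with $g=0$ and a single pair of simple poles) into a genus-one flat surface equipped with a marked horizontal primitive homology class. First, I would set up the cut-and-glue construction rigorously: given $(X,\omega)\in\cP(\mu^\fR)$, the poles $p_{n-1},p_n$ lie in a single part of $\fR$ and hence have opposite residues $\pm c$. Cutting their two half-infinite cylinders along horizontal closed geodesics and gluing the boundaries produces a flat surface $(X',\omega')\in\cP(\widetilde{\mu}^{\widetilde{\fR}})$ of genus one, in which the newly formed finite cylinder has a horizontal core curve $\alpha$. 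The reverse procedure is equally explicit, and the length of the cylinder is a free parameter that is immaterial after projectivization. This identifies the connected components of $\cP(\mu^\fR)$ with pairs consisting of a component of $\cP(\widetilde{\mu}^{\widetilde{\fR}})$ together with a monodromy orbit of primitive classes $[\alpha]\in H_1(X',\mathbb{Z})$ realizable as horizontal closed geodesics in surfaces of that component.

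Next I would verify that the index $I$ of \Cref{Def:index} classifies these orbits. Since $\alpha$ is horizontal, $\operatorname{Ind}\alpha=0$, so a symplectic completion $\{\alpha,\beta\}$ gives $I\equiv\operatorname{Ind}\beta\pmod{\delta}$ independent of the choice of $\beta$, and the rotation number of $(X',\omega')$ equals $\gcd(I,\delta)$. The monodromy preserves $I\pmod{\delta}$, so within a fixed non-hyperelliptic component $\cC'\subset\cP(\widetilde{\mu}^{\widetilde{\fR}})$ of rotation number $r$ the primitive horizontal classes of a given index $I$ with $\gcd(I,\delta)=r$ form a union of monodromy orbits; the key technical input is that this union is a \emph{single} orbit, obtained from Dehn twists along horizontal cylinders together with deformations in $\cC'$. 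Granting transitivity, each admissible $I$ yields exactly one non-hyperelliptic component of $\cP(\mu^\fR)$ over $\cC'$, and summing the contributions $\phi(\delta/r)$ over $r\mid\delta$ gives the expected total $\sum_{r\mid\delta}\phi(\delta/r)=\delta$.

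The exceptional cases then descend directly from \Cref{Thm:main10}. If $\widetilde{\mu}^{\widetilde{\fR}}=(r,-r)$, then rotation number $r$ gives no component and (when $r$ is even) rotation number $r/2$ gives no non-hyperelliptic component, which removes indices $r$ and $r/2$ respectively; for $\widetilde{\mu}^{\widetilde{\fR}}$ in $\{(r,r,-2r),\,(2r,-r,-r),\,(r,r,-r,-r)\}$, rotation number $r$ gives no non-hyperelliptic component, so index $r$ is missing. The main obstacle will be the transitivity claim in the second paragraph: within a positive-dimensional non-hyperelliptic component of a genus-one generalized stratum, any two primitive horizontal classes of equal index modulo $\delta$ must be exchanged by a monodromy element arising from a deformation that preserves the residue condition. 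Establishing this should reduce, by induction on $\dim\cP(\widetilde{\mu}^{\widetilde{\fR}})$ using the breaking up a zero and bubbling a handle surgeries adapted to preserve the marked class, to the one-dimensional base cases handled by the degeneration--undegeneration sequences of Sections~\ref{Sec:Bsignature}--\ref{Sec:Dsignature}.
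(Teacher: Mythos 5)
Your overall strategy (cut the two simple poles, glue to a genus-one surface with a marked core curve, and reduce to \Cref{Thm:main10} plus a monodromy statement) is not the paper's route, and as written it has a genuine gap at its center. The claim you defer — that within a fixed component upstairs the (residue-condition-preserving) monodromy acts transitively on primitive classes of a given index mod $\delta$ realizable by horizontal cylinders — is not a technical verification but is essentially the theorem itself, restated. The surgeries available in the paper (breaking up a zero, bubbling a handle, and their multi-scale interpretations) are not set up to carry a marked homology class or a marked cylinder through the degenerations, so the sentence ``establishing this should reduce, by induction, to the one-dimensional base cases'' does not describe an argument that exists; making it precise would force you to re-derive exactly the relations the paper proves directly, namely that $\cC_0\bar{\oplus}t_1=\cC_0\bar{\oplus}t_2$ whenever $t_1\equiv t_2\pmod{\delta}$. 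Moreover, the asserted bijection between components of $\cP(\mu^\fR)$ and pairs (component of $\cP(\widetilde{\mu}^{\widetilde{\fR}})$, orbit of $[\alpha]$) needs several unproved inputs: existence (each admissible index class is actually realized by a cylinder in the given upstairs component), coherence of the cutting along paths (the marked class need not stay realized by a cylinder under deformation, so connectedness of the marked locus is not automatic), and the hyperellipticity bookkeeping — to conclude that non-hyperelliptic components downstairs correspond only to non-hyperelliptic components upstairs you must exclude hyperelliptic genus-one surfaces carrying a cylinder not preserved by the involution, and the elliptic involution only constrains the absolute homology class of the core, not its homotopy class in the punctured surface. Without these, the count $\sum_{r\mid\delta}\phi(\delta/r)=\delta$ is not justified, and borderline families such as $\widetilde{\mu}=(n,n,-2^n)$ (all components hyperelliptic by \Cref{Prop:Bpair}) or $(12,-3^4)$ (two non-hyperelliptic components of rotation number $3$ in \Cref{Thm:main10}) show the correspondence is delicate precisely in the cases that generate the exception lists.

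There is also a circularity problem inside this paper's architecture: \Cref{Thm:main10} for single-zero strata with an unpaired simple pole, and for $\mu=(2n,-2^n)$ with a non-residueless pole, is itself deduced from \Cref{Thm:main01} for single-zero strata, so you cannot quote \Cref{Thm:main10} as an input without supplying an independent proof of those cases. The paper's actual proof runs the other way and never marks a class upstairs: by \Cref{Prop:ssc0} every non-hyperelliptic component contains a pair of multiplicity-two saddle connections bounding the two simple-pole cylinders; shrinking them (\Cref{lm:unbubble_pair} with $k=1$) writes $\cC=\cC_0\bar{\oplus}t$ over the \emph{unique} non-hyperelliptic component $\cC_0$ of the genus-zero stratum without the simple-pole pair (\Cref{Thm:main00}); the dependence of $\cC_0\bar{\oplus}t$ only on $t\bmod\delta$ is proved by degenerating further so that the relevant deformation takes place in a one-dimensional D-signature stratum and invoking \Cref{Prop:Dindex}; and multiple-zero strata are handled by merging zeroes via \Cref{Thm:ssc} together with \Cref{Prop:main01base1} and \Cref{main1-2base2}. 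In the paper the gluing construction you build on is used only to \emph{define} the invariant (\Cref{Def:index}), not to classify components; to salvage your approach you would need to prove the transitivity and correspondence statements above, which in effect reproduces the paper's $\bar{\oplus}$-analysis and its one-dimensional base cases.
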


Finally, although the present paper does not cover the strata of residueless meromorphic differentials of genus $0$, we state the following conjecture:

\begin{conjecture} \label{conj:genuszeroresidueless}
    Any nonempty stratum $\cP(a_1,\dots,a_m\mid -b_1\mid\dots\mid -b_n)$ of genus $0$ is connected.
\end{conjecture}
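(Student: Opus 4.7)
The plan is to extend the three-step strategy of the present paper to the genus-zero, fully residueless setting. A key structural simplification specific to $g=0$ is that $H^1(\mathbb{P}^1,\mathbb{C})=0$, so every residueless meromorphic differential on $\mathbb{P}^1$ is exact: $\omega = df$ for a rational function $f\colon \mathbb{P}^1\to \mathbb{P}^1$. This identifies $\cP(a_1,\dots,a_m\mid -b_1\mid\dots\mid -b_n)$ with the labeled Hurwitz-type space of rational functions of degree $d = \sum_j(b_j-1)$, ramified of index $b_j-1$ at $p_j$ over $\infty$ and of index $a_i+1$ at the critical point $z_i$ over a variable critical value $f(z_i)\in \mathbb{C}$, modulo affine transformations of the target. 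Connectedness of $\cP(\mu^{\fR})$ becomes connectedness of this Hurwitz space.

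For the base case I would take $m=2$, so $\dim \cP(\mu^{\fR})=1$, and mimic the multi-scale compactification analysis of Sections~\ref{Sec:Bsignature}--\ref{Sec:Dsignature}: enumerate the finitely many boundary points of $\overline{\cP}(\mu^{\fR})$ using their combinatorial level-graph data, and connect them pairwise by explicit degeneration-undegeneration (``Rubik's cube'') moves. Since $g=0$ and each pole is its own part of $\fR$, the rotation number and the index of \Cref{Def:index} are not defined, and the spin parity formula is a sum over an empty index set, hence constant; only hyperellipticity can a priori obstruct connectedness, and it must be checked directly that the hyperelliptic locus does not carry a component distinct from the non-hyperelliptic one.

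For the inductive step, assume $m\geq 3$. Given a connected component $\cC\subset \cP(\mu^{\fR})$, I would produce a representative in $\cC$ with a short saddle connection between two distinct zeros $z_i, z_{i'}$. Contracting that saddle connection merges them into a single zero of order $a_i+a_{i'}$ and sends $\cC$ into a connected component of the boundary stratum $\cP(\mu'^{\fR})$ with $m - 1$ zeros, which is connected by the induction hypothesis. The $\mathrm{GL}^+(2,\RR)$-action preserves the residueless condition (residues are periods and the action is $\RR$-linear on periods), so as in \cite{lee2023connected} it remains available for rotating and rescaling the chosen saddle connection. A standard local-surgery argument then shows that the merged component determines $\cC$.

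The main obstacle will be the base case. In contrast to the higher-genus or positive-residue settings, none of the topological invariants introduced in this paper distinguishes two potential components of a genus-zero residueless stratum, so any extra component would have to arise from a genuinely Hurwitz-theoretic obstruction---a non-transitive braid-group action on the cycle factorizations of the associated cover $f$---and such obstructions are invisible to purely flat-geometric arguments. An appealing alternative that sidesteps the multi-scale base case is to leverage the Hurwitz reformulation directly: invoke known irreducibility results for pure-cycle Hurwitz spaces (e.g.\ those of Liu--Osserman) after splitting the ramification profile at $\infty$ into simple branch points, and then control connectedness across the degeneration that collides those auxiliary branch points back onto $\infty$. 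Making either route fully rigorous, and handling the expected low-order exceptional cases (parallel to the exceptions in \Cref{Thm:mainhigh} and \Cref{Thm:main01}), is the technical crux of the conjecture.
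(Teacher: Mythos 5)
First, note that the statement you are proving is stated in the paper only as \Cref{conj:genuszeroresidueless}: the paper explicitly does \emph{not} prove it, and even remarks that the one-dimensional strata of A-signature in \Cref{prop:list_one_dim_strat} ``may serve as the base cases'' but are not treated. Your proposal is therefore not comparable to any proof in the paper, and, as written, it is a strategy rather than a proof: both of its crux steps are left open. Concretely, your inductive step needs a flat surface in each component with a multiplicity-one saddle connection joining two distinct zeroes so that zeroes can be merged; the paper's existence results (\Cref{Thm:ssc}, \Cref{lm:mergezeroes}, and \Cref{Prop:ssc0}) are stated only for $g>0$ or for \emph{non-residueless} genus-zero strata, precisely because their proofs degenerate to boundary strata whose classification would again require the missing A-signature base cases. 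So you cannot cite them here, and invoking the $\mathrm{GL}^+(2,\RR)$-contraction alone does not produce the needed saddle connection in the fully residueless genus-zero setting without a new argument; moreover, the base case $m=2$ (dimension one, A-signature) is exactly the ``Rubik's cube'' analysis that is missing, and asserting it can be mimicked from Sections~\ref{Sec:Bsignature}--\ref{Sec:Dsignature} is not a substitute for carrying it out.

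Your alternative Hurwitz route has a genuine logical gap at the step you describe as ``control connectedness across the degeneration.'' The identification $\omega=df$ is fine (a residueless form on $\mathbb{P}^1$ is exact), and the stratum does correspond to rational maps with profile $(b_1-1,\dots,b_n-1)$ over $\infty$ and pure-cycle ramification $a_i+1$ at the marked finite critical points. But the Liu--Osserman irreducibility theorem requires \emph{every} branch point to be pure-cycle, which fails over $\infty$ as soon as two poles are non-simple; and after you split the fiber over $\infty$ into auxiliary simple (or pure-cycle) branch points, irreducibility of that larger Hurwitz space does not descend to connectedness of the locus where the auxiliary branch points recollide onto $\infty$ --- a specialization locus inside an irreducible family can perfectly well be reducible. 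What is actually needed is transitivity of the braid/monodromy action on factorizations with one distinguished fixed fiber of arbitrary profile and pure cycles elsewhere, which is precisely the content of the conjecture and is not supplied by either route. Until either the A-signature base case is worked out or such a transitivity statement is proved, the argument does not close.
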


\subsection{Structure of the paper}

\begin{itemize}
    \item In \Cref{Sec:PrincipalBoudnary}, we describe the principal boundary of generalized strata.
    \item In \Cref{Sec:Base}, we state the classification of the connected components of one-dimensional strata. 
    \item In \Cref{Sec:hyperelliptic}, we classify the hyperelliptic components, assuming the results for one-dimensional strata. 
    \item In \Cref{Sec:ssc}, we prove the existence of a flat surface with a multiplicity-one saddle connection for each non-hyperelliptic connected component, using the statements presented in \Cref{Sec:Base}.
    \item In \Cref{Sec:Nonhyper}, we classify the non-hyperelliptic components, assuming the results presented in \Cref{Sec:Base}. 
    \item In \Cref{Sec:equatorial}, we introduce the projective structure on one-dimensional strata given by period coordinates. 
    \item In Sections~\ref{Sec:Bsignature}--\ref{Sec:Dsignature}, we prove the results stated in \Cref{Sec:Base} using the projective structure introduced in \Cref{Sec:equatorial}.
\end{itemize}

\subsection{Motivation and future work}

The multi-scale compactification $\overline{\cP}(\mu)$ of a stratum of holomorphic differentials is a smooth compactification with normal crossings boundary. Understanding the topology of this compactification may lead to new results about the topology of the stratum $\cP(\mu)$. The boundary strata of $\mathbb{P}\overline{\cH}(\mu)$ are enumerated by level graphs and are isomorphic to products of generalized strata up to finite covers. Therefore, classifying the connected components of generalized strata is the first step in studying boundary strata of the multi-scale compactification. Nonetheless, further work is required to achieve a comprehensive description of the irreducible components of the boundary strata in $\overline{\cP}(\mu) \setminus \cP(\mu)$. In particular, one must also classify the connected components of {\em generalized strata of differentials on disconnected Riemann surfaces}. These arise as subspaces of the projectivized product 
\[
\mathbb{P}\left( \prod_i \mathcal{H}_{g_i}(\mu_i) \right)
\]
defined by residue conditions that involve poles belonging to different components of the product. Such conditions introduce additional complexity in both the geometry and topology of the compactifications, making their classification a necessary step toward fully understanding the boundary complex of $\overline{\cP}(\mu)$.

In \cite{payne2013boundary}, Payne introduced the notion of the {\em boundary complex} $\Delta(D)$ associated with a smooth compactification $\widetilde{X}$ of a smooth complex variety $X$, where the boundary divisor $D = \widetilde{X} \setminus X$ is a normal crossings divisor. The boundary complex $\Delta(D)$ is defined as a $\Delta$-complex whose $k$-cells correspond to the irreducible components of the $(k+1)$-codimensional boundary strata of $D$, with face relations determined by inclusion of these strata. One remarkable feature of this construction is that the (reduced) homology of $\Delta(D)$ encodes the top-weight cohomology of the original variety $X$.

This boundary complex technique extends naturally to the setting of smooth complex Deligne–Mumford (DM) stacks, which is particularly relevant for moduli spaces such as the moduli space of curves with marked points $\mathcal{M}_{g,n}$, and for the (projectivized) strata of differentials $\cP(\mu)$. In these cases, the appropriate compactifications are the Deligne–Mumford compactification $\overline{\mathcal{M}}_{g,n}$ and the compactification of the strata via the moduli space of multi-scale differentials $\overline{\cP}(\mu)$, respectively.

The boundary complexes of $\mathcal{M}_{g,n}$ have been studied in detail in \cite{chan2021tropical} and \cite{chan2022topology}. However, the situation is more intricate for $\overline{\cP}(\mu)$. Unlike the case of $\overline{\mathcal{M}}_{g,n}$, where boundary strata are unions of moduli spaces of curves of lower genus or with fewer marked points, the boundary points of $\overline{\cP}(\mu)$ typically correspond to differentials in strata of the form $\cP(\mu'^{\fR})$, i.e., strata with residue conditions and of strictly smaller dimension. This highlights the necessity of understanding the structure and connected components of strata of differentials with residue constraints in order to analyze the irreducible components of the boundary $\overline{\cP}(\mu) \setminus \cP(\mu)$ and, more generally, to study the topology of the boundary complex.

\subsection*{Acknowledgements} 
The authors would like to thank their advisors, Samuel Grushevsky and Martin Möller, respectively, for valuable discussions and comments. They are also grateful to the Simons Foundation International, LTD. and the Collaborative Research Centre TRR 326 “Geometry and Arithmetic of Uniformized Structures” for the invitations to visit Stony Brook and Frankfurt, respectively, during the preparation of this paper.
\section{Deformation of flat surfaces and principal boundary of strata} \label{Sec:PrincipalBoudnary}

In this section, we recall the natural $\operatorname{GL}^{+}(2,\RR)$-action on the strata and the flat surfaces with degenerate core. We also recall the principal boundary of strata, which can be approached by shrinking a collection of parallel saddle connections.

\subsection{Period coordinates of the strata of differentials}

Given a flat surface $(X,\omega)$ in $\cH(\mu)$ or $\cH(\mu^\fR)$, 
{\em shrinking saddle connections} refers to deforming the flat surface within the given stratum so that the lengths of the chosen saddle connections decrease. Thus, it is essential to understand the local coordinates of a stratum, especially when residue conditions are involved. Consider a stratum $\cH(\mu^\fR)$. Let $K(\fR)$ be the submodule of $H_1(X \setminus \boldsymbol{p}, \boldsymbol{z}; \ZZ)$ generated by the classes of loops encircling all poles within each part of the residue condition $\fR$. We say that an oriented multicurve $\alpha_1 + \dots + \alpha_k$ on $X$ is {\em $\fR$-homologous to zero} if its class vanishes in $H_1(X \setminus \boldsymbol{p}, \boldsymbol{z}; \ZZ)/K(\fR)$. Geometrically, an oriented multicurve $\alpha_1 + \dots + \alpha_k$ is $\fR$-homologous to zero if it bounds a subsurface whose set of poles is a union of parts of $\fR$ 
(see \Cref{fig:R_hom_zero}). Given two oriented curves $\alpha_1$ and $\alpha_2$, we say they are {\em $\fR$-homologous} if the difference $\alpha_1 - \alpha_2$ is $\fR$-homologous to zero. 

\begin{figure}[h!]
    \centering
    \resizebox{12cm}{4cm}{\includegraphics[]{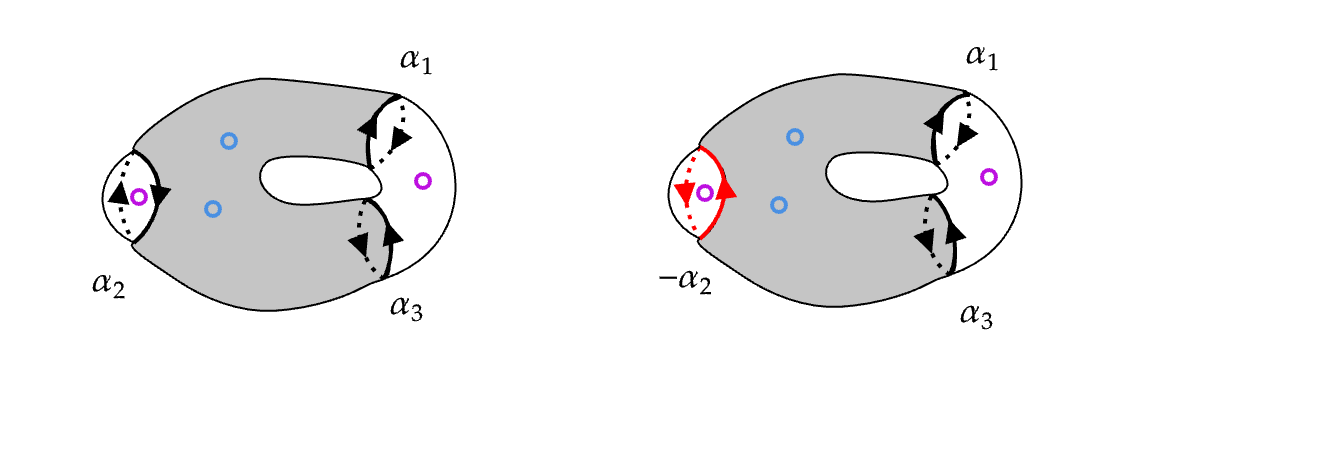}}
    \caption{Example (left) and non-example (right) of multi-curves $\fR$-homologous to zero}
    \label{fig:R_hom_zero}
\end{figure}

A flat surface $(X,\omega) \in \cH(\mu^\fR)$ induces an element of 
\[
\Hom\left(H_1(X \setminus \boldsymbol{p}, \boldsymbol{z}; \ZZ)/K(\fR), \CC\right)
\]
via the map
\[
\alpha \mapsto \int_\alpha \omega.
\]
We fix a basis 
\[
\boldsymbol{\alpha} = (\alpha_1, \dots, \alpha_{2g + n + m - |\fR| - 1})
\]
of the relative homology group $H_1(X \setminus \boldsymbol{p}, \boldsymbol{z}; \ZZ)/K(\fR)$. This basis can be transported to nearby surfaces, and thus defines the {\em period map} on a generalized stratum in a neighborhood of $(X,\omega)$:
\begin{align*}
    \Per_{\boldsymbol{\alpha}} \colon \quad & U_{(X,\omega)} \subset \cH(\mu^\fR) 
    \longrightarrow \CC^{2g + n + m - |\fR| - 1} \\
    & (X',\omega') \longmapsto \left( \int_{\alpha_1} \omega', \dots, \int_{\alpha_{2g + n + m - |\fR| - 1}} \omega' \right),
\end{align*}
which defines a local chart on $U_{(X,\omega)}$.

The natural $\operatorname{GL}^+(2,\RR)$-action on $\cH(\mu)$ is defined as follows: let $s = x + iy$ be a local flat coordinate such that $\omega = ds$. At a zero of order $a-1$, we have a coordinate $s$ such that $\omega = d(s^a)$. For $M=\left(\begin{smallmatrix}
a & b \\
c & d 
\end{smallmatrix}\right)\in \operatorname{GL}^{+}(2,\RR),$ we let $s'\coloneqq(ax+by)+i(cx+dy)$. This gives a new local coordinate $s'$. These new local charts glue together and determine a new complex structure $X'$ on the same topological surface. Moreover, $X'$ has a holomorphic differential $\omega'$ such that $(X',\omega')\in \cH(\mu)$, locally given by $ds'$ (and $d((s')^a)$ at a zero of order $a-1$). The $\operatorname{GL}^+(2,\RR)$-action is then defined by $M\circ (X,\omega)\coloneqq(X',\omega')$. 

\subsection{Flat surfaces with degenerate core} \label{subsec:core}

In this subsection, we introduce flat surfaces with degenerate core, which can be easily described. For a flat surface $(X,\omega)$, a subset $Y\subset X$ is said to be {\em convex} if any straight line joining two points in $Y$ is also contained in $Y$. The {\em convex hull} of a subset $Y$ is the smallest convex subset of $X$ containing $Y$. Recall that the {\em core} $C(X)$ of $(X,\omega)$ is defined to be the convex hull of the set of zeroes $\boldsymbol{z}$. In particular, $C(X)$ contains all zeroes and saddle connections of $(X,\omega)$. In \cite{Tah}, Tahar established the following properties of the core, allowing us to decompose every flat surface into the core and the polar domains:

\begin{proposition}[\cite{Tah}{Lemma 4.5}]  \label{degenerate}
For any flat surface $(X,\omega)\in \cH (\mu)$, $\partial C(X)$ is a finite union of saddle connections. The complement $X\setminus C(X)$ has exactly $n$ connected components, each of which is homeomorphic to a disk containing one pole $p_i$ of $\omega$. 
\end{proposition}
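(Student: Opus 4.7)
The plan is to prove the two assertions in sequence, exploiting the convexity of $C(X)$, the local Euclidean structure of the flat metric at regular points, and the standard local model near poles.

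For the structure of $\partial C(X)$: fix a regular boundary point $p \in \partial C(X) \setminus \boldsymbol{z}$ and take an isometric flat chart $\varphi \colon V \hookrightarrow \CC$ on a small disk about $p$. The image $\varphi(C(X) \cap V)$ is a planar convex subset with $\varphi(p)$ on its boundary, so by planar convex geometry its boundary near $\varphi(p)$ is a straight line segment. Hence $\partial C(X)$ is locally geodesic away from the zeros, and its maximal geodesic arcs have endpoints at zeros, i.e., they are saddle connections. Finiteness follows because $C(X)$ is bounded---pairwise geodesics between the finitely many zeros have finite length, and the iterative construction of the convex hull never leaves a bounded region of $X \setminus \boldsymbol{p}$ since the polar ends lie at infinite flat distance---so $\partial C(X)$ is a compact graph whose vertex set lies in the finite set $\boldsymbol{z}$.

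For the complement, I analyze a connected component $U$ of $X \setminus C(X)$ in two parts. To see that $U$ contains at least one pole, suppose the contrary; then $\overline U$ is a compact flat polygon with piecewise geodesic boundary and corners at zeros. Convexity of $C(X)$ forces the interior angle $\alpha_z^U \geq \pi$ at every corner $z$, since otherwise two points of $C(X)$ just inside the two adjacent boundary saddle connections would be joined by a straight geodesic slicing through the narrow wedge into $U$. Gauss--Bonnet on $\overline U$ (flat interior, geodesic boundary edges) then yields
\[
2\pi \chi(\overline U) = \sum_{z \in \partial U \cap \boldsymbol{z}} (\pi - \alpha_z^U) \leq 0,
\]
contradicting $\chi(\overline U) = 1$; the latter, namely simple connectivity of a pole-free complementary region of a convex hull on a flat surface, can be justified separately by extending straight geodesics from interior zeros. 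To see that $U$ is a disk containing exactly one pole, invoke the standard local model at each pole $p_j$: a deleted neighborhood is isometric to a half-infinite flat cylinder if $b_j = 1$, or to a union of $b_j - 1$ flat half-planes glued along horizontal boundary rays if $b_j \geq 2$. Such a polar neighborhood is a topological disk disjoint from $C(X)$ after sufficient shrinking, hence lies in a unique component; radial flow toward $p_j$ then extends to a deformation retraction of the ambient component onto the polar neighborhood, so $U$ is a disk containing precisely that single pole.

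The main obstacle is the convexity-at-corners inequality $\alpha_z^U \geq \pi$ used in the pole-free contradiction: because each zero is a cone point with angle $2\pi(a+1) > 2\pi$, straight geodesics through $z$ may wrap around the cone in several ways, and $\partial C(X)$ could in principle visit $z$ more than once. One must therefore translate the global convexity of $C(X)$ into a careful sector-by-sector statement at each zero, and also verify the simple connectivity of pole-free components needed to promote the Gauss--Bonnet inequality into an actual contradiction.
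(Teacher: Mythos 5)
The paper does not prove this statement at all: it is quoted as \cite{Tah}, Lemma 4.5, so there is no internal proof to compare against, and your proposal has to stand on its own. As written it does not. The first gap is in the pole-free case: your Gauss--Bonnet identity only yields $2\pi\chi(\overline U)=\sum_i(\pi-\theta_i)\le 0$, which is a contradiction only if you already know $\chi(\overline U)=1$; a flat cylinder with geodesic boundary (no corners, $\chi=0$) satisfies the identity, so the inequality alone does not exclude pole-free components. You acknowledge this but the proposed fix, ``extending straight geodesics from interior zeros,'' is incoherent here, since a complementary component contains no zeros in its interior (all zeros lie in $C(X)$). The argument that actually closes this is the convexity of $C(X)$ itself: a maximal straight segment through a point of a pole-free component must terminate on $\partial C(X)$ at both ends (it cannot escape to a pole and cannot run forever in a compact pole-free region without producing such a chord), and a chord between two points of $C(X)$ lies in $C(X)$, contradicting that its interior meets the complement. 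That chord/convexity mechanism, not Gauss--Bonnet, is what forces every component to reach a pole.

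The second and more serious gap is the claim that each component is a disk containing \emph{exactly one} pole. Your justification is that the radial flow near a pole ``extends to a deformation retraction of the ambient component onto the polar neighborhood''; this is precisely the assertion to be proved, and nothing you wrote rules out a component containing two poles (in which case no such retraction exists) or a component with one pole but nontrivial topology. Note that Gauss--Bonnet cannot help directly here either: a pole of order $b_j$ contributes $+2\pi b_j$, so a region with concave corners containing several poles is numerically consistent with the formula. One must again invoke convexity of $C(X)$ (e.g.\ a path in $U$ separating two poles, or a non-contractible loop, can be straightened into a chord with endpoints on $\partial C(X)\subset C(X)$ crossing $U$, which is forbidden), or argue as Tahar does; without such an argument the count of exactly $n$ components, one per pole, is not established. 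The remaining ingredients of your part on $\partial C(X)$ (local geodesicity at regular boundary points, sector-by-sector care at cone points, finiteness via boundedness of $C(X)$ together with discreteness of saddle connections of bounded length) are essentially right, though the cone-point analysis and finiteness are only sketched.
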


For a pole $p$ of $\omega$, the connected component of $X\setminus C(X)$ containing $p$ is called the {\em polar domain} of $p$.

A flat surface $(X,\omega)$ is said to have {\em degenerate core} if the core $C(X)$ has empty interior. Since $C(X)$ is closed, this is equivalent to saying $C(X)=\partial C(X)$. By \cite[Lemma 5.15]{Tah}, we can construct a flat surface with degenerate core contained in any connected component of a stratum. A consequence of the above proposition is the following:

\begin{proposition} \label{prop:sc_ex}
For any flat surface $(X,\omega)$, there exists a finite collection of saddle connections $\gamma_1,\dots,\gamma_N$ of $(X,\omega)$ such that their homology classes generate $H_1(X\setminus{\boldsymbol{p}}, \boldsymbol{z} ; \ZZ)$. 
\end{proposition}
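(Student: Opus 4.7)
The plan is to reduce the problem to the core $C(X)$ via a deformation retraction, then exhibit a saddle-connection cell structure on the core whose $1$-cells span the relevant relative homology.

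First, I would use \Cref{degenerate} to write $X = C(X) \sqcup \bigsqcup_{i=1}^{n} D_i$, where each polar domain $D_i$ is an open topological disk containing only the pole $p_i$. Since $D_i \setminus \{p_i\}$ deformation retracts onto $\partial D_i \subset \partial C(X)$, performing this retraction simultaneously in every polar domain yields a strong deformation retraction of $X \setminus \boldsymbol{p}$ onto $C(X)$ that is the identity on $\boldsymbol{z}$. Hence the inclusion of pairs induces an isomorphism
\[
H_1(C(X),\boldsymbol{z};\ZZ) \;\xrightarrow{\;\cong\;}\; H_1(X \setminus \boldsymbol{p},\boldsymbol{z};\ZZ),
\]
so it suffices to exhibit a finite generating set of the left-hand side consisting of classes of saddle connections of $(X,\omega)$.

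Next, I would equip $C(X)$ with a geodesic triangulation whose vertex set is $\boldsymbol{z}$ and whose $1$-cells are saddle connections of $(X,\omega)$. When the core is degenerate, $C(X) = \partial C(X)$ is already a finite union of saddle connections by \Cref{degenerate}, so nothing further is required. Otherwise, I would start from the saddle connections forming $\partial C(X)$ and iteratively add maximal geodesic segments with endpoints in $\boldsymbol{z}$ lying in the interior of $C(X)$. Since $C(X)$ is a compact flat surface with corners whose only singularities are the cone points $\boldsymbol{z}$, this Delaunay-type process terminates in finitely many steps and produces a triangulation of $C(X)$ all of whose edges are saddle connections. The existence of such a geodesic triangulation is standard in the theory of translation surfaces, and I would cite rather than reprove it; this is the only non-routine ingredient.

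Finally, the triangulation gives a relative CW-structure on $(C(X),\boldsymbol{z})$ whose relative cellular chain group in degree $1$ is the free abelian group on the finitely many edges, each of which is a saddle connection. Since $H_1(C(X),\boldsymbol{z};\ZZ)$ is a quotient of this group by the image of the $2$-cell boundaries, the classes of these edges generate $H_1(C(X),\boldsymbol{z};\ZZ)$, and transporting along the isomorphism above produces the desired finite family $\gamma_1,\dots,\gamma_N$ of saddle connections whose classes generate $H_1(X \setminus \boldsymbol{p},\boldsymbol{z};\ZZ)$. The main conceptual point is the passage from $X \setminus \boldsymbol{p}$ to the core; the main technical point is the existence of a saddle-connection triangulation of the core, both of which are controlled by \Cref{degenerate} and the classical flat-surface geometry.
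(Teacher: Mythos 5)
Your proposal is correct and follows essentially the route the paper intends: the paper states \Cref{prop:sc_ex} as a direct consequence of the core decomposition in \Cref{degenerate}, which is exactly your reduction of $H_1(X\setminus\boldsymbol{p},\boldsymbol{z};\ZZ)$ to the core via retracting each punctured polar domain onto its boundary saddle connections. Your added detail — a saddle-connection (Delaunay-type) triangulation of a non-degenerate core giving a relative CW structure whose $1$-cells generate the relative homology — is the standard way to complete the argument the paper leaves implicit.
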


\subsection{Zero-dimensional strata}

We recall the following two propositions classifying the zero-dimensional strata. In Sections~\ref{Sec:Bsignature}--\ref{Sec:Dsignature}, they will be building blocks of flat surfaces in one-dimensional strata via plumbing construction. 

\begin{proposition}[\cite{chen2019principal}{Proposition 2.3}] \label{zerodim1} 
Let $(\PP^1,\omega)\in \cP(a_1,a_2\mid-b_1\mid\dots\mid-b_n)$. Then it has exactly $n$ saddle connections joining $z_1$ and $z_2$, parallel to each other. Also, $(\PP^1,\omega)$ is determined by the following information:
\begin{itemize}
    \item A cyclic order on $\boldsymbol{p}$ given by $\tau:\{1,\dots, n\}\to \boldsymbol{p}$.
    \item A tuple of integers ${\bf C}= (C_1,\dots,C_n)$ such that $1\leq C_i\leq b_i-1$ for each $i=1,\dots,n$, satisfying $\sum_i C_i = a_1+1$.
\end{itemize}
We denote this flat surface $(\PP^1,\omega)$ by $Z_1(\tau,{\bf C})$. 
\end{proposition}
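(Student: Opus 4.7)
The plan is to exploit the rigidity forced by $\dim_{\CC} \cH(\mu^\fR) = 2g + m + n - r - 1 = 1$. Each $\CC^*$-orbit in $\cH(\mu^\fR)$ is open in its connected component, so, since $\GL^+(2,\RR)$ is connected and preserves the stratum (residue conditions are $\RR$-linear in the residues and thus respected by the action), the $\GL^+(2,\RR)$-orbit of $(\PP^1,\omega)$ coincides with its $\CC^*$-orbit. Consequently, every shear matrix $M \in \GL^+(2,\RR)$ satisfies $M\cdot(\PP^1,\omega) = \lambda(M)\cdot(\PP^1,\omega)$ for some $\lambda(M)\in\CC^*$. A rotation-scaling by $\lambda(M)$ rotates all saddle-connection directions uniformly by $\arg\lambda(M)$, while a shear rotates a direction $\theta$ nonlinearly in $\theta$; matching the two actions against both horizontal and vertical shears forces all saddle connections to share a common unoriented direction, which we normalize to be horizontal by rotating in $\CC^*$.

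Next I would rule out self-loop saddle connections. Any such loop $\gamma\subset\PP^1\setminus\boldsymbol{p}$ satisfies $\int_\gamma\omega = 2\pi i\sum_i m_i\,\operatorname{res}_{p_i}\omega = 0$ because every residue vanishes; but if $\gamma$ is horizontal its period also equals its strictly positive length, a contradiction. Hence every saddle connection joins $z_1$ to $z_2$. Applying \cite[Lemma 5.15]{Tah} to choose a degenerate-core representative, the core $C(X)$ is the union of these horizontal arcs, and the resulting description applies to every flat surface in the stratum because $\cP(\mu^\fR)$ is zero-dimensional. On $\PP^1$, the $N$ parallel horizontal arcs from $z_1$ to $z_2$ form a planar graph with $V=2$, $E=N$, and Euler's formula gives $F=N$ faces; \Cref{degenerate} identifies these faces with the $n$ polar domains, each containing exactly one pole, whence $N=n$.

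The cyclic order in which the polar domains appear around $z_1$ supplies the bijection $\tau\colon\{1,\dots,n\}\to\boldsymbol{p}$. The corner angle at $z_1$ of the polar domain surrounding $p_{\tau(i)}$ is an integer multiple $2\pi C_i$ of $2\pi$ since the two bounding saddle connections are horizontal. Positivity of the two corner angles $2\pi C_i$ at $z_1$ and $2\pi(b_i - C_i)$ at $z_2$ (the latter deduced by a Gauss--Bonnet computation on the bigonal polar domain, or directly from horizontality) yields $1\leq C_i\leq b_i-1$, while $\sum_i C_i = a_1+1$ records that the total angle at $z_1$ equals $2\pi(a_1+1)$. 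To complete the bijection I would verify the converse by an explicit cyclic gluing: for each valid $(\tau,\mathbf{C})$ take a bigonal polar domain of common horizontal side length with corner angles $2\pi C_i$ and $2\pi(b_i-C_i)$ enclosing one pole of order $b_i$, and glue consecutive bigons along matching sides in the order prescribed by $\tau$.

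The main obstacle will be the rigidity argument in the first paragraph, specifically justifying that the $\GL^+(2,\RR)$-orbit coincides with (and is not smaller than) the $\CC^*$-orbit. This relies on the fact that period coordinates are local charts on $\cH(\mu^\fR)$ and the sole complex period available may be taken to be the common $\omega$-length of the saddle connections, along which $\CC^*$ already acts transitively.
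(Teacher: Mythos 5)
The paper does not prove this statement at all: it is imported verbatim from Chen--Chen \cite{chen2019principal}, so your argument has to stand on its own. Its overall architecture is sound and is essentially the standard one: rule out closed saddle connections by the residue theorem in genus zero, use a degenerate-core representative (legitimate here, since each component of the zero-dimensional projectivized stratum is a single point and \cite[Lemma 5.15]{Tah} plus $\CC^*$-invariance of the core force degeneracy), count $E=F=n$ by Euler's formula on the sphere, and then read off the cyclic order and the angle data from the bigonal polar domains, with the converse given by cyclic gluing.

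Two steps need repair, though. First, the direction-rigidity argument: requiring the infinitesimal rotation $-\sin^2\theta_i$ (horizontal shear) and $-\cos^2\theta_i$ (vertical shear) to be independent of $i$ only forces the unoriented directions into a set $\{\theta_0,\pi-\theta_0\}$, so horizontal and vertical shears alone do \emph{not} pin down a single direction; you would need shears in arbitrary directions (rotated conjugates), and you also silently assume the isomorphism $M\cdot(X,\omega)\cong\lambda(M)\cdot(X,\omega)$ does not permute the saddle connections, which requires a small continuity/finiteness argument. The clean fix is the observation you relegate to your last paragraph: $H_1(X\setminus\boldsymbol{p},\boldsymbol{z};\ZZ)/K(\fR)$ has rank one and every class in $K(\fR)$ has zero period, so all saddle-connection periods are rational multiples of one complex number; this gives parallelism immediately and makes the entire $\GL^+(2,\RR)$ detour unnecessary. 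Second, "the corner angle at $z_1$ is a multiple of $2\pi$ since the bounding saddle connections are horizontal" is too quick: horizontality only gives a multiple of $\pi$. You need that the two prongs at $z_1$ point in the \emph{same} horizontal direction, which follows because the two saddle connections bounding a bigon have \emph{equal} periods (apply the residue theorem to the bigon boundary, using that the enclosed pole is residueless); likewise $C_i+D_i=b_i$ genuinely requires the Gauss--Bonnet/local-model computation for a residueless pole of order $b_i$ (it does not follow "directly from horizontality"). Finally, for the word "determined" you should also record that a residueless polar domain of order $b_i$ with prescribed corner angles $2\pi C_i$, $2\pi(b_i-C_i)$ and boundary period is unique up to isomorphism (standard via basic domains), so that the cyclic gluing gives a bijection and not merely a surjection.
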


\begin{proposition}[\cite{chen2019principal}{Proposition 3.8}] \label{zerodim2}
Let $(\PP^1,\omega)\in \cP(a\mid-b_1\mid\dots\mid-b_{n-2}\mid-b_{n-1},-b_n)$. Then it has $n-1$ saddle connections, parallel to each other. Also, $(\PP^1,\omega)$ is determined by the following information:
\begin{itemize}
    \item An order given by $\tau:\{1,\dots, n-2\}\to \boldsymbol{p}$ on the set of $n-2$ residueless poles.
    \item A tuple of integers ${\bf C}= (C_1,\dots,C_{n-2})$ such that $1\leq C_i\leq b_i-1$ for each $i=1,\dots,n-2$.
\end{itemize}
We denote this flat surface $(\PP^1,\omega)$ by $Z_2(\tau,{\bf C},b_{n-1},b_n)$. 
\end{proposition}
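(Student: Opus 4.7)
The plan is to identify each flat surface $(\PP^1, \omega)$ in this zero-dimensional projectivized stratum with a combinatorial gluing datum extracted from its horizontal foliation, then verify the bijection by an explicit reconstruction. The argument will parallel that of \Cref{zerodim1}, with the shared-residue pair $\{p_{n-1}, p_n\}$ playing the role previously filled by the second zero $z_2$.

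Since $\dim_{\CC} \cH(\mu^\fR) = 2g + m + n - r - 1 = 1$ for $g = 0$, $m = 1$, $r = n-1$, the projectivized stratum $\cP(\mu^\fR)$ is zero-dimensional, and period coordinates show that any two saddle-connection periods have a fixed complex ratio on each component; I would verify (via the core structure below) that these ratios are in fact real, so that applying a suitable element of $\CC^*$ renders all saddle connections horizontal. Next, I would apply \Cref{degenerate}: $\PP^1 \setminus C(X)$ decomposes into $n$ polar disks $D_1, \ldots, D_n$, and $\partial C(X)$ is a union of horizontal saddle connection loops based at the single zero $z_1$. For each residueless pole $p_i$ with $i \leq n-2$, the flat coordinate in which $\omega = dw$ realizes $D_i$ as a cyclic $(b_i - 1)$-fold branched cover of a flat neighborhood of infinity, equivalently as $b_i - 1$ half-planes glued cyclically along rays, so $\partial D_i$ is a concatenation of $b_i - 1$ horizontal saddle connection loops at $z_1$. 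For the shared-residue pair $\{p_{n-1}, p_n\}$, the condition $\res_{p_{n-1}} \omega + \res_{p_n} \omega = 0$ forces $D_{n-1}$ and $D_n$ to be joined across a horizontal cylinder of circumference $|\res_{p_{n-1}} \omega|$ whose length is the single remaining real modulus of the configuration. Summing boundary edges around $z_1$ then gives exactly $n - 1$ horizontal saddle connections on $(\PP^1, \omega)$, as claimed.

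To extract the combinatorial data, I would travel along $\partial C(X)$ and record the cyclic sequence of polar domains encountered; restricting to the $n-2$ residueless ones defines $\tau \colon \{1, \ldots, n-2\} \to \boldsymbol{p} \setminus \{p_{n-1}, p_n\}$. For each such pole $p_{\tau(i)}$, the integer $C_i \in \{1, \ldots, b_{\tau(i)} - 1\}$ records the offset within the $(b_{\tau(i)} - 1)$-fold cyclic covering of $D_{\tau(i)}$ between the saddle connection entering $D_{\tau(i)}$ and the saddle connection exiting it; the $b_{\tau(i)} - 1$ possible values are parameterized by the sheets of the branched cover. Conversely, given admissible data $(\tau, \mathbf{C}, b_{n-1}, b_n)$, I would reconstruct the surface by assembling the $b_i - 1$ half-plane sheets of each residueless polar domain in the cyclic order $\tau$ with the prescribed twists $C_i$, and sealing the two ends with the shared cylinder determined by the pair $\{p_{n-1}, p_n\}$; the resulting flat surface lies in $\cP(\mu^\fR)$ and is uniquely determined modulo the $\CC^*$-normalization.

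The main subtlety is verifying the absence of any sum constraint on $\mathbf{C}$, in contrast to $\sum C_i = a_1 + 1$ of \Cref{zerodim1}. The unique zero $z_1$ has total horizontal angle $2\pi(a + 1)$; a second zero would impose a linear matching condition on the $C_i$, but here the free length of the cylinder attached to $\{p_{n-1}, p_n\}$ absorbs any residual twist, which is what eliminates the constraint. Establishing this cylinder-absorption argument rigorously, and checking that distinct data give non-isomorphic flat surfaces, is the principal technical step; I would carry it out by tracking the developed image of $\omega$ in the plane near $z_1$ and mirroring the argument of \Cref{zerodim1}.
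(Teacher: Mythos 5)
The paper itself does not prove this statement; it is quoted from Chen--Chen, and the standard argument there follows the same outline you adopt (all saddle connections parallel by the rank-one period argument, decompose along them into polar domains, read off the order and angle data, reconstruct). Your first step is fine: $H_1(X\setminus\boldsymbol{p},\boldsymbol{z};\ZZ)/K(\fR)$ has rank one here, so every saddle-connection period is a rational multiple of a single period and all saddle connections are parallel. The problem is the structural step that follows, which is exactly the content of the proposition and where your description goes wrong. A residueless polar domain of $p_{\tau(i)}$ in such a surface is \emph{not} bounded by $b_i-1$ saddle connections: it is a Type~I polar domain bounded by exactly two parallel saddle connections (a slit), with the two corner angles $2\pi C_i$ and $2\pi D_i$, $C_i+D_i=b_i$, both at the unique zero; the two non-residueless polar domains are Type~II domains, each bounded by a single closed saddle connection with corner angle $2\pi(b_{n-1}-1)+\pi$, resp.\ $2\pi(b_n-1)+\pi$. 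With your count ($b_i-1$ boundary loops per residueless domain) the total number of saddle connections would be roughly $\tfrac12\bigl(\sum_{i\le n-2}(b_i-1)+2\bigr)$, not $n-1$, so the asserted count does not follow from what you wrote; establishing that each residueless domain meets exactly two saddle connections and each non-residueless one exactly one (hence the surface is a linear chain capped by the two Type~II domains) is the heart of the proof and is missing.

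Two further points are inconsistent with the statement and with your own dimension count. First, there is no finite horizontal cylinder joining $D_{n-1}$ and $D_n$ and no ``single remaining real modulus'': $\cP(\mu^{\fR})$ is zero-dimensional, so the only continuous parameter is the global $\CC^*$-scale; the two non-residueless domains sit at the two opposite ends of the chain, which also means the datum $\tau$ is a \emph{linear} order (the ends are distinguished by the labels $p_{n-1}$ and $p_n$), not a cyclic one as in \Cref{zerodim1} and as you use in your reconstruction. Second, the explanation for the absence of a sum constraint on $\mathbf{C}$ is not ``cylinder absorption''. The constraint $\sum C_i=a_1+1$ in \Cref{zerodim1} arises because the angles $2\pi C_i$ all sit at $z_1$ while the $2\pi D_i$ sit at $z_2$; here both corners of each slit lie at the same unique zero, so each residueless domain contributes $2\pi b_i$ regardless of $C_i$, the end domains contribute the fixed amounts above, and the total-angle identity $2\pi\sum_{i\le n-2}b_i+2\pi(b_{n-1}+b_n-1)=2\pi(a+1)$ holds automatically --- no condition on $\mathbf{C}$ remains. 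Once the chain structure is actually established, the extraction of $(\tau,\mathbf{C})$ and the reconstruction (and injectivity) are straightforward, but as written the proposal asserts rather than proves the key decomposition, and the geometric model it is based on is incorrect.
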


Note that a level can be a disconnected flat surface. A zero-dimensional generalized stratum of (possibly) disconnected flat surfaces can be described using the second type mentioned above. Specifically, each connected component is a single-zero flat surface with one pair of non-residueless poles.

Let $(X_1, \omega_1), \dots, (X_k, \omega_k)$ be the connected components, where each $(X_i, \omega_i)$ has a pair of poles $p_i, q_i$ with opposite nonzero residues. Since there are $k$ additional residue conditions, each non-residueless pole must be related to another such pole in a different component. Therefore, by relabeling the components if necessary, the residue conditions can be expressed as
\[
\res_{p_i}(\omega_i) + \res_{q_{i+1}}(\omega_{i+1}) = 0 \quad \text{for each } i = 1, \dots, k-1.
\]
In particular, the saddle connections in these components are all parallel to each other.

\subsection{The multi-scale compactification of strata} \label{subsec:ms}

In this subsection, we will recall the multi-scale compactification $\overline{\cP}(\mu^\fR)$ of strata. This is a smooth compactification, so there is a one-to-one correspondence between the connected components of $\cP(\mu^\fR)$ and its compactification. Moreover, by degenerating to the boundary of $\overline{\cP}(\mu^\fR)$, we can reduce the complexity of the flat surfaces. This will allow us to use induction in Sections~\ref{Sec:hyperelliptic}--\ref{Sec:Nonhyper}. 

In \cite{bainbridge2018compactification} and \cite{bainbridge2019moduli}, Bainbridge,
Chen, Gendron, Grushevsky, and M\"{o}ller constructed the moduli space of multi-scale differentials $\overline{\cH}(\mu)$, which is a smooth orbifold containing $\cH(\mu)$ as a dense subset, and such that the boundary divisor is normal crossing. 

We denote by $\overline{\cH}(\mu^\fR)$ the closure of $\cH(\mu^\fR)$ in $\overline{\cH}(\mu)$. Since the residue around a pole on a flat surface is expressed as a finite sum of periods of saddle connections, the locus $\cH(\mu^\fR)$ forms a linear subvariety of $\cH(\mu)$. 

The boundary behavior of linear subvarieties in $\cH(\mu)$ has been studied in \cite{benirschke2022equations} and \cite{benirschke2022boundary}. In particular, the boundary of $\cH(\mu^\fR)$ is described explicitly in \cite{costantini2022chern}, and we will follow their approach in this paper.

We first recall from \cite{bainbridge2019moduli} the notions of enhanced level graphs and twisted differentials.

\begin{definition}
    An {\em enhanced level graph} $\Gamma$ of type $\mu$ on a marked nodal curve $(X,\boldsymbol{z} \sqcup \boldsymbol{p})$ consists of the following data:
    \begin{itemize}
        \item A dual graph of $(\overline{X}, \boldsymbol{z} \sqcup \boldsymbol{p})$, where each leg $z$ or $p$ in the set of half-edges $H(\Gamma)$ is decorated with the integer prescribed by $\mu$.
        
        \item A level function on the set of vertices:
        \[
        \ell: V(\Gamma) \longrightarrow \{0, -1, -2, \ldots, -N\}
        \]
        which is surjective.
        
        \item An edge $e \in E(\Gamma)$ is called {\em vertical} if it connects vertices of distinct levels; otherwise, $e$ is called {\em horizontal}.
        
        \item Each vertical edge $e$ is assigned a positive integer $\kappa_e$, called its {\em enhancement}.
    \end{itemize}

    For each $v \in V(\Gamma)$, let $g_v$ denote the genus of the irreducible component $X_v$ of $\overline{X}$. Then the following relation holds:
    \[
    2g_v - 2 = \sum_{z_i \mapsto v} a_i 
    - \sum_{p_j \mapsto v} b_j 
    + \sum_{e \in E^+(v)} (\kappa_e - 1)
    - \sum_{e \in E^-(v)} (\kappa_e + 1)
    - \sum_{e \in E^h(v)} 1
    \]
    where:
    \begin{itemize}
        \item[-] The first (second, respectively) sum is over all zeroes (poles) incident to $v$.
        \item[-] $E^+(v)$, $E^-(v)$, and $E^h(v)$ denote the sets of half-edges incident to $v$ going to lower, upper, and same-level vertices, respectively.
    \end{itemize}
\end{definition}

\begin{definition}
    A {\em twisted differential} compatible with an enhanced level graph $\Gamma$ of type $\mu^\fR$ on $(X, \boldsymbol{z} \sqcup \boldsymbol{p})$ is a collection of differentials $\boldsymbol{\eta} = (\eta_v)_{v \in V(\Gamma)}$ satisfying the following conditions:
    \begin{enumerate}
        \item \textbf{Prescribed orders:} The orders of the singularities of $\eta_v$ are determined by the decorations on the half-edges at $v$.
        
        \item \textbf{Balancing of horizontal nodes:} For every horizontal node $e$,
        \[
        \res_{e^+} \boldsymbol{\eta} + \res_{e^-} \boldsymbol{\eta} = 0.
        \]
        
        \item \textbf{$\fR$-Global Residue Conditions (GRCs):} Let $\widetilde{\Gamma}^\fR$ be an auxiliary level graph obtained by adding a vertex $v_R$ at level $\infty$ for each part $R \in \fR$, and connecting each $p_j$ ($j \in R$) to $v_R$. Let $\widetilde{\Gamma}^\fR_{>L}$ denote the subgraph consisting of half-edges and vertices strictly above level $L$.

        For every finite level $L$, any connected component $Y$ of $\widetilde{\Gamma}^\fR_{>L}$ containing all poles of entire parts $R \in \fR$ must satisfy the residue condition:
        \[
        \sum_{e \in E^+(Y, L)} \res_e \boldsymbol{\eta} = 0,
        \]
        where $E^+(Y, L)$ is the set of edges connecting $Y$ to vertices at level $L$.
    \end{enumerate}
    
    For brevity, we refer to the $\fR$-Global Residue Conditions as {\em $\fR$-GRCs}.
\end{definition}

A {\em multi-scale differential} in $\overline{\cH}(\mu^\fR)$ is a tuple
\[
(X, \boldsymbol{z} \sqcup \boldsymbol{p}, \Gamma, \boldsymbol{\eta}, \boldsymbol{\sigma}),
\]
where:
\begin{itemize}
    \item $(X, \boldsymbol{z} \sqcup \boldsymbol{p})$ is a pointed nodal curve;
    
    \item $\Gamma$ is an enhanced level graph of type $\mu^\fR$ on $(X, \boldsymbol{z} \sqcup \boldsymbol{p})$;
    
    \item $\boldsymbol{\eta} = (\eta_v)_v$ is a {\em twisted differential} compatible with the enhanced level graph $\Gamma$;
    
    \item $\boldsymbol{\sigma} = (\sigma_e)_e$ is a collection of cyclic-order-reversing maps identifying the geodesic rays on the two branches at each node. We call $\boldsymbol{\sigma}$ the {\em global prong-matching}, and it can be represented by an element in the {\em prong-matching group}
    \[
    P_\Gamma := \prod_e \mathbb{Z} / \kappa_e \mathbb{Z}.
    \]
\end{itemize}

There is a natural action of the {\em level rescaling group} $\mathbb{C}^L$ on a multi-scale differential, where $-L$ is the lowest level of the graph. Two multi-scale differentials are considered {\em equivalent} if they differ by the action of this level rescaling group.

The {\em level rotation group} $\mathbb{Z}^L \subset \mathbb{C}^L$ acts on the prong-matching group $P_\Gamma$ via:
\[
\cdot : \mathbb{Z}^L \times P_\Gamma \longrightarrow P_\Gamma, \quad 
(\boldsymbol{n}, (u_e)_e) \mapsto \left(u_e + n_{\ell(e^+)} - n_{\ell(e^-)}\right)_e.
\]

Thus, in the coarse moduli space $\cH(\mu^\fR)$, we consider only the prong-matching {\em equivalence classes} of multi-scale differentials, and twisted differentials up to rescaling on levels below $0$.

\subsection{Plumbing construction} \label{sec:Plumb}

In this subsection, we will describe the plumbing construction, which describes the neighborhood of a boundary of $\overline{\cP}(\mu^\fR)$. This allows us to navigate the boundary elements of one-dimensional strata in Sections~\ref{Sec:Bsignature}--\ref{Sec:Dsignature}.

Given a multi-scale differential 
\[
\overline{X} = (X, \boldsymbol{z} \sqcup \boldsymbol{p}, \Gamma, \boldsymbol{\eta}, \boldsymbol{\sigma})
\]
lying in the boundary of $\overline{\cH}(\mu^\fR)$, one can construct a complex family of flat surfaces converging to $\overline{X}$ via a flat geometric surgery called {\em plumbing}. For the details of plumbing, we refer the reader to \cite{bainbridge2019moduli}. Here, we describe only the flat geometric plumbing procedure needed to obtain a single flat surface in a neighborhood of $\overline{X}$.

We refer the reader to \cite{boissy2015connected} for details on representing a non-compact flat surface using {\em basic domains}, which include infinite half-disks and half-infinite cylinders. Note that the basic domain corresponding to a simple pole is a half-infinite cylinder, whose base vector equals the residue. Thus, to plumb a horizontal node, one simply cuts off the two corresponding half-infinite cylinders and identifies their boundaries.

To plumb {\em vertical nodes}, the surgery must be performed $\fR$-GRC-wise rather than one node at a time. Observe that each nodal or marked pole is involved in exactly one $\fR$-GRC. Furthermore, if two nodal poles lie on the same level but belong to different $\fR$-GRCs, then after smoothing the nodes at higher levels, the resulting configuration consists of two disjoint flat surfaces near the two nodes. This implies that those nodes can be smoothed independently.

In this paper, we only require plumbing techniques for the following configurations:
\begin{itemize}
    \item[(i)] A single residueless node, where the nodal polar domain is a polar domain of Type I;
    \item[(ii)] A pair of nodes whose residues sum to zero, with each nodal polar domain being of Type II;
    \item[(iii)] A single non-residueless node, where the nodal polar domain is a polar domain of Type II.
\end{itemize} In \Cref{fig:polar_domain}, we illustrate polar domains of Types I and II. 

\begin{figure}[h!]
    \centering
    \includegraphics[width=0.85\textwidth]{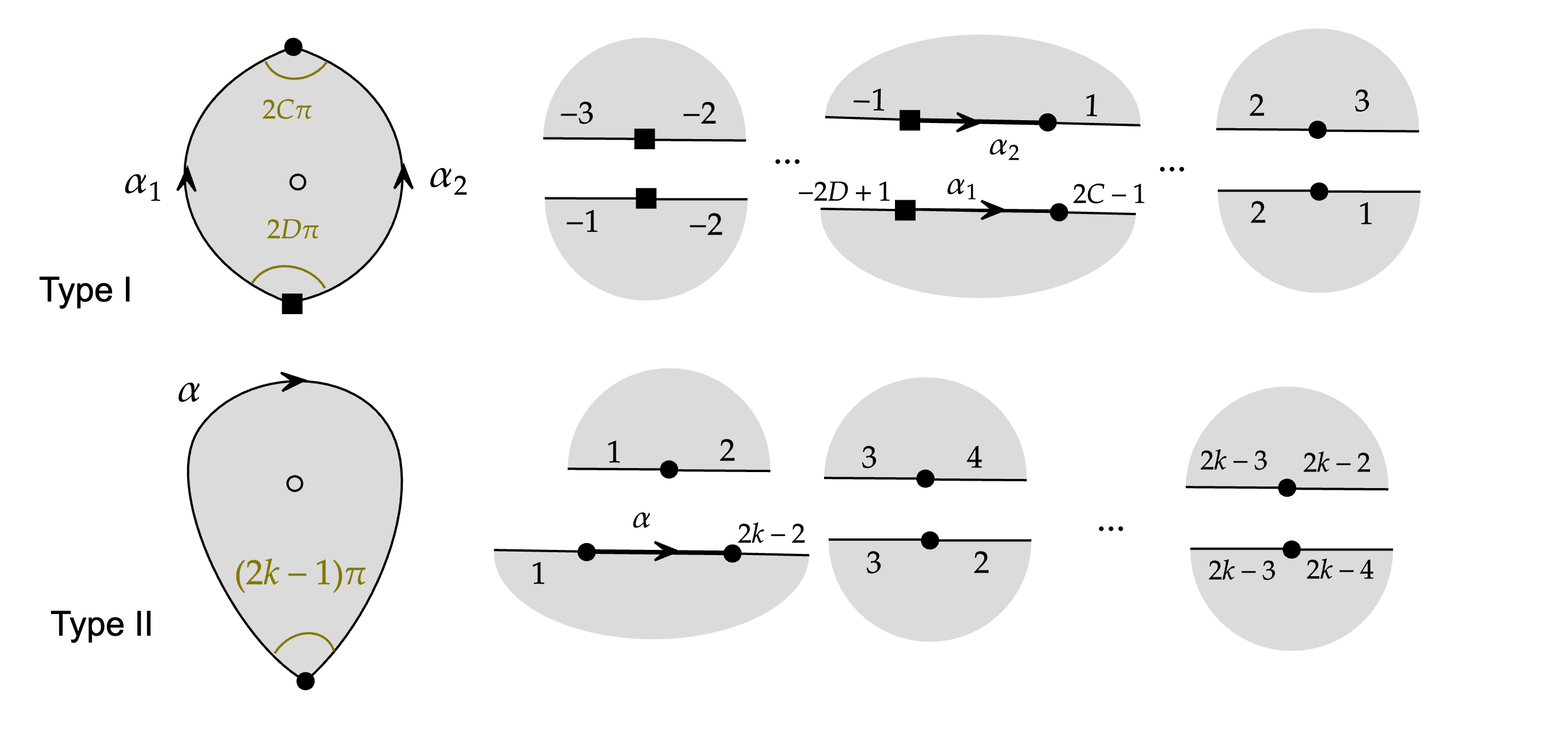}
    \caption{Type I and II polar domains and their representation by basic domains}
    \label{fig:polar_domain}
\end{figure}

The plumbing constructions for cases (i) and (ii) can be described in a fully general manner:

\paragraph{Case (i):} This corresponds to the standard disk surgery known as {\em breaking up a zero}. Given a prong-matching such that the prong $v^+_i$ (respectively $v^+_{i+C-1}$, where the angle between $\alpha_1$ and $\alpha_2$ is $2C\pi$) is the closest outgoing prong in the clockwise direction to the saddle connection $\alpha_1$ (respectively $\alpha_2$) from the bottom level. The resulting flat surface with a conical singularity is shown in \Cref{fig:splittzero}.

\begin{figure}[h!]
    \centering
    \includegraphics[width=0.8\textwidth]{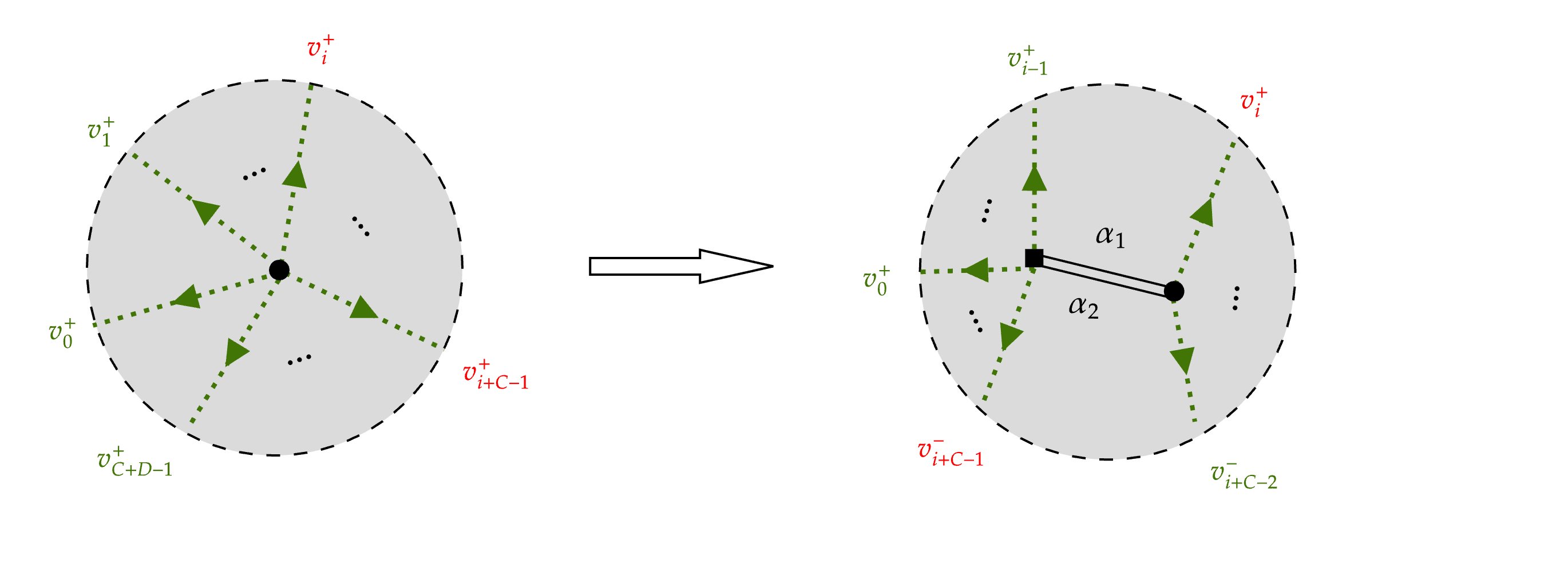}
    \caption{breaking up a zero}
    \label{fig:splittzero}
\end{figure}

\paragraph{Case (ii):} Our construction in this case slightly differs from the plumbing approach in \cite{bainbridge2018compactification}, though the underlying philosophy remains the same. Without loss of generality, we assume that $\alpha_1$ and $\alpha_2$ are not purely imaginary.

Assume a prong-matching such that $v^+_i$ (respectively $w^+_j$) is the closest outgoing (incoming) prong in the clockwise (respectively counter-clockwise) direction to $\alpha_1$ (respectively $\alpha_2$). We begin by constructing a directed path consisting of broken line segments, none of which are parallel to $\alpha_1$ or $\alpha_2$.

The initial segment (respectively the final segment) is sandwiched between $v^+_i, v^+_{i-1}$ (respectively $w^+_j, w^+_{j-1}$). We label the segments such that
\[
\gamma_{\nu,1}, \dots, \gamma_{\nu,n_\nu}
\]
form a maximal sequence of consecutive segments with the same sign of imaginary part. Specifically, $\gamma_{2k+1,\bullet}$ (respectively $\gamma_{2k,\bullet}$) are the segments with positive (respectively negative) imaginary part, and are colored in blue (respectively purple) in \Cref{fig:Plumb2node_1}.

\begin{figure}[h!]
    \centering
    \resizebox{15.8cm}{13.5cm}{\includegraphics[]{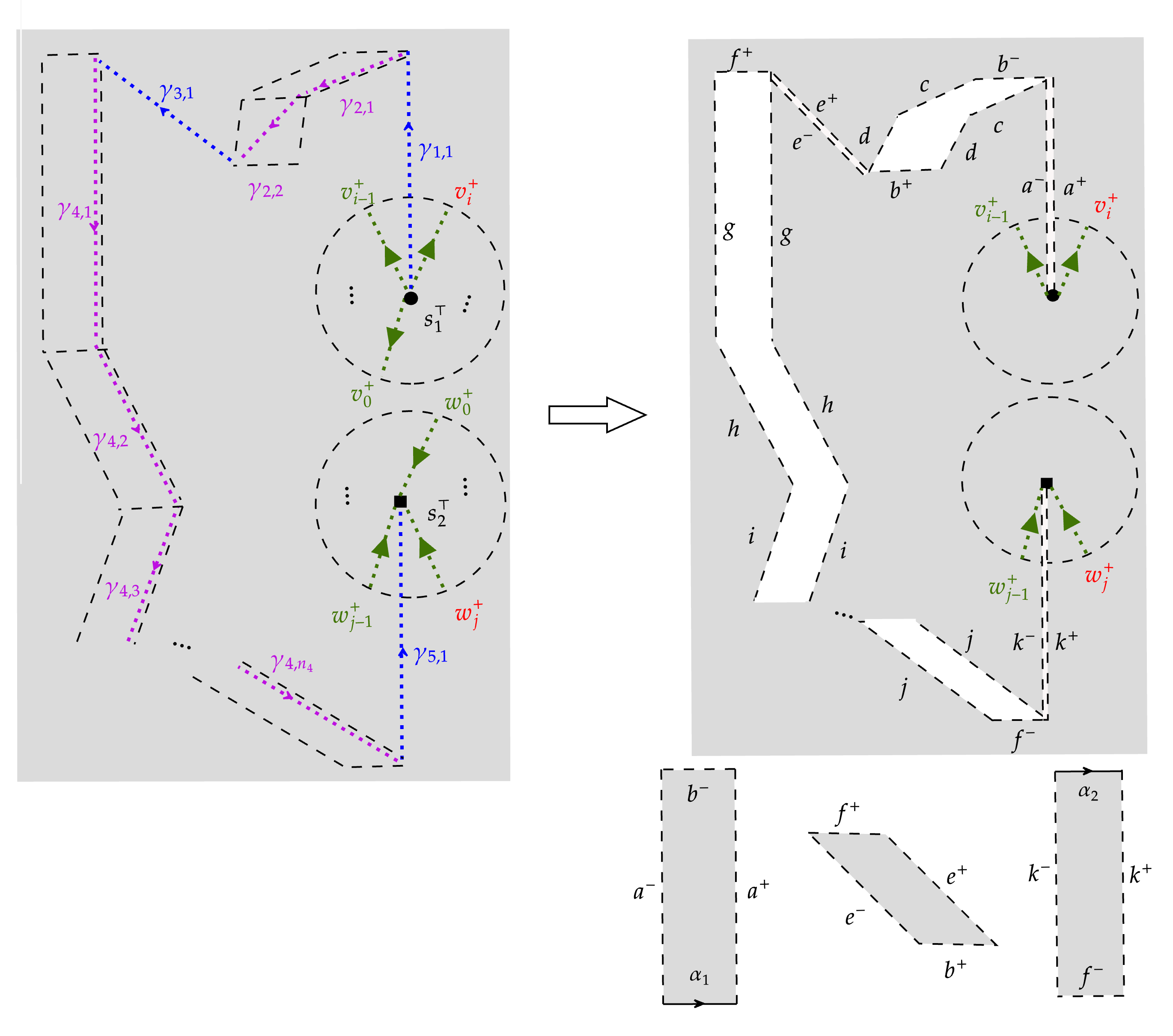}}
    \caption{Plumbing two nodes}
    \label{fig:Plumb2node_1}
\end{figure}

Without loss of generality, we may assume that the segments of the form $\gamma_{2k,n_{2k}}$ are not purely imaginary. We now construct broken strips of width $t = |\alpha_1| = |\alpha_2|$ along paths with negative imaginary part, ensuring that these strips do not intersect the blue segments.

Let
\[
c = \operatorname{sgn}\left( \arg\left( \frac{\gamma_{2k,1}}{\gamma_{2k-1,n_{2k-1}}} \right) \right).
\]
If
\[
\operatorname{sgn}\left( \arg\left( \frac{\gamma_{2k+1,1}}{\gamma_{2k,n_{2k}}} \right) \right) = c,
\]
then for each line $\gamma_{2k,\bullet}$, we take the parallelogram with sides $\gamma_{2k,\bullet}$ and $\gamma_{2k,\bullet} + ct$. Otherwise, for the segment $\gamma_{2k,n_{2k}}$, we take a parallelogram whose diagonal is given by $\gamma_{2k,n_{2k}}$ (see \Cref{fig:Plumb2node_1}). After constructing these strips, we cut along the remaining lines and glue the boundaries using the auxiliary strips. The identification process is depicted in \Cref{fig:Plumb2node_1}.

We can carry out the plumbing more directly in the case where the upper-level flat surface is non-compact. In such situations, the modification of the top-level flat surface becomes more explicit due to its representation via basic domains.

According to the construction in \cite{boissy2015connected}, for each saddle connection in a non-compact flat surface representation, there exists an infinite strip containing it. This allows us to vary the length of such saddle connections freely without disrupting the entire flat surface. Given a prong matching, we modify the upper-level flat surface as follows to preserve the residue condition after plumbing:

\begin{itemize}
    \item[(i)] If the outgoing prongs $v^+_i$ and $v^+_{i+C-2}$ converge to the same pole, no modification is required. Otherwise, we find a directed path $\gamma$, avoiding conical singularities, that connects $v^+_i$ with $v^+_{i+C-2}$. We then increase the length of the saddle connection intersected by $\gamma$ (within the lower basic domains) by $+t$ (see \Cref{fig:modify_case_i}, where the blue saddle connections are adjusted).
    
    \item[(ii)] This case is treated similarly to case (i), except when the nodal zeros $s^\top_1$ and $s^\top_2$ lie on distinct top-level components. In that case, we locate marked poles on both components that belong to the same part $R \in \fR$. We then construct directed paths $\gamma_1$ and $\gamma_2$ connecting $v^+_i$ to one pole of $R$, and another pole of $R$ to $w^+_j$, respectively. We adjust the length of saddle connections intersected with $\gamma_1$ or $\gamma_2$ (within the lower basic domains) by $+t$ (see \Cref{fig:modify_case_ii}).
    
    \item[(iii)] Since the node has non-zero residue, we can again locate marked poles on the top and bottom level components belonging to the same part $R \in \fR$. If the prong $v^+_i$ already converges to a pole in $R$, no modification is needed. Otherwise, we find a directed path $\gamma$ connecting $v^+_i$ to a pole in $R$, and adjust the length of the intersected saddle connection (within the lower basic domains) by $+t$.
\end{itemize}

\begin{figure}[h!]
    \centering
    \resizebox{14cm}{5cm}{\includegraphics[]{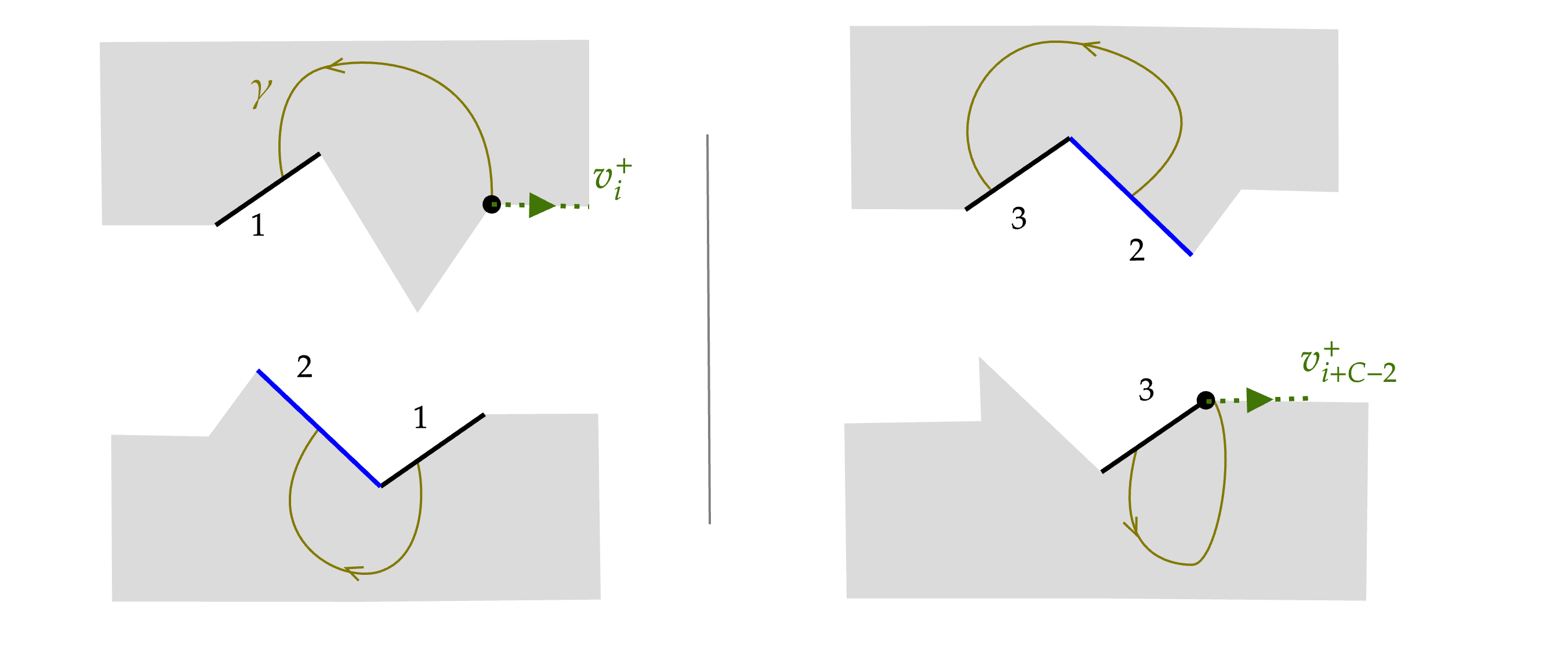}}
    \caption{Modification of top level flat surface case (i)}
    \label{fig:modify_case_i}
\end{figure}

\begin{figure}[h!]
    \centering
    \resizebox{14cm}{5cm}{\includegraphics[]{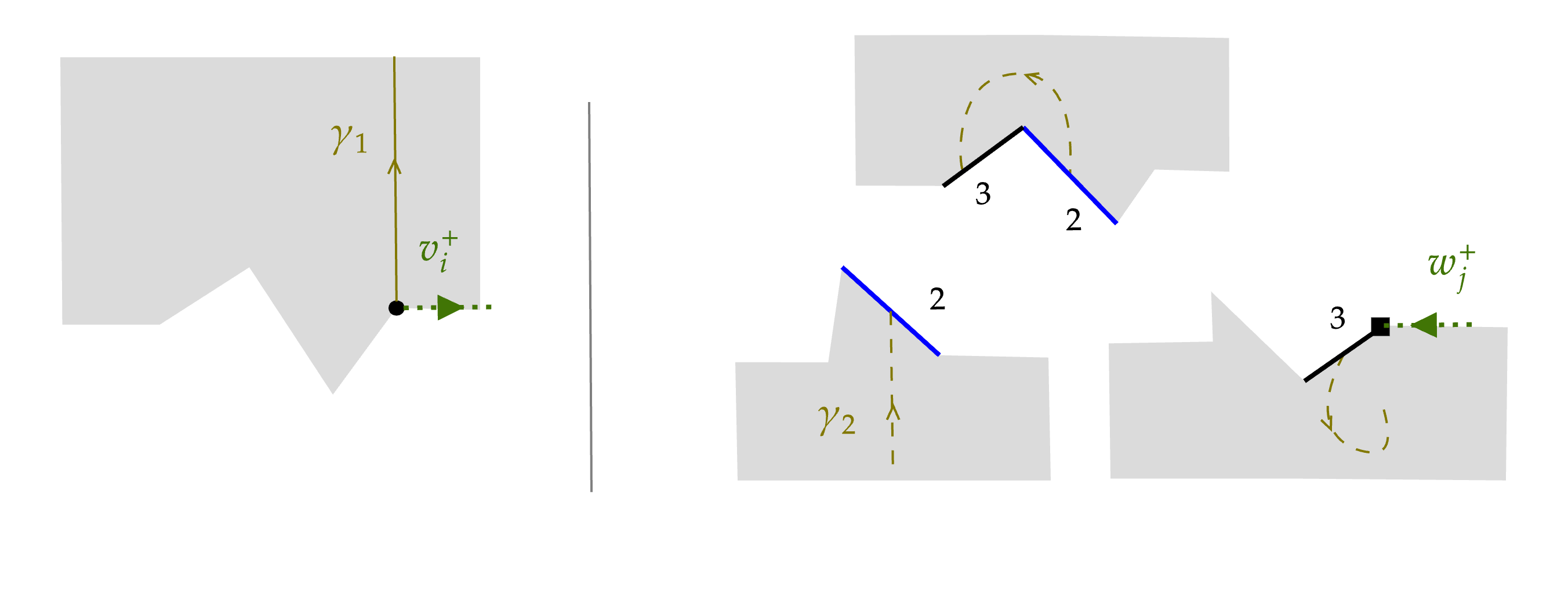}}
    \caption{Modification of top level flat surface case (ii)}
    \label{fig:modify_case_ii}
\end{figure}

After modifying the upper-level flat surface, we append the relevant saddle connections from the bottom level to the basic domains of the differentials on the top level. This final step completes the construction of the plumbed flat surface.

\subsection{Configuration of Saddle Connections and Principal Boundary}

In this subsection, we will recall the definition of principal boundary of strata. This is a boundary divisor obtained by shrinking one collection of parallel saddle connections. In Sections~\ref{Sec:hyperelliptic}--\ref{Sec:Nonhyper}, we will use the principal boundary to apply induction on the dimension of strata. 

The notion of the {\em principal boundary} was first introduced by Eskin, Masur, and Zorich in \cite{eskin2003moduli} for strata of holomorphic differentials. It was defined as the space parameterizing the degenerate limits of flat surfaces in a given stratum obtained by shrinking saddle connections within a fixed homologous class. This boundary is a union of products of lower-dimensional strata of holomorphic differentials, such that the total dimension is one less than that of the original stratum. Although it was called a ``boundary,'' it was not initially regarded as the boundary of any specific compactification of a stratum. In \cite{chen2019principal}, D. Chen and Q. Chen described the limits in the principal boundary using the framework of twisted differentials. In their description, there exists a (projectivized) meromorphic differential that records the relative positions of the collapsed saddle connections. This meromorphic differential lies in a zero-dimensional projectivized stratum.

Similar to the approach in \cite{chen2019principal}, we define in this section the {\em principal boundaries} as specific types of boundary divisors in the multi-scale compactification of a (projectivized) stratum of differentials. 
From now on, we denote by $\overline{\cP}(\mu^\fR)$ the multi-scale compactification of $\cP(\mu^\fR)$, as introduced in \cite{costantini2022chern}.
Before giving the precise definition of principal boundaries, we briefly describe how to approach the (principal) boundary of the multi-scale compactification via degenerations of flat surfaces.

By shrinking saddle connections, we can degenerate a flat surface into simpler flat surfaces. However, in general, it may not be possible to shrink only a single saddle connection. From the period chart of a stratum, it is clear that given a flat surface $(X,\boldsymbol{z} \sqcup \boldsymbol{p},\omega)$ in $\cH(\mu^\fR)$, the $\fR$-homologous saddle connections are {\em entangled}, i.e., these saddle connections are always parallel and have the same length as we deform the flat surface within the stratum.

In this subsection, we explain how to shrink an $\fR$-homologous class of saddle connections so that we approach the principal boundary. Note that an $\fR$-homologous class contains only finitely many saddle connections. We refer to such a collection $\cF = (\alpha_1,\dots,\alpha_k)$ of $k$ saddle connections as a {\em configuration of multiplicity $k$}. Configurations are classified based on the topological properties of their saddle connections. A saddle connection is {\em separating} if cutting along it disconnects the surface. Each saddle connection falls into one of the following categories:
\begin{itemize}
    \item[(I)] Saddle connections with two distinct endpoints.
    \item[(II)] Closed, non-separating saddle connections.
    \item[(III)] Closed, separating saddle connections.
\end{itemize}

To simplify the discussion, we may slightly deform the flat surface so that two saddle connections are parallel if and only if they are $\fR$-homologous.

\begin{proposition} \label{parallel}
For any stratum $\cH(\mu^\fR)$, there exists an open dense subset $W \subset \cH(\mu^\fR)$ such that for any flat surface $(X,\boldsymbol{z} \sqcup \boldsymbol{p},\omega) \in W$, two saddle connections are parallel if and only if they are $\fR$-homologous.
\end{proposition}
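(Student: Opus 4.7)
My plan is to use period coordinates on $\cH(\mu^\fR)$. The \emph{if} direction is immediate: if $\gamma_1$ and $\gamma_2$ are $\fR$-homologous oriented saddle connections, they represent the same class in $H_1(X\setminus\boldsymbol{p},\boldsymbol{z};\ZZ)/K(\fR)$, so $\int_{\gamma_1}\omega = \int_{\gamma_2}\omega$ for every $(X,\omega)\in\cH(\mu^\fR)$, and in particular they are parallel. For the converse, I show that the set
\[
B \coloneqq \{(X,\omega)\in\cH(\mu^\fR) : \text{some two saddle connections are parallel but not }\fR\text{-homologous}\}
\]
is contained in a countable union of proper real-analytic subsets. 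For each unordered pair $\{[c_1],[c_2]\}\subset H_1(X\setminus\boldsymbol{p},\boldsymbol{z};\ZZ)/K(\fR)$ with $[c_1]\neq\pm[c_2]$, consider the locus
\[
Y_{[c_1],[c_2]} \coloneqq \{(X,\omega) : \Ima\bigl(\Per_{\boldsymbol{\alpha}}([c_1])\cdot\overline{\Per_{\boldsymbol{\alpha}}([c_2])}\bigr) = 0\},
\]
which is a real-analytic subset in any local period chart.

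The argument splits on the algebraic relation between the two classes. If $[c_1],[c_2]$ are $\QQ$-linearly independent in $H_1/K(\fR)\otimes\QQ$, the displayed equation is nontrivial on period coordinates, so $Y_{[c_1],[c_2]}$ is a proper real codimension-one subset. If $[c_1]=r[c_2]$ with $r\in\QQ\setminus\{\pm 1\}$, periods are automatically proportional, so I argue instead that $[c_1]$ and $[c_2]$ cannot both be realized as saddle connections on a common $(X,\omega)$. The boundary map $\partial\colon H_1(X\setminus\boldsymbol{p},\boldsymbol{z};\ZZ)/K(\fR)\to H_0(\boldsymbol{z};\ZZ)$ satisfies $\partial[c_1]=r\partial[c_2]$; since the boundary of any saddle-connection class has coefficients in $\{-1,0,1\}$, if either $[c_i]$ is non-closed then $r=\pm 1$, a contradiction. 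If both are closed, project to $H_1(X;\ZZ)$: a closed saddle connection is an embedded simple closed curve, because parallel geodesic arcs cannot cross in any flat chart and a non-zero self-intersection at a zero would violate the no-interior-zero condition. A non-null-homologous simple closed curve on an oriented surface represents a primitive class, so proportionality of primitives forces $r=\pm 1$. In the residual case $[c_1]=\pm[c_2]+\sigma$ where $\sigma$ is a nonzero element of $\ker(H_1/K(\fR)\to H_1(X;\ZZ))$ (a combination of loops around poles), parallelism requires $\int_\sigma\omega = \sum_j 2\pi i\,n_j\res_{p_j}\omega$ to be $\RR$-proportional to $\int_{c_2}\omega$, which is again a proper real codimension-one analytic condition on the residue coordinates.

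Since $H_1/K(\fR)$ is a finitely generated free abelian group, there are only countably many pairs to consider, so $B\subset\bigcup Y_{[c_1],[c_2]}$ lies in a countable union of nowhere dense real-analytic subsets; Baire category gives density of $W\coloneqq\cH(\mu^\fR)\setminus B$. Openness of $W$ follows from the local persistence of saddle connections: at any $(X_0,\omega_0)\in W$, a small period neighborhood $U$ admits a continuous bijection with the saddle connections of $(X_0,\omega_0)$ of period bounded by any fixed $L$, so the bad conditions restricted to $U$ reduce to finitely many closed real-analytic equations, each missing $(X_0,\omega_0)$; shrinking $U$ produces an open neighborhood contained in $W$. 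The main obstacle is the primitivity argument for closed saddle connections, since without it the rational-proportional case could contribute an open, positive-dimensional contribution to $B$ that would destroy density.
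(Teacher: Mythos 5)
The paper itself gives no proof of \Cref{parallel} (it is quoted as the standard Eskin--Masur--Zorich-type genericity fact, adapted to the residue conditions), so there is nothing internal to compare with; your density argument is essentially the standard one. Two points in your write-up are genuine gaps.

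First, the case analysis for $\QQ$-dependent pairs is incomplete. Write the dependence as $q[c_1]=p[c_2]$ with $\gcd(p,q)=1$ and $p\neq\pm q$. Your boundary argument disposes of the non-closed case, and your primitivity argument applies only when the images of the closed saddle connections in $H_1(X;\ZZ)$ are nonzero; it says nothing when both loops are null-homologous in $X$ (separating), and your ``residual case'' $[c_1]=\pm[c_2]+\sigma$ does not cover this situation either (when $\sigma\notin\QQ[c_2]$ that case is already subsumed by $\QQ$-independence, so it adds nothing new). Since parallelism is automatic for dependent pairs, you must rule out simultaneous realization in the missing sub-case, and this needs an argument you have not given. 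One way: a closed saddle connection that is null-homologous in $X$ has class $\pm\sum_{p\in S}\gamma_p$ for the set $S$ of poles on one side, and a congruence $q\,\mathbf{1}_{S_1}\equiv p\,\mathbf{1}_{S_2}\pmod{K(\fR)}$ with $p,q\neq0$, $p\neq\pm q$ forces $S_1\cap S_2$, $S_1\setminus S_2$, $S_2\setminus S_1$ each to be a union of parts of $\fR$, so both periods vanish by the residue conditions, contradicting that a saddle connection has nonzero period. Without this (or a substitute) the dependent case is open.

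Second, the openness step does not work, and no argument of this shape can make it work. Local finiteness of saddle connections of length at most $L$ (which itself needs a compactness argument; the claimed bijection with saddle connections of nearby surfaces fails when a chain of parallel $\fR$-homologous saddle connections through a zero aligns) only controls the bounded-length part of the bad set: nothing prevents surfaces arbitrarily close to $(X_0,\omega_0)$ from carrying a parallel non-$\fR$-homologous pair of very \emph{long} saddle connections, so infinitely many of your loci $Y_{[c_1],[c_2]}$ can meet every neighborhood and the reduction to ``finitely many closed equations'' collapses. In fact the good set need not contain any open set at all: a surface with a completely periodic direction in which two boundary saddle connections of its cylinders have different lengths carries a parallel non-$\fR$-homologous pair, and such surfaces (e.g.\ suitably scaled square-tiled surfaces) are dense in holomorphic strata such as $\cH(2)$, which the statement formally covers, and the same configurations occur densely in many strata with poles of positive genus. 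So the Baire/measure argument you run correctly yields a dense, full-measure (residual) subset on which parallel implies $\fR$-homologous, but it cannot yield an open dense one, and the openness assertion should be treated as a reformulation issue rather than something your proof is missing a trick for; the weaker genericity statement is also all that the paper uses later, since the subsequent arguments only require that a given flat surface can be perturbed slightly so that parallel saddle connections are $\fR$-homologous.
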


This implies that we can use the $\mathrm{GL}^+(2,\mathbb{R})$-action to distinguish the saddle connections in a given configuration $\cF$ from others. More precisely, we define a flow that contracts the saddle connections in a given direction:

\begin{definition}[Contraction Flow]
Let $\alpha, \theta \in S^1$. For $t \in \mathbb{R}^+$, we define the {\em contraction flow} on $\cH(\mu^\fR)$ by:
\[
C_{\alpha,\theta}^t =
\begin{pmatrix}
    \cos\theta & \sin\theta \\
    \cos\alpha & \sin\alpha
\end{pmatrix}
\begin{pmatrix}
    e^{-t} & 0 \\
    0 & 1
\end{pmatrix}
\begin{pmatrix}
    \cos\theta & \sin\theta \\
    \cos\alpha & \sin\alpha
\end{pmatrix}^{-1}
\in \mathrm{GL}^+(2,\mathbb{R}).
\]
The flow $C_{\alpha,\theta}^t$ contracts directions aligned with $\theta$ while preserving lengths in the direction $\alpha$ as $t \to \infty$.
\end{definition}

We apply the contraction flow in the direction of a given configuration $\cF = (\alpha_1, \dots, \alpha_k)$ so that the lengths of these saddle connections become very small compared to those of other saddle connections. It is important to note that the contraction flow does not generally converge, so we stop once the contracted saddle connections are sufficiently short to allow the application of local surgeries on the flat surface.

Cutting along the saddle connections of $\cF$ yields flat subsurfaces with short boundary saddle connections. The decomposition aligns with Sections 9 and 11 of \cite{eskin2003moduli}, adapted to the $\fR$-homologous setting:
\begin{itemize}
    \item[(I)] Each subsurface has two boundary saddle connections, forming slits. Subsurfaces are cyclically ordered and contain entire parts of $\fR$. (See \Cref{fig:config_type_I})
    
    \item[(II)] Each subsurface also has two boundary saddle connections. A unique subsurface $S^*$ contains two disjoint loops; others form slits and are totally ordered. Each contains entire parts of $\fR$. (See \Cref{fig:config_type_II})
    
    \item[(III)] Two subsurfaces have one boundary loop each; others have one slit each. All but the two contain entire parts of $\fR$. A total order exists among the subsurfaces. (See \Cref{fig:config_type_III})
\end{itemize}

Notice that each subsurface of the decomposition will have boundaries that can take one of the following forms: slits, two disjoint closed saddle connections, or a single closed saddle connection. The surgeries required to shrink the boundary saddle connections are simply the reverse of the plumbing construction described in \Cref{sec:Plumb}. 

Now we define the principal boundary in the multi-scale compactification of generalized strata, as it is defined for residueless strata in \cite{lee2023connected}.

\begin{definition}[Principal Boundary]
The codimension-one boundary strata of $\overline{\cH}(\mu^\fR)$ that can be approached by shrinking an $\fR$-homologous configuration $\cF$ of saddle connections on $(X,\boldsymbol{z} \sqcup \boldsymbol{p},\omega) \in \cH(\mu^\fR)$ are called the {\em principal boundary}. It is of Type I, II, or III, corresponding to the type of $\cF$.
\end{definition}

\begin{figure}[h!]
    \centering
    \resizebox{12.5cm}{6.5cm}{\includegraphics[]{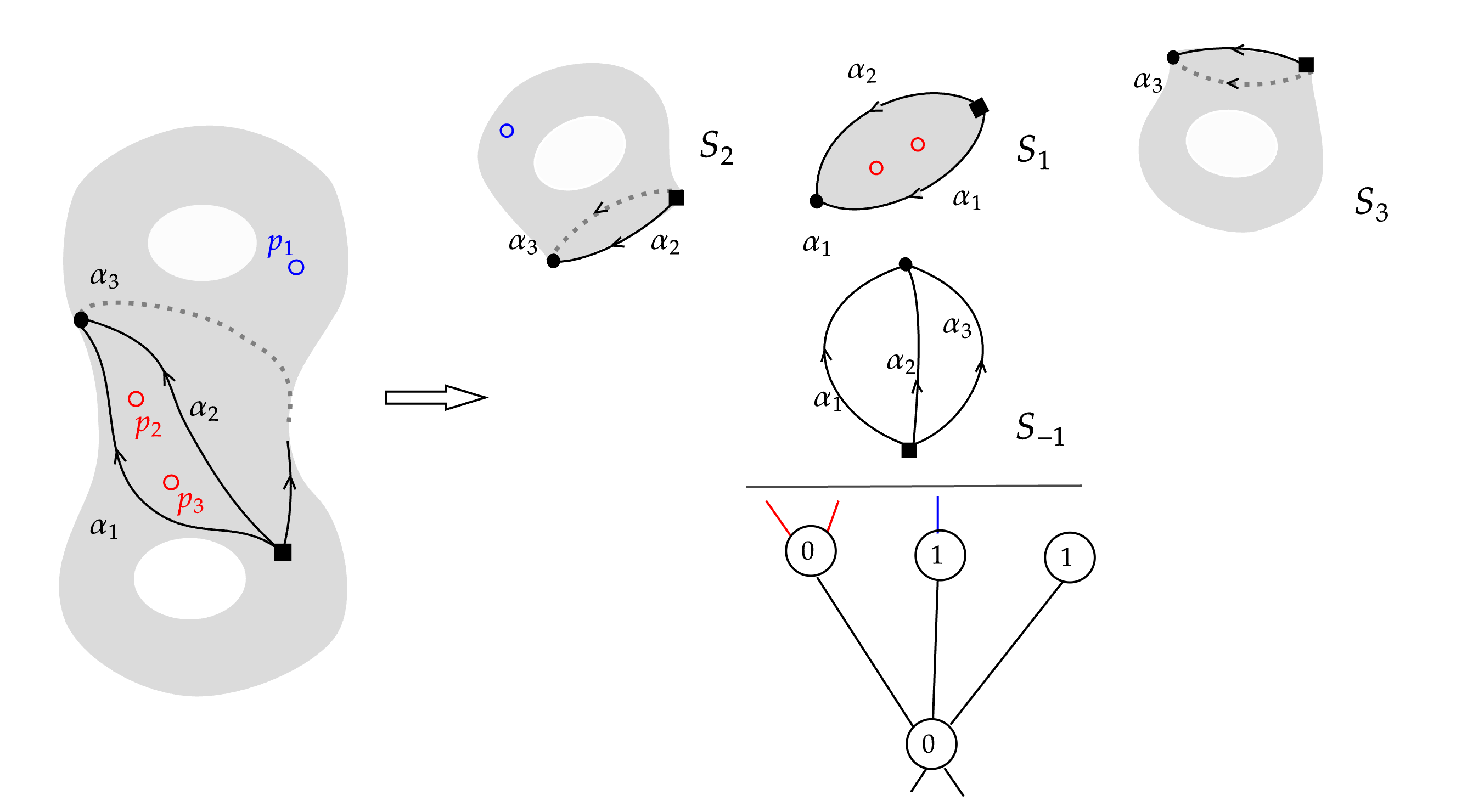}}
    \caption{Example of a Type I configuration of saddle connections}
    \label{fig:config_type_I}
\end{figure}

\begin{figure}[h!]
    \centering
    \resizebox{13.5cm}{7.2cm}{\includegraphics[]{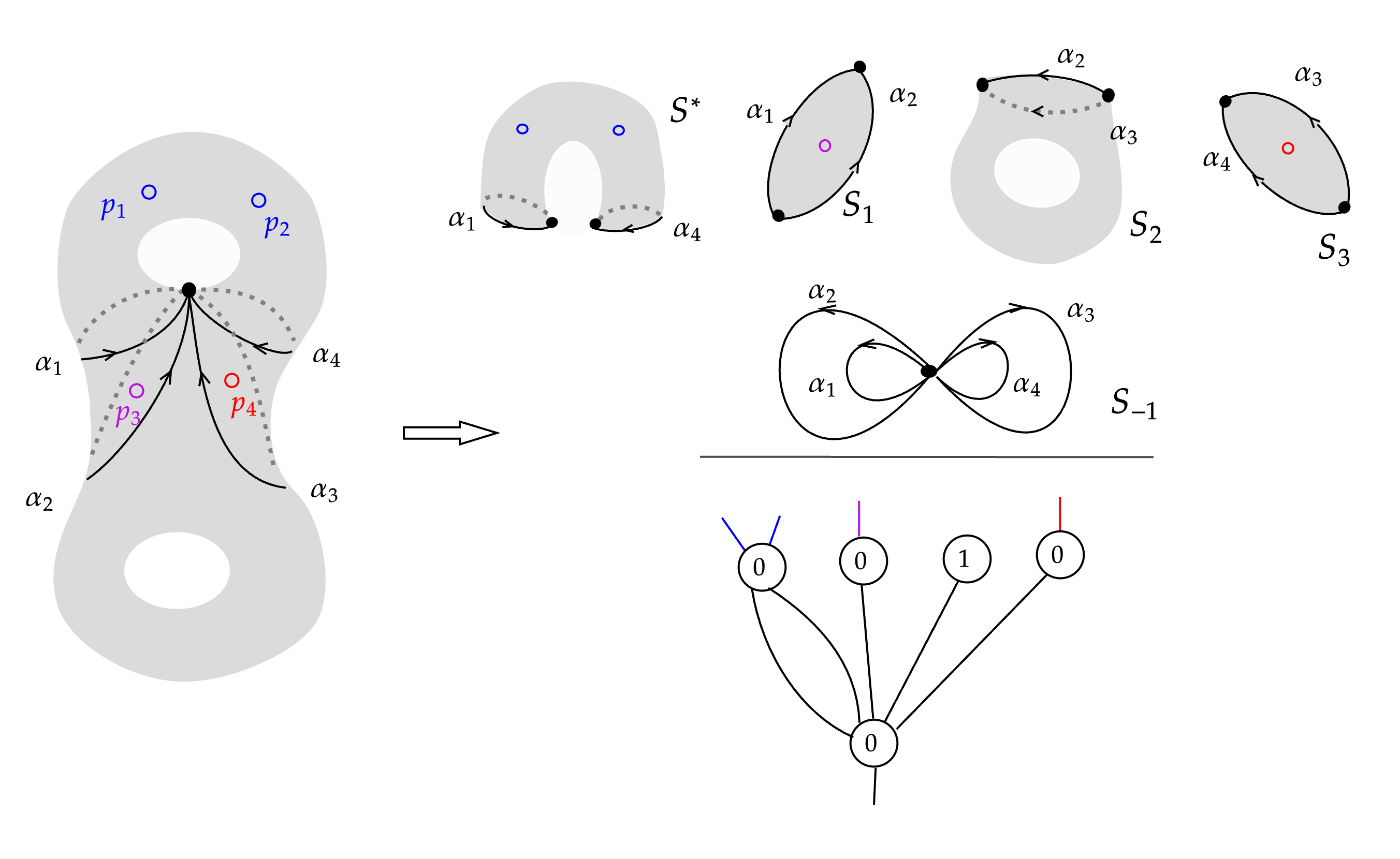}}
    \caption{Example of a Type II configuration of saddle connections}
    \label{fig:config_type_II}
\end{figure}

\begin{figure}[h!]
    \centering
    \resizebox{13.5cm}{7.5cm}{\includegraphics[]{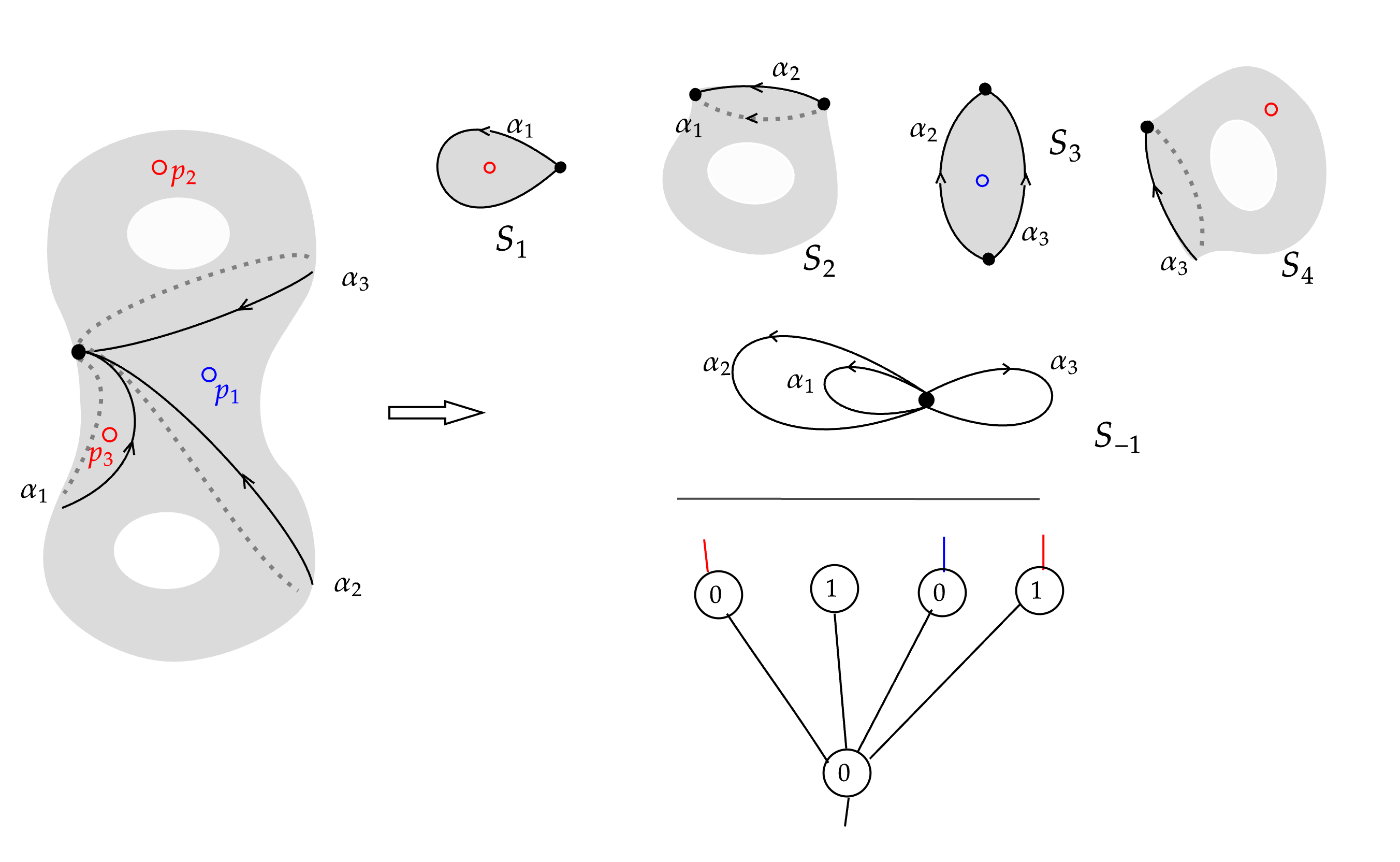}}
    \caption{Example of a Type III configuration of saddle connections}
    \label{fig:config_type_III}
\end{figure}

The configuration forms a ribbon graph $S_{-1}$ (a Type II/III configuration can have an arbitrary number of components) and is always planar. Each cycle corresponds to a boundary of a subsurface. After shrinking the saddle connections to zero, this yields a multi-scale differential
\[
\overline{X} = (X',\boldsymbol{z} \sqcup \boldsymbol{p}, \Gamma, \boldsymbol{\eta}, \boldsymbol{\sigma})
\]
in the boundary of $\overline{\cH}(\mu^\fR)$. The bottom level of $\overline{X}$ consists of differentials of genus zero, as prescribed by $S_{-1}$.

\begin{remark}
If the subsurface $S^*$ in a Type II configuration is a cylinder, we first send its height to infinity. This effectively transforms the configuration into Type III, after which the Type III shrinking process applies.
\end{remark}

After degeneration:
\begin{itemize}
    \item \textbf{Upper level:} Comprises subsurfaces that survive degeneration. Their boundary endpoints become nodal zeros.
    \item \textbf{Lower level:} Formed by filling cycles of $S_{-1}$ with polar domains (Type I or II) or half-infinite cylinders, depending on angles. Each lower-level surface has genus 0.
    \item \textbf{Marked poles:} Polar domains replacing vanishing subsurfaces inherit marked poles; otherwise, poles are nodal.
    \item \textbf{Prong-matching:} Determined by identifying prongs at the zeros of collapsed boundaries with prongs of the filled polar domains of $S_{-1}$.
\end{itemize}

\subsection{Level graphs of principal boundary}

In this subsection, we will describe the level graphs of principal boundary of Type III. The level graphs of Type I and Type II configurations are described in Section 2 of \cite{lee2023connected}. 

In this paper, we first reduce the complexity of flat surfaces by repeatedly shrinking Type I configurations. As a result, we primarily consider Type II or Type III configurations on a single-zero flat surface. The only case we describe principal boundaries of multiple-zero strata is one of the base cases, which is dealt in detail in \Cref{Sec:Bsignature}.

Assume that $\cH(\mu^{\fR})$ is a single-zero stratum with a unique zero $z$ of order $a$. Let $X \in \cH(\mu^{\fR})$ be a general flat surface in the sense of \Cref{parallel}. Suppose that $X$ has a multiplicity-$k$ saddle connection configuration, where the saddle connections $\gamma_1, \dots, \gamma_k$ are labeled in clockwise order around $z$. Assume that the homology class $[\gamma_i]$ is trivial in $H_1(X; \mathbb{Z})$. Then the complement $X \setminus \bigcup_i \gamma_i$ consists of $k+1$ connected regions. Two of these regions are bounded by a single saddle connection, namely $\gamma_1$ and $\gamma_k$, respectively.

We observe that the angle between $\gamma_i$ and $\gamma_{i+1}$ is $2\pi C_i$ on one side and $2\pi D_i$ on the other, for some positive integers $C_i, D_i$. Define ${\bf C} \coloneqq (C_1, \dots, C_k)$ and ${\bf D} \coloneqq (D_1, \dots, D_k)$. The angles at the regions bounded by $\gamma_1$ and $\gamma_k$ are $2\pi Q_1 + \pi$ and $2\pi Q_2 + \pi$, respectively, for some non-negative integers $Q_1, Q_2$. These angles satisfy the relation:$$\sum_i (C_i + D_i) + Q_1 + Q_2 = a.$$ Let $\boldsymbol{p}_i$ denote the set of poles contained in the region between $\gamma_i$ and $\gamma_{i+1}$. Let $\boldsymbol{p}_0$ and $\boldsymbol{p}_k$ denote the sets of poles in the regions bounded by $\gamma_1$ and $\gamma_k$, respectively. Then the decomposition $\boldsymbol{p} = \boldsymbol{p}_0 \sqcup \boldsymbol{p}_1 \sqcup \dots \sqcup \boldsymbol{p}_{k-1} \sqcup \boldsymbol{p}_k$ gives a partition of $\boldsymbol{p}$. For each part $P$ of $\fR$, we must have either $P \subset \boldsymbol{p}_i$ for some $i=1,\dots,k-1$, or $P \subset \boldsymbol{p}_0 \cup \boldsymbol{p}_k$. In other words, $\fR$ is finer than the partition $\boldsymbol{p} = (\boldsymbol{p}_0 \cup \boldsymbol{p}_k) \sqcup \boldsymbol{p}_1 \sqcup \dots \sqcup \boldsymbol{p}_{k-1}. $

For convenience, we denote the full collection of data 
\[
\cF \coloneqq (a, {\bf C}, {\bf D}, Q_1, Q_2, \{ \boldsymbol{p}_i \})
\]
and refer to it as a configuration. We say that the flat surface $X$ {\em has a configuration $\cF$} if there exists a collection of saddle connections on $X$ that forms $\cF$. The combinatorial data of $\cF$ uniquely determines both the level graph of the associated principal boundary and the bottom-level differential obtained by shrinking the configuration.

Given a configuration $\cF$ of Type III, we introduce the {\em configuration graph} $\Gamma(\cF)$ to describe the enhanced level graph of the multi-scale differentials in the principal boundary corresponding to $\cF$.

For each $i = 1, \dots, k - 1$, define the non-negative integer
\[
g_i \coloneqq \frac{1}{2} \left[ C_i + D_i - \sum_{p_j \in \boldsymbol{p}_i} b_j \right].
\]
Also define:
\[
g_0 \coloneqq \frac{1}{2} \left[ Q_1 - \sum_{p_j \in \boldsymbol{p}_0} b_j \right], \qquad
g_k \coloneqq \frac{1}{2} \left[ Q_2 - \sum_{p_j \in \boldsymbol{p}_k} b_j \right].
\]

If $g_i = 0$ and $\boldsymbol{p}_i = \{q_i\}$ for some pole $q_i$, then the corresponding region is isomorphic to the polar domain of $q_i$. If $1 \leq i \leq k - 1$, then $q_i$ must be residueless, and the polar domain is bounded by two saddle connections, $\gamma_i$ and $\gamma_{i+1}$. If $i = 0$ or $i = k$, then $q_i$ must {\em not} be residueless, and the polar domain is bounded by a single saddle connection, $\gamma_1$ or $\gamma_k$, respectively.

Let $J \subset \{0, \dots, k\}$ be the set of indices $i$ such that the region bounded by $\gamma_i$ and $\gamma_{i+1}$ is {\em not} isomorphic to a polar domain. Denote
\[
\boldsymbol{p}_{-1} \coloneqq \{ q_i \mid i \notin J \}.
\]

Suppose that $X$ has a configuration $\cF$ of Type III. All possible local patterns at $z$ resemble those described in the Type II principal boundary, as in \cite[Sec.~3.1]{chen2019principal}. In other words, the regions of $X \setminus \bigcup_i \gamma_i$, listed in clockwise order around $z$, fall into one of the following three types:

\begin{enumerate}[(i)]
  \item A polar domain, followed by surfaces of genus $g_i$ (for each $i \in J$) with figure-eight boundaries, followed by another polar domain.

  \item A polar domain, followed by surfaces of genus $g_i$ (for each $i \in J$) with figure-eight boundaries, followed by a surface of genus $g_k$ with a circular boundary.

  \item A surface of genus $g_0$ with a circular boundary, followed by surfaces of genus $g_i$ (for each $i \in J$) with figure-eight boundaries, followed by a surface of genus $g_k$ with a circular boundary.
\end{enumerate}

We define the {\em configuration graph} $\Gamma(\cF)$ as follows:

\begin{itemize}
    \item The set of vertices is $V(\cF) \coloneqq \{v_{-1}\} \cup \{v_i \mid i \in J\}$.
    \item The set of edges is $E(\cF) \coloneqq \bigcup \{e_i \mid i \in J\}$, where each edge $e_i$ joins $v_{-1}$ and $v_i$.
    \item Each vertex $v_i$ is assigned a collection of half-edges labeled by $\boldsymbol{p}_i$.
    \item To each vertex $v_i$ with $i \in J$, we assign a non-negative integer genus $g_i \geq 0$.
    \item To each edge $e_i$ with $i \in J$, we assign an integer $\kappa_i \coloneqq C_i + D_i - 1 > 0$.
    \item The level function $\ell: V(\cF) \to \{0, -1\}$ is defined by $\ell(v_{-1}) = -1$ and $\ell(v_i) = 0$ for each $i \in J$.
\end{itemize}

The enhanced level graph $\Gamma(\cF)$ is illustrated in \Cref{typeIIIgraph}.

\begin{figure}[ht]
    \centering
    {\includegraphics[width=0.8\textwidth]{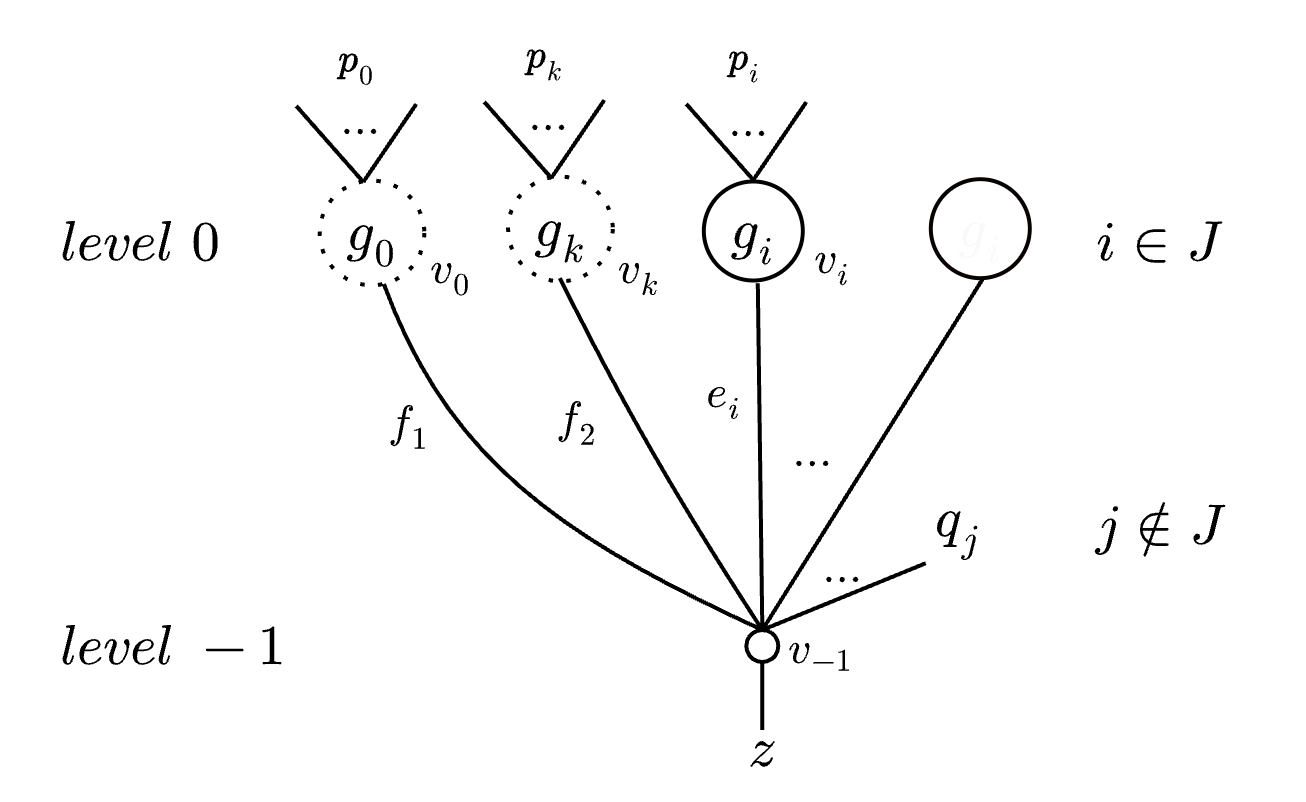}}
    \caption{The graph $\Gamma(\cF)$ for a Type III configuration $\cF$}
    \label{typeIIIgraph}
\end{figure}

The principal boundary $D(\Gamma(\cF))$ of $\overline{\cH}(\mu^{\fR})$ is the subspace of multi-scale differentials compatible with the level graph $\Gamma(\cF)$. For a multi-scale differential $(\overline{X}, \boldsymbol{\eta}) \in D(\Gamma(\cF))$, we denote the irreducible component corresponding to the vertex $v_i$ by $(X_i, \eta_i)$ for each $i \in J \cup \{-1\}$.

\subsection{Existence of principal boundaries of certain Type}

Notice that depending on $\mu^{\fR}$, we can reach the principal boundary of a certain type within any component $\cC$ of a stratum $\overline{\cH}(\mu^{\fR})$. This will be used repeatedly in the induction step to show the accessibility of different components of the stratum via plumbing.

\begin{lemma}\label{lm:univ_prin_bdry}
    Let $\cC$ be any component of a stratum $\cH(\mu^{\fR})$. Then:
    \begin{itemize}
        \item[(I)] If $\cH(\mu^{\fR})$ has more than one zero, then $\overline{\cC}$ contains a Type I principal boundary.
        \item[(II)] If $\cH(\mu^{\fR})$ has a unique zero and $g > 0$, then $\overline{\cC}$ contains a Type II principal boundary.
        \item[(III)] If $\cH(\mu^{\fR})$ any simple pole, then $\overline{\cC}$ contains a Type III principal boundary.
    \end{itemize}
\end{lemma}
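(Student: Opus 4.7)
The plan is, in each case, to produce on some flat surface $(X,\omega)\in\cC$ an $\fR$-homologous configuration of saddle connections of the required combinatorial type, and then to shrink it using the contraction flow $C_{\alpha,\theta}^t$ of \Cref{Sec:PrincipalBoudnary} to approach the associated principal boundary divisor of $\overline{\cC}$. Since membership in a principal boundary of a given type depends only on the combinatorial type of the configuration, it suffices to exhibit such a configuration on a single surface in $\cC$.

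For part (I), choose any $(X,\omega)\in\cC$ and use \Cref{prop:sc_ex} to obtain saddle connections whose classes generate $H_1(X\setminus\boldsymbol{p},\boldsymbol{z};\ZZ)$. With two or more zeros, the relative-boundary map has nonzero image in $\widetilde{H}_0(\boldsymbol{z};\ZZ)$, so some generator $\alpha$ must have two distinct endpoints. The relative boundary descends to $H_1(X\setminus\boldsymbol{p},\boldsymbol{z};\ZZ)/K(\fR)$ since $K(\fR)$ is spanned by closed loops around poles, so every saddle connection $\fR$-homologous to $\alpha$ has the same pair of endpoints, and the $\fR$-homologous class of $\alpha$ therefore forms a Type I configuration.

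For part (II), every saddle connection on $(X,\omega)\in\cC$ is a closed loop at the unique zero $z$. A closed curve on $X$ is separating precisely when it vanishes in $H_1(X;\ZZ)$, and $\ker\bigl(H_1(X\setminus\boldsymbol{p};\ZZ)\to H_1(X;\ZZ)\bigr)$ has rank $n-1$, being spanned by the small loops around the poles modulo their total-sum relation. Since $H_1(X\setminus\boldsymbol{p};\ZZ)$ has rank $2g+n-1$ and $g>0$, the generators from \Cref{prop:sc_ex} cannot all lie in this kernel, yielding a non-separating closed saddle connection $\gamma$. Because $K(\fR)$ is itself contained in that kernel, the property of being non-separating descends to the quotient, so every saddle connection $\fR$-homologous to $\gamma$ is also a non-separating closed loop, and the class forms a Type II configuration.

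For part (III), let $p$ be a simple pole whose polar domain $C_p$ is a half-infinite cylinder; the boundary loop of $C_p$ bounds a disk containing only $p$ and is therefore separating in $X$. By \cite[Lem.~5.15]{Tah} the component $\cC$ contains a flat surface with degenerate core, so the boundary of every polar domain is a cycle in the core graph of saddle connections. By first applying the reductions of (I) and (II) above to collapse other $\fR$-homologous families of saddle connections and iterating, we arrange that the boundary of $C_p$ becomes a single closed saddle connection $\delta$, whose $\fR$-homologous class is then a Type III configuration. The main obstacle is precisely this last simplification: rigorously showing that one can reduce the boundary of $C_p$ to a single saddle connection while remaining in $\cC$ requires a careful induction combining the contraction flow with the plumbing description of \Cref{sec:Plumb}, ensuring at each step that the intermediate degeneration lands in a component for which the same reasoning applies.
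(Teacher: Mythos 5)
Your parts (I) and (II) follow essentially the paper's route: both rest on \Cref{prop:sc_ex}, and your homological bookkeeping (the boundary map to $\widetilde{H}_0(\boldsymbol{z};\ZZ)$ for (I), the rank count for (II), and the observation that having distinct endpoints, respectively being non-separating, descends modulo $K(\fR)$ because $K(\fR)$ consists of loops around poles that die in $H_1(X;\ZZ)$) is a correct expansion of what the paper asserts directly.

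Part (III), however, has a genuine gap, which you yourself flag. The whole content of this case is to produce a surface in $\cC$ on which the boundary of the simple pole's half-infinite cylinder consists of a \emph{single} saddle connection, and your plan to achieve this ``by applying the reductions of (I) and (II) and iterating'' is not an argument. Shrinking a Type I or Type II configuration leaves $\cC$ and lands on a multi-scale boundary point, so any iteration must pass back through plumbing, and you give no reason why the plumbed surface has a simpler cylinder boundary; worse, in the single-zero genus-zero case (central to this paper, e.g.\ the strata of C- and D-signature) every saddle connection is a closed separating loop, so there are no Type I or Type II configurations at all and the proposed reduction cannot even start. The paper's argument is much simpler and purely local: the saddle connections forming the boundary of the half-infinite cylinder of the simple pole are all parallel (to the residue direction), but no two of them can be $\fR$-homologous, so their periods can be varied independently in period coordinates; a small deformation inside the open component $\cC$ makes them pairwise non-parallel, after which the cylinder is bounded by a single closed saddle connection. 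That saddle connection is separating (it cuts off the cylinder containing only the simple pole), hence lies in a Type III configuration, and shrinking its $\fR$-homologous class reaches a Type III principal boundary. Replacing your last paragraph with this perturbation argument closes the gap; the detour through degenerate cores and \cite[Lemma 5.15]{Tah} is not needed.
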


\begin{proof}
    By \Cref{prop:sc_ex}, for any flat surface in $\cC$, there exists a saddle connection of Type I in case (I), or a saddle connection of Type II in case (II). Hence, by shrinking the corresponding $\fR$-homologous class of saddle connections, we can approach the desired principal boundary.

    If $\mu$ contains any $-1$, then there exists a flat surface in $\cC$ that has a closed saddle connection bounding a half-infinite cylinder corresponding to the simple pole. Suppose $(X, \omega) \in \cC$ is such a surface, and the residue of the simple pole can only be realized by multiple parallel saddle connections. We can slightly deform $(X,\omega)$ so that these saddle connections are no longer parallel. This is possible because no two of them can be $\fR$-homologous.

    On the deformed surface, there is a unique saddle connection bounding the simple pole, and this saddle connection is contained in a Type III configuration. Thus, we can approach a Type III principal boundary by shrinking this collection.
\end{proof}
    
\begin{lemma} \label{breakintotwo}
    Any connected component $\cC$ of a positive-dimensional stratum $\cP(\mu^{\fR})$ contains a two-level multi-scale differential $\overline{X} \in \partial \overline{\cC}$ satisfying the following:
    \begin{itemize}
        \item Each level contains a unique irreducible component, and the associated level graph is separable.

        \item If $g > 0$ or $m > 1$, then the nodal pole on the bottom level is forced to be residueless; that is, the top-level component contains only entire parts of poles in $\fR$.

        \item If $m > 1$, then for any given pair of zeroes $z_i, z_j$, the bottom-level component $X_{-1}$ can be chosen to contain both $z_i$ and $z_j$.
    \end{itemize}
\end{lemma}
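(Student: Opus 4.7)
The plan is to split into cases based on $m$ and $g$, combining \Cref{lm:univ_prin_bdry} with the plumbing recipe of \Cref{sec:Plumb}.

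When $m>1$, I would first use \Cref{parallel} to pick a generic $X\in\cC$ so that no two non-$\fR$-homologous saddle connections are parallel. By \Cref{prop:sc_ex}, the saddle connections of $X$ generate the relative homology, so there is at least one Type I saddle connection $\gamma$ joining the prescribed pair $z_i,z_j$, and by genericity $\gamma$ is the unique member of its $\fR$-homology class. Running the contraction flow $C^t_{\alpha,\theta}$ along $\gamma$ and letting $t\to\infty$ produces a boundary point $\overline{X}\in\partial\overline{\cC}$ whose level graph has two vertices joined by a single vertical edge. The bottom component is a $\PP^1$ carrying $z_i,z_j$ together with a single nodal pole of order $a_i+a_j+2$ (automatically residueless, as it is the only pole on a rational curve); the top component of genus $g$ carries every other zero, every original pole, and a nodal zero of order $a_i+a_j$. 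All three bullets follow at once.

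When $m=1$ and $g>0$, Type I is unavailable, so I would instead construct the ``bubble off the zero'' divisor: the bottom is a $\PP^1$ carrying the unique zero $z$ (of order $a$) and a single nodal pole of order $a+2$, while the top is a genus-$g$ component carrying all original poles and a single nodal zero of order $a$. This is a valid codimension-one boundary divisor of $\overline{\cP}(\mu^\fR)$: its graph has one edge and is therefore separable; the nodal pole is residueless as the only pole on the rational bottom; and the top contains every entire part of $\fR$. To place it inside $\overline{\cC}$ for each component $\cC$, I would exploit the self-similar structure of this divisor: its top stratum is isomorphic to the ambient $\cP(\mu^\fR)$ itself, and the plumbing construction of \Cref{sec:Plumb} is a purely local operation preserving the topological invariants (spin parity, hyperellipticity, rotation number) that distinguish components. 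Thus, for each component $\cC$, the divisor component whose top lies in $\cC$ sits in $\partial\overline{\cC}$. When $m=1$ and $g=0$, the residueless and pair-of-zeros conditions are vacuous; positive-dimensionality forces $n\geq r+2\geq 3$, so one can always find an $\fR$-homologous configuration of saddle connections separating the $\PP^1$ into two rational pieces connected by a single node (using \Cref{lm:univ_prin_bdry}(III) when a simple pole is present, or a direct contraction-flow argument otherwise), and shrinking it delivers the desired separable two-level boundary.

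The hardest part will be the $m=1$, $g>0$ case, because the ``bubble off the zero'' divisor is not a principal boundary in the saddle-connection-shrinking sense and so \Cref{lm:univ_prin_bdry} does not apply directly. The key point is to argue, via the local nature of the plumbing recipe and the invariance of the topological invariants under plumbing, that the divisor components are in natural bijection with the components of the ambient stratum, so that every $\cC$ has one such boundary point in its closure. In contrast, the $m>1$ case is the cleanest: the Type I multiplicity-one shrinking delivers all three bullets in one step.
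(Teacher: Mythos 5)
Your $m>1$ case rests on the claim that, after making the surface generic via \Cref{parallel}, the chosen saddle connection $\gamma$ joining $z_i,z_j$ is ``the unique member of its $\fR$-homology class.'' That is not what \Cref{parallel} gives: it only says parallel $\Leftrightarrow$ $\fR$-homologous, and an $\fR$-homology class can (and often must) contain several saddle connections, which are then entangled and shrink together. For instance, in a hyperelliptic component every saddle connection joining the two zeroes comes with its involution partner, which is $\fR$-homologous to it, so multiplicity one is impossible there; more generally, the existence of multiplicity-one saddle connections is exactly \Cref{Thm:ssc}, a hard result proved later (with exceptions) whose proof itself uses \Cref{breakintotwo}, so invoking it here would be circular. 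When the Type~I configuration has multiplicity $k>1$, shrinking it produces a boundary with $k$ top-level components, so your ``all three bullets follow at once'' fails; the missing step (which is how the paper, following \cite[Prop.~4.3]{lee2023connected}, proceeds, and which it carries out explicitly in its $m=1,g=0$ case) is to further degenerate by sending all but one top-level component to a lower level --- possible because each such component contains entire parts of $\fR$ --- and then plumb the lower level transition to merge everything below into a single component.

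The $m=1$, $g>0$ case has a more serious problem: your argument that every component $\cC$ contains the ``bubble off the zero'' divisor in its closure is circular. You justify it by saying plumbing preserves ``the topological invariants \dots that distinguish components,'' but that these invariants distinguish components is precisely the paper's main theorem, not available at this stage; and even granting well-defined invariants, you would still need to show that every component of the stratum actually degenerates to some component of that divisor, which is essentially the statement being proved. The paper avoids this by staying ``bottom-up'': by \Cref{lm:univ_prin_bdry}(II) every component with $m=1$, $g>0$ contains a Type~II principal boundary (shrink an $\fR$-homologous class of closed non-separating saddle connections), and then the same push-down-and-merge argument of \cite[Prop.~4.3]{lee2023connected} yields a two-level differential with one component per level, separable graph, and top level containing only entire parts of $\fR$. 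Your $m=1$, $g=0$ sketch is in the right spirit (the paper finds an innermost saddle connection bounding the polar domain of a non-residueless pole, shrinks, and merges), but as written the ``direct contraction-flow argument otherwise'' is not a proof.
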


\begin{proof}
    First, assume that $m = 1$ and $g = 0$. We will show that any flat surface in $\cC$ has a saddle connection bounding the polar domain of a non-residueless pole. 

    Any saddle connection $\gamma$ joins the unique zero to itself, and $\mathbb{P}^1 \setminus \gamma$ consists of two regions. If one of these regions contains another saddle connection $\gamma'$, we consider the smaller region bounded by $\gamma'$. By repeating this process and using the fact that any genus zero flat surface has finitely many saddle connections, we eventually obtain a saddle connection $\alpha$ that bounds a region containing no other saddle connections. This region must be the polar domain of some non-residueless pole $p$.

    Let $\cF = (\alpha_1, \dots, \alpha_k)$ denote the collection of parallel saddle connections in that direction. By shrinking them, we obtain a multi-scale differential $\overline{X}$ in the Type III principal boundary. Let $J \subset \{0, \dots, k\}$ denote the set of indices $j$ such that the top-level component $X_j$ exists. Since $\alpha_1$ bounds the polar domain of $p$, we know $0 \notin J$. If $|J| = 1$, then we are done. So assume $|J| > 1$. In particular, there exists some $X_j$ with $1 \leq j < k$. In this case, there are no residue constraints relating the poles in $X_j$ to those in the other irreducible components. Thus, we may further degenerate $\overline{X}$ by sending all components except $X_j$ to a lower level. By plumbing the level transition between level $-1$ and level $-2$, we obtain the desired two-level multi-scale differential $\overline{X}$.

    Now, assume that $g > 0$ or $m > 1$. In this case, the component $\cC$ contains principal boundaries of both Type I and Type II. Thus, we may follow the argument in \cite[Proposition 4.3]{lee2023connected} to construct a two-level multi-scale differential $\overline{X}$ with the desired properties.
\end{proof}

\section{One-dimensional strata} \label{Sec:Base}

In this section, we list all types one-dimensional (projectivized) strata that will serve as base cases for the inductive arguments in the upcoming sections. We state the classification results and the existence of a flat surface with multiplicity one saddle connection. The proofs of these results for each type of one-dimensional strata will be given in detail in Sections~\ref{Sec:Bsignature}--\ref{Sec:Dsignature}. The proofs of the main theorems will be given in \Cref{Sec:ssc} and \Cref{Sec:Nonhyper}, assuming the base cases stated in this section. 

Recall that the dimension of a projectivized stratum $\cP(\mu^\fR)$ is equal to $2g+m+n-|\fR|-2$. We need to sort out all cases such that $2g+m+n-|\fR|=3$.

\begin{proposition}\label{prop:list_one_dim_strat}
    The signatures $\mu^\fR$ of one-dimensional (projectivized) strata can be classified as the following list:

\begin{itemize}
    \item A-signature : The tuple $\mu$ is an integral partition of $-2$. It has three non-negative entries while $\fR$ consists of $n$ singletons, i.e. $\mu^\fR = (a_1,a_2,a_3\mid-b_1\mid\dots\mid-b_n)$ with $a_1\leq a_2\leq a_3$.
    \item B-signature : The tuple $\mu$ is an integral partition of $-2$. It has only two non-negative entries, while $\fR$ consists of one pair and $(n-2)$ singletons, i.e. $\mu^\fR=(a_1,a_2\mid-e_1,-e_2\mid-b_1\mid\dots\mid-b_{n-2})$ with $a_1\leq a_2$ and $e_1\leq e_2$.
    \item C-signature : The tuple $\mu$ is an integral partition of $-2$. It has only one non-negative entries, while $\fR$ consists of one 3-tuple and $(n-3)$ singletons, i.e. $\mu^\fR = (a\mid-e_1,-e_2,-e_3\mid-b_1\mid\dots\mid-b_{n-3})$ with $e_1\leq e_2\leq e_3$.
    \item D-signature : The tuple $\mu$ is an integral partition of $-2$. It has only one non-negative entries, while $\fR$ consists of two pairs and $(n-4)$ singletons, i.e. $\mu^\fR = (a\mid-e_1,-e_2\mid-e_3,-e_4\mid-b_1\mid\dots\mid-b_{n-4})$ with $e_1\leq e_2$ and $e_3\leq e_4$.   
    \item E-signature : The tuple $\mu$ is an integral partition of $0$. It has one non-negative entry while $\fR$ consists of $n$ singletons, i.e. $\mu^\fR = (a\mid-b_1\mid\dots\mid-b_n)$.
\end{itemize}
\end{proposition}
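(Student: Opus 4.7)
The plan is to convert the question into a short combinatorial case analysis. Using the dimension formula $\dim \cP(\mu^\fR) = 2g + m + n - r - 2$ (with $r = |\fR|$) recalled in the introduction, together with the divisor relation $\sum_i a_i - \sum_j b_j = 2g - 2$, the condition $\dim \cP(\mu^\fR) = 1$ becomes the single equation
\[
2g + m + (n - r) \;=\; 3, \qquad n - r \;=\; \sum_{R \in \fR}(|R| - 1) \;\geq\; 0.
\]

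First I would dispose of the easy constraints: $g \geq 2$ is impossible (since $2g \geq 4 > 3$), and $m = 0$ is also impossible. Indeed, $m = 0$ forces $\sum b_j = 2 - 2g$, hence $g \leq 1$; but $g = 1$ then requires $n = 0$, contradicting $n - r = 1$, and $g = 0$ requires $n \leq 2$, contradicting $n - r = 3$. So we may assume $g \in \{0, 1\}$ and $m \geq 1$; combined with the displayed equation this forces $m \leq 3$ when $g = 0$ and $m \leq 1$ when $g = 1$.

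Next, I would enumerate the remaining solutions of $2g + m + (n - r) = 3$ by splitting on $(g, m)$ and reading off the shape of $\fR$ from the value of $n - r$:
\begin{itemize}
    \item $(g, m) = (0, 3)$: $n - r = 0$, so every part of $\fR$ is a singleton, giving the A-signature.
    \item $(g, m) = (0, 2)$: $n - r = 1$, so $\fR$ has exactly one pair together with singletons, giving the B-signature.
    \item $(g, m) = (0, 1)$: $n - r = 2$, and the only ways to write $2 = \sum(|R| - 1)$ with positive summands are a single $2$ (one triple) or $1 + 1$ (two pairs), producing the C- and D-signatures respectively.
    \item $(g, m) = (1, 1)$: $n - r = 0$, so $\fR$ consists of singletons, giving the E-signature.
\end{itemize}
The assertion that $\mu$ is an integral partition of $-2$ (resp.\ $0$) in each case is immediate from $\sum_i a_i - \sum_j b_j = 2g - 2$ with $g = 0$ (resp.\ $g = 1$).

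The statement carries no analytic content, and I expect no real obstacle. The only place that demands a moment's attention is the enumeration in the case $(g, m) = (0, 1)$, where the two admissible shapes of $\fR$ (one triple versus two pairs) are exactly what produces the bifurcation between the C- and D-signatures; everything else is a bookkeeping exercise around the dimension formula.
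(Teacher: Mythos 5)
Your proposal is correct and follows essentially the same route as the paper: the paper also reduces the claim to solving $2g+m+n-|\fR|=3$ via the dimension formula and then reads off the possible shapes of $(g,m,\fR)$, which is exactly your enumeration (your explicit exclusion of $m=0$ and $g\geq 2$ is just the bookkeeping the paper leaves implicit).
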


The strata of A-signature may serve as the base cases for proving \Cref{conj:genuszeroresidueless}, connectedness of genus $0$ residueless strata. However, this case will not be considered in the present paper. 

The connected components of strata with E-signature have already been classified in \cite{lee2023connected}, as base cases therein. Therefore, in the present paper, we will take care of the strata of B, C, and D-signatures.

The boundary points of one-dimensional strata correspond either to multi-scale differentials with two levels and no horizontal edge, or with one level and one horizontal edge. In either case, each level is described by a zero-dimensional stratum.

\begin{proposition} \label{Baseboundary}
    Every boundary point of a one-dimensional strata $\cP(\mu^\fR)$ is a principal boundary.
\end{proposition}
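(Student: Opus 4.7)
The plan is to compare the description of boundary points of $\overline{\cP}(\mu^\fR)$ via enhanced level graphs with the description of Type I, II, and III principal boundaries via $\fR$-homologous saddle connection configurations, and verify that in the one-dimensional case every possible boundary level graph arises from some principal boundary.

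First, I would analyze the structure of the possible boundary level graphs. Since $\dim\cP(\mu^\fR)=1$, any boundary point lies in a codimension-one stratum of $\overline{\cP}(\mu^\fR)$, corresponding to an enhanced level graph $\Gamma$ that either (a) has two levels and no horizontal edge, or (b) has a single level and exactly one horizontal edge. The dimension constraint forces, in case (a), each level component to be a zero-dimensional projectivized stratum, and in case (b) the unique level with the horizontal node removed to be zero-dimensional. By \Cref{zerodim1} and \Cref{zerodim2}, such zero-dimensional strata consist of genus-zero $\PP^1$-differentials equipped with a distinguished collection of parallel saddle connections that form the boundary of the collapsed polar domains.

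Next, I would run the plumbing construction of \Cref{sec:Plumb} on such a multi-scale differential $\overline X$. Each node of $\Gamma$ falls into one of the three cases (i)--(iii) described there, and plumbing produces a nearby smooth flat surface $(X,\omega)\in\cP(\mu^\fR)$ in which the parallel saddle connections on the bottom level of $\overline X$ appear as short parallel saddle connections on $(X,\omega)$. Since plumbing is residue-preserving by construction, this entire family of short saddle connections is locked together: contracting them simultaneously yields $\overline X$, and hence they constitute an $\fR$-homologous configuration in the sense of \Cref{Sec:PrincipalBoudnary}. The topological type of this configuration, namely whether the saddle connections join distinct zeroes, form non-separating loops, or form separating loops, matches exactly one of Types I, II, or III, so $\overline X$ is a principal boundary.

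The main obstacle will be the combinatorial bookkeeping for the C- and D-signature cases, where one or two residue parts of size greater than one participate in nontrivial $\fR$-GRCs. One must verify that each admissible distribution of these parts across the two levels yields a configuration graph of the form described in and around \Cref{typeIIIgraph} (or its Type I/II analogues), and rule out ``exotic'' level graphs, for instance those with more than two levels, or those in which a single level contains both zeroes together with a deeply nested pattern of residue parts, by showing that under the one-dimensional constraint the dimensions on each level cannot be simultaneously non-negative unless $\Gamma$ is one of the graphs already enumerated.
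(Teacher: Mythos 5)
Your proposal is correct and follows essentially the same route as the paper: the paper's proof likewise observes that each level of a boundary point (two levels with no horizontal edge, or one level with one horizontal edge) is described by a zero-dimensional stratum, so the bottom level carries exactly one collection of parallel saddle connections, and shrinking that single collection realizes the boundary point, which is precisely the definition of a principal boundary. Your extra discussion of plumbing and of ruling out exotic level graphs is just a more explicit unpacking of the same dimension-count argument the paper uses.
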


\begin{proof}
    Let $\overline{X}$ be a multi-scale differential in the boundary of $\cP(\mu^\fR)$. Then the bottom level is contained in a zero-dimensional stratum. Therefore, there are exactly one collection of parallel saddle connections in the bottom level. Thus we can conclude that $\overline{X}$ is obtained by shrinking this one collection of parallel saddle connections, which means $\overline{X}$ is contained in the principal boundary. 
\end{proof}

We will classify by combinatorial data the plumbed flat surfaces near different principal boundaries and show the deformation from one to another. The deformations are combinations of prong-matching rotations and rearrangements of principal level structure. 

\subsection{Strata of B-signature} 

Now we present the classification results of connected components of strata of B-signature. The proofs will be presented in \Cref{Sec:Bsignature}. Since there are only two zeroes and $\fR$ only has parts with one or two elements, $\mu^{\fR}$ may have ramification profiles. First, we give the classification of hyperelliptic components of strata of B-signature. The proof will be given in \Cref{subsec:Dhyper}.

\begin{proposition} \label{Prop:Bhyper}
    For each ramification profile $\Sigma$ of $\mu^{\fR}$ of B-signature, there exists a unique hyperelliptic component $\cC_{\Sigma}$. 
\end{proposition}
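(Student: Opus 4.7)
The plan is to prove existence and uniqueness of $\cC_{\Sigma}$ via an explicit coordinate construction on $\PP^1$. Since $g=0$ forces $X=\PP^1$, I normalize the hyperelliptic involution to $\sigma(w)=-w$, whose fixed points are $\{0,\infty\}$. The $\fR$-pair $(p_{e_1},p_{e_2})$ has size two and neither of its elements is a singleton of $\fR$, so the third bullet of the definition of ramification profile forces $\Sigma$ to swap $p_{e_1}$ and $p_{e_2}$ (whence $e_1=e_2$). Every other pole is a singleton of $\fR$, and $\Sigma$ may either fix it (at most two such, necessarily of even order and residueless) or pair it with another singleton of equal order. For existence, I place the two zeros (of common order $a=a_1=a_2$) at a $\sigma$-swapped pair $\{\pm w_0\}$, the $\fR$-pair at a $\sigma$-swapped pair $\{\pm w_{\ast}\}$, the $\Sigma$-fixed singletons at a subset of $\{0,\infty\}$, and each $\Sigma$-swapped singleton pair at a further $\sigma$-symmetric pair. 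The differential $\omega$ is determined up to scalar by this prescribed $\sigma$-invariant divisor; since all fixed marked points have even prescribed order, the function $f$ in $\omega=f(w)\,dw$ is an even rational function, and $\sigma^{\ast}\omega=-\omega$ follows automatically.

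The residue conditions reduce cleanly. The $\fR$-pair identity $\res_{p_{e_1}}\omega+\res_{p_{e_2}}\omega=0$ and the vanishing of $\res\omega$ at each $\Sigma$-fixed singleton pole are both immediate from $\sigma^{\ast}\omega=-\omega$. At each $\Sigma$-swapped singleton pair $\{q,-q\}$ of residueless poles the two residues are already opposite, so the two residueless conditions collapse to a single complex equation on the position $q$. A direct parameter count shows that the moduli of admissible $\sigma$-symmetric divisors, modulo the residual rescaling $w\mapsto cw$, has exactly one complex dimension more than the number of residue equations; therefore the hyperelliptic locus $\mathcal{L}_{\Sigma}\subset\cP(\mu^{\fR})$ has the same dimension as $\cP(\mu^{\fR})$ itself, and its non-emptiness follows from a degree/continuity argument applied to the polynomial residue map.

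For uniqueness I would show that $\mathcal{L}_{\Sigma}$ is irreducible, and hence is a single one-dimensional component of $\cP(\mu^{\fR})$. The cleanest route I see is to descend to the quotient $\PP^1=X/\sigma$: the pushforward of $\omega^2$ identifies $\mathcal{L}_{\Sigma}$ with a stratum of meromorphic quadratic differentials on $\PP^1$ whose singularity orders are prescribed by $\mu$ and by the branching behaviour of $\sigma$ over $\{0,\infty\}$, subject to the residue conditions induced from $\fR$. On $\PP^1$ this parameter space is manifestly rational, and connectedness can be verified by an explicit rational parametrization of the one-parameter family. The main obstacle is precisely ruling out extra irreducible components of the residue locus that dimension counting cannot by itself exclude; the $\omega^2$-picture is the tool I expect to make this manageable, in parallel with the analogous C- and D-signature cases treated in the next sections.
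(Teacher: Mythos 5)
Your reduction of the problem is reasonable, and several of the preliminary steps are sound: normalizing $\sigma(w)=-w$, observing that $\Sigma$ must swap the non-residueless pair (so $e_1=e_2$) and the two zeroes, and noting that $\sigma^{\ast}\omega=-\omega$ is automatic once the divisor is $\sigma$-invariant with even order at both fixed points. The dimension count ($2+s$ parameters, one rescaling, $s$ residue equations for the $s$ swapped residueless pairs) correctly predicts that the locus $\mathcal{L}_\Sigma$ is one-dimensional, i.e.\ of full dimension in $\cP(\mu^{\fR})$.

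However, the heart of the proposition is uniqueness, and there your argument has a genuine gap that you yourself flag as ``the main obstacle'': you assert that the locus of admissible $\sigma$-symmetric configurations (equivalently, the residue-constrained stratum of quadratic differentials on the quotient $\PP^1$) is irreducible because the ambient parameter space is rational and the family is one-dimensional. Rationality of the ambient space does not help: $\mathcal{L}_\Sigma$ is the vanishing locus of $s$ polynomial residue equations inside that space, and such a locus can perfectly well have several one-dimensional irreducible components, each of which would then be a separate hyperelliptic component with the same profile. Dimension counting and a degree/continuity argument can give non-emptiness, but cannot exclude extra components; indeed, deciding connectedness of exactly such residue-constrained genus-zero loci is the substance of this paper (and in nearby signatures the analogous loci are genuinely disconnected, distinguished by index or spin), so it cannot be dismissed as ``manifest.'' The ``explicit rational parametrization of the one-parameter family'' would have to be produced, and producing it is essentially equivalent to the statement being proved. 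The paper instead proves uniqueness by working on the boundary of the one-dimensional stratum: it characterizes combinatorially which Type~I boundaries $X^B_{\I}(\ell,\tau,\mathbf{C})$ lie in hyperelliptic components (\Cref{TypeBTypeIhyper}) and then connects any two such boundaries with the same ramification profile by explicit sequences of the equatorial-arc moves $U$ and $R$ (moving the fixed pole and non-fixed poles between levels, normalizing the angles $C_i$, and interchanging conjugate poles), while existence comes from the quadratic-differential construction already given in \Cref{Sec:hyperelliptic}. A secondary, smaller omission: you only consider hyperelliptic surfaces whose involution swaps the two zeroes; to conclude uniqueness of the component one must also rule out (or separately account for) components all of whose members are hyperelliptic with both zeroes fixed — in the paper this is handled by the dimension argument of \Cref{Sec:hyperelliptic}, but your proposal does not address it.
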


For some special cases, we have the following

\begin{proposition} \label{Prop:Bspecial}
    Suppose that $a_1=a_2$ and $e_1=e_2$. The stratum $\cP(\mu^{\fR})$ of B-signature satisfies the following:
    
\end{proposition}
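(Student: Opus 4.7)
The plan is to leverage the extra $\ZZ/2$ symmetry present when $a_1=a_2$ and $e_1=e_2$, which enlarges both the set of ramification profiles of $\mu^\fR$ (hence the number of hyperelliptic components by \Cref{Prop:Bhyper}) and the set of combinatorial identifications among boundary points of $\overline{\cP}(\mu^\fR)$. My first step is to enumerate the ramification profiles $\Sigma$ under this symmetry: because $a_1=a_2$, the involution $\Sigma$ may swap the two zeros rather than fix them, and because $e_1=e_2$, either pairing of the two poles of order $e_i$ is admissible; each such profile contributes a distinct hyperelliptic component.

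Next, to address the non-hyperelliptic side, I would exploit that $\cP(\mu^\fR)$ is one-dimensional, so $\overline{\cP}(\mu^\fR)$ is a compact Riemann surface whose boundary points are enumerated by \Cref{Baseboundary}: they are all principal-boundary multi-scale differentials whose two levels each lie in a zero-dimensional stratum, described combinatorially by \Cref{zerodim1} and \Cref{zerodim2} in terms of a cyclic or linear ordering $\tau$ on the poles and an integer tuple of interior angles. I would list all admissible boundary data, modulo the identifications induced by $a_1=a_2$, $e_1=e_2$, and the level rotation action on prong matchings recalled in \Cref{subsec:ms}, then group them according to the topological invariants (hyperelliptic ramification profile, and where applicable spin parity or rotation number).

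The substantive step is to decide which boundary points belong to the same connected component of $\overline{\cP}(\mu^\fR)$ by constructing explicit degeneration--undegeneration sequences, namely combinations of prong-matching rotations and principal-level rearrangements, between boundary data that share the same topological invariants. Depending on the arithmetic of $a_1$, $e_1$, and the $b_i$, all boundary data may collapse into hyperelliptic classes, in which case the stratum has no non-hyperelliptic component; otherwise a single extra orbit survives, yielding a unique non-hyperelliptic component. The main obstacle is bookkeeping the prong-matching modulo the level rotation group $\ZZ^L$: a full rotation permutes labels on the lower-level differential, and one must show that every such permutation can be absorbed by a compatible change of principal-level rearrangement. Small or degenerate values of $a_1$, $e_1$, or the $b_i$ are precisely where this absorption fails, producing the itemized subcases that the proposition enumerates.
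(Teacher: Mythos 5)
There is a genuine gap: your text is a program rather than a proof. The decisive assertion --- ``depending on the arithmetic \dots all boundary data may collapse into hyperelliptic classes \dots otherwise a single extra orbit survives'' --- is exactly the content of the proposition and is never derived; enumerating ramification profiles and proposing to connect boundary points by degeneration--undegeneration sequences is the general strategy already used for \Cref{Prop:Bpair} and \Cref{Prop:Bnonhyper}, but it does not by itself decide which special strata have \emph{no} non-hyperelliptic component. (Note also that the statement as printed carries no itemized list, so a proof must first pin the claim down; from the proof given in \Cref{sec:Bhyp} the intended content is that for $\cP(a,a\mid-2\mid\dots\mid-2\mid-e^2)$ every component is hyperelliptic, and that $\cP(b,b\mid-2b\mid-1^2)$ and $\cP(b,b\mid-b\mid-b\mid-1^2)$ have no non-hyperelliptic component of index $b$.)

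The paper's argument is short and of a different nature, and does not require connecting any boundary points. Take an arbitrary connected component $\cC$ and a Type I boundary $X^B_{\I}(\ell,\tau,\mathbf{C})\in\partial\cC$ (which exists by \Cref{lm:univ_prin_bdry}). In the special numerics the angle data are \emph{forced} to be symmetric: if all $b_i=2$ then $C_i=D_i=1$ for every $i$, and if $n-2=1$, $e_1=e_2=1$ and the index is $b_1/2$, then $C_1\equiv b_1/2\pmod{b_1}$ forces $C_1=D_1=b_1/2$. By the characterization of hyperelliptic Type I boundaries (\Cref{TypeBTypeIhyper}, using $a_1=a_2$, $e_1=e_2$) this boundary is compatible with a hyperelliptic involution, hence $\cC$ is a hyperelliptic component. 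Your proposal misses this forcing mechanism, and your diagnosis that the exceptional subcases come from a failure to ``absorb'' level rotations in the prong-matching bookkeeping is not where the exceptions actually originate: they come from the forced equality of the tuple $\mathbf{C}$ with its reverse, i.e.\ forced compatibility with the involution, independently of any rotation analysis.
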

The proofs of the two propositions above can be found in \Cref{sec:Bhyp}. For the general cases, we have the following two propositions on the classification of non-hyperelliptic components, corresponding to the cases $(e_1,e_2)=(1,1)$ or $(e_1,e_2)\neq(1,1)$.

\begin{proposition} \label{Prop:Bpair}
    Assume $(e_1,e_2)=(1,1)$. For each $1\leq I\leq \delta\coloneqq \gcd(a_1,a_2,\{b_i\})$. The stratum $\cP(\mu^{\fR})$ of B-signature has a unique non-hyperelliptic component with index $I$, except the following special cases:
    \begin{itemize}
        \item $\cP(n-2,n-2\mid -2\mid\dots\mid-2\mid-1^2)$ has {\em no} non-hyperelliptic component. 
        \item $\cP(b,b\mid -2b\mid -1^2)$ has {\em no} non-hyperelliptic component with index $b$.
        \item $\cP(b,b\mid -b\mid -b|-1^2)$ has {\em no} non-hyperelliptic component with index $b$. 
    \end{itemize}
\end{proposition}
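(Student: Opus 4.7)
Since a one-dimensional projectivized stratum has multi-scale compactification $\overline{\cP}(\mu^\fR)$ a (possibly disconnected) compact Riemann surface, its connected components coincide with those of $\cP(\mu^\fR)$ and are determined combinatorially by the boundary. By \Cref{Baseboundary}, every boundary point is a principal boundary, and by \Cref{zerodim1}--\Cref{zerodim2} each level is a zero-dimensional stratum described by discrete data $(\tau,\mathbf{C})$ plus a prong-matching $u_e\in\ZZ/\kappa_e\ZZ$ on each vertical edge. The first step is to enumerate all admissible principal boundary configurations for $\mu^\fR=(a_1,a_2\mid-1,-1\mid-b_1\mid\dots\mid-b_{n-2})$. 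These split into two-level configurations (with the two zeroes either separated or kept together, and the pair of simple poles either forced to the top level via the $\fR$-GRCs or joined into a cylinder node) and one-level configurations with a horizontal edge, each carrying a finite set of prong-matching choices.

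Next, for each boundary point I would plumb via \Cref{sec:Plumb} to obtain a nearby flat surface and compute its index $I\equiv\operatorname{Ind}\beta\pmod{\delta}$ in the sense of \Cref{Def:index}. Concretely, the two simple poles assemble into a half-infinite-cylinder pair whose core curve becomes the $\alpha$-cycle of the glued genus-one surface, while a representative of the $\beta$-cycle can be traced explicitly through the bottom-level saddle connections; its turning number is an explicit affine function of $\mathbf{C}$ and the prong-matching integer modulo $\delta$. This assigns to each boundary point a class in $\{1,\dots,\delta\}$, which is constant on each component of $\overline{\cP}(\mu^\fR)$; this already gives the uniqueness part, once we verify that every boundary point of a given index $I$ lies in one and the same connected component.

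The central step is therefore a connectedness argument: I would link any two boundary points of equal index by a path in $\overline{\cP}(\mu^\fR)$, realized as a finite sequence of moves of two types. A \emph{prong-matching rotation} shifts $u_e$ by one unit, tracing an arc through the interior to a neighboring principal boundary of the same combinatorial type; a \emph{rearrangement of the level structure} (e.g.\ swapping two adjacent residueless poles in the cyclic order on a level, or changing which zero lies above the other) traces an arc to a differently-shaped principal boundary. A direct combinatorial check shows each move preserves the index formula from the previous step, and a Rubik's-cube--style argument as advertised in \Cref{Sec:Intro} shows that the moves are transitive on the fibers of the index map. The three exceptional cases are then isolated by direct inspection: in $(n{-}2,n{-}2\mid-2\mid\dots\mid-2\mid-1,-1)$ every boundary combinatorics lies in a hyperelliptic component by \Cref{Prop:Bhyper}; in $(b,b\mid-2b\mid-1,-1)$ and $(b,b\mid-b\mid-b\mid-1,-1)$, the unique boundary point of index $b$ is hyperelliptic, so no non-hyperelliptic component of that index survives.

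The main obstacle I anticipate is step three. Enumerating enough degeneration--undegeneration moves to demonstrate transitivity on each index fiber is delicate, since the combinatorial space of boundary points grows with $n$ and some moves are unavailable at low dimension or when $a_1=a_2$ forces a hyperelliptic symmetry. The proof in \Cref{Sec:Bsignature} will need to organize the boundary combinatorics carefully enough to exhibit explicit move sequences in every non-exceptional case, using the projective structure on one-dimensional strata from \Cref{Sec:equatorial} to make these arcs concrete.
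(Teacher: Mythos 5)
Your outline matches the paper's strategy at the level of plan—index as a locally constant invariant computed from boundary combinatorics, and connectedness of each index fiber established by navigating the equatorial net with the rotation $R$ and the transformation $U$—but it does not contain the proof. The entire content of the proposition lies in the step you defer: you assert that ``a Rubik's-cube--style argument shows that the moves are transitive on the fibers of the index map,'' and then explicitly flag this as the anticipated obstacle. In the paper this is not a routine check: it is the reduction of every non-hyperelliptic component of index $I$ to a single standard boundary point, namely \Cref{prop:Bpair_standard}, which in turn rests on \Cref{cor:IIIb_perfect_1st_tail_empty_2nd_tail}, \Cref{lm:B3aexist}, \Cref{cor:IIIa_nonhyp_top_level_add_max}, \Cref{prop:IIIa_nonhyp_empty_2nd_tail_top_level_modification}, and above all \Cref{lm:IIIa_nonhyp_top_level_modification} with its auxiliary lemmas. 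The moves do \emph{not} act transitively in a uniform way: when $a_1=a_2$, when many $b_i$ equal $2$ or $3$, or when only two or four poles sit on the top level, the needed prong-matchings or angle rearrangements are unavailable, and the paper must isolate and separately resolve the exceptional configurations (a1)--(c) of \Cref{lm:IIIa_nonhyp_top_level_modification}, \Cref{lm:TypeB_IIIa_2pole_top_level_C2=1}, \Cref{lm:TypeB_IIIa_4pole_top_level_ind=1or2}, and \Cref{lm:IIIa_proper_prong_matching_for_double_pole}. Without producing (or at least specifying the mechanism of) this standard-form reduction, your argument establishes only that the index is well defined on components, not that it is a complete invariant.

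Two smaller points. First, the proposition also asserts existence of a component for each residue class $I$; your plan should note that plumbing the standard boundary point of index $I$ (and checking it is non-hyperelliptic via \Cref{TypeBTypeIhyper}) supplies this. Second, your description of the exceptional cases is slightly off: in $\cP(b,b\mid-2b\mid-1^2)$ and $\cP(b,b\mid-b\mid-b\mid-1^2)$ it is not that ``the unique boundary point of index $b$ is hyperelliptic,'' but that \emph{every} Type~I boundary realizing index $b$ is forced to have the symmetric angle data $C_i=D_i$ of \Cref{TypeBTypeIhyper}, so any component of that index is hyperelliptic; the same criterion, not a case-by-case inspection of finitely many points, handles $\cP(n-2,n-2\mid-2\mid\dots\mid-2\mid-1^2)$.
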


\begin{proof}
    By \Cref{prop:Bpair_standard}, we can always find Type IIIa boundary point of certain form on a non-hyperelliptic (connected) component corresponding to index $I$. Then by \Cref{cor:IIIb_perfect_1st_tail_empty_2nd_tail}, we can conclude that all the boundaries of such form are path-connected.
\end{proof}

\begin{proposition} \label{Prop:Bnonhyper}
    Suppose that $(e_1,e_2)\neq (1,1)$. The stratum $\cP(\mu^{\fR})$ of B-signature has a unique non-hyperelliptic component, except the special stratum $\cP(n-3-e,n-3-e\mid -2\mid \dots\mid -2\mid -e^2)$ that has {\em no} non-hyperelliptic component. 
\end{proposition}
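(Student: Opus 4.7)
The plan is to mirror the strategy applied to \Cref{Prop:Bpair}: first enumerate the boundary points of $\overline{\cP}(\mu^{\fR})$, identify those coming from hyperelliptic components via \Cref{Prop:Bhyper}, and then show that any two non-hyperelliptic boundary points can be joined by a degeneration--undegeneration sequence inside the one-dimensional stratum.

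First I would enumerate the boundary. By \Cref{Baseboundary}, every boundary point is a principal boundary, and since $g=0$ and $(e_1,e_2)\neq(1,1)$ the relevant configurations are Type I (one horizontal edge) or Type III (one vertical edge with each level a zero-dimensional stratum). Each level is then described by \Cref{zerodim1} or \Cref{zerodim2}, giving a purely combinatorial parametrization of boundary points by an order $\tau$ on the poles together with a vector $\mathbf{C}$ of angular data. Among these I would identify the hyperelliptic boundary points: by \Cref{Prop:Bhyper} a boundary point lies in a hyperelliptic component exactly when its combinatorial data is fixed by an involution coming from a ramification profile $\Sigma$ of $\mu^{\fR}$.

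Next I would construct two basic deformations between non-hyperelliptic boundary points: prong-matching rotations, which shift the entries of $\mathbf{C}$ across adjacent positions, and level-swaps, which exchange which of the two zeros lies on the top level. These are analogous to the Rubik's-cube moves used in the proof of \Cref{Prop:Bpair}. Iterating them, I would reduce every non-hyperelliptic $(\tau,\mathbf{C})$-datum to a single standard form, yielding a unique non-hyperelliptic component in the generic case. For the exceptional stratum $\cP(n-3-e,n-3-e\mid -2\mid \dots\mid -2\mid -e^2)$ I would argue separately that every boundary point is hyperelliptic. The full symmetry $a_1=a_2$, $e_1=e_2=e$, and $b_1=\dots=b_{n-2}=2$ forces every combinatorial datum $(\tau,\mathbf{C})$ to admit an involution swapping the two zeros of equal order and pairing the residueless double poles; this involution yields a ramification profile, so every boundary point, and hence every flat surface, is hyperelliptic.

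The main obstacle will be the second step: verifying that prong-rotation and level-swap moves generate the full equivalence relation on non-hyperelliptic combinatorial data. This is most delicate in the partially symmetric regimes where $a_1=a_2$ or $e_1=e_2$ but not both, since such partial symmetries could in principle produce spurious extra components. One must check case by case that each apparent extra component either collapses into the hyperelliptic locus classified by \Cref{Prop:Bhyper} or coincides with the unique non-hyperelliptic standard form, and that the separating borderline is exactly the exceptional stratum above.
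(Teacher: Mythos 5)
Your outline does follow the same global strategy as the paper (reduce every non-hyperelliptic component to a single standard boundary point by moves on the equatorial net, and treat the fully symmetric stratum separately via the hyperellipticity criterion of \Cref{TypeBTypeIhyper}), but as written it has a genuine gap: the entire mathematical content of the statement is the step you defer, namely that your two move types connect all non-hyperelliptic boundary data. In the paper this is not a routine verification. One cannot normalize purely at the level of Type~I data $(\ell,\tau,\mathbf{C})$ with ``prong rotations and level swaps'': the actual reduction first produces a Type~I boundary with a quantitative constraint $C< e_2+\max(\sum C_{\tau(i)},\sum D_{\tau(i)})$ (\Cref{lm:typeB_nonhyp_char}), then converts it into Type~IIIb/IIIc boundaries with prescribed bottom-level data (\Cref{lm:IIIc_push_pole_standard_to_bot}, \Cref{cor:IIIb_perfect_1st_tail_empty_2nd_tail}), pushes poles between levels (\Cref{lm:typeB_IIIb_e2big_reduce_top_level}), and only then reaches the standard Type~IIIb boundary of \Cref{prop:e2_high_standard_boundary}, whose uniqueness gives the proposition. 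Along the way the connectivity of the moves fails in several specific low-complexity configurations (see the exceptional cases (a1)--(c) of \Cref{lm:IIIa_nonhyp_top_level_modification} and the separate treatments in \Cref{lm:TypeB_IIIa_2pole_top_level_C2=1} and \Cref{lm:TypeB_IIIa_4pole_top_level_ind=1or2}), so the worry you raise about ``spurious extra components'' is real and is resolved only by a lengthy case analysis, not by a generation argument that can be asserted.

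A second, smaller point: in the exceptional stratum your argument ``every combinatorial datum admits an involution, hence every boundary point, hence every flat surface, is hyperelliptic'' elides the passage from a symmetric boundary point to hyperellipticity of the adjacent smooth surfaces. A boundary point lies in the closure of several components (one per prong-matching/equatorial arc), and the hyperelliptic involution must be compatible with the chosen prong-matching for the plumbed surfaces to be hyperelliptic. In the paper this is exactly what \Cref{TypeBTypeIhyper} together with \Cref{Cor:hyperextension} supplies, and in the all-double-pole case the forced equalities $C_i=D_i=1$ (plus $a_1=a_2$, $e_1=e_2$, which give $C=D$) make every smoothing hyperelliptic; you should state and use that compatibility rather than conclude directly from the symmetry of $(\tau,\mathbf{C})$.
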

\begin{proof}
    This is a consequence of \Cref{prop:e2_high_standard_boundary}.
\end{proof}

As corollaries of the above results, we have the following three statements. 

\begin{corollary} \label{Cor:Bresidueless}
    If $e_1,e_2>1$ and $a_1,a_2\geq n-1$, then any non-hyperelliptic connected component $\cC$ of the stratum $\cP(\mu^{\fR})$ of B-signature contains a residueless flat surface. Furthermore, if there exists some pole of order higher than two, then $\cC$ contains a non-hyperelliptic residueless flat surface. 
\end{corollary}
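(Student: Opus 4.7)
The plan is to combine \Cref{Prop:Bnonhyper} with the explicit description of zero-dimensional strata in \Cref{zerodim1}. Under the hypothesis $e_1, e_2 > 1$, \Cref{Prop:Bnonhyper} provides a unique non-hyperelliptic component $\cC \subset \cP(\mu^\fR)$, except in the listed exceptional stratum, where the corollary holds vacuously. Henceforth assume $\cC$ exists; the task is to exhibit a residueless flat surface inside $\cC$, and for part (b), to ensure it is itself non-hyperelliptic as a flat surface.

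First I show the residueless sub-stratum
\[
\cP\bigl(a_1, a_2 \mid -e_1 \mid -e_2 \mid -b_1 \mid \dots \mid -b_{n-2}\bigr),
\]
obtained by refining the pair $\{-e_1, -e_2\}$ into two singletons, is non-empty. By \Cref{zerodim1}, its points are the surfaces $Z_1(\tau, \mathbf{C})$, where $\tau$ is a cyclic order on the $n$ poles and $\mathbf{C} = (C_1, \dots, C_n)$ satisfies $1 \le C_i \le \mathrm{ord}_i - 1$ together with $\sum_i C_i = a_1 + 1$. The achievable values of $\sum_i C_i$ are exactly the interval $[n,\, a_1 + a_2 + 2 - n]$, and the hypothesis $a_1, a_2 \ge n-1$ is precisely what places $a_1 + 1$ inside this interval. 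Admissible pairs $(\tau, \mathbf{C})$ therefore exist.

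I next produce such a $Z_1(\tau, \mathbf{C})$ lying in $\cC$. Since every hyperelliptic component consists entirely of hyperelliptic surfaces (by definition), any non-hyperelliptic flat surface in $\cP(\mu^\fR)$ belongs to a non-hyperelliptic component, and hence to the unique $\cC$. It therefore suffices to exhibit a non-hyperelliptic residueless $Z_1(\tau, \mathbf{C})$. A hyperelliptic involution on a genus-$0$ flat surface with two zeros either (a) swaps the two zeros, which forces $a_1 = a_2$ together with a palindromic symmetry of $(\tau, \mathbf{C})$ compatible with $\fR$; or (b) fixes both zeros, which by the local equation $\sigma^\ast\omega = -\omega$ forces $a_1, a_2$ to be even and imposes a bisecting-reflection symmetry of $(\tau, \mathbf{C})$ also compatible with $\fR$. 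If $a_1 \ne a_2$, case (a) is excluded, and a generic choice of $\tau$ breaks (b). If $a_1 = a_2$ and at least one pole has order $\ge 3$, the corresponding coordinate of $\mathbf{C}$ admits at least two values, and an asymmetric choice destroys (a) and (b) simultaneously. The remaining case, $a_1 = a_2$ with every pole of order $2$, forces $\mathbf{C} = (1, \dots, 1)$ and $a_1 = a_2 = n-1$, which is exactly the excluded exceptional stratum. This settles part (a); part (b) is then immediate, since the hypothesis that some pole has order greater than two places us in the constructive cases above.

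The main obstacle is the precise translation of geometric hyperellipticity into combinatorial symmetries of $(\tau, \mathbf{C})$, particularly for the fix-both-zeros involutions of case (b). I will rely on the characterization of hyperelliptic $Z_1(\tau, \mathbf{C})$-surfaces carried out in \Cref{Sec:hyperelliptic}, together with the $\fR$-compatibility constraints imposed by the pair $\{-e_1, -e_2\}$, to verify that an asymmetric choice of $(\tau, \mathbf{C})$ really eliminates every admissible involution.
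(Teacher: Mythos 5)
Your reduction to the residueless zero-dimensional stratum is fine: by \Cref{zerodim1} the values of $\sum_i C_i$ for $Z_1(\tau,\mathbf{C})$ fill exactly the interval $[n,\,a_1+a_2+2-n]$, so $a_1,a_2\geq n-1$ gives non-emptiness, and the observation that a non-hyperelliptic flat surface automatically lies in the unique non-hyperelliptic component of \Cref{Prop:Bnonhyper} is correct. The gap is in the central constructive claim that whenever $a_1=a_2$ and some pole has order at least $3$, an ``asymmetric'' choice of $\mathbf{C}$ destroys every involution. This is false, because the constraint $\sum_i C_i=a_1+1$ can itself force the symmetry. Take $\cP(4,4\mid-2,-4\mid-4)$: here $C_{q_1}=1$ and $C_{q_2}+C_{p_1}=4$, which is precisely the palindromic condition $C_{q_2}=D_{p_1}$ for the involution swapping $z_1,z_2$, swapping $q_2\leftrightarrow p_1$ and fixing the double pole; since all remaining poles are double (hence interchangeable), every cyclic order admits the required order-reversing matching, so \emph{every} residueless flat surface in this stratum is hyperelliptic as a flat surface. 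Yet this stratum is not the exceptional stratum of \Cref{Prop:Bnonhyper} (that one requires all residueless poles to satisfy $b_i=2$ with $e_1=e_2$, not ``every pole of order two''), so a unique non-hyperelliptic component exists and your construction is supposed to apply — but it cannot produce what it promises. The same forced symmetry occurs whenever exactly two poles have order $\geq 3$, of equal order, all other poles are double and $a_1=a_2$. So both the key step and your identification of the residual bad case with the excluded exceptional stratum are incorrect.

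This also reveals a structural divergence from the paper: your strategy tries to prove, in all cases, the stronger statement that a residueless surface exists which is non-hyperelliptic as a flat surface, which is more than the first assertion requires and, as above, more than is true. The paper instead splits on whether $\mu^{\fR}$ admits a ramification profile: if it does not, \Cref{Thm:mainhyper} gives that there are no hyperelliptic components at all, so \emph{any} residueless surface — even a hyperelliptic one, as in the example above, whose involution swaps a paired pole with a singleton and hence is incompatible with $\fR$ — already lies in the unique non-hyperelliptic component; only when a ramification profile exists (forcing $a_1=a_2$, $e_1=e_2$, and, since $\cC$ exists, some $b_i>2$) does one need to break symmetry, and there the only involutions to exclude are those inducing the ramification profile, a much weaker condition than full non-hyperellipticity. (Your case (b) analysis is also slightly off: an involution of $\mathbb{P}^1$ fixing both zeros has no further fixed points, so no pole can be fixed and the relevant combinatorial symmetry is a half-turn of the angle data rather than a reflection; this part is repairable, but the forced-symmetry issue above is not without adopting the paper's dichotomy or restricting the asymmetric-choice claim to configurations where it actually holds.)
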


\begin{proof}
    The residueless locus $\mathbb{P}\cR(\mu)\subset \cP(\mu^{\fR})$ is nonempty if and only if $e_1,e_2>1$ and $a_1,a_2\geq n-1$. If $\cP(\mu^{\fR})$ has a ramification profile $\Sigma$, then $a_1=a_2$ and $e_1=e_2$. We can easily construct a residueless flat surface with hyperelliptic involution $\sigma$ by breaking up a zero from a hyperelliptic flat surface in $\cR(a_1+a_2,-b_1,\dots, -b_n)$. 

    If $\cP(\mu^{\fR})$ has a (unique) non-hyperelliptic component, then $a_1\neq a_2$, $e_1\neq e_2$ or $b_i\neq 2$ for some $i$. Since $a_1,a_2\geq n-1$, there exists a residueless flat surface in $\mu^{\fR}$. If there exists no ramification profile of $\mu^{\fR}$ and the flat surface is automatically contained in the non-hyperelliptic component. So assume that there is a ramification profile $\Sigma$. In particular, $a_1=a_2$, $e_1=e_2$ and $b_i>2$ for some $i$. If $p_i$ is fixed by $\Sigma$, then $b_i$ is even and thus $b_i\geq 4$.  
\end{proof}

\begin{corollary} \label{Cor:Bssc}
    Let $\cC$ be a non-hyperelliptic component of $\cP(\mu^{\fR})$ of B-signature. Then $\cC$ contains a flat surface with a multiplicity one saddle connection joining two distinct zeroes, except in the following cases:
    
    \begin{itemize}
        \item $e_1=e_2$, $a_1=a_2$ and all $b_i=2$.
        \item $(e_1,e_2)=(1,1)$ and all $b_i=2$ and the index of $\cC$ is $n-1\pmod{2}$. 
    \end{itemize}

    Furthermore, if $\mu^{\fR}\neq (a_1,a_2\mid-2\mid\dots\mid-2\mid-b^2)$, then this flat surface can be chosen so that by shrinking this saddle connection, we obtain $\overline{X}$ with non-hyperelliptic top level component $X_0$. 
\end{corollary}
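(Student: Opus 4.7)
My plan is to reduce the claim to exhibiting a Type I principal boundary point of multiplicity one in the closure $\partial \overline{\cC}$. Since B-signature strata are one-dimensional by \Cref{prop:list_one_dim_strat}, \Cref{Baseboundary} tells us every boundary point is a principal boundary, and a multiplicity-one Type I boundary is precisely the limit of shrinking a single saddle connection between $z_1$ and $z_2$. By the plumbing construction of \Cref{sec:Plumb}, unplumbing such a boundary point yields a flat surface in $\cC$ carrying a multiplicity-one saddle connection joining the two distinct zeroes. So the task becomes: for each non-hyperelliptic component $\cC$ outside the listed exceptions, exhibit a Type I boundary point in $\partial \overline{\cC}$.

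To produce such a boundary point, I would invoke the detailed classification of principal boundaries of one-dimensional B-signature strata developed in \Cref{Sec:Bsignature}. A Type I boundary point of multiplicity one is described by a two-level multi-scale differential whose top-level component is a single-zero genus-$0$ differential (described by \Cref{zerodim2}) carrying all the original poles, and whose bottom-level component is a two-zero genus-$0$ differential with a single non-residueless pole (described by \Cref{zerodim1}) recording the local picture of the collapsed saddle connection, glued by a prong-matching. Using \Cref{Prop:Bpair} and \Cref{Prop:Bnonhyper}, which distinguish non-hyperelliptic components by their index when $(e_1,e_2)=(1,1)$ or assert uniqueness otherwise, one matches the invariants of a candidate Type I boundary to those of $\cC$. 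Concretely, the proofs of those propositions track sequences of prong-matching rotations and rearrangements of principal levels that connect different boundary points lying in the same component; starting from any known Type III boundary point of $\cC$, one executes such a sequence to reach a Type I boundary.

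The exceptions are genuine and must be treated separately. When $e_1=e_2$, $a_1=a_2$, and all $b_i=2$, the top-level single-zero stratum admits a hyperelliptic involution swapping the pair of non-residueless poles and pairing the order-$2$ residueless poles, so every Type I boundary is forced into a hyperelliptic component $\cC_\Sigma$. When $(e_1,e_2)=(1,1)$ and all $b_i=2$, the index of a plumbed flat surface is computable from the prong-matching data of \Cref{zerodim1} and turns out to be congruent to $n-1 \pmod{2}$, so components of opposite index parity cannot be accessed by any Type I boundary. For the final claim that $X_0$ can be chosen non-hyperelliptic: by \Cref{Prop:Bhyper} applied to the top-level single-zero stratum $\cP(a\mid -e_1,-e_2\mid -b_1\mid \dots\mid -b_{n-2})$, the freedom in the cyclic order $\tau$ and angle-tuple $\mathbf{C}$ of \Cref{zerodim2} allows a non-hyperelliptic choice unless the signature is forced to be symmetric, which occurs exactly when $\mu^\fR = (a_1,a_2\mid -2\mid \dots\mid -2\mid -b^2)$.

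The main obstacle is the combinatorial bookkeeping: explicitly computing the index or hyperellipticity invariant of a plumbed surface from the prong-matching, enumerating the sequences of prong-rotations and level rearrangements that traverse $\partial \overline{\cC}$, and certifying that the listed exceptions exhaust all obstructions. These computations are precisely what \Cref{Sec:Bsignature} sets up, and the corollary will follow once that detailed analysis is in place.
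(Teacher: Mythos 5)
Your overall strategy coincides with the paper's: exhibit the multiplicity-one Type I boundary $X^B_{\I}(n-2,\tau,{\bf C})$ — bottom level in $\cP(a_1,a_2,-(a_1+a_2+2))$ containing both zeroes, top level the zero-dimensional surface $Z_2(\tau,{\bf C},e_1,e_2)$ carrying all residueless poles — use \Cref{Prop:Bpair} (matching the index) or \Cref{Prop:Bnonhyper} (uniqueness) to place it in $\partial\overline{\cC}$, and plumb; the ``furthermore'' claim is likewise settled by choosing $\tau$ or ${\bf C}$ asymmetric whenever $e_1\neq e_2$ or some $b_i>2$, exactly as in the paper. One concrete slip in your justification of the second exception: when $(e_1,e_2)=(1,1)$ and all $b_i=2$, the forced choice $C_i=1$ gives index $\sum_i C_i\equiv n-2\pmod 2$ for every such boundary, so it is the components of index $n-1\pmod 2$ that cannot be reached; your assertion that the plumbed surface has index $n-1\pmod 2$ (and hence that the \emph{opposite} parity is excluded) is backwards and, taken literally, would produce the wrong exceptional class. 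Two smaller points: in your first exceptional case ($a_1=a_2$, $e_1=e_2$, all $b_i=2$) \Cref{Prop:Bpair} and \Cref{Prop:Bnonhyper} show there is no non-hyperelliptic component at all, so that exception is vacuous rather than an accessibility obstruction; and realizing an arbitrary index $I$ when some $b_j>2$ requires the short argument the paper gives (fix the $C_i$ for $i\neq j$, then solve for $C_j$, with a fallback adjustment when $b_j=\delta$ and $\sum_{i\neq j}C_i\equiv 0$), which your ``match the invariants'' step glosses over.
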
 
\begin{proof}
    First, assume that $(e_1,e_2)=(1,1)$ and $b_j>2$ for some $j$. By \Cref{Prop:Bpair}, $\cC$ is determined by its index $1\leq I\leq d$. Consider a two-level multi-scale differential $\overline{X}$ such that $X_{-1}\in \cP(a_1,a_2,-(a_1+a_2+2))$ and $X_0\in \cP(a_1+a_2\mid-b_1\mid\dots\mid-b_{n-2}\mid-1^2)$. This flat surface is determined by the angle $1\leq C_i\leq b_i-1$ and the permutation $\tau\in Sym_{n-2}$. The index is equal to $\sum_i C_i \pmod{\delta}$. Choose any $C_i$ for $i\neq j$. Then we can choose $C_j$ such that $\sum_i C_i \equiv I \pmod{\delta} $, unless $b_j=d$ and $\sum_{i\neq j} C_i \equiv 0 \mod d$. In particular, $b_i>2$ for all $i$. Therefore we can find another choice of $C_i$ for $i\neq j$ such that $\sum_{i\neq j} C_i\not\equiv 0 \pmod{ \delta}$. So $X_0$ can be chosen to have index $I$. By \Cref{Prop:Bpair}, $\overline{X}$ is contained in the boundary of $\cC$. By plumbing the level transition, we obtain the desired flat surface. Now suppose $b_i=2$ for all $i$. Then $\delta=2$ and the only possible choice for $X_0$ is so that all $C_i=1$. Then the index of $X_0$ is equal to $n-2 \pmod 2$. So we cannot have the desired flat surface only if the index is $n-1\pmod 2$. 

    Now we assume that $(e_1,e_2)\neq (1,1)$. By \Cref{Prop:Bnonhyper}, $\cC$ is a unique non-hyperelliptic component. We can construct $\overline{X}$ as in the previous paragraph. If $a_1\neq a_2$, then the bottom level component $X_{-1}$ is non-hyperelliptic. If $e_1\neq e_2$ or $b_i> 2$ for some $i$, then $X_0$ can chosen to be non-hyperelliptic. So $\overline{X}$ is contained in the boundary of $\cC$ and by plumbing the level transition, we obtain the desired flat surface.
\end{proof}

\begin{corollary} \label{Cor:Bhyperssc}
    Let $\cC$ be a hyperelliptic component of a stratum $\cP(\mu^{\fR})$ of B-signature. Then
    \begin{itemize}
        \item If $\cC$ has no residueless pole fixed by hyperelliptic involution, then there exists a flat surface in $\cC$ with a multiplicity one saddle connection joining $z_1$ and $z_2$.
        \item  If a residueless pole $p$ is fixed, then there exists a flat surface in $\cC$ with a pair of parallel multiplicity two saddle connections bounding the polar domain of $p$.
    \end{itemize}
       
\end{corollary}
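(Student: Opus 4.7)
The plan is to exhibit, in each case, a specific flat surface in $\cC$ carrying the prescribed saddle configuration, via a $\sigma$-equivariant plumbing from a two-level multi-scale differential in $\partial\overline{\cC}$ in Case 1, and via the hyperelliptic double-cover description in Case 2. In both cases, \Cref{Prop:Bhyper} guarantees that a $\sigma$-equivariant construction lands in $\cC$ rather than in some other hyperelliptic component.

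In Case 1, $\sigma$ fixes both $z_1$ and $z_2$, which forces $a_1, a_2$ to be even. I would take $\overline{X}$ to be the two-level multi-scale differential whose top level is a $\sigma$-invariant flat surface in the simpler zero-dimensional stratum $\cP(a_1+a_2 \mid -e_1,-e_2 \mid -b_1 \mid \dots \mid -b_{n-2})$ (obtained from $\mu^{\fR}$ by merging $z_1, z_2$ into a single $\sigma$-fixed zero of even order $a_1+a_2$, whose hyperelliptic representative exists by \Cref{Prop:Bhyper} applied to the simpler stratum), whose bottom level is the unique flat surface of \Cref{zerodim1} in $\cP(a_1,a_2,-(a_1+a_2+2))$ (automatically $\sigma$-invariant, fixing both zeros), and whose single vertical edge has enhancement $\kappa = 1$. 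A $\sigma$-invariant prong matching exists, so plumbing via Case (i) of \Cref{sec:Plumb} (breaking up a zero) carried out $\sigma$-equivariantly produces a flat surface in $\cC$ with a short $\sigma$-fixed saddle connection joining $z_1$ and $z_2$; being fixed setwise by $\sigma$, this saddle connection has multiplicity one in its $\fR$-homology class.

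In Case 2, $\sigma$ swaps $z_1 \leftrightarrow z_2$ and $q_1 \leftrightarrow q_2$ and fixes the residueless pole $p$ of even order $b_p$, forcing $a_1 = a_2$. The analogous two-level plumbing is obstructed both by the fixed-point count on the top level (too many $\sigma$-fixed singularities would appear) and by the genus-zero dual-graph constraint for multi-node degenerations. Instead, I work with the hyperelliptic double cover: every $(X,\omega) \in \cC$ arises from a $\sigma$-invariant meromorphic quadratic differential $q$ on the quotient $X/\sigma \cong \PP^1$ via $\omega^2 = \pi^* q$, where $\pi \colon X \to X/\sigma$ is branched at the two $\sigma$-fixed poles $p, p'$. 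The differential $q$ has singularity data inherited from $\mu^{\fR}$: a zero at the image $\bar z$ of $z_1 = z_2$ (of order $2a_1$), a simple pole at the image $\bar p$ of $p$ (reflecting the branching together with evenness of $b_p$), and other singularities dictated by $\mu^{\fR}$. Within the corresponding stratum of quadratic differentials on $\PP^1$, I can choose a $q$ for which some saddle trajectory is a loop at $\bar z$ encircling $\bar p$ exactly once; its two preimages on $X$ under $\pi$ form a $\sigma$-swapped pair of parallel arcs from $z_1$ to $z_2$, together bounding a bigon enclosing $p$ — precisely the desired multiplicity-two configuration of saddle connections bounding the polar domain of $p$.

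The main obstacle is Case 2, where the natural multi-scale degeneration is blocked by both genus-zero and fixed-point constraints, forcing a detour through the hyperelliptic double cover. The critical step there is producing the desired saddle trajectory on the quotient, which relies on the non-emptiness of the relevant stratum of quadratic differentials on $\PP^1$ and the flexibility of choosing trajectories within it.
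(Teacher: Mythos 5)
Your Case 1 is, up to inessential slips, the paper's own argument: a two-level multi-scale differential with bottom level in $\cP(a_1,a_2,-(a_1+a_2+2))$, hyperelliptic top level in the merged-zero stratum with the same ramification profile, placed in $\partial\overline{\cC}$ via \Cref{Prop:Bhyper} and then plumbed. Two of your side claims are wrong, though: the involution of a hyperelliptic component of a double-zero stratum \emph{interchanges} $z_1$ and $z_2$ (so $a_1=a_2$) rather than fixing them — this is exactly the statement $m_1+m_2=1$ in the proposition of \Cref{Sec:hyperelliptic}; and the enhancement of the vertical edge is $a_1+a_2+1$, not $1$. Neither slip is fatal, since multiplicity one of the plumbed saddle connection comes from the bottom level being of type $Z_1$ with a single pole and hence a single saddle connection (\Cref{zerodim1}), not from $\sigma$-invariance as you claim.

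The genuine gap is Case 2. Your premise that the two-level degeneration is "obstructed" is false: the paper's proof uses precisely such a degeneration, taking the bottom level in $\cP(a_1,a_2\mid -b_1\mid -(a_1+a_2+2-b_1))$, i.e.\ containing both zeroes, the fixed residueless pole $p$ and a single node, with the remaining single-zero hyperelliptic surface on top; both levels carry involutions induced by $\Sigma$, \Cref{Prop:Bhyper} places the plumbed surface in $\cC$, and the bottom level, being $Z_1$ with two residueless poles, has exactly the two parallel saddle connections bounding the polar domain of $p$, which survive plumbing as the desired multiplicity-two pair. Your substitute via the quadratic-differential quotient leaves the crucial step unproved: the existence of a $q$ on $\mathbb{P}^1$ with a saddle trajectory forming a loop at $\bar z$ encircling $\bar p$ exactly once is asserted by "flexibility of choosing trajectories", but this existence statement is exactly the content that needs an argument (and is what the degeneration supplies). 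In addition, the singularity data is wrong — at a fixed pole of order $b_p$ the quotient differential has a pole of order $b_p+1$, not a simple pole (simple poles of $q$ arise at images of regular fixed points), and the second fixed point of $\sigma$ need not be a pole at all — and the multiplicity-two property of the lifted pair is never verified. As written, Case 2 is not a proof.
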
 
\begin{proof}
    If $\cC$ has no residueless fixed pole, then consider a two-level multi-scale differential $\overline{X}$ such that $X_{-1}\in \cP(a_1,a_2,-(a_1+a_2+2))$ and a hyperelliptic $X_0\in \cP(a_1+a_2\mid-b_1\mid\dots\mid-b_{n-2}\mid-e_1,-e_2)$ with the same ramification profile with $\cC$. Then by \Cref{Prop:Bhyper}, $\overline{X}$ is contained in the boundary of $\cC$. By plumbing the level transition, we obtain the desired flat surface. 

    If $\cC$ has a fixed residueless pole $p$, say $p_1$, then consider a two-level multi-scale differential $\overline{X}$ such that $X_{-1}\in \cP(a_1,a_2\mid-b_1\mid-(a_1+a_2+2-b_2))$ and $X_0\in \cP(a_1+a_2-b_1\mid-b_2\mid\dots\mid-b_{n-2}\mid-e_1,-e_2)$ and both components have involutions induced by the ramification profile of $\cC$. Then by \Cref{Prop:Bhyper}, $\overline{X}$ is contained in the boundary of $\cC$. By plumbing the level transition, we obtain the desired flat surface. 
\end{proof}

\subsection{Strata of C-signature}

For the strata of C-signature, we have the following connectedness result.

\begin{proposition} \label{Prop:Cconnected}
    Any stratum $\cP(\mu^\fR)$ of C-signature is connected.
\end{proposition}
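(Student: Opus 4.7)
Since $\dim_\CC \cP(\mu^\fR) = 1$, the multi-scale compactification $\overline{\cP}(\mu^\fR)$ is a compact Riemann surface whose boundary consists of finitely many points. By \Cref{Baseboundary}, every boundary point is a principal boundary point. Because the flat surfaces in $\cH(\mu^\fR)$ have genus zero and a unique zero, no saddle connection can have distinct endpoints (ruling out Type I) and no closed saddle connection can be non-separating (ruling out Type II). Hence every boundary point arises from a Type III configuration $\cF = (a, {\bf C}, {\bf D}, Q_1, Q_2, \{\boldsymbol{p}_i\})$ together with a prong-matching equivalence class, and the task reduces to enumerating these and showing that they all lie on the same connected component.

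The enumeration of admissible boundary points is governed by the $\fR$-Global Residue Condition for the unique part $\{p_1, p_2, p_3\}$ of size three: the triple must either lie entirely in one polar region of $X \setminus \bigcup_i \gamma_i$, or else be distributed among several regions whose corresponding bottom-level components are linked through the $\fR$-GRC. Each remaining residueless pole $p_j$ is assigned to a single region, subject to the angle-compatibility constraint $\sum_i (C_i + D_i) + Q_1 + Q_2 = a$. I would first produce a complete list of admissible configuration graphs $\Gamma(\cF)$ and then single out a ``canonical'' configuration to serve as a basepoint.

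To prove connectedness, the plan is to exhibit, for any boundary point $\overline{X}$, an explicit sequence of deformations inside $\overline{\cP}(\mu^\fR)$ joining $\overline{X}$ to the canonical configuration. Each deformation is one of the following: a prong-matching rotation around a node, a local reassignment of a residueless pole between two adjacent regions, or a complete un-degeneration along the one-dimensional stratum followed by re-degeneration to an adjacent configuration. Each move is realized by the plumbing construction of \Cref{sec:Plumb} and therefore stays inside a single connected component of $\overline{\cP}(\mu^\fR)$.

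The hard part will be the bookkeeping: one must check that the rearrangement moves do not violate the $\fR$-GRC on $\{p_1, p_2, p_3\}$ and that every admissible configuration can actually be reached by a chain of allowed moves. Since C-signature strata admit no ramification profile (no involution can handle a part of $\fR$ of size three), no spin parity (the strata are generally not of even type), no rotation number ($g = 0$), and no index invariant ($k = 0$), no topological obstruction is expected, and the combinatorial connectivity argument should close the proof.
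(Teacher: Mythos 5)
Your setup is the same as the paper's: reduce to the one-dimensional multi-scale compactification, note via \Cref{Baseboundary} that all boundary points are principal, observe that a unique zero and genus zero force Type III degenerations, and then try to connect all boundary points by prong-matching rotations and degeneration--undegeneration moves. But the proposal stops exactly where the proof has to begin. You write that one must ``check that every admissible configuration can actually be reached by a chain of allowed moves'' and then conclude that since there is no visible invariant, ``no topological obstruction is expected.'' That is not an argument: the absence of a known invariant does not show the moves act transitively on boundary points, and in the neighbouring B- and D-signature cases the analogous truncated move sets genuinely fail to connect everything (extra components, exceptional strata), so transitivity must be proved, not presumed. You never define the canonical configuration, never write down what a single move does to the combinatorial data $(\underline{h},\ell,\tau,{\bf C})$, and never verify that the generated equivalence relation identifies all choices of $\tau$, ${\bf C}$, $\ell$ and of which non-residueless pole sits on the bottom level.

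Concretely, the missing content is what the paper supplies in \Cref{lm:TypeC_III_ex}, \Cref{prop:C_standard} and \Cref{Prop:Cmove}: an explicit description of the transformation $U$ on equatorial half-arcs (how $U\cdot\overline{X}(u)$ changes $\underline{h}$, $\tau$, ${\bf C}$ for each range of the prong-matching $u$), then (i) a move bringing any boundary to the form $X^C_{\III}((1,2,3),n-3,\tau,{\bf C})$ with all residueless poles on the top level, (ii) moves of the form $UR^{2D_j}U$ realizing the adjacent transpositions $(j,j+1)$ on $\tau$, and (iii) moves $UR^{2(C_k-1)}U$ reducing each $C_k$ to $1$, which together land every component on the single standard boundary $X^C_{\III}((1,2,3),n-3,\Id,(1,\dots,1))$. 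Without carrying out (or at least precisely formulating and verifying) this finite list of moves and checking they suffice to normalize all the discrete data, the proof of connectedness is incomplete.
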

\begin{proof}
    This is a consequence of \Cref{prop:C_standard}.
\end{proof}
As a corollary of the above result, we have the following

\begin{corollary} \label{Cor:Cssc}
    Let $\cP(\mu^\fR)$ be a stratum of C-signature and $p$ be one of the three non-residueless poles. This stratum contains a flat surface with a multiplicity one saddle connection that bounds the polar domain of $p$. 
    
    By shrinking this saddle connection, we obtain a principal boundary $\overline{X}$. The top level component $X_0$ can be made non-hyperelliptic if we further assume that $\mu^{\fR}\neq (a\mid-2\mid\dots\mid-2\mid-b,-c,-c)$ where $p$ is the order $b$ pole.  
\end{corollary}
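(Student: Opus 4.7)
The plan is to mirror the strategy of \Cref{Cor:Bssc}: construct a two-level multi-scale differential $\overline{X}$ in the boundary of $\overline{\cP}(\mu^\fR)$ whose single vertical edge encodes the desired degeneration of the polar domain of $p$, then plumb via Case (i) of \Cref{sec:Plumb}. By \Cref{Prop:Cconnected}, the C-signature stratum is connected, so any flat surface realizing the desired saddle connection suffices; the only task is to ensure the upper-level component $X_0$ can be made non-hyperelliptic.

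Relabel so that $p = p_1$ has order $e$, and write $\mu^\fR = (a\mid -e, -e_2, -e_3\mid -b_1\mid\dots\mid -b_{n-3})$. Take the enhanced level graph with a single vertical edge whose enhancement is forced by the genus-zero condition on both levels. The lower-level component $X_{-1}$ lies in the zero-dimensional stratum $\cP(a\mid -e, -(a-e+2))$ described by \Cref{zerodim2} (with no residueless poles), pairing $p$ with the nodal pole; the upper-level component $X_0$ lies in the zero-dimensional stratum $\cP(a-e\mid -e_2, -e_3\mid -b_1\mid\dots\mid -b_{n-3})$, also described by \Cref{zerodim2}, with $p_2, p_3$ as the pair and the $b_i$'s as singletons. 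Since the auxiliary graph ties the triple $\{p, p_2, p_3\}$ across the node, the $\fR$-GRC is compatible with this configuration. Plumbing the vertical node produces a flat surface in $\cP(\mu^\fR)$ carrying a single new closed saddle connection that bounds the polar domain of $p$; as the saddle connection arises from the unique vertical edge, it is the only representative of its $\fR$-homology class, hence of multiplicity one.

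For the non-hyperellipticity of $X_0$, by \Cref{zerodim2} the stratum containing $X_0$ is parametrized by an ordering $\tau$ of the $n-3$ residueless poles together with an integer tuple ${\bf C} = (C_1, \dots, C_{n-3})$ subject to $1 \le C_i \le b_i - 1$. A surface in this zero-dimensional stratum is hyperelliptic only when $(\tau, {\bf C})$ is invariant under a $\ZZ/2$-symmetry that swaps $p_2 \leftrightarrow p_3$ and reverses $\tau$. Outside the exceptional case $\mu^\fR = (a\mid -2\mid\dots\mid -2\mid -b, -c, -c)$ with $p$ of order $b$, either $e_2 \ne e_3$ (so no order-preserving swap of $p_2, p_3$ exists) or some $b_i > 2$ (so some $C_i$ admits an asymmetric value), and in either situation we may choose $(\tau, {\bf C})$ so that $X_0$ is non-hyperelliptic. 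The main obstacle is the short but delicate case analysis verifying that outside the exceptional signature the $\ZZ/2$-symmetry can always be broken by a suitable choice of $(\tau, {\bf C})$.
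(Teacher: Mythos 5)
Your construction is essentially the paper's own proof: the two-level differential you describe (marked zero, $p$, and the nodal pole on the bottom level; the nodal zero, the other two non-residueless poles, and all residueless poles on top) is exactly the standard Type III boundary $X^C_{\III}(\underline{h},n-3,\tau,{\bf C})$ that the paper obtains via \Cref{prop:C_standard}, and your dichotomy ($e_2\neq e_3$ or some $b_i>2$, e.g.\ taking ${\bf C}=(1,\dots,1)$) is the same mechanism the paper uses to rule out hyperellipticity of the top level outside the exceptional signature. One small correction: the node carries residue $-\res_{p}\neq 0$ (otherwise the bottom level would lie in the empty stratum $\cP(a\mid -e\mid -(a-e+2))$), so the smoothing is the plumbing of a single non-residueless node, Case (iii) of \Cref{sec:Plumb}, not Case (i); this does not affect the rest of the argument, and the multiplicity-one claim is better justified by noting that the bottom level has a unique saddle connection and, for small plumbing parameter, no top-level saddle connection can share its period and hence its $\fR$-homology class.
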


\subsection{Strata of D-signature}

For the strata of D-signatures, we have the following connectedness result. The proofs will be presented in \Cref{Sec:Dsignature}. Since there is a unique zero and $\fR$ only has parts with one or two elements, $\mu^{\fR}$ may have ramification profiles. First, we give the classification of hyperelliptic components of strata of D-signature. The proof will be given in \Cref{subsec:Dhyper}.

\begin{proposition} \label{Prop:Dhyper}
    For each ramification profile $\Sigma$ of $\mu^{\fR}$ of D-signature, there exists a unique hyperelliptic component $\cC_{\Sigma}$. 
\end{proposition}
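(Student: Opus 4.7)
The plan is to identify hyperelliptic flat surfaces in $\cP(\mu^{\fR})$ carrying profile $\Sigma$ with an explicit one-parameter family of quadratic differentials on $\mathbb{P}^{1}$, and then argue connectedness of this family.

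Since $\mu$ is a partition of $-2$ in D-signature, we have $g=0$, so $X\cong\mathbb{P}^{1}$, and after a M\"obius change of coordinate the hyperelliptic involution can be normalized to $\sigma(z)=-z$ with fixed points $\{0,\infty\}$. The unique zero must be $\sigma$-fixed, so we place it at $0$; this forces $a$ to be even. The anti-invariance $\sigma^{*}\omega=-\omega$ makes $\omega^{2}$ descend along $\pi\colon X\to X/\sigma$ to a meromorphic quadratic differential $q$ on $\mathbb{P}^{1}$ whose singularity orders are dictated by $\Sigma$: odd-order singularities (of order $a-1$ at $\pi(z)$, $b_{i}+1$ at $\Sigma$-fixed poles, and $-1$ at unmarked Weierstrass points) occur exactly at the branch points of $\pi$, while even-order poles (of order $2b_{i}$) occur at the images of the $\Sigma$-swapped pole pairs. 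The Riemann--Hurwitz count $1+k+W=2$, where $k$ counts $\Sigma$-fixed poles and $W$ counts unmarked Weierstrass points, forces $k\in\{0,1\}$, and the spare fixed point $\infty$ carries whichever of these two objects is required.

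To establish existence, the plan is to place the two $\fR$-pair orbits at $\pm c_{1},\pm c_{2}$ (forcing $e_{1}=e_{2}$ and $e_{3}=e_{4}$), the $m_{s}$ $\Sigma$-swapped singleton orbits at $\pm d_{1},\ldots,\pm d_{m_{s}}$, and the remaining branch point at $\infty$, and then write down the explicit representative
\[
\omega \;=\; \frac{z^{a}\,dz}{(z^{2}-c_{1}^{2})^{e_{1}}(z^{2}-c_{2}^{2})^{e_{3}}\prod_{i=1}^{m_{s}}(z^{2}-d_{i}^{2})^{b_{i}}}
\]
(with one factor removed and a pole read off at $\infty$ in the case $k=1$). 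Anti-invariance under $\sigma$ automatically gives $\operatorname{res}_{p}\omega = -\operatorname{res}_{\Sigma(p)}\omega$, so the $\fR$-conditions on the two $\fR$-pairs hold for free, while the $\fR$-singleton conditions at each $\Sigma$-paired singleton become an explicit residue-zero equation, yielding $m_{s}$ equations in total. After quotienting by the residual scaling $z\mapsto\lambda z$ that preserves $\sigma$, the parameter count is $(2+m_{s})-1-m_{s}=1$, matching $\dim\cP(\mu^{\fR})=1$.

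For uniqueness, the plan is to show this one-dimensional hyperelliptic family is irreducible. When $m_{s}=0$ (i.e.\ $n\in\{4,5\}$) there are no residue constraints and the parameter space is a Zariski-open subset of $\mathbb{P}^{1}$, which is irreducible, giving a single component $\cC_{\Sigma}$. For $n\ge 6$ (so $m_{s}\ge 1$), the configuration space $\{(c_{2}/c_{1},d_{1}/c_{1},\ldots)\}$ is irreducible, and the $m_{s}$ residue equations should cut it to an irreducible subvariety; I would verify this by expressing each residue as an explicit rational function in the remaining parameters and checking non-degeneracy of its leading behavior as one $d_{i}$ tends to $0$ or to $\infty$. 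Since $\overline{\cP}(\mu^{\fR})$ is a smooth compactification of a one-dimensional stratum, an irreducible one-dimensional closed subvariety of it is automatically a single connected component. The main technical obstacle is precisely this irreducibility verification for $m_{s}\ge 1$; as a backup, I would use the plumbing construction of \Cref{sec:Plumb} to match limits of the hyperelliptic family at $\partial\overline{\cP}(\mu^{\fR})$ with $\Sigma$-compatible multi-scale differentials, thereby showing directly that all such boundary points lie on a single connected component of $\overline{\cP}(\mu^{\fR})$.
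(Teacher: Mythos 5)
Your reduction to an explicit anti-invariant family on $\mathbb{P}^{1}$ is a genuinely different route from the paper (which works entirely at the boundary of $\overline{\cP}(\mu^{\fR})$: it starts from a Type IIIc boundary via \Cref{lm:TypeD_IIIc_ex}, normalizes it to the standard form $X^D_{\III c}((4,3,1,2),n-4,n-4,\tau,\tfrac{a}{2}-e_1,{\bf C})$, and then connects all $\Sigma$-compatible choices of $(\tau,{\bf C})$ by explicit $U$, $R$ moves from \Cref{Prop:DIIIcmove}). Your setup itself is sound: the normalization $\sigma(z)=-z$, the forced symmetry $e_1=e_2$, $e_3=e_4$, the automatic vanishing of residues at $\sigma$-fixed poles and of the sums over the two $\fR$-pairs, and the dimension count are all correct.

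However, the proof as proposed has a genuine gap exactly at the step that carries all the difficulty. For $m_s\geq 1$ you need the subvariety of the configuration space cut out by the $m_s$ residue equations $\operatorname{res}_{d_i}\omega=0$ to be (a) nonempty with all marked points distinct and (b) irreducible, and neither is established. The dimension count only bounds the dimension of components if the locus is nonempty; it does not rule out emptiness (the equations could only be solvable on the locus where points collide), so even existence is not yet proved by your argument, whereas the paper gets existence cheaply by writing down a $\Sigma$-symmetric Type IIIc boundary and plumbing. More seriously, irreducibility (or just connectedness) of this residue-constrained family is precisely the content of the proposition: the whole phenomenon this paper studies is that imposing residue conditions can disconnect otherwise irreducible-looking families, so ``the $m_s$ residue equations should cut it to an irreducible subvariety'' cannot be taken on faith, and checking the leading behavior of each residue as some $d_i\to 0$ or $\infty$ does not establish irreducibility or connectedness of the one-dimensional solution locus (it only constrains its closure). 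Your backup plan --- matching boundary limits and showing all $\Sigma$-compatible boundary points lie on one component --- is essentially the paper's actual strategy, but it is the part that requires real work: one must classify the hyperelliptic boundary points combinatorially and exhibit explicit degeneration--undegeneration paths (the three moves in the paper's proof, all instances of case (4) of \Cref{Prop:DIIIcmove}) connecting any two of them. As it stands, the proposal reduces the proposition to an unproved connectedness statement of comparable difficulty.
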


We have the following two proposition on classification of non-hyperelliptic components, each corresponding to the cases $(e_1,e_2)=(e_3,e_4)=(1,1)$, $(e_1,e_2)=(1,1)$ and $(e_3,e_4)\neq (1,1)$, or $(e_1,e_2),(e_3,e_4)\neq (1,1)$.

\begin{proposition} \label{Prop:Dspin}
    If $(e_1,e_2)=(e_3,e_4)=(1,1)$, then the stratum $\cP(\mu^\fR)$ of D-signature has two non-hyperelliptic component distinguished by spin parity if all $b_i$ are even, except for 
    \begin{itemize}
        \item if $\mu^\fR=(2\mid-1^2\mid-1^2)$, there is no non-hyperelliptic component;
        \item if $\mu^\fR=(4\mid-2\mid-1^2\mid-1^2)$, there is no non-hyperelliptic component of even spin.
    \end{itemize}
    Otherwise there is a unique non-hyperelliptic component. 
\end{proposition}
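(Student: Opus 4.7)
The plan is to analyze the multi-scale compactification $\overline{\cP}(\mu^\fR)$, which is a compact Riemann surface because $\cP(\mu^\fR)$ is one-dimensional. By \Cref{Baseboundary}, every boundary point of $\overline{\cP}(\mu^\fR)$ is a principal boundary; since the stratum has a unique zero, only Type II and Type III principal boundaries arise. Connected components of $\cP(\mu^\fR)$ are in bijection with components of $\overline{\cP}(\mu^\fR)$, so it suffices to partition the boundary points into the claimed components, after removing the hyperelliptic components produced by \Cref{Prop:Dhyper}.

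First I would enumerate the principal boundary points. Each is a two-level multi-scale differential whose bottom level lies in a zero-dimensional stratum classified by \Cref{zerodim1} or \Cref{zerodim2}, and is specified by discrete data (cyclic ordering $\tau$, tuple of angles ${\bf C}$, and a prong-matching). Because $(e_1,e_2)=(e_3,e_4)=(1,1)$, each pair of simple poles can either descend entirely to the bottom level or split across levels, and I would list the admissible enhanced level graphs accordingly, keeping track of which $\fR$-GRCs are forced.

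Next I would invoke the waist-cylinder interpretation: each pair of simple poles with opposite residues can be glued into a closed cylinder, producing a flat surface of genus $g+k=2$ in $\cH(a,-b_1,\dots,-b_{n-4})$. When all $b_i$ are even, the spin parity
\[
\sum_{i=1}^{g+k}(\Ind\alpha_i+1)(\Ind\beta_i+1)\pmod{2}
\]
is well-defined, taking the two core curves of the inserted cylinders as $\alpha_1,\alpha_2$ in a symplectic basis, and gives a $\ZZ/2$ deformation invariant of $\cP(\mu^\fR)$. When some $b_i$ is odd, the glued surface lies in a stratum admitting no spin invariant, so only a single non-hyperelliptic component must be produced.

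I would then establish connectedness of the non-hyperelliptic locus within each parity class (respectively of the entire non-hyperelliptic locus when the parity is absent). This uses the degeneration--undegeneration strategy: prong-matching rotations within the level rotation group connect boundary points sharing the same level graph, while smoothing followed by re-degeneration along a different $\fR$-homologous family of saddle connections connects boundary points across distinct level graphs. Under the even-$b_i$ hypothesis, every such manipulation shifts the relevant indices $\Ind\beta_i$ by even integers, so the parity is preserved along these deformation paths.

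Finally, I would address the two exceptional strata. For $\mu^\fR=(2\mid-1^2\mid-1^2)$, the waist-cylinder gluing lands in $\cH(2)$ of genus $2$, which is connected and entirely hyperelliptic by Kontsevich--Zorich; lifting this back shows every flat surface in $\cP(\mu^\fR)$ is hyperelliptic. For $\mu^\fR=(4\mid-2\mid-1^2\mid-1^2)$, a direct enumeration of principal boundary points via \Cref{zerodim1}--\Cref{zerodim2} together with an explicit computation of the spin after gluing shows that only one parity is realized by non-hyperelliptic configurations. The main obstacle is the case-by-case enumeration together with accurate tracking of spin parity across prong rotations and transitions between distinct level graph types, since distinguishing hyperelliptic boundary points (those absorbed into the $\cC_\Sigma$ of \Cref{Prop:Dhyper}) from the non-hyperelliptic ones is precisely what isolates the exceptional strata and produces the stated count.
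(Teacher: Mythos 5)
Your overall framing (reduce to the one-dimensional compactified stratum, enumerate the principal boundary points via the zero-dimensional strata, use the glued-cylinder spin parity as the invariant, and connect boundary points by prong-matching rotations and degeneration--undegeneration) is the same strategy the paper follows, but the proposal stops exactly where the actual proof begins. The assertion that ``prong-matching rotations \dots while smoothing followed by re-degeneration \dots connects boundary points'' within a fixed parity class is the entire content of the proposition, not a step one can invoke: the paper has to prove, by an explicit and lengthy sequence of $R$- and $U$-moves on equatorial half-arcs, that every non-hyperelliptic component contains a \emph{standard} Type~IIIc boundary $X^D_{\III c}((4,3,1,2),n-4,n-4,\Id,C,{\bf 1})$ with $C\in\{1,2\}$ (\Cref{lm:TypeD_IIIc_ex}, \Cref{lm:non-hyp_D_IIIc_char}, \Cref{lm:DIIIcspin}, \Cref{prop:TypeD_spin}), and only then does the parity computation of \Cref{lm:spin_D_IIIc} separate (or identify) the components. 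Your remark that the even-$b_i$ hypothesis makes the moves preserve parity is automatic (parity is a deformation invariant) and does not address the real difficulty, which is showing that two non-hyperelliptic boundary configurations with the same parity can actually be joined; without the normal-form reduction this is unproved.

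The odd-$b_i$ case contains a genuine logical error: ``the glued surface lies in a stratum admitting no spin invariant, so only a single non-hyperelliptic component must be produced'' is a non-sequitur. The absence of an invariant removes an obstruction to connectedness but proves nothing; the paper must still exhibit a path joining the two standard boundaries $C=1$ and $C=2$, and it does so by an explicit move that uses an odd-order pole (the second half of \Cref{prop:TypeD_spin}). Similarly, in the exceptional stratum $(2\mid-1^2\mid-1^2)$ your appeal to hyperellipticity of $\cH(2)$ needs an argument that the hyperelliptic involution of the glued genus-two surface preserves the two inserted cylinders and descends to an involution of the original genus-zero surface compatible with the marked poles and the ramification profile; the paper avoids this by noting directly that the unique admissible boundary datum ($\kappa=1$ forces $C=1$) lies in a hyperelliptic component. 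In short, the proposal is a correct plan at the level of strategy, but the connectedness arguments that constitute the proof are missing or replaced by invalid inferences.
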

\begin{proof}
    This is a consequence of \Cref{prop:TypeD_spin} and the following discussion therein.
\end{proof}

\begin{proposition} \label{Prop:Dindex}
    If $(e_1,e_2)=(1,1)$ and $(e_3,e_4)\neq (1,1)$, then the stratum $\cP(\mu^\fR)$ of D-signature has unique non-hyperelliptic component with index $1\leq I\leq \delta\coloneqq \gcd(\{b_i\},e_3,e_4)$, except for $\mu^\fR=(2b\mid-b^2\mid-1^2)$ where the stratum has no non-hyperelliptic component corresponding to index $b$.
\end{proposition}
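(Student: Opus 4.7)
The plan is to adapt the strategy of \Cref{Prop:Bpair}, exploiting that $\dim_{\CC}\cP(\mu^\fR)=1$, so that $\overline{\cP}(\mu^\fR)$ is a disjoint union of compact Riemann surfaces, one per connected component. Two boundary points thus lie in the closure of the same component if and only if they can be joined by a path in $\cP(\mu^\fR)$. By \Cref{Baseboundary} every boundary point is a principal boundary whose top and bottom levels are $0$-dimensional strata of the form given in \Cref{zerodim1} or \Cref{zerodim2}. The first step is to enumerate these ``standard'' boundary multi-scale differentials: the constraint $(e_3,e_4)\neq(1,1)$ together with $g=0$ restricts how the residue conditions on the pairs $\{-e_3,-e_4\}$ and $\{-1,-1\}$ can be distributed between the two levels, and each level component is parameterized by a cyclic order $\tau$ and an angle tuple ${\bf C}$.

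For each standard boundary $\overline{X}$, the plumbing construction of \Cref{sec:Plumb} produces a nearby smooth flat surface, and the index from \Cref{Def:index} can be read off the combinatorial data. I would pick a symplectic basis $\{\alpha,\beta\}$ on the glued genus-one surface $X'$ with $\alpha$ the core of the cylinder obtained from gluing the pair $\{-1,-1\}$ and $\beta$ traversing the plumbed node exactly once; then $\operatorname{Ind}\beta\pmod\delta$ decomposes into contributions from the enhancement $\kappa$, the prong-matching, and the ${\bf C}$-data on each level, yielding an explicit formula for $\operatorname{Ind}(\overline{X})$ in terms of the configuration $\cF$. For each class $1\leq I\leq\delta$ I would verify that at least one standard form has index $I$ and that any two standard forms with the same index are joined by a chain of elementary deformations inside $\cP(\mu^\fR)$: prong rotations cycling the ${\bf C}$-data modulo $\kappa$, together with rearrangements of the principal-boundary level structure as described in \Cref{sec:Plumb}. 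Each such deformation preserves the index, and together they should act transitively on standard forms of fixed index.

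The exceptional case $\mu^\fR=(2b\mid-b,-b\mid-1,-1)$ is handled separately: here $\delta=b$, and the unique ramification profile of \Cref{Prop:Dhyper} (swapping within each pair) produces a hyperelliptic component. The relation $\sigma^{\ast}\omega=-\omega$ forces $2\operatorname{Ind}\beta\equiv 0\pmod b$ on any hyperelliptic surface, and a direct computation on a model representative pins the hyperelliptic index down to $b$; combined with the fact that the enumeration of standard forms in this special case yields no configuration of index $b$ outside the hyperelliptic locus, this proves the absence of a non-hyperelliptic component of index $b$. The main difficulty is the connectivity step of the previous paragraph: checking that the elementary deformations act transitively on the finite set of standard forms with a fixed index is a Rubik's-cube-type combinatorial problem, analogous to the ones underlying \Cref{Prop:Bpair} and \Cref{Prop:Bnonhyper}, and its detailed execution is what \Cref{Sec:Dsignature} is devoted to.
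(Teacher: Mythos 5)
Your overall route is the same as the paper's: degenerate into the boundary of the one-dimensional stratum, note via \Cref{Baseboundary} that every boundary point is a principal boundary built from the zero-dimensional pieces of \Cref{zerodim1}--\ref{zerodim2}, read the index off the combinatorial data of a plumbed surface (this is exactly \Cref{lm:index_D_IIIa}), and then navigate boundary points by prong-matching rotations $R$ and the conjugation move $U$ until a standard form with prong-matching $[(0,I)]$, identity ordering and all angles equal to one is reached; the exceptional stratum $(2b\mid-b^2\mid-1^2)$ is then exactly the case where that standard form is hyperelliptic. So the skeleton is right. But there is a genuine gap in the connectivity step as you state it. You claim that the elementary deformations ``act transitively on the finite set of standard forms with a fixed index.'' That statement is false in general: when $e_3=e_4$ the stratum can simultaneously have hyperelliptic components (\Cref{Prop:Dhyper}) and non-hyperelliptic components sharing the same index, and their boundary points can never be joined. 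The correct statement, and the one the paper proves, is that every \emph{non-hyperelliptic} component of index $I$ contains the one specific boundary $X^D_{\III a}((4,3),0,n-4,\Id,{\bf 1},[(0,I)])$ (\Cref{prop:TypeD_index}); non-hyperellipticity must enter the navigation itself. In the paper it enters through \Cref{lm:non-hyp_D_IIIc_char}, which uses the failure of the hyperelliptic symmetry to produce a Type IIIc boundary satisfying the inequality $C<e_4+c_{\ell_1}$, and hence (via \Cref{cor:non_hyp_IIIa_ex}, which also uses $e_4>1$) a Type IIIa boundary with all four non-residueless poles on the top level. Your proposal never explains how to reach such a configuration from an arbitrary boundary point of a non-hyperelliptic component, nor where hyperellipticity is excluded along the way, so the argument as written would also ``connect'' hyperelliptic components to non-hyperelliptic ones of the same index.

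Beyond that, the actual combinatorial content is deferred rather than executed. The transitivity you describe as a Rubik's-cube problem is precisely the three-step normalization in the proof of \Cref{prop:TypeD_index} (first adjust the prong-matching to $(0,I)$ using that level rotation only changes it by multiples of $\gcd$'s of the $\kappa_i$ and partial sums $r_k=\sum_{i\le k}b_i$; then reduce all $C_i$ to $1$; then realize all of $\operatorname{Sym}_{n-4}$), and it requires a nontrivial case analysis ($\delta<\kappa_1$, $\delta=\kappa_1>2$, $\delta=\kappa_1=2$ with some $b_i>2$, and all $b_i=2$). Asserting that the moves ``should act transitively'' does not substitute for this analysis, and the delicate cases (e.g.\ $\delta=\kappa_1=e_4=2$ with all $b_i=2$) are exactly where a naive count of available prong-matchings fails. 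Similarly, in the exceptional case your parity argument $2\operatorname{Ind}\beta\equiv 0\pmod b$ only pins the hyperelliptic index to $\{b/2,\,b\}$ for even $b$; the clean way to conclude, as the paper does, is that for $n-4=0$, $e_3=e_4$, $I=\delta$ the unique standard boundary is hyperelliptic, so no non-hyperelliptic component of index $b$ can exist once \Cref{prop:TypeD_index} is in place.
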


\begin{proof}
    This is a consequence of \Cref{prop:TypeD_index} and the following discussion therein. 
\end{proof}

\begin{proposition} \label{Prop:Dnonhyper}
    If $(e_1,e_2),(e_3,e_4)\neq (1,1)$, then the stratum $\cP(\mu^\fR)$ of D-signature has a unique non-hyperelliptic component. 
\end{proposition}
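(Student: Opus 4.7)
The plan is to follow the same paradigm used for the strata of B- and C-signature in Sections~\ref{Sec:Bsignature}--\ref{Sec:Dsignature}. Since $\dim\cP(\mu^\fR)=1$, by \Cref{Baseboundary} every boundary point of $\overline{\cP}(\mu^\fR)$ is a principal boundary, and by \Cref{zerodim1}--\Cref{zerodim2} each such boundary point is encoded by explicit combinatorial data on its top and bottom levels. When $(e_1,e_2),(e_3,e_4)\neq(1,1)$ there are no simple-pole gluings available, so the signature is not of even type and admits neither a spin parity nor an index/rotation invariant; the only potential topological distinction between components is hyperellipticity, which is already handled by \Cref{Prop:Dhyper}. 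It therefore suffices to exhibit a single connected sequence of paths in $\overline{\cP}(\mu^\fR)$ that meets every non-hyperelliptic boundary point.

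First I would enumerate the principal boundaries. Because each of the pairs $\{p_{e_1},p_{e_2}\}$ and $\{p_{e_3},p_{e_4}\}$ must lie entirely on one side of the level transition by the $\fR$-GRC (no simple-pole cancellation is available to split them), the possible two-level degenerations fall into a small number of combinatorial families according to whether (a) both non-residueless pairs sit on the top level with the bottom level of $Z_1$-type, (b) one pair is on each level, or (c) both pairs sit together with the zero on one level and a horizontal node arises. For each family I would record the data $(\tau,{\bf C})$ of \Cref{zerodim1}--\Cref{zerodim2} describing the cyclic/linear order of residueless poles and the prong angles at the zero, together with a compatible prong-matching.

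Next I would pick a standard boundary point $\overline{X}_0$ in family (a)---say with $\tau$ the identity ordering and ${\bf C}$ lexicographically minimal---and deform every other non-hyperelliptic boundary point to $\overline{X}_0$ through explicit move sequences: prong-matching rotations (paths within a fixed principal boundary realized by varying the $\CC^*$-scaling on the lower level), and undegenerations passing through the interior of $\cP(\mu^\fR)$ that swap principal boundary types or permute residueless poles. Since the hyperelliptic locus is finite in $\overline{\cP}(\mu^\fR)$ by \Cref{Prop:Dhyper}, one can always perturb such a path to avoid hyperelliptic boundary points, which would otherwise be the only obstruction to connectivity.

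The main obstacle is the bookkeeping in family (b), where the two non-residueless pairs are separated across levels: here the prong-matching couples angles on both sides of the level transition, and realizing an arbitrary transposition of residueless poles on a single level requires chaining several intermediate principal boundaries together, in the spirit of the ``Rubik's cube'' analogy from the introduction. This step parallels \Cref{prop:TypeD_spin} and \Cref{prop:TypeD_index} but is simpler because no invariant must be tracked; once the combinatorial connectivity graph on boundary points is shown to be connected after removing the hyperelliptic vertices, the conclusion follows from the fact that $\overline{\cP}(\mu^\fR)$ is a smooth compact Riemann surface and hence its non-hyperelliptic locus is connected.
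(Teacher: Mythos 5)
Your overall strategy---connect every non-hyperelliptic component to one fixed standard principal boundary point by prong-matching rotations and degeneration/undegeneration moves---is the same in spirit as the paper's, which shows that every non-hyperelliptic component contains the boundary $X^D_{\III a}((4,3),0,n-4,\Id,{\bf 1},[(0,0)])$ via \Cref{lm:TypeD_IIIc_ex}, \Cref{lm:non-hyp_D_IIIc_char}, \Cref{cor:non_hyp_IIIa_ex} and the three normalization steps in \Cref{prop:TypeD_nonhyp_no_sim_pair}. However, the way you close the argument has genuine gaps. The claim that ``the hyperelliptic locus is finite in $\overline{\cP}(\mu^\fR)$ by \Cref{Prop:Dhyper}, so one can perturb paths to avoid hyperelliptic boundary points'' is false: hyperelliptic components are entire connected components of the one-dimensional projectivized stratum (whole components of the compactified curve), not finitely many points, and \Cref{Prop:Dhyper} asserts nothing about finiteness. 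The actual difficulty, which your sketch does not engage with, is the opposite one: starting from an arbitrary boundary point of a given non-hyperelliptic component, you must show that the moves you plan to use are \emph{available} inside that component, i.e.\ that the component contains a boundary point of a workable combinatorial shape. This is precisely what \Cref{lm:non-hyp_D_IIIc_char} provides, converting non-hyperellipticity into the inequality $C< e_4+c_{\ell_1}$ for some Type IIIc boundary, which then produces a Type IIIb and finally a Type IIIa boundary of the form $X^D_{\III a}((4,3),0,n-4,\tau,{\bf C},Pr)$; without such a characterization your ``connectivity graph minus hyperelliptic vertices'' argument has no guaranteed starting vertex and no reason to be connected.

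Second, your final deduction is circular: ``the conclusion follows from the fact that $\overline{\cP}(\mu^\fR)$ is a smooth compact Riemann surface and hence its non-hyperelliptic locus is connected'' assumes exactly what is to be proved, since the number of connected components of that compact curve is the quantity being computed, and deleting the hyperelliptic components deletes whole components rather than points. Likewise, asserting that ``the only potential topological distinction between components is hyperellipticity'' presupposes the classification; the absence of further invariants is a consequence of the connectivity argument, not an input to it. To repair the proposal you would need to (i) establish the analogue of \Cref{lm:non-hyp_D_IIIc_char} and \Cref{cor:non_hyp_IIIa_ex} so that each non-hyperelliptic component is known to contain a Type IIIa boundary with both non-residueless pairs on top, and (ii) carry out the explicit three-step normalization of the prong-matching to $[(0,0)]$, the angle tuple to ${\bf 1}$, and the permutation to $\Id$, checking the boundary cases (e.g.\ $e_4=2$, $e_1=e_2$) where the required intermediate prong-matchings need separate treatment, as is done in the proof of \Cref{prop:TypeD_nonhyp_no_sim_pair}.
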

\begin{proof}
    This is a consequence of \Cref{prop:TypeD_nonhyp_no_sim_pair}.
\end{proof}

As a corollary of the above results, we have the following

\begin{corollary} \label{Cor:Dssc}
    Assume that $\cC$ is a non-hyperelliptic component of a stratum $\cP(\mu^\fR)$ of D-signature. 
    \begin{itemize}
        \item If $(e_1,e_2)=(1,1)$, then $\cC$ contains a flat surface with a pair of multiplicity two saddle connections $\gamma_1$ and $\gamma_2$, each bounding the polar domain of $q_1$ and $q_2$, respectively. 
        \item If $e_2>1$, then $\cC$ contains a flat surface with a multiplicity one saddle connection $\gamma$ bounding the polar domain of $q_1$.  
    \end{itemize}

In each case, by shrinking the collection of saddle connections given above, we obtain a boundary with non-hyperelliptic top level component except $e_3=e_4$, $b_i=2$ for each $i$ and $e_2\leq 2$.  
\end{corollary}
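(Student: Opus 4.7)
The plan is to adapt the strategy of the proof of \Cref{Cor:Bssc}: in each case we exhibit a two-level multi-scale differential $\overline{X}\in\partial\overline{\cC}$, both of whose levels are zero-dimensional strata parametrized by \Cref{zerodim2}, and then apply the plumbing construction of \Cref{sec:Plumb} to the vertical edge to produce a flat surface in $\cC$ realizing the prescribed configuration of saddle connections. The combinatorial data of the top-level component will be adjusted so that the invariants of $\overline{X}$ match those of $\cC$ predicted by \Cref{Prop:Dspin}, \Cref{Prop:Dindex}, or \Cref{Prop:Dnonhyper}.

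For Case 1, with $(e_1,e_2)=(1,1)$, take the bottom level $X_{-1}\in\cR(a,-a,-1,-1)$ containing the zero $z$, the simple pair $\{q_1,q_2\}$, and a residueless nodal pole of order $a$; and the top level $X_0\in\cP(a-2\mid-e_3,-e_4\mid-b_1\mid\dots\mid-b_{n-4})$ carrying a nodal zero of order $a-2$ together with the remaining marked poles, joined by a vertical edge of enhancement $\kappa=a-1$. The $\fR$-GRCs are automatic, since each part of $\fR$ lying above the bottom level has residues summing to zero, and plumbing the vertical edge yields the two parallel closed boundary geodesics of the cylinders of $q_1$ and $q_2$, which form a multiplicity-two $\fR$-homologous configuration as required. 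For Case 2, with $e_2>1$, one sets up an analogous two-level $\overline{X}$ keeping the pair $\{q_1,q_2\}$ intact on the bottom level, so that plumbing produces a single pinched saddle connection in the boundary of the polar domain of $q_1$; the hypothesis $e_2>1$ ensures that the adjacent polar domain has opening angle greater than $2\pi$ at $z$, preventing any other saddle connection from lying in the same $\fR$-homologous class and hence guaranteeing that the pinched $\gamma$ is of multiplicity one.

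To place $\overline{X}$ inside $\partial\overline{\cC}$, we tune the parameters $(\tau,{\bf C})$ in the \Cref{zerodim2}-parametrization of $X_0$ so that either the induced spin parity or the index $\sum C_i\pmod{\delta}$ agrees with the invariant of $\cC$, appealing to \Cref{Prop:Dspin} or \Cref{Prop:Dindex} respectively; in the remaining situation \Cref{Prop:Dnonhyper} gives uniqueness and no matching is needed. The most delicate step, and the main obstacle, is the concluding assertion that $X_0$ can be chosen non-hyperelliptic. By \Cref{Prop:Dhyper} the hyperelliptic components of the stratum of $X_0$ are classified by the ramification profiles of its signature, and a direct inspection of the combinatorial freedom in \Cref{zerodim2} reveals that a non-hyperelliptic choice of $(\tau,{\bf C})$ exists except precisely when $e_3=e_4$, every $b_i=2$, and $e_2\leq 2$; under those conditions every flat surface in the top-level stratum is forced to be hyperelliptic, matching the exceptional configuration of the statement.
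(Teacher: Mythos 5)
Your first bullet essentially reproduces the paper's construction (your two-level surface with the zero, the pair $q_1,q_2$ and a residueless nodal pole of order $a$ on the bottom, and $q_3,q_4$ plus all residueless poles on top, is exactly the Type IIIc boundary $X^D_{\III c}((4,3,1,2),n-4,\cdot,\tau,C,{\bf C})$ used in \Cref{prop:TypeD_spin} and \Cref{prop:TypeD_index}), but the second bullet is where the argument genuinely fails. You keep the pair $\{q_1,q_2\}$ together on the bottom level and claim that $e_2>1$ forces the pinched saddle connection bounding the polar domain of $q_1$ to have multiplicity one because of a cone-angle condition at $z$. This is not how $\fR$-homology works: the closed saddle connection $\gamma_1$ bounding the polar domain of $q_1$ and the closed saddle connection $\gamma_2$ bounding the polar domain of $q_2$ differ (after orienting) by a loop enclosing the entire part $\{q_1,q_2\}\in\fR$, which lies in $K(\fR)$, so $\gamma_1$ and $\gamma_2$ are $\fR$-homologous regardless of $e_2$; their periods are $\pm 2\pi i\,\res_{q_1}\omega$, so they are entangled, parallel and of equal length throughout the stratum. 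In your degenerate bottom level (a $Z_2$ surface with the zero, the residueless nodal pole, and the pair $q_1,q_2$), \Cref{zerodim2} shows the polar domain of $q_2$ is bounded by a single saddle connection, so the configuration through $\gamma_1$ has multiplicity two, not one. To obtain a multiplicity one saddle connection one must separate the pair onto different levels, i.e.\ use a Type IIIb boundary with $q_1$ alone on the bottom level and the node carrying the balancing residue, as the paper does with $X^D_{\III b}((2,1,4,3),0,n-4,\Id,1,{\bf 1})$; then the bottom level has a unique saddle connection, which plumbs to the required $\gamma$.

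A secondary gap: the paper's \Cref{prop:TypeD_spin}, \Cref{prop:TypeD_index} and \Cref{prop:TypeD_nonhyp_no_sim_pair} assert directly that every non-hyperelliptic component contains the explicit standard boundary, so the corollary is read off without any matching step. Your substitute ("tune $(\tau,{\bf C})$ so the spin or index matches, hence $\overline{X}\in\partial\overline{\cC}$") is insufficient precisely in the cases relevant to the exceptional clause: when $e_3=e_4$, all $b_i=2$ and $e_2\leq 2$, both levels of your $\overline{X}$ are hyperelliptic, and a hyperelliptic component with the same invariant may contain that boundary, so matching invariants alone does not place the plumbed surface in $\cC$ — one needs a non-hyperelliptic level or an incompatible prong-matching, which is what the equatorial-net arguments of the paper supply. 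Also, \Cref{Prop:Dhyper} does not apply to $X_0$, which lies in a zero-dimensional stratum rather than one of D-signature; the hyperellipticity of the top level must be checked directly from the $Z_2$ data (this does yield the stated exceptional list, but the check is not a citation of that proposition).
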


\begin{corollary} \label{Cor:Dhyperssc}
    Let $\cC$ be a hyperelliptic component of a stratum $\cP(\mu^\fR)$ of D-signature. There exists a flat surface in $\cC$ having a pair of multiplicity two saddle connections, each bounding the polar domain of $q_1$ and $q_2$. 
\end{corollary}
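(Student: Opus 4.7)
The plan is to adapt the strategy of the proof of \Cref{Cor:Bhyperssc}: construct a suitable two-level multi-scale differential $\overline{X}$ in $\partial\overline{\cC}$ and plumb its level transition to obtain the desired flat surface. Let $\Sigma$ denote the ramification profile of $\cC$. Since each of the parts $\{p_{n-3},p_{n-2}\}$ and $\{p_{n-1},p_n\}$ of $\fR$ has size two, the definition of ramification profile forces $\Sigma$ to swap the two poles within each pair. Set $q_1:=p_{n-3}$ and $q_2:=p_{n-2}$; in particular $e_1=e_2$, and the hyperelliptic involution $\sigma$ exchanges $q_1\leftrightarrow q_2$ while fixing the zero $z$.

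I take $\overline{X}$ to be a two-level multi-scale differential in the Type III principal boundary whose shrunk saddle connections are a pair of $\fR$-homologous curves, each bounding the polar domain of one of $q_1,q_2$. Concretely, the bottom-level component $X_{-1}$ is a $\PP^1$ carrying the zero $z$, the marked poles $q_1,q_2$, and two vertical-edge nodal poles of orders $\kappa+1$, invariant under the obvious involution swapping $q_1\leftrightarrow q_2$ and exchanging the two nodes. The top-level component $X_0$ is chosen in the hyperelliptic component of the strictly smaller-dimensional generalized stratum in which $\{q_1,q_2\}$ has been replaced by the corresponding nodal-pole pair; its existence follows by induction from \Cref{Prop:Dhyper} (or \Cref{Prop:Cconnected} when the signature drops to C). The prong-matching can be chosen symmetrically, so the two level-wise involutions glue to a global hyperelliptic involution of $\overline{X}$ with ramification profile $\Sigma$.

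By the uniqueness statement of \Cref{Prop:Dhyper}, $\overline{X}\in\partial\overline{\cC}$. Plumbing the two vertical nodes via Case (ii) of \Cref{sec:Plumb}, performed symmetrically with respect to $\sigma$, yields a flat surface in $\cC$ carrying a pair of multiplicity-two parallel saddle connections, each bounding the polar domain of $q_i$. The principal technical obstacle is the enhancement computation at the bottom-level $\PP^1$, which imposes $\kappa_1+\kappa_2 = a-e_1-e_2$; the borderline cases where no admissible $\kappa_i\geq 1$ exists, such as $\cP(2\mid-1,-1\mid-1,-1)$, fall under the one-dimensional base case and must be addressed directly in \Cref{Sec:Dsignature} rather than via this two-level reduction.
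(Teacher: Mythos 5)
Your construction does not exist, and this is fatal to the argument. A stratum of D-signature has $g=0$ (the entries of $\mu$ sum to $-2$), so every stable curve underlying a point of $\overline{\cP}(\mu^\fR)$ has a tree dual graph. The object you describe — a two-level differential whose bottom component carries $z,q_1,q_2$ and \emph{two} vertical nodes joining it to a single top component, with the enhancement relation $\kappa_1+\kappa_2=a-e_1-e_2$ — has dual graph with two vertices and two edges, hence arithmetic genus at least one, and therefore never appears in the boundary of a genus-zero stratum (compare \Cref{lm:univ_prin_bdry}: two-node Type II degenerations require $g>0$). The description of $X_0$ is also internally inconsistent: at a vertical node the top-level component acquires a nodal \emph{zero} of order $\kappa-1$, not a pole, so there is no ``generalized stratum in which $\{q_1,q_2\}$ has been replaced by the corresponding nodal-pole pair'' for $X_0$ to live in. Finally, the appeal to ``induction from \Cref{Prop:Dhyper} (or \Cref{Prop:Cconnected})'' misreads the setup: strata of D-signature are themselves the one-dimensional base cases, so both levels of any boundary point lie in \emph{zero}-dimensional strata (the rigid surfaces of \Cref{zerodim1} and \Cref{zerodim2}), and neither \Cref{Prop:Dhyper} nor \Cref{Prop:Cconnected} applies to a level. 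Your closing deferral of ``borderline cases'' such as $\cP(2\mid-1^2\mid-1^2)$ leaves an additional unproved gap, which in fact should not arise at all.

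What actually happens when the pair of loops bounding the polar domains of $q_1$ and $q_2$ is shrunk is a \emph{single}-node Type IIIc degeneration: since the two cut-off regions are just polar domains, they are absorbed into the bottom level, giving $X^D_{\III c}((4,3,1,2),n-4,n-4,\tau,C,{\bf C})$ with one residueless nodal pole. This is exactly the boundary that the proof of \Cref{Prop:Dhyper} produces inside every hyperelliptic component (with symmetric data $C=D=\tfrac{a}{2}-e_1$, $C_{\tau(i)}$ symmetric, so that the involutions and prong-matching are compatible with $\Sigma$). Its bottom-level component lies in the zero-dimensional stratum $\cP(a\mid-(\kappa+1)\mid-e_1,-e_2)$ and, by \Cref{zerodim2}, carries exactly two parallel saddle connections, one bounding the polar domain of $q_1$ and the other that of $q_2$. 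Plumbing the single residueless node (case (i) of \Cref{sec:Plumb}), as in the proof of \Cref{Cor:Dssc}, yields a flat surface in $\cC$ with the desired pair of multiplicity-two saddle connections; this works uniformly, including $\cP(2\mid-1^2\mid-1^2)$, where $\kappa=1$.
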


\subsection{Strata of E-signature}
The projectivized strata of E-signature are just single-zero strata of residueless differentials on elliptic curves and they already have been investigated in \cite{lee2023connected}. We will recall the results here.

\begin{theorem} [\cite{lee2023connected}{Theorem 1.7}]\label{Enonhyper}
    Let $\cR(\mu)$ be a genus one residueless stratum and $\delta\coloneqq \gcd(\{b_i\}_{i=1}^n)$. For each positive integer $r|\delta$, there exists a unique non-hyperelliptic component of $\cR(\mu)$ with rotation number $r$, except: 
    \begin{itemize}
        \item $\mu=(2n,-2^n)$, there is {\em no} non-hyperelliptic component of $\cR(\mu)$.
        \item $\mu=(2r, -r,-r)$ or $\mu=(2r,-2r)$, there is no non-hyperelliptic component of rotation number $r$.
        \item $\mu=(12,-3^4)$, there are exactly {\em two} non-hyperelliptic components with rotation number 3. 
    \end{itemize}
\end{theorem}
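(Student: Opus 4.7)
The plan is to prove Theorem~\ref{Enonhyper} by induction on the projective dimension $m = \dim \mathbb{P}\cR(\mu)$, which for a residueless genus-one stratum equals the number of zeroes. Since the rotation number is a topological invariant constant on connected components (cf.\ \cite{boissy2015connected}), it suffices, for each divisor $r \mid \delta$, to count the non-hyperelliptic components of rotation number $r$. The task splits into existence (every advertised rotation number is realized) and uniqueness (no two non-hyperelliptic components share a rotation number outside the exceptional list).

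For the base case $m=1$, the stratum has E-signature $\cP(a \mid -b_1 \mid \dots \mid -b_n)$ with $a = \sum b_j$, and by \Cref{Baseboundary} every boundary point of the one-dimensional compactification $\overline{\cP\cR(\mu)}$ is principal, enumerated by the combinatorial data $(\tau, \mathbf{C})$ of \Cref{zerodim1}. I would tabulate these boundary points, compute the rotation number of a plumbed nearby surface from $\mathbf{C}$ and the prong-matching (it is essentially $\gcd$ of the $C_i$ with $\delta$), and then use prong-matching rotations combined with cyclic shifts of $\tau$ to construct explicit paths in $\overline{\cP\cR(\mu)}$ connecting any two boundary points of equal rotation number. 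The three exceptional families are singled out precisely because, in those cases, the combinatorial realization of some rotation number either forces hyperellipticity---so no non-hyperelliptic component appears---or splits into two non-equivalent classes of $(\tau, \mathbf{C})$; I would verify each case by hand.

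For the inductive step $m \geq 2$, given a non-hyperelliptic component $\cC$, \Cref{breakintotwo} yields a two-level multi-scale differential $\overline{X} \in \partial \overline{\cC}$ with separable level graph and residueless nodal pole, whose top level lies in a genus-one residueless stratum with one fewer zero (two zeroes merged) and whose bottom level is a genus-zero flat surface. The rotation number is inherited from the top level, so by induction each rotation number $r \mid \delta$ is realized by a non-hyperelliptic component. For uniqueness, I would take any two surfaces in $\cC$ of the same rotation number, apply the contraction flow $C_{\alpha,\theta}^t$ of \Cref{Sec:PrincipalBoudnary} to shrink Type I configurations until both lie near a common principal boundary, land in a top-level stratum covered by the inductive hypothesis, and conclude via inductive uniqueness together with the connectedness of the plumbing fibers (prong-matching equivalence and rearrangement of level structure).

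The main obstacle is the combinatorial bookkeeping in the base case, particularly the analysis required to treat $(12, -3^4)$, where rotation number $3$ corresponds to two distinct non-hyperelliptic components. Here I expect to need an explicit description of both non-equivalent realizations $(\tau, \mathbf{C})$ with rotation number $3$ and a direct verification that no sequence of prong-matching rotations nor rearrangements of boundary strata transports one to the other---a genuine combinatorial obstruction rather than an apparent one. Once that case is fully in hand, knitting it together with the generic counts for the other one-dimensional base cases and running the induction yields the full classification.
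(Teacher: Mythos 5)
You are attempting to prove a statement that this paper does not prove at all: Theorem~\ref{Enonhyper} is quoted from \cite{lee2023connected} (Theorem~1.7) precisely so that the E-signature base case does not have to be redone here, so your proposal is a reconstruction of that external argument rather than a parallel to anything in this paper. As a reconstruction it has the right flavour (degenerate to principal boundary, encode boundary points by combinatorial data as in \Cref{zerodim1}, connect boundary points with equal invariants by prong-matching rotations $R$ and the transformation $U$), but the framing has a structural problem. As stated, with $\delta=\gcd(\{b_i\})$ taken only over pole orders and all exceptional signatures single-zero, the theorem concerns single-zero genus-one residueless strata, i.e.\ exactly the E-signature strata, and these all have $\dim\cP(\mu^\fR)=2g+m+n-|\fR|-2=1$. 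Your induction on the number of zeroes therefore never gets off the ground: the whole theorem is your base case. If instead you intend the multi-zero genus-one classification, then the relevant gcd must include the zero orders, further exceptional signatures appear (e.g.\ $(r,r,-2r)$, $(r,r,-r,-r)$, $(n,n,-2^n)$; compare \Cref{Thm:main10}), and merging zeroes changes $\delta$, so ``the rotation number is inherited from the top level'' and the surjectivity/uniqueness transfer in your inductive step both require genuine arguments that \Cref{breakintotwo} alone does not supply.

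The decisive gap, however, is that the entire content of the theorem lives in what you defer to ``tabulate \dots and verify each case by hand.'' Paths built from $R$, $U$ and level rearrangements can only ever prove connectedness; they cannot show that $(2n,-2^n)$ has no non-hyperelliptic component, that rotation number $r$ is absent for $(2r,-2r)$ and $(2r,-r,-r)$, or that $(12,-3^4)$ carries two distinct non-hyperelliptic components of rotation number $3$. For the non-existence statements one must show that every boundary datum realizing the given rotation number is compatible with a (degenerate) hyperelliptic involution, in the style of the characterizations in \Cref{sec:Bhyp} and \Cref{subsec:Dhyper}; and for $(12,-3^4)$ one needs either an exhaustive traversal of the full equatorial net of that one-dimensional stratum or an additional invariant separating the two components, since rotation number cannot do so by construction. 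Your plan acknowledges this (``a genuine combinatorial obstruction rather than an apparent one'') but supplies no mechanism for producing the obstruction, so as written the proposal is a programme for a proof rather than a proof.
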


\section{Hyperelliptic components}\label{Sec:hyperelliptic}

In this section, we introduce the properties of hyperelliptic connected components and prove \Cref{Thm:mainhyper}. Recall that a hyperelliptic involution induces an involution $\Sigma$ on $\boldsymbol{p}$, which is invariant in a connected component. In case of hyperelliptic components, we first show that this involution $\Sigma$ is indeed a ramification profile. 

\begin{proposition}
    If $\cC$ is a hyperelliptic component of $\cP(\mu^\fR)$, then $\cP(\mu^\fR)$ has a unique zero or two zeroes of the same order. Moreover, the involution $\Sigma$ induced by the hyperelliptic involution is a ramification profile of $\cP(\mu^\fR)$.
\end{proposition}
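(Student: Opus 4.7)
The plan is to extract a canonical labeled involution from the hyperelliptic involution $\sigma$ varying over $\cC$, and then verify the ramification profile conditions one at a time. For any $(X,\omega)\in\cC$, the hyperelliptic involution $\sigma$ lies in the (finite, hence discrete) set of involutions of $X$ satisfying $\sigma^{*}\omega=-\omega$ and $X/\sigma\cong\mathbb{P}^{1}$; consequently its induced action on the label set is locally constant, and since $\cC$ is connected this produces global involutions $\tau\colon \boldsymbol{z}\to\boldsymbol{z}$ and $\Sigma\colon \boldsymbol{p}\to\boldsymbol{p}$ that preserve orders (because $\sigma^{*}\omega=-\omega$ preserves $\operatorname{div}(\omega)$). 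A local computation in a coordinate $t$ with $\sigma(t)=-t$ at a $\sigma$-fixed point, writing $\omega=t^{k}u(t)\,dt$ with $u(0)\ne 0$, gives $(-1)^{k}u(-t)=u(t)$, forcing $k$ to be even; this produces the parity condition for $\tau$-fixed zeros and $\Sigma$-fixed poles.

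Next I would derive compatibility with $\fR$ and the bound on the number of fixed poles. Since $\sigma^{*}\omega=-\omega$ implies $\operatorname{res}_{\sigma(p)}\omega=-\operatorname{res}_{p}\omega$ for every pole $p$, every flat surface in $\cC$ satisfies these linear residue relations. Because a connected component of a complex analytic orbifold is open, these relations must already be consequences of the $\fR$-conditions defining $\cP(\mu^{\fR})$, else $\cC$ would be contained in a proper analytic subvariety of $\cP(\mu^{\fR})$. A short combinatorial check on which collections of $\fR$-parts can produce such a relation shows that if $\Sigma(p_i)=p_i$ then $\{p_i\}\in\fR$, while if $\Sigma(p_i)=p_j\neq p_i$ then either $\{p_i,p_j\}\in\fR$ or both $\{p_i\},\{p_j\}\in\fR$ separately. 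By Riemann--Hurwitz, $\sigma$ has exactly $2g+2$ fixed points on $X$; the $\tau$-fixed zeros together with the $\Sigma$-fixed poles must be among them, yielding the upper bound on the number of $\Sigma$-fixed poles, with the weaker bound $2g+1$ when $m=1$ (since then the unique zero is automatically $\tau$-fixed).

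Finally, to show that $\cP(\mu^{\fR})$ has either a unique zero or exactly two zeros of equal order, I would carry out a dimension count for the $\tau$-compatible hyperelliptic locus inside $\cP(\mu^{\fR})$. Hyperelliptic genus-$g$ surfaces contribute $2g-1$ moduli (branch points modulo $\mathrm{PGL}_2$); each $\tau$-orbit of marked points contributes either a discrete Weierstrass-point choice (for fixed points) or a free point on $\mathbb{P}^{1}$ (for swapped pairs); and the anti-invariant meromorphic differential with prescribed orders is counted via its pushforward as a quadratic differential on $\mathbb{P}^{1}$ with prescribed orders. Matching this against $\dim\cP(\mu^{\fR})=2g+m+n-|\fR|-2$ and following the comparison used in Kontsevich--Zorich~\cite{kontsevich2003connected} and Boissy~\cite{boissy2015connected}, adapted to account for the $\fR$-conditions just established, one sees that full dimension is attained only for $m=1$ or $m=2$ with $a_1=a_2$. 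This final dimension count, carefully adapted to the generalized-stratum setting, is the main technical obstacle.
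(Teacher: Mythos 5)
Your proposal follows essentially the same route as the paper: order-preservation from $\sigma^{*}\omega=-\omega$, the residue antisymmetry together with openness of $\cC$ forcing each pole to be paired or residueless under $\Sigma$, the local computation $\sigma(z)=-z$ forcing fixed poles to be even, the Riemann--Hurwitz bound on fixed marked points, and finally a dimension count of the hyperelliptic locus via the pushforward quadratic differential on $\mathbb{P}^{1}$. The one step you defer as ``the main technical obstacle'' is in fact the short computation the paper carries out: writing $m_1,m_2$ for the numbers of fixed zeroes and swapped zero pairs, $r$ for fixed poles, and $t_1,t_2$ for the singleton and pair parts of $\fR$, the quotient datum lies in the locus $\cQ_0(\nu)$ of genus-zero quadratic differentials cut out by vanishing $2$-residues at images of swapped residueless pairs, whose dimension is $2g+m_2+t_2$, while the local isomorphism $\PP\cC\to\PP\cQ_0(\nu)$ forces this to equal $\dim\cC=2g+m_1+2m_2+t_2-1$, giving $m_1+m_2=1$. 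Your branch-point/marked-point/scale count yields the same number, so once you actually write it out the argument closes with no genuine obstacle.
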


\begin{proof}
Let $\Sigma$ be the involution on $\boldsymbol{p}$ induced by the hyperelliptic involution $\sigma$ of flat surfaces $(X,\omega)$ in $\cC$. By reordering the poles if necessary, we may assume that $\Sigma$ fixes $p_1,\dots,p_r$ for some and $\Sigma(p_i)=p_{i+1}$ for each $i=r+1,r+3,\dots,n-1$. 

Since $\sigma^{\ast}\omega=-\omega$, $\sigma$ preserves the order of zeroes and poles. That is, $b_i=b_{i+1}$ for each $i=r+1,r+3,\dots,n-1$. Moreover, $\res_{p_i}\omega = \res_{\sigma(p_i)} \sigma^{\ast}\omega = -\res_{\sigma(p_i)}\omega$. So $\{p_i,\sigma(p_i)\}$ or both $\{p_i\}$ and $\{\sigma(p_i)\}$ are parts of $\fR$ for each $i$. Therefore each part of $\fR$ is either a pair or a singleton. If $\sigma$ fixes a pole $p_i$, then $p_i$ is residueless. Moreover, if $z$ is a local coordinate at $p_i$ such that $\omega=\frac{dz}{z^{b_i}}$, then $\sigma(z)=-z$ and $-\omega =\sigma^{\ast}\omega=(-1)^{b_i-1}\frac{dz}{z^{b_i}}=(-1)^{b_i-1}\omega$. Thus $p_i$ is an even pole. Therefore, $\Sigma$ satisfies all conditions to be a ramification profile. 

It remains to show that $\cP(\mu^\fR)$ has a unique zero or two zeroes of the same order. Suppose that $\sigma$ fixes $m_1$ zeroes and interchanges $m_2$ pairs of zeroes. It is sufficient to show that $m_1+m_2=1$. By taking quotient by $\sigma$, $\pi:X\to \mathbb{P}^1$ is a double cover ramified at the fixed points of $\sigma$. We also obtain a half-translation structure on $\mathbb{P}^1$ defined by some quadratic differential $q$ such that $\pi^{\ast}q=\omega^2$. The differential $q$ has $m_1+m_2$ zeroes. It has a pole of order $b_i-1$ at the image of a fixed pole $p_i$, and a pole of order $2b_i$ at the image of interchanged pair $p_i,p_{i+1}$. So this surface $(\PP^1,q)$ is contained in a stratum of quadratic differentials $\cQ(\nu)$, where $\nu=(a'_1,\dots,a'_{m_1+m_2},-1^{2g+2-m_1-r},-b_1-1,\dots,-b_r-1,-2b_{r+1},\dots,-2b_{n-1})$ for some $a'_1,\dots,a'_{n_1+n_2}$. Note that the simple poles are the images of regular points fixed by $\sigma$. For convenience, we assume that those simple poles are not labeled. The 2-residue of $q$ at the pole of order $2b_i$ is equal to $(\operatorname{res}_{p_i}\omega)^2.$. So it is equal to zero if and only if $p_i$ is a residueless pole. Also, any odd order poles has automatically zero 2-residue. Let $\cQ_0(\nu)\subset \cQ(\nu)$ the subset of quadratic differentials satisfying the above 2-residue conditions. Conversely, for any quadratic differential $q\in \cQ_0(\nu)$, we can consider a ramified double cover $\pi:X\to \mathbb{P}^1$ such that $\pi^{\ast}q=\omega^2$ for some $(X,\omega)\in \cP(\mu^\fR)$. The differential $\omega$ is unique up to multiplication by $-1$. Thus we have an injection $\phi: \PP\cC \to \PP\cQ_0(\nu)$, which is a locally isomorphism. In particular, we have $\dim \PP\cC=\dim \cQ_0(\nu)$. 

Let $t_1, t_2$ be the number of singletons and pairs of $\fR$. Then $n=t_1+2t_2$ and $|\fR|=t_1+t_2$. Recall that $r$ of those singletons are corresponding to the fixed poles. The subvariety $\cQ_0(\nu)$ is obtained by imposing $\frac{t_1}{2}-r$ 2-residue conditions. So $$\dim \cQ_0(\nu) +\frac{t_1}{2}-r=\dim \cQ(\nu)=(m_1+m_2)+(2g+2-m_1-r)+\frac{t_1}{2}+t_2-2$$ and thus $\dim \cQ_0(\nu) =2g+m_2+t_2$. However, we also have $\dim \cQ_0(\nu)=\dim \cC = 2g+m_1+2m_2+n-1-|\fR|=2g+m_1+2m_2+t_2-1$. So we have $m_1+m_2=1$, as desired. 
\end{proof}

Note that from the proof above, we can also see that for any ramification profile $\Sigma$ of $\mu^{\fR}$, we can associate a corresponding stratum $\cQ(\nu)$ of quadratic differentials and construct at least one hyperelliptic component of $\cP(\mu^{\fR})$ that induces $\Sigma$. As a consequence, we have the following property of hyperelliptic flat surfaces. 

\begin{corollary} \label{Cor:hyperextension}
    Suppose that $(X,\omega)\in \cP(\mu^{\fR})$ is a hyperelliptic flat surface whose hyperelliptic involution induces a ramification profile $\Sigma$. That is, $(X,\omega)$ has a unique zero or two zeroes interchanged by the hyperelliptic involution. Then $(X,\omega)$ is contained in a hyperelliptic component with ramification profile $\Sigma$. 
\end{corollary}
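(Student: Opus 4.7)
The plan is to extend the construction in the proof of the preceding proposition from a single hyperelliptic connected component to the entire locus $\cH_\Sigma$ of hyperelliptic flat surfaces in $\cP(\mu^\fR)$ inducing the fixed ramification profile $\Sigma$, and then conclude by showing that $\cH_\Sigma$ is both open and closed in $\cP(\mu^\fR)$.

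First, I would define $\phi : \cH_\Sigma \to \mathbb{P}\cQ_0(\nu)$ exactly as in the proposition: for $(X,\omega) \in \cH_\Sigma$ with hyperelliptic involution $\sigma$, the quotient $\pi : X \to X/\sigma \cong \mathbb{P}^1$ yields a quadratic differential $q$ satisfying $\pi^{\ast} q = \omega^2$, and both the signature $\nu$ and the $2$-residue conditions cutting out $\cQ_0(\nu) \subset \cQ(\nu)$ depend only on $\mu^\fR$ and $\Sigma$. The inverse of $\phi$ is given by the canonical double cover branched at the odd-order singularities of $q$ and at the fixed residueless poles prescribed by $\Sigma$; this operation is continuous in $q$, and recovers $(X,\omega)$ up to $\omega \mapsto -\omega$, a sign that is absorbed into the projectivization. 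Thus $\phi$ is an injective local biholomorphism in period coordinates, and the dimension count carried out in the proposition's proof shows $\dim \cH_\Sigma = \dim \mathbb{P}\cQ_0(\nu) = \dim \cP(\mu^\fR)$.

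Next, since $\phi$ is a local biholomorphism onto a space of the same dimension as $\cP(\mu^\fR)$, the inclusion $\cH_\Sigma \hookrightarrow \cP(\mu^\fR)$ is itself a local biholomorphism, and hence $\cH_\Sigma$ is open in $\cP(\mu^\fR)$. For closedness, suppose $(X_n,\omega_n) \in \cH_\Sigma$ converges to $(X,\omega) \in \cP(\mu^\fR)$. The hyperelliptic involutions $\sigma_n$ act on the locally trivial lattice $H_1(X_n \setminus \boldsymbol{p}, \boldsymbol{z}; \mathbb{Z})$, and the action is locally constant along families in the open stratum; consequently the $\sigma_n$ extend to a holomorphic involution $\sigma$ of $X$ satisfying $\sigma^{\ast}\omega = -\omega$ and inducing $\Sigma$ on $\boldsymbol{p}$, so $(X,\omega) \in \cH_\Sigma$.

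Being both open and closed, $\cH_\Sigma$ is a union of connected components of $\cP(\mu^\fR)$. The connected component containing the given surface $(X,\omega)$ is therefore contained in $\cH_\Sigma$ and consists entirely of hyperelliptic flat surfaces inducing $\Sigma$, which is by definition a hyperelliptic component with ramification profile $\Sigma$. The main obstacle is the closedness step: one has to rule out that the limit of the hyperelliptic involutions degenerates or acts differently on the marked poles. Because we stay inside the open (uncompactified) stratum $\cP(\mu^\fR)$, the underlying Riemann surface remains smooth, the integral action of $\sigma_n$ cannot jump discretely along a continuous family, and the labeling of poles by $\Sigma$ is preserved, making the argument go through.
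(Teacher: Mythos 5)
Your argument is correct and follows essentially the same route as the paper's (implicit) proof: the corollary is deduced there exactly from the local isomorphism of the hyperelliptic locus with $\mathbb{P}\cQ_0(\nu)$ and the dimension count in the preceding proposition, which make the locus of hyperelliptic surfaces inducing $\Sigma$ open in $\cP(\mu^{\fR})$, combined with its closedness inside the open stratum, so that it is a union of connected components. Your closedness step is phrased a bit informally, but the underlying fact (limits of hyperelliptic involutions within the stratum exist and preserve $\sigma^{\ast}\omega=-\omega$ and the labeled action on marked points, hence the profile $\Sigma$) is standard and is precisely what the paper relies on.
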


\begin{remark}
    For single-zero strata, the above corollary states that a hyperelliptic flat surfaces are only contained in the hyperelliptic components. However, for double-zero strata, it does not prevent to have a hyperelliptic component whose hyperelliptic involution fixes two distinct zeroes. 
\end{remark}

In order to prove \Cref{Thm:mainhyper}, it remains to prove the following uniqueness statement. 

\begin{proposition} \label{Prop:hyperunique}
    For each ramification profile $\Sigma$ of $\mu^\fR$, there exists a unique hyperelliptic connected component $\cC_{\Sigma}$ of $\cP(\mu^\fR)$. 
\end{proposition}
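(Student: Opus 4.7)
The proof proceeds by induction on $\dim \cP(\mu^\fR)$. The one-dimensional base cases are covered by \Cref{Prop:Bhyper} for B-signatures, \Cref{Prop:Dhyper} for D-signatures, \Cref{Prop:Cconnected} for C-signatures (where outright connectedness forces any hyperelliptic component to be unique), and the classification of \cite{lee2023connected} for E-signatures.

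For the inductive step, let $\cC_1, \cC_2 \subseteq \cP(\mu^\fR)$ be two hyperelliptic components sharing the ramification profile $\Sigma$. Since the multi-scale compactification $\overline{\cP}(\mu^\fR)$ is a smooth orbifold with normal-crossings boundary, connected components of the open stratum correspond bijectively to connected components of $\overline{\cP}(\mu^\fR)$. Hence it suffices to produce a single multi-scale differential $\overline{X} \in \overline{\cC_1} \cap \overline{\cC_2}$.

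To construct such a common boundary point I would start from arbitrary $(X_i,\omega_i) \in \cC_i$ and, by \Cref{Cor:Bhyperssc}, \Cref{Cor:Dhyperssc} and their analogues, deform within $\cC_i$ to a flat surface carrying a $\sigma$-invariant $\fR$-homologous configuration of parallel saddle connections (for example, a pair of multiplicity-two saddle connections bounding the polar domains of a $\Sigma$-orbit of poles, or a single multiplicity-one saddle connection exchanged by $\sigma$ with itself). Shrinking this configuration as in \Cref{sec:Plumb} produces a two-level multi-scale differential $\overline{X}_i \in \partial \overline{\cC_i}$ whose irreducible components each inherit a hyperelliptic involution and an induced ramification profile $\Sigma^{(v)}$. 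The dimension of every such component is strictly less than $\dim \cP(\mu^\fR)$, so the inductive hypothesis asserts that on each level the hyperelliptic component with profile $\Sigma^{(v)}$ is unique. Consequently the boundary divisor carrying $\overline{X}_1$ and $\overline{X}_2$ is the same, and after adjusting the continuous parameters of each level and the residual prong-matching and level-rescaling freedom, $\overline{X}_1$ can be moved to $\overline{X}_2$ inside this common divisor, producing the required point of $\overline{\cC_1} \cap \overline{\cC_2}$.

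The main obstacle is guaranteeing the availability of a $\Sigma$-equivariant shrinking in full generality and verifying that the levels produced are again of the form handled by the induction. This requires handling two delicate subcases: when $\Sigma$ fixes a residueless pole of high even order, the shrinking must use a $\sigma$-symmetric pair of multiplicity-two saddle connections bounding that polar domain; and when $\Sigma$ swaps two zeroes or two non-residueless poles, the saddle connections to shrink must be chosen so that $\sigma$ permutes them within the same $\fR$-homology class. Once these equivariant configurations are produced uniformly, via the corollaries cited above together with the principal-boundary constructions of \Cref{Sec:PrincipalBoudnary}, the induction closes and yields \Cref{Thm:mainhyper}.
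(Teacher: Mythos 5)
Your overall strategy coincides with the paper's: induction on $\dim\cP(\mu^\fR)$ with the one-dimensional strata of Sections~\ref{Sec:Bsignature}--\ref{Sec:Dsignature} (and the E-signature results of \cite{lee2023connected}) as base cases, degeneration of a hyperelliptic component along a $\sigma$-symmetric configuration of saddle connections, and inductive uniqueness applied to the hyperelliptic levels of the resulting multi-scale differential. The paper's proof is exactly this, carried out by citing \cite[Prop.~6.18]{lee2023connected} for the induction step together with \Cref{Prop:ssc_he}.

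As written, however, your argument has a genuine gap, and you name it yourself: the ``availability of a $\Sigma$-equivariant shrinking in full generality'' does not follow from \Cref{Cor:Bhyperssc}, \Cref{Cor:Dhyperssc} ``and their analogues,'' since those corollaries only cover the one-dimensional base cases. What you need is precisely \Cref{Prop:ssc_he}: every positive-dimensional hyperelliptic component contains a flat surface with a multiplicity one saddle connection when $\Sigma$ fixes fewer than $2g+2$ marked points, and with a pair of multiplicity two saddle connections bounding a fixed polar domain when $\Sigma$ fixes $2g+2$ marked points. In the paper this is a separate proposition established by its own induction (via \Cref{breakintotwo} and \Cref{Cor:hyperextension}), and the uniqueness proof explicitly invokes it; leaving it as an unresolved ``obstacle'' leaves your inductive step open. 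A second, smaller issue is the final identification: placing $\overline{X}_1$ and $\overline{X}_2$ in the same boundary divisor with the same (inductively unique) hyperelliptic levels does not yet produce a point of $\overline{\cC_1}\cap\overline{\cC_2}$, because a multi-scale differential also carries a prong-matching equivalence class, and distinct classes over the same divisorial data may smooth into different components. One must verify that the prong-matchings compatible with the hyperelliptic involutions of the levels agree up to the level rotation action; this is exactly the bookkeeping the paper defers to \cite[Prop.~6.18]{lee2023connected}, and ``adjusting the residual prong-matching freedom inside the divisor'' does not by itself supply it.
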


In order to prove uniqueness, we need the following proposition regarding the existence of multiplicity one saddle connections. 

\begin{proposition} \label{Prop:ssc_he}
Assume that $\cC$ is a hyperelliptic component with ramification profile $\Sigma$. Then $\cC$ contains a flat surface $(X,\omega)$ with a multiplicity one saddle connection if and only if $\Sigma$ fixes less than $2g+2$ marked points. 

If $\Sigma$ fixes $2g+2$ marked points and $p$ is a fixed pole, then $\cC$ contains a flat surface $(X,\omega)$ with a pair of multiplicity two saddle connections bounding the polar domain of $p$.
\end{proposition}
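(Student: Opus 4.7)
For the only-if direction, I use $\sigma$-equivariance. Let $\gamma$ be a multiplicity-one saddle connection on some $(X,\omega) \in \cC$, and let $\sigma$ denote the hyperelliptic involution. Since $\sigma^{\ast}\omega = -\omega$ and $\sigma$ preserves the partition $\fR$ (as it induces $\Sigma$), the image $\sigma(\gamma)$ is a saddle connection parallel to $\gamma$, of the same length, and in the same $\fR$-homology class as $\gamma$; multiplicity one forces $\sigma(\gamma) = \gamma$ as a subset of $X$. The fixed set of $\sigma$ is discrete, so $\sigma|_{\gamma}$ cannot be the identity and must act as a reflection through a unique midpoint, which lies in the interior of $\gamma$ and is therefore neither a zero nor a pole. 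Thus the midpoint is an \emph{unmarked} fixed point of $\sigma$, and the Riemann--Hurwitz count ($\sigma$ has exactly $2g+2$ fixed points) forces $\Sigma$ to fix strictly fewer than $2g+2$ marked points.

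For the if direction, I pass to the quadratic differential quotient $q$ on $\PP^{1} = X/\sigma$ with $\pi^{\ast}q = \omega^{2}$; an unmarked fixed point of $\sigma$ corresponds to a simple pole of $q$. By hypothesis, $q$ has at least one simple pole $p^{\ast}$. Starting from any $(X,\omega) \in \cC$ with corresponding $q \in \cQ_{0}(\nu)$, I would deform $q$ inside $\cQ_{0}(\nu)$ so that a saddle connection $\tilde{\gamma}$ ends at $p^{\ast}$: near $p^{\ast}$ the half-translation surface has a cone of angle $\pi$, and a small local modification (for instance via the $\GL^{+}(2,\RR)$-action on $q$) produces a geodesic issuing from $p^{\ast}$ and terminating at another conical singularity. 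Since the quotient map $\phi\colon \cC \hookrightarrow \cQ_{0}(\nu)$ from the preceding discussion is a local isomorphism, the deformation of $q$ lifts uniquely to a deformation inside $\cC$. The preimage $\pi^{-1}(\tilde{\gamma})$ is a single connected geodesic on $X$ passing through $\pi^{-1}(p^{\ast})$ in its interior, giving a saddle connection $\gamma$ with $\sigma(\gamma) = \gamma$, which by the only-if analysis is multiplicity one.

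For the remaining claim, assume $\Sigma$ fixes all $2g+2$ marked points, so that no unmarked fixed point of $\sigma$ exists; then every saddle connection has $\sigma$-orbit of size exactly two. Let $p$ be a pole fixed by $\Sigma$. The polar domain of $p$ is a $\sigma$-invariant punctured disk bounded by a cyclic sequence of saddle connections on which $\sigma$ acts by an order-two rotation, so the boundary splits into $\sigma$-orbits of size two. I would realise a surface in $\cC$ whose polar domain of $p$ is bounded by a single such orbit by plumbing a two-level multi-scale differential in $\partial\overline{\cC}$ whose bottom level is a short $\sigma$-invariant genus-zero piece containing only $p$, bounded by a pair of $\sigma$-conjugate saddle connections.

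The principal technical obstacle throughout is to ensure that each constructed deformation remains inside the specific component $\cC$ rather than straying into some other hyperelliptic component with the same ramification profile. This is handled by the fact that $\phi$ is a local isomorphism onto its image, so any continuous deformation in $\cQ_{0}(\nu)$ starting at $\phi(X,\omega) \in \phi(\cC)$ lifts uniquely to a continuous deformation in $\cC$.
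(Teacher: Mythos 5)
Your ``only if'' argument is a reasonable sketch, but its pivotal step is asserted rather than proved: from $\sigma^{\ast}\omega=-\omega$ you only get that $\sigma(\gamma)$ has the same period as $\gamma$ (up to sign), and equal period does \emph{not} imply that $\sigma(\gamma)$ is $\fR$-homologous to $\gamma$. What you actually need is that $\sigma$ acts by $-\mathrm{id}$ on $H_1(X\setminus\boldsymbol{p},\boldsymbol{z};\ZZ)/K(\fR)$; this is true on a hyperelliptic \emph{component} (the dimension count behind the local isomorphism $\phi\colon\PP\cC\to\PP\cQ_0(\nu)$ forces the anti-invariant part of the period space to be everything), but it must be invoked, not assumed. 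You should also treat the case of a closed saddle connection separately, where a priori $\sigma|_{\gamma}$ could be a fixed-point-free rotation; it is ruled out because the zero at the endpoint lies on $\gamma$ and is fixed.

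The genuine gaps are in the ``if'' direction and the last claim. First, your concluding sentence ``which by the only-if analysis is multiplicity one'' is a converse error: that analysis shows multiplicity one implies $\sigma$-invariance, not the reverse. A $\sigma$-invariant saddle connection through a Weierstrass point can perfectly well be entangled with other parallel saddle connections (this already happens in hyperelliptic components of holomorphic strata), so you have not produced a multiplicity-one saddle connection, which is the entire content of the proposition; likewise the existence of a deformation of $q$ inside $\cQ_0(\nu)$ creating a saddle connection emanating from the unmarked simple pole $p^{\ast}$ is asserted via ``a small local modification'', whereas on meromorphic surfaces geodesics can escape into polar domains and this needs an actual argument. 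Second, for the case where $\Sigma$ fixes $2g+2$ marked points you simply posit a two-level multi-scale differential in $\partial\overline{\cC}$ with the desired bottom level; proving that such a boundary point lies in the closure of \emph{this} component $\cC$ is exactly the difficulty, and identifying the component containing a plumbed surface normally requires the uniqueness statement \Cref{Prop:hyperunique} (equivalently \Cref{Thm:mainhyper}), which the paper proves only after, and using, the present proposition---so as stated your route is circular. The paper avoids both problems by induction on $\dim\cP(\mu^{\fR})$: it degenerates a surface already in $\cC$ via \Cref{breakintotwo} into hyperelliptic pieces, applies the induction hypothesis (and \Cref{Cor:hyperextension}) to each piece, and plumbs back, with the one-dimensional strata (\Cref{Cor:Bhyperssc}, \Cref{Cor:Dhyperssc}) as explicitly verified base cases; some version of this inductive or boundary-combinatorial input is needed to control multiplicity and the component, and it is missing from your proposal.
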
 

We will use induction on $\dim\cP(\mu^{\fR})>0$ to prove \Cref{Prop:ssc_he} and \Cref{Prop:hyperunique}. For the base cases of $\dim\cP(\mu^{\fR})=1$, above two propositions will be proved in Sections~\ref{Sec:Bsignature}-\ref{Sec:Dsignature}. For now, we will assume these base cases. 

\begin{proof}
    We use induction on $\dim \cP(\mu^{\fR})>0$. Suppose that the proposition holds for the one-dimensional base cases and assume that $\dim \cP(\mu^{\fR})>1$. By \Cref{breakintotwo}, we can degenerate a flat surface in $\cC$ into a boundary $\overline{X}$ where each irreducible component is hyperelliptic. By applying \Cref{breakintotwo} repeatedly, we may assume that $X_{-1}$ is contained in a zero-dimensional stratum. In particular, $X_{-1}$ is a rational curve. Note that each irreducible component has a hyperelliptic involution and the node is fixed by both involutions. By \Cref{Cor:hyperextension}, each irreducible component is contained in a hyperelliptic component of a smaller dimension. 

    Suppose that $\Sigma$ fixes less than $2g+2$ marked points. If the top level component $X_0$ has less than $2g+2$ fixed marked points, by induction hypothesis, $X_0$ can be deformed to contain a multiplicity one saddle connection. So assume that $X_0$ has $2g+2$ fixed marked points. That is, $X_{-1}$ contains two zeroes and no fixed marked poles. Again by induction hypothesis, $X_0$ can be deformed to contain a pair of parallel saddle connections bounding the polar domain of $p$. By shrinking this pair of saddle connections, we obtain a three-level multi-scale differential. By merging two levels other than the top level component, we obtain a rational component $Y_{-1}$ containing $p$. Also $Y_{-1}$ intersect the top component $Y_0$ at two nodes interchanged by the hyperelliptic involution. In particular, $Y_{-1}$ only contains one fixed marked point. So $Y_{-1}$ can be deformed so that it contains a multiplicity one saddle connection. By plumbing, this saddle connection becomes a multiplicity one saddle connection of a flat surface in $\cC$.

    Now suppose that $\Sigma$ fixes $2g+2$ marked points. Then $X_0$ must have $2g+2$ fixed marked points. If $X_0$ contains $p$, then by induction hypothesis, $X_0$ can be deformed to contain a pair of saddle connection bounding the polar domain of $p$. So we assume that $p$ is contained in $X_{-1}$. Since $X_{-1}$ has two fixed poles($p$ and the node), any other fixed pole $p'$ is contained in $X_0$. By induction hypothesis, $X_0$ can be deformed to contain a pair of parallel saddle connections bounding the polar domain of $p'$. By shrinking this pair of saddle connections, we obtain a three-level multi-scale differential. By merging two levels other than the top level component, we obtain a rational component $Y_{-1}$ containing two fixed poles $p$ and $p'$. By induction hypothesis, $Y_{-1}$ can be deformed so that it contains a pair of parallel saddle connections bounding the polar domain of $p$. By plumbing, this pair of saddle connections becomes saddle connections of a flat surface in $\cC$, still bounding the polar domain of $p$. 
\end{proof}

Now we can prove \Cref{Prop:hyperunique}. 

\begin{proof}[Proof of \Cref{Prop:hyperunique}]
    We use induction on $\dim \cP(\mu^\fR)>0$. Suppose that the proposition holds for the base cases and assume that $\dim \cP(\mu^{\fR})>1$. The induction step can be proven identically to the proof of \cite[Prop. 6.18]{lee2023connected}, together with \Cref{Prop:ssc_he}. 
\end{proof}

\section{Existence of multiplicity one saddle connections} \label{Sec:ssc}

In this section, we prove the existence of a flat surface with a multiplicity one saddle connection in every non-hyperelliptic component of $\cP(\mu^\fR)$ of dimension higher than one, except for some special cases. For hyperelliptic components, a similar statement was already proven in \Cref{Prop:ssc_he}. By shrinking the multiplicity one saddle connection, we prove that any non-hyperelliptic component of higher dimension can be obtained by breaking up a zero and bubbling a handle from a non-hyperelliptic component of a one-dimensional stratum (again with a few exceptions).

For single-zero strata with $g=0$, note that $H_1(\mathbb{P}^1;\mathbb{Z})=0$ and each saddle connection of a flat surface is a separating closed curve. In this case, we have the following:

\begin{proposition}\label{Prop:ssc0}
    Assume that $\cC$ is a non-hyperelliptic component of a single-zero stratum $\cP(\mu^\fR)$ with $g=0$ of nonzero dimension. Let $p$ be a non-residueless pole with the smallest order. By relabeling the poles, we may assume that $p=q_1$.
    
    \begin{itemize}
        \item If $\{q_1,q_2\}\in \fR$ for some simple pole $q_2$, then $\cC$ contains a flat surface with a pair of multiplicity two saddle connections $\gamma_1$ and $\gamma_2$, each bounding the polar domain of $q_1$ and $q_2$, respectively.
        \item Otherwise, $\cC$ contains a flat surface with a multiplicity one saddle connection $\gamma$ bounding the polar domain of $p$.
    \end{itemize}
    
In each case, by shrinking the collection of saddle connections given above, we obtain a boundary with a non-hyperelliptic top-level component except in the following cases:
\begin{itemize}
    \item $\cP(\mu^\fR)=\cP(a\mid -2\mid\dots\mid-2\mid-e^2\mid-1,-c)$ of dimension one,
    \item $\cP(\mu^\fR)=\cP(3+c\mid -1^2\mid-1^2\mid-1,-c)$ of dimension two,
\end{itemize}
for $c=1,2$, where $p$ is the simple pole paired with the pole of order $c$.
\end{proposition}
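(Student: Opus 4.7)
I proceed by induction on $d \coloneqq \dim \cP(\mu^{\fR}) \geq 1$.

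For the base case $d = 1$, \Cref{prop:list_one_dim_strat} implies that $\mu^\fR$ has either C-signature or D-signature, since these are the only one-dimensional single-zero signatures with $g=0$. When $\mu^\fR$ is of C-signature, the unique part of size greater than one in $\fR$ has three elements, so no pair of simple poles appears and we are automatically in Case 2; \Cref{Cor:Cssc} then supplies the multiplicity-one saddle connection bounding the polar domain of the smallest-order non-residueless pole, together with the statement on non-hyperellipticity of the top level after shrinking. When $\mu^\fR$ is of D-signature, \Cref{Cor:Dssc} handles both cases uniformly: its first bullet produces the pair of multiplicity-two saddle connections in Case 1 when a pair in $\fR$ consists of simple poles, and its second bullet produces the multiplicity-one saddle connection in Case 2. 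Moreover, the listed exception $\cP(a \mid -2 \mid \dots \mid -2 \mid -e^2 \mid -1, -c)$ is precisely the one-dimensional D-signature case in which \Cref{Cor:Dssc} records that the top-level component after shrinking is forced to be hyperelliptic.

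For the inductive step, assume $d > 1$ and fix a non-hyperelliptic component $\cC$ of $\cP(\mu^{\fR})$. The plan is to first invoke \Cref{breakintotwo} to obtain a two-level multi-scale differential $\overline{X} \in \partial \overline{\cC}$ with a unique irreducible component on each level and with $X_{-1}$ zero-dimensional of the form described in \Cref{zerodim1} or \Cref{zerodim2}; the saddle-connection configuration chosen for the initial shrink is selected to bound the polar domain of a non-residueless pole $p \neq q_1$ (and $p \neq q_2$ in Case 1), so that $q_1$ (and $q_2$) remain on the top level. Then $X_0$ lies in a single-zero genus-zero stratum $S'$ of dimension strictly less than $d$. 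Provided $X_0$ lies in a non-hyperelliptic component of $S'$, the induction hypothesis produces a deformation of $X_0$ with the required saddle-connection configuration bounding the polar domain of $q_1$ (and $q_2$), and the plumbing construction of \Cref{sec:Plumb} (case (i), (ii) or (iii) according to the type of the node joining $X_0$ to $X_{-1}$) then transports this deformation to a nearby flat surface in $\cC$ with the desired configuration.

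The main obstacle is therefore to ensure that some admissible choice of degeneration $\overline{X}$ places $X_0$ in a non-hyperelliptic component of $S'$. Since $\cC$ is non-hyperelliptic, at least one of the two levels of $\overline{X}$ must be non-hyperelliptic; when $X_0$ is forced hyperelliptic for every admissible choice of the auxiliary pole $p$, a case analysis based on \Cref{Thm:mainhyper} and the ramification-profile constraint shows that $\mu^{\fR}$ must coincide with a signature from the exceptional list of the proposition. The non-hyperellipticity of the top level of the final boundary obtained by shrinking the saddle-connection configuration in $\cC$ follows by an analogous analysis applied to that boundary's top-level stratum, again using \Cref{Thm:mainhyper}; the two exceptional families in the proposition are precisely those for which the resulting lower-dimensional stratum admits no non-hyperelliptic component with matching residue data, a condition that can be verified directly against the classifications of \Cref{Sec:Base}.
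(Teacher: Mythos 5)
Your base case and overall skeleton (induction on dimension, \Cref{breakintotwo} plus plumbing) match the paper, but the inductive step has genuine gaps. First, you assume you can choose the initial degeneration so that the bottom-level component avoids $q_1$ (and $q_2$); \Cref{breakintotwo} gives no such control in the $g=0$, $m=1$ case --- its proof produces a saddle connection bounding the polar domain of \emph{some} non-residueless pole, possibly $q_1$ itself. The paper has to work in the opposite direction: it runs a nontrivial reduction (shrinking a configuration in $X_0$ bounding the smallest non-residueless pole of $X_0$ and re-plumbing between levels $-1$ and $-2$) precisely in order to force $p$ \emph{into} a bottom-level one-dimensional piece, so that the final claim about the top level can be read off from the one-dimensional classification (\Cref{Cor:Cssc}, \Cref{Cor:Dssc}). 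Second, your plan requires $X_0$ to lie in a non-hyperelliptic component so that the induction hypothesis applies, and you claim that failure of this forces $\mu^\fR$ into the exceptional list via \Cref{Thm:mainhyper}. That is not true: intermediate top levels can easily be hyperelliptic (e.g.\ landing in $\cP(a'\mid-2\mid\dots\mid-2\mid-e^2)$) for strata far outside the exceptional list, because non-hyperellipticity of $\cC$ only constrains the prong-matching, not the individual levels. The paper handles this by invoking \Cref{Prop:ssc_he} for hyperelliptic $X_0$, which yields a pair of multiplicity-two saddle connections bounding a \emph{fixed residueless} pole --- a different configuration that is then fed back into the degeneration-replumbing loop; your proposal has no substitute for this mechanism.

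Third, the part of the statement that carries real content --- that the final top level can be made non-hyperelliptic except for the two listed families, and in particular the derivation of $\cP(3+c\mid-1^2\mid-1^2\mid-1,-c)$ --- is only asserted ("an analogous analysis\dots verified directly against \Cref{Sec:Base}"). In the paper this requires tracking the exceptional cases of \Cref{Cor:Dssc}: one reduces to a bottom-level D-signature stratum $\cP(a\mid-2\mid\dots\mid-2\mid-b^2\mid-1^2)$ with $b=1$, counts the $k$ residueless double poles remaining in $X_0$, shrinks the pair of saddle connections they bound, and shows the construction succeeds unless $k=0$, which pins down exactly the second exceptional family. Your characterization of the exceptions as "strata whose lower-dimensional stratum admits no non-hyperelliptic component with matching residue data" is neither precise nor established, so as written the proof of the exceptional-case dichotomy is missing.
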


Note that the exceptional cases can be predicted because the top-level components are contained in hyperelliptic strata $\cP(a\mid-2\mid\dots\mid-2\mid-e^2)$, $\cP(2\mid-1^2\mid-1^2)$, or $\cP(4\mid-2\mid-1^2\mid-1^2)$. See \Cref{Prop:Dspin}.

\begin{proof}
    We use induction on $\dim \cP(\mu^{\fR})\geq 1$. The one-dimensional cases are stated in \Cref{Sec:Base}: \Cref{Cor:Bssc} for B-signature, \Cref{Cor:Cssc} for C-signature, and \Cref{Cor:Dssc} for D-signature. The first exceptional case comes from \Cref{Cor:Dssc}.
    
    Now suppose $\dim \cP(\mu^\fR)>1$. Let $\boldsymbol{r}\in\fR$ be the part containing $p$. By \Cref{breakintotwo}, we obtain a two-level boundary $\overline{X}$ with two irreducible components intersecting at one node $s$. By applying \Cref{breakintotwo} repeatedly to the bottom-level component $X_{-1}$, we may assume that $X_{-1}$ is contained in a zero-dimensional stratum. That is, $X_{-1}$ contains exactly two non-residueless poles, say $q$ and $q'$. If both are marked poles, then $\{q,q'\}\in \fR$. Otherwise, one of them, say $q'$, is at the node $s$. On the other hand, the top-level component $X_0$ is contained in a single-zero stratum of dimension $\dim \cP(\mu^{\fR})-1>0$. We apply the induction hypothesis on $X_0$.
    
    First, assume that $p\in X_0$. If $X_0$ contains another pole in $\boldsymbol{r}$, then $p$ is a non-residueless pole in $X_0$. By the induction hypothesis, $X_0$ can be deformed to have a saddle connection bounding the polar domain of $p$. By plumbing the level transition and shrinking this saddle connection, we can reduce to the case $p\in X_{-1}$. If $X_0$ does not contain any other pole in $\boldsymbol{r}$, then $p$ is residueless in $X_0$. Since $X_0$ is contained in a single-zero stratum of dimension $\dim \cP(\mu^{\fR})-1>0$, we can choose a non-residueless pole $x\in X_0$ of the smallest order in another part $\boldsymbol{r'}\in \fR$. By the induction hypothesis, $X_0$ can be deformed to have a multiplicity one saddle connection $\gamma$ bounding $x$, unless $\boldsymbol{r'}=\{x,y\}$ for some $y$ and $x,y$ are both simple, or $X_0$ is hyperelliptic. In these cases, by \Cref{Prop:ssc_he} and the induction hypothesis, $X_0$ can still be deformed to have a pair $\gamma_1$ and $\gamma_2$ of multiplicity two saddle connections, each bounding $x$ and $y$. By plumbing the level transition and shrinking $\gamma$ (or the pair $\gamma_1, \gamma_2$), we obtain a two-level boundary $\overline{Y}$ whose bottom-level component $Y_{-1}$ contains no poles in $\boldsymbol{r}$. By the argument above, we can again reduce to the case $p\in X_{-1}$.
    
    Now assume that $p=q\in X_{-1}$. Let $x$ be a non-residueless pole in $X_0$ with the smallest order. By the induction hypothesis, $X_0$ can be deformed to have a multiplicity one saddle connection $\gamma$ bounding $x$, unless $X_0$ is hyperelliptic, or $\{x,y\}\in \fR$ or $\{x,y,q\}\in \fR$ where both $x,y$ are simple. In these exceptional cases, $X_0$ can still be deformed to have a pair of multiplicity two saddle connections bounding $x$ and $y$. In any case, by shrinking $\gamma$ (or the pair $\gamma_1, \gamma_2$) and plumbing the level transition between level $-1$ and $-2$, the new bottom-level component containing $p$ is contained in a non-hyperelliptic component of a one-dimensional stratum. 
    
    Finally, we have to deal with the exceptional cases in \Cref{Cor:Dssc}. That is, the new bottom-level component is contained in a stratum of D-signature such that the orders of $x,y$ are equal, say $b$, and all $b_i=2$. In this case, $\{p,q'\}\in \fR$ and the new top-level component has a unique zero of order $0$, so it is indeed in $\cP(0,-1^2)$. That is, we can reduce to the case when $X_0$ is contained in a stratum of D-signature $\cP(a\mid -2\mid\dots\mid-2\mid -b^2\mid -1^2)$ and $X_{-1}$ contains no marked residueless poles. Since $x$ is the smallest residueless pole in $X_0$, we must have $b=1$. 
    
    If we have $k$ residueless double poles in $X_0$, we can deform $X_0$ to have a pair of multiplicity two saddle connections bounding the polar domains of $x,y$. By shrinking them and plumbing the level transition between levels $-1$ and $-2$, the new bottom-level component is contained in $\cP(a\mid -2k-2\mid-1^2\mid-1,-c)$ where $c=1,2$ is the order of $q'$. If $-2k-2\neq -2$, then we are done by \Cref{Cor:Dssc}. So $k=0$ and we have $\cP(\mu^\fR)=\cP(3+c\mid -1^2\mid-1^2\mid-1,-c)$ for $c=1,2$, which is excluded.
\end{proof}

For non-hyperelliptic components of strata with $m>1$ zeroes or genus $g>0$, we have:

\begin{theorem} \label{Thm:ssc}
Let $\cP(\mu^\fR)$ be a stratum with $g>0$, or a non-residueless stratum with $g=0$ and $m>1$ zeroes. Suppose that $\dim \cP(\mu^\fR)>0$ and $\mu^\fR\neq (a_1,2(n-2)-a_1\mid-2\mid\dots\mid-2\mid-1^2)$ for any even $a_1$. For any non-hyperelliptic component $\cC$ of $\cP(\mu^\fR)$, the following holds:
\begin{itemize}
    \item If $\cP(\mu^\fR)$ has $m>1$ zeroes, then for any pair $z_i,z_j$ of distinct zeroes, $\cC$ contains a flat surface $(X,\omega)$ with a multiplicity one saddle connection $\gamma$ joining $z_i$ and $z_j$. 
    \item If $g>0$ and $\mu$ has a unique zero, then $\cC$ contains a flat surface $(X,\omega)$ with a multiplicity one saddle connection $\gamma$ such that $[\gamma]\in H_1(X,\mathbb{Z})$ is nontrivial.
\end{itemize}
\end{theorem}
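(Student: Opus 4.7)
The plan is to prove \Cref{Thm:ssc} by induction on $\dim \cP(\mu^\fR) \geq 1$, in parallel with the proof of \Cref{Prop:ssc0}. For the base case $\dim \cP(\mu^\fR) = 1$, the hypotheses force $\mu^\fR$ to be of B-signature (so $m = 2$, $g = 0$) or E-signature ($m = 1$, $g = 1$). In the B-signature case, \Cref{Cor:Bssc} directly provides the required multiplicity-one saddle connection joining the two zeros, and its exceptional case matches exactly the exclusion stated in the theorem. In the E-signature case, the existence of a multiplicity-one saddle connection with nontrivial homology class in each non-hyperelliptic component is established in \cite{lee2023connected}.

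For the inductive step with $m > 1$ and a fixed pair of zeros $z_i, z_j$, I would apply \Cref{breakintotwo} (third bullet) to produce a two-level multi-scale differential $\overline{X} \in \partial \overline{\cC}$ on which both $z_i$ and $z_j$ lie in the bottom component $X_{-1}$. By further applying \Cref{breakintotwo} to the top component, we may assume $X_0$ is zero-dimensional, so that $X_{-1}$ lies in a generalized stratum of dimension $\dim \cP(\mu^\fR) - 1 > 0$ containing $z_i, z_j$. If the component of $X_{-1}$ in that stratum is non-hyperelliptic and not in the excluded signature, the induction hypothesis supplies a deformation of $X_{-1}$ carrying a multiplicity-one saddle connection $\gamma$ joining $z_i$ and $z_j$; after a generic perturbation (\Cref{parallel}) the saddle connection $\gamma$ remains multiplicity one on the surface obtained by plumbing the level transition. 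If $X_{-1}$ instead lies in a hyperelliptic component, we invoke \Cref{Prop:ssc_he} together with an analysis of the action of the hyperelliptic involution on saddle connections to arrange a multiplicity-one saddle connection joining $z_i$ and $z_j$.

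For the case $g > 0$ and $m = 1$, the degeneration scheme is analogous, but now we seek a multiplicity-one saddle connection $\gamma$ with $[\gamma] \neq 0$ in $H_1(X; \mathbb{Z})$. We distribute the genus between $X_0$ and $X_{-1}$ so that at least one level carries positive genus, and apply the induction hypothesis on that level to produce a non-separating multiplicity-one saddle connection there. After plumbing, this saddle connection continues to represent a nontrivial homology class on the full surface, since plumbing a vertical node does not collapse any cycle not bounding the node itself.

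The main obstacle is handling those degenerations whose bottom component falls into the excluded B-signature $(a_1, 2(n-2) - a_1 \mid -2 \mid \dots \mid -2 \mid -1^2)$ with $a_1$ even, or into a hyperelliptic component where \Cref{Prop:ssc_he} does not directly yield a saddle connection joining the prescribed pair of zeros. Since the ambient $\cP(\mu^\fR)$ is not of the excluded form, there must be either genus available to distribute across levels, a pole of order greater than two, an extra zero, or a different pattern of simple poles in $\fR$ that we can exploit to choose an alternative partition of marked points in \Cref{breakintotwo}, yielding a bottom component that sits outside the excluded and hyperelliptic loci. Enumerating these avoidance constructions case by case---so that every bad degeneration can be replaced by a good one without leaving $\cC$---is the most delicate part of the argument.
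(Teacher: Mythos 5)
Your overall strategy (induction on $\dim\cP(\mu^\fR)$ with B- and E-signature base cases via \Cref{Cor:Bssc} and \cite{lee2023connected}) matches the paper, but the inductive step runs in the opposite direction from the paper's and this is where it breaks. You make the \emph{top} level zero-dimensional and apply the theorem to the bottom level containing $z_i,z_j$. However, \Cref{breakintotwo} forces the top to carry only entire parts of $\fR$ and the nodal pole on the bottom to be residueless, so the bottom component can easily land in a stratum to which the induction hypothesis simply does not apply: a residueless stratum of genus zero (excluded from the theorem's hypotheses, and whose components are not even classified in this paper --- cf.\ \Cref{conj:genuszeroresidueless}), the excluded signature $(a_1,2(n-2)-a_1\mid-2\mid\dots\mid-2\mid-1^2)$, or a hyperelliptic component. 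You acknowledge exactly this obstacle in your last paragraph, but the ``avoidance constructions case by case'' you defer are the actual mathematical content of the proof; as written the proposal is an outline whose hardest steps are unexecuted. The paper avoids the problem by reducing the \emph{bottom} level to a zero-dimensional stratum containing the two prescribed zeroes and only residueless poles, and then following the argument of \cite[Thm.~6.1]{lee2023connected} together with \Cref{Prop:ssc_he}, so the induction hypothesis is never invoked on a component outside the theorem's scope.

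The single-zero case $g>0$, $m=1$ has the same defect. Saying ``distribute the genus so that at least one level carries positive genus and apply induction there'' ignores that the positive-genus level may be hyperelliptic --- the case the paper spends essentially all of its effort on. The paper's mechanism is concrete: if the genus-$g$ level is hyperelliptic, deform it via \Cref{Prop:ssc_he} to acquire either a non-separating multiplicity-one saddle connection or a pair of multiplicity-two saddle connections bounding a fixed pole, shrink these to pass to a two-node (Type II) configuration, then plumb a \emph{different} level transition so that the non-hyperellipticity of $\cC$ is transferred to a new genus-zero component with two nodal poles of opposite residues (handled by \Cref{Prop:ssc0}) or to a genus-$(g-1)$ top component (handled by the induction hypothesis), and verify explicitly that the special strata $\cP(2k\mid-2\mid\dots\mid-2\mid-1^2)$ cannot arise there. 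Your appeal to ``an analysis of the action of the hyperelliptic involution on saddle connections'' does not supply this construction, and without it the hyperelliptic branch --- and hence the theorem --- is not proved.
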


Recall that in the exceptional case of a stratum $\cP(a_1,2(n-2)-a_1\mid-2\mid\dots\mid-2\mid-1^2)$, a non-hyperelliptic component $\cC$ with index $n+1\pmod 2$ contains a flat surface with a pair of multiplicity two saddle connections joining two distinct zeroes by \Cref{Cor:Bssc}.

\begin{proof}
We use induction on $\dim \cP(\mu^{\fR})>0$. The induction argument is identical to the proof of the residueless version \cite[Thm~6.1]{lee2023connected}, except that we must take care of strata of the form $\cP(a_1,2(n-2)-a_1\mid-2\mid\dots\mid-2\mid-1^2)$ with even $a_1$. By \Cref{Cor:Bssc}, a non-hyperelliptic component of this stratum with index $n+1\pmod 2$ does not satisfy the theorem. We call such a component {\em special} in this proof.

First, assume that $g>0$ and $m=1$. By \Cref{breakintotwo}, there exists a two-level boundary $\overline{X}\in \partial\overline{\cC}$ with two irreducible components intersecting at one node $s$. If both components are hyperelliptic, then $\cC$ is hyperelliptic, which is excluded. Thus, one of the irreducible components must be non-hyperelliptic. If this non-hyperelliptic component is contained in a stratum of positive genus, then the induction hypothesis applies, and it can be deformed to have a multiplicity one saddle connection with nontrivial homology class. Therefore, we may assume that the non-hyperelliptic component is contained in a zero-dimensional stratum, and the other component of genus $g$ is hyperelliptic.

Suppose that the top level component $X_0$ is hyperelliptic. By \Cref{Prop:ssc_he}, $X_0$ can be deformed to have either a multiplicity one saddle connection with nontrivial homology class or a pair of multiplicity two saddle connections bounding a fixed marked pole. By shrinking these saddle connections, $X_0$ degenerates into a two-level multi-scale differential $\overline{Y}$ with two irreducible components intersecting at two nodes $s_1$ and $s_2$. By plumbing the level transition between level $-1$ and $-2$, we obtain a two-level boundary with a new non-hyperelliptic bottom level component of genus zero. This component still has two non-simple poles at the nodes $s_1$ and $s_2$ with opposite residues. By \Cref{Prop:ssc0}, this bottom level component can be deformed to have a multiplicity one saddle connection $\gamma$ bounding the polar domain of either $s_1$ or $s_2$, whichever has smaller order. After plumbing the level transition, $\gamma$ becomes a multiplicity one saddle connection with nontrivial $[\gamma]\in H_1(X,\mathbb{Z})$.

Suppose now that the bottom level component $X_{-1}$ is hyperelliptic. As before, by \Cref{Prop:ssc_he}, $X_{-1}$ degenerates into a two-level multi-scale differential $\overline{Y}$ with two irreducible components intersecting at two nodes $s_1$ and $s_2$. By plumbing the level transition between level $0$ and $-1$, we obtain a two-level boundary with a new non-hyperelliptic top level component of genus $g-1$. If this top level component were contained in a special component, then $X_0$ would be contained in a stratum of the form $\cP(2k\mid-2\mid\dots\mid-2\mid-1^2)$ of genus zero, which is a contradiction since $X_0$ is in a non-hyperelliptic component. Thus, by the induction hypothesis, this top level component can be deformed to have a multiplicity one saddle connection $\gamma$ joining $s_1$ and $s_2$. By plumbing the level transition, $\gamma$ becomes a multiplicity one saddle connection with nontrivial $[\gamma]\in H_1(X,\mathbb{Z})$.

Now assume that $m\geq 2$. By \Cref{breakintotwo}, we obtain a two-level boundary $\overline{X}$ with two irreducible components intersecting at one node $s$, and the bottom level component $X_{-1}$ contains $z_1$ and $z_2$. If $X_{-1}$ is contained in a stratum of positive dimension, we can repeatedly apply \Cref{breakintotwo} to the bottom level component. Thus, we may assume that $X_{-1}$ is contained in a zero-dimensional stratum; that is, $X_{-1}$ contains exactly two zeroes $z_1$ and $z_2$, and only residueless poles. From this point onward, we can follow the same argument as in the proof of \cite[Thm~6.1]{lee2023connected}, together with \Cref{Prop:ssc_he}.
\end{proof}

\subsection{Breaking up a zero and merging zeroes}

Suppose that there exists a multiplicity one saddle connection on a flat surface $(X,\omega)$ joining two distinct zeroes $z_1,z_2$. Then we can merge $z_1$ and $z_2$ by shrinking this saddle connection. This procedure is the inverse of the surgery called {\em breaking up a zero} first introduced in \cite{kontsevich2003connected}. In \cite{chengendron2022towards}, breaking up a zero is interpreted as a smoothing of a certain multi-scale differential. By repeatedly merging pairs of zeroes, we can eventually merge all zeroes and obtain a single-zero flat surface. For a given $\mu$, let $a=a_1+\dots+a_m$ and let $\mu'$ be the corresponding single-zero signature with a unique zero of order $a$. We say that a component $\cC$ of $\cP(\mu^\fR)$ is {\em adjacent} to a component $\cD$ of $\cP(\mu'^{\fR})$ if $\cC$ can be obtained by breaking up the zero from $\cD$. 

\begin{lemma} \label{lm:mergezeroes}
    Let $\cP(\mu^\fR)$ be any stratum with $g>0$, or a non-residueless stratum with $g=0$. Every non-hyperelliptic component of $\cP(\mu^\fR)$ is adjacent to some non-hyperelliptic component of the corresponding single-zero stratum $\cP(\mu'^{\fR})$, except in the following cases:
    \begin{itemize}
        \item $(a\mid-2\mid\dots\mid-2\mid-b^2)$ with genus zero;
        \item $(2\mid-1^2\mid-1^2)$ with genus zero;
        \item $(4\mid-2\mid-1^2\mid-1^2)$ of even spin with genus zero;
        \item $(2n\mid-2\mid\dots\mid-2)$ with genus one.
    \end{itemize}
\end{lemma}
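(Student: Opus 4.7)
The plan is to induct on the number of zeros $m$; the case $m=1$ is immediate since then $\mu = \mu'$ and $\cC$ is itself a component of $\cP(\mu'^\fR)$. For $m \geq 2$, the strategy is to shrink a single multiplicity-one saddle connection joining two zeros, thereby merging them into one, and then invoke the induction hypothesis on the resulting top-level component.

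Given a non-hyperelliptic component $\cC \subset \cP(\mu^\fR)$ with $m \geq 2$, I would first apply \Cref{Thm:ssc} (or \Cref{Prop:ssc0} when $g=0$) to produce a flat surface $(X,\omega) \in \cC$ carrying a multiplicity-one saddle connection $\gamma$ joining two zeros, say $z_1$ and $z_2$. Shrinking $\gamma$ yields a two-level multi-scale differential in $\partial \overline{\cC}$ whose bottom level $X_{-1}$ lies in the zero-dimensional and therefore hyperelliptic stratum $\cP(a_1,a_2,-(a_1+a_2+2))$, and whose top level $X_0$ lies in the stratum $\cP(\tilde\mu^\fR)$ obtained by merging $z_1$ and $z_2$ into a single zero of order $a_1+a_2$.

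The crucial step is to argue that the component $\cC'$ of $\cP(\tilde\mu^\fR)$ containing $X_0$ is non-hyperelliptic, so that the induction hypothesis applies. If $\cC'$ were hyperelliptic, then both $X_0$ and $X_{-1}$ would admit hyperelliptic involutions compatible with $\fR$; the prong-matching description of the plumbing construction in \Cref{sec:Plumb} then lets one check that these involutions glue across the node, making the smoothing $X \in \cC$ hyperelliptic by \Cref{Cor:hyperextension}, which contradicts the non-hyperellipticity of $\cC$. This involution-compatibility check, which is essentially the content of the exception clause in the proof of \Cref{Thm:ssc} / \Cref{Prop:ssc0}, is what I expect to be the main technical obstacle. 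Assuming $\cC'$ is non-hyperelliptic and $\tilde\mu^\fR$ is not in the exception list, transitivity of adjacency combined with the induction hypothesis applied to $\cC'$ yields the desired non-hyperelliptic target $\cD \subset \cP(\mu'^\fR)$.

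Finally, I would handle two residual situations. First, when $\mu^\fR = (a_1, 2(n-2)-a_1 \mid -2 \mid \dots \mid -2 \mid -1^2)$ with even $a_1$ and $\cC$ of index $n+1 \pmod 2$, \Cref{Thm:ssc} fails to supply a multiplicity-one saddle connection; I instead use \Cref{Cor:Bssc} to shrink a pair of multiplicity-two saddle connections joining $z_1$ and $z_2$, and analyze the resulting single-zero genus-one top level via \Cref{Enonhyper}, which produces a non-hyperelliptic target except precisely when $\mu'^\fR$ matches the fourth exception of the lemma. Second, whenever $\mu'^\fR$ itself lies in the exception list, one verifies directly that $\cP(\mu'^\fR)$ contains no non-hyperelliptic component — via \Cref{zerodim2} for the first case, \Cref{Prop:Dspin} for the second and third cases, and \Cref{Enonhyper} for the fourth case — so no target $\cD$ exists and the lemma's exception list is seen to be exhaustive.
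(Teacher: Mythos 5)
Your overall skeleton (merge two zeros along a multiplicity-one saddle connection supplied by \Cref{Thm:ssc}, then induct on $m$) matches the paper, but the step you yourself flag as crucial is not just a technical obstacle — it is wrong as stated, and it is exactly where the lemma's real difficulty lies. You claim that if the merged component $\cC'$ containing the top level $X_0$ were hyperelliptic, then the involutions of $X_0$ and of $X_{-1}\in\cP(a_1,a_2,-(a_1+a_2+2))$ would glue and force $\cC$ to be hyperelliptic, a contradiction. But when $a_1\neq a_2$ the bottom level admits no involution with $\sigma^*\omega=-\omega$ at all (it would have to fix $z_1$, $z_2$ and the pole, three points of $\PP^1$), and even when both levels are hyperelliptic the prong-matching need not be compatible with the two involutions, so hyperellipticity of the smoothing does not follow. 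Hence no contradiction arises: it is entirely possible — and is the generic hard case — that $\cC$ is non-hyperelliptic while the merged single-zero component is hyperelliptic, and your induction has no way to proceed there.

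The paper's proof is devoted precisely to this situation. For $m>3$ nothing is needed (hyperelliptic components have at most two zeros); for $m=3$, if the merged double-zero component is hyperelliptic (so $a_1+a_2=a_3$), one merges a different pair of zeros, producing zeros of distinct orders and hence a non-hyperelliptic target; for $m=2$ with $a_1<a_2$ and a hyperelliptic single-zero target $\cD$, one degenerates $\cD$ via \Cref{breakintotwo} and \Cref{Thm:mainhyper}, breaks up the zero of the bottom-level component, applies the induction hypothesis to the resulting positive-dimensional B-signature bottom level, and chases the exceptional strata $\cP(a_1,a_2\mid-2\mid\dots\mid-2\mid-b^2)$, $\cP(a_1+a_2\mid-1^2\mid-b^2)$ and $\cP(1,1\mid-1^2\mid-1^2)$; this chase is exactly how the exception $(a\mid-2\mid\dots\mid-2\mid-b^2)$ of genus zero enters the statement, together with \Cref{Cor:Bssc} for the one-dimensional case. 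Your proposal contains none of this, so as written it has a genuine gap; the correct observation you do make (verifying directly that the listed $\mu'^{\fR}$ have no non-hyperelliptic components) only explains why the exceptions must be excluded, not why the list is complete.
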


Remark that by \Cref{zerodim2}, \Cref{Prop:Dspin}, and \cite[Thm~1.7]{lee2023connected}, for the exceptional cases, $\cP(\mu'^{\fR})$ has {\em no} non-hyperelliptic components. 

\begin{proof}
    For residueless strata with $g>1$, this lemma is proven in \cite[Prop~9.1]{lee2023connected}. So assume that $\cP(\mu^\fR)$ is a non-residueless stratum. Let $\cC$ be a non-hyperelliptic component of $\cP(\mu^\fR)$. By \Cref{Thm:ssc}, $\cC$ contains a flat surface with a multiplicity one saddle connection joining two distinct zeroes $z_1$ and $z_2$. Thus, $\cC$ is adjacent to some connected component $\cD$ of a stratum with $m-1$ zeroes. In order to repeat merging zeroes, we need to prove that $\cD$ can be chosen to be non-hyperelliptic. Since hyperelliptic components have at most two zeroes, this is obvious if $m>3$. Assume the contrary, that $\cD$ is always hyperelliptic and $m\leq 3$.
    
    First, suppose $m=3$ and that $\cD$ is a double-zero hyperelliptic component. In particular, two zeroes have the same order. If the first zero was obtained by merging $z_1$ and $z_2$ of $\cC$, then we have $a_1+a_2=a_3$. By \Cref{Thm:ssc}, $\cC$ contains a flat surface with a multiplicity one saddle connection joining $z_2$ and $z_3$. Thus, we can merge $z_2$ and $z_3$ instead and obtain a flat surface with two zeroes of orders $a_1$ and $a_2+a_3$. Since they have distinct orders, this flat surface is not hyperelliptic. That is, $\cD$ can be chosen to be non-hyperelliptic.
    
    Now suppose $m=2$ and that $\cD$ is a single-zero hyperelliptic component. If $a_1=a_2$, then $\cC$ is also a hyperelliptic component, a contradiction. So we have $a_1 < a_2$. If $\dim \cP(\mu^\fR)=1$, then it is a one-dimensional stratum of B-signature. These cases are proven in \Cref{Cor:Bssc}, except when $\mu'=(a\mid-2\mid\dots\mid-2\mid-b^2)$. If $\dim \cP(\mu^\fR)>1$, then $\dim \cP(\mu'^\fR)>0$, and by \Cref{breakintotwo}, there exists a two-level multi-scale differential $\overline{X}\in \partial\overline{\cD}$ with two irreducible components intersecting at one node. By breaking up the zero of the bottom level component $X_{-1}$, we can obtain an element $\overline{Y'}$ of $\partial \overline{\cC}$. Now the bottom level component $Y'_{-1}$ is contained in a non-hyperelliptic component with positive dimension. If the induction hypothesis applies to $Y'_{-1}$, then we can merge these zeroes to obtain a non-hyperelliptic single-zero flat surface, completing the proof. The induction hypothesis does not apply only if the bottom level component is contained in $\cP(a_1,a_2\mid-2\mid\dots\mid-2\mid-b^2)$ with genus zero. In particular, $\mu^{\fR}$ has a pair of poles of order $b$ and the ramification profile of $\cD$ interchanges these poles. By \Cref{Thm:mainhyper}, there exists a two-level multi-scale differential $\overline{W}\in \partial\overline{\cD}$ with the bottom level component $W_{-1}\in \cP(a_1+a_2\mid-(a_1+a_2-2b+2)\mid-b^2)$. Thus, we reduce to the stratum $\cP(a_1,a_2\mid-(a_1+a_2-2b+2)\mid-b^2)$ of B-signature. This stratum satisfies the lemma unless $a_1+a_2-2b+2=2$. That is, $W_0$ has a zero of order zero at the node and therefore $W_0\in \cP(0,-1,-1)$. Thus, $\cD$ is the hyperelliptic component of $\cP(a_1+a_2\mid-1^2\mid-b^2)$. Again, by \Cref{Thm:mainhyper}, there exists $\overline{W'}\in \partial\overline{\cD}$ with the bottom level component $W'_{-1}\in \cP(a_1+a_2\mid-2b\mid-1^2)$. This stratum satisfies the lemma unless $b=1$ and $a_1=a_2=1$. However, $\cP(1,1\mid-1^2\mid-1^2)$ is a hyperelliptic stratum and thus $\cC$ is hyperelliptic, a contradiction.
\end{proof}

In other words, the breaking up the zero map $$
B:\{\text{non-hyperelliptic components of }\cP(\mu')\} \to \{\text{non-hyperelliptic components of }\cP(\mu)\}
$$ is surjective for any stratum with $g>0$, or a non-residueless stratum with $g=0$ except some special cases. 

\subsection{Bubbling and unbubbling a handle}

Bubbling a handle, the second surgery introduced in \cite{kontsevich2003connected}, is also interpreted as a smoothing of a certain multi-scale differential in \cite{chengendron2022towards}. 

Let $\cP(\mu^{\fR})$ be a single-zero stratum with $\dim\cP(\mu^{\fR})>1$. Consider $\mu_0=(a-2, -b_1,\dots,-b_n)$. Then $\cP(\mu_0^{\fR})$ is a stratum with genus $g-1$ and $\dim\cP(\mu_0^{\fR})=\dim\cP(\mu^{\fR})-2$. Let $\cD$ be a connected component of $\cP(\mu_0^{\fR})$ and $X_0\in \cD$. Consider another stratum $\cP(a\mid-a\mid-1^2)$ with genus one. An element of this stratum is determined by the angle $2\pi s$, $1\leq s\leq a-1$, between two half-infinite cylinders corresponding to two simple poles. Let $X_{-1}\in \cP(a\mid-a\mid-1,-1)$ be the flat surface with angle $2\pi s$. By identifying the unique zero of $X_0$ and the residueless pole of $X_{-1}$, we obtain a two-level multi-scale differential $\overline{X}$. By plumbing the level transition and the pair of simple poles, we obtain a flat surface in $\cP(\mu^{\fR})$. The connected component $\cC$ containing this flat surface is denoted by $\cD \oplus s$. We also say that $\cC=\cD \oplus s$ is obtained by {\em bubbling a handle} from $\cD$. 

The inverse of this surgery, unbubbling a handle, can also be realized as shrinking two multiplicity one saddle connections simultaneously. 

\begin{lemma}\label{lm:unbubble}
    Let $\cP(\mu^{\fR})$ be a single-zero stratum of genus $g>0$ with $\dim \cP(\mu^{\fR})>1$. Then every non-hyperelliptic component $\cC$ of $\cP(\mu^\fR)$ can be obtained by bubbling a handle from some connected component $\cD$ of a stratum with genus $g-1$. That is, $\cC=\cD\oplus s$ for some $1\leq s\leq a-1$. Furthermore, $\cD$ can be chosen to be also non-hyperelliptic except for the following cases:
    \begin{itemize}
        \item $(2n+2\mid-2\mid\dots\mid-2)$ with genus two;
        \item $(4\mid-1^2\mid-1^2)$ with genus one;
        \item $(a\mid-2\mid\dots\mid-2\mid-b^2)$ with genus one.
    \end{itemize}
\end{lemma}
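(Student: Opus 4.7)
The plan is to construct an explicit unbubbling by shrinking a well-chosen multiplicity-one saddle connection on a flat surface in $\cC$. Since $\cP(\mu^\fR)$ is a single-zero stratum with $g>0$ and $\dim \cP(\mu^\fR)>1$, \Cref{Thm:ssc} applies with no exception and supplies a flat surface $(X,\omega)\in\cC$ carrying a multiplicity-one saddle connection $\gamma$ whose class $[\gamma]\in H_1(X,\ZZ)$ is nontrivial. Because $\mu$ has a unique zero, $\gamma$ is a closed loop at that zero, and non-triviality of $[\gamma]$ forces $\gamma$ to be non-separating; thus its $\fR$-homologous class is a Type II configuration of multiplicity one.

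Shrinking this configuration via the contraction flow yields a two-level multi-scale differential $\overline{X}\in\partial\overline{\cC}$, and the next step is to analyze its enhanced level graph. Cutting $X$ along $\gamma$ produces a single subsurface with two loop boundary components; after collapse, this subsurface becomes the top-level component $X_0$, while the two loops bound a genus-zero polar region that becomes the bottom-level component $X_{-1}$. By the plumbing dictionary of \Cref{sec:Plumb}, $X_{-1}$ carries one pole of order $a$ paired with $X_0$'s order-$(a-2)$ zero at a vertical node of enhancement $a-1$, together with two simple poles of opposite residues that record the two sides of the collapsed slit. Hence $X_{-1}\in\cP(a\mid-a\mid-1^2)$ and $X_0\in\cP(\mu_0^\fR)$, so $\cC=\cD\oplus s$ where $\cD$ is the component of $\cP(\mu_0^\fR)$ containing $X_0$ and $s\in\{1,\dots,a-1\}$ is the prong-matching parameter recorded by $X_{-1}$.

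For the refinement that $\cD$ can be chosen non-hyperelliptic, I would argue that whenever $\cP(\mu_0^\fR)$ admits any non-hyperelliptic component $\cD'$, one can realize $\cC$ as $\cD'\oplus s'$ for a suitable $s'$: start from any $Y_0\in\cD'$, bubble a handle of parameter $s'$, and compare the resulting topological invariants (spin parity, rotation number, or index, as appropriate by \Cref{Sec:Nonhyper}) with those of $\cC$ to conclude that the bubbled surface lies in $\cC$. The three exceptional signatures are precisely those for which $\cP(\mu_0^\fR)$ admits no non-hyperelliptic component at all: for $\mu=(2n+2\mid-2\mid\dots\mid-2)$ of genus two the reduced stratum is the residueless elliptic stratum $\cR(2n,-2^n)$, excluded by \Cref{Enonhyper}; for $\mu^\fR=(4\mid-1^2\mid-1^2)$ of genus one the reduced stratum is $\cP(2\mid-1^2\mid-1^2)$, excluded by \Cref{Prop:Dspin}; and for $\mu^\fR=(a\mid-2\mid\dots\mid-2\mid-b^2)$ of genus one the reduced stratum is a zero-dimensional genus-zero stratum in which every flat surface is hyperelliptic, by direct inspection using \Cref{zerodim2} and \Cref{Thm:mainhyper}.

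The main obstacle is this last invariant-matching step: an arbitrary $(X,\omega)$ produced by \Cref{Thm:ssc} may well unbubble into a hyperelliptic top level even when $\cP(\mu_0^\fR)$ has non-hyperelliptic components available, and the argument must prove that every target non-hyperelliptic $\cC$ is realized by bubbling from at least one non-hyperelliptic $\cD$ with matched invariants. This requires careful tracking of how spin parity, rotation number, and index transform under the bubble operation, drawing on Theorems \ref{Thm:mainhigh}, \ref{Thm:main10}, and \ref{Thm:main01} as the essential combinatorial input.
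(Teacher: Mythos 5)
There are two genuine gaps. First, your description of the degeneration is not what shrinking the saddle connection produced by \Cref{Thm:ssc} actually gives. Collapsing a single multiplicity-one, closed, non-separating saddle connection based at the unique zero yields a Type~II boundary whose level graph is a ``banana'': the bottom level is a genus-zero component carrying the order-$a$ zero and two \emph{nodal} poles of orders $\kappa_1+1,\kappa_2+1$ with $\kappa_1+\kappa_2=a$ and opposite residues, while the top level is a genus $g-1$ component with \emph{two} nodal zeroes of orders $\kappa_1-1,\kappa_2-1$. It is not the bubbling configuration $X_{-1}\in\cP(a\mid-a\mid-1^2)$, $X_0\in\cP(\mu_0^\fR)$ that you assert (that configuration has a horizontal node in addition to the vertical one, and corresponds to shrinking \emph{two} saddle connections). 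Consequently $X_0$ lies in a double-zero genus $g-1$ stratum, and to conclude $\cC=\cD\oplus s$ you still need to merge the two nodal zeroes, i.e.\ invoke \Cref{lm:mergezeroes}; this step is absent from your argument, and it is precisely where the exceptional double-zero stratum $(a_1,a_2\mid-2\mid\dots\mid-2\mid-b^2)$ of \Cref{lm:mergezeroes} enters and produces the genus-one exception $(a\mid-2\mid\dots\mid-2\mid-b^2)$ of the lemma. So even the existence part of the statement is not established as written.

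Second, your strategy for the refinement is circular. You propose that whenever $\cP(\mu_0^\fR)$ has a non-hyperelliptic component $\cD'$, one bubbles from $\cD'$ and matches spin parity, rotation number, or index using \Cref{Thm:mainhigh}, \Cref{Thm:main10}, \Cref{Thm:main01}; but those classification theorems are proved in \Cref{Sec:Nonhyper} \emph{using} this lemma (the single-zero cases are deduced by repeated unbubbling), so they cannot be used as input here. The paper avoids this by staying entirely inside $\partial\overline{\cC}$: when the top level $X_0$ of the banana boundary happens to be hyperelliptic, it is further deformed via \Cref{Prop:ssc_he} (a multiplicity-one saddle connection joining the two nodal zeroes, or a pair of multiplicity-two saddle connections around a fixed pole), the resulting configuration is shrunk and re-plumbed so that the new bottom level lies in $\cP(a,-a)$ or $\cP(a,-b,-(a-b))$, and a result of the residueless paper (changing the prong orders at the two nodes) makes the new top level non-hyperelliptic; only then are the zeroes merged via \Cref{lm:mergezeroes}. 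You would need an argument of this constructive kind (or some other non-circular one) for the case where the boundary point you first reach has hyperelliptic top level.
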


Remark that for the exceptional cases, $\cP(\mu_0^{\fR})$ has {\em no} non-hyperelliptic component.

\begin{proof}
    If $\cP(\mu^\fR)$ is a residueless stratum, then this lemma is proven in \cite[Lemma~10.1]{lee2023connected}. So assume that $\cP(\mu^\fR)$ is non-residueless. By \Cref{Thm:ssc}, there exists a flat surface in $\cC$ with a multiplicity one saddle connection $\gamma$ with nontrivial homology class $[\gamma]$. By shrinking $\gamma$, we degenerate to a two-level multi-scale differential $\overline{X}$ in the principal boundary of Type II with two irreducible components intersecting at two nodes $s_1$ and $s_2$. 
    
    If the top level component $X_0$ is non-hyperelliptic, then by \Cref{lm:mergezeroes}, we can merge two zeroes at $s_1$ and $s_2$ to obtain a flat surface contained in some connected component $\cD$ of a genus $g-1$ stratum. Thus we can deduce that $\cC=\cD\oplus s$ for some $1\leq s\leq a-1$. Moreover, if $X_0$ is not contained in a stratum $\cP(a_1,a_2\mid-2\mid\dots\mid-2\mid-b^2)$ of D-signature, $\cD$ can be chosen to be also non-hyperelliptic. If $X_0$ is in this stratum, then $\cC$ is a component of $\cP(a\mid-2\mid\dots\mid-2\mid-b^2)$ with genus one, which is an exception.

    Now assume that $X_0$ is contained in a hyperelliptic component. In particular, $s_1$ and $s_2$ are zeroes of the same order $\frac{a-2}{2}$ in $X_0$. Suppose first that $X_0$ has fewer than $2(g-1)+2$ fixed marked points. By \Cref{Prop:ssc_he}, $X_0$ can be deformed to have a multiplicity one saddle connection joining $s_1$ and $s_2$. By shrinking this saddle connection and plumbing the level transition between levels $-1$ and $0$, we obtain a two-level multi-scale differential $\overline{X}$ with the non-hyperelliptic bottom level component $X_{-1}\in\cP(a,-a)$. By \cite[Prop~7.15]{lee2023connected}, $X_{-1}$ can degenerate to a two-level multi-scale differential with two irreducible components intersecting at two nodes, such that the number of prongs at the two nodes are distinct. By plumbing the level transition between level $0$ and $-1$, we now have another two-level multi-scale differential $\overline{X'}$ so that $X'_0$ has two zeroes of distinct order at the nodes. In other words, we may assume that $X_0$ is not hyperelliptic, as in the previous paragraph. 

    Finally, suppose that $X_0$ has $2(g-1)+2$ fixed marked points. Then by \Cref{Prop:ssc_he}, $X_0$ can be deformed to have a pair of multiplicity two saddle connections bounding the polar domain of a fixed marked pole, say $p$, of order $b$. By shrinking these saddle connections and plumbing the level transition between levels $-1$ and $0$, we obtain a two-level multi-scale differential $\overline{X}$ with the non-hyperelliptic bottom level component $X_{-1}\in \cP(a,-b,-(a-b))$. Again, since \cite[Prop~7.15]{lee2023connected} applies to $X_{-1}$, we may assume that $X_0$ is non-hyperelliptic.
\end{proof}

By applying the above lemma repeatedly, we deduce that any non-hyperelliptic component $\cC$ of a single-zero stratum $\cP(\mu^{\fR})$ with genus $g>0$ and $\dim\cP(\mu^{\fR})>2g-1$ can be written as
\[
\cC_0\oplus s_1\oplus \dots\oplus s_g,
\]
for some connected component $\cC_0$ of a stratum with genus zero and some integers $1\leq s_i\leq a-2(g-i)-1$. Moreover, $\cC_0$ can be chosen to be non-hyperelliptic except in the following cases:
\begin{itemize}
    \item $(2n+2g-4\mid-2\mid\dots\mid-2)$;
    \item $(2g+2\mid-1^2\mid-1^2)$;
    \item $(a\mid-2\mid\dots\mid-2\mid-b^2)$,
\end{itemize}
where we reach the exceptional cases in the above lemma.

\section{Non-hyperelliptic components} \label{Sec:Nonhyper}

In this section, we classify non-hyperelliptic components of any stratum $\cP(\mu^{\fR})$, proving \Cref{Thm:mainhigh}--\ref{Thm:main00} and \Cref{Thm:main01}.

\subsection{Strata of genus zero}

In order to prove \Cref{Thm:main00} in general, we need the following setting. Assume that $\cP(\mu^{\fR})$ is a single-zero stratum. Then, by relabeling the poles if necessary, there is a part $\{p_1,\dots,p_{\ell}\}$ of $\fR$ with cardinality $\ell>1$. Since $\fR$ does not have a pair of simple poles, we may also assume that $p_{\ell}$ is not a simple pole.

We can consider another stratum $\cP(\mu_{-p_1}^{\fR})$, where $\mu_{-p_1}$ is obtained by omitting $b_1$ and replacing $a_1$ by $a_1-b_1$ from $\mu$. By abuse of notation, we denote by $\fR$ the induced residue condition on $\mu_{-p_1}$. More precisely, it consists of $\{p_2,\dots,p_{\ell}\}$ and other parts of $\fR$. Since $p_{\ell}$ is not simple, $\cP(\mu_{-p_1}^{\fR})$ is a nonempty stratum of dimension $\dim \cP(\mu^{\fR})-1$.

\begin{proof}[Proof of \Cref{Thm:main00} for single-zero strata]
    We use induction on $\dim \cP(\mu^{\fR})$. If $\dim \cP(\mu^{\fR})=1$, then this is a base case of C- or D-signatures. By \Cref{Prop:Cconnected} and \Cref{Prop:Dnonhyper}, it has a unique non-hyperelliptic component.
    
    Assume that $\dim \cP(\mu^{\fR})>1$ and $\cP(\mu^{\fR})$ is \emph{not} residueless. Let $\cC$ be any non-hyperelliptic component. If $m>1$, by \Cref{lm:mergezeroes}, $\cC$ is adjacent to a non-hyperelliptic component $\cD$ of another stratum with $m-1$ zeroes. By the induction hypothesis, $\cD$ is unique, and therefore $\cC$ is also unique. If $m=1$, by \Cref{Prop:ssc0}, $\partial \overline{\cC}$ contains a two-level boundary $\overline{X}$ with a non-hyperelliptic top level component. By the induction hypothesis, the top level component is contained in a unique non-hyperelliptic component. Also, the bottom level component is contained in $\cP(a,-b_1,-a-2+b_1)$, which is connected. Therefore, $\cC$ is unique.
\end{proof}

\subsection{Strata without a simple pole}

First, we consider the strata \emph{without} any simple poles. We prove the following.

\begin{theorem}\label{allcontainresidueless}
    Suppose all $b_i>1$, $g>0$, and $\mu\neq (2n,-2^n)$ or $(n,n,-2^n)$. Then any non-hyperelliptic component of $\cP(\mu^{\fR})$ contains a non-hyperelliptic residueless flat surface.
\end{theorem}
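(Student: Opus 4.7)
The plan is to induct on $\dim \cP(\mu^\fR)$. For the base case, since $g>0$, only the E-signature from \Cref{prop:list_one_dim_strat} is relevant; there $\cP(\mu^\fR)=\cR(\mu)$, so every flat surface is already residueless, and the existence of a non-hyperelliptic component (provided $\mu\neq(2n,-2^n)$) follows directly from \Cref{Enonhyper}.

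For the inductive step, let $\cC$ be a non-hyperelliptic component of $\cP(\mu^\fR)$ with $\dim \cP(\mu^\fR)>1$. When $m>1$, I would invoke \Cref{lm:mergezeroes} to obtain a non-hyperelliptic component $\cD$ of the single-zero merge $\cP(\mu'^\fR)$ that is adjacent to $\cC$. The merged signature $\mu'$ preserves $g$, $\fR$ and the pole orders, so the inductive hypothesis yields a non-hyperelliptic residueless $Y\in\cD$. Breaking up a zero of $Y$ is a local surgery supported in an arbitrarily small neighborhood of the zero and therefore leaves every polar residue untouched, producing a residueless $X\in\cC$. Moreover, any hyperelliptic involution on $X$ would descend, in the limit of shrinking the breaking configuration, to a hyperelliptic involution on $Y$; since $Y$ is non-hyperelliptic, so is $X$. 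The case $m=1$ with $g>1$ is treated analogously through \Cref{lm:unbubble}, writing $\cC=\cD\oplus s$ for a non-hyperelliptic component $\cD$ of a single-zero stratum of genus $g-1$ with the same pole orders. Handle-bubbling glues a genus-one piece $X_{-1}\in\cP(a\mid-a\mid-1,-1)$ whose two simple poles are plumbed together, so no marked pole acquires a new residue and residuelessness is preserved; the same limiting argument as in the merging case shows that a non-hyperelliptic $Y$ bubbles up to a non-hyperelliptic $X$. The remaining case $m=1$, $g=1$ is the base case.

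The main obstacle lies in the exceptional signatures of \Cref{lm:mergezeroes} and \Cref{lm:unbubble}. The only merging exception compatible with our hypotheses ($g>0$, all $b_i>1$) is the genus-one single-zero signature $(2n\mid-2\mid\cdots\mid-2)$, i.e.\ $\mu'=(2n,-2^n)$: for such a merge the target stratum has no non-hyperelliptic component at all, so \Cref{lm:mergezeroes} cannot supply the required $\cD$. If $\mu$ itself is multi-zero and merges fully to $(2n,-2^n)$, then $\mu=(a_1,\ldots,a_m,-2^n)$ with $\sum a_i=2n$ and $g=1$; the sub-case $(a_1,a_2)=(n,n)$ is precisely the excluded $(n,n,-2^n)$, while for any other partition one can merge a pair of zeros of distinct orders first, landing in a multi-zero signature that is outside the exception list and to which the inductive hypothesis applies. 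Likewise, the only unbubbling exceptions relevant here are $(2n+2\mid-2\mid\cdots\mid-2)$ of genus two and $(a\mid-2\mid\cdots\mid-2\mid-b^2)$ of genus one; these I would dispatch by a direct construction, using \Cref{lm:univ_prin_bdry} to approach a Type II principal boundary whose bottom-level component is a one-dimensional stratum controlled by \Cref{Sec:Base}, and then explicitly plumbing via the construction of \Cref{sec:Plumb} to realize a non-hyperelliptic residueless representative inside $\cC$. The delicate bookkeeping of which exceptional partitions can or cannot be avoided by a suitable choice of intermediate surgery is the step I expect to require the most care.
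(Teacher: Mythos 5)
Your reduction steps for $m>1$ (merging zeroes via \Cref{lm:mergezeroes}) and for $m=1$, $g>1$ (unbubbling via \Cref{lm:unbubble}) coincide with the paper's argument, but the proposal has a genuine gap exactly where the paper does its real work: the case $m=1$, $g=1$. You declare this to be ``the base case,'' but it is not. A genus-one single-zero stratum with a non-trivial residue condition (some part of $\fR$ of size $\geq 2$) is non-residueless and has dimension $\geq 2$, so it is not among the one-dimensional E-signature strata, and neither of your reduction moves applies to it: there are no zeroes to merge, and unbubbling lands in genus zero, where the theorem is not even stated and where, more fundamentally, bubbling a handle (or a pair of simple poles) never changes the residues at the marked poles, so a residueless genus-one surface can never be produced from a genus-zero surface carrying nonzero residues. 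This is precisely why the paper treats $g=1$, $m=1$ by a different mechanism: it shrinks a multiplicity-one saddle connection with nontrivial homology class (\Cref{Thm:ssc}) to reach a Type II boundary, analyzes the genus-zero top-level component (separately in the hyperelliptic and non-hyperelliptic top-level cases, using \Cref{Thm:mainhyper} resp.\ \Cref{Thm:main00} to move within its component), reduces to a B-signature stratum, invokes \Cref{Cor:Bresidueless} -- which only applies under the numerical condition $n\leq\kappa_1$ -- and then runs a further degeneration argument (via the genus-one residueless results of \cite{lee2023connected}) to arrange $\kappa_2'-\kappa_1'\leq b_1$ so that this condition holds. None of this is in your proposal, and your ``direct construction'' for the unbubbling exception $(a\mid-2\mid\dots\mid-2\mid-b^2)$ of genus one sits squarely inside this missing case.

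A secondary issue: your fix for the exceptional merge $(2n,-2^n)$ is circular. Merging a pair of zeroes of distinct orders in $\mu=(a_1,\dots,a_m,-2^n)$ only postpones the problem; the recursion eventually reaches a two-zero signature $(c_1,c_2,-2^n)$ with $c_1\neq c_2$, whose single-zero merge is again the excluded $(2n,-2^n)$ (whose non-hyperelliptic components contain no residueless surface at all, since $\cR(2n,-2^n)$ is entirely hyperelliptic), and your scheme offers no alternative reduction for that two-zero case. Handling surfaces with all double poles requires an argument that does not pass through the full single-zero merge, and this needs to be supplied explicitly.
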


If $\mu=(2n,-2^n)$ or $(n,n,-2^n)$, then the residueless strata $\cR(\mu)$ are hyperelliptic, so this theorem is obviously false. We have the following immediate

\begin{corollary}\label{reductiontoresidueless}
    Suppose all $b_i>1$, $g>0$, and $\mu\neq (2n,-2^n)$ or $(n,n,-2^n)$. The number of non-hyperelliptic components of a non-residueless stratum $\cP(\mu^{\fR})$ is equal to that of a residueless stratum $\cR(\mu)$.
\end{corollary}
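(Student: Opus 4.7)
The plan is to deduce the corollary by establishing, via the natural inclusion $\iota : \cR(\mu) \hookrightarrow \cP(\mu^{\fR})$, a bijection between non-hyperelliptic components on the two sides. Surjectivity of the induced map $\iota_*$ on $\pi_0$, restricted to non-hyperelliptic components, is immediate from \Cref{allcontainresidueless}: given a non-hyperelliptic component $\cC$ of $\cP(\mu^{\fR})$, the theorem supplies a non-hyperelliptic residueless surface $(X,\omega) \in \cC$, so the component $\cC^R$ of $\cR(\mu)$ containing $(X,\omega)$ maps into $\cC$ under $\iota_*$ and is itself non-hyperelliptic.

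For injectivity, suppose two non-hyperelliptic components $\cC_1^R$ and $\cC_2^R$ of $\cR(\mu)$ both lie inside the same component $\cC$ of $\cP(\mu^{\fR})$. Choose representatives $(X_i,\omega_i) \in \cC_i^R$. The topological invariants that classify non-hyperelliptic components of $\cR(\mu)$ in \cite{lee2023connected} --- namely hyperellipticity, spin parity when defined, and the rotation number --- depend only on the flat structure of $(X,\omega)$ and are locally constant on the ambient stratum $\cH(\mu)$. In particular, they coincide for $(X_1,\omega_1)$ and $(X_2,\omega_2)$ because those surfaces are path-connected inside $\cC$. The classification from \cite{lee2023connected} then forces them into the same component of $\cR(\mu)$, so $\cC_1^R = \cC_2^R$.

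The only substantive input is \Cref{allcontainresidueless}; everything else in the argument is formal. Consequently, the main obstacle is the theorem itself rather than the corollary. Note also that the exclusions $\mu = (2n,-2^n)$ and $\mu = (n,n,-2^n)$ match those in \Cref{allcontainresidueless}: for these signatures $\cR(\mu)$ is entirely hyperelliptic, so the left-hand count is zero while $\cP(\mu^{\fR})$ may still carry non-hyperelliptic components that admit no residueless representative, and the equality genuinely fails.
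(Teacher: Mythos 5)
Your argument is correct and is essentially the paper's own proof: the paper obtains $N\le N_0$ from \Cref{allcontainresidueless} and $N_0\le N$ from the fact that the invariants (spin parity, rotation number) classifying non-hyperelliptic components of $\cR(\mu)$ extend to locally constant invariants on $\cP(\mu^{\fR})$, and your surjectivity/injectivity of $\iota_*$ is just a repackaging of those two inequalities. The only remark is that your injectivity step, exactly like the paper's counting step, leans on the classification of \cite{lee2023connected} being "one component per invariant value," which carries the same caveat in the exceptional case $\mu=(12,-3^4)$ where two distinct non-hyperelliptic components of $\cR(\mu)$ share rotation number $3$.
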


\begin{proof}
    Let $N$ be the number of non-hyperelliptic components of $\cP(\mu^{\fR})$ and $N_0$ be that of $\cR(\mu)$. Suppose that $\mu^{\fR}\neq (2n,-2^n)$ or $(n,n,-2^n)$. By \Cref{allcontainresidueless}, we have $N\leq N_0$. By the main theorems of \cite{lee2023connected}, $N_0$ is determined by the number of possible topological invariants (spin parity if $g>1$, rotation number if $g=1$). On the other hand, those topological invariants extend to $\cP(\mu^{\fR})$ and give a lower bound for $N$. That is, we also have $N_0\leq N$, and thus $N=N_0$.
\end{proof}

The cases $\mu= (2n,-2^n)$ or $(n,n,-2^n)$, where \Cref{allcontainresidueless} does not apply, will be dealt with separately in the proof of \Cref{Thm:main10}. Now we prove \Cref{allcontainresidueless} by induction on $\dim \cP(\mu^{\fR})$.

\begin{proof}[Proof of \Cref{allcontainresidueless}]
    Assume that $\cC$ is a non-hyperelliptic component of a non-residueless stratum $\cP(\mu^{\fR})$. We use induction on $\dim \cP(\mu^{\fR})>1$ to find a non-hyperelliptic residueless flat surface in $\cC$.
    
    First, suppose that $m>1$. Since $g>0$, by \Cref{lm:mergezeroes}, $\cC$ can be obtained by breaking up a zero from a non-hyperelliptic component $\cC'$ of a single-zero stratum. This component $\cC'$ has a non-hyperelliptic residueless flat surface by the induction hypothesis. By breaking up the zero, we can obtain a non-hyperelliptic residueless flat surface in $\cC$.
    
    Now suppose that $m=1$. Assume that $g>1$. By \Cref{lm:unbubble}, $\cC$ can be obtained by bubbling a handle from a non-hyperelliptic component $\cC'$ of a stratum of genus $g-1>0$. Note that the exceptional case is a residueless stratum, which is excluded. By the induction hypothesis, there exists a non-hyperelliptic flat surface in $\cC'$. By bubbling a handle, we obtain a non-hyperelliptic residueless flat surface in $\cC$. 
    
    Now assume that $g=1$. By \Cref{lm:mergezeroes}, $\cC$ contains a flat surface with a multiplicity one saddle connection with nontrivial homology class. By shrinking this saddle connection, we obtain a multi-scale differential $\overline{X}$ in the principal boundary of Type II. The top level component $X_0$ is contained in a stratum of genus $g=0$ with two zeroes at the nodes. Let $\kappa_1\leq \kappa_2$ be the numbers of prongs at the nodes. Then $X_0$ has two zeroes of orders $\kappa_1-1$ and $\kappa_2-1$.
    
    If $X_0$ is hyperelliptic with ramification profile $\Sigma_0$, then $\cP(\mu)$ contains a flat surface $X'_0$ with ramification profile $\Sigma_0$ by \cite{lee2023connected}. By \Cref{Thm:mainhyper}, $X_0$ and $X'_0$ are contained in the same connected component. So we can deform $X_0$ continuously to $X'_0$. Since $\cC$ is not hyperelliptic, the prong-matching of $\overline{X}$ is not compatible with hyperelliptic involutions on each irreducible component. By plumbing the level transition, we obtain a non-hyperelliptic residueless flat surface in $\cC$.
    
    If $X_0$ is non-hyperelliptic, then let $\cP(\mu_0^{\fR})$ be the stratum containing $X_0$. Since all $b_i>1$, $X_0$ is contained in a unique non-hyperelliptic component $\cC_0$ by \Cref{Thm:main00}. By adding more residue conditions, we can choose some stratum of B-signature contained in $\cP(\mu_0^{\fR})$. Again by \Cref{Thm:main00}, it contains a unique non-hyperelliptic component $\cC'_0$, and this is contained in $\cC_0$. That is, $X_0$ can be deformed to a flat surface in $\cC'_0$.
    
    Now it is reduced to the case when $\cP(\mu_0^{\fR})$ is a stratum of B-signature. By \Cref{Cor:Bresidueless}, this stratum contains a non-hyperelliptic residueless flat surface if and only if $n\leq \kappa_1$. In particular, if $\kappa_2-\kappa_1 \leq b_j$ for some $j$, then $\kappa_1 \geq \frac{1}{2} (\sum_{i\neq j} b_i -2) > n-2$. 
    
    This completes the proof under this assumption. We find $\overline{X'}\in \partial\overline{\cC}$ satisfying this. Consider a two-level multi-scale differential $\overline{Y}$ as follows: the bottom level component $Y_{-1}$ is non-hyperelliptic and contained in $\cP(\kappa_1-1,\kappa_2-1, -b_1,-(a-b_1))$, and $Y_0$ is contained in $\cP(a-b_1-2\mid-b_2\mid\dots\mid-b_{n-2}\mid-e_1,-e_2)$. The two components $Y_0$ and $Y_{-1}$ intersect at the unique node $s$. This is contained in the boundary of the unique non-hyperelliptic component $\cC_0$, so $X_0$ can degenerate continuously to $\overline{Y}$. 
    
    By plumbing the level transition between level $-1$ and level $-2$, the new bottom level component is a genus one non-hyperelliptic residueless flat surface with two poles, $p_1$ and the node $s$. By \cite{lee2023connected}, this flat surface is contained in a unique non-hyperelliptic component of $\cR(a\mid-b_1\mid-(a-b_1))$ with rotation number $r\leq b_1$. Therefore, by \cite[Prop.~7.17]{lee2023connected}, it can be deformed to a two-level differential $\overline{Z}$, where the top level component $Z_0$ contains all marked poles and two zeroes of orders $a'_1,a'_2$ such that $a'_2-a'_1\leq r\leq b_1$. By plumbing the level transition between level $0$ and level $-1$, we obtain $\overline{X'}$ satisfying $\kappa'_2-\kappa'_1\leq b_1$.
\end{proof}

\subsection{Unbubbling a pair of simple poles}

In this subsection, we deal with single-zero strata containing $k$ pairs of simple poles in the residue condition $\fR$. Let $\mu=(a,-b_1,\dots,-b_{n-2k},-1^{2k})$ and let $\fR$ be a residue condition with $k$ pairs of simple poles as its parts. We will relate the number of connected components of $\cP(\mu^{\fR})$ with genus $g$ to that of another stratum $\cP(\widetilde{\mu}^{\widetilde{\fR}})$ with genus $g+k$, where $\mu=(\widetilde{\mu}\mid-1^2\mid\dots\mid-1^2)$ and $\widetilde{\fR}$ is the residue condition on $\widetilde{\mu}$ induced by $\fR$. 

\begin{theorem}
    Suppose that $g+k>1$. Then there exists a one-to-one correspondence between non-hyperelliptic components of a stratum $\cP(\mu^{\fR})$ with $g>0$ and those of the stratum $\cP(\widetilde{\mu}^{\widetilde{\fR}})$.
\end{theorem}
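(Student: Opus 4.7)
The plan is to construct a cylinder gluing map $\Psi$ at the level of connected components and show it restricts to a bijection on non-hyperelliptic components. For $(X,\omega) \in \cP(\mu^{\fR})$, each pair of simple poles $\{p_{2j-1}, p_{2j}\} \in \fR$ carries opposite residues $\pm r_j$, so I cut off the two half-infinite cylinders at a common height and identify the resulting boundary circles by an orientation-reversing isometry, producing a finite cylinder. Performing this for all $k$ pairs yields a flat surface $(Y,\eta) \in \cP(\widetilde{\mu}^{\widetilde{\fR}})$ of genus $g+k$. Since the heights and twists vary continuously with $(X,\omega)$, this induces a well-defined map
\[
\Psi \colon \pi_0\bigl(\cP(\mu^{\fR})\bigr) \longrightarrow \pi_0\bigl(\cP(\widetilde{\mu}^{\widetilde{\fR}})\bigr).
\]

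Surjectivity on non-hyperelliptic components will use \Cref{lm:unbubble}. Since $g+k>1$ and $g>0$, any non-hyperelliptic $\widetilde{\cC} \subset \cP(\widetilde{\mu}^{\widetilde{\fR}})$ decomposes, after $k$ iterations of unbubbling, as $\widetilde{\cC} = \cC' \oplus s_1 \oplus \cdots \oplus s_k$ for some component $\cC'$ of a single-zero stratum of genus $g$. Each bubbling $\oplus s_i$ is realized by plumbing both the vertical node joining the current flat surface to an intermediate one $X_{-1,i} \in \cP(a_i\mid -a_i\mid -1^2)$ and the horizontal pair of simple poles of that intermediate surface. Omitting only the $k$ horizontal plumbings while retaining the vertical ones produces a flat surface in $\cP(\mu^{\fR})$, whose connected component $\cC$ satisfies $\Psi(\cC) = \widetilde{\cC}$.

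For injectivity, take $(X_i,\omega_i) \in \cC_i$ for $i=1,2$ with $\Psi(\cC_1) = \Psi(\cC_2) = \widetilde{\cC}$, choose a path in $\widetilde{\cC}$ joining their images, and perturb it inside $\widetilde{\cC}$ so that the $k$ cylinders arising from the gluing neither collapse to a saddle connection nor merge with other parallel cylinders throughout the path; these degeneration loci have positive codimension and are hence avoidable. Cylinder cutting then lifts the perturbed path to a path in $\cP(\mu^{\fR})$ connecting $(X_1,\omega_1)$ to $(X_2,\omega_2)$, forcing $\cC_1 = \cC_2$. Finally, I would verify that $\Psi$ preserves (non-)hyperellipticity: a hyperelliptic involution $\sigma$ of $(X,\omega)$ induces a ramification profile $\Sigma$ which, since simple poles cannot be fixed by $\Sigma$ and $\sigma^{\ast}\omega=-\omega$ forces matching residues, satisfies $\Sigma(p_{2j-1}) = p_{2j}$; hence $\sigma$ descends to a hyperelliptic involution of $(Y,\eta)$ acting as a half-turn on each cylinder. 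Conversely, a hyperelliptic involution of a generic $(Y,\eta) \in \widetilde{\cC}$ cannot permute the $k$ cylinders non-trivially (this would impose codimension-positive identities among the residues $r_j$), so it preserves each cylinder and cuts down to a hyperelliptic involution of $(X,\omega)$.

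The main technical obstacle will be the perturbation argument in the injectivity step: one must verify rigorously that any path in $\widetilde{\cC}$ admits a deformation within $\widetilde{\cC}$ that continuously preserves the $k$-cylinder structure arising from the gluing, equivalently that any two such $k$-tuples of $\widetilde{\fR}$-homologically independent parallel cylinders can be connected by a continuous family of flat surfaces. As a robust alternative, one can instead establish the bijection by matching topological invariants: the spin parity on $\cP(\mu^{\fR})$ is defined (as described in \Cref{Sec:Intro}) as that of the glued surface in $\cP(\widetilde{\mu}^{\widetilde{\fR}})$, and ramification profiles of $\cP(\mu^{\fR})$ correspond bijectively to ramification profiles of $\cP(\widetilde{\mu}^{\widetilde{\fR}})$ by extending via $p_{2j-1}\leftrightarrow p_{2j}$ on each simple pole pair. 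Combined with surjectivity of $\Psi$, this forces the sought bijection on non-hyperelliptic components.
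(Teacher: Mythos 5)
Your gluing map $\Psi$ is the right object (it is exactly how the paper defines spin parity and the index), and your surjectivity step is in the spirit of what the paper does, but the injectivity step has a genuine gap, and it is the heart of the matter. The claim that the loci where one of the $k$ distinguished cylinders collapses or merges ``have positive codimension and are hence avoidable'' is not true in the relevant sense: the locus where a chosen cylinder persists is open, and its boundary is a real-codimension-one wall, so a path whose endpoints carry essentially different cylinder configurations cannot be perturbed around it. Concretely, your argument never uses $g>0$ or $g+k>1$, yet for $g=0$, $k=1$ the analogous map is \emph{not} injective: by \Cref{Thm:main01}, for $\widetilde{\mu}=(4,-4)$ the stratum $\cP(4\mid-4\mid-1,-1)$ has distinct non-hyperelliptic components of index $1$ and $3$, and after gluing the simple-pole pair both land in the single genus-one component of rotation number $1$. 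Since every ingredient of your path-lifting argument applies verbatim there, the argument proves too much and must be rejected as it stands. Injectivity for $g+k>1$ is precisely the hard content: the paper obtains it by writing every non-hyperelliptic component as $\widetilde{\cC}\,\bar{\oplus}\,t_1\bar{\oplus}\dots\bar{\oplus}t_k$ (\Cref{lm:unbubble_pair}) and showing via the one-dimensional D-signature base case (\Cref{Prop:Dspin}, packaged as \Cref{lm:unbubblespin}) that the result depends only on $t_1+\dots+t_k \pmod 2$, then matching this with the known classification of $\cP(\widetilde{\mu}^{\widetilde{\fR}})$ in the proof of \Cref{Thm:mainhigh}.

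Your fallback via invariants is circular for the same reason: knowing that $\Psi$ preserves spin parity and ramification profiles, together with surjectivity, does not prevent two distinct non-hyperelliptic components of $\cP(\mu^{\fR})$ with equal invariants from mapping to the same target component; to exclude that you need to know the invariants already classify the components of $\cP(\mu^{\fR})$, which is the statement being proved. Two smaller points: iterating \Cref{lm:unbubble} $k$ times in your surjectivity step requires the intermediate components to be non-hyperelliptic, which fails exactly in the exceptional strata listed there (this is where the exceptions of \Cref{Thm:mainhigh} come from), so those cases need separate treatment; and the claim that a hyperelliptic involution of a generic glued surface preserves each of the $k$ glued cylinders needs an actual genericity-of-periods argument (it maps each core geodesic to a closed geodesic of period $-r_j$, and one must rule out other such geodesics), though this part is fixable.
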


In order to prove the above main theorem of this subsection, we need to show that any non-hyperelliptic component of $\cP(\mu^{\fR})$ can be obtained by a surgery called {\em bubbling a pair of simple poles}, which we describe below.

Let $\mu=(\mu_0\mid-1^2)$ and let $\fR_0$ be the residue condition on $\mu_0$ induced by $\fR$. Then $\cP(\mu_0^{\fR_0})$ is a stratum with $k-1$ pairs of simple poles. Also, $\dim \cP(\mu_0^{\fR_0}) = \dim \cP(\mu^{\fR}) - 1$. Let $\cC_0$ be a connected component of $\cP(\mu_0^{\fR_0})$ and let $X_0\in \cC_0$. Consider another flat surface $X_{-1}\in \cP(a\mid-a\mid-1,-1)$. This flat surface is completely determined by the angle $2\pi t$, $1\leq t\leq a-1$, between two half-infinite cylinders corresponding to the two simple poles. By identifying the unique zero of $X_0$ and the residueless pole of $X_{-1}$, we obtain a two-level multi-scale differential $\overline{X}\in \overline{\cP}(\mu^{\fR})$. Let $\cC$ be the connected component of $\cP(\mu^{\fR})$ containing $\overline{X}$ in the boundary. In this case, we denote $\cC=\cC_0 \bar{\oplus} t$, and we say that $\cC$ is obtained by {\em bubbling a pair of simple poles} from $\cC_0$.

Let $\cC$ be a non-hyperelliptic component of a single-zero stratum $\cP(\mu^{\fR})$ of dimension higher than one. Suppose that $\fR$ has a pair of simple poles, say $\{p_{n-1},p_n\}$. By \Cref{Prop:ssc0}, $\cC$ contains a flat surface with a pair of multiplicity two saddle connections each bounding the polar domain of $p_{n-1}$ and $p_n$. By shrinking them, we obtain a two-level multi-scale differential $\overline{X}$. The top-level component $X_0$ is contained in $\cP(\mu_0^{\fR_0})$, and the bottom-level component is $X_{-1}\in \cP(a\mid-a\mid-1^2)$. So $\cC = \cC_0 \bar{\oplus} t$ for some component $\cC_0$ of $\cP(\mu_0^{\fR_0})$ and $1\leq t\leq a-1$. Moreover, the component $\cC_0$ can be chosen to be non-hyperelliptic except in the following cases:
\begin{itemize}
    \item $\mu_0^{\fR_0} = (a-2\mid-2\mid\dots\mid-2\mid-b^2)$ with genus zero.
    \item $\mu_0^{\fR_0} = (2\mid-1^2\mid-1^2)$.
\end{itemize}

By applying this repeatedly with $k$ pairs of simple poles, we have the following:

\begin{lemma} \label{lm:unbubble_pair}
    Let $\cC$ be a non-hyperelliptic component of a single-zero stratum $\cP(\mu^{\fR})$ with $k$ pairs of simple poles. Suppose that $\dim \cP(\mu^{\fR})>k$. Then we can write
    \[
    \cC = \widetilde{\cC} \bar{\oplus} t_1 \bar{\oplus} \dots \bar{\oplus} t_k
    \]
    for some integers $1\leq t_i \leq a-2(k-i)-1$, where $\widetilde{\cC}$ is a connected component of a stratum $\cP(\widetilde{\mu}^{\widetilde{\fR}})$ with no pair of simple poles, and $\mu=(\widetilde{\mu}\mid-1^2\mid\dots\mid-1^2)$. Moreover, if $\cP(\widetilde{\mu}^{\widetilde{\fR}})$ is not one of the exceptional cases discussed above, then $\widetilde{\cC}$ can be chosen to be non-hyperelliptic.
\end{lemma}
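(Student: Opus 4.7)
The plan is to prove the lemma by induction on $k$, iterating the single-step unbubbling procedure described in the paragraph immediately preceding the statement. In the base case $k=1$, that paragraph directly delivers $\cC=\widetilde{\cC}\bar{\oplus} t_1$ with $1\leq t_1\leq a-1$, and the non-hyperellipticity clause of the lemma is exactly the non-hyperellipticity clause of the single-step; so nothing more is required.

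For the inductive step with $k\geq 2$, I would first apply the single-step procedure to $\cC$, producing a factorization $\cC=\cC_0\bar{\oplus} t_k$ with $1\leq t_k\leq a-1$, where $\cC_0$ is a component of $\cP(\mu_0^{\fR_0})$. Here $\mu_0$ has a unique zero of order $a-2$, retains $k-1$ pairs of simple poles in $\fR_0$, and satisfies $\dim\cP(\mu_0^{\fR_0})=\dim\cP(\mu^\fR)-1>k-1$. Next I would invoke the induction hypothesis on $\cC_0$ to get $\cC_0=\widetilde{\cC}\bar{\oplus} t_1\bar{\oplus}\cdots\bar{\oplus} t_{k-1}$ with $1\leq t_i\leq (a-2)-2((k-1)-i)-1=a-2(k-i)-1$, and then stack the two factorizations to obtain the desired decomposition of $\cC$. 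The bookkeeping that the zero order decreases by $2$ per unbubbling matches exactly the bound $t_i\leq a-2(k-i)-1$ stated in the lemma, and $\dim\cP(\mu^\fR)>k$ together with $|\fR|\geq k$ and the identity $a=2g-2+\sum b_j+2k$ forces $a\geq 2k$, so all intervals stay nonempty throughout the iteration.

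The key point, and the main obstacle, is to ensure that $\cC_0$ can be chosen non-hyperelliptic so that the induction hypothesis applies. The single-step permits this except when $\mu_0^{\fR_0}$ is one of the two exceptional signatures $(a-2\mid -2\mid\cdots\mid -2\mid -b^2)$ or $(2\mid -1^2\mid -1^2)$. The first has no pair of simple poles, while $\mu_0^{\fR_0}$ retains $k-1\geq 1$ such pairs, so this case cannot occur at an intermediate step. The second has two pairs of simple poles but dimension only $1$, incompatible with the inherited bound $\dim\cP(\mu_0^{\fR_0})>k-1\geq 1$; this is precisely the exclusion that the hypothesis $\dim\cP(\mu^\fR)>k$ is tailored to enforce. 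Hence neither exception appears at an intermediate unbubbling, and any possible loss of non-hyperellipticity is pushed to the final step, where it becomes the clause about $\widetilde{\cC}$; moreover the second exception is vacuously excluded for $\widetilde{\mu}^{\widetilde{\fR}}$ since that stratum has no pair of simple poles by construction, so only the first exception can obstruct non-hyperellipticity of $\widetilde{\cC}$, exactly matching the lemma's ``except'' clause.
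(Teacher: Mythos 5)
Your proof is correct and takes essentially the same route as the paper: the paper obtains this lemma simply by iterating the single-step unbubbling described in the preceding paragraph (``by applying this repeatedly with $k$ pairs of simple poles''), which is exactly your induction on $k$. Your additional bookkeeping --- the dimension dropping by one per unbubbling, the ranges $1\leq t_i\leq a-2(k-i)-1$, and the check that the exceptional signatures cannot occur at intermediate steps so that non-hyperellipticity propagates --- just makes explicit what the paper leaves implicit.
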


Similarly to \cite[Prop.~6.2]{boissy2015connected} for bubbling handles, we have useful formulae for the connected components obtained by bubbling multiple pairs of simple poles.

\begin{lemma} \label{lm:unbubblespin}
    Let $\cC$ be a connected component of a stratum with a unique zero of order $a$. Suppose that $(t_1,t_2),(s_1,s_2)\neq \left(\frac{a+2}{2},\frac{a+4}{2}\right)$. Then we have
    \[
    \cC \bar{\oplus} t_1 \bar{\oplus} t_2 = \cC \bar{\oplus} s_1 \bar{\oplus} s_2
    \]
    if and only if $t_1+t_2\equiv s_1+s_2 \pmod{2}$.
\end{lemma}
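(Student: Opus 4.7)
The plan is to adapt the strategy of Boissy's Proposition~6.2 in \cite{boissy2015connected} (which handles the bubbling of handles) to the current setting of bubbling pairs of simple poles. The key geometric tool is that each bubbling operation $\bar{\oplus}\,t$ is parameterized by the angle between the two half-infinite cylinders in $X_{-1} \in \cP(a\mid-a\mid-1,-1)$, and this angle is governed by a prong-matching at the node attaching $X_{-1}$ to the zero of $X_0 \in \cC$, as in the plumbing construction of \Cref{sec:Plumb}.

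The ``if'' direction is established via two basic moves: commutativity $\cC \bar{\oplus} t_1 \bar{\oplus} t_2 = \cC \bar{\oplus} t_2 \bar{\oplus} t_1$, and the shift $\cC \bar{\oplus} t_1 \bar{\oplus} t_2 = \cC \bar{\oplus} (t_1+1) \bar{\oplus} (t_2-1)$ whenever both sides lie in the valid range $[1,a-1]$. Commutativity follows from constructing a three-level multi-scale differential whose top level is in $\cC$ and whose bottom level consists of two copies of $\cP(a\mid-a\mid-1,-1)$ placed at the same level, and then exchanging them within the principal boundary. The shift move uses a staggered three-level configuration with one bubbled component at level $-1$ and the other at level $-2$: rotating the prong-matching at the node between levels $-1$ and $-2$ by one prong shifts $t_2 \mapsto t_2 - 1$, while a compensating level rotation at level $-1$ realizes $t_1 \mapsto t_1 + 1$. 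Iterating these two moves reduces any admissible pair $(t_1, t_2)$ to a canonical representative determined solely by $t_1 + t_2 \pmod{2}$.

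For the ``only if'' direction, we use the relevant topological invariant supplied by the classification theorems \Cref{Thm:mainhigh}--\ref{Thm:main01} (spin parity, rotation number, or index). As recalled before \Cref{Thm:mainhigh}, gluing each bubbled pair of simple poles with opposite residues into a finite cylinder produces a closed flat surface of higher genus, together with a marked core curve for each glued pair. The core curve created by $\bar{\oplus}\,t$ has index congruent to $t$ modulo the gcd of the other orders; substituting these cores into the formula for the appropriate invariant yields a quantity that depends precisely on $t_1 + t_2 \pmod{2}$. Hence distinct parities force distinct components.

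The main obstacle is verifying that the prong-matching rotation in the staggered three-level configuration genuinely realizes the shift $(t_1, t_2) \mapsto (t_1+1, t_2-1)$ on the boundary of the iterated $\bar{\oplus}$-construction, which requires tracking angles through both level transitions in the plumbing. The excluded case $(t_1, t_2) = \left(\frac{a+2}{2}, \frac{a+4}{2}\right)$ is expected to arise from a hyperelliptic symmetry in the corresponding three-level configuration that forces the shift move to stabilize this orbit rather than produce a genuinely new pair, analogous to the exceptional configurations in Boissy's original argument for handles.
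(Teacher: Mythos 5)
Your proposal has a genuine gap in the ``if'' direction: the two moves you rely on are not available as described. The angles $t_1,t_2$ are intrinsic moduli of the bottom-level pieces, which lie in the \emph{zero-dimensional} projectivized strata $\cP(a+2\mid-(a+2)\mid-1^2)$ and $\cP(a+4\mid-(a+4)\mid-1^2)$; a level rotation or a change of prong-matching representative acts on plumbing data, not on these pieces, so ``rotating the prong-matching at the node between levels $-1$ and $-2$ and compensating by a level rotation'' cannot realize $(t_1,t_2)\mapsto(t_1+1,t_2-1)$ -- it produces an equivalent multi-scale differential, not a new one. Likewise the ``commutativity'' configuration does not exist: the top-level surface has a unique zero, hence a single node, and only one lower-level component can be attached there, so there is no admissible level graph with the two bubbles on the same level. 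Connecting configurations with different $(t_1,t_2)$ genuinely requires plumbing and re-degenerating through other boundary strata, i.e.\ it requires knowing that the relevant boundary surfaces lie in a connected piece of a one-dimensional stratum. This is exactly what the paper does: it plumbs the transition between levels $-1$ and $-2$ of the three-level differential to obtain a single bottom-level component $X'_{-1}\in\cP(a+4\mid-(a+2)\mid-1^2\mid-1^2)$ of D-signature, observes that $X'_{-1}$ is non-hyperelliptic precisely because $(t_1,t_2)\neq\left(\frac{a+2}{2},\frac{a+4}{2}\right)$, that its spin parity equals $t_1+t_2\pmod 2$ (cf.\ \Cref{lm:spin_D_IIIc}), and then concludes from the classification of that stratum, \Cref{Prop:Dspin}, which was established independently in \Cref{Sec:Dsignature} by the equatorial-net analysis. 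Your plan, by contrast, would have to reprove that connectivity by hand, and the flat-geometric mechanism you propose for it does not work.

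The ``only if'' direction is also problematic as you set it up: you invoke the invariants ``supplied by the classification theorems \Cref{Thm:mainhigh}--\ref{Thm:main01}'', but this lemma is an ingredient in the proofs of those theorems, so that route is circular; moreover, when the ambient signature is not of even type there is no mod-$2$ invariant on the big stratum (rotation number and index only see residues modulo $\delta$), so ``a quantity depending precisely on $t_1+t_2\pmod 2$'' is not available there. The paper instead extracts both directions from the auxiliary D-signature stratum itself: its non-hyperelliptic components are separated by the spin parity of $X'_{-1}$, which is computed directly from the two cylinder cross-curves to be $t_1+t_2\pmod 2$. If you want to salvage your approach, the honest route is to replace your two moves by the degeneration--plumbing argument through $\cP(a+4\mid-(a+2)\mid-1^2\mid-1^2)$ and cite \Cref{Prop:Dspin}; your closing intuition about the excluded pair is correct in spirit, since $(t_1,t_2)=\left(\frac{a+2}{2},\frac{a+4}{2}\right)$ is exactly the case where $X'_{-1}$ is hyperelliptic and the classification by spin no longer applies.
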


\begin{proof}
    Consider the three-level multi-scale differential $\overline{X}$ in the boundary of $\cC\bar{\oplus}t_1\bar{\oplus}t_2$ that is used for the bubbling pairs construction. That is, $X_0\in \cC$, $X_{-1}\in \cP(a+2\mid-(a+2)\mid-1^2)$, and $X_{-2}\in \cP(a+4\mid-(a+4)\mid-1^2)$. By plumbing the level transition between level $-1$ and $-2$, we obtain a new bottom-level component $X'_{-1}$ in a stratum $\cP(a+4\mid-(a+2)\mid-1^2\mid-1^2)$ of D-signature. Note that $X'_{-1}$ is non-hyperelliptic since $(t_1,t_2)\neq \left(\frac{a+2}{2},\frac{a+4}{2}\right)$. Since $a+2>2$, the stratum $\cP(a+4\mid-(a+2)\mid-1^2\mid-1^2)$ has two non-hyperelliptic components distinguished by spin parity. The spin parity of $X'_{-1}$ is equal to $t_1+t_2\pmod{2}$. Therefore, $\cC\bar{\oplus}t_1\bar{\oplus}t_2 = \cC\bar{\oplus}s_1\bar{\oplus}s_2$ if and only if $t_1+t_2\equiv s_1+s_2\pmod{2}$.
\end{proof}

\subsection{General cases: single zero strata} \label{Subsec:singlezero}

In this subsection, we prove \Cref{Thm:main10}, \Cref{Thm:main01}, and \Cref{Thm:mainhigh} for any single-zero stratum $\cP(\mu^{\fR})$, using the results in previous subsections.

\begin{proof}[Proof of \Cref{Thm:main01} for single-zero strata]
    If $\dim \cP(\mu^{\fR})=1$, then this is a stratum of D-signature with one pair of simple poles. This case is proven in \Cref{Prop:Dindex}. Assume that $\dim \cP(\mu^{\fR})>1$ and let $\cC$ be any non-hyperelliptic component. By \Cref{lm:unbubble_pair}, $\cC=\cC_0\bar{\oplus}t$ for some non-hyperelliptic $\cC_0$ and $1\leq t\leq a-1$. Also, by \Cref{Thm:main00}, $\cC_0$ is the unique non-hyperelliptic component. Since the index $1\leq I\leq \delta$ of $\cC_0\bar{\oplus}t$ satisfies $t\equiv I\pmod{\delta}$, it suffices to prove that $\cC_0\bar{\oplus}t=\cC_0\bar{\oplus}I$.

    If $b_i=1$ for some $i$, then $\delta=1$. Also, $p_i$ is {\em not} paired with another simple pole by assumption. So by \Cref{Prop:ssc0}, there exists a flat surface in $\cC$ with a multiplicity one saddle connection bounding the polar domain of $p_i$. By shrinking this saddle connection, we obtain a two-level multi-scale differential $\overline{X}$ with the bottom level component $X_{-1}\in \cP(a-2,-1,-(a-1))$. Thus, $X_{-1}\bar{\oplus}t$ is contained in a stratum of D-signature with one pair of simple poles. Since $a-1\neq 1$ and $\gcd(1,a-1)=1$, by \Cref{Prop:Dindex}, this stratum is connected with a unique non-hyperelliptic component $\cD$, containing $X_{-1}\bar{\oplus}t$ for any $t$. Thus, $\cC$ is unique and $\cC=\cC_0\bar{\oplus}t$ for each $t$.

    So we assume that $b_i>1$ for each $i=1,\dots,n-2$. That is, $\cC_0$ has no simple poles. We claim that for any $i=1,\dots,n-2$, $\cC_0\bar{\oplus}t_1 = \cC_0\bar{\oplus}t_2$ if $t_1 - t_2 \equiv 0 \pmod{b_i}$. Suppose that $p_i$ is a non-residueless pole of $\cC_0$. By uniqueness of $\cC_0$, we can construct a two-level boundary $\overline{X}\in \partial\overline{\cC_0}$ with $X_{-1}\in \cP(a-2,-b_i,-(a-b_i))$. By the same argument as above, we have $\cC_0\bar{\oplus}t_1 = \cC_0\bar{\oplus}t_2$ if $t_1-t_2\equiv 0\pmod{\gcd(a,b_i)}$. Now suppose that $p_i$ is residueless. Then we can choose a non-residueless pole $p_j$. Again by uniqueness of $\cC_0$, we can construct a two-level boundary $\overline{X}\in \partial\overline{\cC_0}$ with $X_{-1}\in \cP(a-2,-b_i,-b_j,-(a-b_j-b_i))$. By a similar argument, we obtain $\cC_0\bar{\oplus}t_1 = \cC_0\bar{\oplus}t_2$ if $t_1-t_2\equiv 0\pmod{\gcd(a,b_i,b_j)}$.

    Consequently, if $t_1-t_2\equiv 0\pmod{\delta}$, then $t_1=t_2+\sum_{i=1}^{n-2}k_i b_i$ for some integers $k_i$. Thus, we can conclude that $\cC_0\bar{\oplus}t_1 = \cC_0\bar{\oplus}t_2$. Therefore, $\cC=\cC_0\bar{\oplus}I$ where $1\leq I\leq \delta$ is the index of $\cC$.
\end{proof}

\begin{proof}[Proof of \Cref{Thm:main10} for single-zero strata]
    If $b_i>1$ for all $i$ and $\mu\neq (2n,-2^n)$, then it follows immediately from \Cref{reductiontoresidueless}. So first we assume that $b_1=1$. That is, $p_1$ is a simple pole {\em not} paired with another simple pole. Then $\delta=1$ and the only possible rotation number is one. It suffices to prove that $\cP(\mu^{\fR})$ has a unique non-hyperelliptic component in this case. By \Cref{lm:unbubble}, $\cC=\cD\oplus s$ for some non-hyperelliptic component $\cD$ and some $1\leq s\leq a-1$. Note that $\cD\bar{\oplus}s$ is a non-hyperelliptic component of a genus zero stratum with one pair of simple poles. Since $\delta=1$, $\cD\bar{\oplus}s$ is the unique non-hyperelliptic component of this stratum by \Cref{Thm:main01} for single-zero strata. This implies that $\cC=\cD\oplus s$ is also unique.

    Now assume that $\mu=(2n,-2^n)$. If $n=1$ or $n=2$, it is a usual stratum dealt with in \cite{boissy2015connected}. So assume that $n>2$. There exists no non-hyperelliptic component for the residueless stratum by \Cref{Enonhyper}. Suppose that there exists a non-residueless pole. That is, $\dim \cP(\mu^{\fR})>1$. By \Cref{lm:unbubble}, any non-hyperelliptic component $\cC$ can be written as $\cC=\cD\oplus s$ for some $\cD$ and some $1\leq s<\frac{n}{2}$. By the same argument as above, using $\cD\bar{\oplus}s$, we conclude that $\cP(\mu^{\fR})$ has two non-hyperelliptic components distinguished by rotation number.
\end{proof}

\begin{proof}[Proof of \Cref{Thm:mainhigh} for single-zero strata]
    Assume that $\cC$ is a non-hyperelliptic component of $\cP(\mu^{\fR})$. We first prove that $\cC$ can be written as one of two bubbling handle/pair-of-simple-poles constructions, which may be distinguished by spin parity.

    First, assume that $g=0$ and $k\geq 2$. If $\mu^{\fR}=(2\mid-1^2\mid-1^2)$, then there exists {\em no} non-hyperelliptic component. If $\mu^{\fR}=(4\mid-2\mid-1^2\mid-1^2)$, then there exists a unique non-hyperelliptic component $\cP(2\mid-2\mid-1^2)\bar{\oplus}1$. Assume that $\dim \cP(\mu^{\fR})=1$ and $\widetilde{\mu}\neq (2),(4,-2)$; then $\cP(\mu^{\fR})$ is a stratum of D-signature with $k=2$. This case is proven in \Cref{Prop:Dspin}. In particular, $\cC=\cD\bar{\oplus}1$ or $\cD\bar{\oplus}2$ for some fixed component $\cD$, and they are equal to each other if and only if $\cP(\mu^{\fR})$ is {\em not} of even type.

    Similarly, if $\dim \cP(\mu^{\fR})=k-1$ with $k>2$, then $\widetilde{\mu}\neq (2),(4,-2)$ automatically. By \Cref{lm:unbubble_pair}, we have
    \[
    \cC=\cD\bar{\oplus}t_1\bar{\oplus}\dots\bar{\oplus}t_{k-2}
    \]
    for some non-hyperelliptic component $\cD$ of a stratum of D-signature with two pairs of simple poles. Also, by \Cref{Prop:Dspin}, $\cD=\cD'\bar{\oplus}1$ or $\cD'\bar{\oplus}2$ for some fixed component $\cD'$. 
    
    Therefore, by \Cref{lm:unbubblespin}, we can conclude that $\cC$ can be written as one of the following two components:
    \[
    (\cD'\bar{\oplus}1) \bar{\oplus}(1\bar{\oplus}\dots\bar{\oplus}1) \quad \text{or} \quad
    (\cD'\bar{\oplus}2) \bar{\oplus}(1\bar{\oplus}\dots\bar{\oplus}1).
    \]
    They are equal to each other if and only if $\cP(\mu^{\fR})$ is {\em not} of even type.

    Suppose that $g=1$ and $k\geq 1$. If $\mu^{\fR}=(4\mid-2\mid-1^2)$, then there exists a unique non-hyperelliptic component $\cP(2,-2)\bar{\oplus}1$. If $\mu^{\fR}=(2\mid-1^2)$, then there exists {\em no} non-hyperelliptic component. Assume that $\dim \cP(\mu^{\fR})=2$ and $\widetilde{\mu}\neq (2),(4,-2)$; then $k=1$, and $\cC$ can be written as $\cD\bar{\oplus}t$ for some $1\leq t<\frac{a}{2}$ and some non-hyperelliptic component $\cD$ of a stratum of E-signature. Therefore, by an immediate corollary of \cite[Thm~1.7]{lee2023connected}, $\cD$ contains an irreducible horizontal boundary divisor, that is, a flat surface of genus zero with one pair of simple poles. In this case, $\cD\bar{\oplus}t$ contains a stratum of D-signature with two pairs of simple poles, except the simple poles in one pair are not labeled. Since the spin parity is defined independently of the labeling of simple poles, we can conclude that there are two non-hyperelliptic components distinguished by spin parity by \Cref{Prop:Dspin}. In particular, $\cC=\cD\bar{\oplus}1$ or $\cD\bar{\oplus}2$ where $\cD$ is some fixed component of the stratum of E-signature. They are equal to each other if and only if $\cP(\mu^{\fR})$ is {\em not} of even type.

    Similarly, if $\dim \cP(\mu^{\fR})=1+k$ for $k>1$, then $\widetilde{\mu}\neq (2),(4,-2)$. By \Cref{lm:unbubble_pair}, $\cC$ can be written as
    \[
    \cC=\cD\bar{\oplus}t_1\bar{\oplus}\dots\bar{\oplus}t_k
    \]
    for some non-hyperelliptic component $\cD$ of a stratum of E-signature. Therefore, we can conclude that $\cC$ can be written as one of the following two components:
    \[
    (\cD\bar{\oplus}2)\bar{\oplus}(1\bar{\oplus}\dots\bar{\oplus}1)\quad \text{or} \quad(\cD\bar{\oplus}1)\bar{\oplus}(1\bar{\oplus}\dots\bar{\oplus}1).
    \]
    They are equal to each other if and only if $\widetilde{\mu}$ is {\em not} of even type.

    Suppose that $g\geq 2$ and $k\geq 0$. If $\dim \cP(\mu^{\fR})=2g+k-1$, then by \Cref{lm:unbubble_pair} we can write
    \[
    \cC=\cD\bar{\oplus}(t_1\bar{\oplus}\dots\bar{\oplus}t_k)
    \]
    where $\cD$ is some non-hyperelliptic component of a residueless stratum with genus $g$. By \cite{lee2023connected}, $\cD=\cD'\oplus1$ or $\cD'\oplus2$ for some fixed component $\cD'$ with genus $g-1>1$. Therefore, by \Cref{lm:unbubblespin}, we can conclude that $\cC$ can be written as one of the following two components:
    \[
    (\cD'\bar{\oplus}2)\bar{\oplus}(1\bar{\oplus}\dots\bar{\oplus}1)\quad \text{or} \quad(\cD'\bar{\oplus}1)\bar{\oplus}(1\bar{\oplus}\dots\bar{\oplus}1).
    \]
    They are equal to each other if and only if $\widetilde{\mu}$ is {\em not} of even type.

    Finally, if $\dim \cP(\mu^{\fR})>2g+k-1$, then combining \Cref{lm:unbubble} and \Cref{lm:unbubble_pair}, $\cC$ can be written as
    \[
    \cC=\cD\oplus(s_1\oplus\dots\oplus s_{g-1})\bar{\oplus}(t_1\bar{\oplus}\dots\bar{\oplus}t_k)
    \]
    for some non-hyperelliptic component $\cD$ of a two-dimensional stratum with genus one and no pair of simple poles. By \Cref{Thm:main10} for single-zero strata, $\cD$ is determined by its rotation number $r|\delta$, so $\cD=\cD'\oplus r$ for some fixed $\cD'$.
    
    Since $g+k\geq2$ and $r\leq d<\frac{a}{2}-k-g$, by \Cref{lm:unbubblespin}, $\cC$ is equal to one of the following components:
    \[
    (\cD'\oplus1)\oplus(1\oplus\dots\oplus1)\bar{\oplus}(1\bar{\oplus}\dots\bar{\oplus}1)\quad \text{or} \quad(\cD'\oplus2)\oplus(1\oplus\dots\oplus1)\bar{\oplus}(1\bar{\oplus}\dots\bar{\oplus}1).
    \]
    
    Therefore, it suffices to determine whether $\cD'\bar{\oplus}2\bar{\oplus}1$ and $\cD'\bar{\oplus}1\bar{\oplus}1$ are distinct. So it is reduced to the case when $(g,k)=(0,2)$ and $\dim \cP(\mu^{\fR})>1$. If $\widetilde{\mu}$ is of even type, then they are distinguished by spin parity. If not, then $\delta$ is odd. Also, by \Cref{Thm:main01} for single-zero strata, $\cD'\bar{\oplus}1=\cD'\bar{\oplus}(1+d)$. Therefore,
    \[
    \cD'\bar{\oplus}1\bar{\oplus}1=\cD'\bar{\oplus}(1+d)\bar{\oplus}1=\cD'\bar{\oplus}2\bar{\oplus}1
    \]
    by \Cref{lm:unbubblespin}.
\end{proof}

\subsection{Multiple-zero strata}

We finish the section by proving the main theorems in general. In order to complete the classification of non-hyperelliptic components, it remains to prove \Cref{Thm:main00}, \Cref{Thm:main01}, and \Cref{Thm:mainhigh} for multiple-zero strata.

\begin{proof}[Proof of \Cref{Thm:main10}]
    Suppose that $\cP(\mu^{\fR})$ has no simple poles, and $\mu\neq (n,n,-2^n)$. Then we already have the classification of non-hyperelliptic components by \Cref{reductiontoresidueless}.  

    Suppose that $\cP(\mu^{\fR})$ has a simple pole but {\em no} pair of simple poles. In the previous subsection, we proved that the corresponding single-zero stratum $\cP(\mu'^{\fR})$ obtained by merging zeroes has a unique non-hyperelliptic component $\cD$. By \Cref{lm:mergezeroes}, any non-hyperelliptic component $\cC$ of $\cP(\mu^{\fR})$ is adjacent to $\cD$, thus unique.

    Finally, let $\mu=(n,n,-2^n)$. We already know that there is no non-hyperelliptic component for the residueless strata. Assume that there exists a non-residueless pole. If $n=1$ or $n=2$, then it is a usual stratum dealt with in \cite{boissy2015connected}. Assume that $n>2$. By merging zeroes, $\cC$ is adjacent to some non-hyperelliptic component $\cD$ of a single-zero stratum $\cP(\mu'^{\fR})$. In the previous subsection, we proved that there are two non-hyperelliptic components distinguished by rotation number. If $n$ is even, after breaking up the zero, we still obtain two non-hyperelliptic components distinguished by rotation number. Note that two non-hyperelliptic components are written as $\cD'\oplus 1$ and $\cD'\oplus (n+1)$ since $n+1$ is even. Consider $\cD'\bar{\oplus}s$. By breaking up the zero in the bottom level component, we have a non-hyperelliptic flat surface in a stratum $\cP(n,n\mid -2n\mid -1^2)$ with index $I\equiv s\pmod n$. Since $1\equiv n+1 \pmod n$, $B(\cD'\bar{\oplus}1)=B(\cD'\bar{\oplus}(n+1))$, and therefore $\cP(\mu^{\fR})$ has a unique non-hyperelliptic component. 
\end{proof}

\begin{proof}[Proof of \Cref{Thm:main00}]
    First, suppose that $\dim \cP(\mu^{\fR})>m-1$. Then $\dim \cP(\mu'^{\fR})>0$, and thus this single-zero stratum has a unique non-hyperelliptic component $\cD$. Any non-hyperelliptic component of $\cP(\mu^{\fR})$ is obtained by breaking up the zero from $\cD$ by \Cref{lm:mergezeroes}. Therefore, $\cC$ is also unique. 

    Now assume that $\dim \cP(\mu^{\fR})=m-1$ and $m>1$. We use induction on $m$. If $m=2$, then $\cP(\mu^{\fR})$ is a stratum of B-signature. So by \Cref{Prop:Bnonhyper}, it has a unique non-hyperelliptic component unless $\mu^{\fR}=(a,a\mid-2\mid\dots\mid-2\mid-(a-n+2),-(a-n+2))$, where there exists no non-hyperelliptic component. If $m>2$, then by \Cref{lm:mergezeroes}, merging two zeroes with the highest orders, any non-hyperelliptic component $\cC$ is adjacent to a non-hyperelliptic component $\cD$ of a stratum with $m-1$ zeroes. By the induction hypothesis, $\cD$ is unique and thus $\cC$ is unique.
\end{proof}

In order to prove \Cref{Thm:main01}, let $\cP(\mu^{\fR})$ be a multiple-zero stratum with genus $g=0$, $n>1$ zeroes, and exactly one pair of simple poles. We use induction on $n\geq 1$. Let $\cP(\mu'^{\fR})$ be the corresponding single-zero stratum. Then $\dim \cP(\mu'^{\fR})=n-|\fR|-1$, and it is equal to zero if and only if $\fR$ consists of one pair and $n-2$ singletons. Therefore, in this case, we use one-dimensional strata of B-signature as a base case instead of the single-zero strata.

First, we prove \Cref{Thm:main01} for double-zero strata $\cP(\mu^{\fR})$ with dimension larger than one. That is, the corresponding single-zero stratum $\cP(\mu'^{\fR})$ has dimension larger than zero.

\begin{proposition} \label{Prop:main01base1}
    Let $\cP(\mu^{\fR})$ be a stratum with $g=0$, two zeroes, and exactly one pair of simple poles. Suppose that $\dim \cP(\mu^{\fR})>1$. Denote $\delta\coloneqq\gcd(a_1,a_2,b_1,\dots,b_{n-1})$. For each $1\leq I\leq d$, there exists a unique non-hyperelliptic component with index $I$. 
\end{proposition}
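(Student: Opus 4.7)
The plan is to reduce the double-zero case to the single-zero case of \Cref{Thm:main01} proved in \Cref{Subsec:singlezero}, via the breaking up/merging zeroes surgery. Set $\mu' = (a_1+a_2, -b_1, \dots, -b_{n-1})$; the residue condition $\fR$ is inherited unchanged. The hypothesis $\dim \cP(\mu^{\fR}) > 1$ gives $\dim \cP(\mu'^{\fR}) = \dim \cP(\mu^{\fR}) - 1 > 0$, and since $\gcd(a_1, a_2)$ divides $a_1+a_2$, the modulus $\delta = \gcd(a_1, a_2, b_1, \dots, b_{n-1})$ coincides with $\gcd(a_1+a_2, b_1, \dots, b_{n-1})$, so the index is well defined with the same modulus on both strata. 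None of the exceptional one-dimensional cases of \Cref{Thm:main01} occurs as $\mu'^{\fR}$ here, since those all have dimension zero. Hence by the already-proved single-zero version, $\cP(\mu'^{\fR})$ has a unique non-hyperelliptic component $\cD_I$ of each index $1 \leq I \leq \delta$.

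For existence with prescribed index $I$, pick $X_0 \in \cD_I$ and break up its unique zero into two zeroes of orders $a_1$ and $a_2$ to obtain $X \in \cP(\mu^{\fR})$. Breaking up is a local surgery preserving the flat structure outside a small disk around the zero, so it preserves the index modulo $\delta$. The connected component $\cC$ containing $X$ is non-hyperelliptic, since the reverse operation of merging would otherwise produce a hyperelliptic flat surface in $\cD_I$, contradicting the non-hyperellipticity of $\cD_I$.

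For uniqueness, let $\cC_1, \cC_2$ be non-hyperelliptic components of $\cP(\mu^{\fR})$ sharing index $I$. By \Cref{lm:mergezeroes}, each $\cC_j$ is adjacent to a non-hyperelliptic component $\cD^{(j)} \subset \cP(\mu'^{\fR})$, and since merging preserves the index we must have $\cD^{(1)} = \cD^{(2)} = \cD_I$. It therefore remains to show that any two non-hyperelliptic components adjacent to $\cD_I$ coincide. Using \Cref{Thm:ssc} (or \Cref{Cor:Bssc} in the exceptional signature $(a_1, a_2 \mid -2 \mid \dots \mid -2 \mid -1^2)$, where one must work with a pair of multiplicity-two saddle connections), each $\cC_j$ contains a flat surface with a multiplicity-one saddle connection joining the two zeroes. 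Shrinking it yields a two-level multi-scale differential $\overline{Y}^{(j)}$ with top level $Y^{(j)}_0 \in \cD_I$ and bottom level $Y^{(j)}_{-1}$ in the zero-dimensional stratum $\cP(a_1, a_2, -(a_1+a_2+2))$. The two degenerations differ only in their prong-matching at the node, parameterized by $P_\Gamma \cong \ZZ/(a_1+a_2+1)\ZZ$. One then exhibits a continuous loop inside $\cD_I$ whose monodromy on prong-matchings generates the full cyclic group, so that any two prong choices are connected by deformation through $\cD_I$, forcing $\cC_1 = \cC_2$.

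The main obstacle is this last prong-matching step: showing that inside the fixed component $\cD_I$ all $a_1+a_2+1$ prong rotations are realized by a continuous family of smooth multi-scale differentials. The cleanest route is to produce an embedded cylinder in flat surfaces of $\cD_I$ whose twist coordinate sweeps out the full prong action, or, equivalently, a Type~III principal boundary of $\cD_I$ whose bottom-level flat surface is a genus-zero differential with a single interior node so that varying its twist rotates the prongs at that node. An analogous argument in the residueless single-zero setting appears in \cite[Sec.~9]{lee2023connected}; here the required cylinder is available because $\cD_I$ has positive dimension and the residue condition $\fR$ (involving only one pair of simple poles) places no obstruction on the twist of cylinders internal to the top level.
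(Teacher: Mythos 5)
Your reduction to the single-zero case founders on an arithmetic point that is in fact the entire content of the paper's proof. You assert that $\delta=\gcd(a_1,a_2,b_1,\dots,b_{n-2})$ coincides with $\Delta:=\gcd(a_1+a_2,b_1,\dots,b_{n-2})$ "since $\gcd(a_1,a_2)$ divides $a_1+a_2$". That only gives $\delta\mid\Delta$, and the containment is strict in general: for $a_1=a_2=1$ and all $b_i$ even one has $\delta=1$ but $\Delta=2$. Consequently the index on the single-zero stratum $\cP(\mu'^{\fR})$ (defined mod $\Delta$) is a strictly finer invariant than the index on $\cP(\mu^{\fR})$ (defined mod $\delta$), and your uniqueness step "merging preserves the index, hence $\cD^{(1)}=\cD^{(2)}=\cD_I$" fails: two non-hyperelliptic components of the same index mod $\delta$ may merge into distinct single-zero components $\cD_{J_1}\neq\cD_{J_2}$ with only $J_1\equiv J_2\pmod{\delta}$. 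The paper's proof is devoted precisely to closing this gap: it shows $B(\cD_{J_1})=B(\cD_{J_2})$ whenever $J_1\equiv J_2\pmod{\delta}$, by degenerating inside $\cD_J$ to the bubbling-a-pair configuration with bottom level in $\cP(a\mid-a\mid-1,-1)$ of index $t$, breaking up the zero there, and noting that the resulting component of $\cP(a_1,a_2\mid-a\mid-1,-1)$ remembers $t$ only mod $\gcd(a_1,a_2)$; combined with the mod-$\Delta$ identification upstairs this yields identification mod $\gcd(\Delta,a_1,a_2)=\delta$. Nothing in your proposal plays this role, so uniqueness is not established.

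Two smaller points. First, the step you single out as "the main obstacle" (realizing all prong rotations at the node by a loop in $\cD_I$) is not actually where the difficulty lies: for a two-level graph with a single vertical node the prong-matching group $\ZZ/\kappa\ZZ$ is a single orbit under the level rotation action, so the boundary point is unique and all its plumbings lie in one component; no cylinder-twist or monodromy argument is needed, and this is already implicit in the paper's use of the well-defined map $B$ on components. Second, when the merged component is hyperelliptic you defer to \Cref{lm:mergezeroes}, which is acceptable only after checking its exceptional list; the paper instead treats this case directly (using \Cref{Prop:ssc_he} and \Cref{Cor:Bssc} to re-route through a non-hyperelliptic single-zero component), which is the safer argument to reproduce.
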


\begin{proof}
    Let $\cC$ be any non-hyperelliptic component with index $I$. By \Cref{Thm:ssc}, there exists a flat surface in $\cC$ with a multiplicity one saddle connection joining two distinct zeroes. By merging zeroes, we obtain a component of a single-zero stratum $\cD$. That is, $\cC=B(\cD)$. Let $\Delta\coloneqq \gcd(b_1,\dots,b_{n-2})$.

    First, assume that $\cD$ is hyperelliptic. Since $\cC$ is not hyperelliptic, we must have $a_1\neq a_2$. Assume that $\cD$ has a fixed pole $p$ of order $b$. Then by \Cref{Prop:ssc_he}, there exists a pair of multiplicity two saddle connections bounding the polar domain of $p$. By shrinking them, we obtain a two-level boundary $\overline{X}$ with the bottom level component $X_{-1}\in \cP(a\mid-b\mid-\frac{a-b+2}{2},-\frac{a-b+2}{2})$. By breaking up the zero, we obtain a non-hyperelliptic flat surface in $\cP(a_1,a_2\mid-b\mid-\frac{a-b+2}{2},-\frac{a-b+2}{2})$. By \Cref{Cor:Bssc}, this is adjacent to a non-hyperelliptic component. Thus, $\cC=B(\cD')$ for a non-hyperelliptic component. That is, we may assume that $\cD$ is non-hyperelliptic.
    
    By \Cref{Thm:main01} for single-zero strata, $\cD$ is the unique non-hyperelliptic component $\cD_J$ with index $1\leq J\leq \Delta$. We want to prove that $B(\cD_{J_1})=B(\cD_{J_2})$ if $J_1\equiv J_2 \pmod \delta$. Then for a fixed non-hyperelliptic $\cC_0$, the component $\cC_J$ is equal to $\cC_0\bar{\oplus} t$ for any $1\leq t\leq a-1$ satisfying $t\equiv J\pmod \Delta$.
    
    Consider a flat surface with index $t$ in $\cP(a\mid-a\mid-1,-1)$, which appears in bubbling a pair of simple poles surgery. By breaking up the zero, we obtain a non-hyperelliptic component of a stratum $\cP(a_1,a_2\mid-a\mid-1,-1)$. This component is determined by the index given by $1\leq I\leq \gcd(a_1,a_2)$ such that $t\equiv I \pmod{\gcd(a_1,a_2)}$. That is, we have $B(\cC_0\bar{\oplus} t_1)=B(\cC_0\bar{\oplus} t_2)$ if $t_1\equiv t_2\pmod{\gcd(a_1,a_2)}$. Thus we can conclude that $B(\cC_0\bar{\oplus} t_1)=B(\cC_0\bar{\oplus} t_2)$ if $t_1\equiv t_2 \pmod{\delta}$, where $\delta=\gcd(\Delta,a_1,a_2)$.
\end{proof}

Now we prove \Cref{Thm:main01} for triple-zero strata $\cP(\mu^{\fR})$ with dimension two. That is, the corresponding single-zero stratum has dimension zero.

\begin{proposition} \label{main1-2base2}
    Let $\cP(\mu^{\fR})$ be a stratum with $g=0$, two zeroes, and exactly one pair of simple poles. Suppose that $\dim \cP(\mu^{\fR})=2$. Denote $\delta\coloneqq\gcd(a_1,a_2,a_3,\{b_j\})$. For each $1\leq I\leq \delta$, there exists a unique non-hyperelliptic component with index $I$.
\end{proposition}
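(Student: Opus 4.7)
The plan is to mirror the proof of \Cref{Prop:main01base1} almost verbatim, with the sole difference that the corresponding single-zero stratum here is zero-dimensional, so the induction must terminate at the one-dimensional B-signature base case governed by \Cref{Prop:Bpair} rather than at a positive-dimensional single-zero stratum. Given a non-hyperelliptic component $\cC$ of index $I$, \Cref{Thm:ssc} furnishes $(X,\omega) \in \cC$ with a multiplicity-one saddle connection $\gamma$ joining two distinct zeroes, which after relabeling we take to be $z_1$ and $z_2$. Shrinking $\gamma$ degenerates $(X,\omega)$ to a two-level multi-scale differential whose bottom level is the unique flat surface in $\cP(a_1,a_2,-(a_1+a_2+2))$ and whose top level lies in the one-dimensional B-signature stratum $\cP(a_1+a_2,a_3 \mid -b_1 \mid \dots \mid -b_{n-2} \mid -1,-1)$. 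Let $\cD$ denote the component of the top level; by the definition of the index and its behavior under plumbing, the index of $\cD$ is the reduction of $I$ modulo $\delta_{12} \coloneqq \gcd(a_1+a_2, a_3, b_1, \dots, b_{n-2})$.

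In the generic case $\cD$ is non-hyperelliptic, and \Cref{Prop:Bpair} (the $(e_1,e_2)=(1,1)$ case of the B-signature classification) asserts that $\cD$ is uniquely determined by its index modulo $\delta_{12}$, so $\cC=B(\cD)$ is also uniquely determined. Existence is symmetric: starting from a non-hyperelliptic B-signature component of any prescribed index and breaking up the zero of order $a_1+a_2$ produces an element of $\cP(\mu^{\fR})$ of the desired index $I$. If $\cD$ is instead hyperelliptic, then necessarily $a_1=a_2$ and the ramification profile of $\cD$ swaps $z_1$ with $z_2$; since $\cC$ itself is non-hyperelliptic, the hyperelliptic branch of the proof of \Cref{Prop:main01base1} applies almost verbatim: I will invoke \Cref{Prop:ssc_he} together with \Cref{Cor:Bhyperssc} to deform $\cD$ so that it contains a pair of multiplicity-two saddle connections bounding the polar domain of a fixed pole, shrink this pair to descend one further level, then break up the newly created zero of the bottom-level component to produce a fresh multiplicity-one saddle connection in a representative of $\cC$ whose merging now lands in a non-hyperelliptic B-signature component $\cD'$, reducing to the generic case.

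The main obstacle I anticipate lies in the three exceptional families listed in \Cref{Prop:Bpair}, namely $(n-2,n-2\mid-2^{n-2}\mid-1^2)$, $(b,b\mid-2b\mid-1^2)$, and $(b,b\mid-b\mid-b\mid-1^2)$, in which certain indices admit no non-hyperelliptic component. If the merging of $z_1$ with $z_2$ lands in one of these configurations at the required index $I$, I will instead merge $z_1$ with $z_3$ or $z_2$ with $z_3$, each producing a one-dimensional B-signature stratum with its own modulus $\delta_{1,3}$ or $\delta_{2,3}$. A direct combinatorial check across the three exceptional families---using that the zero orders satisfy $a_1,a_2,a_3\geq 1$ and that the pole data is heavily constrained in each case---should show that not all three mergings can simultaneously fall into exceptional configurations for the same index $I$, so at least one merging yields a non-hyperelliptic $\cD$ and the uniqueness argument goes through. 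This combinatorial verification is the only delicate point; the rest is a direct adaptation of the argument already used for \Cref{Prop:main01base1}.
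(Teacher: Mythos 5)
The central step of the proposition is missing from your argument. Merging two zeroes only shows that $\cC=B(\cD_J)$, where $\cD_J$ is the non-hyperelliptic component of the one-dimensional double-zero stratum determined by its index $J$ modulo $\delta_{12}=\gcd(a_1+a_2,a_3,\{b_j\})$; but the proposition asserts uniqueness of $\cC$ given its index $I$ modulo $\delta=\gcd(a_1,a_2,a_3,\{b_j\})$, and $\delta$ can be a proper divisor of $\delta_{12}$ (for instance $a_1=a_2=1$, $a_3=4$, $b_1=8$ gives $\delta=1$ and $\delta_{12}=2$). Two non-hyperelliptic components of the same index $I$ could therefore arise as $B(\cD_{J_1})$ and $B(\cD_{J_2})$ with $J_1\equiv J_2\pmod{\delta}$ but $J_1\not\equiv J_2\pmod{\delta_{12}}$, and your proposal gives no reason why $B(\cD_{J_1})=B(\cD_{J_2})$. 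Proving exactly this identity is the heart of the paper's proof: it takes Type IIIc boundaries of $\cD_{J_1}$ and $\cD_{J_2}$ with identical top-level components, records the indices $K_i\equiv J_i$ of the bottom-level components carrying the simple-pole pair, and then breaks the bottom-level zero back into the two original zeroes, landing in a second one-dimensional B-signature stratum whose classification shows the two resulting surfaces lie in one component whenever $K_1\equiv K_2\pmod{\delta}$. This is the analogue of the computation $B(\cC_0\bar{\oplus}t_1)=B(\cC_0\bar{\oplus}t_2)$ in the proof of \Cref{Prop:main01base1} that you claim to mirror but in fact omit, and without it the uniqueness statement is not established (existence, as you note, is the easy half, since a triple-zero stratum has no hyperelliptic components at all).

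Two secondary issues. Your hyperelliptic branch is misanalyzed: with your choice of merging $z_1$ and $z_2$, the component $\cD$ has zeroes of orders $a_1+a_2$ and $a_3$, so it can be hyperelliptic only when $a_1+a_2=a_3$ (the involution would swap the merged zero with $z_3$), not when $a_1=a_2$; likewise the three exceptional families of \Cref{Prop:Bpair} all have two zeroes of equal order, so your deferred and unperformed combinatorial check lives entirely in this same branch. The paper sidesteps all of this by ordering $a_1\leq a_2\leq a_3$ and merging $z_2$ with $z_3$: the resulting zero orders $a_1<a_2+a_3$ are always distinct, so neither a hyperelliptic $\cD$ nor any exceptional case of \Cref{Prop:Bpair} can occur, and the entire second half of your proposal becomes unnecessary. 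You should adopt that choice of merging and then supply the missing argument that $B(\cD_{J_1})=B(\cD_{J_2})$ for $J_1\equiv J_2\pmod{\delta}$.
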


\begin{proof}
    Assume that $a_1\leq a_2\leq a_3$, and let $\cC$ be a non-hyperelliptic component with index $I$. Denote $\Delta\coloneqq\gcd(a_1,a_2+a_3,b_1,\dots,b_{n-2})$.
    
    By \Cref{Thm:ssc}, we can merge the two zeroes $z_2$ and $z_3$, and $\cC$ is adjacent to the unique non-hyperelliptic component $\cD_J$ with index $1\leq J\leq \Delta$ of a stratum of B-signature with two zeroes of orders $a_1$ and $a_2+a_3$, such that $J\equiv I\pmod{\delta}$.
    
    Suppose that $J_1\equiv J_2\pmod{\delta}$. We need to show that $B(\cD_{J_1})=B(\cD_{J_2})$. Then $\cC=B(\cD_J)=B(\cD_I)$, proving the uniqueness of $\cC$. Consider Type IIIc boundaries $\overline{X}$ and $\overline{Y}$ of $\cD_{J_1}$ and $\cD_{J_2}$, respectively, such that the top level components $X_0$ and $Y_0$ are equal. Note that the bottom level components $X_{-1}$ and $Y_{-1}$ also contain the pair of simple poles, and their indices are denoted by $K_1$ and $K_2$, respectively. They satisfy $K_i\equiv J_i \pmod{\Delta}$. Thus $J_1-J_2\equiv K_1-K_2\pmod{\delta}$. 
    
    By breaking up the zero of the bottom level components, we obtain flat surfaces $X'_{-1}$ and $Y'_{-1}$ in another stratum of B-signature with two zeroes of orders $a_2$ and $a_3$. They are contained in the same connected component since $K_1\equiv K_2\pmod{\delta}$, where $\delta=\gcd(a_2,a_3,\Delta)$. Thus, we have $\cD_{J_1}=\cD_{J_2}$.
\end{proof}

We are now ready to prove \Cref{Thm:main01} for multiple-zero strata.

\begin{proof}[Proof of \Cref{Thm:main01}]
    Let $\cC$ be a non-hyperelliptic component with index $I$. First, assume that $\dim \cP(\mu^{\fR})>n-1$. Then we can merge all zeroes, obtaining a non-hyperelliptic component $\cD_J$ of a single-zero stratum of dimension larger than zero. That is, $\cC=B(\cD_J)$. Denote $\Delta=\gcd(a_1+\dots+a_m,b_1,\dots,b_{n-2})$. By \Cref{Thm:main01} for single-zero strata, $\cD_J$ is uniquely determined by its index $1\leq J\leq \Delta$. We have $J\equiv I\pmod{\delta}$. It suffices to show that $\cC=B(\cD_I)$.
    
    By breaking up the zero into two zeroes $z_i$ of order $a_i$ and $w_i$ of order $a_1+\dots+\widehat{a_i}+\dots+a_m$, we obtain a component $\cD_J^i$ of a double-zero stratum, denoted by $B_i(\cD_J)=\cD_J^i$. By breaking up the zero $w_i$ further into $n-1$ zeroes, we can recover $\cC$. The index $J_i$ of $\cD_J^i$ is determined by $J_i\equiv J\pmod{\gcd(\Delta,a_i)}$. Also, by \Cref{Prop:main01base1}, $\cD_J^i$ is uniquely determined by its index. Thus $B_i(\cD_J)=B_i(\cD_{J'})$ for any $J'$ such that $J'\equiv J\pmod{\gcd(\Delta,a_i)}$. In particular, $\cC=B(\cD_{J'})$ for any such $J'$. 
    
    Since this holds for each $i=1,\dots,n$, we can conclude that $\cC=B(\cD_{J'})$ for any $J'$ such that $J'\equiv J\pmod{\delta}$, where $\delta=\gcd(\Delta,a_1,\dots,a_m)$. Therefore, we have $\cC=B(\cD_I)$, and $\cC$ is uniquely determined by its index $I$.
    
    Now assume that $\dim \cP(\mu^{\fR})=m-1$. By merging all zeroes, we obtain a non-hyperelliptic component $\cC_0$ of a single-zero stratum of dimension zero. Therefore, it is not uniquely determined by its index. 
    
    Instead, consider a non-hyperelliptic component $\cE_J=B_1(\cC_0)$ of index $J$ of a double-zero stratum. Break up the zero $w_1$ into two zeroes $z_i$ of order $a_i$ $(i>1)$ and $w'_i$ of order $a_2+\dots+\widehat{a_i}+\dots+a_m$, obtaining a component $\cE_J^i$ of a triple-zero stratum, denoted by $B^i(\cD^1)=\cE^i$. The index of $\cE^i$ is determined by $J^i\equiv J\pmod{\gcd(\Delta,a_1,a_i)}$. Also, by \Cref{main1-2base2}, $\cE^i$ is uniquely determined by its index. Therefore, $B^i(\cE_J)=B^i(\cE_{J'})$ for any $J'$ such that $J'\equiv J\pmod{\gcd(\Delta,a_1,a_i)}$. In particular, $\cC$ is adjacent to $\cE_{J'}$ for any such $J'$. 
    
    Since this holds for each $i=2,\dots,n$, we conclude that $\cC$ is adjacent to $\cE_I$. Thus, $\cC$ is uniquely determined by its index $I$.
\end{proof}

Finally, we prove \Cref{Thm:mainhigh} for a multiple-zero stratum. Let $\cP(\mu'^{\fR})$ be the corresponding single-zero stratum. If both $\cP(\mu^{\fR})$ and $\cP(\mu'^{\fR})$ are \emph{not} of even type, then \Cref{Thm:mainhigh} follows immediately from the single-zero version and \Cref{lm:mergezeroes}, as any non-hyperelliptic component of $\cP(\mu^{\fR})$ is adjacent to the unique non-hyperelliptic component of $\cP(\mu'^{\fR})$. Similarly, if both are of even type, \Cref{Thm:mainhigh} also follows by the same argument. Therefore, it remains to deal with the case when $\cP(\mu'^{\fR})$ is of even type but $\cP(\mu^{\fR})$ is not.

\begin{proof}[Proof of \Cref{Thm:mainhigh} for multiple-zero strata]
    We first take care of exceptional cases. Let $\cC$ be a non-hyperelliptic component of $\cP(\mu^{\fR})$. By \Cref{lm:mergezeroes}, $\cC$ is adjacent to some connected component $\cD$ of $\cP(\mu'^{\fR})$. 
    
    Assume that $g+k=2$ and $\widetilde{\mu}=(1,1)$. Then $\widetilde{\mu'}=(2)$, and therefore $\cD$ is hyperelliptic. Then $\cC$ is also hyperelliptic, a contradiction. Thus, there is no non-hyperelliptic component.
    
    Now assume that $g+k=3$ and $\widetilde{\mu}=(2,2,-2)$ or $(2,2)$. In this case, $\cP(\mu'^{\fR})$ has a unique non-hyperelliptic component $\cD$, and therefore $\cC$ is unique.
    
    From now on, assume that $\widetilde{\mu}\neq(1,1),(2,2,-2)$, or $(2,2)$. By the discussion before this proof, we may assume that $\cP(\mu'^{\fR})$ is of even type and $\cP(\mu^{\fR})$ is not. In particular, the sum $a_1+\dots+a_m$ is even, but at least one $a_i$, say $a_1$, is odd. We need to prove that $\cC$ is a unique non-hyperelliptic component. By merging all zeroes but $z_1$, we deduce that $\cC$ is adjacent to a non-hyperelliptic component $\cD$ of a double-zero stratum with two odd zeroes. 
    
    If $\cD$ is unique, then $\cC$ is also unique because it is obtained by breaking up a zero from $\cD$. Thus, it suffices to prove it for double-zero strata. Let $\cD_{\mathrm{even}}$ and $\cD_{\mathrm{odd}}$ be two non-hyperelliptic components of $\cP(\mu'^{\fR})$ distinguished by spin parity. Denote the breaking up of the zero into $z_1,z_2$ by $B$. We want to show that $B(\cD_{\mathrm{even}})=B(\cD_{\mathrm{odd}})$.
    
    As in the proof for single-zero strata, we can write $\cD_{\mathrm{odd}}=\cC_0\bar{\oplus}1\bar{\oplus}2$ and $\cD_{\mathrm{even}}=\cC_0\bar{\oplus}1\bar{\oplus}1$ (or $\oplus$ instead of some of the $\bar{\oplus}$) for some non-hyperelliptic component $\cC_0$.
    
    Consider the flat surfaces $Y_I$ of the stratum $\cP(a\mid -a\mid -1^2)$ with index $1\leq I\leq a-1$. They appear in the bubbling-a-pair construction $\bar{\oplus}I$. By breaking up the zero, we obtain non-hyperelliptic flat surfaces in a stratum of B-signature with one pair of simple poles. The connected component is determined by the index $1\leq J\leq\gcd(a_1,a)$ such that $J\equiv I\pmod{\gcd(a_1,a)}$. In particular, $B(Y_1)=B(Y_{1+\gcd(a_1,a)})$. That is, $B(\cD_{\mathrm{even}})=B(\cC_0\bar{\oplus}1\bar{\oplus}(1+\gcd(a_1,a)))=B(\cC_0\bar{\oplus}1\bar{\oplus}2)=B(\cD_{\mathrm{odd}})$, since the spin parity of $\cC_0\bar{\oplus}t_1\bar{\oplus}t_2$ is determined by the parity of $t_1+t_2\pmod{2}$.
    
    This completes the proof.
\end{proof}

\section{Equatorial arcs in one-dimensional strata} \label{Sec:equatorial}

The base cases of the induction are strata $\cP(\mu^\fR)$ of dimension one. In \cite[Sec.~4]{lee2023one}, A cellular decomposition of one-dimensional residueless strata is introduced using the projective structure induced by the period coordinates. This decomposition can be applicable to any one-dimensional generalized stratum.

The $1$-skeleton of this decomposition is called the {\em equatorial net}, whose connected components reflect the connected components of the stratum itself. In this section, we briefly explain how the degeneration moves and the associated combinatorial data interact. These descriptions of degeneration will be used in later sections to prove classification results for one-dimensional projectivized strata.

We now outline the cellular decomposition. For one-dimensional strata $\cP(\mu^{\fR})$, we can choose two curves $\alpha,\beta$ on the Riemann surface $X\setminus\boldsymbol{p}$ that form a basis of $\fR$-homology $H_1(X\setminus\boldsymbol{p},\boldsymbol{z};\mathbb{Z})/K(\fR)$. Recall that the local period coordinates are given by
\[
p: \cR(\mu^\fR) \longrightarrow \mathbb{C}^2, 
\]
\[
(X,\omega)\mapsto \left( \int_\alpha \omega , \int_\beta \omega \right).
\]

By projectivization, we have 
$$p: \cP(\mu^{\fR})\longrightarrow \mathbb{C},$$
$$(X,\omega)\mapsto \int_\alpha \omega \left/ \int_\beta \omega \right.$$

Let $\alpha',\beta'$ form another basis of $H_1(X\setminus\boldsymbol{p},\boldsymbol{z};\mathbb{Z})/K(\fR)$. Then there is a 2 by 2 integer matrix $\left(\begin{smallmatrix}
    a & b \\ c& d
\end{smallmatrix}\right)$ such that $$\left(\begin{smallmatrix}
    [\alpha'] \\ [\beta']
\end{smallmatrix}\right) = \left(\begin{smallmatrix}
     a & b \\ c& d
\end{smallmatrix} \right)\left(\begin{smallmatrix}
    [\alpha] \\ [\beta]
\end{smallmatrix} \right).$$

That is, the preimage of the real line under this map $p$ is independent on the choice of the basis. So it locally define a real one-dimensional submanifold of $\cP(\mu^{\fR})$. This is the locus of flat surfaces whose saddle connections are all in the same direction. 

The map $p$ extends to
\[
\overline{p}: \overline{\cP}(\mu^\fR) \longrightarrow \mathbb{C}\mathbb{P}^1.
\]

The {\em equatorial net} of $\overline{\cP}(\mu^\fR)$, denoted by $\mathcal{A}(\mu^\fR)$, is defined by the preimage of $\mathbb{R}\mathbb{P}^1 \subset \mathbb{C}\mathbb{P}^1$ under $\overline{p}$. This is no longer a submanifold and have singularities at the boundary points of $\overline{\cP}(\mu^\fR)$.

The equatorial net decomposes into $1$-cells, called {\em equatorial arcs}, and $0$-cells. The equatorial net defines a ribbon graph on the compactification $\overline{\cP}(\mu^\fR)$ of a one-dimensional stratum, as the coarse moduli space of this stratum is a compact Riemann surface. So there is a one-to-one correspondence between the connected components of $\overline{\cP}(\mu^\fR)$ and the connected components of its equatorial net $\mathcal{A}(\mu^\fR)$. 

The half-edges of the underlying ribbon graph will be referred to as {\em equatorial half-arcs}. To understand how to navigate from one boundary element to another within the same connected component, one can simply follow the equatorial arcs.

\subsection{Equatorial arcs and their representation}

The boundary of a one-dimensional stratum $\overline{\cP}(\mu^\fR)$ consists of two-level multi-scale differentials $\overline{X}$, where each level is a (possible disconnected) flat surface contained in the zero-dimensional generalized stratum. That is, in each level, there exists only one collection of saddle connection all parallel to each other. If $\alpha$, $\beta$ are saddle connections in the level 0 and -1, respectively, then in the plumbing neighborhood of $\overline{X}$, the ratio between the periods of these two collections is determined by the plumbing parameter $$t=\int_\beta \omega \left/ \int_\alpha \omega \right.$$ so the flat surface after plumbing is contained in the equatorial arc if and only if $t\in \mathbb{R}$. As $t\to 0$, the flat surface converges to the boundary $\overline{X}$. Therefore there are two equatorial arcs, $t\in \mathbb{R}_+$ and $t\in \mathbb{R}_-$ emanating from $\overline{X}$. Recall that we need the discrete parameter--prong-matching to determine the flat surface after plumbing. Therefore, we can conclude that for each boundary $\overline{X}$, the number of equatorial arcs adjacent to $\overline{X}$ is twice the number of prong-matchings. 

The level rotation group $\mathbb{C}$ on level -1 acts on a plumbing neighborhood of $\overline{X} \in \partial\overline{\cP}(\mu^\fR)$ by multiplication on the plumbing parameter $t$ by $e^{2\pi z}$ for $z\in \mathbb{C}$. The subgroup $\mathbb{Z}[\tfrac{1}{2}] \subset \mathbb{C}$ preserves the equatorial net near $\overline{X}$, specifically by rotating the equatorial half-arcs around it. We denote by $R$ the rotation of an equatorial half-arc to the next one in the clockwise direction. In addition to level rotation, one can transform an equatorial half-arc around a boundary element into its conjugate half-arc, which starts from the opposite endpoint. Let $U$ denote this transformation. The transformations $U$ and $R$ are depicted in \Cref{fig:transformations_arcs}. Note that the action generated by $R$ and $U$ on the set of equatorial half-arcs is transitive in the given connected component. Thus by applying $R$ and $U$, we can connected any two $0$-cells in the same connected component by describing an explicit path joining them. Our strategy for classifying the connected components of one-dimensional strata is to show that for a fixed topological invariant, we can reach a boundary point of a form fixed for the topological invariant.

\begin{figure}[h!]
    \centering
    \includegraphics[width=\textwidth]{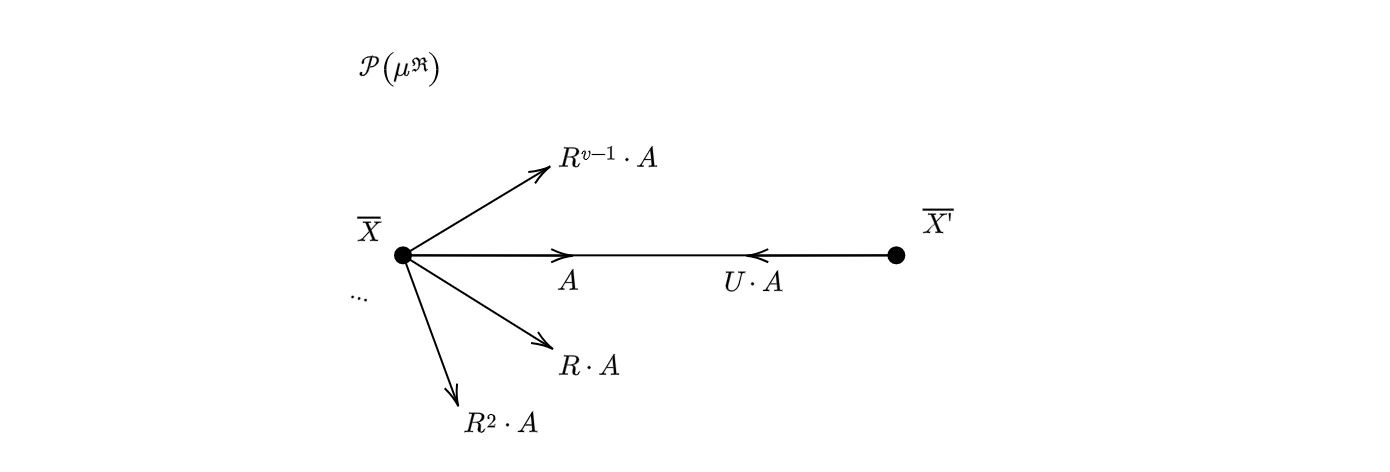}
    \caption{Transformations $R$ and $U$}
    \label{fig:transformations_arcs}
\end{figure}

In order to visualize the multi-scale differentials or flat surfaces on equatorial arcs in a certain one-dimensional projectivized stratum, we use diagrams called {\em separatrix diagrams}. They are defined as follows:

\begin{definition}
    A {\em separatrix diagram} of a flat surface with only parallel saddle connections is a ribbon graph whose vertices and edges correspond to zeros and saddle connections, respectively. The cycles of the ribbon graph are in one-to-one correspondence with the poles. In addition, a number $\theta$ in $\mathbb{N}[\frac{1}{2}]$ is assigned to each pair of adjacent half-edges to record the angles between the saddle connections (i.e., the actual angle is $2\theta\pi$).
\end{definition}
\section{Strata of B-signature} \label{Sec:Bsignature}

In this section, we prove \Cref{Prop:Bhyper}--\Cref{Prop:Bnonhyper}. Let $\cP(\mu^{\fR})$ be a stratum of B-signature. We denote $\mu^\fR = (a_1, a_2\mid-b_1 \mid \dots \mid -b_{n-2}\mid  -e_1, -e_2)$ with $a_1+a_2-(e_1+e_2+\sum_i b_i)=-2$ throughout this section. Assume also that $0 < a_1 \leq a_2$, $0 < e_1 \leq e_2$, and $2 \leq b_1 \leq \dots \leq b_{n-2}$. Let $q_1$ and $q_2$ denote the non-residueless poles of orders $e_1$ and $e_2$, respectively. 

\subsection{Boundaries of strata of B-signatures}

We first describe the boundaries of $\cP(\mu^{\fR})$. They are Type I or Type III principal boundary. 

\subsubsection{Type I boundary}

By \Cref{lm:univ_prin_bdry}, any connected component of $\cC$ of $\cP(\mu^\fR)$ contains a Type I boundary. Recall that a Type I boundary is obtained by shrinking a saddle connection joining $z_1$ and $z_2$. A Type I boundary of a stratum $\cP(\mu^{\fR})$ of B-signature is given by the following combinatorial data:

\begin{itemize}
    \item An integer $0 < \ell \leq n-2$ indicating the number of residueless poles contained in the top-level component.
    \item A permutation $\tau \in \operatorname{Sym}_{n-2}$ on the residueless poles.
    \item A tuple of integers ${\bf C} = (C_1, \dots, C_{n-2})$ where $1 \leq C_i \leq b_i - 1$ for each $i$, and positive integers $C,D$ such that:
    \begin{align*}
        a_1+1&=C + \sum_{i=\ell+1}^{n-2} C_{\tau(i)}, \\
        a_2+1&=D +\sum_{i=\ell+1}^{n-2} (b_{\tau(i)} - C_{\tau(i)}).
    \end{align*}
\end{itemize}

Let $\tau_1=\tau|_{\{1,\dots, \ell\}}$ and ${\bf C_1}=(C_{\tau(i)})_{i=1,\dots, \ell}$. Then the top level component is isomorphic to $Z_2(\tau_1,{\bf C_1},e_1,e_2)$. Note that the bottom level component has $n-1-\ell$ residueless pole, including one pole of order $C+D$ at the node $s^\bot$. Let $\tau_2(i)=\tau(i+\ell)$ for $i=1,\dots, n-2-\ell$ and $\tau_2(n-1-\ell)=s^\bot$. Also let ${\bf C_2}=(C_{\tau(\ell+1)},\dots,C_{n-2},C)$. Then the bottom level component is isomorphic to $Z_1(\tau_2,{\bf C_2})$. We denote this Type I boundary by 
\[
X^B_{\I} (\ell,\tau,{\bf C}).
\]

We now present the level graphs of various principal boundaries, along with the separatrix diagrams corresponding to these boundaries in \Cref{fig:B_type_I_graph}. The enhancement of the level graph is denoted by $\kappa \coloneqq C + D - 1$.
\begin{figure}[h!]
        \centering
        \resizebox{13.5cm}{6.5cm}{\includegraphics[]{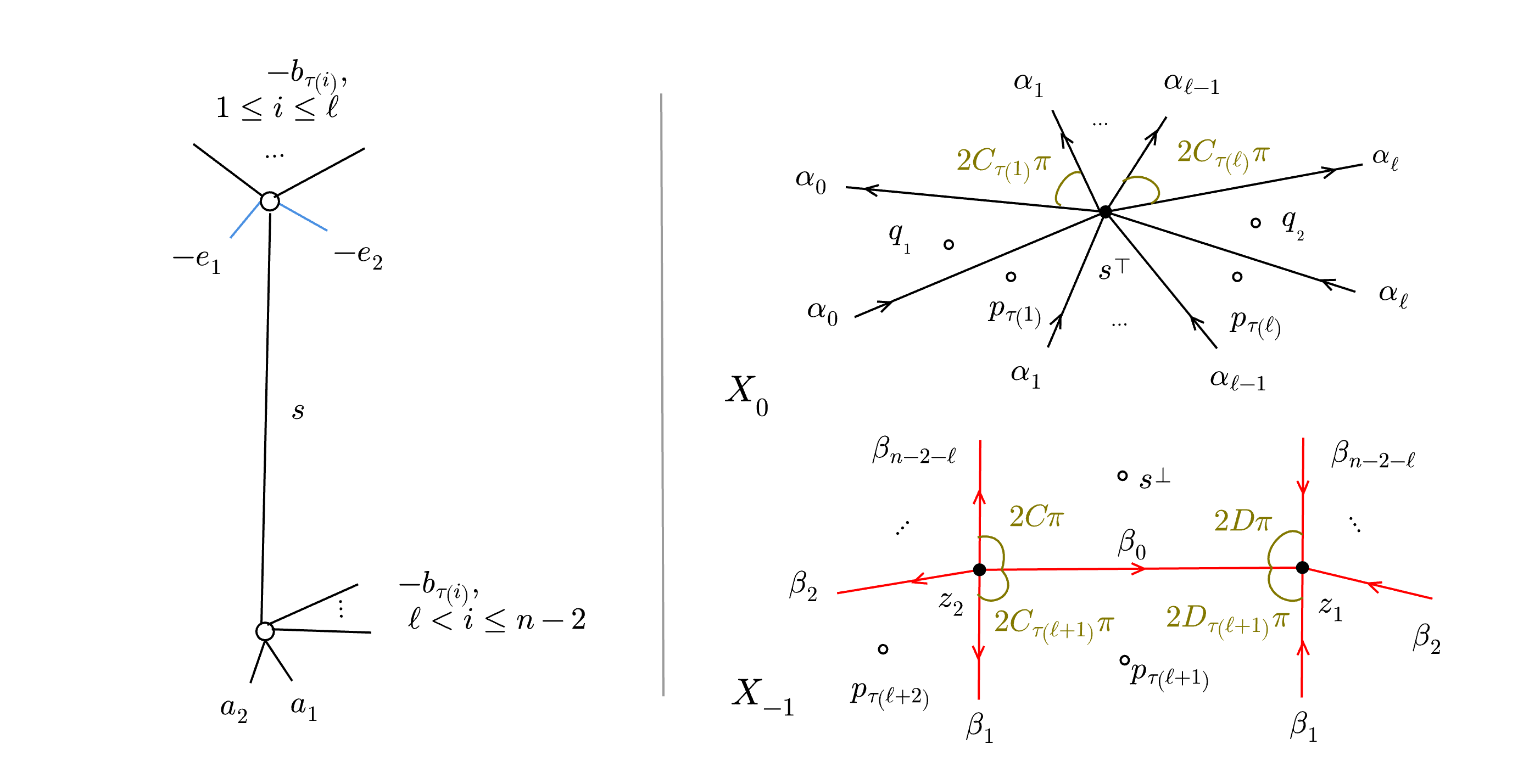}}
        \caption{Level graph and separatrix diagram of Type I boundary of strata of B-signatures}
        \label{fig:B_type_I_graph}
\end{figure}

In \Cref{fig:B_typeI_pm}, the labeling of prongs on each irreducible component is depicted. On the top level, we label the outgoing prongs using the cyclic group $\mathbb{Z}/\kappa\mathbb{Z}$; that is, $v^+_{u_1} = v^+_{u_2}$ if $u_1 - u_2 \equiv 0 \pmod{\kappa}$. The outgoing prongs $v^+_{c_0}, v^+_{c_1}, \dots, v^+_{c_\ell}$ coincide with saddle connections in the same direction, while the outgoing prongs $v^+_{-d_0}, v^+_{-d_1}, \dots, v^+_{-d_\ell}$ are those immediately counter-clockwise to each corresponding saddle connection. Here,

\begin{align*}
    c_i = \sum_{j=1}^i C_{\tau(j)}, \quad
    d_i = e_1 + \sum_{j=1}^i D_{\tau(j)}.
\end{align*}

\begin{figure}[h!]
        \centering
        \resizebox{15.5cm}{9cm}{\includegraphics[]{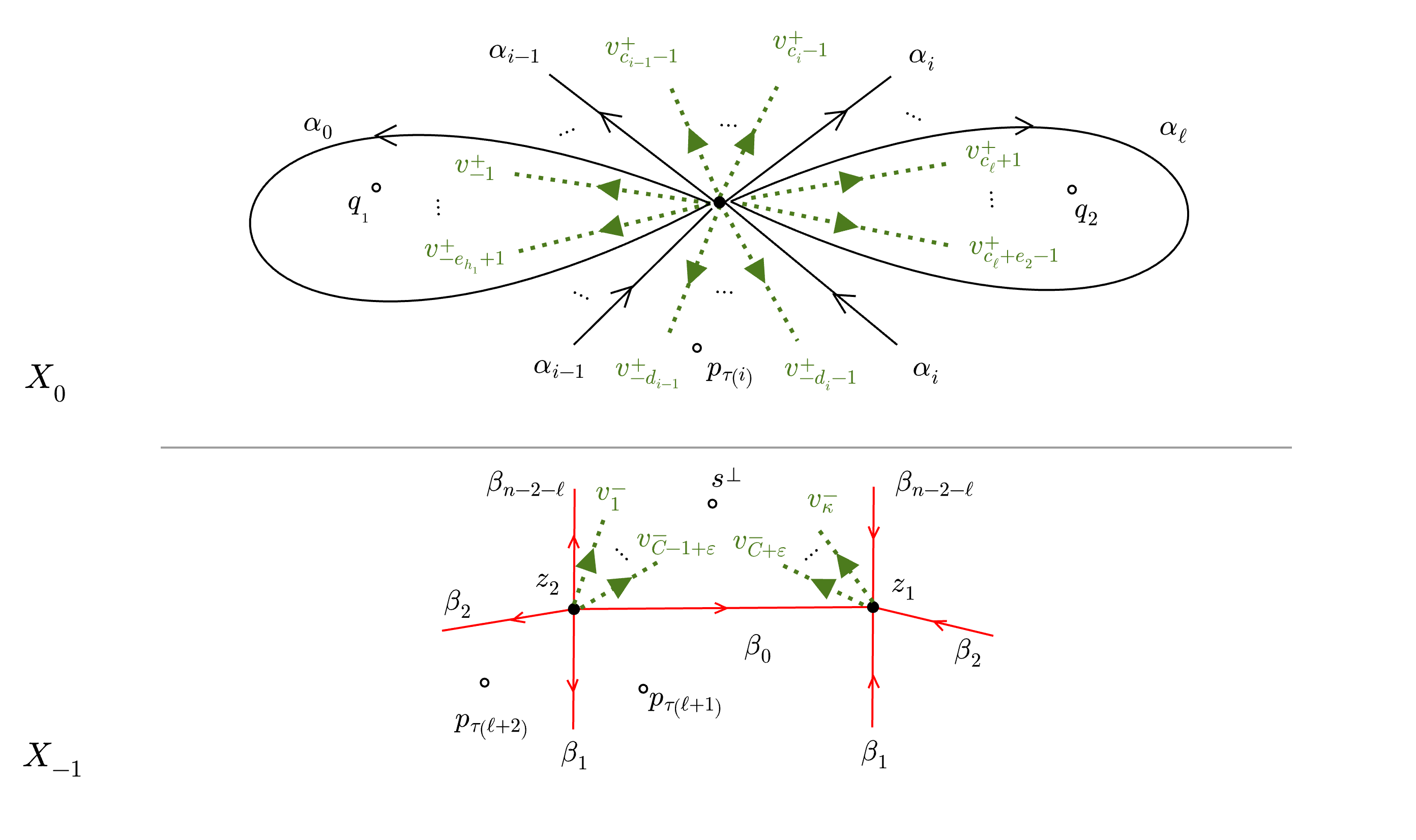}}
        \caption{The marking of prongs for Type I boundary of strata of B-signatures}
        \label{fig:B_typeI_pm}
\end{figure}
Note that 
\[
e_1 + e_2 - 1 + \sum_{j=1}^\ell \left(C_{\tau(j)} + D_{\tau(j)}\right) = \kappa,
\]
hence we have the identity $v^+_{-d_\ell} = v^+_{c_\ell + e_2 - 1}$.

On the bottom level, the incoming prongs at $s^\bot$ correspond to the outgoing prongs at $z_2$ and $z_1$ pointing into the polar domain of $s^\bot$. We label these prongs $v^-_1, \dots, v^-_{\kappa}$ in clockwise order around $s^\bot$. The prong $v^-_1$ is defined as the outgoing prong at $z_2$ directed toward $s^\bot$ that lies immediately clockwise from the saddle connection $\beta_{n-2-\ell}$ at $z_1$.

Depending on the rescaling of the bottom level, there are $C - 1 + \varepsilon$ outgoing prongs at $z_1$ directed toward $s^\bot$, where
\[
\varepsilon = 
\begin{cases}
    1 & \text{if } \arg(\alpha_0/\beta_0) \in (0, \pi], \\
    0 & \text{if } \arg(\alpha_0/\beta_0) \in (\pi, 2\pi].
\end{cases}
\]

Then a prong-matching $\boldsymbol{\sigma}$ is uniquely determined by the image $v^+_u$ of $v^-_{\kappa}$. In this case, we identify the prong-matching with $u \in \mathbb{Z}/\kappa\mathbb{Z}$, i.e., $\boldsymbol{\sigma} = u$.

\subsubsection{Type III boundary}

By shrinking any saddle connection that joins a zero to itself, a flat surface in $\cP(\mu^\cF)$ of B-signature degenerates to a Type III boundary. There are three types of Type III boundaries, distinguished by the structure of the level graphs. These can be categorized as follows:

\begin{itemize}
    \item Type IIIa: when $q_1$ and $q_2$ lie on different components at the bottom level;
    \item Type IIIb: when only one of the non-residueless poles lies on the bottom level, while the other remaining on the top level is not a simple pole;
    \item Type IIIc: when $q_1$ and $q_2$ lie on the same component at the bottom level.
\end{itemize}

For each type, the positions of the zeros and poles can be interchanged, resulting in a total of 8 different boundary types distinguished by the level graph configuration. Note that, for a given stratum, some of these boundary types may not occur due to numerical constraints on $a_1, a_2, e_1,$ and $e_2$.

If $e_1 = e_2 = 1$, then boundaries of Type IIIb do not exist, whereas boundaries of Type IIIa and Type IIIc appear in every connected component. On the other hand, if $e_2 > 1$, then Type IIIc with $z_1$ and $q_2$ on the top level always exists, since we assume $a_2 \leq e_1$.

We will navigate these different boundary types to classify the connected components. The three types of Type III boundaries are characterized by the combinatorial data described in \Cref{tab:TypeBIIIabc_comb}, with the corresponding level graphs and separatrix diagrams shown in \Cref{tab:TypeB_IIIabc}.

\begin{table}[h!]
    \centering
    \begin{tabular}{|c|p{12cm}|}
    \hline
        \textbf{Type} & \textbf{Combinatorial Data} \\ \hline
         IIIa & 
         A tuple $\underline{h} = (h_1, h_2)$ indicating the zeros $z_{h_1}$ and $z_{h_2}$ contained in the irreducible components with $q_1$ and $q_2$, respectively. \\ \cline{2-2}
         & Integers $\ell_1, \ell_2$ with $0 \leq \ell_1 < \ell_2 \leq n-2$, such that $\ell_1$ (resp. $n-2 - \ell_2$) is the number of residueless poles on the component $X_{-1}^1$ (resp. $X_{-1}^2$). \\ \cline{2-2}
         & A permutation $\tau \in \mathrm{Sym}_{n-2}$ indexing the residueless poles. \\ \cline{2-2}
         & A tuple ${\bf C} = (C_1, \dots, C_{n-2})$ with $1 \leq C_i \leq b_i - 1$. \\ \cline{2-2}
         & A prong-matching equivalence class $[(u,v)] \in \mathbb{Z}/\kappa_1\mathbb{Z} \times \mathbb{Z}/\kappa_2\mathbb{Z}$ modulo diagonal rotation. \\ \hline

        IIIb & 
        A tuple $\underline{h} = (h_1, h_2, h_3, h_4) \in \{(1,2,2,1), (1,1,2,2), (2,1,1,2), (2,2,1,1)\}$ indicating the zero $z_{h_2}$ and the pole $q_{h_4}$ on the bottom level, while $z_{h_1}$ and $q_{h_3}$ lie on the top level. \\ \cline{2-2}
        & An integer $\ell$, where $0 \leq \ell \leq n-2$, representing the number of residueless poles on the bottom level. \\ \cline{2-2}
        & A permutation $\tau \in \mathrm{Sym}_{n-2}$ indexing the residueless poles. \\ \cline{2-2}
        & An integer $C$ with $1 \leq C \leq e_{h_2} - 1$. \\ \cline{2-2}
        & A tuple ${\bf C} = (C_1, \dots, C_{n-2})$ with $1 \leq C_i \leq b_i - 1$. \\ \hline

        IIIc & 
        A tuple $\underline{h} = (h_1, h_2) \in \{(1,2), (2,1)\}$ indicating the zero $z_{h_1}$ on the bottom level and $z_{h_2}$ on the top level. \\ \cline{2-2}
        & Integers $\ell_1, \ell_2$ with $0 \leq \ell_1 < \ell_2 \leq n-2$, where $\ell_1$ (resp. $n-2 - \ell_2$) denotes the number of residueless poles between $q_1$ and $s^\bot$ (resp. $q_2$ and $s^\bot$) on $Z_{-1}$. \\ \cline{2-2}
        & A permutation $\tau \in \mathrm{Sym}_{n-2}$ indexing the residueless poles. \\ \cline{2-2}
        & An integer $C$ with $1 \leq C \leq \kappa$. \\ \cline{2-2}
        & A tuple ${\bf C} = (C_1, \dots, C_{n-2})$ with $1 \leq C_i \leq b_i - 1$. \\ \hline
    \end{tabular}
    \caption{Combinatorial data of Type III boundaries for strata with B-signatures}
    \label{tab:TypeBIIIabc_comb}
\end{table}

\begin{table}[h!]
    \centering
    \begin{tabular}{|c|c|}
    \hline
      Type   & Level graph/ Separatrix diagram \\ \hline
       IIIa  & \centered{  
\resizebox{14cm}{6.6cm}{\includegraphics[]{./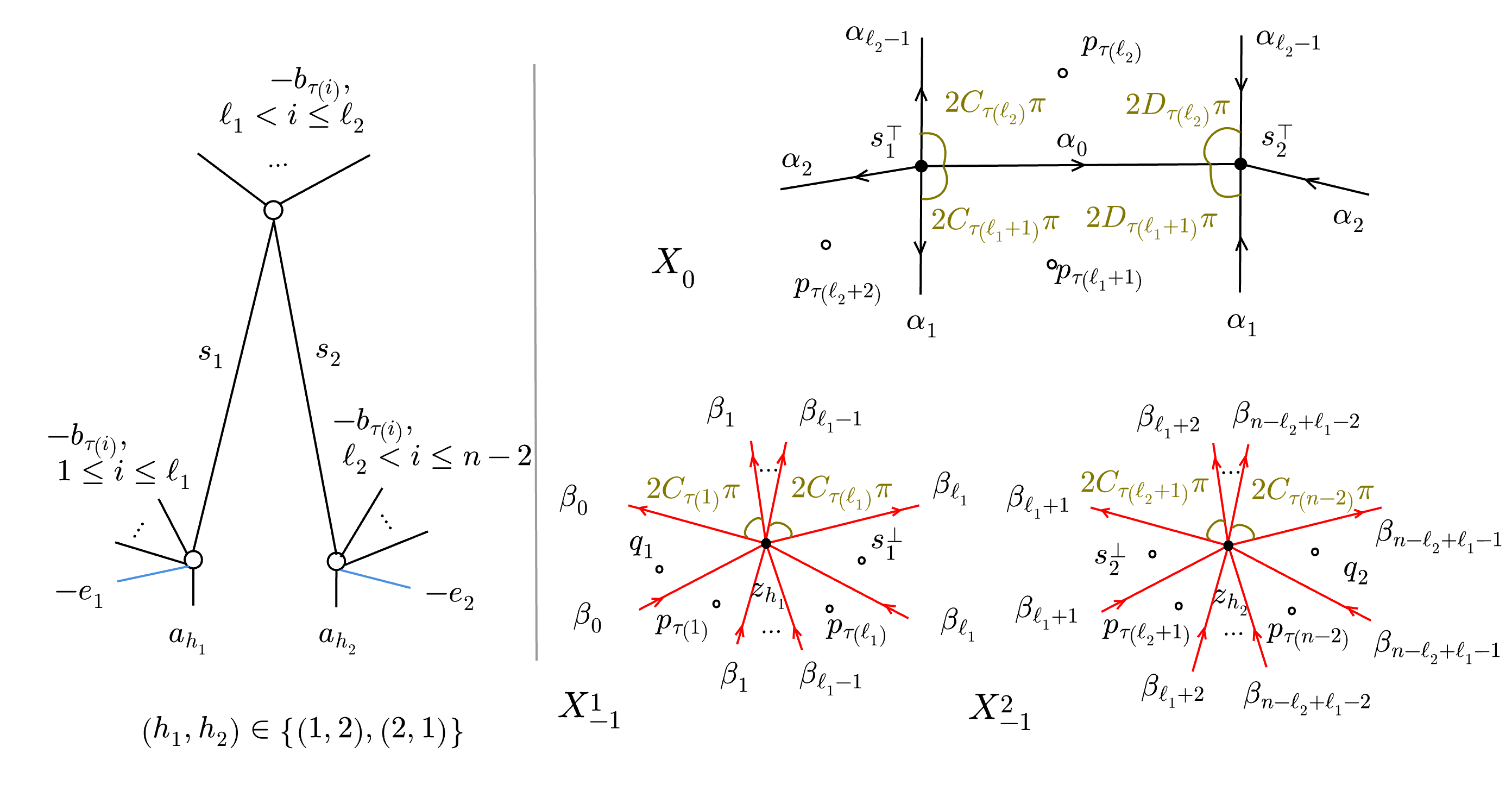}}
 }  \\ \hline

 IIIb & \centered{  
\resizebox{14cm}{6.6cm}{\includegraphics[]{./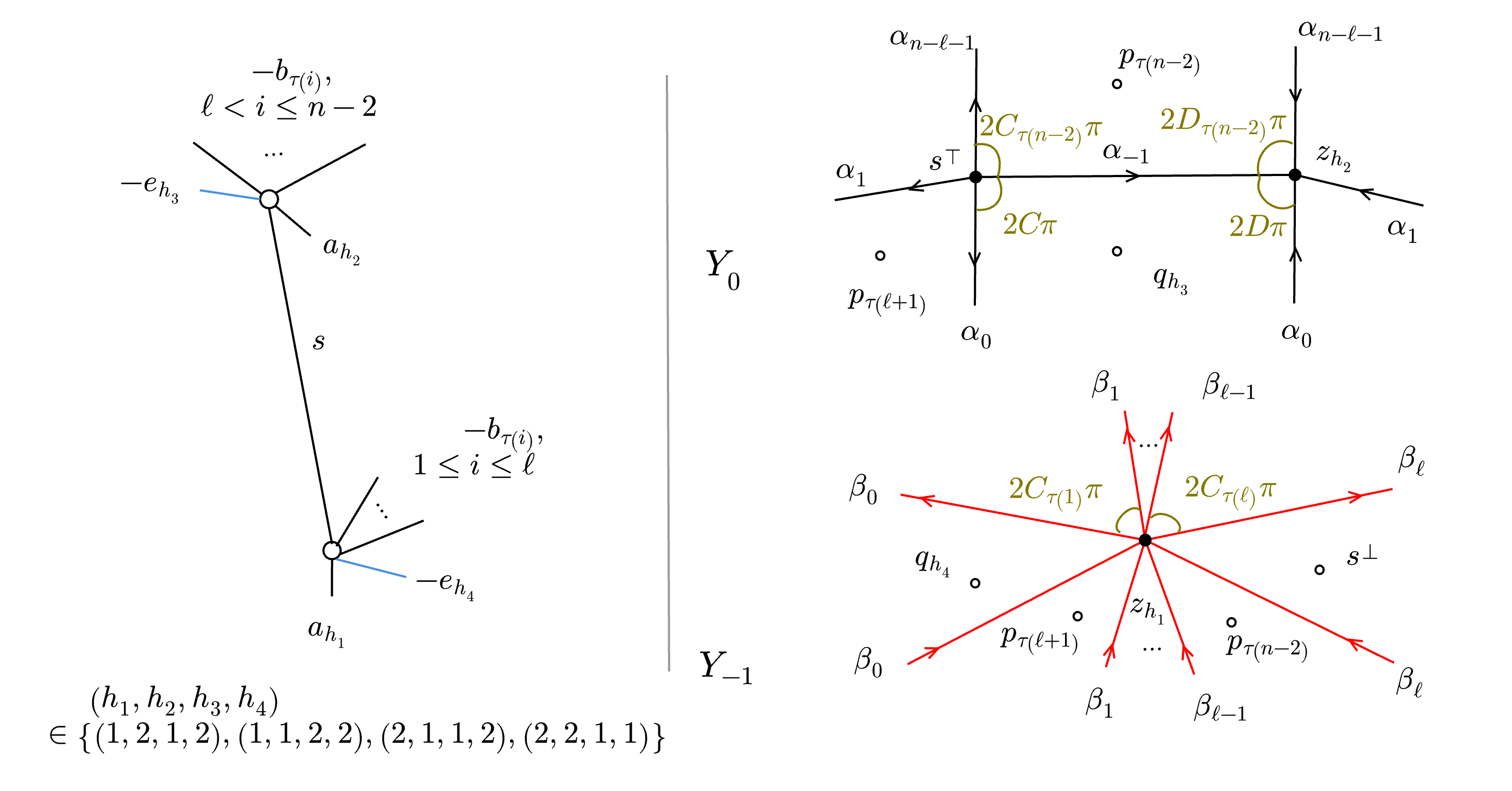}}
 }  \\ \hline

 IIIc & \centered{  
\resizebox{14cm}{6.6cm}{\includegraphics[]{./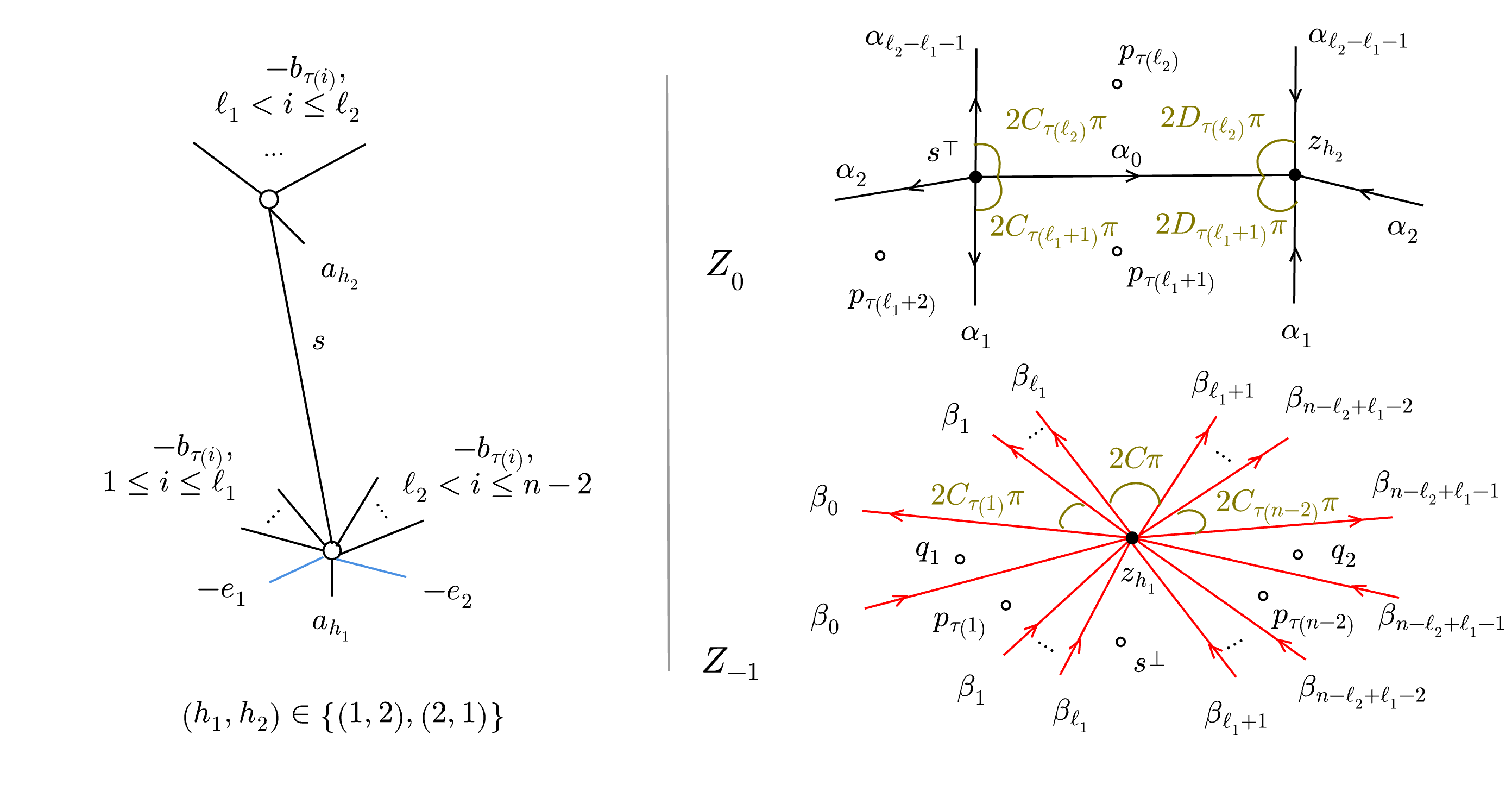}}
 }  \\ \hline
       
    \end{tabular}
    \caption{Type III Principal boundaries of Strata of B-signatures}
    \label{tab:TypeB_IIIabc}
\end{table}

For a Type IIIa boundary, let $\tau_1=\tau|_{\{1,\dots, \ell_1\}}$, $\tau_2(i)=\tau(i+\ell_1)$ for $i=1,\dots, \ell_2-\ell_1$ and $\tau_3(i)=\tau(i+\ell_2)$ for $i=1,\dots,n-2-\ell_2$. Also, let ${\bf C_1}=(C_{\tau(i)})_{i=1,\dots, \ell_1}$, ${\bf C_2}=(C_{\tau(i)})_{i=\ell_1+1,\dots, \ell_2}$ and ${\bf C_3}=(C_{\tau(i)})_{i=\ell_2+1,\dots, n-2}$. The top level component is isomorphic to $Z_1(\tau_2,{\bf C_2})$. There are two bottom level components, isomorphic to $Z_2(\tau_1,{\bf C_1},e_1,\kappa_1+1)$ and $Z_2(\tau_3,{\bf C_3},\kappa_2+1,e_2)$, where $\kappa_1=a_{h_1}-e_1+1-\sum_{i=1}^{\ell_1} b_{\tau(i)}$ and $\kappa_2=a_{h_2}-e_2+1-\sum_{i=\ell_2+1}^{n-2}$. We denote this boundary by 
\[
X^B_{\mathrm{IIIa}} (\underline{h}, \ell_1, \ell_2, \tau, \mathbf{C}, \mathrm{Pr}).
\]

For a Type IIIb boundary, the top level component contains $n-1-\ell$ residueless poles, one of which is $q_{h_3}$. Let $\tau_1(i)=\tau(i+\ell)$ for $i=1,\dots, n-2-\ell$ and $\tau_1(n-1-\ell)=q_{h_3}$. Also, let ${\bf C_1}=(C_{\tau(1)},\dots, C_{\tau(\ell)},C)$. Then the top level component is isomorphic to $Z_1(\tau_1,{\bf C_1})$. Let $\tau_2(i)=\tau(i)$ for $i=1,\dots, \ell$ and ${\bf C_2}=(C_{\tau(i)})_{i=1,\dots,\ell}$. Then the bottom level component is isomorphic to $Z_2(\tau_2,{\bf C_3},e_{h_4},\kappa+1)$ where $\kappa=a_{h_1}-e_{h_3}+1-\sum_{i=1}^{\ell} b_{\tau(i)}$. We denote this boundary by 
\[
X^B_{\mathrm{IIIb}} (\underline{h}, \ell, \tau, C, \mathbf{C})
\]

For a Type IIIc boundary, let $\tau_1(i)=\tau(i+\ell_1)$ for $i=1,\dots, \ell_2-\ell_1$ for $i=1,\dots,\ell_1$. Also, let ${\bf C_1}=(C_{\tau(\ell_1)},\dots, C_{\tau(\ell_2)})$. Then the top level component is isomorphic to $Z_1(\tau_1,{\bf C_1})$. The bottom level component contains $n-1-(\ell_2-\ell_1)$ residueless poles, one of which is the pole of order $\kappa+1\coloneqq \sum_{i=\ell-1+1}^{\ell_2}b_{\tau(i)} -a_{h_2}$ at the node $s^\bot$. Let $\tau_2(i)=\tau(i)$ for $i=1,\dots, \ell_1$, $\tau_2(i)=\tau(i+\ell_2-\ell_1-1)$ for $i=\ell_1+2,\dots, n-1-(\ell_2-\ell_1)$ and $\tau_2(\ell_1+1)=s^\bot$. Also let ${\bf C_2}=(C_{\tau(1)},\dots, C_{\tau(\ell_1)},C,C_{\tau(\ell_2+1)},\dots,C_{\tau(n-2)})$. Then the bottom level component is isomorphic to $Z_2(\tau_2,{\bf C_2},e_1,e_2)$. We denote this boundary by 
\[
X^B_{\mathrm{IIIc}} (\underline{h}, \ell_1, \ell_2, \tau, C, \mathbf{C}).
\]
We present the corresponding level graphs and separatrix diagrams for the principal boundaries in \Cref{tab:TypeB_IIIabc}. The choices of $h_1, h_2, h_3, h_4$ already incorporate all relevant permutations. We use the letter $s$ to denote the node, and the symbols $\top$ and $\bot$ to label the associated nodal poles and zeros. The enhancements of the level graphs are denoted by $\kappa$.

Note that for each variant of Type III, the conical angles around the zeros impose numerical constraints on the combinatorial data:
\begin{itemize}
    \item[(IIIa)] $a_{h_2} + 1 = e_2 + \sum_{i=\ell_2+1}^{n-2} b_{\tau(i)} + \sum_{i=\ell_1+1}^{\ell_2} D_{\tau(i)}$;
    \item[(IIIb)] $a_{h_2} + 1 = D + \sum_{i=\ell+1}^{n-2} D_{\tau(i)}$;
    \item[(IIIc)] $a_{h_2} + 1 = \sum_{i=\ell_1+1}^{\ell_2} D_{\tau(i)}$.
\end{itemize} 
On the top level of a Type IIIa boundary, we label the outgoing prongs at $s_1^\top$ by $\mathbb{Z}/\kappa_1\mathbb{Z}$ clockwise, and the incoming prongs at $s_2^\top$ by $\mathbb{Z}/\kappa_2\mathbb{Z}$ counterclockwise. The prongs $v^+_{c_0}, v^+_{c_1}, \dots, v^+_{c_{\ell_2-\ell_1-1}}$ at $s_1^\top$ and $w^+_{c_0}, w^+_{c_1}, \dots, w^+_{c_{\ell_2-\ell_1-1}}$ at $s_2^\top$ overlap with the saddle connections, where

\[
    c_i = \sum_{j=1}^i C_{\tau(\ell_1 + j)}, \quad
    d_i = \sum_{j=1}^i D_{\tau(\ell_1 + j)}.
\]

On the bottom level, the incoming (respectively, outgoing) prongs at $s_1^\bot$ (resp. $s_2^\bot$) are represented by outgoing (resp. incoming) prongs at $z_{h_1}$ (resp. $z_{h_2}$) pointing toward the nodal polar domains. The outgoing prongs at $z_{h_1}$ are labeled $v^-_1, \dots, v^-_{\kappa_1}$ clockwise, while the incoming prongs at $z_{h_2}$ are labeled $w^-_1, \dots, w^-_{\kappa_2}$ counterclockwise. The prong $v^-_1$ (resp. $w^-_{\kappa_2}$) is the outgoing (resp. incoming) prong closest to the saddle connection $\beta_{\ell_1}$ (resp. $\beta_{\ell_1+1}$) at $z_{h_1}$ (resp. $z_{h_2}$) in the clockwise direction. A prong-matching $\boldsymbol{\sigma}$ is an orientation-preserving map sending outgoing prongs at $z_{h_1}$ to those at $s_1^\top$, and incoming prongs at $z_{h_2}$ to those at $s_2^\top$. It is uniquely determined by the image $v^+_u$ of $v^-_{\kappa_1}$ and the image $w^+_v$ of $w^-_1$, so we identify $\boldsymbol{\sigma} = (u, v) \in \mathbb{Z}/\kappa_1\mathbb{Z} \times \mathbb{Z}/\kappa_2\mathbb{Z}$.

On the top level of a Type IIIb boundary, we label the outgoing prongs at $s^\top$ clockwise by $\mathbb{Z}/\kappa\mathbb{Z}$. The prongs $v^+_{c_{-1}}, v^+_{c_0}, \dots, v^+_{c_{n-\ell-3}}$ at $s^\top$ overlap with the saddle connections, where $c_{-1} = 0$ and 
\[
c_i = C + \sum_{j=1}^i C_{\tau(\ell + j)} \quad \text{for } i \geq 0.
\]

On the bottom level, the incoming prongs at the nodal poles $s^\bot$ are represented by outgoing prongs at $z_{h_1}$ toward the nodal polar domain, labeled clockwise by $v^-_1, \dots, v^-_{\kappa}$. The prong $v^-_1$ is the outgoing prong at $z_{h_1}$ closest to the saddle connection $\beta_0$ in the clockwise direction. A prong-matching $\boldsymbol{\sigma}$ is a cyclic order-preserving map identifying these prongs with those at $s^\top$, uniquely determined by the image $v^+_u$ of $v^-_{\kappa}$. Thus, we identify $\boldsymbol{\sigma} = u \in \mathbb{Z}/\kappa\mathbb{Z}$.

On the top level of a Type IIIc boundary, the labeling of prongs at $s^\top$ is the same as in the Type IIIb case. The outgoing prongs $v^+_{c_0}, \dots, v^+_{c_{\ell_2 - \ell_1 - 1}}$ at $s^\top$ overlap with the saddle connections, where
\[
c_i = \sum_{j=1}^i C_{\tau(\ell_1 + j)}.
\]

On the bottom level, the incoming prongs at the nodal poles $s^\bot$ are represented by outgoing prongs at $z_{h_1}$ pointing toward the nodal polar domain, labeled clockwise by $v^-_1, \dots, v^-_{\kappa}$. The prong $v^-_1$ is the one immediately following $\beta_{\ell_1}$ clockwise. Depending on the rescaling of the bottom level, there are $C - 1 + \varepsilon$ outgoing prongs at $z_{h_1}$ between $\beta_{\ell_1}$ and $\beta_{\ell_1+1}$ in the clockwise direction, where
\[
\varepsilon = 
\begin{cases}
    1 & \text{if } \arg(\alpha_0 / \beta_0) \in (0, \pi], \\
    0 & \text{if } \arg(\alpha_0 / \beta_0) \in (\pi, 2\pi].
\end{cases}
\] A prong-matching $\boldsymbol{\sigma}$ is uniquely determined by the image $v^+_u$ of $v^-_{\kappa}$, and hence we identify $\boldsymbol{\sigma} = u \in \mathbb{Z}/\kappa\mathbb{Z}$.

\subsection{Plumbing construction and equatorial half-arcs}

In this subsection, we describe the plumbing constructions for different types of principal boundaries such that all saddle connections are parallel. Specifically, an equatorial half-arc can be characterized by specifying the associated plumbing configuration.

Let $\overline{W} = X^B_{\mathrm{I}}(\ell, \tau, \mathbf{C})$ be a Type~I boundary. By rescaling the bottom level so that 
\[
\arg\left(\frac{\alpha_0}{\beta_0}\right) = \pi \quad \text{or} \quad 2\pi,
\]
we obtain plumbed surfaces $\overline{W}_t\left(u + \frac{\varepsilon}{2}\right)$ where all saddle connections are parallel. Here:
\begin{itemize}
    \item $t \in \mathbb{R}_+$ is the plumbing parameter;
    \item $u \in \mathbb{Z}/\kappa\mathbb{Z}$ determines the prong-matching;
    \item $\varepsilon = 1$ if $\arg\left(\frac{\alpha_0}{\beta_0}\right) = \pi$, and $\varepsilon = 0$ if $\arg\left(\frac{\alpha_0}{\beta_0}\right) = 2\pi$.
\end{itemize} The separatrix diagrams of the plumbed surfaces $\overline{W}_t\left(u + \frac{\varepsilon}{2}\right)$ with $\varepsilon = 0$ are shown in \Cref{tab:TypeBIPlumb}. In these diagrams, the blue saddle connections indicate those modified by the plumbing deformation $+t$.

Note that there is a natural cyclic ordering of the prongs at $s^\top$. We use the symbols $<$ and $\leq$ to denote this cyclic order. For instance, writing $0 < u_1 < u_2 \pmod{\kappa}$ means that there exist integers $x, y, z$ such that
\[
x \equiv 0 \pmod{\kappa}, \quad z \equiv u_1 \pmod{\kappa}, \quad y \equiv u_2 \pmod{\kappa}, \quad \text{and} \quad x < y < z \leq x + \kappa.
\]

We denote by $\overline{W}\left(u + \frac{\varepsilon}{2}\right)$ the equatorial half-arc of the plumbed surface $\overline{W}_t\left(u + \frac{\varepsilon}{2}\right)$ emanating from the boundary $\overline{W}$. With this notation, the rotation operation $R$ on equatorial half-arcs satisfies
\[
R^k \cdot \overline{W}\left(u + \frac{\varepsilon}{2}\right) = \overline{W}\left(u + \frac{\varepsilon}{2} + \frac{k}{2}\right).
\]

\begin{table}[h!]
    \centering
    \begin{tabular}{|c|c|c|}
    \hline
 \centered{$ -d_{j}\leq u < -d_{j-1}\pmod{\kappa}$ \\$c_{i-1}< u+C\leq c_i\pmod{\kappa}$} &

  \centered{$c_{i-1}\leq u < c_{i}\pmod{\kappa}$\\ $ c_{j-1}< u+C\leq c_{j}\pmod{\kappa}$} &

  \centered{$c_{\ell}\leq u < c_{\ell}+e_2\pmod{\kappa}$ \\ $ c_{i-1}< u+C\leq c_i\pmod{\kappa}$}

 \\ \hline

\centered{   
\resizebox{4.5cm}{2.9cm}{\includegraphics[]{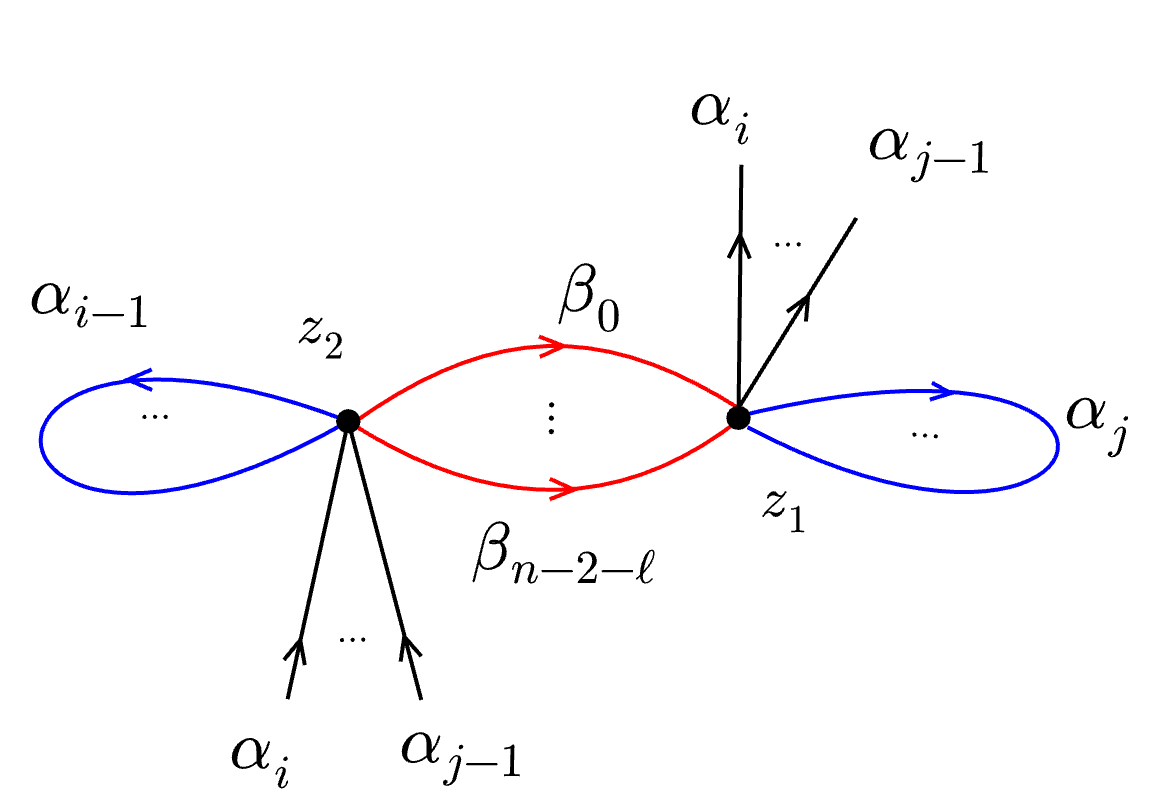}}}  & 

\centered{ 
\resizebox{4.5cm}{3.3cm}{\includegraphics[]{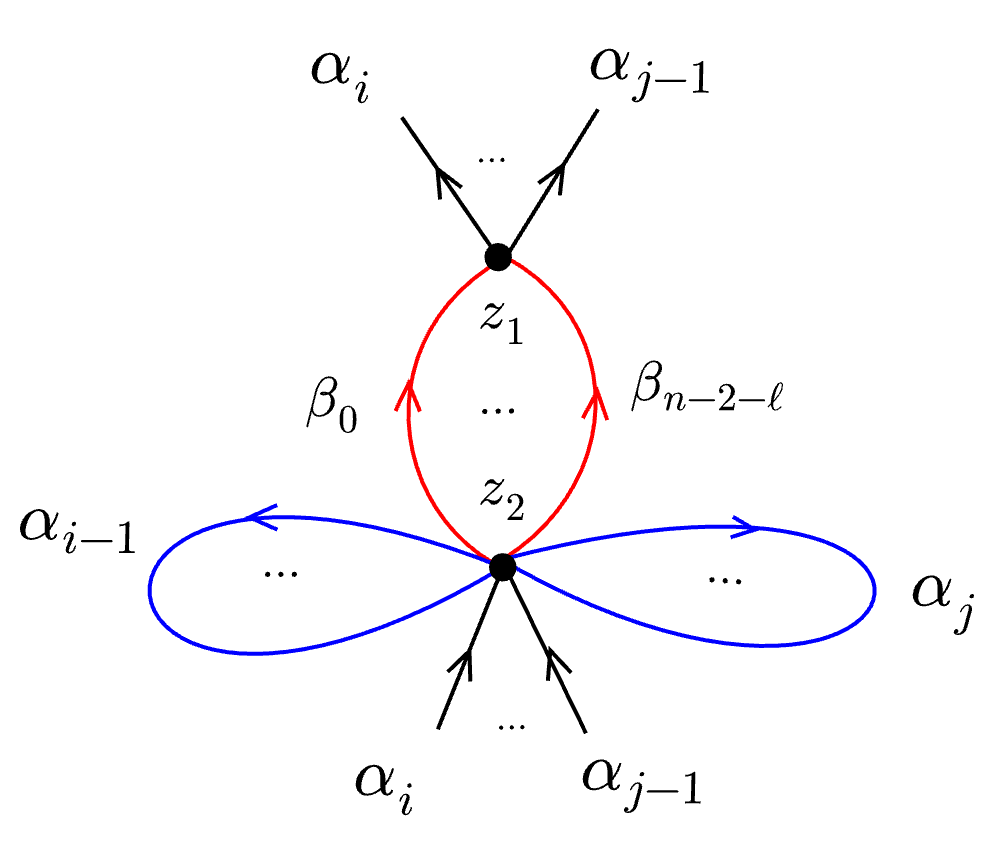} } 
}  &

\centered{ 
\resizebox{4.5cm}{3.3cm}{\includegraphics[]{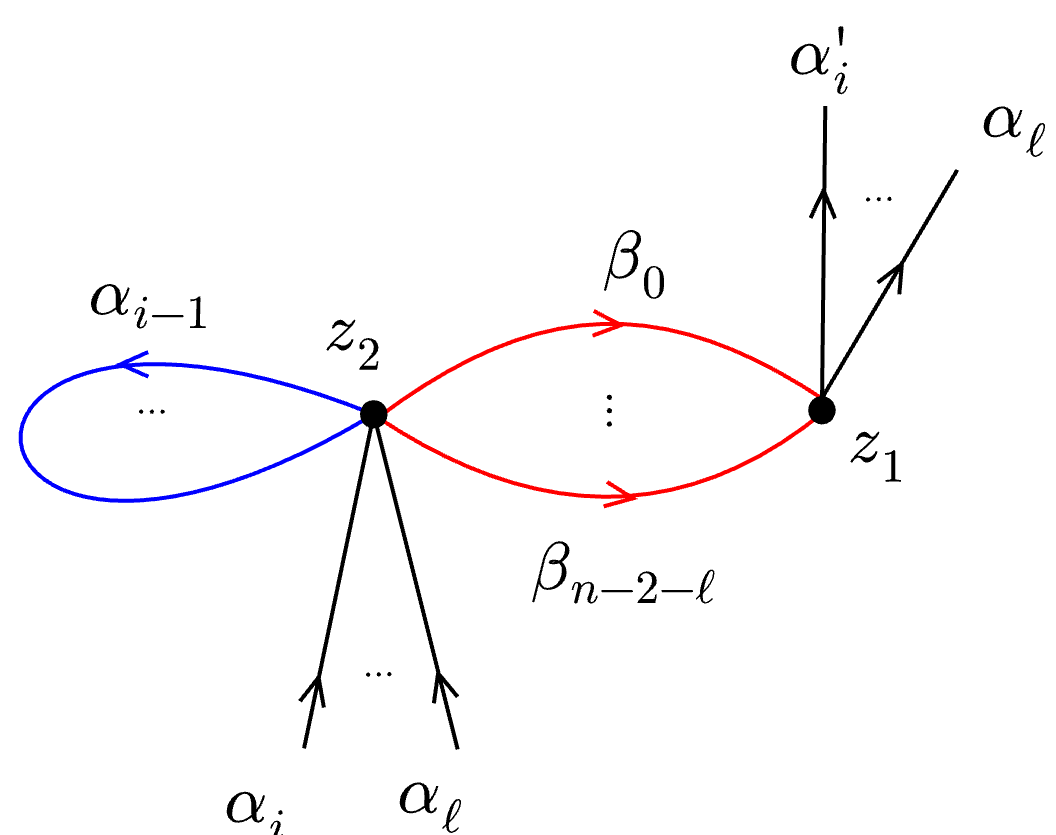} } 
}

\\ \hline
    \centered{$-d_i\leq u < -d_{i-1}\pmod{\kappa}$\\ $c_{i-1}<u+C\leq c_i\pmod{\kappa}$}
 &
  $ c_\ell<u, u+C \leq c_\ell+e_1\pmod{\kappa}$ 
  &
  $c_{i-1}<u,u+C  \leq c_{i} \pmod{\kappa}$
  \\ \hline

\centered{  
\resizebox{4.5cm}{2.9cm}{\includegraphics[]{./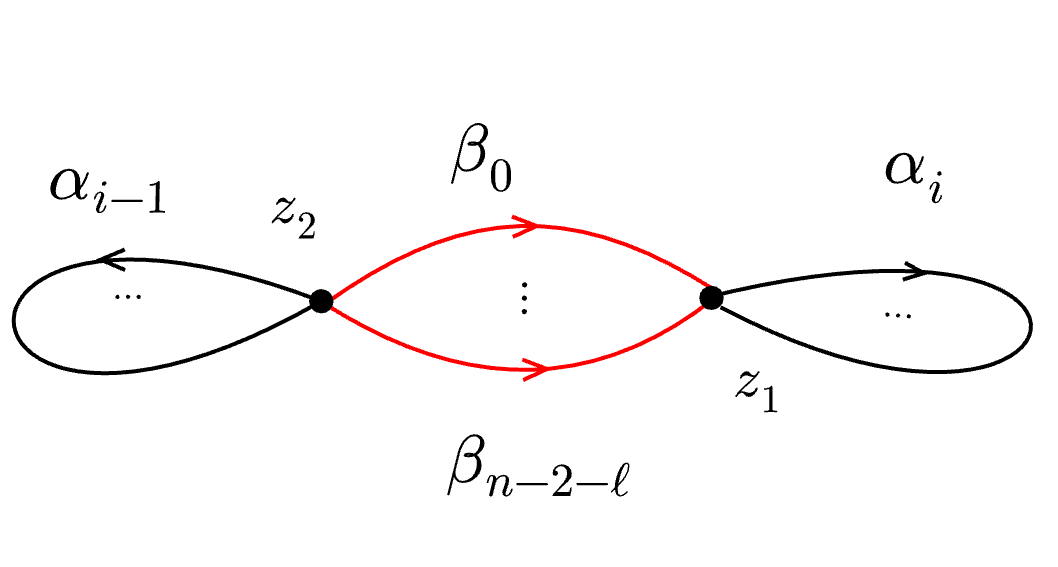}}
 } &

\centered{
\resizebox{4.5cm}{3cm}{\includegraphics[]{./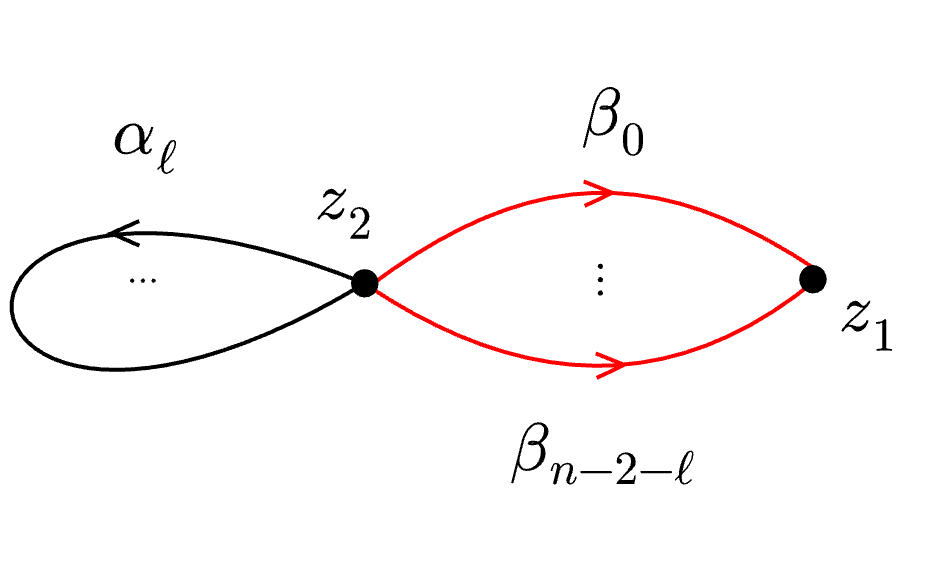} } }&

\centered{
\resizebox{4.5cm}{3.4cm}{\includegraphics[]{./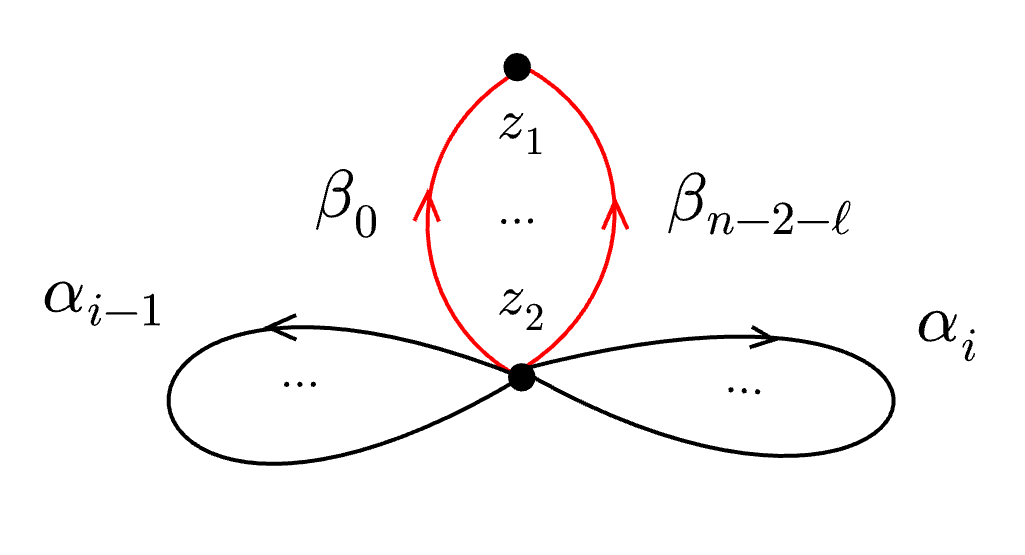} } }
 \\ \hline
 \centered{$c_\ell\leq u < c_{\ell}+e_2 \pmod{\kappa}$ \\ $ -e_1< u+C\leq 0 \pmod{\kappa}$ }
    
  &
   
  &
  \\ \hline

\centered{  
\resizebox{4.3cm}{3.4cm}{\includegraphics[]{./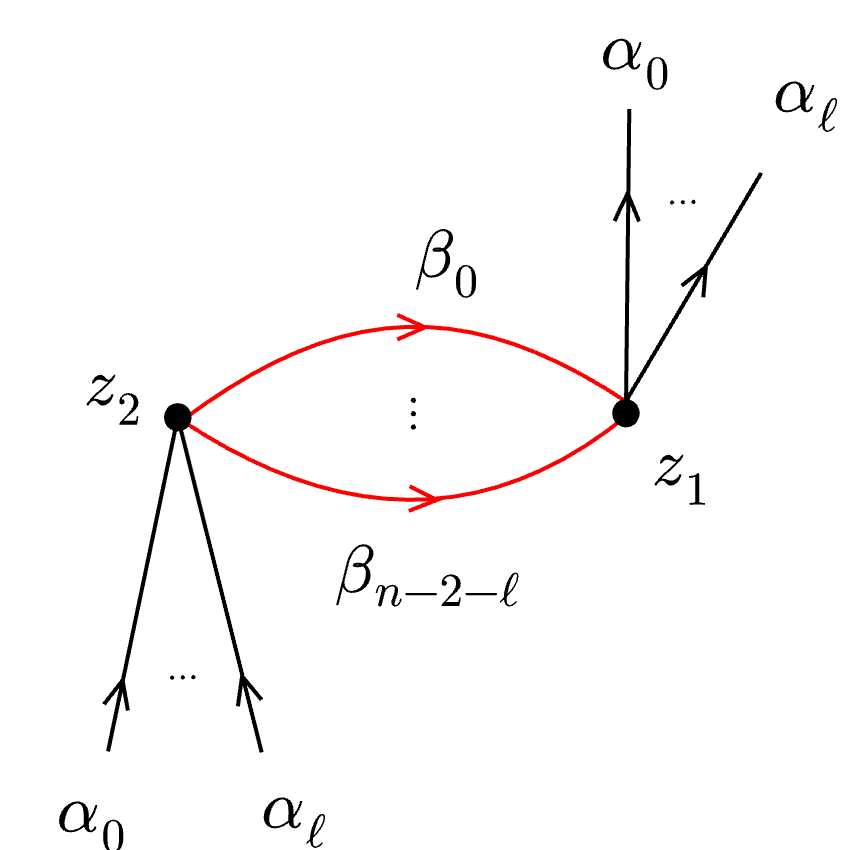}}
 } &

&

 \\ \hline
    \end{tabular}
    \caption{Flat pictures of plumbed surfaces $\overline{W}_t(u)$ where $u$ is integer}
    \label{tab:TypeBIPlumb}
\end{table}

Next, we describe the plumbed surfaces corresponding to the Type~III boundaries. For simplicity, we denote the Type~III multi-scale differentials by
\[
\overline{X} = X^D_{\mathrm{IIIa}}(\underline{h}, \ell_1, \ell_2, \tau, \mathbf{C}, \mathrm{Pr}), \quad 
\overline{Y} = X^D_{\mathrm{IIIb}}(\underline{h}, \ell_1, \ell_2, \tau, C, \mathbf{C}), \quad \text{and} \quad
\overline{Z} = X^D_{\mathrm{IIIc}}(\underline{h}, \ell_1, \ell_2, \tau, C, \mathbf{C}).
\]
For each $t \in \mathbb{R}_+$, and by fixing the prong-matching, we obtain families of plumbed surfaces with all saddle connections parallel. As in the Type~I case, we introduce notation for equatorial half-arcs. Again, we define $\varepsilon = 1$ if $\arg\left(\frac{\alpha_0}{\beta_0}\right) = \pi$, and $\varepsilon = 0$ if $\arg\left(\frac{\alpha_0}{\beta_0}\right) = 2\pi$.

\begin{itemize}
    \item[(IIIa)] The prong-matching of a Type~IIIa boundary $\overline{X}$ is uniquely determined by a tuple $(u, v) \in \mathbb{Z}/\kappa_1\mathbb{Z} \times \mathbb{Z}/\kappa_2\mathbb{Z}$. The corresponding family of plumbed surfaces with parallel saddle connections is denoted by 
    \[
    \overline{X}_t\left(u + \frac{\varepsilon}{2}, v - \frac{\varepsilon}{2}\right).
    \]
    The separatrix diagrams for $\varepsilon = 0$ are shown in \Cref{tab:TypeBIIIaPlumb}. We denote by $\overline{X}\left(u + \frac{\varepsilon}{2}, v - \frac{\varepsilon}{2}\right)$ the associated equatorial half-arc starting from $\overline{X}$.
    
    \item[(IIIb)] The prong-matching of a Type~IIIb boundary $\overline{Y}$ is uniquely determined by a single value $u \in \mathbb{Z}/\kappa\mathbb{Z}$. The corresponding family of plumbed surfaces is denoted by 
    \[
    \overline{Y}_t\left(u + \frac{\varepsilon}{2}\right).
    \]
    The separatrix diagrams for $\varepsilon = 0$ are shown in \Cref{tab:TypeBIIIbPlumb}. The associated equatorial half-arcs are denoted by $\overline{Y}\left(u + \frac{\varepsilon}{2}\right)$.
    
    \item[(IIIc)] Similarly, the prong-matching of a Type~IIIc boundary $\overline{Z}$ is determined by a value $u \in \mathbb{Z}/\kappa\mathbb{Z}$. The plumbed surfaces are denoted by 
    \[
    \overline{Z}_t\left(u + \frac{\varepsilon}{2}\right),
    \]
    and the equatorial half-arcs by $\overline{Z}\left(u + \frac{\varepsilon}{2}\right)$. The separatrix diagrams for $\varepsilon = 0$ are shown in \Cref{tab:TypeBIIIcPlumb}.
\end{itemize}

\begin{table}[h!]
    \centering
    \begin{tabular}{|c|c|}
    \hline
    \centered{$ c_{i-1}\leq u < c_{i} \pmod{\kappa_1}$\\  $d_{j-1}< v\leq d_j \pmod{\kappa_2}$\\
  $i\neq j $}
  &
    \centered{$ c_{i-1}\leq u < c_{i} \pmod{\kappa_1}$\\  $d_{i-1}< v\leq d_i \pmod{\kappa_2}$}

 \\ \hline

\centered{   
\resizebox{7cm}{4.5cm}{\includegraphics[]{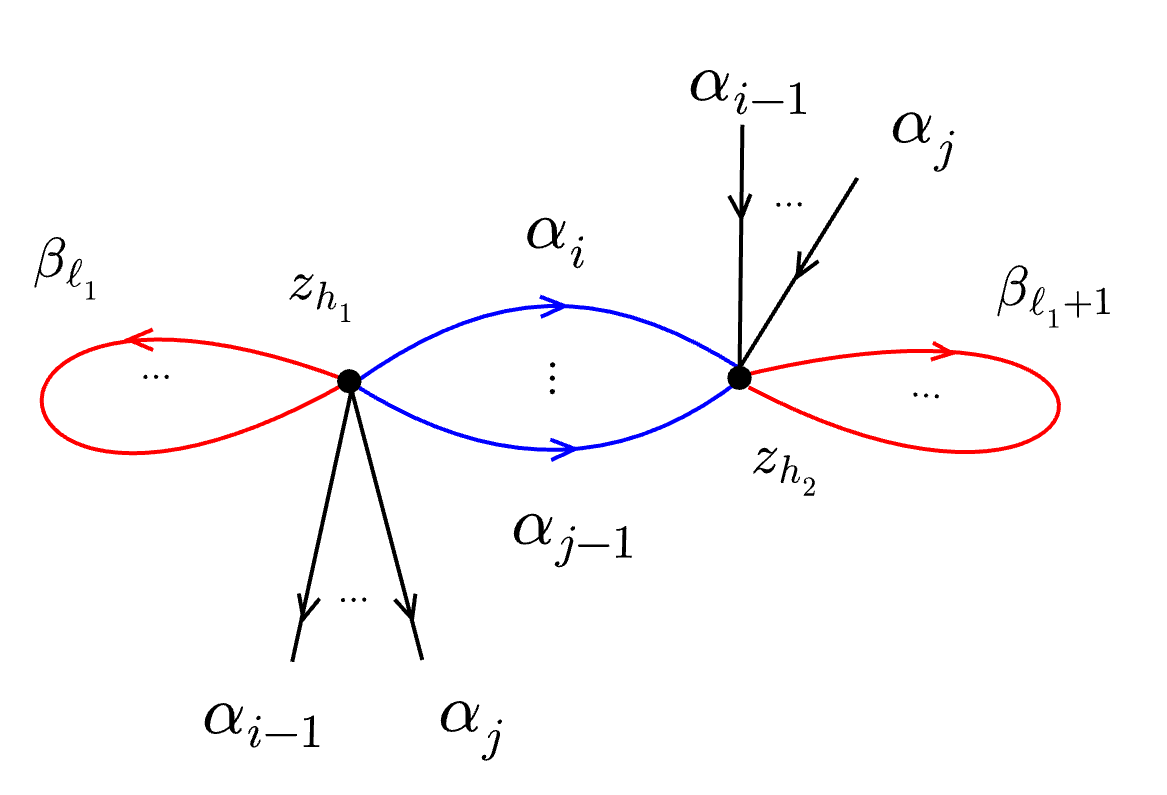}}}  & 

\centered{ 
\resizebox{7cm}{3.5cm}{\includegraphics[]{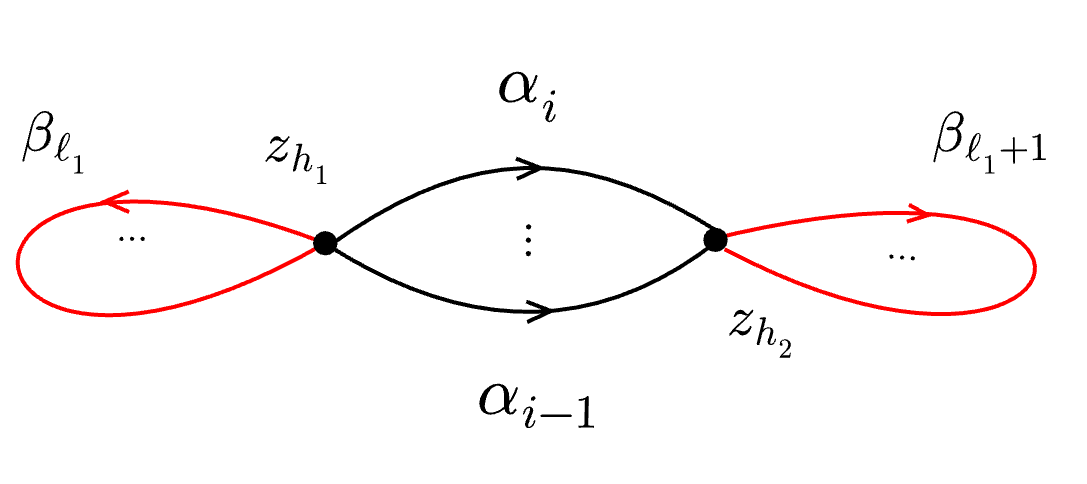} } 
} 

\\ \hline

    \end{tabular}
    \caption{Flat pictures of plumbed surfaces $\overline{X}_t(u,v)$ with $u,v$ integers}
    \label{tab:TypeBIIIaPlumb}
\end{table}

\begin{table}[h!]
    \centering
    \begin{tabular}{|c|c|}
    \hline
 $ c_{i-1}\leq u < c_{i}\pmod{\kappa}, \quad  i>0$ &

  $0\leq u < c_{0}=D \pmod{\kappa}$

 \\ \hline

\centered{   
\resizebox{7cm}{4.5cm}{\includegraphics[]{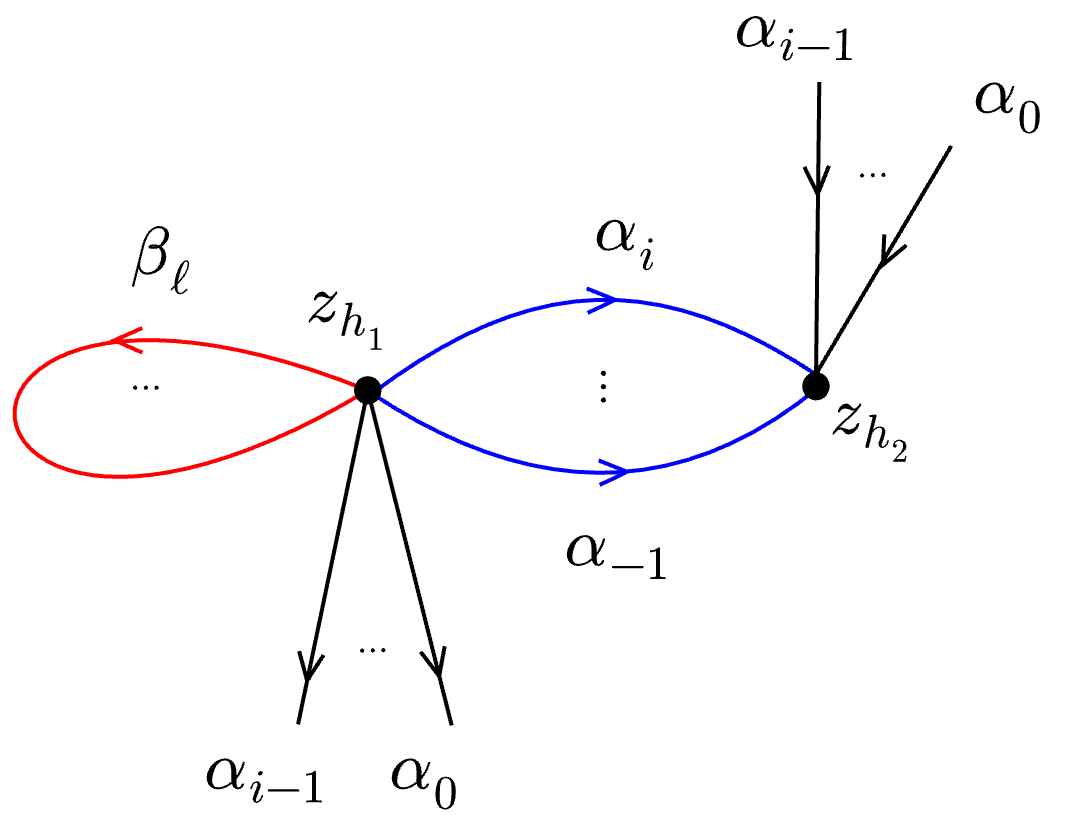}}}  & 

\centered{ 
\resizebox{7cm}{3.5cm}{\includegraphics[]{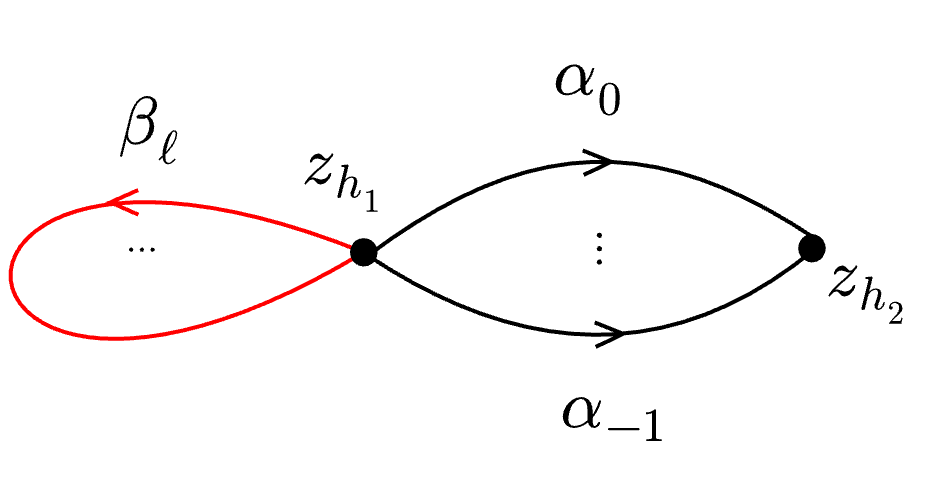} } 
} 

\\ \hline

    \end{tabular}
    \caption{Flat pictures of plumbed surfaces $\overline{Y}_t(u)$ with $u$ an integer}
    \label{tab:TypeBIIIbPlumb}
\end{table}

\begin{table}[h!]
    \centering
    \begin{tabular}{|c|c|}
    \hline
    \centered{$ c_{i-1}\leq u < c_{i}\pmod{\kappa}$\\ $  c_{j-1}< u+C\leq c_{j} \pmod{\kappa}$\\ $ i\neq j$ }
 &
    \centered{$c_{i-1}\leq u < c_{i}\pmod{\kappa} $\\ $c_{i-1}<u+C\leq c_{i} \pmod{\kappa}$}

 \\ \hline

\centered{   
\resizebox{7cm}{4.5cm}{\includegraphics[]{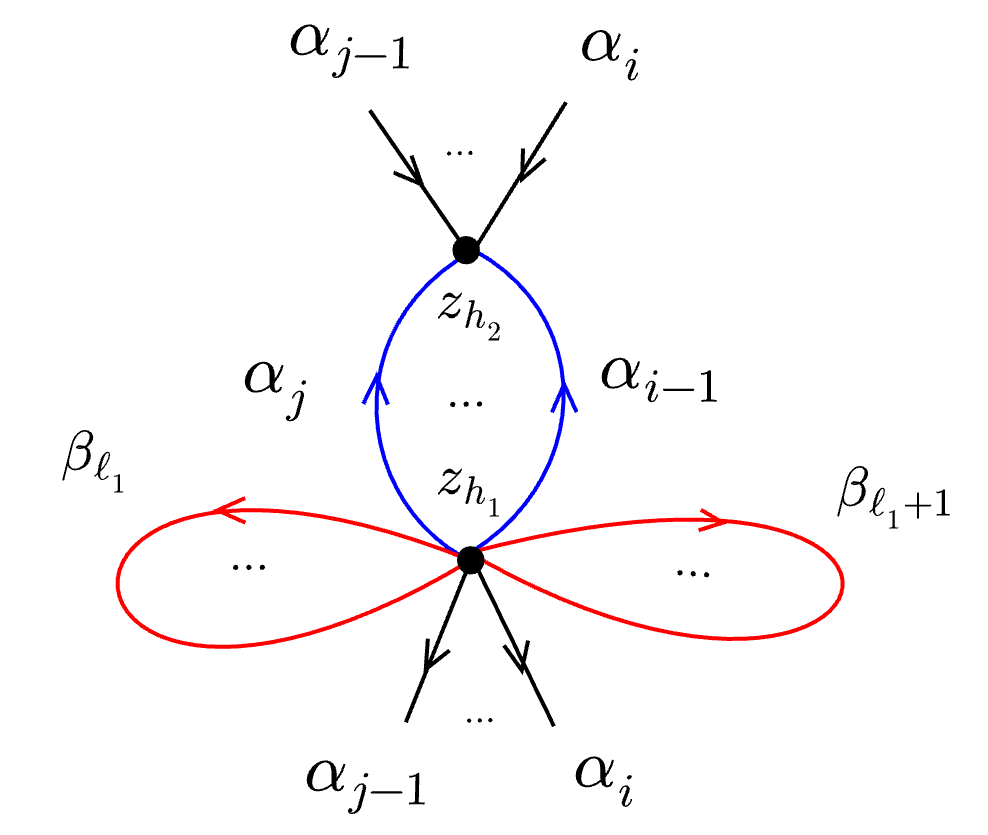}}}  & 

\centered{ 
\resizebox{7cm}{3.5cm}{\includegraphics[]{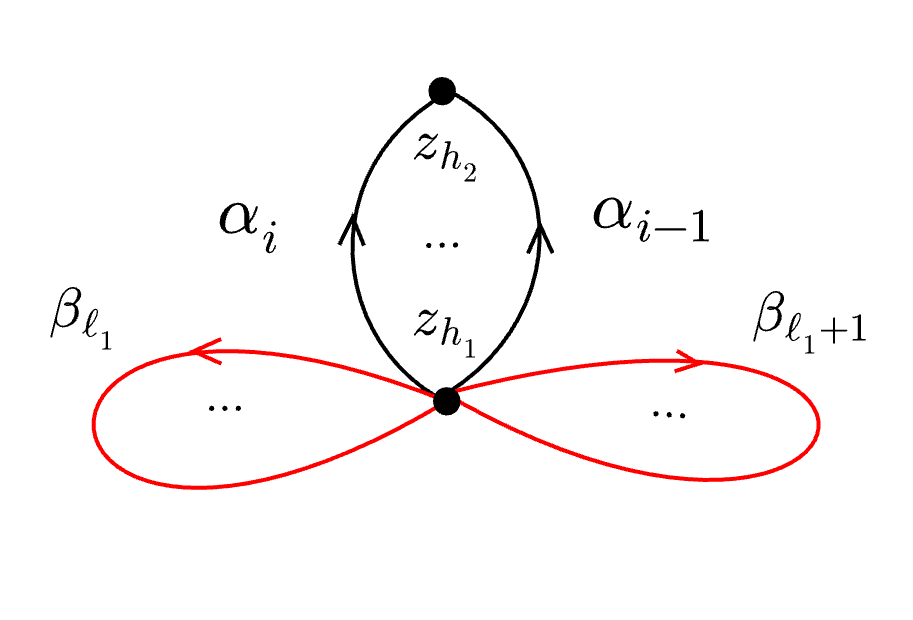} } 
} 

\\ \hline

    \end{tabular}
    \caption{Flat pictures of plumbed surfaces $\overline{Z}_t(u)$ with $u$ an integer}
    \label{tab:TypeBIIIcPlumb}
\end{table}

The rotation operator $R$ on equatorial half-arcs originating from a Type~III boundary acts as follows:
\begin{align*}
    R^{k} \cdot \overline{X}\left(u + \frac{\varepsilon}{2}, v - \frac{\varepsilon}{2}\right) 
        &= \overline{X}\left(u + \frac{\varepsilon}{2} + \frac{k}{2}, \, v - \frac{\varepsilon}{2} - \frac{k}{2}\right), \\
    R^{k} \cdot \overline{Y}\left(u + \frac{\varepsilon}{2}\right) 
        &= \overline{Y}\left(u + \frac{\varepsilon}{2} + \frac{k}{2}\right), \\
    R^{k} \cdot \overline{Z}\left(u + \frac{\varepsilon}{2}\right) 
        &= \overline{Z}\left(u + \frac{\varepsilon}{2} + \frac{k}{2}\right).
\end{align*}

\subsection{Transformation $U$ on equatorial half-arcs}

On every flat surface in a stratum of B-signature, there exists a Type I saddle connection. Consequently, every equatorial half-arc originating from a Type III boundary terminates at a Type I boundary. In this subsection, we provide explicit descriptions of the terminal Type I boundaries for equatorial half-arcs emanating from Type III boundaries. The following propositions describe the correspondence between the combinatorial data at the endpoints of such arcs. These results follow from direct observations.

\begin{proposition} \label{Prop:BIIIamove}
Let $\overline{X} = X^B_{\III a} \big((2,1), \ell_1, \ell_2, \Id, {\bf C}, [(u,v)]\big)$ be a Type IIIa boundary. Assume that $0 \leq u < c_1 \pmod{\kappa_1}$ and $d_{j-1} \leq v < d_j \pmod{\kappa_2}$ for some $j \neq 1$. Then
\[
U \cdot \overline{X}(u,v) = \overline{W}(u'),
\]
where $\overline{W} = X^B_{\I}(\ell, \tau, {\bf C}')$ is a Type I boundary, with the following data:
\begin{itemize}
    \item An integer $\ell = \ell_1 + j + (n - 2 - \ell_2)$;
    \item A permutation $\tau$ defined by
    \[
    \tau(i) = \begin{cases}
        i & \text{if } 1 \leq i \leq \ell_1 + j, \\
        i + (\ell_2 - \ell_1 - j) & \text{if } \ell_1 + j + 1 \leq i \leq \ell, \\
        i - (n - 2 - \ell_2) & \text{if } \ell + 1 \leq i \leq n - 2;
    \end{cases}
    \]
    \item A tuple ${\bf C}'$ defined by
    \[
    C'_i = \begin{cases}
        c_1 - u & \text{if } i = \ell_1 + 1, \\
        C_{\ell_1 + j} + d_j - v & \text{if } i = \ell_1 + j, \\
        C_i & \text{otherwise};
    \end{cases}
    \]
    \item An index $u' = -\left(e_1 + \sum_{k=1}^{\ell_1} D_k + u\right)$.
\end{itemize}
\end{proposition}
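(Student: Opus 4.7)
The strategy is to traverse the entire equatorial arc whose initial half-arc is $\overline{X}(u,v)$ and determine which principal boundary lies at the opposite end. Since every flat surface in a stratum of B-signature contains a Type I saddle connection (joining $z_1$ and $z_2$), the opposite endpoint must be a Type I boundary $\overline{W} = X^B_{\I}(\ell, \tau, {\bf C}')$, and we must read off $(\ell, \tau, {\bf C}')$ together with the prong-matching index $u'$ from the flat picture.

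First, I would fix a representative of the plumbed surface $\overline{X}_t(u,v)$ for small $t > 0$. Under the hypothesis that $0 \leq u < c_1 \pmod{\kappa_1}$ and $d_{j-1} \leq v < d_j \pmod{\kappa_2}$ with $i = 1 \neq j$, the relevant separatrix diagram is the first case of Table~\ref{tab:TypeBIIIaPlumb}. In this picture, the saddle connections on the top level $X_0$ (which are the only ones long near $\overline{X}$) are joined by the short saddle connection originating from plumbing, and all saddle connections of $\overline{X}_t(u,v)$ are parallel. As we move along the equatorial arc by letting the plumbing parameter grow, exactly one of the Type I saddle connections joining $z_1$ and $z_2$ will eventually shrink to zero, producing the boundary $\overline{W}$.

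Next, I would identify geometrically which saddle connection on $\overline{X}_t(u,v)$ shrinks to zero at the opposite endpoint. The complementary regions of the multi-curve encode a decomposition of the polar regions, and the pole structure on either side of the shrinking saddle connection determines $\ell$, $\tau$, and ${\bf C}'$. Concretely, the saddle connections on $X_0$ sitting between positions $\ell_1 + 1$ and $\ell_1 + j$ get ``split'' into the two sides of the new Type I saddle connection, which explains why $\tau$ is the identity on $\{1,\dots,\ell_1+j\}$, shifts the middle block by $\ell_2 - \ell_1 - j$, and shifts the last block by $-(n-2-\ell_2)$. The entries of ${\bf C}'$ at positions $\ell_1 + 1$ and $\ell_1 + j$ differ from those of ${\bf C}$ because these two saddle connections see new angular contributions: the sub-angle $c_1 - u$ replaces $C_{\tau(\ell_1+1)}$ at the first splitting, and $C_{\tau(\ell_1+j)} + d_j - v$ at the second, with all other angles unchanged.

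The remaining ingredient, and the main bookkeeping obstacle, is the prong-matching index $u' \in \mathbb{Z}/\kappa\mathbb{Z}$, where $\kappa = C + D - 1$ is the enhancement of the new Type I boundary. This is computed by marking the outgoing prong $v_1^-$ at the new node (which sits immediately clockwise from the new saddle connection $\beta_{n-2-\ell}$ at $z_1$) and counting prongs to the reference prong $v_\kappa^-$. Following the labelling conventions from the Type I plumbing picture and the Type IIIa prong labels, this count yields
\[
u' \;=\; -\Bigl(e_1 + \sum_{k=1}^{\ell_1} D_k + u\Bigr) \pmod{\kappa},
\]
the minus sign reflecting the orientation reversal of clockwise versus counter-clockwise labellings at $s^\bot$ versus $s^\top$. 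The main work is the careful enumeration of prongs in the explicit flat diagram to verify this formula; once this is done, matching the combinatorial data with the definition of $X^B_{\I}(\ell, \tau, {\bf C}')$ completes the proof.
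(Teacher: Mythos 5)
Your proposal takes essentially the same route as the paper: the paper gives no written argument beyond stating that these transformation formulas ``follow from direct observations'' of the plumbed surfaces, and your plan — plumb $\overline{X}_t(u,v)$ via the first case of Table~\ref{tab:TypeBIIIaPlumb}, follow the equatorial arc (all saddle connections parallel) to the Type~I boundary at the opposite endpoint, and read off $\ell$, $\tau$, ${\bf C}'$ and the prong count giving $u'$ from the flat picture and the prong-labelling conventions — is precisely that direct observation. Like the paper, you leave the explicit prong enumeration for $u'$ and the angle bookkeeping for ${\bf C}'$ as a verification on the diagram rather than writing it out, which is consistent with the level of detail the authors themselves provide.
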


\begin{proposition} \label{Prop:BIIIbmove}
Let $\overline{X} = X^B_{\III b}\big((2,1,2,1), \ell, \Id, C, {\bf C}\big)$ be a Type IIIb boundary, and suppose $u \in \mathbb{Z}/\kappa\mathbb{Z}$. Then
\[
U \cdot \overline{X}(u) = \overline{W}(u'),
\]
where $\overline{W} = X^B_{\I}(\ell', \tau, {\bf C}')$ is a Type I boundary, given by the following cases:

\textbf{Case 1:} If $c_{j-1} \leq u < c_j \pmod{\kappa}$ for some $j \geq 1$:
\begin{itemize}
    \item $\ell' = n - 2 - j + 1$;
    \item $\tau$ defined by
    \[
    \tau(i) = \begin{cases}
        i & \text{if } 1 \leq i \leq \ell, \\
        i + j & \text{if } \ell + 1 \leq i \leq n - 2 - j, \\
        i - (n - 2 - (\ell + j)) & \text{if } n - 1 - j \leq i \leq n - 2;
    \end{cases}
    \]
    \item ${\bf C}'$ defined by
    \[
    C'_i = \begin{cases}
        c_j - u & \text{if } i = \ell + j, \\
        C_i & \text{otherwise};
    \end{cases}
    \]
    \item $u' = -\left(e_1 + \sum_{k=1}^{\ell} D_k + (u-c_{j-1})\right)$.
\end{itemize}

\textbf{Case 2:} If $0 \leq u < C \pmod{\kappa}$:
\begin{itemize}
    \item $\ell' = \ell$;
    \item $\tau = \Id$;
    \item ${\bf C}'$ defined by $C'_i = C_i$ for all $i$;
    \item $u' = -\left(e_1 + \sum_{k=1}^{\ell} D_k + u \right)$.
\end{itemize}
\end{proposition}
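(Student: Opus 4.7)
The plan is to prove Proposition~\ref{Prop:BIIIbmove} by direct inspection of the separatrix diagrams of the plumbed surfaces $\overline{Y}_t(u)$ presented in Table~\ref{tab:TypeBIIIbPlumb}. The key observation is that the transformation $U$ simply reverses our viewpoint on an equatorial half-arc: the half-arc $\overline{Y}(u)$ is the same edge of the equatorial net as $U \cdot \overline{Y}(u)$, but starting from the opposite endpoint. Since every flat surface in a stratum of B-signature admits at least one Type~I saddle connection (joining $z_1$ and $z_2$), and since the plumbed surface $\overline{Y}_t(u)$ has a unique $\fR$-homologous class of saddle connections that are not the plumbed ones, the opposite endpoint must be a Type~I boundary obtained by shrinking this class.

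First, I would identify, for each value of $u \pmod{\kappa}$, the unique Type~I saddle connection on $\overline{Y}_t(u)$. By inspection of Table~\ref{tab:TypeBIIIbPlumb}, this saddle connection is determined by which interval $[c_{j-1}, c_j)$ contains $u$. When $u \in [c_{j-1}, c_j)$ with $j \geq 1$, the Type~I saddle connection separates the residueless poles into two groups: those indexed by $\tau(1), \dots, \tau(\ell+j)$ remain on the side containing $q_2$ (the new top level), while those indexed by $\tau(\ell+j+1), \dots, \tau(n-2)$ lie with $q_1$ on the new bottom level. The case $u \in [0, C)$ is special: the Type~I saddle connection leaves the pole configuration essentially unchanged (Case~2 in the statement).

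Next, I would read off the combinatorial data of the resulting Type~I boundary $\overline{W} = X^B_{\I}(\ell', \tau, \mathbf{C}')$ from the separatrix diagram. The integer $\ell'$ counts residueless poles on the new top-level component; the permutation $\tau$ records how the labels need to be reordered so that the new top-level poles occupy positions $1, \dots, \ell'$; and the tuple $\mathbf{C}'$ comes from measuring the angles between the Type~I saddle connection and its adjacent saddle connections on the new bottom level. All entries $C'_i$ remain $C_i$ except at the index where the Type~I saddle connection cuts into an existing polar domain, where the residual angle $c_j - u$ replaces $C_{\ell+j}$.

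The main obstacle, and the source of most of the case analysis, will be the careful computation of the prong-matching index $u'$. This requires tracking the reference prong $v^-_\kappa$ at the new Type~I boundary and expressing its image $v^+_{u'}$ in the cyclic labeling of the new upper-level zero. Walking clockwise from the saddle connection $\beta_0$ of $\overline{W}$ to the reference prong, one counts the pole orders $D_1, \dots, D_\ell$ together with the residual contribution $u - c_{j-1}$ (in Case~1) or $u$ (in Case~2), and adjusts by $e_1$ to account for the pole $q_1$; this yields the formula for $u'$ in each case. Since the underlying calculation is a bookkeeping exercise, the proof amounts to writing out the two cases explicitly and verifying agreement with the stated formulas.
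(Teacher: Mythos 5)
Your overall method is the same as the paper's: the paper offers no written argument for this proposition beyond the remark that it ``follows from direct observations'' of the plumbed flat pictures, and your plan---locate the class of saddle connections joining $z_1$ and $z_2$ on $\overline{Y}_t(u)$, shrink it, and read off the new combinatorial data and prong-matching---is exactly that observation made explicit. However, as written your key combinatorial step is wrong, and carrying it out would not reproduce the stated formulas. You claim that for $u\in[c_{j-1},c_j)$ the poles $\tau(1),\dots,\tau(\ell+j)$ go to the new top level while $\tau(\ell+j+1),\dots,\tau(n-2)$ ``lie with $q_1$ on the new bottom level.'' This contradicts the paper's convention for Type I boundaries: in $X^B_{\I}(\ell',\tau,{\bf C}')$ the bottom level is $Z_1(\tau_2,{\bf C_2})$, which carries both zeros $z_1,z_2$ and \emph{only residueless} poles, while $q_1$ and $q_2$ always sit on the top level $Z_2(\tau_1,{\bf C_1},e_1,e_2)$. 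It also contradicts the very data you are trying to verify: the statement has $\ell'=n-1-j$, and unwinding the permutation shows the new top level carries $\{\tau(1),\dots,\tau(\ell+1)\}\cup\{\tau(\ell+j+1),\dots,\tau(n-2)\}$ together with $q_1,q_2$, while the new bottom level carries only the chunk $\tau(\ell+2),\dots,\tau(\ell+j)$ of former top-level poles. Your partition has the wrong cardinality ($\ell+j$ versus $n-1-j$) and puts the poles $\tau(\ell+j+1),\dots,\tau(n-2)$ on the wrong level.

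The source of the error is the assumption that a \emph{single} Type I saddle connection is shrunk. Generically the $\fR$-homologous class that collapses at the far end of the arc has multiplicity $j$: it is the collection of (plumbing-modified) former top-level saddle connections between the prong position $u$ and the break at $q_2$, and it is precisely the $j-1$ residueless poles trapped between consecutive members of this class that form the bottom-level component $Z_1$ of $\overline{W}$, with the nodal pole of order $C'+D'$ absorbing everything else. Compare with the analogous Proposition~\ref{Prop:BIIIamove}, whose output has the same structure (a contiguous block of former top-level poles descends, everything adjacent to $q_1$ and $q_2$ stays up). Your description of Case~2 as ``essentially unchanged'' has the same flaw in milder form: there the labels and angles are unchanged, but the level distribution of the residueless poles is exchanged ($\tau(1),\dots,\tau(\ell)$ move up with $q_1,q_2$; $\tau(\ell+1),\dots,\tau(n-2)$ move down). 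Once the decomposition is corrected, your bookkeeping for ${\bf C}'$ and for the prong-matching index $u'$ (walking clockwise past $e_1$, the angles $D_1,\dots,D_\ell$, and the residual $u-c_{j-1}$ or $u$) is in line with the stated formulas, so the rest of the plan would go through.
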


\begin{proposition} \label{Prop:BIIIcmove}
Let $\overline{X} = X^B_{\III c}\big((2,1), \ell_1, \ell_2, \Id, C, {\bf C}\big)$ be a Type IIIc boundary. Suppose that $0 \leq u < c_1 \pmod{\kappa}$ and $c_{j-1} < u + C \leq c_j \pmod{\kappa}$ for some $j \neq 1$. Then
\[
U \cdot \overline{X}(u) = \overline{W}(u'),
\]
where $\overline{W} = X^B_{\I}(\ell, \tau, {\bf C}')$ is a Type I boundary, with:
\begin{itemize}
    \item $\ell = \ell_1 + j + (n - 2 - \ell_2)$;
    \item $\tau$ defined by
    \[
    \tau(i) = \begin{cases}
        i & \text{if } 1 \leq i \leq \ell_1 + j, \\
        i + j & \text{if } \ell_1 + j < i \leq \ell, \\
        (i - \ell) + (\ell_1 + j) & \text{if } \ell + 1 \leq i \leq n - 2;
    \end{cases}
    \]
    \item ${\bf C}'$ defined by
    \[
    C'_i = \begin{cases}
        c_1 - u & \text{if } i = \ell_1 + 1, \\
        u + C - c_{j-1} & \text{if } i = \ell_1 + j, \\
        C_i & \text{otherwise};
    \end{cases}
    \]
    \item $u' = -\left(e_1 + \sum_{k=1}^{\ell_1} D_k + u\right)$.
\end{itemize}
\end{proposition}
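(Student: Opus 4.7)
The plan is to directly analyze the flat picture of $\overline{X}_t(u)$ given in \Cref{tab:TypeBIIIcPlumb} and identify the Type I boundary reached at the opposite endpoint of the equatorial arc. Since the hypothesis $0 \leq u < c_1 \pmod{\kappa}$ and $c_{j-1} < u+C \leq c_j \pmod{\kappa}$ with $j \neq 1$ places us in the first (i.e.\ $i \neq j$) case of that table, I would first draw the corresponding separatrix diagram to identify the unique Type I saddle connection $\gamma$ that can be shrunk while keeping every other saddle connection parallel and of nondecreasing length. Concretely, $\gamma$ is the saddle connection joining $z_1$ and $z_2$ that sweeps across the plumbing strip as $t$ grows; the other endpoint of the equatorial arc is reached when $\gamma$ collapses.

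Next I would read off the combinatorial data of the resulting Type I boundary $\overline{W}$ from the collapsed picture. The zero $z_2$ (on the top level in the IIIc data) is the one whose incident saddle connections form the cyclic order recorded by $\tau$ and ${\bf C}'$. The residueless poles split as follows: the $\ell_1$ poles originally between $q_1$ and $s^{\bot}$ remain attached on the $q_1$ side, the $n-2-\ell_2$ poles originally between $s^{\bot}$ and $q_2$ remain attached on the $q_2$ side, and the $\ell_2 - \ell_1$ poles originally on the top level of $\overline{X}$ now redistribute. Under the given parameter ranges, exactly the first $j$ among these top-level poles remain on the top level after collapsing $\gamma$, which yields $\ell = \ell_1 + j + (n-2-\ell_2)$. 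The formula for $\tau$ is then the explicit reindexing corresponding to this regrouping of poles, which is what is written in the statement.

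The two modified entries of ${\bf C}'$ come from the two zero-angles at $z_2$ that are split by $\gamma$ in the Type IIIc picture: the wedge of angle $2\pi c_1$ at $z_2$ (bounded by the saddle connection at the $\ell_1+1$-st position) has its portion of size $2\pi(c_1 - u)$ that becomes the new angle at position $\ell_1+1$, while the wedge between the $(j-1)$-th and $j$-th top-level saddle connections splits so that the new angle $C'_{\ell_1+j}$ equals $u + C - c_{j-1}$; all other angles at $z_2$ are unchanged. The prong-matching shift $u'$ is then obtained by locating the prong $v^-_\kappa$ of the Type I picture inside the prongs at $s^\top$ of $\overline{X}$: counting outgoing prongs at $z_1$ from $\beta_0$ clockwise up to the collapsed position produces the expression $u' = -\bigl(e_1 + \sum_{k=1}^{\ell_1} D_k + u\bigr)$ in $\mathbb{Z}/\kappa\mathbb{Z}$.

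The main obstacle, as in the two preceding propositions \ref{Prop:BIIIamove} and \ref{Prop:BIIIbmove}, is the prong bookkeeping: one must be careful with cyclic conventions (clockwise versus counterclockwise, $\mathbb{Z}/\kappa\mathbb{Z}$ versus $\mathbb{Z}/\kappa_1\mathbb{Z} \times \mathbb{Z}/\kappa_2\mathbb{Z}$), and ensure that the rescaling of the bottom level used to make all saddle connections parallel is the same on both sides of the deformation. Once the flat picture from \Cref{tab:TypeBIIIcPlumb} is placed next to the corresponding picture from \Cref{tab:TypeBIPlumb}, the identification is routine and the three formulas for $\ell$, $\tau$, ${\bf C}'$, and $u'$ follow by inspection.
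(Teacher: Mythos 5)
Your proposal takes essentially the same route as the paper: the paper gives no written argument for Propositions \ref{Prop:BIIIamove}--\ref{Prop:BImove} beyond the remark that they "follow from direct observations," i.e.\ exactly the inspection of the plumbed flat pictures in \Cref{tab:TypeBIIIcPlumb} against \Cref{tab:TypeBIPlumb}, with the prong bookkeeping for $u'$, that you describe. One minor slip in your narration: for $\underline{h}=(2,1)$ the zero $z_2$ lies on the \emph{bottom} level of the Type IIIc boundary (and $z_1$ on the top), contrary to your parenthetical, though this does not affect the method or the formulas you are verifying.
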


We also consider the transformation $U$ acting on equatorial arcs that connect two Type I boundaries. In the following proposition, we give an explicit description of this transformation.

\begin{proposition} \label{Prop:BImove}
Let $\overline{W} = X^B_{\I}(\ell, \Id, {\bf C})$ be a Type I boundary. Suppose that $c_{i-1} \leq u < c_i \pmod{\kappa}$ and $-d_j < u + C \leq -d_{j-1} \pmod{\kappa}$ for some $j > i$. Then
\[
U \cdot \overline{W}\left(u + \frac{1}{2}\right) = \overline{W'}(u'),
\]
where $\overline{W'} = X^B_{\I}(\ell', \tau, {\bf C'})$, with the following data:
\begin{itemize}
    \item An integer $\ell' = n - 2 - (j - i - 1)$;
    
    \item A permutation $\tau$ defined by
    \[
    \tau(k) = \begin{cases}
        k & \text{if } 1 \leq k \leq i, \\
        n - 1 - (k - i) & \text{if } i < k \leq i + (n - 2 - \ell), \\
        k - (i + n - 2 - \ell) + (j - 1) & \text{if } i + (n - 2 - \ell) < k \leq \ell', \\
        n - 1 - (k - i) & \text{if } \ell' < k \leq n - 2;
    \end{cases}
    \]
    
    \item A tuple ${\bf C'}$ defined by
    \[
    C'_k = \begin{cases}
        u - c_{i-1} + 1 & \text{if } k = i, \\
        C_j + (-d_{j-1} - (u + C)) & \text{if } k = j, \\
        C_k & \text{otherwise};
    \end{cases}
    \]
    
    \item An index
    \[
    u' = C'_i + \sum_{k=1}^{i-1} C_k + \sum_{k=\ell+1}^{n-2} D_k + \left(-d_{j-1} - (u + C)\right).
    \]
\end{itemize}
\end{proposition}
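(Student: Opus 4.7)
The plan is to prove Proposition~\ref{Prop:BImove} by direct examination of the plumbed flat surface, following the same geometric strategy that underlies Propositions~\ref{Prop:BIIIamove}, \ref{Prop:BIIIbmove}, and~\ref{Prop:BIIIcmove}. First, I would explicitly describe the plumbed surface $\overline{W}_t(u + 1/2)$, which corresponds to the convention $\varepsilon = 1$, i.e.\ $\arg(\alpha_0/\beta_0) = \pi$. Under the hypotheses $c_{i-1} \leq u < c_i$ and $-d_j < u + C \leq -d_{j-1}$ with $j > i$, all saddle connections of $\overline{W}_t(u+1/2)$ are parallel and form a ribbon graph analogous to those listed in Table~\ref{tab:TypeBIPlumb}, with the plumbed saddle connection $\gamma$ joining the two zeros $z_1$ and $z_2$.

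Next, I would traverse the equatorial half-arc $\overline{W}(u + 1/2)$ by letting $t \to \infty$. Along this arc all saddle connections stay parallel while $\gamma$ grows without bound, so projectively another saddle connection $\gamma'$ in the same direction becomes short and eventually collapses at the conjugate endpoint, producing the Type I boundary $\overline{W'}$. From the explicit ribbon graph I would identify $\gamma'$: it is the saddle connection separating the $i$-th and $j$-th angular sectors at the two zeros (measured from the base saddle connection). The collapse of $\gamma'$ redistributes the residueless poles between the new top and bottom levels of $\overline{W'}$ and thus determines its combinatorial data.

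The remaining work is combinatorial bookkeeping. The count $\ell' = n - 2 - (j - i - 1)$ of top-level residueless poles follows from counting the poles trapped on the new top level. The permutation $\tau$ records how the residueless poles on the top and bottom of $\overline{W}$ are reordered when the cyclic order is re-read around $\gamma'$ instead of $\gamma$; the four-case formula for $\tau$ in the statement is precisely this rearrangement. The modified angles $C'_i = u - c_{i-1} + 1$ and $C'_j = C_j + (-d_{j-1} - (u + C))$ arise because $\gamma'$ subdivides the two polar domains previously bounded by the sectors indexed by $i$ and $j$, while the other $C'_k$ are unchanged. Finally, the formula for $u'$ is obtained by summing the angular contributions to the new nodal pole $s^\bot$ from the polar domains of the residueless poles now on the bottom level, together with the partial angle $(-d_{j-1} - (u + C))$ from the $j$-th sector.

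The main obstacle is tracking the cyclic orientation of the prongs when swapping the roles of top and bottom levels and when passing from the convention $\varepsilon = 1$ to $\varepsilon' = 0$ at the new endpoint. In particular, verifying the explicit formula for $u'$ requires carefully following the image of the prong $v^-_{\kappa'}$ under the new prong-matching, which in turn demands consistent conventions for labelling prongs across all constructions. Once the ribbon graph picture is in hand and $\gamma'$ is identified, all four formulae fall out by direct inspection.
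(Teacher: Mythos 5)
Your overall strategy (write down the plumbed surface $\overline{W}_t(u+\tfrac12)$ and read the far endpoint of the arc off the flat picture) is exactly the paper's approach, which records these $U$-moves as direct observations on the separatrix diagrams. But there is a genuine error in the one step that carries all the content: how the conjugate endpoint is reached. The half-arc $\overline{W}(u+\tfrac12)$ is the one with $\varepsilon=1$, i.e. $\arg(\alpha_0/\beta_0)=\pi$. Because every residueless polar domain forces its two boundary saddle connections to have equal periods, all top-level saddle connections of $\overline{W}$ have a common period (say $1$) and all bottom-level ones have period $-t$; in the case (i) plumbing the saddle connections crossed by the modification path, namely $\alpha_i,\dots,\alpha_{j-1}$ (these are singled out precisely by the hypotheses $c_{i-1}\le u<c_i$ and $-d_j<u+C\le -d_{j-1}$), acquire period $1-t$. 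Hence the arc terminates at the \emph{finite} parameter value $t=1$, where the $\fR$-homologous class $\{\alpha_i,\dots,\alpha_{j-1}\}$ collapses in absolute length; this class bounds exactly the polar domains of $p_{i+1},\dots,p_{j-1}$, which is why those $j-i-1$ poles descend to the bottom level of $\overline{W'}$, giving $\ell'=n-2-(j-i-1)$, the stated $\tau$, and the truncated angles $C'_i$, $C'_j$.

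Your mechanism — let $t\to\infty$ so that $\gamma$ grows without bound and some other saddle connection becomes only \emph{projectively} short — is the correct picture for the integer half-arc $\overline{W}(u)$ ($\varepsilon=0$), where no length decreases and the degeneration at the far end is the collapse of the unmodified class in the projective limit. Applied literally to $\overline{W}(u+\tfrac12)$ it selects the wrong collapsing class: the constant-length saddle connections $\alpha_0,\dots,\alpha_{i-1},\alpha_j,\dots,\alpha_\ell$, whose collapse would trap $q_1$ or $q_2$ below level zero and would not even produce a Type I boundary of the form $\overline{W'}=X^B_{\I}(\ell',\tau,{\bf C'})$. Since you defer all four formulae to "combinatorial bookkeeping" performed after identifying $\gamma'$, this misidentification propagates: the count $\ell'$, the permutation, the angle changes and the prong index $u'$ would all be computed on the wrong degeneration (note also that the collapsing object is a class of $j-i$ parallel saddle connections, not a single $\gamma'$). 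The fix is to replace the $t\to\infty$ limit by the finite-time collapse of the $(1-t)$-modified class described above; once that is in place, the bookkeeping does go through as you outline, including the prong-count for $u'$, which is obtained by following $v^-_{\kappa'}$ around the sectors of $p_1,\dots,p_{i}$, the old bottom-level poles, and the residual angle $-d_{j-1}-(u+C)$ in the polar domain of $p_j$.
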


\subsection{Hyperelliptic components}\label{sec:Bhyp}

If $\cC$ is a hyperelliptic component of $\cP(\mu^{\fR})$, then any boundary of $\overline{\cC}$ admits an involution that is a degeneration of the hyperelliptic involution. We can characterize the boundaries of hyperelliptic components using the combinatorial data introduced earlier. It is clear that boundaries of Type IIIb and Type IIIc do not admit such an involution, as their level graphs are not symmetric. 

Since the saddle connections of a flat surface generate the relative homology $H_1(X \setminus \boldsymbol{p}, \boldsymbol{z}; \mathbb{Z})$, every flat surface must contain a saddle connection joining $z_1$ and $z_2$. Therefore, the boundary of $\overline{\cC}$ consists only of Type I and Type IIIa boundaries. The following proposition characterizes the Type I boundaries that appear in the boundary of hyperelliptic components:

\begin{proposition} \label{TypeBTypeIhyper}
A Type I boundary $X^B_{\I} (\ell, \Id, {\bf C})$ lies in the boundary of a hyperelliptic component if and only if the following conditions hold:
\begin{itemize}
    \item $C_{\tau(i)} = D_{\tau(\ell + 1 - i)}$ for $i = 1, \dots, \ell$,
    \item $C_{\tau(\ell + j)} = D_{\tau(n + 1 - j)}$ for $j = 1, \dots, n - \ell$.
\end{itemize}
\end{proposition}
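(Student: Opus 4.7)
The strategy is to analyze, on each irreducible component of the Type I multi-scale differential $\overline{X} = X^B_{\I}(\ell, \Id, \mathbf{C})$, the involution coming from (a limit of) hyperelliptic involutions. Recall that the top level is a single-zero surface $Z_2(\tau_1,\mathbf{C}_1,e_1,e_2)$ with its unique zero at the node $s^\top$, and the bottom level is a double-zero surface $Z_1(\tau_2,\mathbf{C}_2)$ with zeroes $z_1,z_2$ and a nodal pole of order $C+D$ at $s^\bot$. I will translate the palindromic identities in the statement into the existence of the required involutions on each zero-dimensional flat surface.

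For the direction $(\Leftarrow)$, assume the listed symmetry conditions. Read clockwise around $s^\top$, the angular data at the zero of $Z_2$ is the cyclic sequence $(C_{\tau(1)},D_{\tau(1)},\dots,C_{\tau(\ell)},D_{\tau(\ell)})$ together with the positions of $q_1,q_2$. The identities $C_{\tau(i)} = D_{\tau(\ell+1-i)}$ make this sequence invariant under the reversal fixing the $q_1 q_2$-axis, so by \Cref{zerodim2} there is an involution $\sigma_0$ of $Z_2(\tau_1,\mathbf{C}_1,e_1,e_2)$ fixing $s^\top$ with $\sigma_0^{\ast}\omega_0 = -\omega_0$. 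The analogous palindromic identities for the indices between $\ell+1$ and $n-2$ (incorporating the nodal pole $s^\bot$ of order $C+D$) produce, by \Cref{zerodim1}, an involution $\sigma_{-1}$ of $Z_1(\tau_2,\mathbf{C}_2)$ swapping $z_1,z_2$ and fixing $s^\bot$. Both involutions reverse the cyclic order of prongs at the node, so they are compatible with the prong-matching modulo the diagonal level-rotation action. Plumbing the resulting symmetric multi-scale differential yields a nearby smooth hyperelliptic flat surface, and by \Cref{Cor:hyperextension} the entire connected component is hyperelliptic.

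For the direction $(\Rightarrow)$, suppose $\overline{X}$ lies in the closure of a hyperelliptic component $\cC$. The hyperelliptic involutions on nearby smooth fibers degenerate to an involution $\sigma$ of $\overline{X}$. Since the level graph of $\overline{X}$ has a single vertex on each level joined by one vertical edge, $\sigma$ necessarily preserves each level and fixes the node. The restriction $\sigma_0$ is a hyperelliptic involution of $Z_2$ fixing $s^\top$, and $\sigma_{-1}$ is a hyperelliptic involution of $Z_1$ which must swap $z_1$ and $z_2$ (the alternative of fixing both would contradict $\sigma_{-1}^{\ast}\omega_{-1}=-\omega_{-1}$ at two zeros of the same parity assignment). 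Applying the uniqueness parts of \Cref{zerodim2} and \Cref{zerodim1} to the angular data read off from $\sigma_0$ and $\sigma_{-1}$ forces exactly the palindromic identities in the statement.

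The main technical obstacle is bookkeeping: one must verify that the combinatorial palindrome for the angles of $Z_2$ (where the two non-residueless poles $q_1,q_2$ act as the axis of reversal) and the combinatorial palindrome for $Z_1$ (where the nodal pole $s^\bot$ plays a distinguished role) translate precisely into the indexed identities $C_{\tau(i)} = D_{\tau(\ell+1-i)}$ and $C_{\tau(\ell+j)} = D_{\tau(n+1-j)}$. Doing so requires a careful audit of the cyclic labeling of prongs used to define $\mathbf{C}_1$ and $\mathbf{C}_2$, together with the check that $\sigma_0$ and $\sigma_{-1}$ assemble into a well-defined automorphism of the \emph{equivalence class} of prong-matchings rather than a particular representative.
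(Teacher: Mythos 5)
Your proposal is correct and follows essentially the same route as the paper: the paper's entire proof is the one-line observation that the (limit of the) hyperelliptic involution exchanges $p_{\tau(i)}$ with $p_{\tau(\ell+1-i)}$ and $p_{\tau(\ell+j)}$ with $p_{\tau(n+1-j)}$, which is exactly the level-by-level symmetry analysis you spell out, with the converse handled by building the involutions on the two zero-dimensional level components and smoothing (your use of \Cref{zerodim1}, \Cref{zerodim2} and \Cref{Cor:hyperextension} matches the paper's framework). One small repair: your parenthetical reason that $\sigma_{-1}$ must swap $z_1$ and $z_2$ (a parity contradiction) does not work when both zero orders are even; the quick correct argument is that an involution of $\PP^1$ has exactly two fixed points, one of which is already the node $s^\bot$, so it cannot also fix both zeroes — equivalently, invoke the fact established in \Cref{Sec:hyperelliptic} that the hyperelliptic involution of a double-zero hyperelliptic component interchanges the two zeroes.
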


\begin{proof}
Note that the involution exchanges $p_{\tau(i)}$ with $p_{\tau(\ell + 1 - i)}$, and $p_{\tau(\ell + j)}$ with $p_{\tau(n + 1 - j)}$.
\end{proof}

We can use \Cref{TypeBTypeIhyper} to show that certain special strata do not contain any non-hyperelliptic components.

\begin{proof}[Special cases of \Cref{Prop:Bpair} and \Cref{Prop:Bnonhyper}]
First, assume that $b_i = 2$ for all $i$. Let $\cC$ be any connected component of $\cP(\mu^{\fR})$. Suppose that $X^B_{\I}(n, \tau, {\bf C}) \in \partial\cC$. By relabeling the poles if necessary, we may assume that $\tau = \Id$. Since $b_i = 2$ for all $i$, we must have $C_i = D_i = 1$. By \Cref{TypeBTypeIhyper}, it follows that $\cC$ is a hyperelliptic component.

Now assume that $e_1 = e_2 = 1$ and $n = 1$. Let $\cC$ be a connected component with index $\frac{b_1}{2}$. Suppose that $X^B_{\I}(1, \Id, {\bf C}) \in \partial\cC$. Again, by relabeling poles, we may assume that $\tau = \Id$. Then $\delta = b_1$ and the index is equal to $C_1 \pmod\delta $. Therefore, $C_1 = D_1 = \frac{b_1}{2}$, and by \Cref{TypeBTypeIhyper}, $\cC$ is hyperelliptic.
\end{proof}

\begin{proof}[Proof of \Cref{Prop:Bhyper}]
Let $\overline{W} \coloneqq X^B_{\I} (\ell, \tau, {\bf C}) \in \overline{\cC}$ be a hyperelliptic surface with ramification profile $\Sigma$. By relabeling the poles, we can assume that $\Sigma$ is one of the following:
\begin{itemize}
    \item For even $n$, $\Sigma = (1, n-2)(2, n-3) \dots (\frac{n-2}{2}, \frac{n}{2})$ or $(1, n-4)(2, n-5) \dots (\frac{n-4}{2}, \frac{n-2}{2})(n-3)(n-2)$. 
    \item For odd $n$, $\Sigma = (1, n-3)(2, n-4) \dots (\frac{n-3}{2}, \frac{n-1}{2})(n-2)$.
\end{itemize}
We will show that we can:
\begin{itemize}
    \item[(1)] move the fixed pole $p_{n-2}$ and all non-fixed poles to the top level;
    \item[(2)] adjust the angle assignments $C_i$ without changing the positions of the marked poles;
    \item[(3)] interchange the positions of any pair of conjugate poles; 
    \item[(4)] interchange the positions of two pairs of conjugate poles.
\end{itemize}

Before we prove the above items, we first show that we can swap the sets of residueless poles on the top and bottom levels. Due to the symmetries in the combinatorial data, by \Cref{TypeBTypeIhyper}, for the prong-matching $u \equiv -1 \pmod{\kappa}$, we have $u + C \equiv c_\ell + e_2 - 1 \pmod{\kappa}$. Then, we have $U \cdot \overline{W}(-1) = \overline{W'}(u')$, where the top-level component $W'_0$ contains the residueless poles from the bottom level $W_{-1}$ of $\overline{W}$.

We now prove the first item. Suppose we start with $\overline{W}$ such that no level contains the pole $p_{n-2}$ and all non-fixed poles. If this condition is not met, we can use the previous move to swap the residueless marked poles on two levels. Without loss of generality, let’s assume we start with a Type I multi-scale differential $\overline{W}$ such that $p_{n-2}$ is on the bottom level. If there is no fixed pole on the top level, let $i_0 = \ell/2$ and $j_0 = \ell/2 + 1$; otherwise, let $i_0 = (\ell-1)/2$ and $j_0 = (\ell+3)/2$. Consider the prong-matching $u \equiv c_{i_0} - 1 \pmod{\kappa}$. By symmetry, we have $u + C \equiv -d_{j_0-1} \pmod{\kappa}$. Then the equatorial half-arc $U \cdot \overline{W}(u + \frac{1}{2}) = \overline{W'}(u')$ for some $\overline{W'}$ such that $p_{n-2}$ and all the residueless marked poles lie on the top level.

Next, we prove the second item by showing that we can set $C_{\tau(i)}$ and $D_{\tau(\ell+1-i)}$ to 1. Consider the prong-matching $u \equiv c_i - 1 \pmod{\kappa}$. By symmetry, we have $u + C \equiv -d_{\ell-i} \pmod{\kappa}$. Then, the equatorial half-arc $UR^{2C_{\tau(i)} - 2} U \cdot \overline{W}(u + \frac{1}{2}) = \overline{W'}(u')$ results in a surface $\overline{W'}$ where $C'_{\tau(i)} = D'_{\tau(\ell+1-i)} = 1$, and $\overline{W'}$ differs from $\overline{W}$ only in the angles of the polar domains of $p_{\tau(i)}$ and $p_{\tau(\ell+1-i)}$. 

Now we prove item (3). For any non-fixed pole $p_{\tau(i)}$ on the top level, by item (2), we can assume that $D_{\tau(i)} = 1$. Let $j = \ell + 1 - i$ and, without loss of generality, assume $i < j$. Let $\overline{W'} = X^B_{\I} (\ell, \tau', \ell, {\bf C}')$ such that $\tau' = \tau \circ (ij)$ and $C'_{\tau'(k)} = C_{\tau(k)}$ for $k \in \underline{n-2}$. We consider the prong-matchings:
$$ u \equiv c_i - 1 \pmod{\kappa} \quad \text{on} \quad \overline{W}, \quad u' \equiv -d_{j-1} \pmod{\kappa} \quad \text{on} \quad \overline{W'}. $$ 
By symmetry, we have $u + C \equiv -d_{j-1} \pmod{\kappa}$ while $u' + C \equiv c_i \pmod{\kappa}$. One can check that $UR^2U \cdot \overline{W}(u + \frac{1}{2}) = \overline{W''}(c''_i - \frac{1}{2})$ and $UR^2U \cdot \overline{W'}(u') = \overline{W''}(-d''_{j-2})$. Here, $p_{\tau(i)}$ and $p_{\tau(j)}$ are on the bottom level of $\overline{W''}$.

For item (4), let $p_{\tau(i-1)}$ and $p_{\tau(i)}$ be non-fixed poles on the top level. Let $j = \ell + 1 - i$ and, without loss of generality, assume $i < j$. By item (2), we can assume that $D_{\tau(i-1)} = D_{\tau(i)} = 1$. Let $\overline{W'} = X^B_{\I} (\ell, \tau', \ell, {\bf C})$ such that $\tau' = \tau \circ (i-1, i) \circ (j+1, j)$. We consider the prong-matchings:
$$ u \equiv c_{i-1} - 1 \pmod{\kappa} \quad \text{on} \quad \overline{W}, \quad u' \equiv c'_{i} - 1 \pmod{\kappa} \quad \text{on} \quad \overline{W'}. $$ 
One can check that $UR^2U \cdot \overline{W}(u + \frac{1}{2}) = \overline{W''}(c''_{i-1} - \frac{1}{2})$ and $UR^2U \cdot \overline{W'}(u'+\frac{1}{2}) = \overline{W''}(c''_{i} - \frac{1}{2})$. Here, $p_{\tau(i-1)}$ and $p_{\tau(j+1)}$ are on the bottom level of $\overline{W''}$.
\end{proof}

\subsection{Non-hyperelliptic components}\label{subsec:Bhyper}

In this subsection, we will establish lemmas to provide degeneration techniques for non-hyperelliptic components of strata of B-signature. The following lemma characterizes the Type I boundaries that are ubiquitous in non-hyperelliptic components, so that we can later initiate the degeneration from these Type I boundaries.

\begin{lemma}\label{lm:typeB_nonhyp_char}
    Let $\cC$ be a {\em{non-hyperelliptic}} component of $\cP(\mu^{\fR})$. Then there exists a multi-scale differential $\overline{W}=X^B_{\I} (\ell,\tau,{\bf C}) \in \partial\cC$ such that  
    \begin{equation*}
        C < e_2 + \max\left(\sum_{i=1}^{\ell} C_{\tau(i)}, \sum_{i=1}^{\ell} D_{\tau(i)}\right).
    \end{equation*}
\end{lemma}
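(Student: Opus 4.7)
The plan is to exhibit a Type I boundary in $\partial\overline{\cC}$ meeting the required bound by starting from an arbitrary Type I boundary (whose existence is guaranteed by \Cref{lm:univ_prin_bdry}) and then navigating along equatorial arcs, via the transformation $U$ of \Cref{Prop:BImove} combined with the rotations $R$, to reach an extremal configuration in which the combinatorial data can be computed explicitly.

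The target configuration I would aim for is $\ell=0$ (all residueless poles on the bottom level) with $C_i=b_i-1$ for every $i$. There one has $S_C=S_D=0$ and
\[
C=a_1+1-\sum_i(b_i-1)=n-3+e_1+e_2-a_2,
\]
so the required inequality $C<e_2+\max(S_C,S_D)=e_2$ reduces to $a_2>n+e_1-3$. To rule out the borderline case $a_2\le n+e_1-3$, combine $a_1\le a_2$, the identity $a_1+a_2+2=e_1+e_2+\sum_i b_i$, and $b_i\ge 2$ for all $i$: a direct count forces $a_1=a_2$, $e_1=e_2$, and $b_i=2$ for every $i$, which is exactly the symmetric configuration where \Cref{Prop:Bpair} (for $(e_1,e_2)=(1,1)$) and \Cref{Prop:Bnonhyper} (for $(e_1,e_2)\ne(1,1)$), both relying on \Cref{TypeBTypeIhyper}, assert that $\cP(\mu^{\fR})$ has no non-hyperelliptic component, contradicting the hypothesis on $\cC$.

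Assuming henceforth $a_2>n+e_1-3$, the core step is to show that a Type I boundary with the target data lies in $\partial\overline{\cC}$. Starting from any $\overline{W}_0\in\partial\overline{\cC}$, I would migrate the residueless poles one at a time from the top level to the bottom level by selecting, at each stage, prong indices $i<j$ and an interval for $u$ satisfying the hypothesis of \Cref{Prop:BImove}, then applying $U\cdot \overline{W}(u+\tfrac12)$ to obtain a new Type I boundary with strictly smaller $\ell$ on the top. Once $\ell=0$ is reached, a further sequence of rotations $R$ and localized $U$-moves adjusts each entry $C_i$ to its maximum value $b_i-1$, producing the desired boundary.

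The main obstacle lies in verifying this navigation: at each stage one must check that a valid equatorial half-arc satisfying the hypothesis of \Cref{Prop:BImove} exists, and that the new combinatorial data $(\ell',\tau',{\bf C}')$ produced by that proposition strictly approaches the target rather than cycling among equivalent configurations. A greedy rule (always choosing the smallest index $i$ such that a pole still has to be migrated or an entry still has to be enlarged) should terminate, but the bookkeeping through the induced permutation $\tau'$ and the prong index $u'$ in \Cref{Prop:BImove} is delicate, and this is where the bulk of the combinatorial work will lie.
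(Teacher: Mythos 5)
Your reduction to the single target configuration $\ell=0$, $C_{i}=b_{i}-1$ has a fatal numerical gap: you only checked the inequality $C<e_{2}$ at the target, but not that the target is an admissible Type I boundary. The datum $C$ must be a positive integer, and with $\ell=0$ one has $C=a_{1}+1-\sum_{i}C_{i}$, so $C\ge 1$ forces $\sum_{i}C_{i}\le a_{1}$; with $C_{i}=b_{i}-1$ this means $a_{1}\ge\sum_{i}(b_{i}-1)$, equivalently $a_{2}\le n-4+e_{1}+e_{2}$. Your dichotomy only treats the lower bound $a_{2}>n-3+e_{1}$, so in the whole range $a_{2}\ge n-3+e_{1}+e_{2}$ the configuration you want to navigate to simply does not exist (indeed if $a_{1}<n-2$ not even $\ell=0$ is possible, since each $C_{i}\ge1$). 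Worse, when $(e_{1},e_{2})=(1,1)$ the two requirements are incompatible for \emph{any} choice of ${\bf C}$: every $\ell=0$ boundary has $C\ge 1=e_{2}$, so the desired strict inequality can never be realized with all residueless poles on the bottom level. Since the pair-of-simple-poles case is precisely where this lemma is needed for the index classification (\Cref{Prop:Bpair}), the approach collapses on a central case rather than on a fringe one. A secondary issue: citing \Cref{Prop:Bpair} and \Cref{Prop:Bnonhyper} to dispose of the borderline stratum is circular as stated, because their proofs rest on this lemma; what you may legitimately use is only the special-case argument via \Cref{TypeBTypeIhyper} (all $b_{i}=2$, $a_{1}=a_{2}$, $e_{1}=e_{2}$ forces every component to be hyperelliptic), which your parenthetical gestures at but should be invoked explicitly. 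Finally, the "core step" — that every non-hyperelliptic component actually reaches your canonical boundary — is exactly the hard combinatorial content and is left entirely unverified; steering to one fixed boundary within a given component is essentially as strong as the classification itself.

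The paper's proof goes in the opposite direction, and the direction is what makes it work: it pushes residueless poles to the \emph{top} level (reaching $\ell=n-2$, or $\ell$ with a single bottom pole having $C_{\tau(\ell+1)}=D_{\tau(\ell+1)}$), which is never obstructed by the angle budget at $z_{1}$, since only the bottom level is constrained by $a_{1}+1=C+\sum_{i>\ell}C_{\tau(i)}$. In that position $a_{1}\le a_{2}$ gives $C\le D$, and the identity $C+D=e_{1}+e_{2}+\sum_{i\le\ell}(C_{\tau(i)}+D_{\tau(i)})$ yields $C\le\tfrac12(C+D)\le e_{2}+\max\bigl(\sum_{i\le\ell}C_{\tau(i)},\sum_{i\le\ell}D_{\tau(i)}\bigr)$ directly, with the equality case excluded by a single extra move using non-hyperellipticity of the top level. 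If you want to salvage your plan, you would at minimum need a different target in the regimes identified above, and a termination argument for the $U$, $R$ moves; as written, the proposal does not prove the lemma.
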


\begin{proof}
    Let $\overline{W}=X^B_{\I} (\ell,\tau,{\bf C})$ be a Type I boundary of $\overline{\cC}$. First, note that if 
    \[
    \sum_{i=\ell+1}^{n-2} C_{\tau(i)} = \sum_{i=\ell+1}^{n-2} D_{\tau(i)},
    \]
    then by the assumption that $a_1 \leq a_2$, we will have $C \leq D$. In particular, if there is no marked residueless pole on the bottom level, then automatically $C \leq D$ and thus $C = \min(C, D)$.
    
    We now show that we can always start with some $\overline{W}$ such that 
    \begin{enumerate}
        \item[(1)] $\ell = n - 2$, i.e., there is no marked residueless pole on the bottom level; or
        \item[(2)] there is exactly one marked residueless pole on the bottom level and $C_{\tau(\ell+1)} = D_{\tau(\ell+1)}$.
    \end{enumerate}
  The properties listed above imply that $C \leq D$. We may assume that $\ell > 0$. Otherwise, we have 
    \[
    C + D - 1 = \kappa = e_1 + e_2 - 1 \quad (\text{i.e., } C + D = e_1 + e_2).
    \]
    Consider the prong-matching 
    \[
    u \equiv \max(e_2 - C, 0) \pmod{\kappa},
    \]
    then $e_2 \leq u + C \leq 0 \pmod{\kappa}$ and the saddle connection $\alpha_0$ will be a saddle connection of multiplicity one connecting $z_1$ and $z_2$ on a plumbed surface $\overline{W}_t(u)$. This means that $U \cdot \overline{W}(u) = \overline{W'}(u')$ where $\overline{W'}$ satisfies (1). Now we may assume $C \geq e_2 + \max(C_{\tau(\ell)}, D_{\tau(\ell)})$ (and $D \geq e_1 + \max(C_{\tau(1)}, D_{\tau(1)})$) because otherwise, by considering the same (or similar) prong matching as the case $\ell = 0$, we can reach some Type I boundary satisfying (1). Let $k$ be the largest number such that 
    \[
    C \leq e_2 + \sum_{i=k}^{\ell} b_{\tau(i)}.
    \]
    Consider 
    \[
    u \equiv c_k - 1 - \min(C_{\tau(k)}, D_{\tau(k)}) \pmod{\kappa}.
    \]
    Then we have the following possibilities for $u$ and $u + C$:
    \begin{itemize}
        \item[(i)] $c_{k-1} \leq u < c_k \pmod{\kappa}$ and $-d_{k-1} < u + C \leq -d_{k-2} \pmod{\kappa}$;
        \item[(ii)] $c_k \leq u < c_{k+1} \pmod{\kappa}$ and $-d_k < u + C \leq -d_{k-1} \pmod{\kappa}$;
        \item[(iii)] $u \equiv c_{k-1} - 1 \pmod{\kappa}$ and $u + C \equiv -d_k \pmod{\kappa}$.        
    \end{itemize}
    Then $U \cdot \overline{W}(u + \frac{1}{2}) = \overline{W'}(u')$ where $\overline{W'}$ is some Type I boundary satisfying (1) (for cases (i) and (ii)) or (2) (for case (iii)). 
      
   Now we will show the inequality in the lemma. Notice that the sum of angles on the top level (the nodal zero order plus one) is equal to $C + D - 1$ (the nodal pole order minus one), which is  
    \[
    C + D - 1 = (e_2 - \frac{1}{2}) + (e_1 - \frac{1}{2}) + \sum_{i=1}^{\ell} (C_{\tau(i)} + D_{\tau(i)}).
    \]
    Hence,  
    \begin{align*}
        C \leq \frac{1}{2}(C + D) &= \frac{1}{2}(e_1 + e_2) + \frac{1}{2} \sum_{i=1}^{\ell} (C_{\tau(i)} + D_{\tau(i)}) \\
        &\leq e_2 + \max\left( \sum_{i=1}^{\ell} C_{\tau(i)}, \sum_{i=1}^{\ell} D_{\tau(i)} \right).
    \end{align*}
    Equality is attained if and only if the following holds:
    \begin{align*}
        e_2 &= e_1, \\
        C &= D, \\ 
        \sum_{i=1}^{\ell} C_{\tau(i)} &= \sum_{i=1}^{\ell} D_{\tau(i)}.
    \end{align*}

    Now we will show how to avoid attaining equality. By our assumption on $\overline{W}$, equality implies that $a_1 = a_2$ and the bottom level is hyperelliptic. The top level cannot be hyperelliptic, and thus there is some index $i \leq \frac{\ell}{2}$ where $(C_{\tau(i)}, D_{\tau(i)}) \neq (D_{\tau(\ell+1-i)}, C_{\tau(\ell+1-i)})$. Otherwise, $\overline{W}$ will be hyperelliptic. Now let $k$ be the smallest index satisfying $(C_{\tau(k)}, D_{\tau(k)}) \neq (D_{\tau(\ell+1-k)}, C_{\tau(\ell+1-k)})$. Without loss of generality, we may assume $D_{\tau(k)} < C_{\tau(\ell+1-k)}$. We consider the prong-matching 
    \[
    u \equiv c_{k-1} + 1 \pmod{\kappa}.
    \]
    Then $u + C \equiv -d_{\ell+1-k} \pmod{\kappa}$. Then the equatorial half-arc 
    \[
    UR^{2D_{\tau(k)}} U \cdot \overline{W}(u + \frac{1}{2}) = \overline{W'}(u' + \frac{1}{2}),
    \]
    where $\overline{W'} = X^B_{\I} (\ell-1, \tau', {\bf C}')$ such that we have sent $p_{\tau(k)}$ to the bottom level. In addition, 
    \[
    \sum_{i=1}^{\ell-1} D_{\tau'(i)}' = \sum_{i=1}^{\ell} D_{\tau(i)}
    \]
    and $C' = C - C_{\tau(k)}$. This means that $C' < C = e_2 + \sum_{i=1}^{\ell-1} D_{\tau'(i)}'$, and we are done.

\end{proof}

Combining \Cref{lm:typeB_nonhyp_char} with the following lemma, we can establish the existence of certain Type IIIb and Type IIIc boundaries in a non-hyperelliptic component, such that the residueless poles between $q_1$ and $s^\bot$ satisfy any predetermined permutation. The following lemma enables us to move a residueless pole that does not initially lie between $q_1$ and $s^\bot$ to the bottom level next to $s^\bot$ (and hence between $q_1$ and $s^\bot$).

\begin{lemma}\label{lm:IIIc_push_pole_standard_to_bot}
    Let $\cC$ be a non-hyperelliptic component of $\cP(\mu^{\fR})$, and assume there exists a Type IIIc boundary $\overline{Z} = X^B_{\III c}((2,1),\ell_1,\ell_2,\tau,C,{\bf C})$. Given $p_{\tau(j)}$ where $j > \ell_1$, we can find a boundary $$\overline{Z'} = X^B_{\III c}((2,1),\ell_1',\ell_2',\tau',C',{\bf C}') \in \partial\cC$$ of one of the following forms:
    \begin{enumerate}
        \item $\ell_1' = \ell_1$ and $\ell_2' = n-2$, i.e., all residueless poles between the nodal pole and $q_2$ have been moved onto the top level; $\tau'(i) = \tau(i)$ and $C'_{\tau'(i)} = C_{\tau(i)}$ for all $i \leq \ell_1$.
        \item $\ell_1' = \ell_1 + 1$ with $\tau'(\ell_1 + 1) = \tau(j)$ and $C'_{\tau(j)} = b_{\tau(j)} - 1$, i.e., the pole $p_{\tau(j)}$ has been moved to lie between the nodal pole and $q_1$; $\tau'(i) = \tau(i)$ and $C'_{\tau'(i)} = C_{\tau(i)}$ for all $i \leq \ell_1$.
    \end{enumerate}

    Moreover, if the angle sum of residueless poles on the bottom level satisfies $D + \sum_{i \leq \ell_1} D_{\tau(i)} + \sum_{i > \ell_2} D_{\tau(i)} < n - 2$, then we have
    \[
    C' + \sum_{i \leq \ell_1'} C'_{\tau'(i)} + \sum_{i > \ell_2'} C'_{\tau'(i)} = C + \sum_{i \leq \ell_1} C_{\tau(i)} + \sum_{i > \ell_2} C_{\tau(i)}.
    \]
\end{lemma}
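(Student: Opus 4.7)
The plan is to navigate within $\cC$ from $\overline{Z}$ to the desired $\overline{Z'}$ along the equatorial arcs, by applying a sequence of $U$-transformations (and rotations $R$) whose combinatorial effects are tabulated in Propositions~\ref{Prop:BIIIcmove}, \ref{Prop:BImove}, and the preceding results. The guiding principle is to choose prong-matchings so that the saddle connections bordering the polar domains of $p_{\tau(1)},\dots,p_{\tau(\ell_1)}$ are never involved in the ``peeling'' induced by a $U$-move; this automatically preserves $\ell_1$, the partial permutation $\tau|_{\{1,\dots,\ell_1\}}$, and the angles $C_{\tau(i)}$ for $i\leq \ell_1$, which is the common requirement of conclusions (1) and (2).

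I would split the argument according to where $p_{\tau(j)}$ currently sits. When $\ell_1 < j \leq \ell_2$, the pole lies on the top-level component, and I select a prong-matching $u$ on $\overline{Z}$ such that, via the flat pictures in \Cref{tab:TypeBIIIcPlumb}, the saddle connection bordering the polar domain of $p_{\tau(j)}$ becomes a multiplicity-one connection. One application of $U$ via \Cref{Prop:BIIIcmove} produces a Type~I boundary with $p_{\tau(j)}$ relocated, and a second $U$, handled by \Cref{Prop:BImove}, returns us to a Type~IIIc configuration with $p_{\tau(j)}$ placed immediately adjacent to $s^\bot$ on the $q_1$-side and $C'_{\tau(j)} = b_{\tau(j)} - 1$, giving form~(2). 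When $j > \ell_2$, the pole already lies on the bottom but between $s^\bot$ and $q_2$; here the initial $U$-move can either directly empty the entire bottom-right block onto the top level (landing on form~(1)) or, after relabeling, reduce the situation to the first case.

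The principal obstacle is the ``moreover'' assertion: the sum $S \coloneqq C + \sum_{i\leq \ell_1} C_{\tau(i)} + \sum_{i > \ell_2} C_{\tau(i)}$ is preserved under the navigation, provided the complementary sum $D + \sum_{i\leq \ell_1} D_{\tau(i)} + \sum_{i > \ell_2} D_{\tau(i)}$ is strictly less than $n-2$. Geometrically, $S$ measures the total clockwise angle on one side of the bottom-level configuration; the numerical hypothesis guarantees that the prong-matchings available to us cannot force a reversal of the bottom-level orientation (which would exchange the roles of $C$ and $D$, as in the parity flip appearing in the proof of \Cref{lm:typeB_nonhyp_char}). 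The technical heart of the proof is then a careful bookkeeping, using the explicit recipes in \Cref{Prop:BIIIcmove} and \Cref{Prop:BImove}, to verify that every intermediate transformation leaves $S$ invariant, with the hypothesis used precisely to rule out the angle-swapping $U$-moves.
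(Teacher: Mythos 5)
Your overall plan (navigating along equatorial arcs by composites of $U$ and $R$ whose effects are read off from \Cref{Prop:BIIIcmove} and related tables, splitting on the position of $p_{\tau(j)}$, and keeping the data attached to $p_{\tau(1)},\dots,p_{\tau(\ell_1)}$ untouched) is the same as the paper's, but the proposal has a genuine gap at the step where the dichotomy (1)/(2) is actually produced. In the case $\ell_1<j\leq\ell_2$ you claim that a single choice of prong-matching followed by two $U$-moves relocates $p_{\tau(j)}$ to the bottom level next to $s^\bot$ with $C'_{\tau(j)}=b_{\tau(j)}-1$. This is not available in general: to park the pole on the $q_1$-side with its full angle one needs $D_{\tau(j)}=1$ and $C$ large enough (the paper arranges $C>b_{\tau(j)}-1$). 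When $D_{\tau(j)}>1$, the paper first reduces it by repeated $UR^{-2}U$ moves, and each such move requires at least one residueless pole between $s^\bot$ and $q_2$ — this is exactly why alternative (1) (i.e.\ $\ell_2'=n-2$) appears in the statement: when no such pole remains one is already in form (1). When $C\leq b_{\tau(j)}-1$, one must first increase $C$ by an auxiliary operation of the form $UR^{-2\theta-1}U$ with an explicitly computed angle $\theta$. Your argument supplies no mechanism for either step, treats form (2) as always reachable in the top-level case, and (for $j>\ell_2$) asserts that one $U$-move can ``empty the entire bottom-right block,'' whereas the paper moves only $p_{\tau(j)}$ up and then restores $\ell_1'=\ell_1$ before reducing to the top-level case. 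So the core of the lemma — why one of the two forms is always reachable — is not established as written.

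The \emph{moreover} clause is also mis-argued. The invariance of $S=C+\sum_{i\leq\ell_1}C_{\tau(i)}+\sum_{i>\ell_2}C_{\tau(i)}$ is not about the hypothesis preventing an ``orientation reversal'' of the bottom level. The actual mechanism is: the composites $UR^{\pm2}U$ used throughout preserve the bottom-level angle sum exactly, and the only move that changes it is the $C$-increasing operation above; the hypothesis $D+\sum_{i\leq\ell_1}D_{\tau(i)}+\sum_{i>\ell_2}D_{\tau(i)}<n-2$ rules that operation out, because needing it forces $C\leq b_{\tau(j)}-1$, which in turn forces $D$ to be at least the number of top-level poles and hence the bottom-level $D$-sum to be at least $n-2$, a contradiction. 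Without identifying which specific move changes $S$ and why the numerical hypothesis makes it unnecessary, your final paragraph is a restatement of the claim rather than a proof of it.
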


\begin{proof}
    There are two possibilities for the position of $p_{\tau(j)}$:
    \begin{enumerate}
        \item[(a)] $p_{\tau(j)}$ lies on the top level.
        \item[(b)] $p_{\tau(j)}$ lies on the bottom level, i.e., $j > \ell_2$.
    \end{enumerate}
    In case (b), consider the prong-matching $u \equiv 0 \pmod{\kappa}$. Suppose $c_{i-1} < u + C \leq c_i \pmod{\kappa}$. The angle from $\beta_{j - \ell_2}$ clockwise to $\alpha_i$ on the plumbed surface $\overline{Z}(0)$ is
    \[
    \theta = c_i - (u + C) + \frac{1}{2} + \sum_{k = \ell_2 + 1}^{j - 1} D_{\tau(k)}.
    \]
    Then
    \[
    UR^{-2\theta - 1}U \cdot \overline{Z}(0) = \overline{Z'}(u'),
    \]
    where $\overline{Z'} = X^B_{\III c}((2,1),\ell_1',\ell_2',\tau',C',{\bf C}')$ with $\ell_1' \geq \ell_1$ and $p_{\tau(j)}$ now on the top level with $C'_{\tau(j)} = b_{\tau(j)} - 1$. Since $D'_{\tau(j)} = 1$, the top level cannot contain only one pole (as $a_{h_2} > 0$).

    Assume $\tau'(\ell_1'+1) = \tau(j)$. To fully reduce case (b) to case (a), we must decrease $\ell_1'$ back to $\ell_1$ by removing any extra poles between $q_1$ and the nodal pole that are not among $p_{\tau(1)}, \ldots, p_{\tau(\ell_1)}$, while keeping $p_{\tau(j)}$ on the top level. Consider the prong-matching $u' \equiv 0 \pmod{\kappa'}$ on $\overline{Z'}$. If $0 < C' \leq c'_1 \pmod{\kappa'}$, then
    \[
    UR^{2}U \cdot \overline{Z'}(u') = \overline{Z''}(u''),
    \]
    where $\overline{Z''}$ is a Type IIIc boundary with $\ell_1'' = \ell_1' - 1$ and $p_{\tau(j)}$ still on the top level with $D''_{\tau(j)} = 1$.

    Assume now $C' \leq b_{\tau(j)} - 1$. Let $\theta$ be the angle from $\beta_{\ell_1'+1}$ clockwise to $\alpha_1$ on the plumbed surface $\overline{Z'}_t(1)$. Then
    \[
    \theta = b_{\tau(j)} - 1 - C + e_2 + \sum_{i = \ell_2' + 1}^{n - 2} b_{\tau'(i)}.
    \]
    Applying $UR^{-2\theta - 1}U$ gives
    \[
    \overline{Z''}(u'' + \tfrac{1}{2}) = UR^{-2\theta - 1}U \cdot \overline{Z'}(u'),
    \]
    where $\overline{Z''} = X^B_{\III c}((2,1),\ell_1',\ell_2',\tau',C'',{\bf C}')$ with
    \[
    C'' = C' - a_{h_2} + \sum_{\substack{\ell_1' < i \leq \ell_2'\\ i \neq \ell_1' + j'}} b_{\tau(i)} > C'.
    \]
    Repeating this operation as needed, we may assume initially $C' > b_{\tau(j)} - 1$, enabling us to reduce $\ell_1'$ by one.

    In case (a), assume $\ell_2 \neq n - 2$, otherwise $\overline{Z}$ already satisfies condition (ii). If $D_{\tau(j)} = 1$, then using the previous operation to increase $C$ ensures $C > b_{\tau(j)} - 1$, allowing us to move $p_{\tau(j)}$ between $q_1$ and the nodal pole.

    Finally, to reduce $D_{\tau(j)}$ to one, let $u \equiv c_{j-1} + \max(C_{\tau(j)} - C,\:0) \pmod{\kappa}$. Then $c_{j-1} \leq u < c_j \pmod{\kappa}$ and $c_{i-1} < u + C \leq c_i \pmod{\kappa}$ for some $i \neq j$. Since we assume there is at least one residueless pole between $s^\bot$ and $q_2$, the operation
    \[
    UR^{-2}U \cdot \overline{Z}(u) = \overline{Z'}(u')
    \]
    yields a boundary where $D'_{\tau(j)} = D_{\tau(j)} - 1$.

    The last claim follows from the fact that $UR^{\pm2}U$ does not change the angle sum of residueless poles on the bottom level. If this operation is needed to increase $C$, then $C \leq b_{\tau(j)} - 1$, implying $D$ is at least the number of poles on the top level. Thus,
    \[
    D + \sum_{i \leq \ell_1} D_{\tau(i)} + \sum_{i > \ell_2} D_{\tau(i)} \geq n - 2,
    \]
    contradicting the hypothesis.
\end{proof}

The following proposition shows that certain types of boundaries in a non-hyperelliptic component of a stratum can be obtained by applying \Cref{lm:typeB_nonhyp_char} and \Cref{lm:IIIc_push_pole_standard_to_bot}.

\begin{proposition}\label{cor:IIIb_perfect_1st_tail_empty_2nd_tail}
Let $\cC$ be a non-hyperelliptic component of $\cP(\mu^{\fR})$. Given any $\tau^* \in \mathrm{Sym}_{n-2}$, we have the following:
\begin{itemize}
    \item If $e_2 > 1$, then there exists a boundary $\overline{Y} = X^B_{\III b}((2,1,2,1), \ell, \tau, C, {\bf C}) \in \partial\cC,$
    
    \item If $e_2 = 1$, then there exists a boundary $\overline{Z} = X^B_{\III c}((2,1), \ell, n-2, \tau, C, {\bf C}) \in \partial\cC,$
    
\end{itemize}
such that $\tau(i) = \tau^*(i)$ and $C_{\tau(i)} = b_{\tau(i)} - 1$ for all $i \leq \ell$.
\end{proposition}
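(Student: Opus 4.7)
The plan is to realize a Type IIIb or IIIc boundary with the prescribed combinatorial data in $\partial\cC$ by combining \Cref{lm:typeB_nonhyp_char} (which supplies a Type I boundary satisfying a useful inequality) with iterated applications of \Cref{lm:IIIc_push_pole_standard_to_bot} (which pushes one residueless pole at a time onto the left half of the bottom level at maximal angle $b_{\tau(i)}-1$). I would proceed by induction on the index $\ell$, with the base case $\ell=0$ amounting to the existence of \emph{any} Type IIIb (resp.\ IIIc) boundary in $\partial\cC$ of the appropriate shape.

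For the base case, I would start from the Type I boundary $\overline{W}=X^B_{\I}(\ell_0,\tau_0,\mathbf{C}_0)\in\partial\cC$ given by \Cref{lm:typeB_nonhyp_char} and then invert \Cref{Prop:BIIIbmove} or \Cref{Prop:BIIIcmove}: choose a prong matching $u$ on $\overline{W}$ so that $U\cdot\overline{W}\bigl(u+\tfrac{\varepsilon}{2}\bigr)$ is a Type IIIb (if $e_2>1$) or Type IIIc (if $e_2=1$) boundary in which no residueless pole lies between $q_1$ and $s^{\bot}$. The inequality $C<e_2+\max(\sum C_{\tau_0(i)},\sum D_{\tau_0(i)})$ from \Cref{lm:typeB_nonhyp_char} guarantees such a prong matching exists: after possibly swapping the roles of the two levels, it ensures a prong matching whose plumbed surface (consulted in \Cref{tab:TypeBIPlumb}) contains a closed saddle connection bounding the polar domain of $q_2$ (when $e_2>1$), or containing $q_1$ and $q_2$ in the same complementary region (when $e_2=1$); shrinking this saddle connection produces the desired Type III principal boundary.

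For the inductive step, assume we already have the target boundary with some value of $\ell$. To upgrade to $\ell+1$, apply the argument of \Cref{lm:IIIc_push_pole_standard_to_bot} with the specific pole $p_{\tau^{\ast}(\ell+1)}$ (for $e_2=1$ this is the lemma as stated; for $e_2>1$ the identical local operations $UR^{\pm 2}U$ apply since the combinatorics between $q_1$ and $s^{\bot}$ on the bottom component of a Type IIIb boundary matches that of the left half of a Type IIIc boundary). If option~(2) of the lemma is realized, we obtain the desired boundary at level $\ell+1$. If instead option~(1) is realized, then $\ell_2=n-2$ already and $p_{\tau^{\ast}(\ell+1)}$ has landed on the top level in case~(a) of the lemma's proof; re-invoking the lemma then forces option~(2) on the next iteration. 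In either scenario the first $\ell$ entries of $\tau$ and the values $C_{\tau(i)}=b_{\tau(i)}-1$ are preserved by the last clause of \Cref{lm:IIIc_push_pole_standard_to_bot}, so the induction advances.

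The main technical obstacle will be the base case: extracting from the inequality of \Cref{lm:typeB_nonhyp_char} an explicit prong matching whose $U$-image is precisely of Type IIIb or IIIc with no residueless pole between $q_1$ and the nodal pole, and no extraneous residueless pole trapped on the wrong side of the nodal pole in the IIIc case. This requires a careful case analysis across the plumbing pictures in \Cref{tab:TypeBIPlumb}, separating the subcases $e_2=1$ and $e_2>1$ and keeping track of which half-arc emerges as the equatorial arc's other endpoint. Once this base case is established, the iterative step follows directly from the machinery already built in \Cref{lm:IIIc_push_pole_standard_to_bot}.
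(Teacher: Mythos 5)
Your overall strategy is the same as the paper's: extract a Type I boundary from \Cref{lm:typeB_nonhyp_char}, pass through a $U$-move to a Type III boundary with no residueless pole between $q_1$ and $s^\bot$, and then iterate \Cref{lm:IIIc_push_pole_standard_to_bot} to install the poles $p_{\tau^*(1)},\dots$ on the bottom level in order with maximal angles $b_{\tau^*(i)}-1$. The paper does exactly this, taking the specific half-arc $U\cdot\overline{W}(0)$, whose image (by the inequality $C<e_2+\sum_{i\le\ell}C_{\tau(i)}$) is either a Type IIIb boundary with $\ell=0$ or a Type IIIc boundary with no residueless pole between $q_1$ and the nodal pole, and then performing \emph{all} of the pushing in the Type IIIc setting, converting to Type IIIb only at the very end (when $e_2>1$) via the operations from the proof of \Cref{lm:IIIc_push_pole_standard_to_bot}.

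The gap is in your organization of the case $e_2>1$. First, your base case commits to producing a Type IIIb boundary directly, i.e.\ to finding a prong matching $u$ with both $u$ and $u+C$ inside the $e_2$-sector of the nodal pole; but \Cref{lm:typeB_nonhyp_char} only gives $C<e_2+\max\bigl(\sum_i C_{\tau(i)},\sum_i D_{\tau(i)}\bigr)$, not $C\le e_2$, so when $C>e_2$ (and $C>e_1$) no choice of $u$ keeps the shrinking saddle connection bounding a single non-residueless polar domain, and ``swapping the roles of the two levels'' does not repair this. Second, your inductive step then applies \Cref{lm:IIIc_push_pole_standard_to_bot} to Type IIIb boundaries, asserting that the $UR^{\pm2}U$ operations transfer verbatim; the lemma is stated and proved only for Type IIIc boundaries, and this transfer is never established in the paper. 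Both issues disappear if you follow the paper's routing: accept the Type IIIc outcome of $U\cdot\overline{W}(0)$ when it occurs (for $e_2=1$ the inequality forces it, as the paper notes), carry out the entire induction inside the Type IIIc setting where the pushing lemma actually applies, and only after all poles are in place use the operations from its proof to trade the Type IIIc boundary for the required Type IIIb one when $e_2>1$.
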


\begin{proof}
By \Cref{lm:typeB_nonhyp_char}, without loss of generality, we can find a Type I boundary $
    \overline{W} = X^B_{\I}(\ell, \tau, {\bf C}) \in \partial\cC
$
such that $
    C < e_2 + \sum_{i=1}^\ell C_{\tau(i)}.
$
Then, the equatorial half-arc $U \cdot \overline{W}(0)$ corresponds to either:
\begin{itemize}
    \item a Type IIIb boundary 
    $
        \overline{Y} = X^B_{\III b}((2,1,2,1), 0, \tau', C', {\bf C}'),
    $
    where no marked residueless pole lies on the bottom level—thus satisfying the desired condition for Type IIIb; or
    \item a Type IIIc boundary 
    $
        \overline{Y} = X^B_{\III c}((2,1), 0, \ell_2, \tau'', C'', {\bf C}''),
    $
    where no marked residueless pole lies on the bottom level between $q_1$ and the nodal pole.
\end{itemize}
Note that if $e_2 = 1$, then the inequality $C \leq \sum_{i=1}^\ell C_{\tau(i)}$ ensures that only a Type IIIc boundary can occur.

In the Type IIIc case, we can apply \Cref{lm:IIIc_push_pole_standard_to_bot} repeatedly to move the marked residueless poles to the bottom level, between $q_1$ and the nodal pole, while preserving the prescribed order and assigning angles as $C_{\tau(i)} = b_{\tau(i)} - 1$. Furthermore, if $e_2 > 1$, we can continue using the operations from the proof of \Cref{lm:IIIc_push_pole_standard_to_bot} until we obtain a Type IIIb boundary.
\end{proof}

\begin{definition}
    A Type IIIa boundary $\overline{X} = X^B_{\III a}((h_1, h_2), \ell_1, \ell_2, \tau, {\bf C}, Pr)$ is said to have \emph{the hyperelliptic top level} if the top-level component $X_0$ of $\overline{X}$ admits a hyperelliptic involution $\sigma$ that interchanges the two nodes, and the prong-matching equivalence class $Pr$ is compatible with $\sigma$. Otherwise, $\overline{X}$ is said to have \emph{the non-hyperelliptic top level}.
\end{definition}

\noindent
Note that $\overline{X}$ may have the non-hyperelliptic top level even if its top-level component $X_0$ is hyperelliptic, since the prong-matching equivalence class $Pr$ may not be compatible with the hyperelliptic involution of $X_0$.

\begin{proposition} \label{prop:IIIa_hyper}
    A Type IIIa boundary $\overline{X} = X^B_{\III a}((h_1, h_2), \ell_1, \ell_2, \tau, {\bf C}, Pr)$ has the hyperelliptic top level if and only if the following conditions hold:
    \begin{itemize}
        \item $\kappa_1 = \kappa_2$;
        \item If $(u, v) \in Pr$ and $u = c_i$ for some $i$, then $v = d_j$ for some $j$.
    \end{itemize}
    In particular, if $\overline{X}$ has the non-hyperelliptic top level, then:
    \begin{enumerate}
        \item If $\kappa_1 \geq \kappa_2$, there exists a prong-matching $(u, 0) \in Pr$ such that $u \neq c_i$ for any $i$;
        \item If $\kappa_2 \geq \kappa_1$, there exists a prong-matching $(0, v) \in Pr$ such that $v \neq d_j$ for any $j$.
    \end{enumerate}
\end{proposition}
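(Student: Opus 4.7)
The plan is to establish both directions of the equivalence, and then extract the two ``in particular'' assertions from the negation of the characterization. Throughout, let $N = \ell_2 - \ell_1$ denote the number of parallel saddle connections of $X_0$ joining $s_1^\top$ and $s_2^\top$; recall the cumulative increments $c_i - c_{i-1} = C_{\tau(\ell_1+i)}$ and $d_i - d_{i-1} = D_{\tau(\ell_1+i)}$.

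For the forward direction, assume the top level is hyperelliptic with involution $\sigma$. Since $\sigma$ interchanges the two zeros of orders $\kappa_1 - 1$ and $\kappa_2 - 1$, we get $\kappa_1 = \kappa_2$. Next, I will track how $\sigma$ acts on the prong labels: $\sigma$ being a holomorphic involution of $X_0 \cong \PP^1$ preserves the set of saddle connections while reversing their cyclic order at the two nodes, so it carries the outgoing prongs $\{v^+_{c_i}\}$ at $s_1^\top$ bijectively onto the incoming prongs $\{w^+_{d_j}\}$ at $s_2^\top$ (specifically $v^+_{c_i} \mapsto w^+_{d_{N-1-i}}$, up to an overall rotation). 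Because $Pr$ is compatible with $\sigma$, the identifications at the two nodes are intertwined by $\sigma$ on the top and by the swap of the hyperelliptic $Z_2$-type bottom-level components; hence whenever the prong $v^+_u$ paired to $v^-_{\kappa_1}$ lies on a saddle connection (i.e.\ $u = c_i$), the prong $w^+_v$ paired to $w^-_1$ must likewise lie on a saddle connection, yielding $v = d_j$ for some $j$.

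For the reverse direction, assume $\kappa_1 = \kappa_2$ and the prong-matching condition. I will show that the condition forces the palindromic symmetry
\[
C_{\tau(\ell_1 + i)} = D_{\tau(\ell_2 + 1 - i)}, \qquad b_{\tau(\ell_1 + i)} = b_{\tau(\ell_2 + 1 - i)}, \qquad i = 1, \dots, N.
\]
Indeed, the condition precisely asserts that the prong-matching intertwines the subset $\{c_i\} \subset \mathbb{Z}/\kappa_1\mathbb{Z}$ with the subset $\{d_j\} \subset \mathbb{Z}/\kappa_2\mathbb{Z}$, and passing to cumulative differences converts this into the palindromic equalities of the $C$'s and $D$'s, together with the matching of the pole orders. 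These symmetries and $\kappa_1 = \kappa_2$ are exactly the data required to produce a hyperelliptic involution $\sigma$ on $X_0$ interchanging the two zeros, and the prong-matching condition is, by its very form, the compatibility statement between $Pr$ and this $\sigma$.

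For the ``in particular'' assertions, suppose the top level is non-hyperelliptic. The equivalence class $Pr$ is an orbit of the level-rotation action $(u, v) \mapsto (u - n, v - n)$ in $\mathbb{Z}/\kappa_1\mathbb{Z} \times \mathbb{Z}/\kappa_2\mathbb{Z}$. When $\kappa_1 \geq \kappa_2$, the level rotation can bring $v$ to $0$, and the set of first coordinates $u$ obtained with $v = 0$ forms a coset of $\kappa_2\mathbb{Z}$ in $\mathbb{Z}/\kappa_1\mathbb{Z}$, of cardinality $\kappa_1/\gcd(\kappa_1, \kappa_2)$. If every such $u$ were of the form $c_i$, then combined with the failure of the prong-matching condition (in the case $\kappa_1 = \kappa_2$) or the strict inequality $\kappa_1 > \kappa_2$ (which forces at least one unmatched coset element), we reach a contradiction with the size and placement of $\{c_i\}$; hence there exists $(u, 0) \in Pr$ with $u \notin \{c_i\}$. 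The symmetric case $\kappa_2 \geq \kappa_1$ produces $(0, v) \in Pr$ with $v \notin \{d_j\}$ by the same argument.

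The main obstacle will be the reverse direction, where the combinatorial prong-matching condition must be translated faithfully into the geometric palindromic symmetry of $(\tau_2, {\bf C}_2)$. This requires a careful bookkeeping of the cyclic orderings and of the orientations of the half-edges at the two nodes (outgoing vs.\ incoming, clockwise vs.\ counter-clockwise), which is where sign and index errors are most likely to creep in.
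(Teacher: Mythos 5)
The characterization itself you argue along the same lines as the paper: the involution must exchange the two nodal zeros, forcing $\kappa_1=\kappa_2$, and the prong-matching condition forces a bijection between $\{c_i\}$ and $\{d_j\}$ with constant sum, hence the palindromic symmetry of the $C$'s, $D$'s and pole orders that produces (and is compatible with) the involution; your reverse direction simply expands the one sentence the paper devotes to this, and that part is fine.

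The genuine gap is in the two ``in particular'' assertions, which is precisely where the paper does \emph{not} argue by counting: for $\kappa_1\neq\kappa_2$ it invokes \cite[Lemma 7.14]{lee2023connected}, and for $\kappa_1=\kappa_2$ it appeals to the explicit bijection structure of the characterization. Your substitute — the first coordinates $u$ with $(u,0)\in Pr$ form a coset of size $\kappa_1/\gcd(\kappa_1,\kappa_2)$, and if all of them were $c_i$'s one would ``reach a contradiction with the size and placement of $\{c_i\}$'' — never actually produces that contradiction, and cardinality alone cannot. When $\kappa_1=\kappa_2$ the coset consists of the single element $(u+v,0)$, and since $0$ itself is one of the $d_j$'s, the failure of the matching condition at some \emph{other} element $(c_i,v)\in Pr$ with $v\notin\{d_j\}$ says nothing about whether $u+v\in\{c_i\}$; so no contradiction follows from ``the failure of the prong-matching condition.'' When $\kappa_1>\kappa_2$ the coset has $\kappa_1/\gcd(\kappa_1,\kappa_2)$ elements while $\{c_i\}$ has $\ell_2-\ell_1$ elements, and the coset can perfectly well be at least as small as $\{c_i\}$ and land inside it as far as counting is concerned (already $\kappa_1=4$, $\kappa_2=2$ with two poles of order $3$ and ${\bf C}=(2,2)$ gives a coset of size $2$ against two $c_i$'s), so ``placement'' is doing all the work and you have not supplied it. To repair this you need the actual structural input: either reproduce the argument of \cite[Lemma 7.14]{lee2023connected} locating a representative of $Pr$ whose relevant prong lies strictly between saddle-connection prongs when $\kappa_1\neq\kappa_2$, or, when $\kappa_1=\kappa_2$, use the full bijection/palindrome description of the hyperelliptic case to pin down which residue class $u+v$ is excluded — a counting estimate on the size of the rotation orbit does not suffice.
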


\begin{proof}
    The first condition is necessary for the existence of an involution, as it requires the same number of prongs at both nodes. The second condition ensures a one-to-one correspondence between the $c_i$'s and the $d_j$'s such that $(c_i, d_j) \in Pr$, which determines the involution.

    If $\kappa_1 \neq \kappa_2$, then the existence of such unmatched prong-matching is a direct consequence of \cite[Lemma 7.14]{lee2023connected}. If $\kappa_1 = \kappa_2$, then the conclusion follows from our characterization of the hyperelliptic top level.
\end{proof}

\noindent
Note that if $b_i = 2$ for each $i = 1, \dots, \ell_1$, then $\overline{X}$ automatically satisfies the above conditions, since $C_i = 1$ for all $i$ and $\kappa_1 = \kappa_2 = \ell_1$. Thus, $\overline{X}$ has the hyperelliptic top level.

\medskip

In order to construct equatorial paths connecting Type IIIa boundaries that can be generalized within the same stratum or even to other strata, we consider only those equatorial arcs adjacent to Type IIIa boundaries with specific properties. More precisely, let $\cP(\mu^\fR)$ be a stratum of B-signature, $(h_1, h_2) \in \{(1,2), (2,1)\}$, and let $\mathcal{I}_1$ and $\mathcal{I}_2$ be two disjoint ordered subsets (possibly empty) of the index set of residueless marked poles:
\begin{align*}
    \mathcal{I}_1 &= (i_1(1), i_1(2), \dots, i_1(L_1)) \subset \underline{n-2}, \\
    \mathcal{I}_2 &= (i_2(1), i_2(2), \dots, i_2(L_2)) \subset \underline{n-2},
\end{align*}
such that $a_{h_1} > b_{i_1(1)} + \dots + b_{i_1(L_1)}$ and $a_{h_2} > b_{i_2(1)} + \dots + b_{i_2(L_2)}$. 

We denote by $\mathcal{A}_{h_1,h_2}(\mu^\fR, \mathcal{I}_1, \mathcal{I}_2)$ the truncated equatorial net of $\mathcal{A}(\mu^\fR)$ consisting of equatorial arcs adjacent to Type IIIa boundaries
\[
X^B_{\III a}((h_1, h_2), \ell_1, \ell_2, \tau, {\bf C}, Pr),
\]
such that:
\begin{itemize}
    \item $\ell_1 \geq L_1$ and $n - 2 - \ell_2 \geq L_2$;
    \item $\tau(k) = i_1(k)$ for $k = 1, \dots, L_1$ and $\tau(n - 1 - k) = i_2(k)$ for $k = 1, \dots, L_2$.
\end{itemize}

In $\mathcal{A}_{h_1,h_2}(\mu^\fR, \mathcal{I}_1, \mathcal{I}_2)$, two Type IIIa boundaries are connected only by equatorial arcs corresponding to degenerations that do not modify the polar domains of $q_1$, $q_2$, or $p_i$ for $i \in \mathcal{I}_1 \cup \mathcal{I}_2$. These degenerations allow us to "forget" poles that are "untouched."

\medskip

Given subsets $\mathcal{I}_1' = (i_1(1), \dots, i_1(L_1'))$ and $\mathcal{I}_2' = (i_2(1), \dots, i_2(L_2'))$, we have the inclusion:
\[
\mathcal{A}_{h_1,h_2}(\mu^\fR, \mathcal{I}_1', \mathcal{I}_2') \subset \mathcal{A}_{h_1,h_2}(\mu^\fR, \mathcal{I}_1, \mathcal{I}_2).
\]
Hence, to show the connectedness of two points in $\mathcal{A}_{h_1,h_2}(\mu^\fR, \mathcal{I}_1, \mathcal{I}_2)$, it suffices to show that both can be connected to the same component of $\mathcal{A}_{h_1,h_2}(\mu^\fR, \mathcal{I}_1', \mathcal{I}_2')$.

\medskip

Let $\cP(\mu'^{\fR'})$ be a stratum of B-signature such that $n' = n - L_1 - L_2$. Suppose $\rho: \underline{n'-2} \to \underline{n-2} \setminus (\mathcal{I}_1 \cup \mathcal{I}_2)$ is an order-preserving map such that $b'_i = b_{\rho(i)}$ and
\[
a'_{h_1'} - e_1' = a_{h_1} - e_1 - \sum_{i \in \mathcal{I}_1} b_i, \quad 
a'_{h_2'} - e_2' = a_{h_2} - e_2 - \sum_{i \in \mathcal{I}_2} b_i.
\]
Then there is a covering map
\[
\rho^*: \mathcal{A}_{h_1,h_2}(\mu^\fR, \mathcal{I}_1, \mathcal{I}_2) \rightarrow \mathcal{A}_{h_1', h_2'}(\mu'^{\fR'}, \emptyset, \emptyset),
\]
where $(h_1', h_2') = (h_1, h_2)$ if and only if $\sgn(a'_{h_1'} - a'_{h_2'}) = \sgn(a_{h_1} - a_{h_2})$.

The map sends a Type IIIa boundary in $\cP(\mu^{\fR})$ to the Type IIIa boundary in $\cP(\mu'^{\fR'})$ obtained by replacing the polar domains of $q_1$ and $p_{i_1(1)}, \dots, p_{i_1(L_1)}$ (and respectively $q_2$ and $p_{i_2(1)}, \dots, p_{i_2(L_2)}$) by a single Type II polar domain of pole order $e'_1$ (respectively $e'_2$). Alternatively, these domains can be replaced by infinite half-cylinders. In particular, one can always construct a covering map
\[
\rho^*: \mathcal{A}_{h_1,h_2}(\mu^\fR, \mathcal{I}_1, \mathcal{I}_2) \rightarrow \mathcal{A}_{h_1,h_2}(\mu'^{\fR'}, \emptyset, \emptyset)
\]
where $\cP(\mu'^{\fR'})$ contains a pair of simple poles and
\[
a'_{h_1} = a_{h_1} + 1 - e_1 - \sum_{i \in \mathcal{I}_1} b_i, \quad 
a'_{h_2} = a_{h_2} + 1 - e_2 - \sum_{i \in \mathcal{I}_2} b_i.
\] This covering map $\rho^*$ implies that an equatorial path in $\mathcal{A}_{h_1,h_2}(\mu'^{\fR'}, \emptyset, \emptyset)$ can be lifted to one in $\mathcal{A}_{h_1,h_2}(\mu^\fR, \mathcal{I}_1, \mathcal{I}_2)$, providing a reduction technique useful in proving connectedness between Type IIIa boundaries.

\medskip

On a stratum $\cP(\mu^\fR)$ of B-signature with $e_1 = e_2 = 1$, the \emph{index} (as introduced in the Introduction) is a local invariant. For a Type IIIc boundary $\overline{Z} = X^B_{\III c}((h_1, h_2), \ell_1, \ell_2, \tau, C, {\bf C})$, one can directly compute from the plumbed surface that:
\[
\operatorname{Ind}(\overline{Z}) \equiv C + \left( \sum_{i \leq \ell_1} + \sum_{i > \ell_2} \right) C_{\tau(i)} \pmod{\delta},
\]
where $\delta \coloneqq \gcd(a_1, \{b_i\}_{i \in \underline{n-2}})$. Similarly, for a Type IIIa boundary $\overline{X} = X^B_{\III a}((h_1, h_2), \ell_1, \ell_2, \tau, {\bf C}, Pr)$, and for any $(u, v) \in Pr$, we have:
\[
\operatorname{Ind}(\overline{X}) \equiv -u - v + \left( \sum_{i \leq \ell_1} + \sum_{i > \ell_2} \right) C_{\tau(i)} \pmod{\delta}.
\]

From our discussion on sub ribbon graphs above, the index defined on $\mathcal{A}_{h_1,h_2}(\mu'^{\fR'},\emptyset,\emptyset)$ will induce a local invariant on $\mathcal{A}_{h_1,h_2}(\mu^\fR,\mathcal{I}_1,\mathcal{I}_2)$. From the formula of index of Type IIIa boundary on a B-signature stratum with a pair of simple poles, we can define the index of a Type IIIa boundary $\overline{X}$ in $\mathcal{A}_{h_1,h_2}(\mu^\fR,\mathcal{I}_1,\mathcal{I}_2)$ as $$\operatorname{Ind}(\overline{X},\mathcal{I}_1,\mathcal{I}_2)\equiv -u-v+(\sum_{i\leq L_1}+\sum_{i\geq n-1-L_2})C_{\tau(i)}\pmod{\delta(\mathcal{I}_1,\mathcal{I}_2)},$$ where $$\delta(\mathcal{I}_1,\mathcal{I}_2)=\gcd(a_{H_1}+1-e_1-\sum_{i\in\mathcal{I}_1}b_i,\: \{b_i\}_{\substack{i\in\underline{n-2}\\i\notin\mathcal{I}_1\cup\mathcal{I}_2}}).$$

\begin{remark}\label{rm:trunc_eq_net}
    By the definition of a Type IIIa boundary in the truncated equatorial net $\mathcal{A}_{h_1,h_2}(\mu^\fR,\mathcal{I}_1,\mathcal{I}_2)$ and the fact that the index is a local invariant, two elements 
    \[
        \overline{X}=X^B_{\III a}((h_1,h_2),\ell_1,\ell_2,\tau,{\bf C},Pr) \quad \text{and} \quad \overline{X'}=X^B_{\III a}((h_1,h_2),\ell_1',\ell_2',\tau',{\bf C}',Pr')
    \]
    cannot be connected within $\mathcal{A}_{h_1,h_2}(\mu^\fR,\mathcal{I}_1,\mathcal{I}_2)$ unless $C_i = C'_i$ for all $i \in \mathcal{I}_1 \cup \mathcal{I}_2$ and $\operatorname{Ind}(\overline{X},\mathcal{I}_1,\mathcal{I}_2) = \operatorname{Ind}(\overline{X'},\mathcal{I}_1,\mathcal{I}_2)$.
\end{remark}

 \Cref{lm:IIIa_move} illustrates the resulting Type IIIa boundary under various conditions. Before presenting the lemma, we include a remark regarding the notation for prong matchings of Type IIIa boundaries. We denote by $\mathcal{A}(\overline{X})$, where $\overline{X}=X^B_{\III a}((h_1,h_2),\ell_1,\ell_2,\tau,{\bf C},Pr)$, the connected component of the truncated equatorial net $\mathcal{A}_{h_1,h_2}(\mu^\fR,\mathcal{I}_1,\mathcal{I}_2)$ containing $\overline{X}$, where 
\[
    \mathcal{I}_1 = \big(\tau(1),\tau(2),\dots, \tau(\ell_1)\big), \quad 
    \mathcal{I}_2 = \big(\tau(n-2),\tau(n-3),\dots, \tau(\ell_2+1)\big).
\]

\begin{remark}
    The ordering of residueless poles on the top level of a Type IIIa boundary is cyclic. Thus, applying a cyclic permutation does not change the top level but only our reference, namely the number $\tau(1)$ (if $\ell_1=0$). The prong matching on a Type IIIa boundary depends on the choice of $\tau(1)$. Changing $\tau(1)$ modifies the tuple $(u,v)$ and potentially its equivalence class in $\mathbb{Z}/\kappa_1\mathbb{Z} \times \mathbb{Z}/\kappa_2\mathbb{Z}$. 

    To write proofs conveniently, we represent a prong matching independently of the choice of $\tau(1)$. We denote by $(u,v)_k$ the prong matching with respect to the reference pole $p_k$. We also denote by $c_{k,i}$ (resp.\ $d_{k,i}$) the labels of prongs for overlapping saddle connections. By default, we write $(u,v)$ (resp.\ $c_i$ and $d_i$) to mean $(u,v)_{\tau(1)}$ (resp.\ $c_{\tau(1),i}$ and $d_{\tau(1),i}$). The transformation of the tuple under change of reference is:
    \[
        (u,v) = (u - d_{k-1},\: v - c_{k-1})_{\tau(k)}.
    \]
    In particular, if $(u,v) = (u',v')_{\tau(k)}$, then $u+v$ and $u'+v'$ differ by a sum of $b_{\tau(i)}$ over poles on the top level.
\end{remark}

\begin{lemma} \label{lm:IIIa_move}
    Let $\overline{X}=X^B_{\III a}((h_1,h_2),0,n-2,\Id,{\bf C},[(u,v)])$ be a boundary in a stratum $\cP(\mu^\fR)$ of B-signature. Assume $c_{i-1} \leq u < c_i \pmod{a_{h_1}}$ and $d_{j-1} < v \leq d_j \pmod{a_{h_2}}$ with $i \neq j$. Then $UR^{-2}U \cdot \overline{X}(u,v)$ (or $UR^2U \cdot \overline{X}(u+\tfrac{1}{2}, v-\tfrac{1}{2})$ if $j \equiv i+1 \pmod{\ell}$ and $C_i = D_j = 1$) defines a path in $\mathcal{A}(\overline{X})$ connecting $\overline{X}$ to a boundary $\overline{X'}$ given as follows:
    
    \begin{itemize}
        \item[(1)] If $D_i, C_j > 1$, then $\overline{X'} = X^B_{\III a}((h_1,h_2),0,n-2,\Id,{\bf C'},[(u,v)])$ with 
        \[
            C'_i = C_i + 1,\quad C'_j = C_j - 1,\quad C'_k = C_k \text{ for } k \neq i,j.
        \]

        \item[(2)] If $D_i > 1$ and $C_j = 1$, then $\overline{X'} = X^B_{\III a}((h_1,h_2),0,n-3,\tau^{(2)},{\bf C'},[(u',v')])$, where
        \[
            (u',v') = \big(u - c_{i-1} + 1,\: d'_{i,j-i}\big)_i,
        \]
        with 
        \[
            C'_i = C_i + 1,\quad C'_j = C_j + d_j - v,\quad C'_k = C_k \text{ for } k \neq i,j,
        \]
        and
        \[
            \tau^{(2)}(k) = 
            \begin{cases}
                k & \text{if } k < j, \\
                k + 1 & \text{if } j \leq k < n-2, \\
                j & \text{if } k = n-2.
            \end{cases}
        \]

        \item[(3)] If $D_i = 1$ and $C_j > 1$, then $\overline{X'} = X^B_{\III a}((h_1,h_2),1,n-2,\tau^{(3)},{\bf C'},[(u',v')])$, where
        \[
            (u',v') = \big(c'_{j,i-j},\: v - d_{j-1}\big)_j,
        \]
        with
        \[
            C'_i = c_i - u,\quad C'_j = C_j - 1,\quad C'_k = C_k \text{ for } k \neq i,j,
        \]
        and
        \[
            \tau^{(3)}(k) = 
            \begin{cases}
                k - 1 & \text{if } 1 < k < i, \\
                k & \text{if } i \leq k \leq n-2, \\
                i & \text{if } k = 1.
            \end{cases}
        \]

        \item[(4)] If $C_i = D_j = 1$, then $\overline{X'} = X^B_{\III a}((h_1,h_2),1,n-3,\tau^{(4)},{\bf C'},[(u',v')])$ with
        \[
            (u',v') = 
            \begin{cases}
                (0,\, d'_{i+1,j-i-1})_{i+1} & \text{if } j \not\equiv i+1 \pmod{\ell}, \\
                (c'_{j+1,i-j-1} - 1,\, 1)_{j+1} & \text{if } j \equiv i+1 \pmod{\ell},
            \end{cases}
        \]
        and
        \[
            C'_i = 
            \begin{cases}
                c_i - u & \text{if } j \not\equiv i+1 \pmod{\ell}, \\
                D_i + c_i - u - 1 & \text{otherwise},
            \end{cases}
            \quad
            C'_j = 
            \begin{cases}
                C_j + d_j - v & \text{if } j \not\equiv i+1 \pmod{\ell}, \\
                d_j - v + 1 & \text{otherwise},
            \end{cases}
        \]
        and $C'_k = C_k$ for all $k \neq i,j$, with
        \[
            \tau^{(4)}(k) = 
            \begin{cases}
                k - 1 & \text{if } 1 < k < \min(i,j), \\
                k & \text{if } \min(i,j) \leq k < \max(i,j), \\
                k + 1 & \text{if } \max(i,j) \leq k < n-2, \\
                j & \text{if } k = n-2, \\
                i & \text{if } k = 1.
            \end{cases}
        \]
    \end{itemize}
\end{lemma}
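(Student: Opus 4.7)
The plan is to trace the geometric effect of the sequence $UR^{-2}U$ (or $UR^{2}U$ in the exceptional sub-case of (4)) on the plumbed surface $\overline{X}_t(u,v)$, whose separatrix diagram under the assumption $i\neq j$ is given by the first row of \Cref{tab:TypeBIIIaPlumb}. First I would apply \Cref{Prop:BIIIamove} (or its obvious analogue when $(h_1,h_2)=(1,2)$) to compute the intermediate Type I boundary $\overline{W}(u_0)\coloneqq U\cdot\overline{X}(u,v)$; its combinatorial data and the prong index $u_0$ can be read off directly by reinterpreting the two figure-eight slits of the plumbed picture as the two slits of a Type I configuration. Next, the rotation $R^{-2}$ shifts the prong index at $\overline{W}$ by one integer step. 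Finally I would apply \Cref{Prop:BImove} to the rotated half-arc to identify the target boundary $\overline{X'}$ and verify it has the Type IIIa form stated in each case.

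The four cases correspond to whether $D_i,C_j$ equal one or are strictly greater than one. When both are strictly greater than one, the change of prong index by one step only shifts the widths of the two slits at $p_i$ and $p_j$, increasing $C_i$ by one and decreasing $C_j$ by one; the level structure $(\ell_1,\ell_2)=(0,n-2)$ with identity permutation is preserved, yielding (1). When one of the two equals one, the corresponding slit collapses under the shift, forcing the pole to be absorbed into the bottom-level polar domain adjacent to $q_1$ (if $D_i=1$) or adjacent to $q_2$ (if $C_j=1$); the new permutation $\tau^{(2)}$ or $\tau^{(3)}$ and the updated angle tuple in (2) and (3) record exactly this reabsorption together with the residual widths $c_i-u$ and $d_j-v$. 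Case (4) superposes the two effects. The need to rewrite prong-matchings with respect to the new reference pole $p_i$, $p_j$, $p_{i+1}$ or $p_{j+1}$, rather than $p_1$ as in $\overline{X}$, accounts for the subscripts in the expressions $(u',v')_k$.

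The sub-case $j\equiv i+1\pmod{n-2}$ with $C_i=D_j=1$ needs separate treatment because the two collapsing poles are then cyclically adjacent on the top level of $\overline{X}$, and the intermediate Type I boundary after $R^{-2}$ falls outside the regime covered by the first branch of \Cref{Prop:BImove}. The remedy is to reverse the rotational direction and use $R^{2}$, which corresponds to considering the half-arc emanating at $(u+\tfrac{1}{2},v-\tfrac{1}{2})$ and gives the alternative prong-matching formula $(c'_{j+1,i-j-1}-1,1)_{j+1}$ stated in (4).

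The main obstacle will be the meticulous combinatorial bookkeeping across the three steps. Each application of $U$ changes both the enumeration $\tau$ of residueless poles and the reference used to express prong-matchings; the angle tuple ${\bf C}$ acquires adjustments from the residual slit widths; and the intermediate Type I boundary must be put into the normal form of \Cref{fig:B_type_I_graph} before \Cref{Prop:BImove} can be applied. Beyond this, no conceptual input is needed: once the three steps are executed carefully, the target data in (1)--(4) match by direct verification, and the path built automatically lies in $\mathcal{A}(\overline{X})$ because with $\ell_1=0$ and $\ell_2=n-2$ the truncated equatorial net imposes no constraint on which residueless poles are allowed to move.
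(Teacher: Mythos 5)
Your overall route is the same as the paper's: pass through the intermediate Type I boundary via $U$, shift by one rotation step, and identify the far end of the resulting arc. However, the final step as you describe it would fail. You propose to identify the target boundary by applying \Cref{Prop:BImove} to the rotated half-arc, but that proposition only describes $U$ acting on \emph{half-integer} half-arcs $\overline{W}(u+\tfrac12)$ of a Type I boundary under the window condition $-d_j < u+C \leq -d_{j-1}$, and its conclusion is always another \emph{Type I} boundary $\overline{W'}$. After applying $U$ to $\overline{X}(u,v)$ (which by \Cref{Prop:BIIIamove} gives an integer half-arc $\overline{W}(u_0)$) and rotating by $R^{\mp 2}$, you are sitting on an \emph{integer} half-arc of $\overline{W}$, whose far endpoint must be the Type IIIa boundary $\overline{X'}$ claimed in the lemma. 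So \Cref{Prop:BImove} neither applies to this half-arc nor could it ever produce the Type IIIa data listed in (1)--(4); invoking it here is the wrong tool, and the same misattribution underlies your explanation of the exceptional sub-case of (4).

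The correct closing step — and the paper's actual one-line argument — is to apply \Cref{Prop:BIIIamove} a \emph{second} time, namely to the candidate boundary $\overline{X'}$ in each case (after a cyclic relabeling of the reference pole so that its prong-matching lies in the window hypotheses of that proposition), and to check that $U\cdot\overline{X'}(u',v')$ is the same Type I boundary $\overline{W}$ with prong index shifted by exactly one step, i.e.\ that $R^{\mp 2}U\cdot\overline{X}(u,v)=U\cdot\overline{X'}(u',v')$; applying $U$ then identifies $UR^{\mp2}U\cdot\overline{X}(u,v)$ with the half-arc at $\overline{X'}$. The remaining ingredients of your plan — the first application of \Cref{Prop:BIIIamove}, the effect of $R^{\mp2}$ on the prong index, the use of the conjugate half-arc $(u+\tfrac12,v-\tfrac12)$ when $C_i=D_j=1$ and $j\equiv i+1$, and the observation that with $\ell_1=0$, $\ell_2=n-2$ the path automatically stays in $\mathcal{A}(\overline{X})$ — are consistent with the paper, and leaving the case-by-case verification of the data in (1)--(4) as bookkeeping matches the paper's own level of detail. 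But as written, the proposal's identification step rests on an inapplicable proposition and needs to be replaced as above.
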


\begin{proof}
    This follows directly from the description of the actions $R$ and $U$ given in \Cref{Prop:BIIIamove}, namely that $R^2U \cdot \overline{X} = U \cdot \overline{X'}$.
\end{proof}

\Cref{lm:IIIa_nonhyp_top_level_modification} plays a crucial role in this section. It allows us to relate Type IIIa boundaries with different combinatorial data and will be used several times. In addition, it gives a frequently used degeneration sending poles to the bottom level.

\begin{lemma}\label{lm:IIIa_nonhyp_top_level_modification}
Let $\overline{X}=X^B_{\III a}((2,1),\ell_1,\ell_2,\tau,{\bf C},[(0,v)])$ be a boundary in a stratum $\cP(\mu^\fR) $ of B-signature which has the non-hyperelliptic top level. Then we have the following:
    \begin{enumerate}
        \item Given that $\ell_2-\ell_1>1$ and we are not in case (a1), (a2) or (c). By cyclic permutation of the top level, we may assume that $b_{\tau(\ell_1+1)}=\min_{\ell_1<i\leq \ell_2}(b_{\tau(i)})$ if $\kappa_1\geq \kappa_2$, otherwise $b_{\tau(\ell_2)}=\min_{\ell_1<i\leq \ell_2}(b_{\tau(i)})$. Then there exists some $\overline{X'}=X^B_{\III a}((2,1),\ell_1',\ell_2',\tau',{\bf C'},[(0,v')])$ in $\mathcal{A}(\overline{X})$ such that $\ell_1'=\ell_1+1$ if $\kappa_1\geq \kappa_2$, otherwise $\ell_2'=\ell_2-1$. In addition, $\tau'(\ell_1+1)=\tau(\ell_1+1)$ and $\tau'(\ell_2)=\tau(\ell_2)$. 
        
        \item Given that we are not in case (a3), (b) or (c). Then, for every $\tau',\bf C'$ and $v'$ such that $\tau'(i)=\tau(i)$ resp. $C_{\tau(i)}=C'_{\tau(i)}$ for $\ell_1<i\leq \ell_2$ and $v'\equiv v\pmod{\delta(\mathcal{I}_1,\mathcal{I}_2})$, where $\mathcal{I}_1,\mathcal{I}_2$ are the sets of bottom level residueless poles. Then there exists $\overline{X'}=X^B_{\III a}((2,1),\ell_1,\ell_2,\tau',{\bf C'},[(0,v')])$ in $\mathcal{A}(\overline{X})$.  
    \end{enumerate}  

    The aforementioned exceptional cases are as follows:
    
    \begin{itemize}
        \item[(a1)] $\ell_1=2$,  $\kappa_1=\kappa_2=b_{\tau(1)}=b_{\tau(2)}$.
        \item[(a2)] $\ell_1=2$, $\kappa_1$ even while $\kappa_2=b_{\tau(2)}=2$ and $v\equiv 1 \pmod{2}$.
        \item[(a3)] $\ell_1=2$,  $\kappa_1=\kappa_2=b_{\tau(1)}=b_{\tau(2)}$ and $v\equiv \pm\lfloor \kappa_1/2\rfloor \pmod{\kappa_1}$.
        \item[(b)] $\ell_1=3$, $\{\kappa_1,\kappa_2\}=\{3,6\}$ and $b_{\tau(1)}=b_{\tau(2)}=b_{\tau(3)}=3$.
        \item[(c)] $\ell_1=4$, $\kappa_1=\kappa_2=6$ and $b_{\tau(1)}=b_{\tau(2)}=b_{\tau(3)}=b_{\tau(4)}=3$ with $v\equiv 0,3 \pmod{6}$. 
    \end{itemize}
\end{lemma}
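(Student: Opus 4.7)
The plan is to use the $R$ and $U$ transformations of equatorial half-arcs (Propositions~\ref{Prop:BIIIamove}--\ref{Prop:BImove}) together with the explicit case analysis of \Cref{lm:IIIa_move}. Both parts of the statement are consequences of understanding when prong-matching rotations combined with $U$-moves can modify the configuration at a single node without affecting the rest of the boundary.

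For part (1), since $\overline{X}$ has non-hyperelliptic top level, \Cref{prop:IIIa_hyper} furnishes an alternative representative in the prong-matching class $[(0,v)]$: when $\kappa_1\ge\kappa_2$, one finds $(u^*,0)$ with $u^*\not\equiv c_i\pmod{\kappa_1}$ for any $i$. After the permitted cyclic relabeling of the top level so that $b_{\tau(\ell_1+1)}$ is minimal, I will locate $u^*$ by successive applications of $R^{\pm 2}$. In a generic configuration, $u^*$ can be placed with $c_{i-1}<u^*<c_i$ in such a way that the hypothesis of case~(2),~(3) or~(4) of \Cref{lm:IIIa_move} is satisfied, producing $\overline{X'}$ with $\ell_1'=\ell_1+1$ and $\tau'(\ell_1+1)=\tau(\ell_1+1)$. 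A mirror argument using $(0,v^*)$ handles $\kappa_2>\kappa_1$.

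For part (2), the index $\operatorname{Ind}(\overline{X},\mathcal{I}_1,\mathcal{I}_2)$ is a local invariant by \Cref{rm:trunc_eq_net}, so any two boundaries with identical top-level data and $v'\equiv v\pmod{\delta(\mathcal{I}_1,\mathcal{I}_2)}$ must agree in this invariant. To prove that such boundaries all lie in the same component $\mathcal{A}(\overline{X})$, I plan to generate them from $\overline{X}$ via three elementary moves: (i)~a transposition of two adjacent bottom-level poles, obtained by sending one of them to the top via \Cref{lm:IIIa_move}(2) or~(3) and then returning it to a new slot; (ii)~a unit shift of $C_{\tau(i)}$ for $i\le\ell_1$ or $i>\ell_2$, obtained by applying $UR^{\pm 2}U$ at a suitably chosen prong-matching; and (iii)~a shift of $v$ by any $b_{\tau(i)}$ with $i\notin(\ell_1,\ell_2]$, obtained by round-tripping such a pole through the top level with a different prong alignment. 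The greatest common divisor of the periods realised by move~(iii), together with the period $a_{h_1}+1-e_1-\sum_{i\in\mathcal{I}_1}b_i$ coming from a loop around~$q_1$, is exactly $\delta(\mathcal{I}_1,\mathcal{I}_2)$, so these moves realise every admissible triple $(\tau',\mathbf{C}',v')$.

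The principal obstacle will be the exceptional cases (a1)--(c). Each is a rigid symmetric configuration of top-level poles of equal small order in which the $R$- and $U$-moves available either collapse to the identity or force the prong-matching into a position compatible with a hyperelliptic involution of the top level; for instance, (a3) obstructs a shift of $v$ by $\lfloor\kappa_1/2\rfloor$, and (b), (c) reflect the failure of transitivity when three or four poles of order $3$ share enhancements in $\{3,6\}$ or $\{6,6\}$. Verifying that these are \emph{precisely} the configurations where one of the elementary moves (i)--(iii) fails requires a direct case analysis built on the formulas in \Cref{lm:IIIa_move}; this case checking is the technically intricate portion of the argument.
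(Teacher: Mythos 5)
Your outline for part (1) collapses precisely at the point where the real difficulty lies. From \Cref{prop:IIIa_hyper} one does get a prong-matching $(u^*,0)$ with $u^*\not\equiv c_i$, and \Cref{lm:IIIa_move} then lets you push \emph{some} top-level pole to a bottom component — but the lemma demands that the pole pushed down be the specific minimal-order pole $p_{\tau(\ell_1+1)}$ (resp.\ $p_{\tau(\ell_2)}$), i.e.\ $\tau'(\ell_1+1)=\tau(\ell_1+1)$. Your phrase ``in a generic configuration, $u^*$ can be placed\dots'' asserts exactly this and never proves it; note that within the class $[(0,v)]$ the admissible pairs $(u,v-u+k\kappa_1)$ are rigidly constrained, and applying $R$ only rotates the half-arc, it does not give you a free choice of which interval $u^*$ falls into relative to the $c_i$ and $d_j$. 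The paper's proof shows this targeting is genuinely hard: it requires an induction on the number of top-level poles, a separate mechanism when the reference pole is a double pole (\Cref{lm:IIIa_proper_prong_matching_for_double_pole}), base-case analyses for two poles (\Cref{lm:TypeB_IIIa_2pole_top_level_C2=1}) and for four triple poles (\Cref{lm:TypeB_IIIa_4pole_top_level_ind=1or2}), and a round-trip argument (\Cref{lm:IIIa_nonhyp_induction_reduce_ell}, \Cref{cor:IIIa_nonhyp_top_level_add_one}) to send the wrong pole down, fix the configuration, and bring it back. None of this is replaceable by a genericity remark, and the exceptional cases (a1)--(c) emerge from that analysis rather than being checkable in isolation.

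For part (2) the proposed elementary moves are mis-specified relative to the truncated net $\mathcal{A}(\overline{X})$. By \Cref{rm:trunc_eq_net}, within $\mathcal{A}(\overline{X})$ the bottom-level poles must stay on the bottom with their angle data $C_i$ unchanged; so your move (ii) (unit shifts of $C_{\tau(i)}$ for $i\le\ell_1$ or $i>\ell_2$) and move (iii) (round-tripping a bottom-level pole through the top level) leave the truncated net and are not available. Moreover the modulus to be realised is $\delta(\mathcal{I}_1,\mathcal{I}_2)=\gcd\bigl(a_{h_1}+1-e_1-\sum_{i\in\mathcal{I}_1}b_i,\{b_i\}_{i\notin\mathcal{I}_1\cup\mathcal{I}_2}\bigr)$, i.e.\ it is generated by the \emph{top-level} pole orders together with the modified zero order, not by the orders $b_{\tau(i)}$, $i\notin(\ell_1,\ell_2]$, that you invoke. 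The correct mechanism — pushing the reference top-level pole down with two different angle/prong data and reconnecting the two resulting boundaries through a common configuration, which is where shifts of $v$ by $\gcd(a_1,\{b_i\ \mathrm{top}\})$ and by $a_2$ are actually produced — is the content of \Cref{cor:IIIa_nonhyp_top_level_add_twoprongs}, and it in turn requires delicate re-choices of the angle assignments (and is exactly where the rigid configurations (a3), (b), (c) obstruct the argument). Finally, your concluding sentence defers ``verifying that these are precisely the configurations where one of the elementary moves fails'' to an unperformed case analysis; that case analysis, organised as an induction with the auxiliary lemmas above, \emph{is} the proof, so as it stands the proposal has a genuine gap in both items.
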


We will give the proof of the above lemma later. The proof will not depend on the data of bottom level components.

Before proving \Cref{lm:IIIa_nonhyp_top_level_modification}, we first address the exceptional cases $\ell_1 = 2$ (i.e., exactly two poles on the top level) and $\ell_1 = 4$ (i.e., exactly two poles on the top level and the pseudo-index is $1$ or $2$). For simplicity, we prove the statement on a stratum with the minimal number of poles. The result generalizes to more complicated strata by our discussion on the truncated equatorial net.

\begin{lemma}\label{lm:TypeB_IIIa_2pole_top_level_C2=1}
   Let $\overline{X} = X^B_{\III a}((2,1),0,2,\Id,{\bf C},[(0,v)])$ be a Type IIIa boundary point in a stratum of B-signature $\mu^\fR = (a_1,a_2 \mid -b_1 \mid -b_2 \mid -1^2)$ with the non-hyperelliptic top level. Then there exists $\overline{X'} = X^B_{\III a}((2,1),0,2,\Id,{\bf C}',[(0,v')])$ in the truncated equatorial net $\mathcal{A}(\overline{X})$ such that:
   \begin{itemize}
       \item If $a_1 = a_2 = b_1 = b_2$ and $v \equiv \pm\lfloor b_1/2 \rfloor \pmod{a_2}$ with $C_2 > C_1$, then $C'_1 = \lceil b_1/2 \rceil - 1$ and $C'_2 = \lfloor b_1/2 \rfloor + 1$;
       \item Otherwise, $C'_1 = b_1 - 1$ and $C'_2 = a_2 - b_1 + 1$.
   \end{itemize}
\end{lemma}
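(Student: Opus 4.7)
The principal tool is the $UR^{-2}U$ transformation from case~(1) of \Cref{lm:IIIa_move}, which modifies $(C_1, C_2) \mapsto (C_1+1, C_2-1)$ while preserving $\tau = \Id$, the 2-pole-on-top structure $\ell_1 = 0, \ell_2 = 2$, and the prong-matching class $[(0,v)]$. Applicability reduces to $D_1 = b_1 - C_1 > 1$, $C_2 > 1$, and $v \in (D_1, a_1]$ modulo $a_1$. The signature constraint $a_1 + a_2 = b_1 + b_2$ together with $a_1 \le a_2$ and $b_1 \le b_2$ yields $a_2 \ge b_1$, with equality precisely in the symmetric case $a_1 = a_2 = b_1 = b_2 =: k$.

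In the generic case, I plan to first apply \Cref{lm:IIIa_nonhyp_top_level_modification}(2), which is available here since its exceptional cases (a1)--(c) all require $\ell_1 \ge 2$ while we have $\ell_1 = 0$. This allows shifting $v$ to any value in its residue class modulo $\delta = \gcd(a_2, b_1, b_2)$, so we may arrange $v > D_1^{\mathrm{init}}$. Iteratively applying the $UR^{-2}U$ move then raises $C_1$ by one per step; the condition $v > D_1$ is automatically preserved since $D_1$ monotonically decreases. The iteration terminates at the target $(C'_1, C'_2) = (b_1 - 1, a_2 - b_1 + 1)$; the last step's requirement $C_2 \ge 2$ holds because $a_2 > b_1$ in the strict inequality generic case, and the boundary case $a_2 = b_1$ with $k = 2$ is already at the target $(1,1)$.

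In the symmetric special case $a_1 = a_2 = b_1 = b_2 = k$ with $v \equiv \pm\lfloor k/2\rfloor \pmod k$ and $C_2 > C_1$, we have $\delta = k$ so $v$ is rigid, and the duality $D_1 = C_2$, $D_2 = C_1$ combined with the central position of $v$ makes the condition $v > D_1$ equivalent to $C_1 > \lceil k/2\rceil$, which fails whenever $C_1 \le \lfloor k/2\rfloor$. To nevertheless reach the target $(\lceil k/2\rceil - 1, \lfloor k/2\rfloor + 1)$, I plan to combine cyclic relabeling of the top-level poles (which offers the alternative representative $(C_2, v - C_1)$ of the same prong-matching class, in the $\tau = (2\,1)$ reference) with $UR^{\pm 2}U$ moves applied in the alternate reference; the non-hyperelliptic top-level hypothesis via \Cref{prop:IIIa_hyper} excludes the prong-matchings $(0, 0), (0, D_1), (0, a_1)$ which would force a hyperelliptic top level, providing just enough flexibility to stabilize at the equilibrium value.

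The hard part is the explicit realization of the equatorial path in the symmetric case, where the $v$-shifting tool of \Cref{lm:IIIa_nonhyp_top_level_modification}(2) becomes trivial. This demands a careful analysis combining cyclic relabeling with the $UR^{\pm 2}U$ moves, and possibly a detour through a Type IIIa boundary with a different top-level structure via cases~(2)--(4) of \Cref{lm:IIIa_move} (temporarily sending a pole to the bottom level and then returning), to verify that the reachable set of $(C_1, C_2)$ indeed contains the ``equilibrium'' target state.
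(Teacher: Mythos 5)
There is a genuine gap, and it sits exactly where the lemma's content lies. Your generic-case plan rests on invoking \Cref{lm:IIIa_nonhyp_top_level_modification}(2) to shift $v$ inside its class modulo $\delta$, but this is circular in the paper's logical order: the base case $n-2=2$ of \Cref{lm:IIIa_nonhyp_top_level_modification} is itself proved \emph{from} \Cref{lm:TypeB_IIIa_2pole_top_level_C2=1} (together with \Cref{cor:IIIa_nonhyp_top_level_add_twoprongs}), and item (2) of that lemma is essentially the ``freely rearrange the top level'' statement that the present lemma is a stepping stone toward. You also misread the exceptional cases: they are indexed by the number of poles \emph{on the top level} (not by $\ell_1$ in the sense of your $\ell_1=0$), and the configuration of the first bullet ($a_1=a_2=b_1=b_2$, $v\equiv\pm\lfloor b_1/2\rfloor$) is precisely exceptional case (a3), where item (2) is explicitly not available — so the tool you cite fails exactly where you need it, independently of the circularity. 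Moreover, even where some $v$-shifting is legitimate, your iteration uses the fixed representative $(0,v)$ and needs $v\in(D_1,a_1]$; this window can be short or empty (nothing prevents $b_1>a_1$, e.g.\ $(a_1,a_2,b_1,b_2)=(3,9,5,7)$ with $C_1\le 2$), so one must search for a suitable representative $(u_0,v_0)$ in the whole equivalence class with $0\le u_0<C_1$ and $D_1<v_0\le 0 \pmod{a_1}$. Establishing that such a representative exists is the actual work: the paper does it by a counting argument modulo $\gcd(a_1,a_2)$, which isolates the stuck configurations $a_1=a_2$ with $C_1\in\{D_1,D_1-1,D_1-2\}$; the case $C_1=D_1$ is excluded by the non-hyperellipticity hypothesis, and the symmetric subcases produce the first bullet.

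Your treatment of the remaining hard cases is also misplaced and unexecuted. The detour through a Type IIIa boundary with one pole sent to the bottom level (reached by $UR^{-2C_1}U$, landing on $X^B_{\III a}((2,1),0,1,(12),\ldots)$, where the level rotation moves the prong-matching by $\gcd(C_1,D_1)\le D_1-C_1\in\{1,2\}$) is needed in the regime $a_1=a_2$, $b_2>b_1$ — i.e.\ inside what you call the generic case and for the \emph{second} bullet — not only in the fully symmetric case, which your dichotomy does not anticipate; and for the symmetric case you explicitly defer ``the hard part.'' Finally, the appeal to \Cref{prop:IIIa_hyper} to ``exclude the prong-matchings $(0,0),(0,D_1),(0,a_1)$'' misstates that proposition: hyperellipticity of the top level is a condition on the entire prong-matching equivalence class (every $(u,v)$ with $u=c_i$ must have $v=d_j$), so non-hyperellipticity guarantees the existence of one unmatched representative but does not forbid particular pairs from lying in the class. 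As it stands the proposal reduces the lemma to later, stronger results and leaves the genuinely delicate prong-matching analysis unproved.
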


\begin{proof}
We begin by proving that in non-exceptional cases, we can always modify $D_1$ to one (i.e., $C_1$ to $b_1 - 1$). Assume $b_1 > 2$ and $a_1 > 2$, since otherwise the statement is trivial.

By the implicit assumption $a_2 \geq a_1$ (and $b_2 \geq b_1$), we have:
\[
C_1 + C_2 \geq D_1 + D_2 \quad \text{and} \quad D_2 + C_2 \geq D_1 + C_1,
\]
which together imply:
\[
C_2 - D_1 \geq |C_1 - D_2| \geq 0.
\]

Assume there exists a prong-matching $(u_0, v_0) \in [(0,v)]$ such that $0 \leq u_0 < C_1 \pmod{a_2}$ and $D_1 < v_0 \leq 0 \pmod{a_1}$ (guaranteed if $C_1 > D_1$ by prong-counting). Since $C_2 - D_1 \geq 0$, the operation $UR^{-2D_1+2}U \cdot \overline{X}(u_0,v_0) = \overline{X'}(u',v')$ defines a valid move in $\mathcal{A}(\overline{X})$, where $(u',v') \in [(0,v)]$ and $D'_1 = 1$ (and $C'_2 = C_2 - D_1 + 1$) on $\overline{X'}$.

Now assume $C_1 \leq D_1$ and $D_1 \geq 2$. Suppose every prong-matching $(u_0, v_0) \in [(0,v)]$ with $0 \leq u_0 < C_1 \pmod{a_2}$ satisfies $0 < v_0 \leq D_1 \pmod{a_1}$. If we can find $(u_0, v_0)$ with $C_1 + 1 \leq u_0 < 0 \pmod{a_2}$ and $0 < v_0 \leq D_1 - 1 \pmod{a_1}$, applying $UR^2U$ to $\overline{X}(u_0,v_0)$ simultaneously decreases $D_1$ and $C_2$ by one. Repeating this until $C_1 > D_1$ completes the argument.

This process halts only if every $0 < v_0 \leq D_1 - 1 \pmod{a_1}$ corresponds to $0 \leq u_0 \leq C_1 \pmod{a_2}$, implying for any $(u_0, D_1-1) \in [(0,v)]$:
\[
0 \leq u_0 < u_0+1 < \dots < u_0 + D_1 - 2 \leq C_1 \pmod{a_2}.
\]

Changing the prong-matching by $\gcd(a_1, a_2)$, we can assume $D_1 - 2 < \gcd(a_1, a_2)$. Otherwise, the integers of the form $u_0 + k_1 + k_2 \gcd(a_1,a_2)$ for $0 \leq k_1 \leq D_1 - 2$ would exhaust $\mathbb{Z}/a_2\mathbb{Z}$. Hence, $\gcd(a_1,a_2) - (D_1 - 2) \geq C_2$, i.e.,
\[
C_1 + 2 - D_1 \geq a_2 - \gcd(a_1, a_2).
\]

Now, we consider the problematic cases where $D_1 \geq C_1$:
\begin{enumerate}
    \item $a_1 = a_2$ and $C_1 = D_1$, $D_1 - 1$, or $D_1 - 2$;
    \item $a_2 - \gcd(a_1, a_2) = 2$ and $C_1 = D_1$;
    \item $a_2 - \gcd(a_1, a_2) = 1$ and $C_1 = D_1$ or $D_1 - 1$.
\end{enumerate}

Cases (ii) and (iii) imply $(a_1,a_2) = (2,4), (1,3), (1,2)$, contradicting $a_1 > 2$. In case (i), the equality $C_1 = D_1$ forces hyperellipticity at the top level with fixed poles under the involution, which we exclude.

Remaining subcases:
\begin{itemize}
    \item[(ia)] $a_1 = a_2 = b_1 = b_2$, and $C_1 = D_1 - 2 = \lceil b_1/2 \rceil - 1$;
    \item[(ia')] $a_1 = a_2 = b_1 = b_2$, and $C_1 = D_1 - 1 = \lceil b_1/2 \rceil - 1$;
    \item[(ib)] $a_1 = a_2$, $b_2 > b_1$ (i.e., $D_2 > C_1$, $C_2 > D_1$), and $C_1 = D_1 - 2$;
    \item[(ib')] same as above with $C_1 = D_1 - 1$.
\end{itemize}

We identify two persistent issues:
\begin{enumerate}
    \item[(1)] Every $(u_0,v_0)\in [(0,v)]$ with $0 \leq u_0 < C_1$ has $0 < v_0 \leq D_1$;
    \item[(2)] Every $v_0 \leq D_1 - 1$ corresponds to $u_0 \leq C_1$.
\end{enumerate}

In (ia), since $D_1 - C_1 = 2$, we cannot have $(1, D_1 - 1) \in [(0,v)]$. This would produce a prong $(C_1 + 1, 1)$ contradicting (2). As $D_1 - 1 = \lceil b_1 / 2 \rceil$, $(0, D_1 - 1)$ is the prong stated in the lemma. Case (ia') yields $(0, \pm \lfloor b_1 / 2 \rfloor)$.

For (ib) or (ib'), consider $(u_0,v_0) = (-1, D_1)$ and $(C_1 + 1, D_1 - C_1 - 1)$. Then $UR^{-2C_1}U \cdot \overline{X}(u_0, v_0) = \overline{X'}(u_0, 0)$, with:
\[
\overline{X'} = X^B_{\III a}((2,1), 0, 1, (12), (C_1', C_2 + C_1), [(u_0, 0)]).
\]

On $\overline{X'}$, level rotation changes $u_0$ by $\gcd(a_2, D_2 - C_1) = \gcd(D_1, C_1) \leq D_1 - C_1 \in \{1, 2\}$. Since $C_2 > D_1 > C_1$, we can adjust $u_0$ to obtain $(D_1, D_1 - C_1)$ or $(C_1 - 1, 0)$, solving (2) or (1) respectively.

This completes the proof.
\end{proof}

\begin{lemma}\label{lm:TypeB_IIIa_4pole_top_level_ind=1or2}
   Let $\overline{X} = X^B_{\mathrm{III}a}((2,1),0,4,\tau,\mathbf{C},[(0,v)])$ be a Type IIIa boundary stratum with B-signature $\mu^\fR = (a_1,a_2\mid -3\mid -3\mid -3\mid -3\mid -1^2)$, having the non-hyperelliptic top level. Assume $v \not\equiv 0,3 \pmod{6}$. Then there exists $\overline{X'} = X^B_{\mathrm{III}a}((2,1),0,4,\Id,\mathbf{C}',[(0,v')])$ in $\mathcal{A}(\overline{X})$ such that:
   \begin{itemize}
       \item If $v \equiv 1,4 \pmod{6}$, then $v' \equiv 1 \pmod{6}$ and $\mathbf{C}' = (1,2,2,1)$;
       \item If $v \equiv 2,5 \pmod{6}$, then $v' \equiv 2 \pmod{6}$ and $\mathbf{C}' = (1,1,2,2)$.
   \end{itemize}
\end{lemma}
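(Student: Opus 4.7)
The plan is to reduce an arbitrary boundary $\overline{X} = X^B_{\III a}((2,1),0,4,\tau,\mathbf{C},[(0,v)])$ to the canonical form of the lemma using the moves from \Cref{lm:IIIa_move}. The tuple $\mathbf{C} \in \{1,2\}^4$ with $\sum_i C_i = 6$ takes six possible values which split, under cyclic shift of the top level, into a generic orbit $\mathcal{O}_4 = \{(2,2,1,1),(2,1,1,2),(1,1,2,2),(1,2,2,1)\}$ and an alternating orbit $\mathcal{O}_2 = \{(2,1,2,1),(1,2,1,2)\}$. Since cyclic permutation of $\tau$ corresponds only to a choice of reference pole on the top level and does not change the underlying multi-scale differential, I may freely apply such shifts at the cost of translating the prong-matching representative within $[(0,v)]$.

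For $\mathbf{C} \in \mathcal{O}_4$, I would first use cyclic shifts to bring $\mathbf{C}$ to the target $(1,2,2,1)$ or $(1,1,2,2)$ according to the class of $v$ prescribed in the lemma. Then, applying case (1) of \Cref{lm:IIIa_move} iteratively, which for $u=0$ amounts to swapping $C_1=1$ with some $C_j=2$ under the constraint $d_{j-1} < v \leq d_j \pmod{6}$, together with further cyclic shifts, I would reduce $\tau$ to $\Id$. The hypothesis $v \not\equiv 0, 3 \pmod{6}$ guarantees that at each step a representative of $[(0,v)]$ satisfying the constraint exists, since the forbidden classes correspond precisely to the boundary values of the admissible $v$-ranges for case (1) on these tuples.

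For $\mathbf{C} \in \mathcal{O}_2$, case (1) is unavailable: the alternating pattern forces the admissible range of $v$ for case (1) to equal $\{0,3\} \pmod{6}$, which the hypothesis excludes. To bypass this, I would apply case (4) of \Cref{lm:IIIa_move} to degenerate into a Type IIIa boundary with $\ell_1 = 1$ and $\ell_2 = 3$. Via the covering map $\rho^* : \mathcal{A}_{2,1}(\mu^\fR, \mathcal{I}_1, \mathcal{I}_2) \to \mathcal{A}_{2,1}(\mu'^{\fR'}, \emptyset, \emptyset)$ to a simpler B-signature stratum with only two top-level poles, one applies \Cref{lm:TypeB_IIIa_2pole_top_level_C2=1} in the covered stratum, lifts the resulting equatorial path, and finally reverses the case (4)-degeneration to return to $\ell_1 = 0,\ \ell_2 = 4$. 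This produces a boundary with $\mathbf{C}' \in \mathcal{O}_4$, to which the previous paragraph applies.

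The main obstacle will be the bookkeeping of the prong-matching class through the case (4)-detour, in particular verifying that the composite shift in $v$ produced by the two applications of case (4) together with the intermediate moves in the simpler stratum combines to the correct residue class modulo $6$. The index $-v \pmod{3}$ is preserved by all moves by \Cref{rm:trunc_eq_net}, so $v \equiv 1,4 \pmod{6}$ are equivalent modulo the index and hence may a priori lie in the same connected component, and likewise for $v \equiv 2,5 \pmod{6}$; the technical content of the proof is therefore to exhibit an explicit sequence of moves that realizes this equivalence, which I expect to require a careful case-by-case analysis using the explicit prong-matching formulas in cases (1) and (4) of \Cref{lm:IIIa_move}.
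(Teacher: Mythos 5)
There is a genuine gap in your treatment of the generic orbit $\mathcal{O}_4$, and it concerns precisely the part of the lemma that carries the real content: reaching $\tau'=\Id$. The moves you propose there — case (1) of \Cref{lm:IIIa_move} together with cyclic shifts of the reference pole — never change the cyclic arrangement of the four \emph{labeled} poles on the top level: case (1) only redistributes the angles $C_i$, and a cyclic shift merely renames which pole is called $\tau(1)$ without altering the underlying multi-scale differential. Since all four poles have the same order, the permutation $\tau$ is not determined by the angle data, and there are genuinely distinct cyclic orders of the labels (e.g.\ $p_1,p_3,p_2,p_4$ is not a rotation of $p_1,p_2,p_3,p_4$). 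Hence starting from such an arrangement, no sequence of case-(1) moves and relabelings can produce $\tau'=\Id$, and your argument for $\mathcal{O}_4$ cannot close. To actually permute labeled poles one must use moves that transfer a pole to a bottom-level component and reinsert it elsewhere, and then verify that the permutations so realized generate $\operatorname{Sym}_4$. This is exactly what the paper's proof does: it first uses level rotations and passages through boundaries with the hyperelliptic top level to realize the adjacent swaps $p_{\tau(1)}\leftrightarrow p_{\tau(2)}$ and $p_{\tau(3)}\leftrightarrow p_{\tau(4)}$ (simultaneously normalizing $\mathbf{C}$ to $(1,2,2,1)$ or $(1,1,2,2)$), then deliberately changes $\mathbf{C}$ to the alternating tuple $(1,2,1,2)$ so that $UR^{-2}U$ sends a pole to the bottom level, uses $\gcd(6,6-3)=3$ to rotate the level and reset the prong-matching, and brings the pole back in a new position, realizing the $3$-cycle $(134)$; composing $(12)$ with $(134)$ gives the $4$-cycle, hence all of $\operatorname{Sym}_4$. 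In other words, the alternating tuple you treat as an obstacle is in the paper the key tool for the permutation step you are missing.

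Your handling of $\mathcal{O}_2$ via case (4) of \Cref{lm:IIIa_move}, the covering map $\rho^*$, and \Cref{lm:TypeB_IIIa_2pole_top_level_C2=1} is plausible in spirit (the paper instead converts alternating tuples to the standard form directly by a swap through a hyperelliptic-top intermediate, available exactly when $C_{\tau(j)}=D_{\tau(j-1)}$), but even if the prong-matching bookkeeping through that detour works out, it only fixes the angle tuple and does not repair the missing permutation argument above. As written, the proposal proves at best that boundaries with $\tau$ in the standard cyclic class and $v\not\equiv 0,3\pmod 6$ can be normalized, not the full statement.
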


\begin{proof}
    By changing the reference saddle connection, we can adjust the prong-matching by $3$, hence we may assume $(u_0, v_0) \in [(0,v)]$ is such that:
    \begin{itemize}
        \item $u_0 \equiv c_{j-1} \pmod{6}$ and $v_0 \equiv d_{j-1} + 1 \pmod{6}$, or
        \item $u_0 \equiv c_j - 1 \pmod{6}$ and $v_0 \equiv d_j \pmod{6}$,
    \end{itemize}
    for some $1 \leq j \leq 4$ depending on whether we have $v \equiv 1,4 \pmod{6}$ or $v \equiv 2,5 \pmod{6}$. By symmetry, it suffices to prove the case $v \equiv 1,4 \pmod{6}$.

    First, we show we can transform the angle assignments to a standard form relative to our reference pole. If $C_{\tau(j)} = D_{\tau(j-1)}$, then a level rotation of $C_{\tau(j)} - D_{\tau(j)} = \pm 1$ yields a prong-matching $(u_1, v_1)$ such that either:
    \[
        u_1 \equiv c_{j-1} \pm 1 \pmod{6}, \quad v_1 \equiv d_{j-1} \text{ or } d_j \pmod{6}.
    \]
    Then, $UR^{-2}U \cdot \overline{X}(u_1, v_1) = \overline{X'}(u', v')$, where $\overline{X'} = X^B_{\mathrm{III}a}((2,1),1,3,\tau',\mathbf{C}',[(u',v')])$ has the hyperelliptic top level with $\{\tau'(1), \tau'(4)\} = \{\tau(j), \tau(j-1)\}$. This allows us to exchange the positions of two top-level poles and adjust the angles. Hence, we may assume:
    \[
        (C_{\tau(j)}, C_{\tau(j-1)}, C_{\tau(j-2)}, C_{\tau(j-3)}) \in \{(1,2,2,1), (2,1,1,2)\}.
    \]
    If initially $(2,1,1,2)$ holds, then $u_0 + 3 \equiv c_{j-3} \pmod{6}$ and $v_0 - 3 \equiv d_{j-3} + 1 \pmod{6}$, so we can reset the reference pole to $p_{\tau(j-2)}$ and reduce to the case $(1,2,2,1)$.

    If $C_{\tau(j)} \ne D_{\tau(j-1)}$, then necessarily $C_{\tau(j)} = C_{\tau(j-1)}$ and $C_{\tau(j-2)} = C_{\tau(j-3)}$. Thus, the angle tuple is either $(2,2,1,1)$ or $(1,1,2,2)$. In the first case, define $(u', v') = (u_0 + 4, v_0 - 4)$. Then:
    \[
        UR^2U \cdot \overline{X}(u', v')
    \]
    swaps $C_{\tau(j-1)}$ and $C_{\tau(j-3)}$, turning $(2,2,1,1)$ into $(2,1,1,2)$. Similarly, $(1,1,2,2)$ becomes $(1,2,2,1)$. Thus, up to relabeling the poles, we can assume $\mathbf{C} = (1,2,2,1)$ and $v = 1$. We now demonstrate how to permute the poles on the top level.

    From earlier, we can swap $p_{\tau(1)} \leftrightarrow p_{\tau(2)}$ and $p_{\tau(3)} \leftrightarrow p_{\tau(4)}$ (with angle adjustments). If we change $\mathbf{C}$ to $(1,2,1,2)$, then:
    \[
        UR^{-2}U \cdot \overline{X}(u_0 + 2, v_0 - 2) = \overline{X'}(u', v'),
    \]
    where pole $p_4$ moves to the bottom level in $\overline{X'}$. Since $\gcd(6,6-3)=3$, we can rotate the levels and reset the reference prong-matching. Therefore, we can permute the top-level poles via the $3$-cycle $(134)$. By composing $(12)$ with $(134)$, we obtain the $4$-cycle $(1234)$, thus all permutations of the four poles are realizable. This completes the proof.
\end{proof}

For the inductive argument in the proof of \Cref{lm:IIIa_nonhyp_top_level_modification}, we need the following lemma to reduce the number of residueless poles on the top level.

\begin{lemma} \label{lm:IIIa_nonhyp_induction_reduce_ell}
Let $\overline{X}=X^B_{\III a}((2,1),0,n-2,\tau,{\bf C},Pr)$ be a Type~IIIa boundary stratum of B-signature 
\[
\mu^\fR = (a_1,a_2\mid -b_1 \mid \dots \mid -b_{n-2} \mid -1^2),
\]
the with non-hyperelliptic top level and $n-2 > 2$. Assume $\tau(1)=1$. If there exists a prong-matching $(u,v)\in Pr$ such that $0 \leq u < c_1$ and $d_{k-1} < v \leq d_k$ for some $k\neq 1$, then there exists $\overline{X'}$ in $\mathcal{A}(\overline{X})$ as described in case (i) of \Cref{lm:IIIa_nonhyp_top_level_modification}. In particular, if $p_1$ is not a double pole and $C_1 \leq D_1$, then such a prong-matching exists, and hence so does such $\overline{X'}$.

Otherwise, there exists a Type~IIIa boundary $\overline{X'} \in \mathcal{A}(\overline{X})$ of one of the following forms:
\begin{enumerate}
    \item $X^B_{\III a}((2,1),0,n-3,\tau',{\bf C'},Pr')$, meaning some residueless pole $p_k$ is sent to the second bottom-level component.
    \item $X^B_{\III a}((2,1),1,n-2,\tau',{\bf C'},Pr')$ with $\tau'(1)=k$ for some $k \neq 1$, meaning a residueless pole $p_k$ (not $p_1$) is sent to the first bottom-level component.
\end{enumerate}
\end{lemma}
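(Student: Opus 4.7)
\textbf{Proof plan for Lemma~\ref{lm:IIIa_nonhyp_induction_reduce_ell}.}

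The plan is to directly invoke Lemma~\ref{lm:IIIa_move} with $i=1$ and $j=k$ on the given prong-matching, then perform a case analysis on the four subcases according as $D_1$ and $C_k$ equal~$1$ or not. Subcase~(3) of Lemma~\ref{lm:IIIa_move} (i.e.\ $D_1=1$, $C_k>1$) directly produces a Type~IIIa boundary $\overline{X'}=X^B_{\III a}((2,1),1,n-2,\tau^{(3)},{\bf C'},[(u',v')])$ with $\tau^{(3)}(1)=1$, which is precisely case~(i) of Lemma~\ref{lm:IIIa_nonhyp_top_level_modification}. Subcase~(4) ($C_1=D_k=1$) likewise sends $p_1$ onto the first bottom component and, as a by-product, $p_k$ onto the second. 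In subcase~(1) ($D_1,C_k>1$) the move produces a new Type~IIIa boundary with $C_1'=C_1+1$ and $C_k'=C_k-1$, so $D_1$ drops by one. I will check that the original prong-matching class survives this move and that the defining inequalities for the suitable $(u,v)$ are preserved (possibly with $k$ replaced by an adjacent index), which lets me iterate until $D_1=1$ and apply subcase~(3) or~(4). Subcase~(2) ($D_1>1$, $C_k=1$) sends $p_k$ to the second bottom, so one residueless pole leaves the top level while $p_1$ is unchanged; I will then restart the argument with the new (and simpler) configuration, which terminates by induction on $\ell_2-\ell_1$.

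For the ``in particular'' statement, I will argue by pigeonhole on representatives of $Pr$. Fix a reference representative $(u_0,v_0)$; level rotation generates the family $(u_0+n,v_0-n)$ for $n\in\mathbb{Z}$. Restricting to $u\in[0,c_1)=[0,C_1)$ selects exactly $C_1\cdot\kappa_2/\gcd(\kappa_1,\kappa_2)$ representatives whose $v$-values form an arithmetic progression of common difference $\kappa_1/\gcd(\kappa_1,\kappa_2)$ in $\mathbb{Z}/\kappa_2\mathbb{Z}$. The bad $v$-region $(0,D_1]$ has size $D_1$; since $n-2>2$ forces $\kappa_2>D_1$, the good region is nonempty. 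Using $b_1\geq 3$ (so $C_1,D_1\geq 1$ with $b_1=C_1+D_1\geq 3$) together with $C_1\leq D_1$, I will check that the arithmetic progression cannot be entirely absorbed into the bad region, unless the top level degenerates into one of the hyperelliptic configurations excluded by the non-hyperelliptic hypothesis (these being exactly the exceptional cases (a1)-(c) of Lemma~\ref{lm:IIIa_nonhyp_top_level_modification}, characterized by Proposition~\ref{prop:IIIa_hyper}).

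For the ``otherwise'' alternative, suppose every $(u,v)\in Pr$ with $u\in[0,c_1)$ satisfies $v\in(0,d_1]$. This rigidity forces a strong arithmetic constraint on $Pr$. I will then shift the reference pole by one (cyclic permutation on the top level) so that $\tau'(1)\neq 1$, and re-examine the prong-matching condition with respect to the new reference. Combined with Proposition~\ref{prop:IIIa_hyper}, the non-hyperelliptic top level guarantees a prong-matching $(u,v)\in Pr$ not of the form $(c_i,d_i)$, so in some neighborhood the hypothesis of Lemma~\ref{lm:IIIa_move} applies with some $i\neq 1$. Cases (2), (3) or (4) of that lemma then send some $p_k$ with $k\neq 1$ to one of the two bottom components, giving either form~(1) or form~(2) of the alternative conclusion; case (1) is handled by iteration exactly as in the main argument.

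The main obstacle will be the iteration step in subcase~(1): I need to verify that after increasing $C_1$ by one the suitable prong-matching persists for \emph{some} $k'$ (possibly different from $k$), together with checking the correct boundary behavior at $C_1=\kappa_1$ or $D_1=1$ to guarantee termination. The delicate point is that the non-hyperelliptic top level hypothesis has to be invoked not just once but at each stage of the iteration, and the exceptional hyperelliptic configurations (a1)-(c) excluded in the hypothesis of Lemma~\ref{lm:IIIa_nonhyp_top_level_modification} must be tracked carefully to ensure they do not arise during the reduction.
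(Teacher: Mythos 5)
Your plan follows the same basic strategy as the paper (case analysis via \Cref{lm:IIIa_move}, iteration of $UR^{\pm 2}U$, a counting argument for the ``in particular'' clause, and a reduction for the ``otherwise'' clause), but it glosses over exactly the steps where the real work lies. The conclusion ``as described in case (i) of \Cref{lm:IIIa_nonhyp_top_level_modification}'' means that \emph{only} $p_1$ descends to the first bottom-level component, with $\tau'(1)=1$ and the remaining poles (in particular $p_{\tau(n-2)}$) left on top. Your subcase (4) ($C_1=D_k=1$) produces a boundary with \emph{both} $p_1$ and $p_k$ pushed down ($\ell_1'=1$, $\ell_2'=n-3$), which is not of the required form; one must then return $p_k$ to the top level, and the paper does this by finding, via \Cref{prop:IIIa_hyper}, a prong-matching on the new boundary with one image prong at some $c_i'$ and the other strictly inside a cone, plus a separate argument when the new top level happens to be hyperelliptic with respect to the new prong-matching. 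None of this appears in your plan. Likewise, in your subcase (2) the wrong pole $p_k$ descends; ``restart the argument and induct on $\ell_2-\ell_1$'' does not work as stated, because nothing guarantees that the reduced configuration again admits a prong-matching with $u$ in the cone of $p_1$ and $v$ outside it (that existence is precisely what is at stake), and you again need to return $p_k$ to the top at the end. The paper instead inducts on $n-2$ through the covering map $\rho^*$ onto a smaller B-signature stratum (with $p_{\tau(k)}$ deleted), uses \Cref{lm:TypeB_IIIa_2pole_top_level_C2=1} as the two-pole base case to force $C_1\geq D_1$ there, lifts back, and only then returns $p_{\tau(k)}$ using $\kappa_1''\geq\kappa_2''$ and \Cref{prop:IIIa_hyper}.

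For the ``in particular'' clause, two problems: first, for fixed $u$ the available $v$'s form a coset of the subgroup generated by $\gcd(\kappa_1,\kappa_2)$ in $\mathbb{Z}/\kappa_2\mathbb{Z}$ (common difference $\gcd(\kappa_1,\kappa_2)$, not $\kappa_1/\gcd(\kappa_1,\kappa_2)$); this is fixable, but the counting alone does not settle the borderline symmetric situation $C_1=D_1$ in which every representative with $u$ in the cone of $p_1$ has $v\in(0,D_1]$ and the image prongs are symmetric about $p_1$ --- the paper needs a separate move there (push a different pole down first via $UR^{-2}U$, then use $(0,D_1-1)$ on the new boundary to decrease $D_1$ and break the symmetry). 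Second, your claim that the failure configurations are ``exactly the exceptional cases (a1)--(c), excluded by the non-hyperelliptic hypothesis'' is not correct: this lemma assumes only a non-hyperelliptic top level and does \emph{not} exclude (a1)--(c); that exclusion is part of the hypotheses of \Cref{lm:IIIa_nonhyp_top_level_modification}, not of \Cref{lm:IIIa_nonhyp_induction_reduce_ell}, so you cannot appeal to it here. As written, the proposal therefore has genuine gaps at the termination of the iteration, at the return of the auxiliary pole, and in the pigeonhole argument.
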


\begin{proof}
We first show that we can always send exactly one pole to the bottom level, which will specialize to our claims in \Cref{lm:IIIa_nonhyp_top_level_modification}. Since the top level is non-hyperelliptic, there exists a prong-matching $(u,v)\in Pr$ such that $c_{k-1} \leq u < c_k$ and $d_{k'-1} < v \leq d_{k'}$ for some $k \neq k'$. 

If $D_{\tau(k)} \neq C_{\tau(k')}$, then we can send one of the poles $p_{\tau(k)}$ or $p_{\tau(k')}$ to the bottom level using $UR^2U \cdot \overline{X}(u,v)$ (or $UR^{-2}U \cdot \overline{X}(u+\tfrac{1}{2},v-\tfrac{1}{2})$), repeating this operation $\min(D_{\tau(k)}, C_{\tau(k')})$ times as described in \Cref{lm:IIIa_move}.

Now suppose $D_{\tau(k)} = C_{\tau(k')} = 1$. Applying $UR^2U \cdot \overline{X}(u,v)$ yields 
\[
\overline{X'} = X^B_{\III a}((2,1),1,n-3,\tau',{\bf C'},Pr'),
\]
where either $\tau'(1) = \tau(k)$ or $\tau'(n-3) = \tau(k')$. If $\overline{X'}$ has the non-hyperelliptic top level, then we can find a prong-matching $(u',v')\in Pr'$ such that $u' \equiv c_i' \pmod{a_2 - b_{\tau(k)}}$ (or $v' \equiv d_j' \pmod{a_1 - b_{\tau(k')}}$) but the other prong is not congruent to any $d_i$ (or $c_j$). Applying $UR^2U$ to this configuration sends one pole back to the top level, completing the step.

If the top level of $\overline{X'}$ is hyperelliptic, then on $\overline{X}$, we must have $u - c_{k-1} \neq d_{k'} - v$ or $c_k - u \neq v - d_{k'-1}$. Without loss of generality, assume $c_k - u > v - d_{k'-1}$; that is, $c_{k-1} < u + v - d_{k'-1} < c_k \pmod{a_2}$. By the proof of \Cref{Prop:Bhyper}, we can permute conjugate poles and adjust angle assignments. So we may assume $k' > k+1$. Then apply
\[
UR^{-2}U \cdot \overline{X}(u + v - d_{k'-1}, d_{k'-1}) = \overline{X''}(u'',v'').
\]
If $\overline{X''}$ has the hyperelliptic top level, then we deduce that initially $(C_i,D_i) = (1,1)$ for all $i\neq \tau(k)$. Hence $C_{\tau(k)} > 1$. If $p_{\tau(k)} = p_1$, we again reach a contradiction. Otherwise, find a prong-matching $(u_0,v_0) \in Pr$ such that $v_0 \equiv d_{k-1} \pmod{a_1}$ and $u_0 \in (c_{i-1},c_i] \pmod{a_2}$ for some $i \neq k$, which works since $D_{\tau(i)} \neq C_{\tau(k)}$.

We now proceed by induction on $n-2$. Assume there exists $(u,v)\in Pr$ such that $0 \leq u < d_1 \pmod{a_2}$ and $d_{k-1} < v \leq d_k \pmod{a_1}$ for some $k \neq 1$. Then either $p_1$ is sent to the bottom level with $q_1$, or $p_{\tau(k)}$ is sent to the level with $q_2$. In the latter case, apply the covering map 
\[
\rho^*: \mathcal{A}_{2,1}(\mu^\fR, \emptyset, (\tau(k))) \to \mathcal{A}_{2,1}(\mu'^{\fR'}, \emptyset, \emptyset)
\]
with 
\[
\mu'^{\fR'} = (a_1 - b_{\tau(k)}, a_2 \mid -b_1 \mid \dots \mid\hat{-b_{\tau(k)}} \mid \dots \mid -1^2).
\]
If $n-2 = 3$, then $\mu'^{\fR'}$ has $a_1' < a_2'$, so by \Cref{lm:TypeB_IIIa_2pole_top_level_C2=1}, we know $C_2 - D_1 > 0$. Thus, by induction, we can send $p_1$ to the bottom level. Lifting this back yields
\[
\overline{X''} = X^B_{\III a}((2,1),1,n-3,\tau'',{\bf C''},Pr'')
\]
with $\tau''(1) = 1$ and $\tau''(n-3) = \tau(k)$. Since $\kappa_1'' \geq \kappa_2''$, by \Cref{prop:IIIa_hyper}, there exists $(u'',v'') \in Pr''$ such that $c_{i-1} < u'' < c_i \pmod{\kappa_1}$ and $v'' \equiv d_j \pmod{\kappa_2''}$. Applying $UR^2U$ again returns $p_{\tau(k)}$ to the top level, completing the proof.

Finally, if $C_1 > D_1$, then counting prongs guarantees a prong-matching $(u,v)$ with $u < c_1$ and $d_{k-1}<v \leq d_k\pmod{a_1}$ for some $k \neq 1$. If $C_1 = D_1$ and the image prongs are symmetric about $p_1$, then there exists $(a, D_1 - a) \in Pr$ with $a \equiv c_{k'-1} \pmod{a_2}$ and $D_1 - a \equiv d_{k''} \pmod{a_1}$. Assume $k'' > k'$ and $C_{\tau(k'')} > D_{\tau(k')} = 1$. Then applying $UR^{-2}U \cdot \overline{X}(a, D_1 - a)$ gives
\[
\overline{X'} = X^B_{\III a}((2,1),1,n-2,\tau',{\bf C'},Pr'), \quad \text{with } \tau'(1) = \tau(k').
\]
Then, since $(0,D_1 - 1) \in Pr'$, applying $UR^2U$ gives $\overline{X''}$ with $D_1'' = D_1 - 1$. Replacing $\overline{X}$ with $\overline{X''}$ completes the argument.
\end{proof}

The above lemma actually works not very well with the case that $p_{1}$ is a double pole. This is because if we cannot find a proper prong-matching and send some other pole to the bottom level, we cannot adjust the angle assignments such that $C_{1}>D_{1}$. The next conditional lemma will be to deal with the case where we do not have a proper prong-matching and $b_1=2$, given that item (ii) of \Cref{lm:IIIa_nonhyp_top_level_modification} holds for $\overline{X'}$.

\begin{lemma}\label{lm:IIIa_proper_prong_matching_for_double_pole}
    Let $\overline{X}=X^B_{\III a}((2,1),0,n-2,\tau,{\bf C},Pr)$ be a Type IIIa boundary of a stratum of B-signature $\mu^\fR=(a_1,a_2\mid -b_1\mid\dots -b_{n-2}\mid-1^2)$ with $n-2>2$, $b_{1}=2$ and having the non-hyperelliptic top level. Assume that $\tau(1)=1$. Let $\overline{X'}=X^B_{\III a}((2,1),1,n-2,\tau',{\bf C'},Pr')$ in $\mathcal{A}(\overline{X})$ be a boundary as described in (ii) of \Cref{lm:IIIa_nonhyp_induction_reduce_ell}. If item (ii) of \Cref{lm:IIIa_nonhyp_top_level_modification} holds for $\overline{X'}$, then item (i) of \Cref{lm:IIIa_nonhyp_top_level_modification} holds for $\overline{X}$.
\end{lemma}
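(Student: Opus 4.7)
The plan is to leverage the combinatorial freedom provided by item (ii) of \Cref{lm:IIIa_nonhyp_top_level_modification} applied to $\overline{X'}$ in order to produce a new Type~IIIa boundary $\overline{X''}\in\mathcal{A}(\overline{X})$ in which $p_1$, rather than $p_k$, sits on the bottom level adjacent to $q_1$. Since $b_1=2$ is the minimum pole order on the top level of $\overline{X}$, such an $\overline{X''}$ is exactly what item (i) of \Cref{lm:IIIa_nonhyp_top_level_modification} demands when $\kappa_1\geq \kappa_2$; the opposite case is symmetric and would be handled by working near $s_2^\top$ instead.

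First I would use item (ii), which is assumed to hold for $\overline{X'}$, to replace $\overline{X'}$ inside $\mathcal{A}(\overline{X})$ by a boundary in which $p_1$ has been cyclically permuted so as to sit immediately clockwise of the node $s_1^\top$ on the top level, i.e.\ with $\tau'(\ell_1'+1)=1$, while the remaining top-level poles are reordered and their angle slots $C'_{\tau'(i)}$ are adjusted to any convenient values subject to the index constraint $v'\pmod{\delta(\mathcal I_1,\mathcal I_2)}$. Because $b_1=2$, the slot occupied by $p_1$ satisfies $C'_1=D'_1=1$ and is rigidly determined; all remaining freedom lives in the other top-level angle assignments and in the level-rotation choice of prong-matching.

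Second, using that freedom I would arrange a prong-matching class $(u,v)\in Pr'$ with $u\equiv c'_{\ell_1'}\pmod{\kappa_1'}$ (so that $u$ lands precisely at the clockwise edge of the $p_1$-slot) and with $v$ lying on one of the $d'_j$-labels belonging to a pole which, after the transformation, will need to return to the top level. By the description of case~(4) of \Cref{lm:IIIa_move}, the operation $UR^{\pm 2}U\cdot\overline{X'}(u+\tfrac{1}{2},v-\tfrac{1}{2})$ (with the sign dictated by whether we aim to push $p_1$ or to re-absorb $p_k$) then yields a Type~IIIa boundary $\overline{X''}=X^B_{\III a}\!\bigl((2,1),1,n-2,\tilde\tau,\tilde{\mathbf C},\widetilde{Pr}\bigr)$ with $\tilde\tau(1)=1$. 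Matching the index before and after is automatic because each $UR^{\pm 2}U$-move is an equatorial-arc concatenation inside $\mathcal{A}(\overline{X})$, so the local invariant $\operatorname{Ind}(\cdot,\mathcal I_1,\mathcal I_2)$ is preserved throughout.

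The main obstacle is the tightness of the $p_1$-slot: since $C_1=D_1=1$, the range $c'_{\ell_1'}\le u<c'_{\ell_1'+1}$ consists of a single residue class modulo $\kappa_1'$, so there is essentially only one candidate prong in each level-rotation orbit. The technical heart of the argument is therefore to show that, after the rearrangement afforded by item (ii), this single candidate can be paired with a $v$ landing on a $d'_j$-label of a distinct top-level pole, and that the resulting $\overline{X''}$ does not accidentally acquire a hyperelliptic top level. The hyperellipticity issue should be handled by a further invocation of item (ii) to perturb the angle assignments at the remaining $n-3\geq 0$ top-level poles, breaking any residual symmetry; the existence of a compatible $v$ follows from counting prongs in the residue class $\bmod \delta(\mathcal I_1,\mathcal I_2)$, using that this gcd strictly exceeds $1$ precisely when $p_1$ is not already aligned, which is the only configuration where our construction would need adjustment.
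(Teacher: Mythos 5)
There is a genuine gap. Your plan hinges on a single move that simultaneously takes $p_k$ off the bottom level and puts $p_1$ there, producing $\overline{X''}=X^B_{\III a}((2,1),1,n-2,\tilde\tau,\tilde{\mathbf C},\widetilde{Pr})$ with $\tilde\tau(1)=1$ directly from $\overline{X'}$. The move you cite does not do this: case (4) of \Cref{lm:IIIa_move} starts from a boundary with \emph{all} residueless poles on the top level and pushes one pole to \emph{each} of the two bottom components, and more generally a prong-matching with $v$ on a $d'_j$-label sends $p_{\tau'(j)}$ to the component containing $q_2$, not back up, and does not exchange it with $p_1$. The actual difficulty, which your proposal names ("tightness of the $p_1$-slot") but does not resolve, is that the failure of \Cref{lm:IIIa_nonhyp_induction_reduce_ell} on $\overline{X}$ forces $(0,1)\in Pr$ together with $a_1\mid a_2$, so the only viable strategy is to use the detour through $\overline{X'}$ to \emph{change the prong-matching class}: while $p_k$ sits on the bottom, item (ii) lets one shift the matching by $\delta^\top=\gcd(a_1,\{b_i\}_{i\neq k})\le 2$ and rearrange the top-level angles so that, after pulling $p_k$ back up by a $UR^2U$-move, the new class contains a proper prong-matching $(0,v)$ with $v$ in the interior of some other pole's slot; then \Cref{lm:IIIa_nonhyp_induction_reduce_ell} applies. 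Your sketch gestures at "re-absorbing $p_k$" but never sets up the required normalization of $Pr'$ (e.g.\ that one may assume $D'_k=1$ and a matching of the form $(2,-2)$), nor explains how the shifted value (of the form $-1-\delta^\top$) is made to land strictly inside an admissible $d'$-interval.

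That last point is where your "counting" argument breaks down. The claim that a compatible $v$ "follows from counting prongs in the residue class $\bmod\ \delta(\mathcal I_1,\mathcal I_2)$, using that this gcd strictly exceeds $1$ precisely when $p_1$ is not already aligned" is not correct: here $\delta(\mathcal I_1,\mathcal I_2)=\delta^\top$ always divides $b_1=2$, so it is $1$ or $2$ regardless of any alignment of $p_1$, and in the small strata the counting simply fails. Indeed the existence of a suitable interval requires a case analysis on $a_1$ versus $2+\delta^\top$, the number of poles, and the largest top-level pole order, and there are genuinely exceptional configurations (e.g.\ $a_1\le 2+\delta^\top$ with $n-2=3$, and in particular the stratum with $a_1=a_2=4$, $b=(2,2,4)$) that require separate explicit move sequences rather than any general counting argument. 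Without handling these, the key step of your proof — producing a proper prong-matching after returning all poles to the top level — is unproven, and in the exceptional cases the uniform argument you propose would be false as stated.
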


\begin{proof}
    Notice that we have assumed that we cannot find proper prong-matching on $\overline{X}$ such that the first image prong is inside the polar domain of $p_{1}$ (which is a double pole) while the other is not, i.e. we must have $(0,1)\in Pr$ and $a_1$ dividing $a_2$. Let $k=\tau'(1)$. We can assume that $D'_{k}=1$ and $(a,-a)\in Pr'$ for some integer $a$, i.e. $(2,-2)\in Pr'$. This is because we can consider $(a,-a+1)\in Pr$ with $|a|$ minimal so that $c_{k'-1}\leq a< c_{k'} \pmod{a_2}$ and $d_{j-1}< -a+1\leq d_j \pmod{a_1}$ (where $i\neq j$) with $D_{\tau(k)}<C_{\tau(j)}$. Then by applying $UR^{-2}UR^{-2D_{\tau(k)}+2}UR^{-2D_{\tau(k)}+2}U$ on $\overline{X}(a,-a+1)$ will reach the desired $\overline{X'}$. Notice that since $p_{1}$ is a double pole, the assumption of item (ii) for $\overline{X'}$ implies we can change the prong-matching by $$\delta^\top\coloneqq\delta((k),\emptyset)=\gcd(a_1,\{b_{\tau(i)}\}_{\substack{i=1,\dots,\ell_1\\i\neq k}})\leq 2.$$  If $a_1>2+\delta^\top$ and we can by item (ii) of \Cref{lm:IIIa_nonhyp_top_level_modification} adjust the positions and angle assignments on the top level of $\overline{X'}$ such that $d'_{j-1}< -1-\delta^\top< d'_{j}$ for some $j\neq 1$, then $UR^{2}U\cdot \overline{X'}(1,-1-\delta^\top)=\overline{X''}(1,-\delta^\top)$, where $$\overline{X''}=X^B_{\III a}((2,1),0,n-2,\tau'',{\bf C}'',Pr'').$$ Moreover, the prong-matching $(0,-\delta^\top+1)\in Pr''$ is the proper prong-matching in \Cref{lm:IIIa_nonhyp_induction_reduce_ell} and we are done. Hence, we will divide the proof into cases depending on whether $a_1>2+\delta^\top$ and how to modify the top level of $\overline{X'}$. Notice that $a_1\geq n-2$ (otherwise the stratum itself is empty) and $n-2>2$, so the following cases cover the case where $a_1>2+\delta^\top$.
    \begin{enumerate}
        \item[(1a)] $a_1\geq n-2+\delta^\top$ and $n-2\geq 3+\delta^\top$; 
        \item[(1b)] $a_1\geq n-2+\delta^\top$ and $n-2=4$ (where $\delta^\top=2)$;
        \item[(1c)] $a_1\geq n-2+\delta^\top$ and $n-2=3$;        
        \item[(2a)] $a_1= (n-2)+\delta^\top-1$ and $n-2\geq 3+\delta^\top$;
        \item[(2b)] $a_1= (n-2)+\delta^\top-1$ and $n-2=3+\delta^\top-1$ (where $\delta^\top=2$);
        \item[(3)] $a_1= (n-2)+\delta^\top-2$ and $n-2\geq 3+\delta^\top$ (where $\delta^\top=2$).
    \end{enumerate}
    
Let $p_{k_1}$ be a largest pole on the top level of $\overline{X'}$. Before we start, we want to remark that in any listed case, $b_{k_1}\geq 2+\delta^\top$. This is because $b_{k_1}>2$ and if $b_{k-1}=1+\delta^\top$, then $\delta^\top$ is forced to be $1$. 

    In case (1a), if $b_{k_1}\geq 3+\delta^\top $, then we can modify $\overline{X'}$ such that $\tau'(n-2)=k_1$ and $D'_{k_1}\geq 2+\delta^\top$. Then $-D'_{k_1}< -1-\delta^\top< 0\pmod{a_1}$, and by our aforementioned operations we are done. Assume now  $b_{k_1}=2+\delta^\top$. Let $p_{k_2}$ be a second largest pole on the top level of $\overline{X'}$. Then $b_{k_2}= 1+\delta^\top$ or $2+\delta^\top$, otherwise $a_1\leq n-3+\delta^\top$ which contradicts $a_1\geq n-2+\delta^\top$. We modify $\overline{X'}$ such that $\tau'(n-2)=k_2$ and $\tau'(n-3)=k_1$ with $D'_{k_1}=1+\delta^\top$. If $D'_{k_2}\leq \delta^\top$, then $-D'_{k_1}-D'_{k_2}< -1-\delta^\top <-D'_{k_2}\pmod{a_1}$, otherwise, $D'_{k'}=1+ \delta^\top$ and we have similarly $-D'_{k_1}-D'_{k_2}< -1-2\delta^\top < -D'_{k_2}\pmod{a_1}$. Then by the aforementioned operation, we are also done. Notice that the treatment for the situation $b_{k_1}\geq 3+\delta^\top$ works for any listed cases if one also has $b_{k_1}\geq 3+\delta^\top$. In addition, the treatment for the situation $b_{k_1}=2+\delta^\top$ actually works for any listed cases if one also has $n-2\geq 4$, $b_{k_1}=2+\delta^\top$ (even if $b_{k_2}=\delta^\top=2$).
    
    In case (1b), we can use the exact same argument from (1a). Similarly, in case (1c), as $a_1\geq 3+\delta$ and the top level of $\overline{X'}$ only contains $p_1$ (a double pole) and $p_{k_1}$, we must have $b_{k_1}\geq 3+\delta^\top$. And it can use the same argument in (1a).

    In case (2a) and (2b), we have $n-2\geq 4$. The argument will be the same as (1a). 
        
    Now for case (3), we should have $n-2\geq 5$ and $a_1=n-2$. We can modify the top level of $\overline{X'}$ such that $\tau'(n-4)=k_1$ with $D'_{k_1}=2$. Then $ d'_{n-6}<-1-\delta^\top<d'_{n-5}\pmod{a_1}$ and we are done by the aforementioned operation. 

    Now we may consider the situation $a_1\leq 2+\delta^\top$. Taking into account that $a_1\geq n-2$ and $n-2>2$, it can be divided into the following:
    \begin{enumerate}
        \item[(4a)] $a_1=3$, $n-2=3$ and $\delta^\top=1$;
        \item[(4b)] $a_1=4$, $n-2=3$ and $\delta^\top=2$.
    \end{enumerate}
    
    Notice that since $a_1=2+\delta^\top$ in the cases (4a) and (4b), we can modify $\overline{X'}$ such that $(0,2)\in Pr'$ on $\overline{X'}$. Since $d'_1\equiv 1 \pmod{a_1}$, we have $UR^2U\cdot \overline{X'}(0,2)=\overline{X''}(-b_k+1,2)$ where $\overline{X''}=X^B_{\III a}((2,1),0,n-2,\tau'',{\bf C}'',Pr'')$ such that $\tau''(1)=1$ and $\tau''(3)=k$. If $-b_k+3\neq 1 \pmod{a_1}$, then $(0,-b_k+3)\in Pr''$ a proper prong-matching in \Cref{lm:IIIa_nonhyp_induction_reduce_ell} and we are done. Otherwise, $b_k\equiv 2\pmod{a_1}$, i.e. either $b_k=2$; or $b_k\geq 5$ (or $b_k\geq 6$) if $\delta^\top=1$ (resp. $\delta^\top=2)$. 
    
    We may first assume $a_1<a_2$ (i.e. $a_2\geq 2a_1$). Then $C''_{k}\geq a_1$ if $b_k\neq 2$, otherwise $C''_{\tau''(2)}= a_2-2\equiv -2 \pmod{a_1}$. We consider either $(-b_k+2,1)\in Pr''$ or $(a_2-2,1)\in Pr''$. For the former situation, we have $UR^{-3}U\cdot \overline{X''}(-a_1,1)=\overline{X'''}(2+\frac{1}{2},\frac{1}{2})$, where $\overline{X'''}=X^B_{\III a}((2,1),0,n-2,\tau''\circ (23),{\bf C}'',Pr''')$. Notice that $(0,3)\in Pr'''$ is a proper prong-matching in \Cref{lm:IIIa_nonhyp_induction_reduce_ell}. For the latter situation, we have $UR^{2D''_{\tau''(2)}+1}U\cdot \overline{X''}(a_2-2+\frac{1}{2},\frac{1}{2})=\overline{X'''}(-D''_{\tau''(2)},1)$, where $\overline{X'''}=X^B_{\III a}((2,1),0,n-2,\tau''\circ (23),{\bf C}'',Pr''')$. In this case $(0,-D''_{\tau''(2)}+1)\in Pr'''$ is a proper prong-matching in \Cref{lm:IIIa_nonhyp_induction_reduce_ell}.
    
     Now if $a_2=a_1$, then we can only be in case (4b), and have $b_1=b_2=2$, $b_3=4$. We may W.L.O.G. assume that $\tau=\Id$ on $\overline{X}$. Since we do not have a proper prong-matching, we should have $(0,1)\in Pr$. In this case, we have $UR^{-2}UR^4UR^{-2}U\cdot \overline{X}(1,0)=\overline{X'''}(1,0)$, where $\overline{X'''}=X^B_{\III a}((2,1),1,2,(123),(1,2,1),[(1,0)])$. We can send back the poles on the bottom level differently by considering $UR^2UR^4UR^2U\cdot \overline{X'''}(0,1)=\overline{X''''}(-2,1)$ where $\overline{X''''}$ only differs from $\overline{X}$ by the prong-matching equivalence class. On $\overline{X''''}$ we can find the proper prong-matching satisfying \Cref{lm:IIIa_nonhyp_induction_reduce_ell}. This completes our proof.   
\end{proof}

In the inductive step of the proof of \Cref{lm:IIIa_nonhyp_top_level_modification}, we need to compare different Type IIIa boundaries $\overline{X'}$ with the smallest residueless pole on the top level of $\overline{X}$ sent to the bottom level. The following lemma ensures that if \Cref{lm:IIIa_nonhyp_top_level_modification} holds on $\overline{X'}$, then we can easily modify the prong-matching and the $C_{\tau(\ell_1)}$ of the pushed pole whenever the pseudo-index remains unchanged.

\begin{lemma} \label{cor:IIIa_nonhyp_top_level_add_twoprongs}
   Let $\overline{X}=X^B_{\III a}((2,1),1,n-2,\tau,{\bf C},[(0,v)])$ and $\overline{X'}=X^B_{\III a}((2,1),1,n-2,\tau,{\bf C'},[(0,v')])$ be Type IIIa boundaries of some stratum of B-signature $\mu^\fR=(a_1,a_2\mid -b_1\mid\dots -b_{n-2}\mid-1^2)$ having the non-hyperelliptic top levels. In addition, $\tau'(1)=\tau(1)=1$. Assume that item (ii) of \Cref{lm:IIIa_nonhyp_top_level_modification} holds for both $\overline{X}$ and $\overline{X'}$. Then $\overline{X'}$ is in $\mathcal{A}(\overline{X})$ if $\delta((1),\emptyset)\neq a_1 $, or $v,v'\neq 0\pmod{a_1}$; and $C_{1}-v\equiv C'_{1}-v'\pmod{\delta}$.
         
\end{lemma}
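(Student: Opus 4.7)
Both $\overline X$ and $\overline{X'}$ lie in the truncated equatorial net $\mathcal{A}_{2,1}(\mu^\fR,(1),\emptyset)$, on which the induced local invariant is $C_1-v\pmod{\delta((1),\emptyset)}$, a refinement of the stratum's index $\pmod\delta$. The plan is to first apply item (ii) of \Cref{lm:IIIa_nonhyp_top_level_modification} to each of $\overline X$ and $\overline{X'}$ in order to normalise $\tau|_{\{2,\dots,n-2\}}$ and $C_{\tau(i)}$ for $i>1$, and to shift $v$ by any multiple of $\delta((1),\emptyset)$; what remains is to bridge the gap between $\pmod{\delta((1),\emptyset)}$ and the coarser $\pmod\delta$ allowed in the hypothesis. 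The plan is to do this by a round trip in which $p_1$ is temporarily exchanged with a top-level residueless pole $p_{\tau(k)}$, item (ii) is reapplied in the swapped configuration, and the exchange is then reversed, returning $p_1$ to the first bottom component with a controlled change of $(C_1,v)$.

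Concretely, after normalising $\overline X$ we may, by reordering the top-level poles, arrange that a chosen $p_{\tau(k)}$ is the immediate neighbour of $p_1$ in the cyclic order; by the analysis in \Cref{lm:IIIa_move}, there is then a prong-matching $(u_0,v_0)\in[(0,v)]$ at which the operation $UR^2U$ produces a Type IIIa boundary $\overline Y=X^B_{\III a}((2,1),1,n-2,\tau',{\bf C'},[(0,v_1)])$ with $\tau'(1)=\tau(k)$ and with $p_1$ now sitting on the top level of $\overline Y$. On $\overline Y$ we invoke item (ii) of \Cref{lm:IIIa_nonhyp_top_level_modification} once more to shift $v_1$ by any multiple of $\delta((\tau(k)),\emptyset)$ and, crucially, to reset the angle assignment at $p_1$ (which is now a top-level residueless pole); the inverse $UR^2U$-move then returns $p_1$ to the first bottom component. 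Reading off the resulting effect on $(C_1,v)$ from \Cref{Prop:BIIIamove}, one finds that the shifts achievable by such round trips, together with the $\delta((1),\emptyset)$-shift coming from item (ii) on $\overline X$ itself, generate exactly the subgroup $\{(x,y)\in\mathbb Z^2:x-y\equiv 0\pmod\delta\}$, using the identity $\delta=\gcd(b_1,\delta((1),\emptyset))$ and its analogues for varying $k$.

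The main technical obstacle is to control the intermediate boundaries so that the successive applications of item (ii) of \Cref{lm:IIIa_nonhyp_top_level_modification} are legal: at each step we must verify the non-hyperelliptic top level and avoid the exceptional cases (a3), (b), (c). The hypothesis $\delta((1),\emptyset)\neq a_1$ or $v,v'\not\equiv 0\pmod{a_1}$ is precisely what is required for this. If $\delta((1),\emptyset)=a_1$ then the $\delta((1),\emptyset)$-shift of item (ii) on $\overline X$ is trivial in $\mathbb Z/a_1\mathbb Z$, so the round trip is the only source of $v$-motion, and the alternative $v\not\equiv 0\pmod{a_1}$ precisely excludes the prong-matchings at which the exchange would land in a boundary with hyperelliptic top level. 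The remaining verification that $\overline Y$ and its successors avoid the three rigid patterns (a3), (b), (c) is a direct case check: either the freedom to vary $k$ among the at least two top-level poles lets us steer around them, or the patterns themselves force $\delta=\delta((1),\emptyset)$ and there is no gap to bridge.
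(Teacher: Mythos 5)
There is a genuine gap in your main mechanism. Your ``round trip'' hinges on invoking item (ii) of \Cref{lm:IIIa_nonhyp_top_level_modification} on the intermediate swapped boundary $\overline{Y}=X^B_{\III a}((2,1),1,n-2,\tau',{\bf C}',[(0,v_1)])$ with $\tau'(1)=\tau(k)$, where $p_1$ sits on the top level. But the hypotheses of the lemma only grant item (ii) for $\overline{X}$ and $\overline{X'}$; the boundary $\overline{Y}$ has the same number of top-level poles, so nothing from the induction on $\ell_2-\ell_1$ applies to it, and item (ii) for boundaries with a different bottom residueless pole is precisely the kind of statement that is still being established at this stage of the induction --- assuming it is circular. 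In addition, the claimed single $UR^2U$ move that simultaneously sends $p_{\tau(k)}$ down and $p_1$ up is not supported by \Cref{lm:IIIa_move}: its cases only transfer angle between top-level poles or push top-level poles down; raising the bottom pole requires the special aligned prong-matchings of \Cref{lm:IIIa_nonhyp_induction_reduce_ell} (or \Cref{prop:IIIa_hyper}) and, in general, a multi-step maneuver whose very existence (non-hyperelliptic top level, correct interval for $v$, no double-pole obstruction) is part of what must be verified. Your reading of the side hypothesis ($\delta((1),\emptyset)\neq a_1$ or $v,v'\not\equiv 0$) as ``excluding hyperelliptic landings'' is also a guess; in the actual argument it is what allows the prong-matching to be shifted by $\delta((1),\emptyset)$ away from $0\pmod{a_1}$ so that the $UR^2U$ moves are available.

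The paper's proof avoids the swap altogether. It writes $(C_1-v)-(C_1'-v')=k_1\,\delta((1),\emptyset)+k_2\,a_2$ (using $\delta=\gcd(\delta((1),\emptyset),a_2)$ rather than your $\delta=\gcd(b_1,\delta((1),\emptyset))$), kills $k_1$ by item-(ii) shifts applied only to $\overline{X}$ and $\overline{X'}$ themselves, and then absorbs the remaining $a_2$-discrepancy by choosing, again only via item (ii) on $\overline{X}$ and $\overline{X'}$, angle assignments ${\bf C}''$ and ${\bf C}'''$ so that a single $UR^2U$ move from $\overline{X}(0,v)$ and from $\overline{X'}(0,v')$ lands on arcs adjacent to one and the same boundary with all residueless poles on the top level; there the prong-matching is only defined up to the diagonal level rotation in $\mathbb{Z}/a_2\mathbb{Z}\times\mathbb{Z}/a_1\mathbb{Z}$, which is what absorbs the shift by $a_2$. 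Moreover, the part you defer as ``a direct case check'' is exactly where the paper's proof spends most of its effort: constructing ${\bf C}'',{\bf C}'''$ when $a_2-b_1\in\{n-3,n-2\}$ (with the explicit exceptional signatures such as $(4,4,4,4,(8,8))$ and $(3,3,3,3,3,3,(9,9))$), the double-pole situations, and the case $a_1=2\,\delta((1),\emptyset)$ with $v\equiv v'\equiv\delta((1),\emptyset)$. Your arithmetic observation that $\gcd\bigl(\delta((1),\emptyset),\delta((\tau(k)),\emptyset)\bigr)=\delta$ is correct and could conceivably support an alternative argument, but as structured the proposal both rests on an unlicensed (circular) use of item (ii) and omits the quantitative verifications that carry the proof.
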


\begin{proof}
   By the assumption that (ii) of \Cref{lm:IIIa_nonhyp_top_level_modification} holds on $\overline{X}$ and $\overline{X'}$, we may assume that $\tau=\tau'=\Id$. Let $\delta^{\top} \coloneqq \delta((1),\emptyset)= \gcd(a_1,\{b_{i}\}_{i=2,\dots,n-2})$. Notice that $\delta=\gcd(\delta^{\top},a_2)$.  If some pole $p_{i}$ on the top level of $\overline{X}$ and $\overline{X'}$ is a double pole, then by assumption, we have $b_{1}=2$. In this case, we always have $C_{1}=C'_{1}=1$ and $v\equiv v' \pmod \delta$. In addition, $\delta=\delta^{\top}$ and thus we should have $v\equiv v'\pmod{\delta^\top}$. In this case, $\overline{X'}$ is in $\mathcal{A}(\overline{X})$ by (ii) of \Cref{lm:IIIa_nonhyp_top_level_modification}. So we can reduce to the case when all the poles on the top level are non-double poles. If $b_{1}=\delta$ and $v\equiv v' \pmod{\delta^\top}$, then it is also immediate that $C_{1}=C'_{1}$ and we are done by (ii) of \Cref{lm:IIIa_nonhyp_top_level_modification}.
 
 Since $(C_1-v)-(C'_1-v')\equiv 0 \pmod{\delta}$, we have $(C_1-v)-(C'_1-v')=k_1 \delta^{\top}+k_2 a_2$ for some integers $k_1,k_2$. We claim that we can in general assume $k_1=0$. Indeed, we consider the equality in modulo $a_1$: $$(C_1-v)-(C'_1-v')\equiv k_1 \delta^{\top}+k_2 a_2 \pmod{a_1}.$$ This means that if $v+\widetilde{k_1}\delta^\top\neq 0\pmod{a_1}$ and $v'-(k_1-\widetilde{k_1})\delta^\top\neq 0 \pmod{a_1}$ for some integer $\widetilde{k_1}$, then by changing $v$ resp. $v'$ to $v+\widetilde{k_1}\delta^\top$ resp. $v'-(k_1-\widetilde{k_1})\delta^\top$, we reduce to the case $k_1=0$. This is only impossible if $a_1=2\delta^\top$ and $v\equiv v'\equiv \delta^\top\pmod{a_1}$. This exceptional case will be dealt later.

Assuming that $k_1=0$ as above. Now we will show that if $n-2\leq 3$ or $a_1\leq \sum_{i=2}^{n-2}(b_i-1)-2$, then there are angle assignments ${\bf C}''=(C_1,C''_2,\dots,C''_{n-2})''$ resp. ${\bf C}'''=(C'_1,C'''_2,\dots, C'''_{n-2})$ such that :
 \begin{itemize}
     \item $C''_i=C_i'''$ for all $i\geq 2$ and $d_{j-1}''< v\leq v'<d_{j}''\pmod{a_1}$; or
     \item $C''_i=C_i'''=C_i$ for all $i\geq 2$, except $C_j''=C_j'''+1$ and $C_k''=C_k'''-1$ for some $2\leq j< k \leq n-2$. We have also $d_{j-1}''< v<d_{j}''\pmod{a_1}$ and $d_{k-1}'''< v'<d_{k}'''\pmod{a_1}$.
 \end{itemize}
  Then, we can modify $\overline{X}$ (and $\overline{X'}$) by the angle assignments ${\bf C}''$ (and ${\bf C}'''$ respectively), and have $UR^2U\cdot \overline{X}(0,v)=\overline{Y}(D_1-1,v+1)$ resp. $UR^2U\cdot \overline{X'}(0,v')=\overline{Y}(D'_1-1,v'+1)$, where $\overline{Y}=X^B_{\III a}((2,1),0,n-2,\tau,{\bf C},Pr'')$. The case for $n-2=2$ is trivial on our assumption of $v,v'$ and $\delta^\top$. If $n-2=3$, then we can assume there is at most one of $v$ and $v'$ equivalent to $d_1 \pmod{a_1}$. First, in the case of $a_2-b_1=2$, either $\delta^\top=1$ (one can observe that for $a_1=3,4,5$) or $a_1\geq 3\cdot 2$. By shifting $v$ and $v'$ simultaneously by $\pm \delta^\top$ (which is $\leq 2$), we are done. Second, in the case of $a_2-b_1=3$, then we can easily adjust $C_2$ by one. The procedure according to the situations are as follows and we are done with setting appropriate ${\bf C}''$ and ${\bf C}'''$:
 \begin{itemize}
     \item If $0< v< b_1-1\pmod{a_1}$ and $b_1-1\leq v'<0\pmod{a_1}$, then we can set $C''_1=b_1-1$ and $C'''_1=b_1-2$.
     \item If $0<v\leq v' < b_1-1$, then we can set $C''_1=C'''_1=b_1-1$.
     \item If $b_1-1\leq v\leq v'<a_1$, then we can set $C''_1=C'''_1=b_1-2$.
 \end{itemize}
  We now give the general procedure for $a_1\leq \sum_{i=2}^{n-2}(b_i-1)-2$. Let $N$ be the largest number $1\leq N\leq n-3$ such that $\sum_{i=1}^N(b_{1+i}-1)\leq a_1-(n-3-N)$. Then we can first modify $\bf C$ such that 
 $$C_{1+i}=\begin{cases}
     b_{1+i}-1 & \mbox{ for } 1\leq i\leq N,\\
     1+a_1-(n-3-N)-\sum_{j=1}^N(b_{1+j}-1) &\mbox{ for } i=N+1,\\
     1&\mbox{ for } N+2\leq i\leq n-3.
 \end{cases} $$
We remark that either $N+1<n-2$ or $C_{n-2}\leq b_{n-2}-3$ by our assumption on $a_1$. If $v\equiv d_j\pmod{a_1}$, then we can decrease $C_2$ by one while increasing $C_{\max(j+2,N+2)}$ by one, in order to make $v$ not equal to any $d_i$. Notice that after the change, by the remark, we still have $C_{n-2}\leq b_{n-2}-2$. Then, by assuming $d_{j-1}<v<d_{j}$, the remaining procedure according to cases are as follows:
\begin{itemize}
    \item If $d_{k-1}<v'\leq d_k\pmod{a_1}$ for some $k\geq j$ such that $C_{k+1}\leq b_{k+1}-2$, then we can set ${\bf C}''={\bf C}$. And we set ${\bf C}'''={\bf C}$ except $C'''_{j+1}=C_{j+1}-1$ and $C'''_{k+1}=C_{k+1}+1$.
    \item If $d_{k-1}< v'\leq d_{k}\pmod{a_1}$, for some $k\geq j$ and $C_{k+1}= b_{k+1}-1$, then we can decrease $C_{k+1}$ by one and increase $C_{n-2}$ by one. Then the procedure of last item applies.
\end{itemize}
Now we have already give the proof of the lemma for the case where $k_1$ can be reduced to $0$, and $n-2\leq 3$ or $a_1\leq \sum_{i=2}^{n-2}(b_i-1)-2$.

 There are some cases still not yet covered, namely the cases $a_1= \sum_{i=2}^{n-2}(b_i-1)-1$ (i.e. $a_2-b_1=n-2$) or $a_1= \sum_{i=2}^{n-2}(b_i-1)$ (i.e. $a_2-b_1=n-3$) with $n-2>3$. Since $a_2\geq a_1$, if we substitute the conditions into the inequality, we obtain:
 \begin{align}
    2(n-3)+2 &\geq \sum_{i=3}^{n-2}b_i +(b_{2}-b_{1})\Rightarrow 2\cdot \frac{n-2}{n-4}\geq \frac{\sum_{i=3}^{n-2}b_i}{n-4}+\frac{b_{2}-b_{1}}{n-4} \\
    2(n-3)&\geq \sum_{i=3}^{n-2}b_i +(b_{2}-b_{1})\Rightarrow 2\cdot \frac{n-3}{n-4}\geq \frac{\sum_{i=3}^{n-2}b_i}{n-4}+\frac{b_{2}-b_{1}}{n-4}.
 \end{align}
 Notice that $\frac{\sum_{i=3}^{n-2}b_i}{n-4}$ is the average of the pole orders of poles $p_3,p_4,\dots,p_{n-2}$. Since we assume that there is no double pole on the top level of $\overline{X}$ and $\overline{X'}$, the average order should be at least $3$ and thus $n-2\leq 6$ resp. $n-2\leq 4$ for (8.20) resp. (8.21). For (8.21), there is a unique case where $b_1=b_2=b_3=b_4=a_2=3$, but this does not satisfy the item (ii) of \Cref{lm:IIIa_nonhyp_top_level_modification}. Hence, we only need to consider the following explicit possibilities for (8.20):
 \begin{itemize}
     \item[(1)] $n-2=4$ with $b_{4}=4$: $(b_4,b_3,b_2,b_1,(a_2,a_1))=(4,4,4,4,(8,8)),(4,4,3,3,(7,7)),(4,3,3,3,(7,6)),$ $(4,3,3,2,(6,6)), (3,3,3,3,(7,5))$ or $(3,3,3,2,(6,5))$.
     \item[(2)] $n-2=4$ with $b_4=3$: $(b_4,b_3,b_2,b_1,(a_2,a_1))=(3,3,3,3,(7,5))$ or $(3,3,3,2,(6,5))$.
     \item[(3)] $n-2=5$: $(b_5,b_4,b_3,b_2,b_1,(a_2,a_1))=(4,3,3,3,3,(8,8)),(3,3,3,3,3,(8,7))$ or $(3,3,3,3,2,(7,7))$.
     \item[(4)] $n-2=6$: $(b_6,b_5,b_4,b_3,b_2,b_1,(a_2,a_1))=(3,3,3,3,3,3,(9,9))$.
 \end{itemize}
 Notice that if we are in the case $a_1= \sum_{i=2}^{n-2}(b_i-1)-1$ (i.e. $a_2-b_1=n-2$) and go through the procedure of setting up ${\bf C},{\bf C}'',{\bf C}'''$ as that of $a_2-b_1>n-2$, then the only obstacle would be $v\equiv\sum_{i=1}^j (b_{1+i}-1)\pmod{a_1}$ for some $1\leq j\leq N-3$ while $\sum_{i=1}^{j+1}(b_{1+i}-1)< v'<0\pmod{a_1}$. This means that if we can shift $v,v'$ simultaneously such that $v\neq\sum_{i=1}^j (b_i-1)\pmod{a_1}$ for any $j$ and $v\leq v'<0\pmod{a_1}$, then we can also find the appropriate ${\bf C}''$ and ${\bf C}'''$. On the list of possibilities for (8.20), the cases with $\delta^\top=1$ are obvious that we can shift $v,v'$. Hence, the only exceptional cases are $(4,4,4,4,(8,8))$ and $(3,3,3,3,3,3,(9,9))$. Notice that $b_1=\delta=\delta^\top$ in these cases, so we only need to consider the cases where $v\neq v'\pmod{\delta^\top}$. One can observe that for such $v,v'$, we can always shift $v,v'$ simultaneously to avoid having bad situation.

 It remains now to deal with the exceptional case for reducing $k_2$ to be $0$, i.e. we have $a_1=2\delta^\top$ with $v,v'\equiv 0\pmod{\delta^\top}$. Notice that the case of $b_{1}=2$ will be trivial because it will imply that $C_1=C'_1$ and we can make $v=v'$. Now, it suffices to show the lemma for the specific situation where $a_1=2\delta^\top$ and $(C'_1,v')=(C_{1}\pm 1,v\pm 1)$, because then we can use the tricks similar to the proof for general cases, by designing the suitable ${\bf C}''$ and ${\bf C}'''$. From the listing of cases $a_2-b_1=n-3$ or $a_2-b_1=n-2$, we know that there is no case satisfying $a_1=2\delta^\top$ plus that we cannot shift $v,v'$. Hence, we can assume that we can apply $UR^2U$ on $\overline{X}$ resp. $\overline{X'}$, reaching some Type IIIa boundaries $\overline{Y}$ resp. $\overline{Y'}$ such that the angle assignments of $\overline{Y}$ and $\overline{Y'}$ coincide. Since we have assumed $(C'_{1},v')=(C_{1}\pm 1,v\pm 1)$, $\overline{Y}$ and $\overline{Y'}$ must coincide as well. This completes the proof.
 
\end{proof}

The following lemma can be proved by similar arguments in \Cref{cor:IIIa_nonhyp_top_level_add_twoprongs}. It will allow us to send back some pole to the top level, in the induction of the proof of \Cref{lm:IIIa_nonhyp_top_level_modification}. 

\begin{lemma} \label{cor:IIIa_nonhyp_top_level_add_one}
Let $\overline{X}=X^B_{\III a}((2,1),1,n-2,\tau,{\bf C},[(0,v)])$ be Type IIIa boundaries of some stratum of B-signature $\mu^\fR=(a_1,a_2\mid -b_1\mid\dots -b_{n-2}\mid-1^2)$ having the non-hyperelliptic top level. Assume that item (ii) of \Cref{lm:IIIa_nonhyp_top_level_modification} holds for both $\overline{X}$. If $\sum_{i=2}^{n-2}(b_{\tau(i)}-1)>a_1> n-3$, and $a_1\neq \delta^\top$ or $v\neq0\pmod{a_1}$, then there exists some $\overline{X'}$ in $\mathcal{A}(\overline{X})$ where $\overline{X}=X^B_{\III a}((2,1),0,n-2,\tau',{\bf C}',Pr')$.
\end{lemma}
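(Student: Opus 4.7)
The plan is to mimic the structure of the proof of \Cref{cor:IIIa_nonhyp_top_level_add_twoprongs}, but now to produce a single ``inverse pole-push'' surgery that sends the pole $p_{\tau(1)}$ from the first bottom-level component of $\overline{X}$ back to the top level, rather than to relate two boundaries that already share the value $\ell_1=1$. By item (ii) of \Cref{lm:IIIa_nonhyp_top_level_modification}, assumed on $\overline{X}$, we have the freedom to rearrange the permutation $\tau$ on the top level (keeping $\tau(1)$ fixed), to adjust the angle assignment $\{C_{\tau(i)}\}_{i>1}$ subject to the top-level compatibility relations, and to shift the prong-matching parameter $v$ by any multiple of $\delta^\top$. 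Using this freedom, the first step is to normalize $\overline{X}$ so that, for some representative $(u_0,v_0)\in [(0,v)]$, the value $v_0$ is congruent to $0\pmod{a_1}$ while $u_0$ is strictly between two consecutive cutting prongs $d''_{j-1}$ and $d''_j$ of the renormalized top level.

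With the normal form in place, I would then apply the surgery $UR^{2}U$ to the equatorial half-arc $\overline{X}(u_0,v_0)$. By the local computation already carried out in \Cref{lm:IIIa_move} (cases (3) and (4)), a $UR^{2}U$-move performed at a prong-matching that hits $v^-$ exactly on one node while lying strictly interior between two $d_j$'s on the other node is precisely the inverse of the move that pushes a residueless pole onto the first bottom-level component. Consequently the output is a Type IIIa boundary of the form $\overline{X'}=X^B_{\III a}\big((2,1),0,n-2,\tau',{\bf C}',Pr'\big)$ lying in $\mathcal{A}(\overline{X})$, which is exactly the conclusion of the lemma. The role of the inequality $\sum_{i=2}^{n-2}(b_{\tau(i)}-1)>a_1>n-3$ is twofold: the upper estimate provides enough slack among the $C''_{\tau(i)}$'s to place the $d''_j$'s so that the chosen $u_0$ avoids all of them, while the lower bound $a_1>n-3$ guarantees enough outgoing prongs around $s_1^\top$ for $v_0\equiv 0\pmod{a_1}$ to be achievable. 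The non-degeneracy clause ``$a_1\neq \delta^\top$ or $v\neq 0\pmod{a_1}$'' is precisely what is needed to make at least one of the admissible $\delta^\top$-shifts of $v$ nontrivial, so that the $v$-congruence class can actually be brought to $0$ without collapsing the level rotation group.

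The hard part, exactly as in \Cref{cor:IIIa_nonhyp_top_level_add_twoprongs}, will be the borderline low-dimensional configurations: strata in which $a_1$ nearly saturates $\sum_{i=2}^{n-2}(b_{\tau(i)}-1)$, or in which the top level has few residueless poles, so that the range of admissible $(\tau'',{\bf C}'')$ allowed by item (ii) is quite restricted. For these, I would enumerate cases in the same spirit as the list $(1)$--$(4)$ at the end of the proof of \Cref{cor:IIIa_nonhyp_top_level_add_twoprongs}, restricted further by the new upper constraint $a_1<\sum_{i=2}^{n-2}(b_{\tau(i)}-1)$. In each such case I would write out the plumbed separatrix diagrams and verify the desired $UR^{\pm 2}U$-sequence directly, occasionally combining it with a preliminary rearrangement on the top level (as in the $k_1=0$ reduction trick used in the proof of \Cref{cor:IIIa_nonhyp_top_level_add_twoprongs}). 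This case analysis is the most technical ingredient, but it requires no new conceptual tool beyond \Cref{lm:IIIa_move}, \Cref{lm:IIIa_nonhyp_top_level_modification}(ii), and the bookkeeping already developed in the preceding lemmas.
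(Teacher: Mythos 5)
Your overall strategy---normalize using \Cref{lm:IIIa_nonhyp_top_level_modification}(ii), then pull $p_{\tau(1)}$ back to the top level with a single $UR^{2}U$ move, treating borderline signatures by a separate case analysis---is the same as the paper's, but the key surgery is set up at the wrong node, and this is a genuine gap. In the boundary $X^B_{\III a}((2,1),1,n-2,\tau,{\bf C},[(0,v)])$ the prong-matching lives in $\mathbb{Z}/\kappa_1\mathbb{Z}\times\mathbb{Z}/\kappa_2\mathbb{Z}$ with $\kappa_1=a_2-b_{\tau(1)}$ and $\kappa_2=a_1$, and the pole you must retrieve, $p_{\tau(1)}$, lies on the bottom component attached at the node governed by the \emph{first} coordinate $u$. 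To bring it back up one needs a representative $(u_0,v_0)$ with $u_0\equiv c_i\pmod{a_2-b_{\tau(1)}}$ (exactly on a saddle-connection prong at that node) and $v_0$ strictly between consecutive special values mod $a_1$; this is what the paper arranges, and it is also what the output prong-matching of \Cref{lm:IIIa_move}(3) shows: after pushing a pole to the first bottom component, the new $u'$ sits exactly at a $c'$-value, so the inverse move must start from such a configuration. Your normalization is the mirror image---$v_0\equiv 0\pmod{a_1}$ exactly and $u_0$ strictly interior---which aligns the $UR^{2}U$ move with the other node; since $\ell_2=n-2$ there is no residueless pole there to pull up, and such a move at best pushes a top-level pole down to the second bottom component or merely reshuffles angles, so it cannot produce $\ell_1'=0$.

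Your reading of the hypotheses confirms the mix-up. The clause ``$a_1\neq\delta^\top$ or $v\not\equiv 0\pmod{a_1}$'' is used in the paper to guarantee that $v$ can be moved \emph{away} from $0$ (so that, after the angle adjustments borrowed from the proof of \Cref{cor:IIIa_nonhyp_top_level_add_twoprongs}, $v$ is interior to an interval between the $d_j$'s); under your reading, the excluded degenerate case $a_1=\delta^\top$, $v\equiv 0$ would actually be the favorable one. Likewise $a_1>n-3$ is not about prongs at $s_1^\top$: it is exactly the realizability of the target boundary with all $n-2$ poles on top, since its top-level angles must satisfy $\sum_i D_{\tau(i)}=a_1$ with every $D_{\tau(i)}\geq 1$, while the strict inequality $a_1<\sum_{i=2}^{n-2}(b_{\tau(i)}-1)$ gives the slack needed to place the $d_j$'s away from $v$ (not away from $u_0$). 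Finally, the residual cases are not generic low-dimensional strata but precisely the presence of a double pole on the top level (forcing $\delta^\top\leq 2$ and, up to shifts, $v\equiv-1$ or $-2\pmod{a_1}$), which the paper resolves by repositioning poles so that $D_{\tau(n-2)}\geq 2$; your plan to enumerate borderline signatures does not isolate this obstruction. The argument can be repaired, but only after swapping the roles of $u$ and $v$ throughout and redoing the normalization and case analysis accordingly.
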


\begin{proof}
     If $a_1\neq \delta^\top$, by item(ii) \Cref{lm:IIIa_nonhyp_top_level_modification}, we can always assume $(0,v)\in Pr$ for some $v\neq 0\pmod{a_1}$. Suppose that there exists $(u_0,v_0)\in [(0,v)]$ such that $u_0\equiv c_i\pmod{a_2-b_{\tau(1)}}$ and $d_{j-1}<v_0<d_j\pmod{a_1}$ for some $i,j$. Then $UR^2U\cdot \overline{X}(u_0,v_0)=\overline{X'}(u',v')$, where $\overline{X}=X^B_{\III a}((2,1),0,n-2,\tau',{\bf C}',Pr')$ and we are done. In the lemma, we also have excluded the case where $a_1=\sum_{i=2}^{n-2}(b_i-1)$. Thus, if we do not have any double pole on the top level, then by the argument of adjusting the angle assignments (and shifting $v$) as in the proof of \Cref{cor:IIIa_nonhyp_top_level_add_twoprongs}, we can find some proper angle assignments on the top level such that $c_{k-1}<v<c_{k}\pmod{a_1}$. By applying $UR^2U$ on $\overline{X}(0,v)$ then we are also done. 
    
    The cases not covered by the argument of angle adjustments in \Cref{cor:IIIa_nonhyp_top_level_add_twoprongs} is if there is some double pole on the top level. However, then we will have $\delta^\top\leq 2$ and we can assume $v\equiv -1\pmod{a_1}$ or $-2\pmod{a_1}$. If $v\equiv -1\pmod{a_1}$, then we are fine because $b_{\tau(n-2)}\geq 3$ and we can make $D_{n-2}\geq 2$. For the case $v\equiv -2\pmod{a_1}$, we can modify the positions of the poles such that $b_{\tau(n-2)}=2$ and $\tau(n-3)=n-2$. Then by setting $D_{n-2}\geq 2$, we are also done.

\end{proof}

Finally, we are ready to prove \Cref{lm:IIIa_nonhyp_top_level_modification} by induction on $\ell_2-\ell_1 \geq 1$. Notice first that if we have proved \Cref{lm:IIIa_nonhyp_top_level_modification} for the boundaries $X^B_{\III a}((2,1),0,\ell_2-\ell_1,\tau',{\bf C}',Pr') $ of the stratum of B-signature of the form $\mu'^{\fR'}=(a_1',a_2'\mid b_1'\mid\dots\mid b_{\ell_2-\ell_1}'\mid -1^2)$, then we have also proved the lemma for boundaries $\overline{X}=X^B_{\III a}((h_1,h_2),\ell_1,\ell_2,\tau,{\bf C},Pr)$ of arbitrary stratum of B-signature $\mu^\fR$. Namely, we have the covering map $\rho^*:\mathcal{A}_{h_1,h_2}(\mu^\fR,\mathcal{I}_1,\mathcal{I}_2)\supset \mathcal{A}(\overline{X}) \rightarrow\mathcal{A}(\overline{X'})\subset \mathcal{A}_{h_1',h_2'}(\mu'^{\fR'},\emptyset,\emptyset)$, where $(h_1',h_2')=(2,1)$ if and only if $\kappa_1\geq \kappa_2$. The value of $(h_1',h_2')$ is exactly the reason why we have to consider the relation of $\kappa_1,\kappa_2$ in item (i) of \Cref{lm:IIIa_nonhyp_top_level_modification}. By \Cref{lm:IIIa_nonhyp_induction_reduce_ell}, we can reduce the number $\ell_2-\ell_1$ of poles on the top level component and apply induction hypothesis. In particular, we will use \Cref{cor:IIIa_nonhyp_top_level_add_twoprongs} as a part of induction hypothesis.

\begin{proof}[Proof of \Cref{lm:IIIa_nonhyp_top_level_modification}]
    As we have discussed above, we may use the reduction to strata of simpler B-signature to simplify our arguments in the proof. We may assume that $\overline{X}=X^B_{\III a}((2,1),0,n-2,\tau,{\bf C},Pr)$ is boundary of a stratum of B-signature $\mu^{\fR}=(a_1,a_2\mid b_1\mid\dots\mid b_{n-2}\mid -1^2)$, and $\tau(1)=1$. And we use induction on $n-2>0$. For $n-2=1$, it is trivial because item (i) start with $n-2=2$ while item (ii) is just due to the level rotation.
    
   Now assume $n-2=2$, then by \Cref{lm:TypeB_IIIa_2pole_top_level_C2=1}, we can make $C_2=1$ except for the exceptional case (a3) (which is included in (a1)). In addition, if we are not in the exceptional case (a2), then by level rotation, we can surely find some prong-matching $(u,v)$ such that $0\leq u < c_1\pmod{a_2}$ while $d_1< v \leq 0\pmod{a_1}$. Thus, $UR^{-2}U\cdot \overline{X}{(u,v)}$ will reach the desired $\overline{X'}$ in item (i) of the lemma. Item (ii) follows item (i) by \Cref{cor:IIIa_nonhyp_top_level_add_twoprongs}. Although item (i) does not hold for case (a1), the level rotation action is trivial, and \Cref{lm:TypeB_IIIa_2pole_top_level_C2=1} shows that except for the case (a3), angle assignments can be adjusted freely. Hence, item (ii) also holds for (a1) except (a3). 
   
    Now we assume that $n-2>2$ and we will first prove item (i). By \Cref{lm:IIIa_nonhyp_induction_reduce_ell}, if there is $(u,v)\in Pr$ such that $0\leq u < c_1\pmod{a_2}$ while $d_{k-1}< v \leq d_k\pmod{a_1}$ for some $k\neq 1$, then we are done with item (i). Otherwise, we can obtain another element $\overline{Y}$ in $\mathcal{A}(\overline{X})$ with $n-3$ poles on the top level component such that $p_{1}$ remains on the top level. We can have $\overline{Y}=X^B_{\III a}((2,1),1,n-2,\tau',{\bf C}',Pr')$ or $\overline{Y}=X^B_{\III a}((2,1),0,n-3,\tau',{\bf C}',Pr')$. First, assume the former case. W.L.O.G. we can assume $(0,v')\in Pr'$ with $d'_{k'-1}<v'<d'_{k'}\pmod{a_1}$ for some $k'$. We now assume that item (ii) holds on $\overline{Y}$. If $b_{1}=2$, the trick to prove item (i) is already given in \Cref{lm:IIIa_proper_prong_matching_for_double_pole}. Now we assume $b_{1}>2$. If $a_2-b_k\geq n-4+\frac{b_1}{2}$, then by induction hypothesis, we may modify $\overline{Y}$ such that $C'_{1}\geq D'_{1}$. Since we have assumed that item (ii) holds on $\overline{Y}$, then by \Cref{cor:IIIa_nonhyp_top_level_add_one} (or \Cref{prop:IIIa_hyper}) we can send back $p_k$ to the top level. This is because, we obtain $\overline{Y}$ by sending exactly one pole to the bottom level, so the equatorial arc approaching $\overline{Y}$ will never be $\overline{Y}(0,0)$. Now we can reduce to the case where $C_1\geq D_1$, which by \Cref{lm:IIIa_nonhyp_induction_reduce_ell} we are done with item (i). There are some cases where item (ii) will not hold on $\overline{Y}$. However, if we observe the exceptional cases carefully (see also \Cref{lm:TypeB_IIIa_2pole_top_level_C2=1}), we can make $D'_1-C'_1\leq 2$. And there is a proper prong-matching $(u',v')\in Pr'$ such that $u'\equiv c_i \pmod{a_2-b_k}$ and $0< v'<d_1 \pmod{a_1}$ such that $UR^2U\cdot\overline{Y}(u',v')=\overline{X''}(u'',v'')$, where $\overline{X''}=X^B_{\III a}((2,1),0,n-2,\tau'',{\bf C}'',Pr'')$ and $C''_1\geq D''_1$, so we are done with the case $a_1\leq a_2-b_k$. If now $a_2-b_k<n-3+\frac{b_1}{2}$, i.e. $a_2-b_k<a_1$, then by induction hypothesis, we can reach $\overline{Y'}=X^B_{\III a}((2,1),1,n-3,\tau'',{\bf C}'',[(0,v'')])$ in $\mathcal{A}(\overline{X})$ such that $\tau''(n-2)=1$ and $\tau''(1)=k$. By \Cref{cor:IIIa_nonhyp_top_level_add_one}, we can send $p_k$ back to the top level component. This will be reduced to the other case of $\overline{Y}$ we obtained in the beginning. This will be treated later. 
  
    Now assume that we have $\overline{Y}=X^B_{\III a}((2,1),0,n-3,\tau',{\bf C}',[(0,v')])$. If $\tau'(n-2)\neq 1$, then the top level component $Y_0$ contains $p_{1}$. Now, if item (i) holds on $\overline{Y}$, we can obtain $\overline{Y'}=X^B_{\III a}((2,1),1,n-3,\tau'',{\bf C}'',[(0,v'')])$ such that $\tau''(1)=1$. By \Cref{prop:IIIa_hyper}, we can find proper prong-matching and thus by applying $UR^2U$ move $p_{\tau''(n-2)}$ back to the top level. If item (i) does not hold on $\overline{Y}$, then we may assume $Y$ is in (a1), (a2) or (c). However, since $a_2>a_1-b_{\tau'(n-2)}$, (a1) and (c) cannot happen. Exceptional case (a2) means that $b_1=2$, so \Cref{lm:IIIa_proper_prong_matching_for_double_pole} guarantees item (i) on $\overline{X}$. We now consider the case $\tau'(n-2)=1$ for $\overline{Y}$. W.L.O.G., we can assume $\tau'(1)=2$. By induction hypothesis, we can reach $\overline{Y'}=X^B_{\III a}((2,1),1,n-3,\tau'',{\bf C}'',[(0,v'')])$ in $\mathcal{A}(\overline{X})$ such that $\tau''(1)=2$ and $\tau''(n-2)=1$. By induction hypothesis and \Cref{cor:IIIa_nonhyp_top_level_add_one}, we can move $p_{1}$ back to the top, i.e. we are reduced to the case when $\overline{Y}=X^B_{\III a}((2,1),1,n-2,\tau',{\bf C'},[(0,v')])$ where $\tau'(1)=2$. If $a_1-b_{2}\geq \ell-2+\frac{b_{\ell}}{2}$, then we are done by the argument in last paragraph. We now assume that $a_2-b_{2}< n-3+\frac{b_{1}}{2}$, i.e. $a_2<n-3+\frac{b_1}{2}+b_2$. That is, $$a_1=\sum_{i=1}^{n-2} b_i -a_2 > \sum_{i=1}^{n-2} b_i -(n-3)-\frac{b_{1}}{2}-b_{2}.$$ Since $a_1\leq a_2$, we have $$\sum_{i=1}^{n-2} b_i -(n-3)-\frac{b_{1}}{2}-b_{2} <  n-3+\frac{b_{1}}{2}+b_{2}.$$
    Therefore, $b_{2} > \sum_{i=3}^{n-2}(b_i-2)\geq (n-4)(b_{2}-2)$. Thus $b_{2}<2+\frac{2}{n-5}$. As item (i) for the case where $b_2=2$ (i.e. $b_1=2$ as well) has been handled by \Cref{lm:IIIa_proper_prong_matching_for_double_pole}, we should have $n-2=3$ or $4$. If $n-2 =4$, then $b_1=b_2=b_3=b_4=3$ with $a_2=a_1=6$, and we only consider the case where $v\neq 0,3 \pmod{6}$. By \Cref{lm:TypeB_IIIa_4pole_top_level_ind=1or2}, item (ii) holds already on $\overline{X}$, i.e. we can have $C_1>D_1$ which guarantees item (i) by \Cref{lm:IIIa_nonhyp_induction_reduce_ell}. If $n-2=3$, then $b_2>b_3-2$ and thus $b_2=b_3$ or $b_2=b_3-1$. If $b_2=b_3-1$, then $a_1=b_3-1+\frac{b_1}{2}$ and $a_2=b_3+\frac{b_1}{2}$. That is, $\gcd(a_1,a_2)=1$ and we can find the proper prong-matching on $\overline{X}$ as in \Cref{lm:IIIa_nonhyp_induction_reduce_ell}. If $b_2=b_3$, then $a_1=b_3-1+\frac{b_1}{2}$ and $a_2=b_3+1+\frac{b_1}{2}$. So we have $\gcd(a_1,a_2)\leq 2$ and again we can find a proper prong-matching to apply \Cref{lm:IIIa_nonhyp_induction_reduce_ell}.

    Now we will prove the second item. Assume that $n-2>2$. By the first item to $\overline{X}$, we can obtain and obtain $\overline{Y}=X^B_{\III a}((2,1),1,n-2,\tau',{\bf C'},[(0,v')])$ in $\mathcal{A}(\overline{X})$, where $\tau'(1)=1$ and $v'\neq 0$. We also have $\operatorname{Ind}(\overline{Y})\equiv C'_1-v' \pmod \delta$. Then if item (ii) holds on $\overline{Y}$, then by \Cref{cor:IIIa_nonhyp_top_level_add_twoprongs} we are done. Now we may assume that $\overline{Y}$ falls into the exceptional cases (a3) or (b) or (c), we only need to consider the following cases of $\overline{X}$:
    \begin{enumerate}
        \item[(1)] $n-2=3$ and $b_1=2$ and $b_1=b_2=a_1$ while $a_2=a_1+2$;
        \item[(2)] $n-2=3$ and $b_1\geq 3$ and $b_2=b_3=a_1$ while $a_2=a_1+b_1$;
        \item[(3)] $n-2=4$ and $b_1=b_2=b_3=b_4=3$ with $a_1=a_2=6$ ($b_1=2$ will not fall into (b) because $3+2<6$ which means that it is not in our reduction);
        \item[(4)] $n-2=5$ and $b_1=b_2=b_3=b_4=b_5=3$ with $a_2=a_1+3=9$
        \item[(5)] $n-2=5$ and $b_1=b_2=b_3=b_4=3$, $b_5=2$ with $a_2=a_1+2=8$
    \end{enumerate}

    In case (1), we have $\delta=\gcd(a_1,a_2)$. And by level rotation, we can change the prong-matching by $2$. Notice that by the value of $a_2$ and $b_2,b_3$, we should always have $C_2,C_3>1$. This means that by considering an appropriate prong-matching, we can adjust the angle assignments for $p_2$ and $p_3$ freely. Furthermore, we can send $p_2$ to the bottom level component containing $q_1$, and obtain a top level satisfying item (ii) of the lemma. Hence, we can send $p_2$ back in another position. This directly shows that we can modify $\overline{X}$.

    In case (2), we can assume that $b_2=b_3=a_1\geq 4$ because otherwise the top level of $\overline{X}$ will be already the exceptional case (b). Assume that the prong-matching equivalence class on $\overline{Y}$ is $[(0,\lfloor a_1/2\rfloor)]$ (notice that $\lfloor a_1/2\rfloor\geq 2 $). By \Cref{lm:TypeB_IIIa_2pole_top_level_C2=1}, we can make $D'_2$ or $D'_3$ to be $a_1-1$. W.L.O.G., we assume $D'_2=a_1-1$. Then $UR^2U\cdot \overline{Y}(0,\lfloor a_1/2\rfloor)=\overline{X'}(D'_1-1,\lfloor a_1/2\rfloor+1)$, where we send back $p_1$ to the top level. If $D'_1>1$, then by applying $UR^{-2}U$ on $\overline{X'}(D_1'-2,\lfloor a_1/2\rfloor+2)$ we can send again $p_1$ to the bottom level but the prong-matching on the new boundary $\overline{Y'}$ is non-exceptional. Otherwise, if $a_1$ is even, then we can consider $\overline{X'}(D_1',\lfloor a_1/2\rfloor)$. The only problematic situation is $b_1=3$ and $a_2$ is odd. Then $\gcd(a_1,a_2)\leq 3$, so we can change the prong-matching by $\pm 3$. If $a_2 \neq 6$, then we are done. It is obvious that $a_2 \neq 6$ because the problematic situation is under the condition that $a_2$ is odd. 

    In case (3), if $v\equiv 0,3\pmod 6$, then this case is the exceptional case (c). If $v\not\equiv 0,3 \pmod 6$, then \Cref{lm:TypeB_IIIa_4pole_top_level_ind=1or2} ensures that we can modify the top level.

    In case (4) and (5), on $\overline{X}$, only one $C_i$ ($i\neq 1$) is one. In case (4), we can rotate the prong-matching on by $+1$ or $-1$ and change the prong-matching by $3$ to avoid $v'\equiv 0,3 \pmod{6}$ on $\overline{Y}$. In case (5), $\gcd(a_1,a_2)=2$, so we can change the prong-matching on $\overline{X}$ as case (4) and avoid obtaining exceptional $\overline{Y}$.

\end{proof}

The following proposition is the variant of item (ii) of \Cref{lm:IIIa_nonhyp_top_level_modification} on the connectedness of the whole equatorial net. We require that the bottom level component containing $q_2$ does not contain any marked residueless pole. In return, we can drop the exceptional cases in \Cref{lm:IIIa_nonhyp_top_level_modification}.    

\begin{proposition}\label{prop:IIIa_nonhyp_empty_2nd_tail_top_level_modification}   
    Let $\cC$ be a non-hyperelliptic component of a stratum of B-signature and let $$\overline{X}=X^B_{\III a}((2,1),\ell,n-2,\tau,{\bf C},[(0,v)])\in \partial\cC$$ be a Type IIIa boundary has the non-hyperelliptic top level and $\kappa_1\geq \kappa_2$. In addition, $C_{\tau(\ell)}=b_{\tau(\ell)}-1$. Then there exists $$\overline{X'}=X^B_{\III a}((2,1),\ell,n-2,\tau',{\bf C}',[(0,v')])\in \partial\cC$$ for any $\tau'$, ${\bf C}'$ and $v'$ such that
    \begin{itemize}
        \item $\overline{X'}$ has the non-hyperelliptic top level; 
        \item $v'=v\pmod{\delta(\mathcal{I}_1,\emptyset)}$, where $\mathcal{I}_1$ is the set of bottom level residueless poles;
        \item $\tau'(i)=\tau(i)$ and $C'_{\tau(i)}=C_{\tau(i)}$ for any $i\leq\ell$.
    \end{itemize}
\end{proposition}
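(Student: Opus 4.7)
The strategy is to reduce to \Cref{lm:IIIa_nonhyp_top_level_modification}(ii) and to neutralize its exceptional cases (a3), (b), (c) by exploiting the two extra hypotheses: the absence of residueless poles on the $q_2$-side ($\mathcal{I}_2=\emptyset$) and the extremality $C_{\tau(\ell)}=b_{\tau(\ell)}-1$. The first hypothesis gives us a ``parking lot'' on the bottom-$q_2$ component to temporarily send a top-level pole down, and the second one ensures that the bottom-$q_1$ configuration stays untouched through the cycle, keeping us inside the truncated equatorial net $\mathcal{A}_{(2,1)}(\mu^\fR,\mathcal{I}_1,\emptyset)$ with $\mathcal{I}_1=\{\tau(1),\dots,\tau(\ell)\}$.

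The plan is to proceed by induction on $n-2-\ell$, the number of top-level poles. For the base configurations in which neither the top level of $\overline{X}$ nor that of the target $\overline{X'}$ falls into any of (a3), (b), (c), the proposition is immediate from \Cref{lm:IIIa_nonhyp_top_level_modification}(ii) together with \Cref{cor:IIIa_nonhyp_top_level_add_twoprongs} to realize the congruence class $v'\equiv v\pmod{\delta(\mathcal{I}_1,\emptyset)}$. In the inductive step, one selects a top-level pole $p_{\tau(j)}$ and applies \Cref{lm:IIIa_nonhyp_induction_reduce_ell} to push it to the $q_2$-component (this is permitted precisely because $\mathcal{I}_2=\emptyset$, so that the resulting boundary still lies in $\mathcal{A}_{(2,1)}(\mu^\fR,\mathcal{I}_1,\emptyset)$ after updating $\mathcal{I}_2$ to contain $\tau(j)$). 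One then invokes the induction hypothesis on the boundary with one fewer top-level pole to rearrange the remaining poles and adjust the prong-matching, and finally brings $p_{\tau(j)}$ back up by \Cref{cor:IIIa_nonhyp_top_level_add_one}. The condition $\kappa_1\geq\kappa_2$ is used to guarantee that the relevant sub-equatorial net still admits these up-down moves (compare the discussion following \Cref{prop:IIIa_hyper} and the covering map $\rho^*$).

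The remaining work is a case-by-case check of the exceptional top-level patterns. For (a3), where the top level has exactly two poles of equal order $b$ with $v\equiv\pm\lfloor b/2\rfloor\pmod b$, we send one of them to the empty $q_2$-side, reducing to a single-pole top level that escapes all exceptional patterns, perform the needed prong-matching adjustment there, and bring the pole back in the desired position. For (b) and (c) we perform the analogous reduction: any of the three or four top-level poles of order $3$ can be sent to the $q_2$-side to reach a top level with only two or three top-level poles of order $3$, which no longer satisfies the rigid pattern of (b) or (c); the induction hypothesis applies to the reduced configuration, and the pole is then pushed back up. The bookkeeping is facilitated by \Cref{lm:TypeB_IIIa_4pole_top_level_ind=1or2} for the order-$3$ patterns and by \Cref{cor:IIIa_nonhyp_top_level_add_one} for the return move.

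The main obstacle will be Step~3, namely checking that each down-then-up cycle through the $q_2$-component actually realizes \emph{every} $v'$ in the allowed congruence class modulo $\delta(\mathcal{I}_1,\emptyset)$, rather than a strict subgroup thereof. In the very symmetric patterns (b) and (c), the intrinsic $\delta^\top$ of the top level is $3$, whereas $\delta(\mathcal{I}_1,\emptyset)$ may be finer; the extremality $C_{\tau(\ell)}=b_{\tau(\ell)}-1$ is precisely what contributes the missing unit shifts of $v$ when the displaced pole re-enters through a clockwise transit past $p_{\tau(\ell)}$, so the bulk of the remaining work is to verify explicitly that the composite prong-matching cocycle of such a down-and-up cycle covers all residues in $\mathbb{Z}/\delta(\mathcal{I}_1,\emptyset)\mathbb{Z}$, including the boundary subcases where the pole returning to the top level lands adjacent to $p_{\tau(\ell)}$.
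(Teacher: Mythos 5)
There is a genuine gap, and it sits exactly where the proposition has content. Your plan reduces everything to ``park a top-level pole on the empty $q_2$-side, induct, bring it back,'' but in the exceptional configurations (a3), (b), (c) the availability of that down-move is precisely what is obstructed: case (a3) is contained in case (a1), which is explicitly excluded from item (i) of \Cref{lm:IIIa_nonhyp_top_level_modification}, and \Cref{lm:TypeB_IIIa_2pole_top_level_C2=1} shows that in the locked two-pole configuration one cannot normalize the angles to $C_{\tau(n-2)}=1$, which is what the down-move via \Cref{lm:IIIa_move}/\Cref{lm:IIIa_nonhyp_induction_reduce_ell} needs; you give no argument that a proper prong-matching allowing the push to the $q_2$-side exists there. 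Moreover, you yourself flag (your ``Step~3'') that you have not checked that a down-and-up cycle realizes every $v'$ in the residue class modulo $\delta(\mathcal{I}_1,\emptyset)$ rather than a proper coset, and the mechanism you invoke for the missing shifts is not substantiated: in the paper the hypothesis $C_{\tau(\ell)}=b_{\tau(\ell)}-1$ (i.e.\ $D_{\tau(\ell)}=1$) is used inside explicit move sequences, not as a source of unit translations of $v$.

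The paper's actual route through the exceptional cases is different: it exploits the emptiness of the $q_2$-side bottom component not as a parking lot but by \emph{absorbing} it, i.e.\ choosing a prong-matching $(u_0,v_0)$ with $u_0$ strictly inside a sector and $v_0\equiv d_j$ (available by non-hyperellipticity and \Cref{prop:IIIa_hyper}) so that $UR^2U$ lands on a Type IIIb boundary (if $e_2>1$) or Type IIIc boundary (if $e_2=1$) carrying the same top-level poles. On those boundaries the two (or four) equal-order poles become permutable and the angles and prong-matching class can be changed, which is exactly what breaks the locked patterns (a3) and (c); case (b) is then handled by forced angles, level rotation by $3$, and a reduction to case (a2) or to a configuration with a bottom-level pole that can be pushed up. If you want to salvage your induction scheme, you would need to either prove that the push-down move exists in (a3), (b), (c) under the proposition's hypotheses (which I doubt can be done uniformly), or replace it in those cases by the IIIb/IIIc transit as in the paper; as written, the proposal does not prove the proposition.
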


\begin{proof}
  It suffices to just consider the exceptional cases that are covered by the proposition but not covered by \Cref{lm:IIIa_nonhyp_top_level_modification}. The cases are:
\begin{itemize}
    \item[(a3)] $\ell=n-4$,  $\kappa_1=\kappa_2=b_{\tau(n-3)}=b_{\tau(n-2)}$ and $v\equiv \pm\lfloor \kappa_1/2\rfloor \pmod{\kappa_1}$;
    \item[(b)] $\ell=n-5$, $\{\kappa_1,\kappa_2\}=\{3,6\}$ and $b_{\tau(n-4)}=b_{\tau(n-3)}=b_{\tau(n-2)}=3$;
    \item[(c)] $\ell=n-6$, $\kappa_1=\kappa_2=6$ and $b_{\tau(n-5)}=b_{\tau(n-4)}=b_{\tau(n-3)}=b_{\tau(n-2)}=3$ with $v\equiv 0,3 \pmod{6}$.
\end{itemize}

 In case (a3), according to \Cref{lm:TypeB_IIIa_2pole_top_level_C2=1}, we can always modify the top level such that either $C_{\tau(n-2)}=1$ or $C_{\tau(n-2)}=D_{\tau(2)}+1$ (resp. $C_{\tau(n-2)}=D_{\tau(2)}+2$) if $\kappa_1$ is odd (resp. even). As the top level is non-hyperelliptic with respect to the prong-matching, we can find some prong-matching $(u_0,v_0)$ such that $c_{i-1}+1\leq u_0 < c_i \pmod{\kappa_1}$ for any $i$ while $v_0\equiv d_j\pmod{\kappa_2}$ for some $i,j$. Then $UR^2U\cdot \overline{X}(u_0,v_0)$ will be adjacent to a Type IIIb or IIIc boundary (depending on whether $e_2=1$) having the same set of residueless poles on the top level as $\overline{X}$ . If $e_2=1$, we get a Type IIIc boundary and the two poles on the top level are permutatble, i.e. we can permute the two residueless poles on the top level of the original Type IIIa boundary. Hence, after interchanging the residueless poles on the top level of the original Type IIIa boundary, we can easily reduce $C_{\tau(n-2)}$ to one (see the proof of \Cref{lm:TypeB_IIIa_2pole_top_level_C2=1}). In case $e_2>1$, we have $UR^2U\cdot \overline{X}(u_0,v_0)=\overline{Y}(u_0)$ where $\overline{Y}=X^B_{\III b}((2,1,2,1),n-4,\tau',e_2-1,{\bf C}')$. Unless $\kappa_1=3$, we should have $ 1< u_0+1< C_{\tau(\ell+i)}\pmod{\kappa_1}$ or $0\leq u_0-1< C_{\tau(\ell+i)}\pmod{\kappa_1}$. Then $UR^{-2}U\cdot \overline{Y}(u_0\pm 1)=\overline{X'}(u_0\pm 1,v_0)$ where $\overline{X'}$ is not of case (a3). We can make $C''_{\tau(n-2)}$ to one and change back the prong-matching via some Type IIIb boundary. Now assume $\kappa_1=3$. Notice that as we assume $a_1\leq a_2$, either $e_1=e_2>1$ or there is some other marked residueless pole on the bottom level. If there is no other marked residueless pole, then $UR^{-2e_2+2}UR^{-2C_{\tau(n-4+j)}}UR^{2e_2-2}U\cdot \overline{Y}(1)=\overline{Y'}(u'')$, where $\overline{Y'}=X^B_{\III b}((2,1,2,1),n-4,\tau'\circ (n-3,n-2),e_2-1,{\bf C}')$, i.e. we have interchanged the positions of $p_{\tau(n-2)}$ and poles $p_{\tau(n-3)}$ and we are done by return to a Type IIIa boundary. Assume that there is some marked residueless pole on the bottom level of $\overline{Y}$ (and also $\kappa_1=3$), then as $D_{\tau(n-4)}=1$ by the hypothesis of the proposition, we have $UR^{-2}UR^2U(R^{-2}U)^2R^2U \cdot\overline{Y}(2)=\overline{X'}(0,0)$ where $\overline{X'}$ has the same bottom level as $\overline{X}$ but with the hyperelliptic top level. This implies that we can actually interchange the positions and angle assignments on the top level of $\overline{X'}$, i.e. it induces the moves to interchange the two residueless marked poles on the top level of $\overline{Y}$ and $\overline{X}$, and we are done. 

In case (c), if $e_2>1$, then the argument will be similar to (a3). We can change the prong-matching equivalence class such that it is non-exceptional via some Type IIIb boundary. After we have arranged the angles and position of the residueless poles on the top level, we change back the prong-matching via Type IIIb boundary again. Now we consider $e_2=1$. By changing the choice of $\tau(n-5)$, we can assume that $v\equiv 0\pmod{6}$. Notice that by observation, we can also choose that $C_{\tau(n-5)}=2$ (also $C_{\tau(n-2)}=2$ due to non-hyperellipticity). We have then $UR^2U\cdot \overline{X}(2,-2)=\overline{Z}(1)$, where $\overline{Z}=X^B_{\III c}((2,1),n-6,n-2,\tau,3,{\bf C}')$ with $C'_{\tau(n-2)}=2$ and $C'_{\tau(n-5)}=C'_{\tau(n-4)}=C'_{\tau(n-2)}=1.$ We can simultaneously interchange the angle assignments and the positions of $p_{\tau(n-3)}$ and $p_{\tau(n-4)}$ by considering $UR^{-7}U\cdot \overline{Z}(1)$. In addition, we can purely interchange position of $p_{\tau(n-3)}$ and $p_{\tau(n-4)}$ by considering $UR^2UR^4UR^{-2}U\cdot \overline{Z}(1)$. By combining these moves we can modify the top level of $\overline{Z}$ and hence all possible Type IIIa boundary in (c) are contained in the same component.

    The case (b) is easy because the angle assignments are forced to be identical and by level rotation we can easily change the prong-matching by $3$. The only thing we need to show is we can interchange any two poles. If $\kappa_1=6$, then we can send some pole to the bottom level and reduce to case (a2) and we are done. Otherwise, there has to be some residueless marked pole on the bottom level, so by non-hyperellipticity and \Cref{prop:IIIa_hyper}, we can find a proper prong-matching and push it to the top level and got either case (c) or some non-exceptional case. Then we can change the position and push back the pole from bottom level. Thus, cases (b) can be dealt by other cases.
    
\end{proof}

Now we can also drop the requirement that $\kappa_2> \delta^\top$ or $(0,0)\notin Pr$ in \Cref{cor:IIIa_nonhyp_top_level_add_one}. The following corollary allows us in general maximizing the number of poles on the top level of a Type IIIa boundary without marked residueless pole on the bottom level component containing $q_2$.

\begin{corollary} \label{cor:IIIa_nonhyp_top_level_add_max}
    Let $\cC$ be a non-hyperelliptic component of a stratum of B-signature and $$\overline{X}=X^B_{\III a}((2,1),\ell,n-2,\tau,{\bf C},Pr)\in \partial\cC$$ be a Type IIIa boundary having the non-hyperelliptic top level and with $\ell>0$ such that $C_{\tau(\ell)}=b_{\tau(\ell)}-1$. Assume that $\kappa_2> n-2-\ell$, i.e. the number of poles on the top is not yet maximized. Then there exists $X^B_{\III a}((2,1),\ell-1,n-2,\tau,{\bf C}',Pr')\in \partial\cC$, where $\tau'(i)=\tau(i)$ and $C'_{\tau(i)}=C_{\tau(i)}$ for $i<\ell$. 
\end{corollary}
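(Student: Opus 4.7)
My plan is to derive this corollary from \Cref{cor:IIIa_nonhyp_top_level_add_one} by dropping its restriction on the prong-matching with help from \Cref{prop:IIIa_nonhyp_empty_2nd_tail_top_level_modification}.

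First I would reduce to the one-pole situation of \Cref{cor:IIIa_nonhyp_top_level_add_one} via the covering map $\rho^*$ attached to the truncated equatorial net $\mathcal{A}_{2,1}(\mu^\fR, \mathcal{I}_1, \emptyset)$ with $\mathcal{I}_1 = (\tau(1), \ldots, \tau(\ell-1))$. Under $\rho^*$ the boundary $\overline{X}$ maps to a Type IIIa boundary of a simpler B-signature stratum $\mu'^{\fR'}$ carrying exactly one residueless marked pole $p_{\tau(\ell)}$ on the bottom level component containing $q_1$, with $a_1' = a_1 - \sum_{i < \ell} b_{\tau(i)}$. The hypotheses descend cleanly: the non-hyperellipticity of the top level is preserved, the equality $C_{\tau(\ell)} = b_{\tau(\ell)} - 1$ survives since $\rho^*$ only affects the polar data attached to $\mathcal{I}_1$, and the inequality $\kappa_2 > n - 2 - \ell$ becomes exactly the inequality needed to fit the extra pole on the top level of $\rho^*(\overline{X})$.

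In the reduced setting, if the prong-matching equivalence class admits a representative $(0, v')$ with $v' \not\equiv 0 \pmod{a_1'}$ or if $a_1' \neq \delta'^{\top}$, then \Cref{cor:IIIa_nonhyp_top_level_add_one} produces the desired boundary and an equatorial path; lifting through $\rho^*$ yields an element $X^B_{\III a}((2,1), \ell-1, n-2, \tau, {\bf C}', Pr')$ in $\partial\cC$ with the required bottom-level data unchanged. The remaining case is $a_1' = \delta'^{\top}$ together with every representative congruent to $(0, 0) \pmod{(\kappa_1, \kappa_2)}$. Here I invoke \Cref{prop:IIIa_nonhyp_empty_2nd_tail_top_level_modification} applied to $\rho^*(\overline{X})$, which is legitimate since the bottom-level component containing $q_2$ has no residueless poles. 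The proposition lets me replace the top-level permutation and angle assignments while fixing $p_{\tau(\ell)}$ on the bottom and preserving $v$ modulo $\delta(\{\tau(\ell)\}, \emptyset)$, so that some representative of the prong-matching equivalence class has its second coordinate strictly inside an interval $(d'_{j-1}, d'_j) \pmod{\kappa_2}$. Since $\kappa_2 > 1$ in the reduced setting, the slack provided by this inequality guarantees such an interior position can be achieved. With the new angle configuration in hand, \Cref{cor:IIIa_nonhyp_top_level_add_one} applies.

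The main obstacle will be verifying that a suitable top-level reconfiguration genuinely exists in every sub-case. The available shifts of $v'$ are by multiples of $\delta(\{\tau(\ell)\}, \emptyset)$, whereas to escape the exceptional prong-matching one must land outside the finite set $\{d'_0, \ldots, d'_{n-2-\ell}\} \pmod{\kappa_2}$; reconciling these two constraints is an elementary counting argument that uses both the inequality $\kappa_2 > n - 2 - \ell$ and the freedom in choosing the top-level permutation. In the most degenerate sub-cases, for instance when the top level carries very few non-double-pole residueless poles or when $\gcd(a_1', a_2') = a_1'$ strictly, I expect to fall back on the workaround used in \Cref{lm:IIIa_proper_prong_matching_for_double_pole}: detour through a Type IIIb or Type IIIc boundary to alter the prong-matching representative, then return to a Type IIIa boundary with the same bottom-level data before applying \Cref{cor:IIIa_nonhyp_top_level_add_one}.
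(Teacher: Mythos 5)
Your overall route (reduce via $\rho^*$ to the one-pole situation and invoke \Cref{cor:IIIa_nonhyp_top_level_add_one}) is the same as the paper's, but your treatment of the exceptional case is where the argument genuinely fails. When $a_1'=\delta'^{\top}$ and $v'\equiv 0$, you cannot escape the bad prong-matching by reconfiguring the top level with \Cref{prop:IIIa_nonhyp_empty_2nd_tail_top_level_modification}: that proposition only changes $v$ by multiples of $\delta(\mathcal{I}_1,\emptyset)$, and in this exceptional case $\delta(\mathcal{I}_1,\emptyset)=\kappa_2$, so by \Cref{rm:trunc_eq_net} the class $v\equiv 0\pmod{\kappa_2}$ is an invariant of the truncated equatorial net no matter how you permute poles or redistribute angles on the top level. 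Your claim that ``since $\kappa_2>1$ the slack guarantees an interior position'' is therefore false precisely in the case you need it. The paper's mechanism here is different: starting from the degenerate plumbing $\overline{X}(0,0)$, the move $UR^2U$ lands on a Type IIIb boundary (if $e_2>1$) or a Type IIIc boundary (if $e_2=1$), and the hypothesis $\kappa_2>n-2-\ell$ is used exactly at this point to guarantee some $D_{\tau(j)}>1$ on the top, so that reversing the operation re-enters a Type IIIa boundary with $p_{\tau(\ell)}$ now on the top level; \Cref{prop:IIIa_nonhyp_empty_2nd_tail_top_level_modification} is only applied afterwards, to restore the prescribed positions and angles. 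You do mention a IIIb/IIIc detour as a fallback, but you mischaracterize both when it is needed (you describe it via $\gcd(a_1',a_2')=a_1'$ and ``few non-double poles'' rather than $a_1'=\delta'^{\top}$ with $v'\equiv 0$) and what it accomplishes: it does not ``alter the prong-matching representative'' --- that is impossible here --- it changes the level structure directly.

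A second gap: \Cref{cor:IIIa_nonhyp_top_level_add_one} requires the strict inequality $\sum_{i=\ell+1}^{n-2}(b_{\tau(i)}-1)>\kappa_2$ (besides $\kappa_2>n-2-\ell$ and item (ii) of \Cref{lm:IIIa_nonhyp_top_level_modification}), and you never address the boundary case $\kappa_2=\sum_{i=\ell+1}^{n-2}(b_{\tau(i)}-1)$. The paper handles it separately: in that case $\kappa_2\geq\kappa_1$, so \Cref{prop:IIIa_hyper} (non-hyperellipticity of the top level) supplies a proper prong-matching $(u,v)$ with $v$ not aligned to any $d_j$, and $UR^2U\cdot\overline{X}(u,v)$ sends $p_{\tau(\ell)}$ up directly. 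Without this case and with the flawed handling of the $\kappa_2=\delta(\mathcal{I}_1,\emptyset)$ case, the proposal does not yet yield the corollary.
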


\begin{proof}
    If $\kappa_2<\sum_{i=\ell+1}^{n-2}(b_{\tau(i)}-1)$ and we can find some prong-matching $(0,v)\in Pr$ for $0<v<\kappa_2$, then the argument in \Cref{cor:IIIa_nonhyp_top_level_add_one} can apply again. Notice that if $\kappa_2=\sum_{i=\ell+1}^{n-2}(b_{\tau(i)}-1)$, then $\kappa_2\geq \kappa_1$. By \Cref{prop:IIIa_hyper}, we can find a proper prong-matching $(u,v)$ and consider $UR^2U\cdot \overline{X}(u,v)$. The only problem now is when $\kappa_2=\delta(\mathcal{I}_1,\emptyset)$ with $v\equiv 0\pmod{\delta(\mathcal{I}_1,\emptyset)}$. If $e_2>1$, then $UR^2U\cdot \overline{X}(0,0)=\overline{Y}(0)$ where $\overline{Y}=X^B_{\III b}((2,1,2,1),\ell-1,\tau,e_2-1,{\bf C})$. As $\kappa_2> n-2-\ell$, there is some $D_{\tau(j)}>1$ on the top level, and $UR^{-2}U\cdot \overline{Y}(c'_j)$ will be adjacent to a Type IIIa boundary where the pole $p_{\tau(\ell)}$ will be sent to the top level. If $e_2=1$, we have $UR^2U\cdot \overline{X}(0,0)=\overline{Z}(0)$ where $\overline{Z}=X^B_{\III c}((2,1),\ell-1,n-2,\tau',C',{\bf C}')$ with $D'=1$. Then again we choose appropriate prong-matching such that only the $u\equiv c'_j\pmod{\kappa_1+b_{\tau(\ell)}}$ where $D_{\tau(\ell+j)}\geq 2$. Then by reversing the operation, we obtain a Type IIIa boundary with $p_{\tau(\ell)}$ sent to the top level. Finally, by \Cref{prop:IIIa_nonhyp_empty_2nd_tail_top_level_modification}, we can adjust the positions of the poles on the new top level and obtain the boundary in the statement.
\end{proof}

\subsection{Non-hyperelliptic components: higher order non-residueless poles}
In this section, we will give the proof of the classification of connected components of a B-signature stratum with $e_2>1$. In the proof of classifying the connected components via finding boundaries of standard forms, the following lemma allows us to reduce the possibilities of the top level by pushing marked residueless pole on the top level to the bottom level.

\begin{lemma}\label{lm:typeB_IIIb_e2big_reduce_top_level}
    Let $\cC$ be a non-hyperelliptic component of a stratum of B-signature and $e_2>1$. Then there exists some Type IIIb boundary $\overline{Y}=X^B_{\III b}((2,1,2,1),\ell,\tau,e_2-1,{\bf C})\in\overline{\cC}$ such that 
    \begin{itemize}
        \item  $\tau(i)=i$ and $C_{i}=b_{i}-1$ for $i\leq \ell$;
        \item  $\ell=n-2$ or $a_1+1>e_2-1+\sum_{i=\ell+2}^{n-2} (b_{i}-1)$, i.e. numerically we cannot move any residueless pole on the top level to the bottom level.
    \end{itemize}
\end{lemma}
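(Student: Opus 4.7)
I plan to apply \Cref{cor:IIIb_perfect_1st_tail_empty_2nd_tail} with $\tau^* = \Id$ to obtain a Type IIIb boundary $\overline{Y}_0 = X^B_{\III b}((2,1,2,1), \ell_0, \Id, e_2-1, {\bf C}_0) \in \overline{\cC}$ with $C_{0,i} = b_i - 1$ for $i \leq \ell_0$, and then, among all Type IIIb boundaries in $\overline{\cC}$ of the form $X^B_{\III b}((2,1,2,1), \ell, \Id, e_2-1, {\bf C})$ with $C_i = b_i - 1$ for $i \leq \ell$, I choose one, say $\overline{Y}$, with $\ell$ maximal. If $\ell = n - 2$, the lemma is immediate.

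If $\ell < n - 2$, I argue by contradiction, assuming that the inequality $a_1 + 1 > e_2 - 1 + \sum_{i=\ell+2}^{n-2}(b_i - 1)$ fails. Under this assumption, the angle-balance constraints on both levels of a Type IIIb degeneration admit a configuration in which the residueless pole $p_{\ell+1}$ also sits on the bottom level with maximal angle $C_{\ell+1} = b_{\ell+1} - 1$, so numerically there is no obstruction to producing a Type IIIb boundary with $\ell' = \ell + 1$. I will realize such a boundary geometrically by an equatorial-arc sequence paralleling \Cref{lm:IIIc_push_pole_standard_to_bot}: starting from a suitable prong-matching on $\overline{Y}$ (e.g.\ $u \equiv 0 \pmod \kappa$), compute the angular offset $\theta$ required to sweep past the polar domain of $p_{\ell+1}$ using the plumbing diagrams in \Cref{tab:TypeBIIIbPlumb}, then apply a composition of the form $UR^{-2\theta - 1}U$. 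By \Cref{Prop:BIIIbmove} followed by \Cref{Prop:BImove}, this composition takes $\overline{Y}$ through an intermediate Type I boundary to another Type IIIb boundary $\overline{Y}' \in \overline{\cC}$ with $\ell+1$ residueless poles pushed onto the bottom level, contradicting maximality of $\ell$.

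The main obstacle is the precise combinatorial verification that the composition $UR^{-2\theta - 1}U$ indeed lands on a Type IIIb (as opposed to a Type IIIa or Type IIIc) boundary, and that, after the surgery, the permutation and angle data on the bottom level restrict to $\tau'(i) = i$ and $C'_i = b_i - 1$ for all $i \leq \ell + 1$, while the angle at $q_2$ on the top level remains $e_2 - 1$. This requires a careful case analysis tracking the prong labels $c_i$ and $d_i$ at the intermediate Type I boundary, parallel to the analysis in the proof of \Cref{lm:IIIc_push_pole_standard_to_bot}. The hypothesis $e_2 > 1$ is crucial here: it guarantees that $q_2$ is represented by a genuine Type II polar domain on the top level, which supplies the angular flexibility needed for the surgery to return to a Type IIIb configuration rather than to a Type IIIc one (the latter case being precisely what is treated by \Cref{lm:IIIc_push_pole_standard_to_bot}).
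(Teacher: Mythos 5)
Your setup (start from \Cref{cor:IIIb_perfect_1st_tail_empty_2nd_tail}, then push residueless poles from the top level to the bottom level by equatorial moves, with an extremal/contradiction argument to terminate) is the right general strategy and matches the paper's starting point, and your reading of the failure of the inequality as exactly the numerical possibility of a configuration with $\ell+1$ bottom poles is correct. However, the core of the lemma is precisely the step you defer: showing that, inside the same component, one can actually realize the push-down of $p_{\ell+1}$ while ending up again at a Type IIIb boundary with $\tau'(i)=i$, $C'_i=b_i-1$ for $i\le\ell+1$, and top-level angle $e_2-1$ at $q_2$. You propose a single composite move $UR^{-2\theta-1}U$ through an intermediate Type I boundary and explicitly flag that you have not verified it lands on a Type IIIb boundary with the right data; as stated this is a genuine gap, and in fact a single such move does not suffice in general. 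The paper's proof instead splits on $e_2>2$ versus $e_2=2$: for $e_2>2$ it uses repeated moves $UR^{\pm 2}U$ at prong-matchings $c_{j-1}$ resp.\ $c_{j-1}+1$, which transfer one unit of angle between the parameter $C$ at $q_2$ and some $C_{\tau(\ell+j)}$, and the decreasing move is what eventually carries $p_{\ell+1}$ down when it becomes numerically possible; for $e_2=2$ one unavoidably leaves the Type IIIb world, passes through Type IIIc boundaries, and runs the iterative procedure of \Cref{lm:IIIc_push_pole_standard_to_bot}, with termination guaranteed by a monotonicity observation (the moves only decrease the angles $C_{\tau(\ell+j)}$ for $j\neq 1$ while increasing $C_{\ell+1}$), repeating whenever one returns to a Type IIIb boundary before the pole has been pushed.

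This also means your heuristic that ``$e_2>1$ guarantees the surgery returns to a Type IIIb configuration rather than a Type IIIc one'' is incorrect: for $e_2=2$ the intended path necessarily goes through Type IIIc boundaries, and the distinction between the two cases is quantitative ($e_2>2$ gives enough angular room at $q_2$ for the $\pm 1$ transfers; $e_2=2$ does not), not a IIIb/IIIc dichotomy handled by a separate lemma. A secondary bookkeeping issue: you maximize $\ell$ over boundaries already in the exact normal form of the lemma (with angle parameter $e_2-1$), but the intermediate boundaries produced by any push-down procedure need not have that angle parameter or the prescribed $C_i$ on the newly pushed pole at once, so the contradiction with maximality does not come for free; you would need either to maximize over a looser family or to add the angle-adjustment moves (as in the paper's $e_2>2$ case) that restore the normal form after each push.
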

\begin{proof}
    By \Cref{cor:IIIb_perfect_1st_tail_empty_2nd_tail}, we can find some Type IIIb boundary $\overline{Y}=X^B_{\III b}((2,1,2,1),\ell,\tau,C,{\bf C})$ such that $\tau(i)=i$ and $C_{i}=b_i-1$ for $i\leq \ell$. 
    We will divide the cases into $e_2>2$ resp. $e_2=2$. Assume now $e_2>2$. If $C>1$ and $C_{\tau(\ell+j)}<b_{\tau(\ell+j)}-1$ for some $j$, then $UR^{-2}U\cdot\overline{Y}(c_{j-1})=\overline{Y'}(u')$ where $\overline{Y'}=X^B_{\III b}((2,1,2,1),\ell,\tau,C-1,{\bf C}')$ where $C_i=C'_i$ except $C'_{\tau(\ell+j)}=C_{\tau(\ell+j)}+1$. Similarly, if $C<e_2-1$ and $C_{\tau(\ell+j)}>1$ for some $j$, then $UR^{2}U\cdot\overline{Y}(c_{j-1}+1)=\overline{Y'}(u')$ where $\overline{Y'}=X^B_{\III b}((2,1,2,1),\ell,\tau,C+1,{\bf C}')$ where $C_i=C'_i$ except $C'_{\tau(\ell+j)}=C_{\tau(\ell+j)}-1$. By combining these two moves, we can modify the angle assignments on the top level. The first move can also send $p_{\ell+1}$ to the bottom level if it is numerically possible. By repeating these until the second condition in the lemma is attained, then are done.
    
    Now we assume $e_2=2$. If the second condition in the lemma is not yet attained, then there is some $C_{\tau(\ell+j)}> 1$ (where $\tau(\ell+j)\neq \ell+1$) on the top level, then $UR^{2}U\cdot\overline{Y}(c_{j-1}+1)=\overline{Z}(u')$ where $\overline{Z}=X^B_{\III c}((2,1),\ell,n-2,\tau,C',{\bf C}')$ with $C_i=C'_i$ except $C'_{\tau(\ell+j)}=C_{\tau(\ell+j)}-1$. We can just apply the moves in the proof of \Cref{lm:IIIc_push_pole_standard_to_bot} to try to push the pole $p_{\ell+1}$ to the lower level. If we obtain back a Type IIIb boundary before we successfully push the pole $p_{\ell}$ to the lower level, then we repeat the procedure again. One just needs to notice that the moves we use will only decrease the angle assignments $C_{\tau(\ell+j)}$ ($\tau(\ell+j)\neq \ell+1$) while increasing $C_{\ell+1}$. Hence, we can keep pushing poles as long as the second condition in the lemma is not yet satisfied and we are done.
    
\end{proof}

Now, we can prove our main result on classification for $e_2>1$.

\begin{proposition}\label{prop:e2_high_standard_boundary}
    Let $\cC$ be a non-hyperelliptic component of a stratum of B-signature and $e_2>1$. Let $\ell\leq n-2$ be the largest number such that $a_1+1\leq e_2-1+\sum_{i=\ell+1}^{n-2}(b_i-1)$. Then $\overline{\cC}$ contains the boundary $\overline{Y}=X^B_{\III b}((2,1,2,1),\ell,n-2,\Id,e_2-1,{\bf C})$, where $C_i=b_i-1$ for $i\leq\ell$ while $C_i=1$ for $i>\ell+1$.
\end{proposition}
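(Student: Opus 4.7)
The plan is to start from the output of \Cref{lm:typeB_IIIb_e2big_reduce_top_level} and then fine-tune the angle data on the top level. Applied to $\cC$, that lemma produces a Type IIIb boundary $\overline{Y}^{(0)} \in \partial\overline{\cC}$ of the form
\[
X^B_{\III b}\bigl((2,1,2,1),\,\ell^{(0)},\,\tau^{(0)},\,e_2-1,\,{\bf C}^{(0)}\bigr)
\]
with $\tau^{(0)}(i) = i$ and $C^{(0)}_i = b_i-1$ for $i\leq \ell^{(0)}$, and such that $\ell^{(0)} = n-2$ or $a_1 + 1 > e_2 - 1 + \sum_{i \geq \ell^{(0)}+2}(b_i - 1)$. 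The second condition is exactly the maximality characterization of the $\ell$ in the proposition, so after possibly iterating the pushing moves in the lemma we may assume $\ell^{(0)} = \ell$.

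Next I would redistribute angle on the top level while keeping the nodal angle pinned at $C = e_2 - 1$. The two moves used inside the proof of \Cref{lm:typeB_IIIb_e2big_reduce_top_level}, namely $UR^{-2}U \cdot \overline{Y}(c_{j-1})$ (which sends $(C,C_{\tau(\ell+j)})$ to $(C-1,C_{\tau(\ell+j)}+1)$) and its inverse $UR^{2}U$, can be composed in pairs: first trade one unit from $C$ to $C_{\tau(\ell+j_1)}$, then trade one unit from $C_{\tau(\ell+j_2)}$ back to $C$. The net effect is to transfer one unit of angle from $C_{\tau(\ell+j_2)}$ to $C_{\tau(\ell+j_1)}$ while restoring $C = e_2-1$, and each such pair remains available so long as the intermediate value of $C$ stays in the admissible range $[1,e_2-1]$ (guaranteed since $e_2 \geq 2$). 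Applying this repeatedly, I would concentrate the free angle into the first top-level position, achieving $C_{\tau(\ell+j)}=1$ for every $j \geq 2$ and leaving $C_{\tau(\ell+1)}$ equal to the residual value forced by the identity $\sum_{i=\ell+1}^{n-2} D_{\tau(i)} = a_2 + 1 - D$.

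The remaining task is to upgrade $\tau$ from ``identity on the first $\ell$ positions'' to the genuine identity $\tau = \Id \in \mathrm{Sym}_{n-2}$, i.e.\ to arrange the top-level poles in the cyclic order $\ell+1, \ell+2, \dots, n-2$. For this I would route through Type IIIc configurations: starting from a Type IIIb boundary whose top-level configuration I want to permute, applying $U$ at a suitable equatorial half-arc lands in a Type IIIc boundary (cf.\ the transitions used in the proof of \Cref{lm:typeB_IIIb_e2big_reduce_top_level} in the $e_2 = 2$ case and in \Cref{lm:IIIc_push_pole_standard_to_bot}), where the pole ordering can be shuffled via $UR^{\pm 2}U$ moves; a further $U$ returns to a Type IIIb boundary with a modified top-level permutation. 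Iterating, any cyclic permutation of the top-level poles is realizable. Once $\tau = \Id$ and $C_i = 1$ for $i \geq \ell+2$ are in place, the angle-sum constraint determines $C_{\ell+1}$ uniquely, completing the construction of the standard boundary.

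The main obstacle will be Step~3: the maximality of $\ell$ prevents any of the top-level poles from being expelled to the bottom level as a net transformation, so the permutations must be realized by \emph{transient} excursions that temporarily create Type IIIc boundaries with one more bottom-level pole and then return. The bookkeeping, particularly verifying that the intermediate angle and prong-matching data stay in the admissible ranges throughout the round trip and that we can indeed attain every cyclic permutation (rather than only a subgroup), is the delicate point. The case $e_2 = 2$ also needs to be treated slightly separately, since the $C$-channel has no room to absorb intermediate units, forcing the redistribution to be performed entirely via Type IIIc detours as already sketched in \Cref{lm:typeB_IIIb_e2big_reduce_top_level}.
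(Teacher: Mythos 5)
Your starting point coincides with the paper's: both proofs begin by invoking \Cref{lm:typeB_IIIb_e2big_reduce_top_level} to land on a Type IIIb boundary with $\tau(i)=i$, $C_i=b_i-1$ on the bottom level and with $\ell$ maximal. The divergence, and the problem, is in what you call Step 3. After this point the paper does \emph{not} attempt to permute the top-level poles by ad hoc Type IIIc excursions; in the main case ($b_{n-2}>2$ and more than one residueless pole on top) it arranges $D_{\tau(\ell+1)}>1$, applies $UR^{-2}U\cdot\overline{Y}(1)$ to convert the Type IIIb boundary into a Type IIIa boundary $\overline{X}(1,0)$, and then invokes \Cref{prop:IIIa_nonhyp_empty_2nd_tail_top_level_modification}, which is precisely the previously established tool that realizes an arbitrary permutation and arbitrary admissible angle assignment on the non-hyperelliptic top level (with the exceptional cases (a3), (b), (c) already disposed of there because the second tail is empty), before returning to Type IIIb via $UR^{2}U$ and correcting a residual cyclic shift with $UR^{-3}U$. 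Your proposal never calls on this machinery nor supplies a substitute: you explicitly defer the verification that \emph{every} ordering of the top-level poles is reachable by transient Type IIIc round trips, and that deferral is exactly the mathematical content of the proposition. Note that such detours are genuinely constrained — \Cref{rm:trunc_eq_net} records index-type invariants of the truncated nets, and \Cref{lm:IIIc_push_pole_standard_to_bot} only produces cyclic-shift-type rearrangements with prescribed angle $b-1$ — so obtaining arbitrary transpositions (rather than a cyclic subgroup) from these moves is not routine, and the maximality of $\ell$ blocks the naive strategy of parking a pole on the bottom level and reinserting it elsewhere.

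Two further points. First, your angle-redistribution step is fine only for $e_2>2$: the paired moves require the intermediate value of $C$ to stay in $[1,e_2-1]$, which fails when $e_2=2$ (your parenthetical ``guaranteed since $e_2\geq 2$'' should read $e_2\geq 3$); you do flag $e_2=2$ at the end, but then the entire burden again falls on the unproven Type IIIc detours. Second, the paper isolates the case $b_1=\dots=b_{n-2}=2$, where redistribution is vacuous and the permutations are produced by the explicit half-integer moves $UR^3U\cdot\overline{Y}(1+\tfrac12)$ and $UR^5U\cdot\overline{Y}(2+\tfrac12)$ (a cyclic shift and a transposition composed with it, which together generate everything); nothing in your plan produces these or an equivalent, so even this comparatively easy case is not actually closed by your argument. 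In short, the proposal is a plausible outline whose decisive step is missing; to complete it along the paper's lines you should pass through a Type IIIa boundary and use \Cref{prop:IIIa_nonhyp_empty_2nd_tail_top_level_modification}, or else prove from scratch a permutation-realization statement for Type IIIb/IIIc boundaries, which would amount to redoing that proposition.
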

 \begin{proof}    
    By \Cref{lm:typeB_IIIb_e2big_reduce_top_level}, we can find some Type IIIb boundary $\overline{Y}=X^B_{\III b}((2,1,2,1),\ell,\tau,e_2-1,{\bf C})$ such that numerically it is not possible to push any residueless pole to the bottom level while on the bottom level $\tau(i)=i$ and $C_{i}=b_i-1$. Now we will divide the cases into following:
    \begin{enumerate}
        \item $a_1+1\leq e_2-1+b_{n-2}-1$; otherwise
        \item $b_1=b_2=\dots=b_{n-2}=2$; 
        \item the remaining cases, i.e. $a_1+1> e_2-1+b_{n-2}-1$ and $b_{n-2}>2$.
    \end{enumerate}

    In case (i), $\overline{Y}$ has at most one marked residueless pole on the top level, thus it is automatically the desired boundary.

    In case (ii), it suffices to show that we can permute the poles on the top level (including $q_2$). Notice that $UR^3U\cdot \overline{Y}(1+\frac{1}{2})=\overline{Y'}(u')$, where $\overline{Y'}=X^B_{\III b}((2,1,2,1),\ell,\tau',e_2-1,{\bf C})$ such that $\tau'=\tau\circ(n-2,n-3,\dots,\ell+1)$. This implies that we can place $q_2$ arbitrarily. In addition, $UR^5U\cdot \overline{Y}(2+\frac{1}{2})=\overline{Y''}(u'')$, where $\overline{Y''}=X^B_{\III b}((2,1,2,1),\ell,\tau'',e_2-1,{\bf C})$ such that $\tau''=\tau\circ(n-2,n-3,\dots,\ell+1)^2\circ (\ell+1,\ell+2)$. By combining the moves, we are done with permutation. 

    In case (iii), by assumption, there is some $D_{\tau(i)}>1$ on the top level. In addition, by the first move in case (ii), we can assume that $D_{\tau(\ell+1)}>1$. Hence, $UR^{-2}U\cdot \overline{Y}(1)=\overline{X}(1,0)$ where $\overline{X}=X^B_{\III a}((2,1),\ell,n-2,\tau,{\bf C}',[(1,0)])$ such that $C'_i=C_i$ except for $i=\tau(\ell+1)$. By \Cref{prop:IIIa_nonhyp_empty_2nd_tail_top_level_modification}, we can so modify $\overline{X}$:
    \begin{itemize}
        \item[(iiia)] If $a_1+1\geq e_2-1+\sum_{i=\ell+2}^{n-2}(b_i-1)+2$, then $D''_i=b_i-1$ for all $i\neq \ell+1$. 
        \item[(iiib)] Otherwise, let $k>1$ be the smallest number such that $b_{\ell+k}>2$. We set $D''_{\ell+k}=b_{\ell+k}-2$ while $D''_{\ell+j}=b_{\ell+j}-1$ for $j\neq k$.
    \end{itemize} In case (iiia), we will set $\tau''=\Id$ while in case (iiib), $\tau''=(\ell+1,\ell+2,\dots,n-2)^{k-1}$. Then $UR^2U\cdot \overline{X}(1,0)=\overline{Y'}(u')$. Then we are done with case (iiia). In case (iiib), by considering $UR^{-3}U\cdot \overline{Y'}(-1)$, we can change $\tau''$ by $(\ell+1,\ell+2,\dots,n-2)^{-1}$. Hence, by applying this procedure by $k-1$ times, we are also done.
\end{proof}

\subsection{Non-hyperelliptic components: a pair of simple poles}

In this subsection, we assume that $(e_1, e_2) = (1,1)$. We begin with the following corollary, which establishes the existence of Type~IIIc boundaries for each non-hyperelliptic component. Recall that we define 
\[
\delta \coloneqq \gcd(a_1, b_1, \dots, b_{n-2}).
\]
The lemma below allows us to start with a Type~IIIa boundary with a pre-determined bottom level on any connected component $\overline{\cC}$.

To state this lemma, we extend the notion of a \emph{truncated equatorial net}. Let $\overline{Z} = X^B_{\III c}((2,1), \ell, n{-}2, \tau, C, {\bf C})$ be a boundary point of some stratum with B-signature $\mu^\fR$. Let $\mathcal{I}_1 = (\tau(1), \tau(2), \dots, \tau(\ell))$. We denote by $\mathcal{A}(\mu^\fR, \mathcal{I}_1, \emptyset)$ the subgraph of $\mathcal{A}(\mu^\fR)$ consisting of edges adjacent to boundaries of the following forms:
\begin{itemize}
    \item $\overline{X'} = X^B_{\III c}((2,1), \ell_1, \ell_2, \tau', C', {\bf C}')$ with $\ell_1 \geq \ell$, such that $\tau'(i) = \tau(i)$ and $C'_{\tau(i)} = C_{\tau(i)}$ for all $i \leq \ell$;
    \item $\overline{X'} = X^B_{\III a}((2,1), \ell_1, \ell_2, \tau', {\bf C}', [(u,v)])$ with $\ell_1 \geq \ell$, such that $\tau'(i) = \tau(i)$ and $C'_{\tau(i)} = C_{\tau(i)}$ for all $i \leq \ell$.
\end{itemize}

Let $\mathcal{A}(\overline{Z})$ denote the connected component of $\mathcal{A}(\mu^\fR, \mathcal{I}_1, \emptyset)$ containing $\overline{Z}$. Recall that the index of a Type~IIIa boundary $\overline{X'}$ relative to the index set $\mathcal{I}_1$ is given by:
\[
\Ind(\overline{X'}, \mathcal{I}_1, \emptyset) \equiv -u - v + \sum_{i > \ell} C_{\tau'(i)} \pmod{\delta(\mathcal{I}_1, \emptyset)},
\]
where
\[
\delta(\mathcal{I}_1, \emptyset) = \gcd\left(a_2 + 1 - e_1 - \sum_{i \in \mathcal{I}_1} b_i, \: \{ b_i \}_{\substack{i \in \underline{n-2} \\ i \notin \mathcal{I}_1}} \right).
\]Similarly, for a Type~IIIc boundary $\overline{Z'}$, we define:
\[
\Ind(\overline{Z'}, \mathcal{I}_1, \emptyset) \equiv C' + \sum_{i > \ell} C'_{\tau'(i)} \pmod{\delta(\mathcal{I}_1, \emptyset)}.
\]A boundary $\overline{X'}$ or $\overline{Z'}$ belongs to $\mathcal{A}(\overline{Z})$ only if
\[
\Ind(\overline{X'}, \mathcal{I}_1, \emptyset) = \Ind(\overline{Z}, \mathcal{I}_1, \emptyset) \quad \text{or} \quad \Ind(\overline{Z'}, \mathcal{I}_1, \emptyset) = \Ind(\overline{Z}, \mathcal{I}_1, \emptyset).
\]

\begin{lemma} \label{lm:B3aexist}
    Let $\cC$ be a non-hyperelliptic component of a stratum with B-signature. For any permutation $\tau^* \in \mathrm{Sym}_{n-2}$ such that $b_{\tau^*(n-2)} > 2$, there exists a boundary 
    \[
    \overline{X} = X^B_{\III a}((2,1), \ell, n{-}2, \tau, \mathbf{C}, \mathrm{Pr}) \in \partial \cC
    \]
    such that $\tau(i) = \tau^*(i)$ and $C_{\tau(i)} = b_{\tau(i)} - 1$ for all $i \leq \ell$. Moreover, $\overline{X}$ has the non-hyperelliptic top level.
\end{lemma}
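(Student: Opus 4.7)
The plan is to first obtain a Type IIIc boundary with the prescribed structure on the first $\ell$ bottom poles via Corollary \ref{cor:IIIb_perfect_1st_tail_empty_2nd_tail}, and then navigate through the equatorial net to a Type IIIa boundary that preserves this structure and has the non-hyperelliptic top level.

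Since $(e_1,e_2)=(1,1)$ in this subsection, applying Corollary \ref{cor:IIIb_perfect_1st_tail_empty_2nd_tail} to the given permutation $\tau^*$ yields a Type IIIc boundary
\[
\overline{Z} = X^B_{\III c}\bigl((2,1),\ell, n-2,\tau^*, C, \mathbf{C}\bigr) \in \partial\overline{\cC}
\]
with $C_{\tau^*(i)} = b_{\tau^*(i)}-1$ for all $i\leq\ell$, and with no residueless pole between $q_2$ and the nodal pole. To move from Type IIIc to Type IIIa, I would choose the prong-matching $u$ on $\overline{Z}$ so that, on the plumbed surface $\overline{Z}_t(u)$ described by Table \ref{tab:TypeBIIIcPlumb}, the saddle connection adjacent to the polar domain of $q_2$ has multiplicity one. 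Applying $U$ then produces a Type I boundary (Proposition \ref{Prop:BIIIcmove}), and a second $U$ step, combined with an intermediate $R^k$ rotation chosen to align the $\alpha_0$ saddle connection with $q_2$ (Proposition \ref{Prop:BImove}), yields a Type IIIa boundary
\[
\overline{X} = X^B_{\III a}\bigl((2,1), \ell', n-2, \tau, \mathbf{C}', \mathrm{Pr}\bigr)
\]
with $\ell_2' = n-2$. Tracking the combinatorial data through Propositions \ref{Prop:BIIIcmove}, \ref{Prop:BImove}, and \ref{Prop:BIIIamove}, the poles $p_{\tau^*(1)},\dots,p_{\tau^*(\ell')}$ remain on the first bottom component in the prescribed order with $C'_{\tau^*(i)} = b_{\tau^*(i)}-1$ for $i\leq\ell'$.

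For the non-hyperellipticity of the top level of $\overline{X}$, I would use the hypothesis $b_{\tau^*(n-2)} > 2$. If the top level of the candidate $\overline{X}$ turns out to be hyperelliptic with respect to the prong-matching, I apply item (ii) of Lemma \ref{lm:IIIa_nonhyp_top_level_modification} (or, when its hypotheses are satisfied, the stronger Proposition \ref{prop:IIIa_nonhyp_empty_2nd_tail_top_level_modification}) to adjust the angle assignments on the top level so that $C'_{\tau^*(n-2)} \neq D'_{\tau^*(n-2)}$, which is possible precisely because $b_{\tau^*(n-2)}>2$. This asymmetry breaks any involution compatible with the prong-matching, and, crucially, these modifications do not alter the first bottom component.

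The main obstacle is the transition in the middle step: a Type IIIc boundary has a single bottom component carrying both $q_1$ and $q_2$, whereas a Type IIIa boundary has two disjoint bottom components, one per non-residueless pole. Ensuring that the two $U$-applications split off $q_2$ into its own component, leave the residueless poles $p_{\tau^*(1)},\dots,p_{\tau^*(\ell')}$ untouched, and produce exactly the configuration $\ell_2'=n-2$ requires a careful choice of the intermediate prong-matching, and the exceptional cases (a1)--(c) of Lemma \ref{lm:IIIa_nonhyp_top_level_modification} must be ruled out separately; this is where the assumption $b_{\tau^*(n-2)} > 2$ is decisive, as these exceptional cases all involve double poles or highly symmetric angle patterns that the hypothesis forbids.
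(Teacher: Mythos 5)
There is a genuine gap in your argument, and it sits exactly at the point you flag as the ``main obstacle.'' Your plan for securing the non-hyperelliptic top level is circular: both item (ii) of \Cref{lm:IIIa_nonhyp_top_level_modification} and \Cref{prop:IIIa_nonhyp_empty_2nd_tail_top_level_modification} take as a \emph{hypothesis} that the Type IIIa boundary already has the non-hyperelliptic top level, so neither can be invoked to repair a candidate boundary whose top level (together with its prong-matching class) turns out to be hyperelliptic — and such boundaries can perfectly well occur in $\partial\overline{\cC}$ even when $\cC$ itself is non-hyperelliptic. Your backup claim that the hypothesis $b_{\tau^*(n-2)}>2$ rules out the exceptional cases (a1)--(c) is also incorrect: cases (a3), (b), (c) involve poles of order $3$ or higher (e.g.\ $b_{\tau(i)}=3$ in (b) and (c)), so they are not forbidden by that hypothesis. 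Finally, asserting that making $C'_{\tau^*(n-2)}\neq D'_{\tau^*(n-2)}$ ``breaks any involution compatible with the prong-matching'' is not justified by \Cref{prop:IIIa_hyper}, whose criterion is about $\kappa_1=\kappa_2$ and the images of the prongs $c_i$, $d_j$, not about a single pole's angle split.

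The paper avoids this trap by never passing through a Type I boundary and never repairing hyperellipticity after the fact. Starting from the same \Cref{cor:IIIb_perfect_1st_tail_empty_2nd_tail}, it stays inside the truncated equatorial net $\mathcal{A}(\overline{Z})$ and chooses a Type IIIc boundary $\overline{Z'}$ that is extremal there (maximal $\ell'$, then maximal $C'_{\tau'(\ell'+1)}$). The move $UR^{-2}U\cdot\overline{Z'}(0)$ (and the auxiliary move $UR^{-2\theta-1}U$ used to increase $C'$) must then either produce the desired Type IIIa boundary directly or contradict extremality; and in the residual case where the resulting IIIa boundary has the hyperelliptic top level, the symmetry of that involution is used to exhibit another move from $\overline{Z'}$ that would again violate the maximality of $\ell'$ — a contradiction. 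So the non-hyperellipticity of the top level is forced by the extremal choice rather than arranged afterwards. To fix your proof you would need either to carry out this kind of maximality argument, or to supply an independent mechanism (not \Cref{lm:IIIa_nonhyp_top_level_modification} or \Cref{prop:IIIa_nonhyp_empty_2nd_tail_top_level_modification}) that converts a hyperelliptic-top-level IIIa boundary in $\partial\overline{\cC}$ into a non-hyperelliptic one while preserving the prescribed data on the first bottom component; as written, that step is missing. Your IIIc $\to$ I $\to$ IIIa transition is likewise only sketched, with the bookkeeping of $\tau$, $\mathbf{C}$ and the prong-matching asserted rather than verified.
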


\begin{proof}
    By \Cref{cor:IIIb_perfect_1st_tail_empty_2nd_tail}, we can find a Type~IIIc boundary 
    \[
    \overline{Z} = X^B_{\III c}((2,1), \ell, n{-}2, \tau, C, \mathbf{C}) \in \partial \cC
    \]
    such that $\tau(i) = \tau^*(i)$ and $C_{\tau(i)} = b_{\tau(i)} - 1$ for all $i \leq \ell$. Let 
    \[
    \overline{Z'} = X^B_{\III c}((2,1), \ell', n{-}2, \tau', C', \mathbf{C}')
    \]
    be a Type~IIIc boundary in $\mathcal{A}(\overline{Z})$ with $\ell'$ maximal such that $\tau'(i) = \tau^*(i)$ and $C'_{\tau'(i)} = b_{\tau'(i)} - 1$ for $i \leq \ell'$. Without loss of generality, we may also assume $\tau'(\ell'+1) = \tau^*(\ell'+1)$ and $C'_{\tau'(\ell'+1)}$ is maximal among all such Type~IIIc boundaries in $\mathcal{A}(\overline{Z})$.

    Observe that the transformation $UR^{-2}U \cdot \overline{Z'}(0)$ must be adjacent either to $\overline{Z'}$ again or to a Type~IIIa boundary. Otherwise, this would contradict the maximality of either $\ell'$ or $C'_{\tau'(\ell'+1)}$. This implies either:
    \[
    0 < C' \leq C'_{\tau'(\ell'+1)} \pmod{\kappa'} \quad \text{or} \quad C' \equiv c'_j \pmod{\kappa'}
    \]
    for some $j$. 

    In the former case, if $D'_{\tau'(\ell'+1)} > 1$, then 
    \[
    UR^{-2}U \cdot \overline{Z'}(C'_{\tau'(\ell'+1)} - C') = \overline{X}(C'_{\tau'(\ell'+1)} - C' + 1, D'_{\tau'(\ell'+1)} - 1),
    \]
    where 
    \[
    \overline{X} = X^B_{\III a}((2,1), \ell', n{-}2, \tau', \mathbf{C}'', \mathrm{Pr}),
    \]
    and $\mathbf{C}''$ agrees with $\mathbf{C}'$ except at index $i = \tau'(\ell'+1)$. Then $\overline{X}$ is the desired Type~IIIa boundary.

    If $D'_{\tau'(\ell'+1)} = 1$, we can increase $C'$ by considering the transformation $UR^{-2\theta - 1}U \cdot \overline{Z'}(0)$ where $\theta = b_{\tau'(\ell'+1)} - C'$. Repeating this process, we eventually reach the case where $C' \equiv c'_j \pmod{\kappa'}$ for some $j$.

    Now suppose again $D'_{\tau'(\ell'+1)} > 1$. We can then construct the desired Type~IIIa boundary as before. However, if $D'_{\tau'(\ell'+1)} = 1$, then $UR^{-2}U \cdot \overline{Z'}(0)$ will be adjacent to a Type~IIIa boundary where $p_{\tau'(\ell'+1)}$ is sent to the bottom level. If the top level is non-hyperelliptic, the proof is complete.

    Suppose instead that the resulting Type~IIIa boundary $\overline{X}$ has a hyperelliptic top level. Then $p_{\tau'(\ell'+2)}$ must be conjugate to (or coincide with) $p_{\tau'(\ell'+j)}$ for some $j$. Due to the symmetry of hyperellipticity, we can rearrange conjugate poles. Thus, without loss of generality, we may assume $b_{\tau'(\ell'+2)} > 2$ and $D'_{\tau'(\ell'+2)} > 1$ (and also $C'_{\tau'(\ell'+j)} > 1$).

    Returning to $\overline{Z'}$, suppose $C' \equiv c'_j \pmod{\kappa'}$. Then the transformation $UR^{-2}U \cdot \overline{Z'}(C'_{\tau'(\ell'+1)})$ is either adjacent to a desired Type~IIIa boundary (since $D'_{\tau'(\ell'+2)} > 1$), or to a Type~IIIc boundary. If it is the latter, then $C'_{\tau'(\ell'+2)}$ increases by one, so we have $C' \equiv c'_j - 1 \pmod{\kappa'}$. This implies that $UR^{-2}U \cdot \overline{Z'}(0)$ becomes adjacent to a Type~IIIc boundary with $\ell'+1$ poles on the bottom level, contradicting the maximality of $\ell'$. Hence, the boundary $\overline{X}$ cannot have the hyperelliptic top level, completing the proof.
\end{proof}

Now we first give the connectedness result on strata whose residueless poles are all double. 

\begin{proposition}\label{prop:Bpair_only_double}
     Let $\cC$ be a non-hyperelliptic component of a stratum of B-signature with $e_1=e_2=1$ and $b_i=2$ for any $i$. Then there exists $\overline{Z}=X^B_{\III c}((2,1),n-3-a_1,n-2,\Id,C,(1,\dots,1))\in\partial\cC$, where 
     \begin{itemize}
         \item $C=1$ or $2$ if $a_1$ is even;
         \item $C=2$ if $a_1$ is odd.
     \end{itemize} 
\end{proposition}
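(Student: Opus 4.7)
The plan is to produce a standard Type IIIc boundary in $\partial\cC$ by applying \Cref{cor:IIIb_perfect_1st_tail_empty_2nd_tail}, and then to reduce the nodal-pole angle $C$ into $\{1,2\}$ via iterated equatorial moves that preserve the index.

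First, I would apply \Cref{cor:IIIb_perfect_1st_tail_empty_2nd_tail} with $\tau^{\ast}=\Id$; since $e_2=1$, this supplies a Type IIIc boundary
\[
\overline{Z}_{C_0}=X^B_{\III c}\bigl((2,1),\ell,n-2,\Id,C_0,{\bf C}\bigr)\in\partial\cC
\]
with $C_{\tau(i)}=b_{\tau(i)}-1=1$ for all $i\le\ell$. Since every $b_i=2$ forces ${\bf C}=(1,\dots,1)$ and all residueless poles are combinatorially interchangeable, $\tau=\Id$ may be assumed throughout; the numerical constraint $a_{h_2}+1=\sum_{i=\ell_1+1}^{\ell_2}D_{\tau(i)}$ with $h_2=1$, $\ell_2=n-2$, $D_i=1$ then forces $\ell=n-3-a_1$. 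Thus $\overline{Z}_{C_0}$ already has the stated combinatorial form except that $C_0\in\{1,\dots,a_1+1\}$ need not lie in $\{1,2\}$. Moreover, a direct involution count shows that $\overline{Z}_C$ is never in a hyperelliptic component for any $C$: the top level $Z_1(\Id,(1,\dots,1))\in\cP(a_1,a_1\mid -2\mid\cdots\mid -2)$ admits an involution compatible with $\sigma^{\ast}\omega=-\omega$ only when $a_1$ is odd, whereas the bottom-level nodal pole of order $a_1+2$ can be fixed by such an involution only when $a_1$ is even; these conditions are mutually exclusive.

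Next, the Type IIIc index formula yields
\[
\operatorname{Ind}(\overline{Z}_{C_0})\equiv C_0+(n-3-a_1)\pmod{\delta},\qquad \delta=\gcd(a_1,2),
\]
so $C_0\bmod\delta$ is determined by the index $I$ of $\cC$. For $a_1$ odd ($\delta=1$) I may take $C=2$; for $a_1$ even ($\delta=2$) I take $C\in\{1,2\}$ with parity matching that of $C_0$. If $C_0\in\{1,2\}$ we are done. Otherwise I would iteratively apply the composite move $UR^{\pm 2}U$: \Cref{Prop:BIIIcmove} sends $\overline{Z}_{C_0}(u)$, for a suitably chosen prong-matching $u$, to a Type I boundary $\overline{W}=X^B_{\I}(\cdot,\Id,(1,\dots,1))$; a rotation on $\overline{W}$ followed by another $U$ (governed by \Cref{Prop:BImove} or the inverse of \Cref{Prop:BIIIcmove}) returns to a Type IIIc boundary of the same standard form but with $C$ replaced by $C-2$. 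Iteration drives $C$ into $\{1,2\}$, and since the index is locally constant along equatorial arcs, the final boundary lies in $\partial\cC$.

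The main technical obstacle is the case analysis required to verify that each $UR^{\pm 2}U$-step stays within the class of standard Type IIIc boundaries $X^B_{\III c}((2,1),n-3-a_1,n-2,\Id,\cdot,(1,\dots,1))$ and does not inadvertently alter $\ell_1$, $\ell_2$, $\tau$, or ${\bf C}$. The rigidity imposed by $b_i=2$ and $C_i=D_i=1$ collapses the combinatorial flexibility so that the only free parameter is the nodal-pole angle $C$, which greatly simplifies this analysis; and since $\overline{Z}_C$ is never hyperelliptic, the path automatically remains in the non-hyperelliptic locus throughout.
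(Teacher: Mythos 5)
Your proposal has two genuine gaps, and they are precisely the two points where the paper's proof does its real work. First, you cannot "assume $\tau=\Id$ throughout" on the grounds that the double poles are combinatorially interchangeable. The poles are labeled, and relabeling them is an automorphism of the stratum that need not preserve the given connected component $\cC$: the fact that all non-hyperelliptic components with a given index coincide is exactly what is being proved downstream (\Cref{prop:Bpair_standard}, \Cref{Prop:Bpair}), so invoking interchangeability here is circular. \Cref{cor:IIIb_perfect_1st_tail_empty_2nd_tail} only controls the order of the poles on the bottom level (indices $i\le \ell$); normalizing the cyclic arrangement of the $a_1+1$ poles on the top level inside the \emph{fixed} component is a substantial step, which the paper carries out by exhibiting explicit equatorial moves: when $C=2$ an adjacent transposition of top-level poles via $UR^5U\cdot\overline{Z}(c_j+\tfrac12)$, and when $C=1$ a transposition of conjugate poles on the (automatically hyperelliptic, since all $b_i=2$) top level of the adjacent Type IIIa boundary together with cyclic rotations, which generate $\operatorname{Sym}_{a_1+1}$ because $\gcd(2,a_1+1)=1$.

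Second, the asserted reduction $C\mapsto C-2$ by iterating $UR^{\pm2}U$ while staying in the standard form $X^B_{\III c}((2,1),n-3-a_1,n-2,\Id,\cdot,(1,\dots,1))$ is never verified, and it is not what the rigid all-double-pole combinatorics naturally gives: with $C_i=D_i=1$ the moves from \Cref{Prop:BIIIcmove} and \Cref{Prop:BImove} change which poles lie on which level and the reference position relative to the node, not just the nodal angle, so you would have to check that a suitable prong-matching on the intermediate Type I boundary returns you to the same $(\ell_1,\ell_2,\tau,{\bf C})$ with $C$ decreased by exactly $2$ — this is the case analysis you defer, and it is the crux. The paper instead normalizes $C$ by passing via $UR^{-2}U\cdot\overline{Z}(0)$ to a Type IIIa boundary whose top level is automatically hyperelliptic (remark after \Cref{prop:IIIa_hyper}), and reads off $C\in\{1,2\}$ from whether its involution has a fixed pole, using the hyperelliptic symmetry to place a fixed pole next to the node. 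Finally, even granting your reduction-by-two, the case $a_1$ odd requires landing on $C=2$ specifically; your parity-preserving move can leave you at $C=1$, and the extra move changing $C$ by one (available since $\delta=1$, and supplied in the paper by the fixed-pole argument) is missing from your proposal.
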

\begin{proof}
   By \Cref{cor:IIIb_perfect_1st_tail_empty_2nd_tail}, we can find some Type IIIc boundary $\overline{Z}=X^B_{\III c}((2,1),n-3-a_1,n-2,\tau,C,(1,\dots,1))\in \partial\cC$  such that $\tau(i)=i$ for all $i\leq n-3-a_1$. Notice that $UR^{-2}U\cdot \overline{Z}(0)$ will be adjacent to a Type IIIa boundary $\overline{X}$ having the hyperelliptic top level with one fixed pole (resp. zero or two fixed pole) if $a_1$ is odd (resp. even). If there is any fixed pole, we can assume that $p_{\tau(n-2)}$ is one fixed pole and hence we started with $\overline{Z}$ with $C=2$. If $C=2$, then we can interchange any two adjacent poles on the top level of $\overline{Z}$, namely by considering $UR^5U\cdot \overline{Z}(c_j+\frac{1}{2})$ (which will interchange the position of $p_{\tau(n-3-a_1+j)}$ and $p_{\tau(n-2-a_1+j)}$.
   
   Assume that we have no fixed pole on the top level of $\overline{X}$, then we can set $C=1$ on $\overline{Z}$. In addition, there are at least two conjugate poles on $\overline{X}$ that are interchangeable. Since the transpositon $(13)$ and the cyclic group of order $a_1+1$ generate $\operatorname{Sym}_{a_1+1}$ (as $a_1$ even and $\gcd(2,a_1+1)=1$), we are done. 
    
\end{proof}

The following proposition characterizes ubiquitous boundary points on connected components with varying topological invariants.

\begin{proposition} \label{prop:Bpair_standard}
    Let $\cC$ be a non-hyperelliptic component of a stratum of B-signature with $e_1 = e_2 = 1$ and index $I$. Then there exists a boundary point 
    \[
    \overline{X} = X^B_{\III a}((2,1), \max(0, n{-}2{-}a_1), n{-}2, \Id, \mathbf{C}, [(0, I')]),
    \]
    where $C_i = b_i - 1$ for all $i \leq \max(0, n{-}2{-}a_1)$, and $I \equiv I' + \max(0, n{-}2{-}a_1) \pmod{\delta}$.
\end{proposition}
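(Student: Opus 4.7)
The plan is to construct the required boundary in three steps: (i) extract an initial Type IIIa boundary of $\overline{\cC}$ via \Cref{lm:B3aexist}; (ii) reduce the bottom-tail parameter $\ell_1$ down to $\max(0, n-2-a_1)$ by iterating \Cref{cor:IIIa_nonhyp_top_level_add_max}; and (iii) adjust the prong-matching to the form $[(0, I')]$ with the correct index via \Cref{prop:IIIa_nonhyp_empty_2nd_tail_top_level_modification}. A separate argument is needed in the special case where $b_i = 2$ for every $i$, because \Cref{lm:B3aexist} does not apply then; there one starts instead from \Cref{prop:Bpair_only_double}.

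For the generic case (some $b_i > 2$), I would relabel the residueless poles so $b_{n-2} > 2$ and then apply \Cref{lm:B3aexist} with $\tau^* = \Id$ to produce $\overline{X}_0 = X^B_{\III a}((2,1), \ell_0, n-2, \Id, {\bf C}_0, \mathrm{Pr}_0) \in \partial\cC$ with $C_{0,i} = b_i - 1$ for $i \leq \ell_0$ and the non-hyperelliptic top level. An angle-sum count at the top-level $Z_1$-differential, namely $\sum_{i=\ell_0+1}^{n-2} C'_{\tau(i)} = \min(\kappa_1, \kappa_2) \leq \kappa_2 = a_1$ with each $C'_{\tau(i)} \geq 1$, forces $\ell_0 \geq \max(0, n-2-a_1)$. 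Because $\kappa_2 = a_1$, the hypothesis $\kappa_2 > n-2-\ell$ required by \Cref{cor:IIIa_nonhyp_top_level_add_max} reads $\ell > n-2-a_1$, so that corollary applies repeatedly, preserving both $\tau = \Id$ and the maximal angle condition $C_{\tau(i)} = b_{\tau(i)} - 1$ on the surviving bottom-tail poles, until $\ell$ reaches $\max(0, n-2-a_1)$. Denote the outcome $\overline{X}_1$. The identity $a_1 + a_2 = \sum_i b_i$ together with $b_i \geq 2$ gives $\kappa_1 = a_2 - \sum_{i \leq \ell} b_i \geq a_1 = \kappa_2$ on $\overline{X}_1$, so after a diagonal level rotation brings the prong-matching into the form $[(0, v_1)]$, the hypotheses of \Cref{prop:IIIa_nonhyp_empty_2nd_tail_top_level_modification} are satisfied and that proposition furnishes, for every $I' \equiv v_1 \pmod{\delta(\mathcal{I}_1, \emptyset)}$, a boundary of the form $X^B_{\III a}((2,1), \ell, n-2, \Id, {\bf C}, [(0, I')]) \in \partial\cC$. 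Since $\delta \mid \delta(\mathcal{I}_1, \emptyset)$ and $\delta \mid b_i$ (so $\sum_{i=1}^{\ell}(b_i - 1) \equiv -\ell \pmod{\delta}$), invariance of the index in $\overline{\cC}$ combined with the Type IIIa index formula yields the congruence $I \equiv I' + \ell \pmod{\delta}$ for a suitable representative $I'$.

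For the case $b_i = 2$ for all $i$, the identities $a_1 + a_2 = 2(n-2)$ and $a_1 \leq a_2$ force $a_1 \leq n-2$; the equality $a_1 = n-2$ is excluded because $\cP(n-2, n-2 \mid -2 \mid \dots \mid -2 \mid -1^2)$ carries no non-hyperelliptic component by the exceptional clause of \Cref{Prop:Bpair} (proved independently in \Cref{sec:Bhyp}). For $a_1 < n-2$, \Cref{prop:Bpair_only_double} supplies $\overline{Z} = X^B_{\III c}((2,1), n-3-a_1, n-2, \Id, C, (1,\ldots,1)) \in \partial\cC$; a suitable $UR^{-2}U$-type composition of the elementary transformations of \Cref{Prop:BIIIcmove} and \Cref{Prop:BImove} (of the kind used in the proof of \Cref{lm:B3aexist}) moves one further residueless pole from the top tail to the first bottom tail, producing a Type IIIa boundary with $\ell_1 = n-2-a_1$, whose prong-matching can then be adjusted as in the generic case. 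The principal obstacle across the whole argument is the bookkeeping in this special case: one must verify explicitly that the limited $U$- and $R$-moves available to us reach a Type IIIa boundary of exactly the advertised combinatorial shape rather than merely an intermediate Type IIIc or Type I form, and that the prong-matching lands in the correct class modulo $\delta$ while avoiding the exceptional configurations listed in \Cref{cor:IIIa_nonhyp_top_level_add_max} and \Cref{prop:IIIa_nonhyp_empty_2nd_tail_top_level_modification}.
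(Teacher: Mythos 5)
Your steps (i) and (ii) — extracting an initial Type IIIa boundary via \Cref{lm:B3aexist} and driving $\ell_1$ down to $\max(0,n-2-a_1)$ via \Cref{cor:IIIa_nonhyp_top_level_add_max} — match the paper's proof, and your separate treatment of the all-double-pole case via \Cref{prop:Bpair_only_double} is also what the paper does. The gap is in step (iii), and it is exactly the point where the paper's proof does most of its work. With the bottom tail fixed as $\mathcal{I}_1=\{1,\dots,n-2-a_1\}$, \Cref{prop:IIIa_nonhyp_empty_2nd_tail_top_level_modification} (by \Cref{rm:trunc_eq_net}) only lets you move the prong-matching within its class modulo $\delta(\mathcal{I}_1,\emptyset)=\gcd\bigl(a_1,\{b_i\}_{i\notin\mathcal{I}_1}\bigr)$, and this can be a proper multiple of $\delta=\gcd(a_1,\{b_i\}_{i})$: e.g.\ for $(a_1,a_2)=(4,14)$ with $b_1=2$, $b_2=\dots=b_5=4$, the bottom tail is $\{p_1\}$, $\delta(\mathcal{I}_1,\emptyset)=4$ while $\delta=2$. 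So your argument only produces \emph{some} boundary $[(0,I')]$ whose $I'$ automatically satisfies the congruence mod $\delta$ (which any boundary of this combinatorial shape does, by the index formula); it does not let you normalize $I'$ to a representative determined by $I$, i.e.\ it cannot shift $I'$ by a multiple of $\delta$ that is not a multiple of $\delta(\mathcal{I}_1,\emptyset)$. Read with that weak, purely existential $I'$, the proposition cannot do its job in \Cref{Prop:Bpair}: two non-hyperelliptic components with the same index could land on standard points $[(0,I'_1)]\neq[(0,I'_2)]$ with $I'_1\equiv I'_2\pmod\delta$, and nothing in your argument identifies them.

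The missing idea is the re-arrangement of which residueless poles occupy the bottom tail. The paper fixes the family of permutations $\tau^{(1)},\dots,\tau^{(M)}$, $M=1+(n-2-a_1)$, each excluding a different pole $p_r$ from the bottom tail, so that the corresponding moduli $\delta^{(r)}=\gcd\bigl(a_1,\{b_i\colon p_i\ \text{on the top level in arrangement } r\}\bigr)$ jointly have gcd $\delta$; it then writes the required total shift of the prong-matching as $t_1\delta^{(1)}+\dots+t_M\delta^{(M)}$ and realizes it arrangement by arrangement, passing between arrangements through a Type IIIc boundary via $UR^2U$ at a prong-matching of the form $(K,0)$ and re-entering via the arguments of \Cref{lm:B3aexist} and \Cref{prop:IIIa_nonhyp_empty_2nd_tail_top_level_modification}. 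Your proposal never performs this shuffle in the generic case (your closing caveat about ``bookkeeping'' concerns only the all-$b_i=2$ case, where in fact the paper needs just one $U$-move), so as written the proof establishes the standard boundary only up to the coarser equivalence mod $\delta(\mathcal{I}_1,\emptyset)$ and leaves the classification by index unproved whenever $a_1<n-2$ and $\gcd\bigl(a_1,\{b_i\}_{i>n-2-a_1}\bigr)\neq\delta$.
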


\begin{proof}
    In the case where $b_{n-2} = 2$, by \Cref{prop:Bpair_only_double}, we can begin with
    \[
    \overline{Z} = X^B_{\III c}((2,1), n{-}3{-}a_1, n{-}2, \Id, C, \mathbf{1}),
    \]
    where $C = 1$ or $2$. By considering either $UR^{-2}U \cdot \overline{Z}(0)$ (if $C = 1$) or $UR^{2}U \cdot \overline{Z}(\frac{1}{2})$ (if $C = 2$), we obtain the desired boundary point.

    Now assume $b_{n-2} > 2$. Fix permutations $\tau^{(1)}, \tau^{(2)}, \dots, \tau^{(M)} \in \mathrm{Sym}_{n-2}$, where $M = 1 + \max(0, n{-}2{-}a_1)$, defined by:
    \[
    \tau^{(r)}(i) = 
    \begin{cases}
        r & \text{if } i = n{-}1{-}a_1, \\
        i & \text{if } i < r \text{ or } i > n{-}1{-}a_1, \\
        i + 1 & \text{if } r \leq i < n{-}1{-}a_1.
    \end{cases}
    \]
    Note that $\tau^{(M)} = \Id$.

    By \Cref{lm:B3aexist}, \Cref{cor:IIIa_nonhyp_top_level_add_max}, and \Cref{prop:IIIa_nonhyp_empty_2nd_tail_top_level_modification}, there exists a boundary 
    \[
    \overline{X^{(1)}} = X^B_{\III a}((2,1), \max(0, n{-}2{-}a_1), n{-}2, \tau^{(1)}, \mathbf{C}^{(1)}, [(u^{(1)}, v^{(1)})])
    \]
    such that $C^{(1)}_{\tau^{(1)}(i)} = b_{\tau^{(1)}(i)} - 1$ for $i \leq \max(0, n{-}2{-}a_1)$. If $a_1 \geq n{-}2$, then $C^{(1)}_i = b_i - 1$ for all $i$, and we can adjust the prong-matching and permutation via \Cref{prop:IIIa_nonhyp_empty_2nd_tail_top_level_modification} to reach a standard boundary point.

    Now assume $a_1 < n{-}2$. Given index $I$, define $I'$ to be the smallest positive integer such that
    \[
    n{-}2{-}a_1 + I' \equiv I \pmod{\delta}.
    \]
    Let $\delta^{(r)} = \gcd\left(a_1, \{b_{\tau^{(r)}(n{-}2+j)}\}_{j=1}^{a_1}\right)$. Then there exists a linear combination
    \[
    -u^{(1)} - v^{(1)} - (n{-}2{-}a_1) \equiv I' + t_1 \delta^{(1)} + \dots + t_M \delta^{(M)} \pmod{a_1}.
    \]By \Cref{prop:IIIa_nonhyp_empty_2nd_tail_top_level_modification}, we can modify the prong-matching of $\overline{X^{(1)}}$ to $(u^{(2)}, v^{(2)})$ such that
    \[
    -u^{(2)} - v^{(2)} - (n{-}2{-}a_1) \equiv I' + t_2 \delta^{(2)} + \dots + t_M \delta^{(M)} \pmod{a_1}.
    \]
    Let $K$ be the smallest non-negative integer such that $(K, 0) \in [(u^{(2)}, v^{(2)})]$. Then
    \[
    UR^2U \cdot \overline{X^{(1)}}(K, 0) = \overline{Z}(u')
    \]
    where 
    \[
    \overline{Z} = X^B_{\III c}((2,1), \ell', n{-}2, \tau', C', \mathbf{C}')
    \]
    with $\ell' + D' < n{-}2$.

    By the arguments in \Cref{lm:B3aexist} and \Cref{prop:IIIa_nonhyp_empty_2nd_tail_top_level_modification}, we obtain 
    \[
    \overline{X^{(2)}} = X^B_{\III a}((2,1), n{-}2{-}a_1, n{-}2, \tau^{(2)}, \mathbf{C}^{(2)}, [(u^{(2)}, v^{(2)})])
    \]
    such that $D^{(2)}_i = 1$ for all $i$. Repeating this process $M{-}1$ times yields the desired boundary:
    \[
    X^B_{\III a}((2,1), n{-}2{-}a_1, n{-}2, \Id, \mathbf{C}, [(0, I')]),
    \]
    where $C_i = b_i - 1$ for all $i$. Hence, for each index $I$, a unique such boundary point exists.
\end{proof}

Now we are ready to prove \Cref{Cor:Bssc}, which establishes the existence of a flat surface with a multiplicity one saddle connection.

\begin{proof}[Proof of \Cref{Cor:Bssc}]
    Consider a Type I boundary $\overline{X}=X^B_{\I}(n-2,\tau,{\bf C})$. The bottom level component of $\overline{X}$ has a multiplicity one saddle connection joining $z_1$ and $z_2$. Thus, it suffices to show that every non-hyperelliptic component contains such a boundary for suitable choices of $\tau$ and ${\bf C}$.

    If $a_1 \neq a_2$ or $e_1 \neq e_2$, then the component is uniquely non-hyperelliptic, and any choice of $\tau$ and ${\bf C}$ suffices.

    Suppose now that $b_i > 2$ for some $i$. By relabeling the residueless poles, we may assume that $\tau = \Id$ and that the sequence $(b_1, \dots, b_{n-2})$ is in decreasing order. If $(e_1,e_2) \neq (1,1)$, then \Cref{Prop:Bnonhyper} implies there exists a unique non-hyperelliptic component, and we may take ${\bf C} = (1,\dots,1)$.

    Now consider the case $(e_1,e_2) = (1,1)$. Let $\delta = \gcd(b_1, \dots, b_{n-2})$ and let $I$ be the index of a non-hyperelliptic component. There exists a choice of ${\bf C}$ such that $\sum_i C_i \equiv I \pmod{\delta}$. The top level component of $\overline{X}$ is hyperelliptic only if $b_i = b_1$ and $C_i = D_{n-1-i}$ for each $i$, in which case $\sum_i C_i = \frac{(n-2)b_1}{2}$. If $\frac{(n-2)b_1}{2} \geq n-2 + b_1$, then we can choose ${\bf C}'$ such that $\sum_i C'_i = \sum_i C_i - b_1 \equiv \frac{(n-2)b_1}{2} \pmod{b_1}$. Similarly, if $\frac{(n-2)b_1}{2} \leq (n-2)(b_1 - 1) - b_1$, then a non-hyperelliptic boundary with index $I$ exists.

    Therefore, assume that
    \[
        (n-2)(b_1 - 1) - b_1 < \frac{(n-2)b_1}{2} < n-2 + b_1.
    \]
    This inequality implies $(n-2)(b_1 - 2) \leq 2b_1 - 2$, which in turn gives $n - 2 \leq 2$. Thus, we are reduced to the following exceptional cases:
    \begin{itemize}
        \item $\cP(\mu^{\fR}) = \cP(2b \mid -b \mid -b \mid -1^2)$ and $I = b$,
        \item $\cP(\mu^{\fR}) = \cP(2b \mid -2b \mid -1^2)$ and $I = b$,
        \item $\cP(\mu^{\fR}) = \cP(2 \mid -1^2)$ and $I = 1$.
    \end{itemize}
    However, by \Cref{Prop:Bpair}, such non-hyperelliptic components do not exist. Therefore, in all cases, a Type I boundary with a multiplicity one saddle connection exists within each non-hyperelliptic component.
\end{proof}

\section{Strata of C-Signatures} \label{Sec:Csignature}

In this section, we prove \Cref{Prop:Cconnected}, which establishes the connectedness of the strata of C-signatures. Let $\cP(\mu^{\fR})$ be a stratum of C-signature. We denote $\mu^\fR = (a\mid-b_1 \mid \dots \mid -b_{n-3}\mid  -e_1, -e_2, -e_3)$ with $a-(e_1+e_2+e_3+\sum_i b_i)=-2$ throughout this section. Assume also that $0 < a$, $0 < e_1 \leq e_2\leq e_3$, and $2 \leq b_1 \leq \dots \leq b_{n-3}$. Let $q_1$,$q_2$ and $q_3$ denote the non-residueless poles of orders $e_1$,$e_2$ and $e_3$, respectively. 

\subsection{Type III Boundary}

Since any flat surface in $\cP(\mu^{\fR})$ has a unique zero, each saddle connection must join the zero to itself. Hence, this stratum only has a Type III boundary. These can be given by the following combinatorial data:

\begin{itemize}
    \item A tuple $\underline{h} = (h_1, h_2, h_3) \in \{(1,2,3), (1,3,2), (2,3,1)\}$, where $h_3$ indicates the non-residueless pole $q_{h_3}$ on the bottom level;
    \item An integer $0 \leq \ell \leq n-3$, representing the number of residueless poles on the top level component;
    \item A permutation $\tau \in \mathrm{Sym}_{n-3}$ of the residueless poles;
    \item A tuple of positive integers ${\bf C} = (C_1, \dots, C_{n-3})$, where $1 \leq C_i \leq b_i - 1$ for each $i$.
\end{itemize}

Let $\tau_1=\tau|_{\{1,\dots, \ell\}}$ and ${\bf C_1}=(C_{\tau(i)})_{i=1,\dots, \ell}$. Then the top level component is isomorphic to $Z_2(\tau_1,{\bf C_1},e_{h_1},e_{h_2})$. Let $\tau_2(i)=\tau(i+\ell)$ for $i=1,\dots, n-3-\ell$ and ${\bf C_2}=(C_{\tau(i)})_{i=\ell+1,\dots, n-3}$. Then the bottom level component is isomorphic to $Z_2(\tau_2,{\bf C_2},\kappa+1,e_{h_3})$ where $\kappa=a-e_{h_3}+1-\sum_{i=\ell+1}^{n-3} b_{\tau(i)}$. We denote this boundary by 
\[
X^C_{\III}(\underline{h}, \ell, \tau, {\bf C}).
\]
The level graph and separatrix diagram are illustrated in \Cref{fig:C_type_III_graph}.

\begin{figure}[h!]
    \centering
    \includegraphics[width=0.8\textwidth]{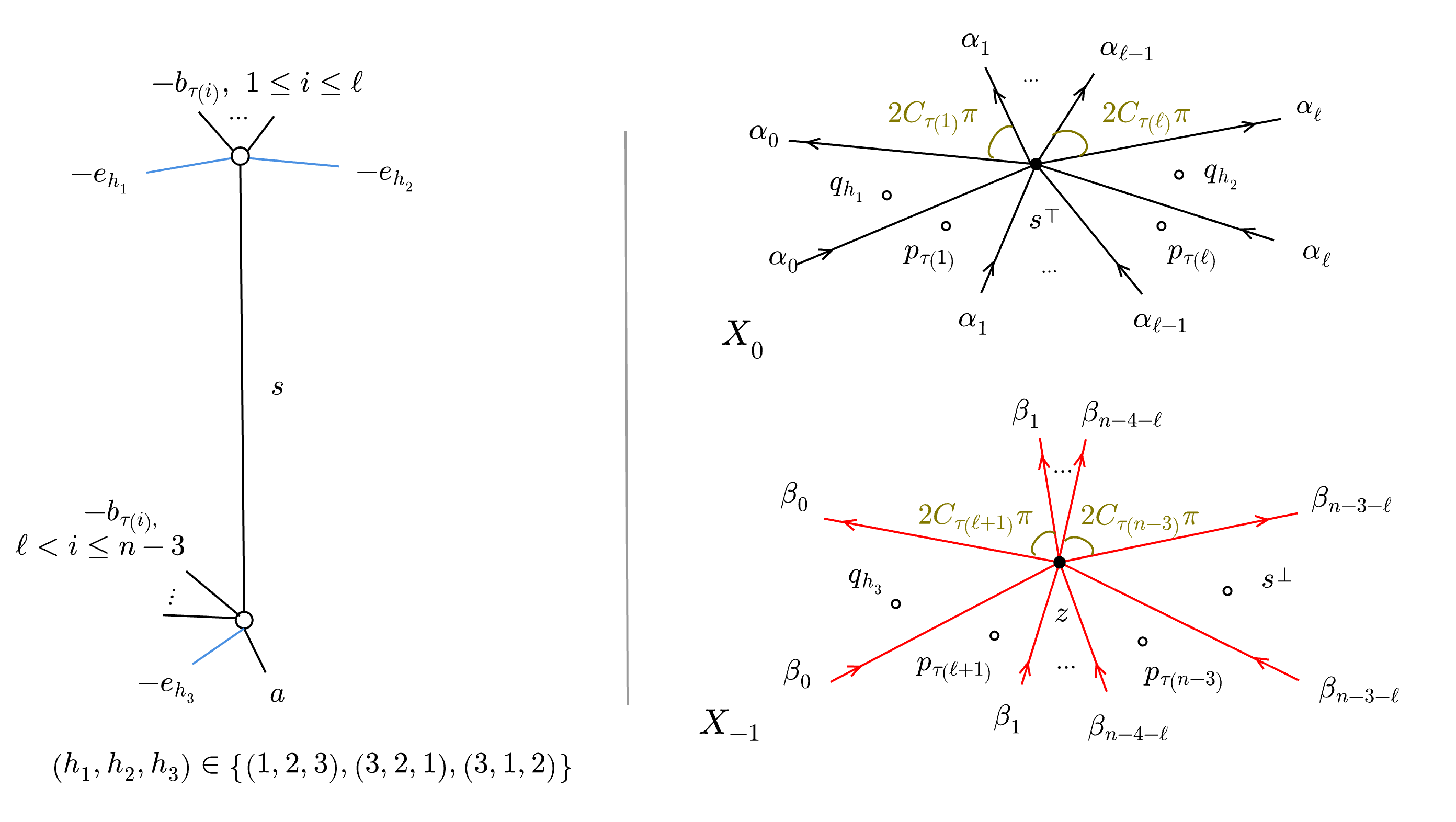}
    \caption{Level graph and separatrix diagram of a Type III boundary in the strata of C-signatures}
    \label{fig:C_type_III_graph}
\end{figure}

In \Cref{fig:C_type_III_pm}, we label the outgoing prongs at $s^\top$ and the incoming prongs at $s^\bot$. More precisely, the outgoing prongs at $s^\top$ are labeled $v^+_1, \dots, v^+_\kappa$ clockwise, with $v^+_1$ aligned to the direction of $\alpha_0$. The incoming prongs at $s^\bot$ correspond to the outgoing prongs at $z$ directed into the polar domain of $s^\bot$. We label them $v^-_1, \dots, v^-_\kappa$ clockwise at $z$, with $v^-_1$ nearest to the saddle connection $\beta_0$. A prong-matching $\boldsymbol{\sigma}$ is determined by $\boldsymbol{\sigma}(v^-_\kappa)=v^+_u$, and we identify $\boldsymbol{\sigma}$ with an element $u \in \ZZ/\kappa\ZZ$.

\begin{figure}[h!]
    \centering
    \includegraphics[width=0.8\textwidth]{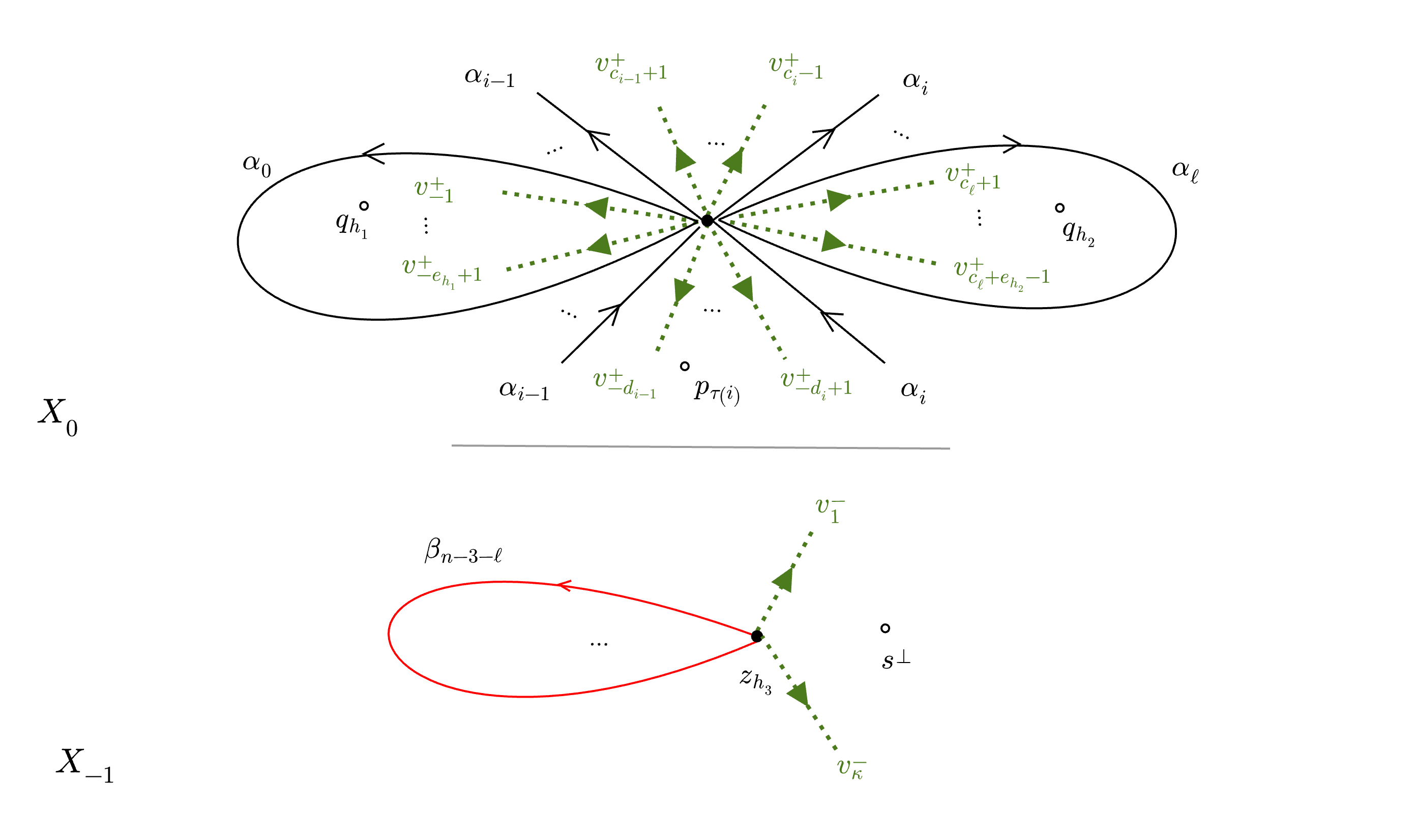}
    \caption{Prong labeling for strata of C-signatures}
    \label{fig:C_type_III_pm}
\end{figure}

\subsection{Plumbing construction and equatorial half-arcs}

Let $\overline{X} = X^C_{\III}(\underline{h}, \ell, \tau, {\bf C})$ be a Type III boundary. By plumbing construction with $t\in \RR_+$ and a prong-matching $u \in \ZZ/\kappa\ZZ$, we obtain a flat surface $\overline{X}_t(u)$ on an equatorial arc adjacent to $\overline{X}$. We denote this equatorial half-arc by $\overline{X}(u)$. Similarly, we obtain another equatorial half-arc by taking $t\in \RR_-$, denoted by $\overline{X}(u-\frac{1}{2})$. The number of equatorial arcs adjacent to $\overline{X}$ is equal to $2\kappa$. We describe the transformation $U$ for some equatorial half-arcs in $\cP(\mu^{\fR})$, which will be used frequently, in the later proofs. Other cases can be obtain in a similar way. 

\begin{proposition}\label{Prop:Cmove}
Let $\overline{X} = X^C_{\III}((1,2,3), \ell, \Id, {\bf C})$ and $0 \leq u < c_\ell + e_2$. Then $U \cdot \overline{X}(u) = \overline{X'}(u')$ where $\overline{X'}$ and $u'$ are given by the following:

\begin{enumerate}
    \item If $c_\ell \leq u < c_\ell + e_2$, then $\overline{X'} = X^C_{\III}((3,2,1), n - 3 - \ell, \tau, {\bf C})$, with
    \[
        u' = (c_\ell + e_2 - 1) - (u - c_\ell),
    \]
    and
    \[
        \tau(i) =
        \begin{cases*}
            \ell + i & if $1 \leq i \leq n - 3 - \ell$, \\
            i - (n - 3 - \ell) & otherwise.
        \end{cases*}
    \]

    \item If $c_{j-1} \leq u < c_j$ for $j \leq \ell$, then $\overline{X'} = X^C_{\III}((3,2,1), n - 2 - j, \tau, {\bf C'})$, with
    \[
        u' = (-e_3 - \sum_{k = \ell+1}^{n-3} D_k) - (u - c_{j-1}),
    \]
    where
    \[
        \tau(i) =
        \begin{cases}
            \ell + i & \text{if } 1 \leq i \leq n - 3 - \ell, \\
            i - (n - 3 - \ell) + j & \text{if } n - 3 - \ell < i \leq n - 2 - j, \\
            i - (n - 2 - j) & \text{otherwise},
        \end{cases}
    \]
    and
    \[
        C'_i =
        \begin{cases*}
            c_j - u & if $i = j$, \\
            C_i & otherwise.
        \end{cases*}
    \]
\end{enumerate}
\end{proposition}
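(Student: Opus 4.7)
The plan is to verify each case by starting with the flat picture of the plumbed surface $\overline{X}_t(u)$ described in the C-signature analogue of the IIIb/IIIc plumbing tables, then reinterpreting that same surface as a plumbing of a \emph{different} boundary $\overline{X'}$ along a \emph{different} shrinking collection of parallel saddle connections. The transformation $U$ is by definition the involution that swaps the two endpoints of an equatorial arc; equivalently it swaps the roles of the two homology classes that span the period map $\overline{p}$. So I would present the proof as a direct bookkeeping of saddle connections, angles, and reference prongs.

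First I would describe the flat picture of $\overline{X}_t(u)$ in a neighborhood of the shrinking saddle connections. By the plumbing description in \Cref{sec:Plumb} together with the prong labeling in \Cref{fig:C_type_III_pm}, the single zero $z$ has $a+1$ outgoing horizontal prongs; the $n-2$ collection of parallel saddle connections $\beta_0,\beta_1,\dots,\beta_{\ell},\alpha_0$ coming from the bottom level is sandwiched between the prongs $v^+_{c_{j-1}}$ and $v^+_{c_j}$ (for $j\le \ell$) or between $v^+_{c_\ell}$ and $v^+_{c_\ell+e_2-1}$ (if the node sits inside the polar domain of $q_2$ on the top level). The opposite equatorial endpoint corresponds to letting the \emph{other} transversal collection of $n-2$ parallel segments shrink: these are the saddle connections that wind around the polar domain of $q_3$ (together with the residueless poles that lay on the bottom level) from the opposite side. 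Shrinking them sends $q_3$ to the top level and $q_1,q_2$ (along with a rearranged set of residueless poles) to the bottom, which already forces the tuple $\underline{h}'=(3,2,1)$ claimed in both cases.

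For case (1), where $c_\ell\le u<c_\ell+e_2$, I would observe that no residueless pole changes level; only the two non-residueless clusters swap levels. Therefore $\overline{X'}$ has the same $\mathbf{C}$ and the permutation is the cyclic rotation by $\ell$, matching the formula for $\tau$. The new prong-matching reference is computed by identifying which outgoing prong at the new top node plays the role that $v^+_{\kappa}$ played on $\overline{X}$; a direct count using $c_\ell+e_2-1\equiv -e_1-\sum_k D_k\pmod\kappa$ gives $u'=(c_\ell+e_2-1)-(u-c_\ell)$. For case (2), where $c_{j-1}\le u<c_j$ with $j\le\ell$, the pole $p_j$ gets cut by the new shrinking collection: its angular datum at the new boundary is split, producing the modified $C'_j=c_j-u$, while all poles with index strictly less than $j$ migrate to the bottom level (hence the shift in $\tau$) and all poles with index strictly greater than $j$ go to the top level via $q_2$. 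The new reference prong is then located at the other end of the cut, yielding $u'=(-e_3-\sum_{k=\ell+1}^{n-3}D_k)-(u-c_{j-1})$.

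The main obstacle, and essentially the only non-routine part, is keeping the prong-matching conventions consistent. Because $U$ swaps which of the two periods is ``vertical'' and which is ``horizontal,'' the cyclic order of prongs around the new top node is reversed relative to the old bottom node, and the reference prong $v^-_\kappa$ must be transported through the swap. I would handle this by fixing, once and for all, the identification between outgoing prongs at $z$ pointing into the shrunk polar domain and incoming prongs at $s^\bot$, and then reading off $u'$ from the position of the rightmost such prong in the new configuration. Once this dictionary is set, both formulas for $u'$ and both descriptions of $\tau$, $\mathbf{C}'$ reduce to arithmetic of the integers $c_i$, $d_i$, $e_{h_i}$ that is already encoded in the diagrams of \Cref{tab:TypeBIIIcPlumb} applied to the C-signature case.
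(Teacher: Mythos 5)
Your overall strategy is the right one, and it is in fact the only "proof" the paper offers for statements of this kind: as with the B-signature analogues, the paper treats these formulas as direct observations read off from the plumbed flat picture, i.e.\ one builds $\overline{X}_t(u)$, identifies which collection of parallel saddle connections shrinks at the far endpoint of the equatorial arc, and records the resulting levels, angles and reference prong. The problem is that your execution of exactly this bookkeeping is wrong at the decisive step, the assignment of poles to levels of $\overline{X'}$. You claim that shrinking the complementary collection ``sends $q_3$ to the top level and $q_1,q_2$ \dots to the bottom'' and that this forces $\underline{h}'=(3,2,1)$. In the paper's convention only $q_{h_3}$ sits on the bottom level, so $(3,2,1)$ means $q_3$ \emph{and} $q_2$ on top and only $q_1$ below; a bottom level containing both $q_1$ and $q_2$ together with the nodal pole would not be a zero-dimensional generalized stratum and cannot occur here at all. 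Likewise, in case (1) you assert that no residueless pole changes level and then claim the permutation is the cyclic rotation by $\ell$ — these two statements are incompatible, and the first is false. The correct picture is: for $c_\ell\le u<c_\ell+e_2$ the prong $v^+_u$ points into the polar domain of $q_2$, so by case (iii) of the plumbing recipe in \Cref{sec:Plumb} no top-level saddle connection is lengthened by $+t$; hence at the far endpoint the entire old top-level collection $\alpha_0,\dots,\alpha_\ell$ shrinks, cutting off precisely the polar domains of $q_1,p_1,\dots,p_\ell$, which together with $z$ form the new bottom level, while $q_2$, $q_3$ and $p_{\ell+1},\dots,p_{n-3}$ all lie on the new top level. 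Thus \emph{every} residueless pole changes level (only $q_2$ does not), and this is exactly what produces $\ell'=n-3-\ell$, the cyclic $\tau$, and ${\bf C}'={\bf C}$. Carried out literally, your description would instead yield $\ell'=\ell$ and $\tau=\Id$, i.e.\ a different and incorrect boundary, so the argument as written does not establish the statement but merely quotes it.

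Case (2) of your sketch is closer to the truth ($p_1,\dots,p_{j-1}$ descend with $q_1$, the angle of $p_j$ is split to $C'_j=c_j-u$, the rest ascend), but to actually derive it you must specify which saddle connections acquire the $+t$ modification: they are the ones crossed by the path from $v^+_u$, which lies in the sector of $p_j$, to the polar domain of $q_2$, namely $\alpha_j,\dots,\alpha_\ell$; consequently the collection shrinking at the far endpoint is $\alpha_0,\dots,\alpha_{j-1}$, and the new nodal zero/pole, the split angle, and the location of the reference prong (hence $u'$) are all read off from this. Without pinning down the modified collection — and without the prong count that transports $v^-_\kappa$ through the degeneration — the formulas for $u'$ in both cases are asserted rather than verified. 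So the proposal follows the paper's method in outline, but the concrete flat-geometric bookkeeping, which is the entire content of the proposition, needs to be redone along the lines above before this can count as a proof.
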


\subsection{Connectedness of strata}

We now prove \Cref{Prop:Cconnected}, showing that the strata of C-signatures are connected.

\begin{lemma}\label{lm:TypeC_III_ex}
Every connected component $\cC$ of $\cP(\mu^{\fR})$ contains a boundary of the form $X^C_{\III}((1,2,3), n-3, \tau, {\bf C})$.
\end{lemma}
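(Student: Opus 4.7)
The plan is to start with an arbitrary Type III boundary in $\partial\cC$ and apply a sequence of transformations $R$ and $U$ in the equatorial net until we reach a boundary of the required form. Such an initial boundary exists: by \Cref{Baseboundary}, every boundary point of the one-dimensional stratum $\cP(\mu^\fR)$ is a principal Type III boundary, and $\overline{\cC}$ has nonempty (finite) boundary since $\overline{\cP}(\mu^\fR)$ is a compact Riemann surface. Fix therefore some $\overline{X_0} = X^C_{\III}(\underline{h}_0, \ell_0, \tau_0, \mathbf{C}_0) \in \partial\cC$.

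The argument proceeds in two stages. First, I maximize the number of residueless poles on the top level. Assuming $\ell < n-3$, there is at least one residueless pole, say $p_{\tau(\ell+1)}$, lying on the bottom level adjacent to the node. By applying $U$ at a prong $u$ pointing into the polar domain of this pole (possibly preceded by an $R$-rotation to reach such a prong), I obtain a new boundary $\overline{X'}$ in which $p_{\tau(\ell+1)}$ has been moved to the top level. This strictly increases $\ell$, and iterating yields a boundary with $\ell = n-3$.

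Second, I adjust $\underline{h}$ so that $h_3 = 3$. Once $\ell = n-3$, the bottom level contains only the pole $q_{h_3}$ and the node. If $h_3 \neq 3$, I invoke \Cref{Prop:Cmove}(1): applying $U$ at a prong $u$ in the polar domain of the top-level pole $q_{h_2}$ (respectively $q_{h_1}$) swaps $q_{h_1}$ (respectively $q_{h_2}$) with $q_{h_3}$, sending us to a configuration where either $h_3$ equals the old $h_1$ (respectively the old $h_2$), with $\ell = 0$. Reapplying the first stage restores $\ell = n-3$ with this new value of $\underline{h}$. A bounded number of such swaps suffices to achieve $h_3 = 3$, giving a boundary of the form $X^C_{\III}((1,2,3), n-3, \tau, \mathbf{C})$ for some $\tau$ and $\mathbf{C}$.

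The main obstacle is the detailed case analysis required in stage one: for each position of $p_{\tau(\ell+1)}$ on the bottom level, I must identify an explicit prong $u$ inside its polar domain and verify, via the appropriate analogue of \Cref{Prop:Cmove}, that the resulting $\overline{X'}$ indeed has strictly larger $\ell$. This is routine but requires checking several subcases depending on the cyclic ordering of the three non-residueless poles relative to the residueless ones and whether $u$ lies adjacent to a saddle connection or strictly inside a polar domain. Once this is carried out, the remainder of the argument follows from the symmetry between the three non-residueless poles provided by the $U$-swaps in stage two, and the proof is complete.
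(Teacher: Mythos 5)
Your overall strategy (start from an arbitrary boundary point, which exists by \Cref{Baseboundary} and compactness, and navigate with $R$ and $U$ using the move calculus of \Cref{Prop:Cmove}) is the same as the paper's, but the step you yourself flag as "routine" is where the argument breaks as written. In stage one you want a $U$-move, applied "at a prong pointing into the polar domain" of a bottom-level residueless pole, that promotes exactly that pole to the top level and strictly increases $\ell$ while leaving everything else (in particular $\underline{h}$) untouched. No such move exists in this setting: the prong-matching parameter $u$ indexes prongs at the node, read off against the top-level saddle connections and polar domains, and by \Cref{Prop:Cmove} (and its obvious relabelings) every $U$-move from a Type III boundary rearranges the two levels wholesale — in both cases of that proposition \emph{all} bottom-level residueless poles move to the top at once (possibly sending some top-level ones down), and the bottom non-residueless pole $q_{h_3}$ is simultaneously exchanged with a top-level one. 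So the incremental induction "raise one pole at a time, fix $\underline{h}$ afterwards" does not track what the moves actually do, and the interaction between your two stages is not controlled by the bookkeeping you propose.

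The repair is easier than the case analysis you anticipate, and it is exactly the paper's proof: a single move with $u\in[c_0,c_1)$ (case (2) of \Cref{Prop:Cmove} with $j=1$, or case (1) when $\ell=0$) already lands on a boundary with all $n-3$ residueless poles on the top level, at the cost of swapping $q_{h_1}$ and $q_{h_3}$; if the resulting bottom non-residueless pole is not $q_3$, one further move of the same type, applied at a prong in the polar domain of the top-level non-residueless pole other than $q_3$, exchanges $q_3$ with the bottom pole. At most two $U$-moves therefore produce $X^C_{\III}((1,2,3),n-3,\tau,{\bf C})$; there is no need for the iterative stage one, and without replacing it by this (or an equivalent verified move) your proof is incomplete.
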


\begin{proof}
Let $\overline{X} = X^C_{\III}((h_1, h_2, h_3), \ell, \tau, {\bf C}) \in \partial\overline{\cC}$. If $h_3 \neq 3$, then $U \cdot \overline{X}(0)$ is adjacent to a desired boundary. So assume that $h_3 = 3$. Then now $U\cdot \overline{X}(0)$ is adjacent to $X^C_{\III}((3,h_2,h_1), n-3, \tau, {\bf C})$, reducing to the case $h_3\neq 3$. 
\end{proof}

\begin{proposition}\label{prop:C_standard}
Every connected component $\cC$ of $\cP(\mu^{\fR})$ contains the standard boundary $\overline{X} = X^C_{\III}((1,2,3), n-3, \Id, (1,\dots,1))$.
\end{proposition}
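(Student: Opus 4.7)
The plan is to use the transformations $R$ and $U$ on equatorial half-arcs to connect an arbitrary Type III boundary in $\partial\overline{\cC}$ to the standard form. By \Cref{lm:TypeC_III_ex}, $\partial\overline{\cC}$ contains a boundary of the form $\overline{X} = X^C_{\III}((1,2,3), n-3, \tau, {\bf C})$, so the task reduces to connecting any such $\overline{X}$, via equatorial arcs, to $\overline{X}_0 = X^C_{\III}((1,2,3), n-3, \Id, (1,\ldots,1))$. I will carry this out in two steps: first adjusting the angle assignments ${\bf C}$ to $(1,\dots,1)$, and then realizing any permutation needed to reduce $\tau$ to $\Id$.

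For the angle adjustment, fix an index $j$ and apply $U$ to $\overline{X}$ at the prong-matching $u = c_j - 1$. By \Cref{Prop:Cmove}(2), this produces a Type III boundary $\overline{X'}$ with $\underline{h'} = (3,2,1)$ in which $C'_{\tau(j)} = c_j - u = 1$ while all other entries of ${\bf C}$ are preserved. Applying $U$ a second time, starting from $\overline{X'}$ with a suitable level rotation $R^k$ in between, returns us to a $(1,2,3)$-boundary with the same combinatorial data except that $C_{\tau(j)}$ has been replaced by $1$. Here we need the analogue of \Cref{Prop:Cmove} for $\underline{h} = (3,2,1)$ boundaries, which is derived by identical reasoning and which is implicitly available from the symmetric treatment of the three $\underline{h}$-tuples. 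Iterating over $j = 1,\dots,n-3$ lets us assume ${\bf C} = (1,\dots,1)$.

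For the permutation step, given ${\bf C} = (1,\dots,1)$, I plan to realize each adjacent transposition of the top-level residueless poles via a composite move of the form $U R^k U$. Starting from $\overline{X}$ at a prong-matching inside the angular region between $p_{\tau(i)}$ and $p_{\tau(i+1)}$, the first $U$ passes to a $(3,2,1)$-boundary that cyclically re-orders the top level through this reference point, and the second $U$, applied after an appropriate shift $R^k$, returns us to a $(1,2,3)$-boundary in which $\tau$ has been composed with the transposition $(i,i+1)$ while the angles remain all equal to $1$. Since adjacent transpositions generate $\mathrm{Sym}_{n-3}$, iterating these moves converts $\tau$ to $\Id$, yielding $\overline{X}_0 \in \partial\overline{\cC}$.

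The main technical obstacle will be ensuring that the composite $U R^k U$ in each step lands on a $(1,2,3)$-boundary of the desired shape, rather than on a $(2,3,1)$ or $(3,2,1)$ configuration, and that the other angles or positions are not inadvertently disturbed. This is a matter of bookkeeping in the prong-matching labels and of carefully choosing $k$ so that the intermediate boundary's outgoing $U$-arc lies within the correct angular sector; the explicit formulas in \Cref{Prop:Cmove} make this tractable but tedious. Low-dimensional edge cases, namely when some $b_i = 2$ (so $C_i = 1$ automatically) or when the enhancement $\kappa$ is small, either require no move at all or can be handled by routing the deformation through a $(2,3,1)$-intermediate boundary.
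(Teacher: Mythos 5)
Your proposal is correct and follows essentially the same route as the paper: starting from \Cref{lm:TypeC_III_ex}, one uses composite moves of the form $UR^kU$ at prong-matchings near $c_j$ to normalize the angle data to $(1,\dots,1)$ and to realize adjacent transpositions of the residueless poles, exactly as in the paper's proof (which does the permutation step first and the angle step second). The only notable difference is that the paper resolves your "bookkeeping" concern for transpositions by a meet-in-the-middle argument — showing $UR^{2D_j}U\cdot\overline{X}(c_{j-1})$ and $UR^{2D_j}U\cdot\overline{X'}(c_j)$ are adjacent to the same boundary — rather than by tracking a direct move from one boundary to the other.
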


\begin{proof}
By \Cref{lm:TypeC_III_ex}, $\cC$ contains an element of the form $X^C_{\III}((1,2,3), n-3, \tau, {\bf C})$ for some $\tau$ and ${\bf C}$. It remains to show that $\cC$ does not depend on these parameters. We find the following boundary of $\cC$ for each step:    
    \begin{itemize}
        \item[(1)] $X^C_{\III}((1,2,3), n-3, \Id, {\bf C})$.
        \item[(2)] $X^C_{\III}((1,2,3), n-3, \Id, {\bf 1})$.
    \end{itemize}

For the first step, it is sufficient to prove the case $\tau=(j,j+1)$ for each $1 \leq j \leq n-4$. Let $\overline{X'} = X^C_{\III}((1,2,3), n-3, \tau \circ (j,j+1), {\bf C})$. Then $UR^{2D_j}U \cdot \overline{X}(c_{j-1})$ and $UR^{2D_j}U \cdot \overline{X'}(c_j)$, thus $\overline{X'}\in \partial\overline{\cC}$. 
    
For the second step, suppose $C_k > 1$ for some $k$. Then $UR^{2(C_k - 1)}U \cdot \overline{X}(c_k - 1)$ is adjacent to a boundary $\overline{X'}$ with $C'_k = 1$ and other data is identical to $\overline{X}$. Repeating this process yields the standard form.
\end{proof}

This completes the proof of the connectedness of the strata of C-signatures. We now conclude with the following corollary.

\begin{proof}[Proof of \Cref{Cor:Cssc}]
Assume $p = q_3$. The connected stratum $\cP(\mu^{\fR})$ contains the boundary $\overline{X} = X^C_{\III}((1,2,3), n-3, \Id, (1, \dots, 1))$. The bottom level component contains a multiplicity one saddle connection bounding the polar domain of $p$. The top level component is hyperelliptic only if $e_1 = e_2$ and all $b_i = 2$.
\end{proof}
\section{Strata of D-signatures} \label{Sec:Dsignature}

In this section, we prove \Cref{Prop:Dhyper}--\ref{Prop:Dnonhyper}. Let $\cP(\mu^{\fR})$ be a stratum of D-signature. We denote $\mu^\fR = (a\mid-b_1 \mid \dots \mid -b_{n-4}\mid  -e_1, -e_2\mid -e_3,-e_4)$ with $a-(e_1+e_2+e_3+e_4+\sum_i b_i)=-2$ throughout this section. Assume also that $0< e_1\leq e_2, 0<e_3\leq e_4$ and $b_i\geq 2$. Let $q_j$ denote the non-residueless poles of orders $e_j$, $j=1,2,3,4$. 

\subsection{Type III boundary}

Since any flat surface in $\cP(\mu^{\fR})$ has a unique zero, each saddle connection is joining the zero to itself. So the stratum only has Type III boundaries. In the case of D-signatures, there are three types of Type III boundaries, distinguished by the type of level graphs. We can list them as follows (the combinatorial data in \Cref{tab:TypeDIIIabc_comb}) :

\begin{itemize}
    \item[(IIIa)] : if all $q_j$ are contained in the top level;    
    \item[(IIIb)] : if only one of the $q_j$ is contained in the bottom level components;
    \item[(IIIc)] : if exactly a pair of non-residueless poles, either $(q_1,q_2)$ or $(q_3,q_4)$, is contained in the bottom level component.
\end{itemize}

\begin{table}[h!]
    \centering
    \begin{tabular}{|c|p{12cm}|}
    \hline
        \textbf{Type} & \textbf{Combinatorial data}\\ \hline
         IIIa& A tuple $\underline{h}=(h_1,h_2)$ indicating the indices of the non-residueless pole contained in $X_0^{1}$ (the component containing $q_1$) resp. $X_0^{2}$.  \\ \cline{2-2}
         & Integers $\ell_1,\ell_2$, where $0\leq \ell_1\leq \ell_2\leq n-4$ such that $\ell_1$ resp. $\ell_2-\ell_1$ are the number of residueless poles contained in the component $X_0^1$ resp. $X_0^2$.\\ \cline{2-2}
         & A permutation $\tau\in \operatorname{Sym}_{n-4}$ to indicate the indices of residueless poles.\\ \cline{2-2}
         & A tuple of integers ${\bf C}=(C_1,\dots,C_{n-4})$, where $1\leq C_i\leq b_{i}-1$.\\ \cline{2-2}
         & A prong-matching equivalence class $[(u,v)]\in \ZZ/\kappa_1\ZZ\times \ZZ/\kappa_2\ZZ$ modulo the diagonal rotation.\\ \hline
        IIIb &  A tuple $\underline{h}=(h_1,h_2,h_3,h_4)$ indicating the indices of non-residueless poles, where $h_2$ indicate the index of the non-residueless pole on bottom level.\\ \cline{2-2}
        & Integers $\ell_1,\ell_2$, where $0\leq \ell_1\leq \ell_2\leq n-4$ such that $\ell_1$ resp. $\ell_2-\ell_1$ are the number of residueless poles lying between $q_{h_3}$ and $q_{h_1}$ resp. between $q_{h_1}$ and $q_{h_4}$.\\ \cline{2-2}
        & A permutation $\tau\in \operatorname{Sym}_{n-4}$ to indicate the indices of residueless poles.\\ \cline{2-2}
        & An integer $C$, where $1\leq C \leq e_{h_1} -1 $. \\ \cline{2-2}
        & A tuple of integers ${\bf C}=(C_1,\dots,C_{n-4})$, where $1\leq C_i\leq b_i-1$ .\\ \hline
        IIIc & A tuple $\underline{h}=(h_1,h_2,h_3,h_4)$ indicating the indices of non-residueless poles, where $h_3,h_4$ indicate the indices of the non-residueless pole on bottom level.\\ \cline{2-2}
        & Integers $\ell_1,\ell_2$, where $0\leq \ell_1\leq \ell_2\leq n-4$ such that $\ell_1$ resp. $\ell_2-\ell_1$ are the number of residueless poles lying on $Z_0$ resp. on the $Z_{-1}$ between $q_{h_3}$ and $s^\bot$.\\ \cline{2-2}
        & A permutation $\tau\in \operatorname{Sym}_{n-4}$ to indicate the indices of residueless poles.\\ \cline{2-2}
        & An integer $C$, where $1\leq C \leq \kappa$. \\ \cline{2-2}
        & A tuple of integers ${\bf C}=(C_1,\dots,C_{n-4})$, where $1\leq C_i\leq b_i-1$ .\\ \hline
    \end{tabular}
    \caption{Combinatorial data of boundaries}
    \label{tab:TypeDIIIabc_comb}
\end{table}

For a Type IIIa boundary, let $\tau_1=\tau|_{\{1,\dots, \ell_1\}}$, $\tau_2(i)=\tau(i+\ell_1)$ for $i=1,\dots, \ell_2-\ell_1$ and $\tau_3(i)=\tau(i+\ell_2)$ for $i=1,\dots,n-4-\ell_2$. Also, let ${\bf C_1}=(C_{\tau(i)})_{i=1,\dots, \ell_1}$, ${\bf C_2}=(C_{\tau(i)})_{i=\ell_1+1,\dots, \ell_2}$ and ${\bf C_3}=(C_{\tau(i)})_{i=\ell_2+1,\dots, n-4}$. There are two top level components, isomorphic to $Z_2(\tau_1,{\bf C_1},e_1,e_{h_1})$ and $Z_2(\tau_2,{\bf C_2},e_{h_2},e_2)$. The bottom level component is isomorphic to $Z_2(\tau_3,{\bf C_3},\kappa_1+1,\kappa_2+1)$ where $\kappa_1=e_1+e_{h_1}-1+\sum_{i=1}^{\ell_1} b_{\tau(i)}$ and $\kappa_2=e_2+e_{h_2}-1+\sum_{i=\ell_1+1}^{\ell_2} b_{\tau(i)}$. We denote this boundary by 
\[
X^D_{\III a} (\underline{h},\ell_1,\ell_2,\tau,{\bf C},[(u,v)]).
\]

For a Type IIIb boundary, the top level component contains $\ell_2+1$ residueless poles, one of which is $q_{h_1}$. Let $\tau_1(i)=\tau(i)$ for $i=1,\dots, \ell_1$ and $\tau_1(i)=\tau(i-1)$ for $i=\ell_1+2,\dots,n-3-\ell_2$ and $\tau_1(\ell_1+1)=q_{h_1}$. Also, let ${\bf C_1}=(C_{\tau(1)},\dots, C_{\tau(\ell_1)},C,C_{\tau(\ell_1+1)},\dots,C_{\tau(\ell_2)})$. Then the top level component is isomorphic to $Z_2(\tau_1,{\bf C_1},e_{h_3},e_{h_4})$. Let $\tau_2(i)=\tau(i+\ell_2)$ for $i=1,\dots, n-4-\ell_2$ and ${\bf C_2}=(C_{\tau(i)})_{i=\ell_2+1,\dots, n-4}$. Then the bottom level component is isomorphic to $Z_2(\tau_2,{\bf C_3},e_{h_2},\kappa+1)$ where $\kappa=e_{h_1}+e_{h_3}+e_{h_2}-1+\sum_{i=1}^{\ell_2} b_{\tau(i)}$. We denote this boundary by 
\[
X^D_{\III b} (\underline{h},\ell_1,\ell_2,\tau,C,{\bf C}).
\]

For a Type IIIc boundary, let $\tau_1(i)=\tau(i)$ for $i=1,\dots, \ell_1$ for $i=1,\dots,\ell_1$. Also, let ${\bf C_1}=(C_{\tau(1)},\dots, C_{\tau(\ell_1)})$. Then the top level component is isomorphic to $Z_2(\tau_1,{\bf C_1},e_{h_1},e_{h_2})$. The bottom level component contains $n-3-\ell_1$ residueless poles, one of which is the pole of order $\kappa+1\coloneqq e_{h_1}+e_{h_2}+\sum_{i=1}^{\ell_1}b_{\tau(i)}$ at the node $s^\bot$. Let $\tau_2(i)=\tau(i+\ell_1)$ for $i=1,\dots, \ell_2-\ell_1$, $\tau_2(i)=\tau(i+\ell_1-1)$ for $i=\ell_2-\ell_1+2,\dots, n-3-\ell_1$ and $\tau_2(\ell_2-\ell_1+1)=s^\bot$. Also let ${\bf C_2}=(C_{\tau(\ell_1+1)},\dots, C_{\tau(\ell_2)},C,C_{\tau(\ell_2+1)},\dots,C_{\tau(n-4)})$. Then the bottom level component is isomorphic to $Z_2(\tau_2,{\bf C_2},e_{h_3},e_{h_4})$. We denote this boundary by 
\[ 
X^D_{\III c} (\underline{h},\ell_1,\ell_2,\tau,C,{\bf C}).
\]

The choices of $h_1,h_2,h_3,h_4$ have already taken into account of permutations. The level graphs and separatrix diagrams of corresponding boundaries are depicted in \Cref{tab:TypeD_IIIabc}. We use the alphabet $s$ to label the node, and with the symbols $\top,\bot$ to label the nodal poles and zeros. The enhancements of the level graphs are denoted by $\kappa$. 

\begin{table}[h!]
    \centering
    \begin{tabular}{|c|c|}
    \hline
      Type   & Level graph/ Separatrix diagram \\ \hline
       IIIa  & \centered{  
\resizebox{14cm}{6.2cm}{\includegraphics[]{./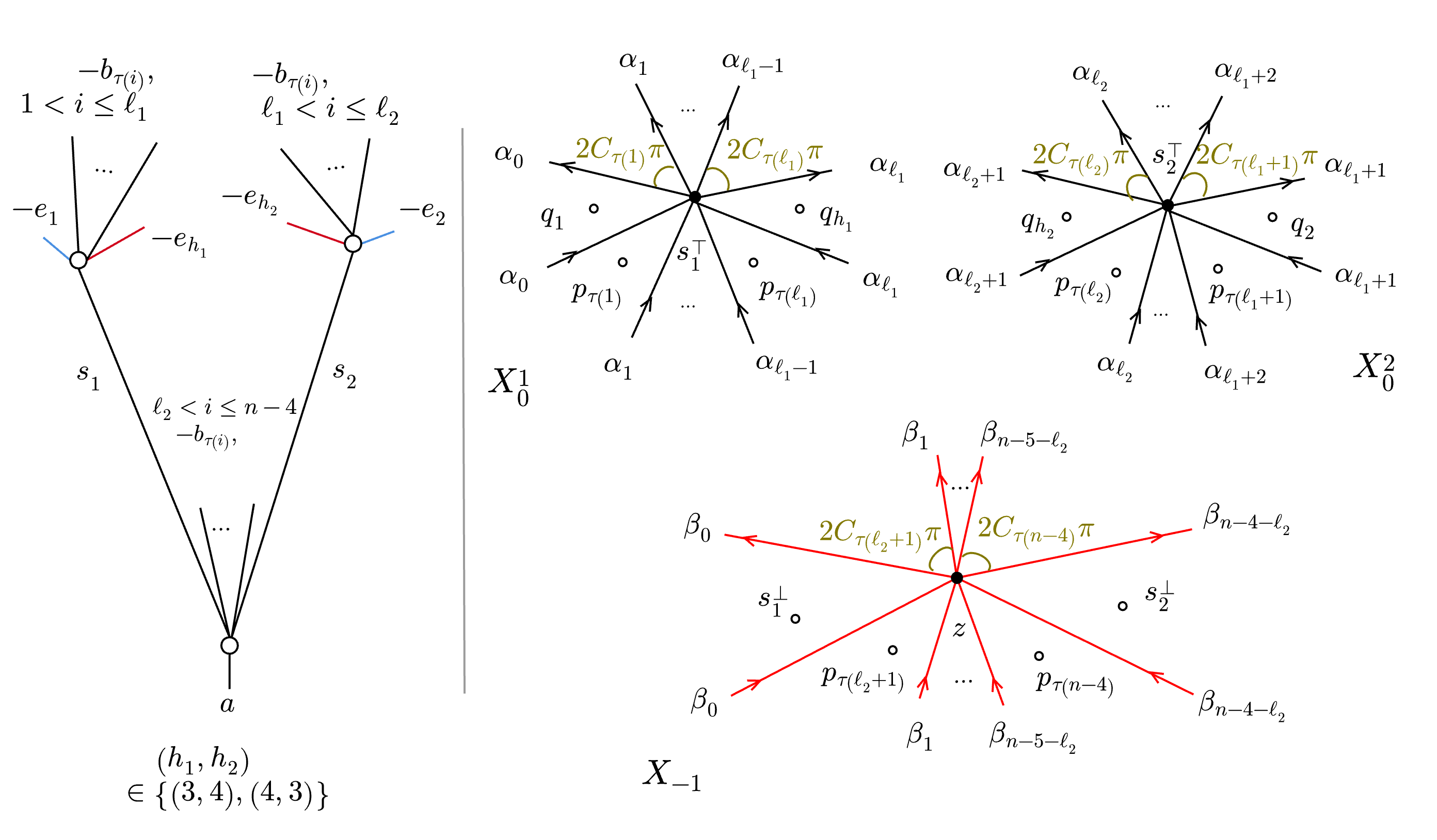}}
 }  \\ \hline

 IIIb & \centered{  
\resizebox{14cm}{6.2cm}{\includegraphics[]{./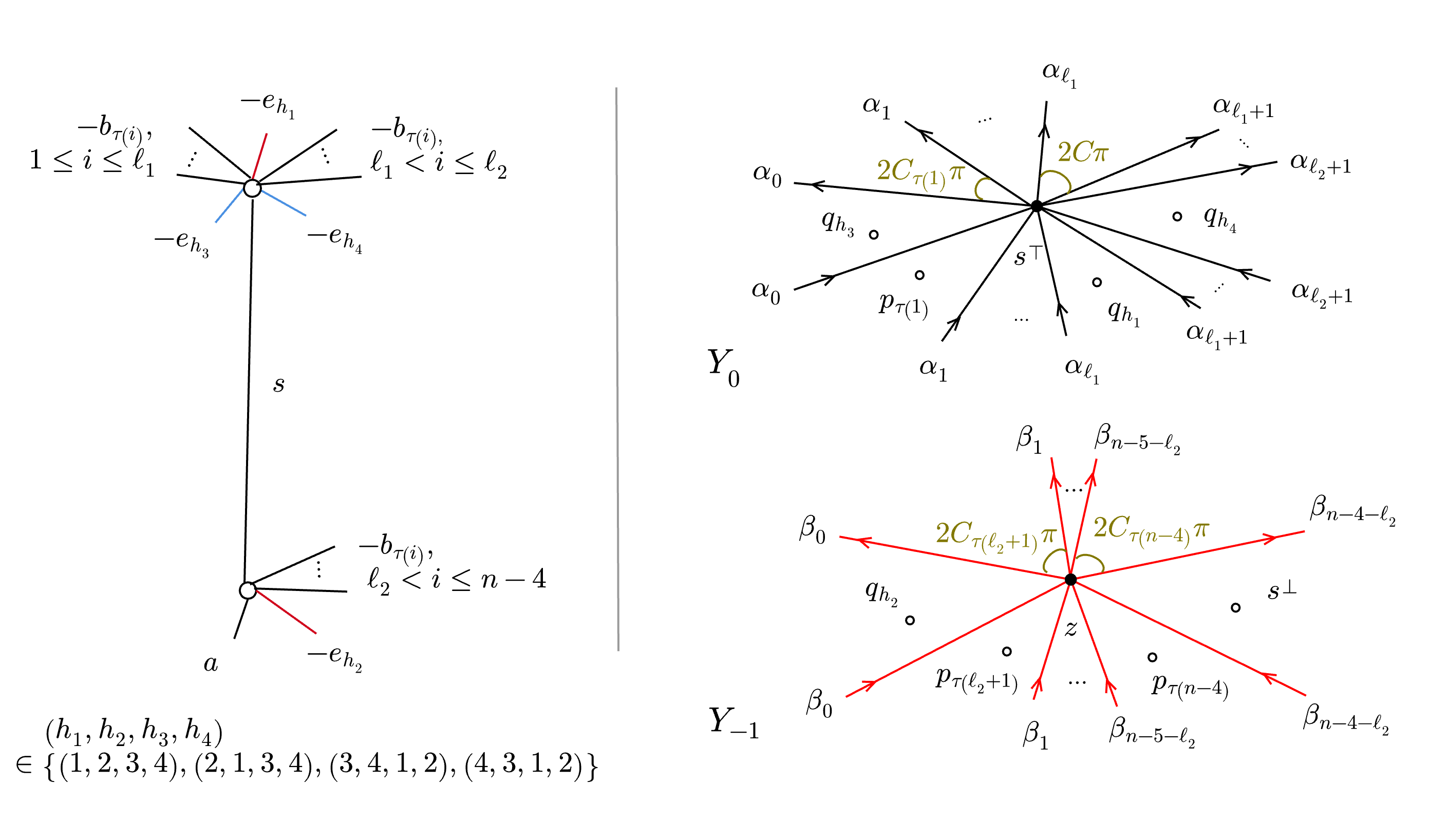}}
 }  \\ \hline

 IIIc & \centered{  
\resizebox{14cm}{6.2cm}{\includegraphics[]{./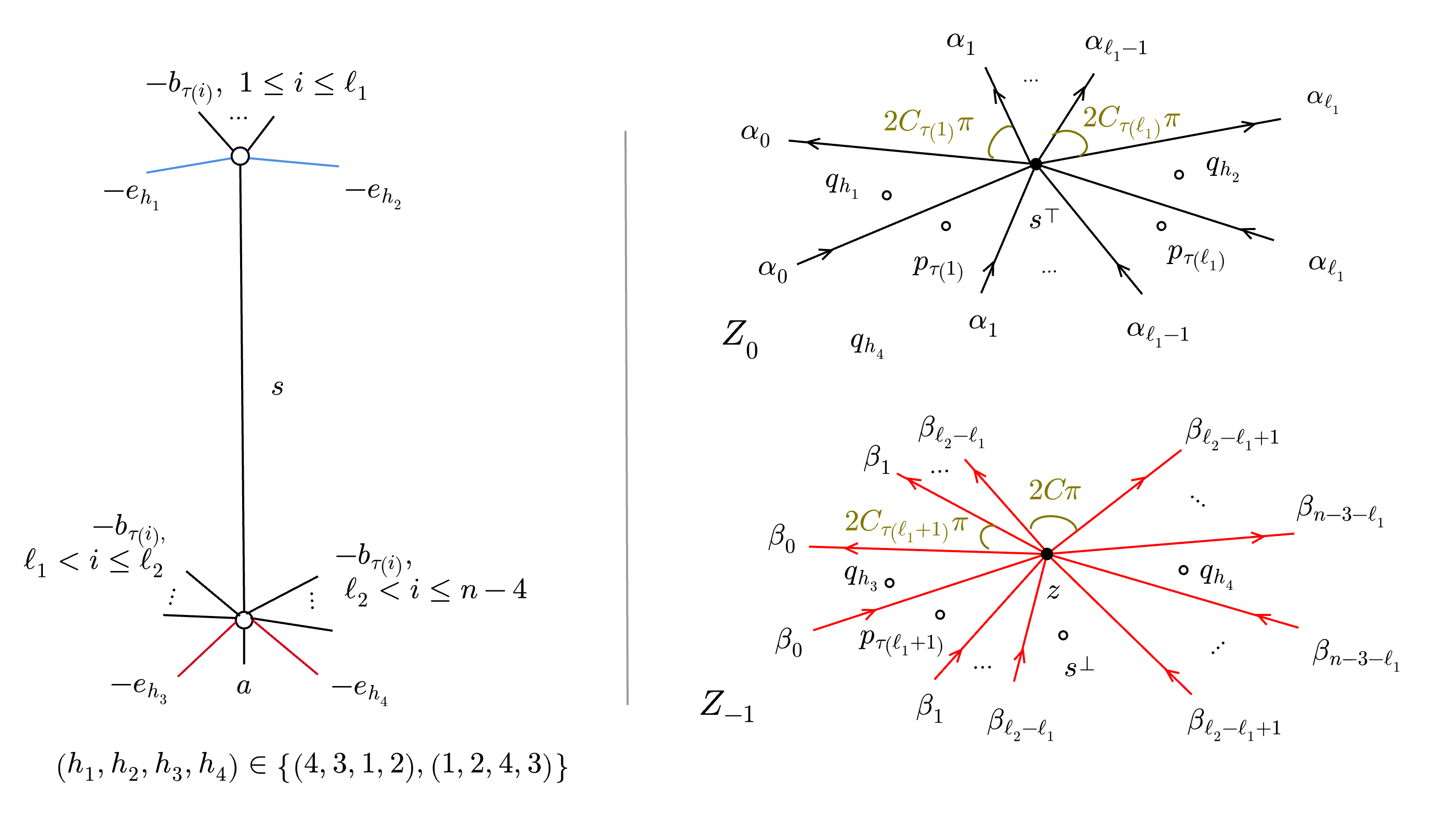}}
 }  \\ \hline
       
    \end{tabular}
    \caption{The boundaries of strata of D-signatures}
    \label{tab:TypeD_IIIabc}
\end{table}

In \Cref{tab:TypeD_pm}, the labeling of outgoing prongs at $s^\top$ and the incoming prongs at $s^\bot$ is depicted. 

\begin{table}[]
    \centering
    \begin{tabular}{|c|c|}
    \hline
      Type   & Prongs-labeling\\ \hline
       IIIa  & \centered{  
\resizebox{14cm}{7cm}{\includegraphics[]{./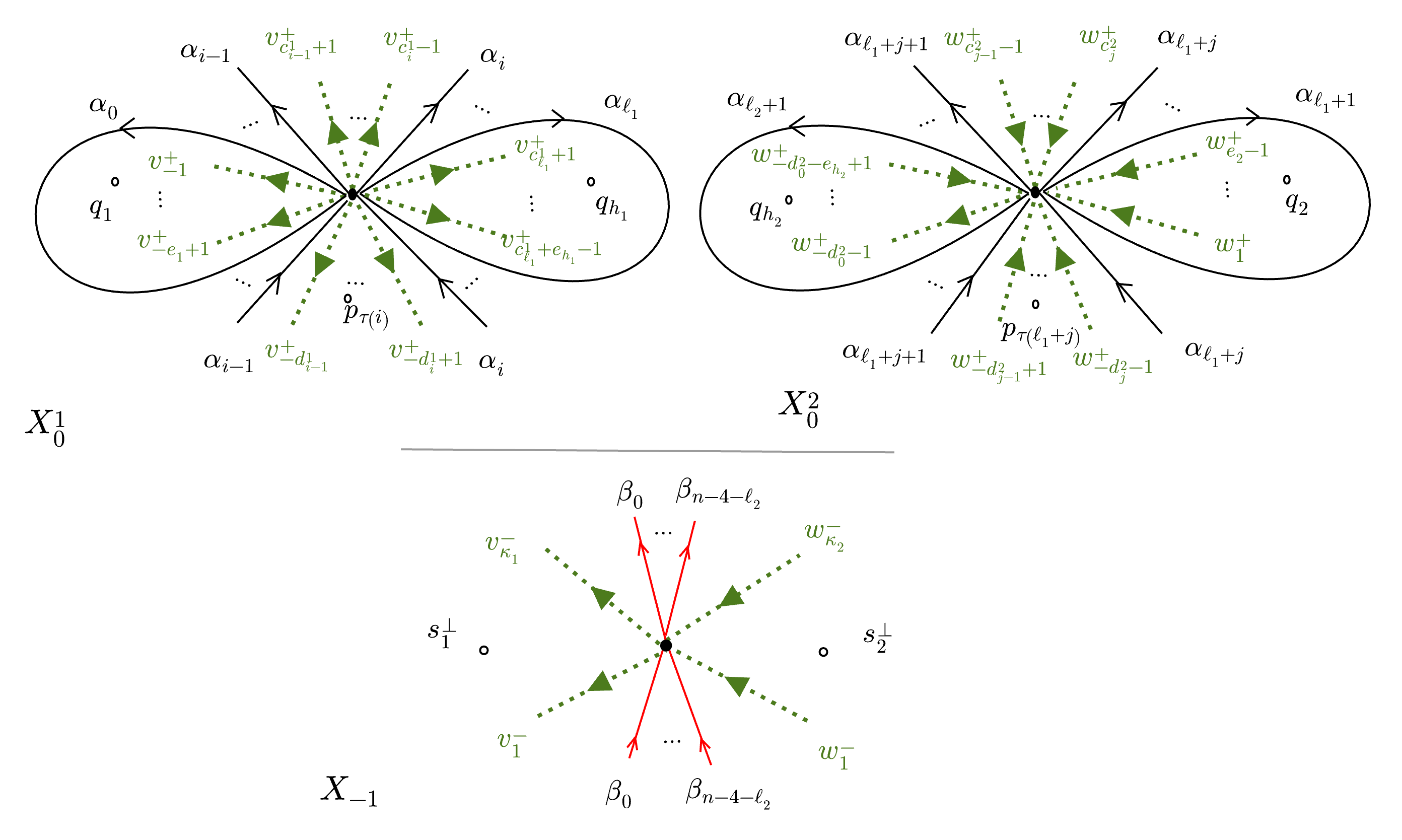}}
 }  \\ \hline
 IIIb & \centered{  
\resizebox{14cm}{7.3cm}{\includegraphics[]{./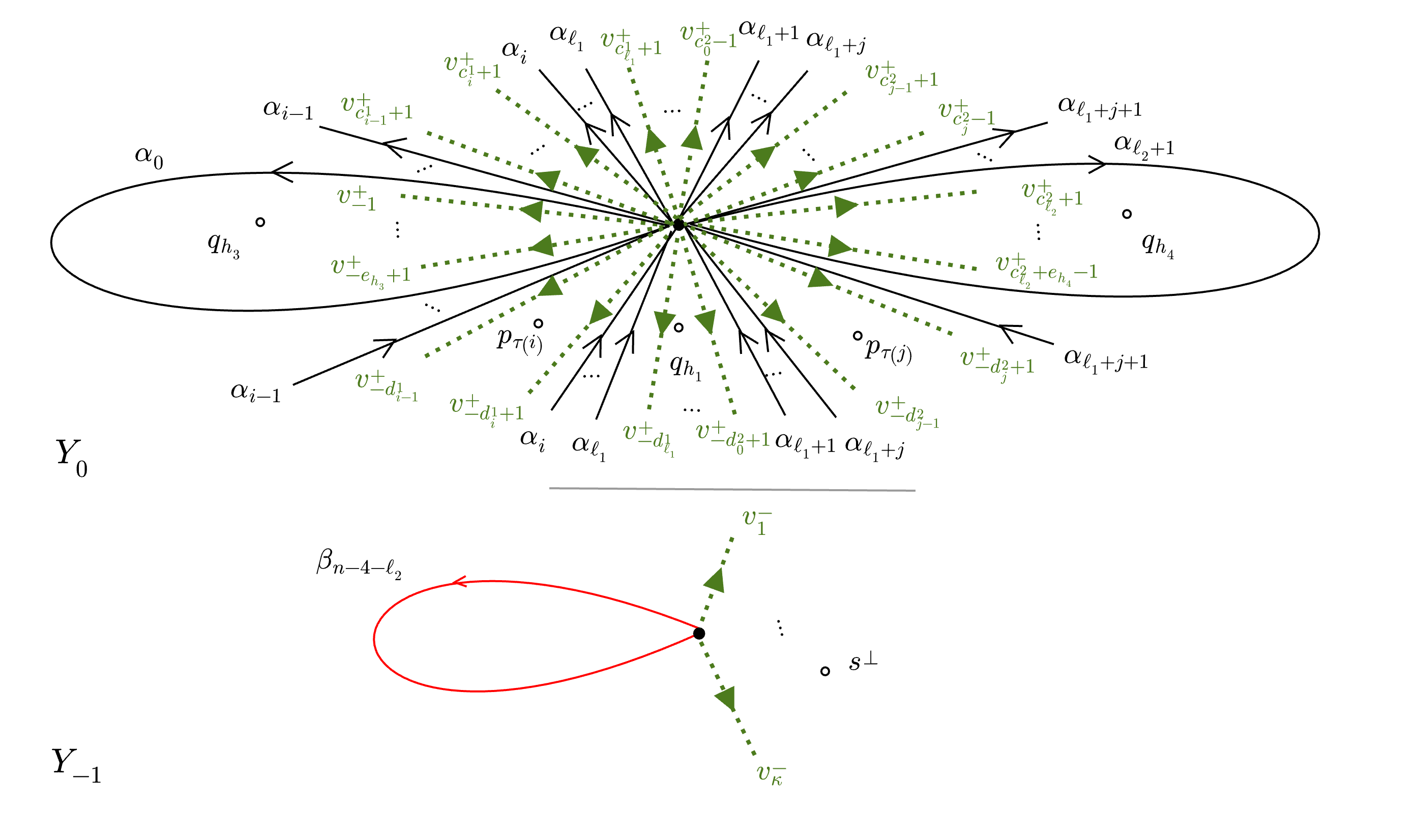}}
 }  \\ \hline
 IIIc & \centered{  
\resizebox{14cm}{6.3cm}{\includegraphics[]{./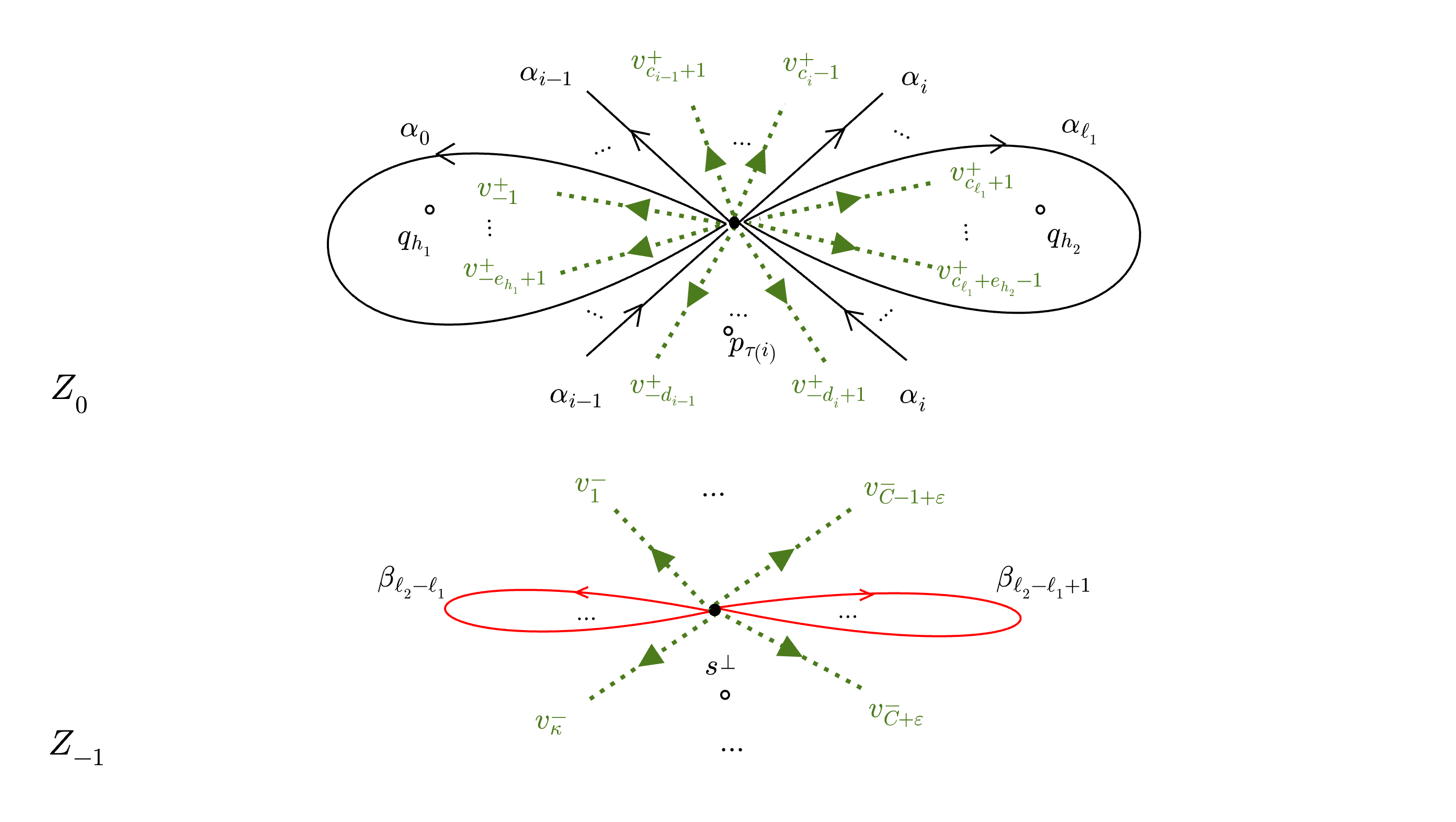}}
 }  \\ \hline      
    \end{tabular}
    \caption{Prongs-labeling of Strata of D-signatures}
    \label{tab:TypeD_pm}
\end{table}

For Type IIIa, on the top level, we label the outgoing prongs at $s_1^\top$ by $v^+_i$, $i\in \ZZ/\kappa_1\ZZ$ clockwise so that $v^+_0$ is aligned with $\alpha_0$. Also, the incoming prongs at $s_2^\top$ are labeled by $w^+_j$, $j\in\ZZ/\kappa_2\ZZ$, counter-clockwise so that $w^+_0$ is aligned with $\alpha_{\ell_2+1}$. At $s_1^\top$, the outgoing prongs $v^+_{c^1_0},v^+_{c^1_1},\dots,v^+_{c^1_{\ell_1}}$ overlap with saddle connections in the same direction while the outgoing prongs $v^+_{-d^1_0},v^+_{-d^1_1},\dots,v^+_{-d^1_{\ell_1}}$ are the prongs that are closest to the saddle connections counter-clockwise, where
\begin{align*}
    c^1_i=\sum_{k=1}^i C_{\tau(k)}\quad
    d^1_i=e_1+\sum_{k=1}^i D_{\tau(k)}.
\end{align*}
Similarly, at $s_2^\top$, the outgoing prongs $w^+_{-d^2_0},w^+_{-d^2_1},\dots,w^+_{-d^2_{\ell_2-\ell_1}}$ overlap with saddle connections in the same direction while the outgoing prongs $w^+_{c^2_0},w^+_{c^2_1},\dots,w^+_{c^2_{\ell_2-\ell_1}}$ are the prongs that are closest to the saddle connections counter-clockwise, 
\begin{align*}
    c^2_j=e_2+\sum_{k=\ell_1+1}^{\ell_1+j} C_{\tau(k)}\quad
    d^2_j=\sum_{k=\ell_1+1}^{\ell_1+j} D_{\tau(k)}.
\end{align*}
On the bottom level, note that the incoming prongs at $s_1^\bot$ is corresponding to the outgoing prongs at $z$ towards the polar domain of $s_1^\bot$. We label them by $v^-_1,\dots, v^-_{\kappa_1}$ clockwise at $z_{h_1}$ so that $v^-_1$ is closest to the saddle connection $\beta_0$. Similarly, we label the incoming prongs at $z$ towards the polar domain of $s_2^\bot$ by $w^-_1,\dots, w^-_{\kappa_2}$ counter-clockwise at $z_{h_2}$ so that $w^-_1$ is closest to the saddle connection $\beta_{n-4-\ell_2}$. Then a prong-matching $\boldsymbol{\sigma}$ is uniquely determined by the images of $\boldsymbol{\sigma}(v^-_{1})=v^+_{u}$ and $\boldsymbol{\sigma}(w^-_{\kappa_2})=w^+_{v}$. We identify $\boldsymbol{\sigma}$ with an element $(u,v)\in \ZZ/ \kappa_1 \ZZ\times \ZZ/ \kappa_2 \ZZ$. 

For Type IIIb boundaries, on the top level, we label the outgoing prongs at $s^\top$ by $v^+_i$, $i\in\ZZ/\kappa\ZZ$, so that $v^+_0$ is aligned with the saddle connection $\alpha_0$. The outgoing prongs $v^+_{c^1_0},v^+_{c^1_1},\dots,v^+_{c^1_{\ell_1}}$ and $v^+_{c^2_0},v^+_{c^2_2},\dots,v^+_{c^2_{\ell_2-\ell_1}}$ overlap with saddle connections in the same direction, while the outgoing prongs $v^+_{-d^1_0},v^+_{-d^1_1},\dots,v^+_{-d^1_{\ell_1}}$ and $v^+_{-d^2_0},v^+_{-d^2_1},\dots,v^+_{-d^2_{\ell_2-\ell_1}}$ are the prongs that are closest to the saddle connections counter-clockwise, where
\begin{align*}
    c^1_i&=\sum_{k=1}^i C_{\tau(k)} \quad
    d^1_i=e_1+\sum_{k=1}^i D_{\tau(k)},\\
    c^2_j&=C+\sum_{k=1}^{\ell_1+j} C_{\tau(k)} \quad
    d^2_j=e_1+D+\sum_{k=1}^{\ell_1+j} D_{\tau(k)}.
\end{align*}
On the bottom level, the incoming prongs at the nodal poles $s^\bot$ can be represented by outgoing prongs at $z$ towards the nodal polar domain. We label the outgoing prongs in the nodal polar domain clockwise by $v^-_1,\dots,v^-_{\kappa}$. The prong $v^-_1$ is the closest outgoing prong to the saddle connection $\beta_{n-4-\ell_2}$ clockwise at $z$. A prong-matching $\boldsymbol{\sigma}$ is uniquely determined by $\boldsymbol{\sigma}(v^-_\kappa)=v^+_u$, so we identify $\boldsymbol{\sigma}$ with an element $u \in \ZZ/\kappa\ZZ$.

For Type IIIc boundaries, on the top level, the prongs at $s^\top$ are labeled similarly as that of Type IIIb boundary. The outgoing prongs $v^+_{c_0},\dots, v^+_{c_{\ell_1}}$ at $s^\top$ overlap with the saddle connections, where $c_i=\sum_{k=1}^i C_{\tau(k)}$. On the bottom level, the incoming prongs at the nodal poles $s^\bot$ can be represented by outgoing prongs at $z$ towards the nodal polar domain. We label the outgoing prongs in the nodal polar domain at $z$ clockwise by $v^-_1,\dots,v^-_{\kappa}$. The prong $v^-_1$ is the outgoing prong at $z$ towards $s^\bot$ which is the outgoing prong next to $\beta_{\ell_2-\ell_1}$ clockwise. Depending on the rescaling of the bottom level, there are $C-1+\varepsilon$ outgoing prongs between $\beta_{\ell_1}$ and $\beta_{\ell_1+1}$ clockwise, where $$\varepsilon=\begin{cases}
    1 &\mbox{ if }\arg(\alpha_0/\beta_0)\in(0,\pi]\\
    0 &\mbox{ if }\arg(\alpha_0/\beta_0)\in(\pi,2\pi].
\end{cases}$$ A prong-matching $\boldsymbol{\sigma}$ is uniquely determined by $\boldsymbol{\sigma}(v^-_\kappa)=v^+_u$, so we identify $\boldsymbol{\sigma}$ with an element $u \in \ZZ/\kappa\ZZ$.

\subsection{Plumbing construction and equatorial half-arcs}

In this subsection, we describe the equatorial half-arcs adjacent to each boundary. We describe the transformation $U$ for some equatorial half-arcs in $\cP(\mu^{\fR})$, which will be used frequently, in the later proofs. Other cases can be obtain in a similar way. 

\paragraph{\bf Type IIIa boundary}

Let $\overline{X}$ be a Type IIIa boundary. By plumbing construction with $t\in \RR_+$ and a prong-matching $(u,v)\in Pr$, we can obtain a flat surface $\overline{X}_t(u,v)$ on an equatorial half-arc adjacent to $\overline{X}$. We denote this equatorial half-arc by $\overline{X}(u,v)$. Similarly, we obtain another equatorial half-arc by taking $t\in \RR_-$, denoted by $\overline{X}(u-\frac{1}{2},v+\frac{1}{2})$. There are $2 \operatorname{lcm} (\kappa_1,\kappa_2)$ equatorial arcs adjacent to $\overline{X}$.

\begin{proposition} \label{Prop:DIIIamove}
Let $\overline{X}=X^D_{\III a} ((4,3),0,n-4,\Id,{\bf C},[(u,v)])$ be a Type IIIa boundary. Assume $0< u < e_4$ and $c^2_{j} < v \leq c^2_{j+1}$ for some $j$. Then $U\cdot \overline{X}(u,v)=\overline{X'}(u')$ for $\overline{X'}=X^D_{\III b} ((4,3,1,2)),0,n-4-j,\tau,C',{\bf C'})$ where

\begin{itemize}
    \item A permutation $\tau$ given by
    \begin{equation*}
        \tau(i) = \begin{cases}
                j+2-i &\text{if $1\leq i\leq j+1$} \\
                i &\text{otherwise}.
                \end{cases}
    \end{equation*}
    \item A positive integer $C'=u.$
    \item A tuple ${\bf C'}$ given by
    \begin{equation*}
        C'_i = \begin{cases}
                v-c^2_j &\text{if $i=j+1$ }  \\
                C_i &\text{otherwise}.
                \end{cases}
    \end{equation*}
    \item $u'=u+v+d^2_j$.
\end{itemize}
\end{proposition}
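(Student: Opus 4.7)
My plan is to verify the statement directly from the plumbing construction of \Cref{sec:Plumb}, in the spirit of the analogous identities \Cref{Prop:BIIIamove,Prop:BIIIbmove,Prop:BIIIcmove,Prop:Cmove}. Because $\ell_1 = 0$ and $\ell_2 = n-4$, both $X_0^1$ and $X_{-1}$ are the simplest possible two-pole spheres ($Z_2(\emptyset, \emptyset, e_1, e_4)$ and $Z_2(\emptyset, \emptyset, \kappa_1+1, \kappa_2+1)$ respectively), each carrying a single saddle connection, while all the residueless poles live on $X_0^2$. The plumbed surface $\overline{X}_t(u,v)$ is therefore transparent to draw: plumb the node $s_1$ of Type (ii) using the prong $v^+_u$ and width $t$, and do the same at $s_2$ with the prong $w^+_v$, obtaining a genus-zero flat surface containing three distinguished short saddle connections (the plumbed images of $\alpha_0$, $\beta_0$ and a connection crossing the strip).

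Along the equatorial arc, increasing $t$ forces a different collection of saddle connections to collapse at the other endpoint. The hypothesis $0 < u < e_4$ means that the selected prong $v^+_u$ lies strictly inside the polar domain of $q_4$ at $s_1^\top$, so at the other endpoint the pole $q_4$ detaches from $q_1$ and becomes a non-nodal pole on the new top level; conversely $q_3$ and a certain number of residueless poles from $X_0^2$ are pulled onto the new bottom. The hypothesis $c_{j}^2 < v \leq c_{j+1}^2$ pinpoints the position of $w^+_v$ between the outgoing prongs attached to the $j$-th and $(j{+}1)$-st saddle connections of $X_0^2$, so that exactly the first $j{+}1$ of them participate in the degeneration, pulling $p_1,\dots,p_j$ down to the bottom (together with $q_3$) while leaving $p_{j+1}$ on top with remaining angle $v - c_j^2$ on the side of the new node.

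Having identified the limit, the combinatorial data of $\overline{X'}$ can be read off directly. The new top level is the ribbon graph obtained from $X_0^2$ by cutting along the collapsing $j{+}1$ saddle connections and inserting the polar domains of $q_4$, $q_1$, and $q_2$: the cyclic order of its residueless poles is $p_{j+1},p_j,\dots,p_1,p_{j+2},\dots,p_{n-4}$, yielding the stated permutation $\tau$. The angle on top at $q_4$ is $C' = u$ (from the position of $v^+_u$), the angle at $p_{j+1}$ towards the node is $C'_{j+1} = v - c_j^2$, and every other angle is unchanged; the tuple $\underline{h} = (4,3,1,2)$ is forced by which pole ends up on the bottom and by the cyclic order of the non-residueless poles on top.

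The subtle point is the prong-matching $u' = u + v + d_j^2$ on $\overline{X'}$. Relative to the canonical reference prong $v_0^+$ of a Type IIIb boundary (the prong aligned with $\alpha_0'$), the distinguished incoming prong $v^-_\kappa$ at the new node is obtained by following, through the plumbing region, a path that contributes a shift of $u$ inside the old $q_4$-polar domain, a shift of $v$ across the strip on $X_0^2$, and an offset of $d_j^2$ coming from the change of reference saddle connection caused by reversing the first $j{+}1$ residueless poles. The hard part of the proof is precisely this bookkeeping; once it is done, the formula follows from the rigid combinatorics of the plumbing ribbon graph, and the identity $U \cdot \overline{X}(u,v) = \overline{X'}(u')$ is established.
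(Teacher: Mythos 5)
Your overall strategy is the same as the paper's: these $U$-moves are stated there without written proof, as direct observations read off from the plumbed separatrix diagrams, and a careful flat-geometric verification along the lines you sketch is exactly what is intended. The problem is that your write-up does not actually carry out that verification, and where it commits to specifics it is inconsistent. You assert that $p_1,\dots,p_j$ are pulled to the bottom level together with $q_3$, but two sentences later you say the new top level carries the residueless poles in the cyclic order $p_{j+1},p_j,\dots,p_1,p_{j+2},\dots,p_{n-4}$, i.e.\ all of them; these statements contradict each other. Neither is reconciled with the target data: in $X^D_{\III b}((4,3,1,2),0,n-4-j,\tau,C',{\bf C'})$ the top level carries exactly $\ell_2'=n-4-j$ residueless poles, namely $\tau(1),\dots,\tau(n-4-j)$, and the bottom carries the $j$ poles $\tau(n-3-j),\dots,\tau(n-4)$ (see \Cref{tab:TypeDIIIabc_comb}); combining "$p_1,\dots,p_j$ go down" with the stated $\tau$ does not reproduce these counts, so you have not verified that the limit you describe is the boundary point claimed in \Cref{Prop:DIIIamove}.

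The missing step is precisely the one that carries all the content of the proposition. Which saddle connections of $X_0^2$ (and whether $\alpha_0$) acquire the $+t$ modification in the plumbing of \Cref{sec:Plumb} depends on the choice of the residue part $R$ and of the modification paths joining $v^+_u$ and $w^+_v$ to the poles of $R$; only after fixing these and counting the crossed saddle connections can one say which $\fR$-homologous class stays short at the far end of the arc, hence which polar domains are absorbed into the new bottom level, what the induced order $\tau$ and angles ${\bf C'}$, $C'=u$, $C'_{j+1}=v-c^2_j$ are, and — by tracking the distinguished prong through the polar domain of $q_4$, across the strip, and through the reindexing of the reference saddle connection — why $u'=u+v+d^2_j$. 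You explicitly defer this bookkeeping ("the hard part of the proof is precisely this bookkeeping"), so the proposal restates what must be checked rather than checking it; as submitted it is a plan with an internal inconsistency, not a proof.
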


\paragraph{\bf Type IIIb boundary}

Let $\overline{X}$ be a Type IIIb boundary. By plumbing construction with $t\in \RR_+$ and a prong-matching $u\in \ZZ/ \kappa \ZZ$, we can obtain a flat surface $\overline{X}_t(u)$ on an equatorial half-arc denoted by $\overline{X}(u)$. Similarly, by taking $t\in \RR_-$, we obtain $\overline{X}(u-\frac{1}{2})$. There are $2\kappa$ equatorial arcs adjacent to $\overline{X}$. 

\begin{proposition} \label{Prop:DIIIbmove}
Let $\overline{X}=X^D_{\III b} ((4,3,1,2),\ell_1,\ell_2,\Id,C,{\bf C})$ be a Type IIIb boundary. 

(1) Assume $ -e_1 < u < 0$. Then $U\cdot \overline{X}(u)=\overline{X'}(u')$ for $\overline{X'}=X^D_{\III b} ((1,2,3,4)),n-4-\ell_2,n-4-(\ell_2-\ell_1),\tau,-u,{\bf C})$ where
\begin{itemize}
    \item A permutation $\tau$ given by
    \begin{equation*}
        \tau(i) = \begin{cases}
            i+\ell_2 &\text{if $1\leq i\leq n-4-\ell_2$}  \\
            i-(n-4-\ell_2) &\text{if $n-4-\ell_2+1\leq i\leq n-4-\ell_2+\ell_1$ }  \\
            (\ell_1+n-3)-i &\text{otherwise}.
                \end{cases}
    \end{equation*}
    \item $u'=(c')^2_{n-4-(\ell_2-\ell_1)}+C$.
\end{itemize}

(2) Assume $c^2_{\ell_2} < u < c^2_{\ell_2}+e_2$. Then $U\cdot \overline{X}(u)=\overline{X'}(u')$ for $\overline{X'}=X^D_{\III b} ((2,1,4,3)),\ell_2-\ell_1,n-4-\ell_1,\tau,u-c^2_{\ell_2},{\bf C})$ where
\begin{itemize}
    \item A permutation $\tau$ given by
    \begin{equation*}
        \tau(i) = \begin{cases}
            i+\ell_1 &\text{if $1\leq i\leq \ell_2-\ell_1$}  \\
            (n-3+\ell_2-\ell_1)-i &\text{if $ \ell_2-\ell_1+1 \leq i\leq n-4-\ell_1$ }  \\
            i+\ell_1-(n-4) &\text{otherwise}.
                \end{cases}
    \end{equation*}
    \item $u'=-C$.
\end{itemize}
\end{proposition}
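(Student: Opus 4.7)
The plan is to verify both identities by directly inspecting the flat picture of the plumbed surface $\overline{X}_t(u)$ for $t \in \mathbb{R}_+$ and the given prong-matching $u$, and then identifying the Type IIIb boundary to which the opposite equatorial half-arc (the one obtained by applying $U$) is adjacent. This parallels the derivations of \Cref{Prop:BIIIbmove} and \Cref{Prop:DIIIamove}, and is essentially a bookkeeping exercise once the correct flat picture is drawn.

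First I would draw the separatrix diagram of $\overline{X}_t(u)$ using the basic-domain representation of the top level (which, since $\underline{h}=(4,3,1,2)$, contains the non-residueless poles $q_4, q_1, q_2$ together with the residueless poles indexed by $\tau(1),\dots,\tau(\ell_2)$) and of the bottom level (containing $q_3$ and $\tau(\ell_2+1),\dots,\tau(n-4)$). In case (1), the range $-e_1 < u < 0$ places the outgoing prong $v^+_u$ at $s^\top$ strictly inside the half-infinite cylinder basic domain corresponding to $q_1$. Consequently, on the plumbed surface, the $t$-parallel saddle connection emerging from the reopened node abuts the polar domain of $q_1$, and the transformation $U$, which reverses the equatorial half-arc and thus identifies the complementary collection of $\fR$-homologous saddle connections to shrink, isolates $q_1$ as the unique non-residueless pole on a new bottom level component. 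Reading the cyclic order of residueless poles clockwise around $z$ starting from the prong just past $q_1$ yields the permutation $\tau$ in the statement (the indices $\ell_2+1,\dots,n-4$ come first, followed by $1,\dots,\ell_1$, and finally $\ell_1+1,\dots,\ell_2$ in reverse order). The order of the new nodal pole is $-u$, and the new prong-matching $u'$ is obtained by tracking the image of $v^-_{\kappa'}$ under the identification, which after accounting for the accumulated angles $(c')^2_\bullet$ on the new top level gives $u' = (c')^2_{n-4-(\ell_2-\ell_1)} + C$.

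Case (2) is completely symmetric: the range $c^2_{\ell_2} < u < c^2_{\ell_2}+e_2$ places the outgoing prong inside the polar domain of $q_2$, so $U$ sends the equatorial half-arc to one adjacent to a Type IIIb boundary where $q_2$ is isolated on the bottom level. The tuple $\underline{h}$ becomes $(2,1,4,3)$, the cyclic order of residueless poles around the new node is reversed as described, and the same angle-accumulation argument produces $u' = -C$.

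The main obstacle is purely combinatorial: one must carefully transport the indexing conventions for prongs at $s^\top$ (labeled clockwise starting from $\alpha_0$) through the reversal, keeping track of the orientations of the incoming-vs-outgoing prongs at the new node, and verify that the accumulated partial sums $c^1_i, d^1_i, c^2_j, d^2_j$ translate into the formulas as stated. No new geometric input beyond the plumbing construction of \Cref{sec:Plumb} and the prong-labeling conventions laid out in \Cref{tab:TypeD_pm} is needed; in particular, the content of this proposition is entirely analogous to the Type IIIb moves already proved for B-signature strata.
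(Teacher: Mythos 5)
Your overall method---drawing the plumbed surface $\overline{X}_t(u)$, locating the prong $v^+_u$, and reading off the combinatorial data of the boundary at the opposite end of the equatorial arc---is exactly the argument the paper has in mind (these move propositions are recorded as direct observations from the separatrix diagrams, as with \Cref{Prop:BIIIbmove}). However, the bookkeeping you sketch arrives at conclusions that contradict the statement, so as written the proof fails. In case (1) you claim that $U$ ``isolates $q_1$ as the unique non-residueless pole on a new bottom level component'' and that ``the order of the new nodal pole is $-u$''. But in $\overline{X'}=X^D_{\III b}((1,2,3,4),\dots,-u,{\bf C})$ the entry $h'_2=2$ means the non-residueless pole on the new bottom level is $q_2$, and that bottom level also contains the zero $z$ together with the residueless poles $\tau(\ell_1+1),\dots,\tau(\ell_2)$ (in reversed order, per the third branch of the stated permutation); $q_1$ instead sits on the new top level, where it is the pole carrying the angle datum $C'=-u$. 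Note that $1\leq -u\leq e_1-1=e_{h'_1}-1$ is precisely the admissible range for that datum, whereas the new nodal pole has order $\kappa'+1$ with $\kappa'=e_1+e_3+e_4-1+\sum_{i\leq \ell'_2}b_{\tau'(i)}$, which has nothing to do with $-u$. Symmetrically, in case (2) the tuple $(2,1,4,3)$ has $h'_2=1$, so it is $q_1$ (together with $\tau(1),\dots,\tau(\ell_1)$) that goes to the new bottom level, while $q_2$ stays on top with angle datum $u-c^2_{\ell_2}\in\{1,\dots,e_2-1\}$---the opposite of what you assert.

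The source of the error is the heuristic ``prong inside the polar domain of $q_i$ implies $q_i$ is isolated at the other end.'' Plumbing with $-e_1<u<0$ inserts the whole bottom-level piece (the $\beta$-chain with $q_3$ and $\tau(\ell_2+1),\dots,\tau(n-4)$) into the angular sector of $q_1$ at the node, so at the opposite end of the arc $q_1$ remains attached to the surviving large-scale component, and its angle gets split as $(-u)+(e_1+u)$, which is exactly why $-u$ reappears as the new parameter $C'$; what collapses is the part of the chain on the far side of $q_4$, namely $q_2$ together with $\tau(\ell_1+1),\dots,\tau(\ell_2)$, while the old datum $C$ of $q_4$ is absorbed into the new prong-matching $u'=(c')^2_{n-4-(\ell_2-\ell_1)}+C$. (Also, $e_1$ is arbitrary in this proposition, so the polar domain of $q_1$ is a Type II polar domain, not necessarily a half-infinite cylinder.) To make the proof correct you must actually carry out this prong and angle transport rather than infer the new level structure from the position of $v^+_u$ alone; once that is done the stated permutation, the values of $C'$, and the formulas for $u'$ in both cases drop out, but your current identifications would yield a different (incorrect) boundary.
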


\paragraph{\bf Type IIIc boundary}

Let $\overline{X}$ be a Type IIIc boundary. Similarly to Type IIIb boundary, there are $2\kappa$ equatorial arcs adjacent to $\overline{X}$, denoted by $\overline{X}(u)$ or $\overline{X}(u-\frac{1}{2})$ for each $u\in \ZZ/\kappa\ZZ$.

\begin{proposition} \label{Prop:DIIIcmove}
Let $\overline{X}=X^D_{\III c} ((4,3,1,2),\ell_1,\ell_2,\Id,C,{\bf C})$ be a Type IIIc boundary. Then $U\cdot \overline{X}(u)$ is given by the following:

(1) Assume $-e_4 < u < 0$. Suppose that $c_{j-1} < u+C \leq c_j$ for some $1\leq j\leq \ell_1$ or $u+C\leq 0$ and $j=0$. Then $\overline{X'}=X^D_{\III b} (4,3,1,2),\ell_2-\ell_1,j+n-4-\ell_2,\tau,C',{\bf C'})$ where

\begin{itemize}
    \item A permutation $\tau$ given by
    \begin{equation*}
        \tau(i) = \begin{cases}
                i+\ell_1 &\text{if $ 1\leq i\leq \ell_2-\ell_1$}  \\
                i+\ell_1-j &\text{if $\ell_2-\ell_1+1\leq i\leq \ell_2-\ell_1+j$} \\
                i+\ell_1-j &\text{if $\ell_2-\ell_1+j+1\leq i\leq n-4-(\ell_1-j)$} \\
                -i+(n-4+j+1) &\text{otherwise}. 
                \end{cases}
    \end{equation*}
    \item A tuple ${\bf C'}$ given by
    \begin{equation*}
        C'_i = \begin{cases}
                u+C-c_{j-1} &\text{if $i=j$} \\
                C_i &\text{otherwise}.
                \end{cases}
    \end{equation*}
    \item $u'=-(d^2)'_{\ell_2-\ell_1+j}$.
\end{itemize}

(2) Assume $-e_4 < u < 0$ and $\kappa-e_4<u+C<\kappa+u$ or $\kappa-d_j < u+C \leq \kappa-d_{j-1}$ for some $j$. Then $\overline{X'}=X^D_{\III b} (4,3,1,2),\ell_2-\ell_1,j+n-4-\ell_2,\tau,C',{\bf C'})$ where

\begin{itemize}
    \item A permutation $\tau$ given by
    \begin{equation*}
        \tau(i) = \begin{cases}
                i+\ell_1 &\text{if $ 1\leq i\leq \ell_2-\ell_1$}  \\
                i+\ell_1-j &\text{if $\ell_2-\ell_1+1\leq i\leq \ell_2-\ell_1+j$} \\
                i+\ell_1-j &\text{if $\ell_2-\ell_1+j+1\leq i\leq n-4-(\ell_1-j)$} \\
                -i+(n-4+j+1) &\text{otherwise}. 
                \end{cases}
    \end{equation*}
    \item A tuple ${\bf C'}$ given by
    \begin{equation*}
        C'_i = \begin{cases}
                u+C+d_j-\kappa &\text{if $i=j$} \\
                C_i &\text{otherwise}.
                \end{cases}
    \end{equation*}
    \item $u'=(c^2)'_{\ell_2-\ell_1+j}$.
\end{itemize}

(3) Assume $c_j \leq u < c_{j+1}$ and $c_{k-1} < u+C \leq c_k$ for some $j<k$. Then $\overline{X'}=X^D_{\III c} (1,2,4,3),n-4-\ell_1+(k-j),n-4-\ell_1+k,\tau,C',{\bf C'})$ where

\begin{itemize}
    \item A permutation $\tau$ given by
    \begin{equation*}
        \tau(i) = \begin{cases}
                i+\ell_1 &\text{if $ 1\leq i\leq \ell_2-\ell_1$}  \\
                i+(j-\ell_2) &\text{if $\ell_2+1\leq i\leq \ell_2+(k-j)$} \\
                i+\ell_1-k &\text{if $\ell_2+(k-j)+1 \leq i\leq n-4-(\ell_1-k)$} \\
                i-(n-4-(\ell_1-k)) &\text{if $n-4-(\ell_1-k)+1 \leq i\leq n-4-(\ell_1-k)+j$} \\
                i+\ell_1-(n-4) &\text{otherwise}. 
                \end{cases}
    \end{equation*}
    \item A tuple ${\bf C'}$ given by
    \begin{equation*}
        C'_i = \begin{cases}
                c_{j+1}-u &\text{if $i=j+1$} \\
                u+C-c_{k-1} &\text{if $i=k$} \\
                C_i &\text{otherwise}.
                \end{cases}
    \end{equation*}
    \item $C'=c_k-c_j+\sum_{i=\ell_1+1}^{n-4} b_i$.
    \item $u'=-(d'_{\ell_2-\ell_1}+u-c_j)$.
\end{itemize}

(4) Assume $c_j \leq u < c_{j+1}$ and $\kappa-d_k < u+C \leq \kappa-d_{k-1}$ for some $j<k$. Then $\overline{X'}=X^D_{\III c} (1,2,4,3),n-4-\ell_1+(k-j),n-4-\ell_1+k,\tau,C',{\bf C'})$ where

\begin{itemize}
    \item A permutation $\tau$ given by
    \begin{equation*}
        \tau(i) = \begin{cases}
                i+\ell_1 &\text{if $ 1\leq i\leq \ell_2-\ell_1$}  \\
                i+\ell_1-j &\text{if $\ell_2-\ell_1+1\leq i\leq \ell_2-\ell_1+j$} \\
                i+\ell_1-j &\text{if $\ell_2-\ell_1+j+1\leq i\leq n-4-(\ell_1-j)$} \\
                -i+(n-4+j+1) &\text{otherwise}. 
                \end{cases}
    \end{equation*}
    \item A tuple ${\bf C'}$ given by
    \begin{equation*}
        C'_i = \begin{cases}
                c_{j+1}-u &\text{if $i=j+1$} \\
                C_k+u+C-(\kappa-d_k) &\text{if $i=k$} \\
                C_i &\text{otherwise}.
                \end{cases}
    \end{equation*}
    \item $C'=c_k-c_j+\sum_{i=\ell_1+1}^{\ell_2} b_i$.
    \item $u'=-(d^2)'_{\ell_2-\ell_1+j}$.
\end{itemize}
\end{proposition}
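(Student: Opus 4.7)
The plan is to compute $U \cdot \overline{X}(u)$ in each of the four cases by explicitly drawing the plumbed flat surface $\overline{X}_t(u)$ for small $t > 0$, identifying the collection of parallel saddle connections whose shrinking produces the opposite endpoint of the equatorial arc, and then reading off the level graph, the permutation, the angle tuple, and the prong-matching of the resulting Type III multi-scale differential. Concretely, the separatrix diagrams in \Cref{tab:TypeBIIIcPlumb} (suitably adapted to the D-signature setting by replacing the pair of simple poles by the pair $q_3,q_4$ at the level below, and by putting $q_1,q_2$ on the top level instead of on the two distinct bottom-level components) give an explicit picture of $\overline{X}_t(u)$: the nodal polar domain is replaced by a cylinder-like region of core length $t$ together with $C - 1 + \varepsilon$ or $C - \varepsilon$ horizontal saddle connections emanating from the unique zero, and the relevant prongs meeting this region are precisely those labelled $v^+_u,\dots,v^+_{u+C}$ on the top component and $v^-_1,\dots,v^-_\kappa$ on the bottom component.

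The first step is to localise the hypothesis $-e_4 < u < 0$ (cases (1) and (2)) versus $c_j \leq u < c_{j+1}$ (cases (3) and (4)) in this picture: the inequality $-e_4 < u < 0$ means that the prong $v^+_u$ lies inside the polar domain of $q_1$ on the top level, so that after plumbing the cylinder connects a point of that polar domain with the nodal zero $s^\bot$, producing a Type IIIb configuration; whereas $c_j \leq u < c_{j+1}$ puts $v^+_u$ between two of the saddle connections separating residueless poles of the top level, hence the cylinder joins two pieces of the top level and the resulting opposite boundary is again of Type IIIc. A parallel analysis of the location of $v^+_{u+C}$ determines whether we are in subcase (1) versus (2) and in (3) versus (4). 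In each case I would cut the plumbed surface along the new shrinking collection of parallel saddle connections, and check which residueless poles $p_{\tau(i)}$ end up in which irreducible component of the limit; this directly yields the permutation $\tau$ in the statement, together with the splitting of angles that produces the new tuple $\mathbf{C'}$ (the saddle connections adjacent to the conical point at $s^\bot$ are the ones cut in two, and their angular positions are exactly what the formulas $c_{j+1}-u$, $u+C-c_{k-1}$, $C_k + u + C - (\kappa - d_k)$, etc.\ record).

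The last step is to compute the new prong-matching $u'$. Here the key observation is that $U$ reverses the role of top and bottom while the underlying smooth surface is unchanged, so $u'$ must be determined by identifying, on the plumbed surface $\overline{X}_t(u)$, the prong $(v')^+_{u'}$ of the new top component that corresponds to the prong $(v')^-_{\kappa'}$ of the new bottom component. In each of the four cases this reduces to summing the angular contributions of the prongs swept out between the old reference prong and the new reference prong, which produces the expressions $u + v + d^2_j$, $-(d'_{\ell_2-\ell_1} + u - c_j)$, and $-(d^2)'_{\ell_2 - \ell_1 + j}$ appearing in the statement. The arithmetic bookkeeping is routine but error-prone: the clockwise versus counter-clockwise labelling of prongs at $s^\top$ and $s^\bot$, the sign $\varepsilon$ arising from the rescaling convention $\arg(\alpha_0/\beta_0) \in \{\pi, 2\pi\}$, and the cyclic identifications modulo $\kappa$, $\kappa_1$, $\kappa_2$ must all be tracked consistently.

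The main obstacle is therefore not conceptual but combinatorial: one must verify that the new enhancements $\kappa'_1, \kappa'_2$ obtained from the formulas for $\mathbf{C'}$ and $C'$ are consistent with the level graph of the target boundary $\overline{X'}$, and that the prong labels $u'$ are well-defined modulo these enhancements. I plan to handle this uniformly by introducing, once and for all, a ``total angle'' cocycle around the unique conical point $z$ that interpolates between any two outgoing prongs, and verifying the formulas by decomposing that cocycle along the cuts made in each case. Cases (1) and (2) are symmetric (one uses $c^2$, the other uses $d^2$ and picks up the shift by $\kappa$), as are (3) and (4), so the verification reduces to two genuinely distinct computations together with two straightforward sign/shift analogues.
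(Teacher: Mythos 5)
Your plan — plumb $\overline{X}_t(u)$, locate the new collection of parallel saddle connections, cut along it, and read off the level graph, permutation, angle tuple and prong-matching of the opposite boundary — is exactly how the paper treats these statements, which it records as direct observations from the separatrix diagrams of the plumbed surfaces, so your approach is essentially the paper's. One caution for when you carry out the drawing: in the configuration $(4,3,1,2)$ the poles $q_1,q_2$ lie on the bottom level of $\overline{X}$, so for $-e_4<u<0$ the prong $v^+_u$ sits in the polar domain of $q_4$ (not $q_1$) on the top level (and the formula $u+v+d^2_j$ you quote belongs to the Type IIIa proposition, not this one); with those labels corrected the case analysis you describe lines up with the four cases of the statement.
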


\begin{proposition}
    Let $\overline{X}=X^D_{\III c} ((4,3,1,2),\ell_1,\ell_2,\Id,C,{\bf C})$ be a Type IIIc boundary. Assume $-e_4 < u \leq 0$ and $c_j < u+C \leq c_{j+1}$ for some $0\leq j< \ell_1$. Then $U\cdot \overline{X}(u-\frac{1}{2})=\overline{X'}(u'+\frac{1}{2},v'-\frac{1}{2})$ where $\overline{X'}=X^D_{\III a} (4,3),\ell_2-\ell_1,n-4-j,\tau,{\bf C'},[(u',v')])$ is given by the following:

\begin{itemize}
    \item A permutation $\tau$ given by
    \begin{equation*}
        \tau(i) = \begin{cases}
                i+\ell_1 &\text{if $ 1\leq i\leq \ell_2-\ell_1$}  \\
                \ell_2+1-i &\text{if $\ell_2-\ell_1+1\leq i\leq \ell_2-j$} \\
                i+j &\text{if $\ell_2-j+1\leq i\leq n-4-j$} \\
                i-(n-4-j) &\text{otherwise}. 
                \end{cases}
    \end{equation*}
    \item A tuple ${\bf C'}$ given by
    \begin{equation*}
        C'_i = \begin{cases}
                c_j+1-(u+C) &\text{if $i=j+1$} \\
                C_i &\text{otherwise}.
                \end{cases}
    \end{equation*}
    \item $(u',v')=(-u,u+C-c_j)$.
\end{itemize}

\end{proposition}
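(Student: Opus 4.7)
The plan is to verify this proposition by a direct geometric analysis of the plumbed flat surface, tracking the separatrix diagram as one traverses the equatorial arc from $\overline{X}$ at one endpoint to the boundary $\overline{X'}$ at the other endpoint. Since this is a structural statement about the combinatorial data of two adjacent boundaries in the equatorial net, the argument is essentially a careful bookkeeping exercise rather than a conceptual one, and follows the same template as the proofs of \Cref{Prop:DIIIamove}, \Cref{Prop:DIIIbmove}, and the earlier cases in \Cref{Prop:DIIIcmove}.

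First, I would draw the plumbed surface $\overline{X}_t(u-\tfrac{1}{2})$ using the basic-domain representation described in \Cref{sec:Plumb}. Under the hypotheses $-e_4 < u \leq 0$ and $c_j < u+C \leq c_{j+1}$ with $0 \leq j < \ell_1$, the outgoing prong $v^+_u$ lies inside the polar domain of $q_4$ on the top level (which is cut open and attached to the bottom via the plumbing), while $v^+_{u+C}$ lies inside the polar domain of the residueless pole $p_{\tau(j+1)}$ on the top level. The $\varepsilon = 0$ case of \Cref{tab:TypeDIIIcPlumb} then shows that on the plumbed surface, the polar domain of $p_{\tau(j+1)}$ is split by the plumbed segment into two half-polar-domains, one attached to $q_{h_1} = q_1$, $q_{h_2} = q_2$ and the poles $p_{\tau(1)},\dots,p_{\tau(j)}$, the other attached to $q_3$, $q_4$ and the remaining residueless poles.

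Second, I would identify the collapsing family at the opposite endpoint of the equatorial arc. The opposite direction shrinks a different family of parallel saddle connections on $\overline{X}_t(u-\tfrac{1}{2})$: these are the curves cutting the surface into two figure-eight top-level pieces (giving the two top components of a Type IIIa boundary) together with a single bottom-level component containing the remaining residueless poles $p_{\tau(\ell_1+1)},\dots,p_{\tau(\ell_2)}$. Reading off which poles belong to each piece yields precisely the permutation $\tau$ and the indices $\ell_2 - \ell_1$ and $n-4-j$ prescribed in the statement; in particular, the poles $p_{\tau(\ell_2+1)},\dots,p_{\tau(n-4)}$ end up together with $p_{\tau(j+1)}$ between the two new nodes, accounting for the shape of $\tau$.

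Third, the angle tuple ${\bf C'}$ is read off from the same picture. Only the polar domain of $p_{\tau(j+1)}$ is disturbed (it is the one sliced in two by the plumbing segment), so $C'_i = C_i$ for $i \neq j+1$, while the portion of $p_{\tau(j+1)}$'s polar domain retained on the top level of $\overline{X'}$ has angular width $c_j + 1 - (u+C)$ as measured from the prong immediately clockwise of $\alpha_j$, giving $C'_{j+1} = c_j + 1 - (u+C)$. Finally, the new prong-matching is determined by following the reference prong $v^-_1$ of $\overline{X}$ (and the corresponding ingoing prong on the second new node) through the deformation and comparing with the labeling conventions of \Cref{tab:TypeD_pm}; this yields $u' = -u$ and $v' = u + C - c_j$, with the $\pm\tfrac{1}{2}$ shifts on both sides matching because one traverses the arc from the $t \in \mathbb{R}_-$ half at $\overline{X}$ to the $t \in \mathbb{R}_+$ half at $\overline{X'}$.

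The main obstacle is the prong-matching calculation: the Type IIIc boundary has a single node with clockwise-labeled prongs, while Type IIIa has two separate nodes with opposite orientation conventions, so reconciling the labels requires carefully choosing the reference overlapping saddle connection $\alpha_0$ on $\overline{X'}$ and verifying that with this choice the values $-u$ and $u+C-c_j$ land in the correct range modulo $\kappa_1$ and $\kappa_2$ respectively. Once that single reference prong is tracked consistently, the rest of the verification reduces to reading angles off the separatrix diagrams as in the cited companion cases of \Cref{Prop:DIIIcmove}.
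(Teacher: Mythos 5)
Your proposal is correct and matches the paper's treatment: the paper states this proposition without proof, as a direct observation read off from the plumbing construction and separatrix diagrams (exactly as for the neighboring moves \Cref{Prop:DIIIamove}--\Cref{Prop:DIIIcmove}), which is precisely the bookkeeping verification you outline. The only caution is that since the arc is $\overline{X}(u-\tfrac{1}{2})$ you must work with the half-integer ($\arg(\alpha_0/\beta_0)=\pi$) plumbing picture rather than the $\varepsilon=0$ diagrams of \Cref{tab:TypeDIIIcPlumb}, but you account for this shift when tracking the reference prong, so the argument goes through.
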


Before classifying the connected components of $\cP(\mu^\fR)$, we need to show that there exists a certain Type IIIc boundary for each connected component.

\begin{lemma}\label{lm:TypeD_IIIc_ex}
   Any connected component $\cC$ of $\cP(\mu^{\fR})$ contains some Type IIIc boundary of the form $$X^D_{\III c} ((4,3,1,2)),\ell_1,\ell_2,\tau,C,{\bf C}).$$ 
\end{lemma}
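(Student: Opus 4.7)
Proof Proposal. I start with any boundary point $\overline{W} \in \partial\overline{\cC}$, which exists since $\cC$ is a positive-dimensional component of the compact moduli space $\overline{\cP}(\mu^\fR)$. By \Cref{Baseboundary}, $\overline{W}$ is a principal boundary, hence of Type IIIa, IIIb, or IIIc. The plan is to navigate within the equatorial net of $\overline{\cC}$, using the transformations $R$ and $U$ tabulated in \Cref{Prop:DIIIamove}, \Cref{Prop:DIIIbmove}, and \Cref{Prop:DIIIcmove}, until a Type IIIc boundary of the required form $\underline{h}=(4,3,1,2)$ is reached.

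The first step is to reduce to the Type IIIc case. If $\overline{W}$ is Type IIIa, \Cref{Prop:DIIIamove} shows that applying $U$ to a suitable equatorial half-arc adjacent to $\overline{W}$ yields a Type IIIb boundary of $\overline{\cC}$; so I may assume $\overline{W}$ is of Type IIIb or IIIc. If $\overline{W}$ is Type IIIb with data $\underline{h}=(h_1,h_2,h_3,h_4)$, so that $q_{h_2}$ is alone on the bottom level, iterating the two $U$-moves of \Cref{Prop:DIIIbmove} cycles the bottom-level non-residueless pole through the four choices $q_1,q_2,q_3,q_4$ (because \Cref{Prop:DIIIbmove}(1) swaps $h_2 \leftrightarrow h_3$ while (2) swaps $h_2 \leftrightarrow h_4$, and together these transpositions generate all of $\mathrm{Sym}_4$ on $\{h_1,h_2,h_3,h_4\}$). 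In particular, I can reach a Type IIIb boundary in $\overline{\cC}$ with $q_1$ alone on the bottom level, so that $q_2$ lies on the top level.

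The second step is to cross from such a Type IIIb boundary to a Type IIIc boundary carrying $\{q_1,q_2\}$ on the bottom. Since $U$ is an involution on the set of equatorial half-arcs, \Cref{Prop:DIIIcmove}(1)--(2) --- which describe passages from Type IIIc to Type IIIb with $\underline{h}=(4,3,1,2)$ --- read backwards give the transitions from a Type IIIb boundary with $q_1$ singled out on the bottom to a Type IIIc boundary where the pair $\{q_1,q_2\}$ sits on the bottom; the residue identity $\res_{q_1}\omega+\res_{q_2}\omega=0$ is precisely what guarantees that the shrinking collection of saddle connections can enclose both $q_1$ and $q_2$ simultaneously. Finally, if the resulting Type IIIc boundary has labeling $\underline{h}=(3,4,1,2)$ rather than $(4,3,1,2)$, or carries the two bottom-level poles in the reversed order $\underline{h}=(h_1,h_2,2,1)$, then further $U$- and $R$-moves within \Cref{Prop:DIIIcmove}(3)--(4) permute these labels and yield the specific target $\underline{h}=(4,3,1,2)$.

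The main obstacle is a bookkeeping one: at every intermediate step, I must produce a prong-matching on the current boundary point lying in the interval required by the relevant case of \Cref{Prop:DIIIamove}--\Cref{Prop:DIIIcmove}. This boils down to checking that the prong sets $Pr$ always intersect the prescribed intervals, which is a pigeonhole argument on the $\kappa$-periodic labels: the total angle $2\pi\kappa$ at each node decomposes into subintervals corresponding to the various possible target boundary types, and unless we are already in the desired configuration, at least one of the available intervals contains an element of $Pr$. Carrying out this verification carefully, together with the navigation plan above, produces the required Type IIIc boundary of the form $X^D_{\III c}((4,3,1,2),\ell_1,\ell_2,\tau,C,\mathbf{C})$ in every connected component $\cC$.
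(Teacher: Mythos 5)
Your overall strategy is the same as the paper's (start from an arbitrary Type III boundary and navigate the equatorial net with $R$ and $U$ until the labeling $(4,3,1,2)$ is reached), but the execution has genuine gaps at exactly the points you defer. First, every move you invoke is one of the tabulated integer-prong transitions of \Cref{Prop:DIIIamove}--\Cref{Prop:DIIIcmove}, and these are stated only for special boundaries (e.g.\ \Cref{Prop:DIIIamove} assumes $\ell_1=0$, $\ell_2=n-4$, $\tau=\Id$) and only for prong-matchings in ranges such as $0<u<e_4$, $-e_1<u<0$, or $c^2_{\ell_2}<u<c^2_{\ell_2}+e_2$. When the relevant non-residueless poles are simple ($e_1=e_2=1$, or $e_4=1$), these ranges are empty, so the moves you iterate literally do not exist; this is not a removable bookkeeping issue, since the simple-pole cases are among the most important for the D-signature classification. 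Your ``pigeonhole on the $\kappa$-periodic labels'' is precisely the statement that needs proof, and it is false in the form you state it. The paper avoids this by not restricting to integer prong-matchings: for a Type IIIa boundary it uses the half-integer arc $U\cdot\overline{X}(\tfrac12,v-\tfrac12)$, on which the plumbed surface visibly has a saddle connection bounding the polar domain of $q_4$, and for a Type IIIb boundary it makes the explicit choice $u=\sum_{k\le\ell_1}C_k+1$ so that the image prong lies in the polar domain of the partner pole; a single $U$ then lands on the desired Type IIIc boundary, and the remaining case (Type IIIc with $q_3,q_4$ at the bottom) is settled by $U\cdot\overline{X}(0)$ with a two-case analysis on $C$.

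Second, the combinatorics of your intermediate targets is off. The two moves of \Cref{Prop:DIIIbmove} exchange the bottom-level pole with one of the two ``end'' poles of the top-level chain; two transpositions sharing a letter generate only a copy of $\mathrm{Sym}_3$, not $\mathrm{Sym}_4$ (all four poles are in fact reachable in two steps, but not for the reason you give, and each step again needs a prong-matching in a possibly empty range). More importantly, \Cref{Prop:DIIIcmove}(1)--(2) connect the Type IIIc boundary $X^D_{\III c}((4,3,1,2),\dots)$ to a Type IIIb boundary of type $(4,3,1,2)$, whose bottom-level non-residueless pole is $q_3$ (the partner of $q_4$), not $q_1$; so reading these moves backwards requires steering to a IIIb boundary with $q_3$ alone at the bottom, whereas your plan steers to one with $q_1$ at the bottom, which is the wrong stepping stone for the move you intend to reverse. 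With the half-arc/explicit-prong choices above (the paper's route) both issues disappear, but as written your argument does not close.
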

\begin{proof}
    By shrinking a collection of saddle connection, we can obtain some Type III boundary $\overline{X}$ of $\cC$. 
    
    First, suppose that $\overline{X}$ is a Type IIIa boundary $\overline{X}=X^D_{\III a} (\underline{h},\ell_1,\ell_2,\tau,{\bf C},Pr)$. If $\underline{h}=(4,3)$, then consider a prong-matching $(0,v)\in Pr$. A flat surface in $U\cdot \overline{X}(\frac{1}{2},v-\frac{1}{2})$ contains a saddle connection $\gamma$ bounding the polar domain of $q_4$. By shrinking the collection of saddle connection parallel to $\gamma$, we can reduce to the case of a Type IIIb or Type IIIc boundary. The same argument applies to the case $\underline{h}=(3,4)$.
    
    Second, suppose that $\overline{X}$ is a Type IIIb boundary $X^D_{\III b} (\underline{h},\ell_1,\ell_2,\tau,{\bf C})$. If $\underline{h}=(3,4,1,2)$, then $q_4\in X_0$ and $q_3\in X_{-1}$. Choose a prong-matching $u=\sum_{k=1}^{\ell_1} C_k+1$ so that $v^+_u$ is lying in the polar domain of $q_4$. Then $U\cdot \overline{X}(u)$ is adjacent to a desired Type IIIc boundary. The proof for $\underline{h}=(4,3,1,2)$ is symmetrical to above. 

    Finally, we are reduced to the Type III boundaries. Suppose that $\overline{X}=X^D_{\III c} ((1,2,4,3)),\ell_1,\ell_2,\tau,C,{\bf C})$. If $c^1_{\ell_1}\leq C <c^1_{\ell_1}+e_2$, then $U\cdot \overline{X}(0)$ is adjacent to a Type IIIb boundary, so we are done by the previous paragraph. Otherwise, $U\cdot \overline{X}(0)$ is adjacent to a Type IIIc boundary of the desired form.
\end{proof} 

\subsection{Hyperelliptic components} \label{subsec:Dhyper}

In this subsection, we classify the hyperelliptic components of a stratum $\cP(\mu^\fR)$ of D-signature. This stratum can have hyperelliptic components only if $e_1=e_2$ and $e_3=e_4$. In this case, a ramification profile $\Sigma$ on $\mu^\fR$ interchanges two pairs $(q_1,q_2)$ and $(q_3,q_4)$. Also it has two fixed points, one of which is the unique zero. So there is one fixed pole if and only if $n$ is even. If a boundary is contained in a hyperelliptic component, it also as an involution which is a degeneration of $\Sigma$. For a Type IIIc boundary $\overline{Z}=X^D_{\III c} ((1,2,3,4),\ell_1,\ell_2,\tau,C,{\bf C})$, the fixed pole is contained in the top level component if exists. Due to the symmetry given by heprelliptic involutions, we have the following equalities:

\begin{itemize}
    \item $\ell_2-\ell_1=n-4-\ell_2$
    \item $(C_{\tau(i)},D_{\tau(i)})=(D_{\tau(\ell_1+1-i)},C_{\tau(\ell_1+1-i)})$ for $1\leq i \leq \ell_1$
    \item $(C_{\tau(\ell_1+i)},D_{\tau(\ell_1+i)})= (D_{\tau(n-3-i)},C_{\tau(n-3-i)})$ for $1\leq i \leq \ell_2-\ell_1$
    \item $C=D$.
\end{itemize}

For each ramification profile $\Sigma$, it is easy to construct a Type IIIc boundary with ramification profile $\Sigma$. That is, there exists at least one hyperelliptic component for each ramification profile. Now we prove \Cref{Prop:Dhyper}, by proving the uniqueness of such connected component. 

\begin{proof}[Proof of \Cref{Prop:Dhyper}]
    Let $\Sigma$ be a ramification profile of $\cP(\mu^{\fR})$. If $\overline{X}=X^D_{\III c} ((4,3,1,2),\ell_1,\ell_2,\tau,C,{\bf C})$ is a boundary of $\cC$ obtained by \Cref{lm:TypeD_IIIc_ex}, then $C=\kappa-d_{\ell_1}$. So by \Cref{Prop:DIIIcmove} with $u=0$, we obtain $X^D_{\III c} ((1,2,4,3),n-4,n-4,\tau',C',{\bf C'})$. By the same argument again, we can reduce to the case $\overline{X}=X^D_{\III c} ((4,3,1,2),n-4,n-4,\tau,\frac{a}{2}-e_1,{\bf C})$. By relabeling the poles, we may assume that $\Sigma(i)=n-3-i$. Now $\cC$ is determined by $\tau$ and ${\bf C}$ satisfying $\tau(j)=\tau(n-3-j)$ and $C_j+C_{n-3-j}=b_j$ for each $j$. In order to show that $\cC$ is unique, it remains to show that $\cC$ also contains the following boundaries:
     
     \begin{itemize}
        \item[(1)] $X^D_{\III c} ((4,3,1,2),n-4,n-4,\Id,\frac{a}{2}-e_1,{\bf C'})$ for any ${\bf C'}$ satisfying $C'_j=D'_{n-3-j}$ for each $j$. 
        \item[(2)] $X^D_{\III c} ((4,3,1,2),n-4,n-4,\tau\circ(j,n-3-j),\frac{a}{2}-e_1,{\bf C})$ for each $1\leq j\leq \frac{n-4}{2}$.
        \item[(3)] $X^D_{\III c} ((4,3,1,2),n-4,n-4,\tau\circ(j,j+1)(n-3-j,n-4-j),\frac{a}{2}-e_1,{\bf C})$ for each $1\leq j< \frac{n-4}{2}$.
    \end{itemize}
    We can use (4) of $\Cref{Prop:DIIIcmove}$ to show each of above stpes. For the first step, it reduces to the case when $C_i=C'_i+1$, $C_{n-3-i}=C'_{n-3-i}-1$ for some $i$ and $C_j=C'_j$ for $j\neq i$. Note that $UR^2U\cdot\overline{X}(c_i)$ is adjacent to $X^D_{\III c} ((4,3,1,2),n-4,n-4,\Id,\frac{a}{2}-e_1,{\bf C'})$, a desired boundary. For the second step, note that $UR^{-2C_j}U\cdot \overline{X}(c_{j-1})$ and $UR^{2C_j}U\cdot \overline{X'}(-d_{j-1})$ are adjacent to the same boundary, where $\overline{X'}$ is the desired boundary. For the last step, note that $UR^{-2C_{j+1}}U\cdot \overline{X}(c_j)$ and $UR^{-2C_{j+1}}U\cdot \overline{X'}(c_{j+1})$ are adjacent to the same boundary, where $\overline{X'}$ is the desired boundary. 
\end{proof}

\subsection{Non-hyperelliptic components}

Now we classify the non-hyperelliptic components of $\cP(\mu^{\fR})$, proving \Cref{Prop:Dspin} - \Cref{Prop:Dnonhyper}. We first show the existence of a special collection of Type IIIc boundaries in non-hyperelliptic components.

\begin{lemma}\label{lm:non-hyp_D_IIIc_char}
    Any non-hyperelliptic component $\cC$ of $\cP(\mu^{\fR})$ has some Type IIIc boundary $$X^D_{\III c} (4,3,1,2),\ell_1,\ell_2,\tau,C,{\bf C})$$ such that 
    \begin{equation}\label{eq:non-hyp}
      C< e_4+ c_{\ell_1}.  
    \end{equation}
    Moreover, if $e_4>1$, then we can further assume that $\ell_1=\ell_2$. Also in this case, $\cC$ has some Type IIIb boundary of the form $X^D_{\III b} ((4,3,1,2),0,\ell_2,\tau',C',{\bf C'})$.    
\end{lemma}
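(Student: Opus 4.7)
The plan is to mirror the approach used in the B-signature analog, \Cref{lm:typeB_nonhyp_char}. By \Cref{lm:TypeD_IIIc_ex}, the component $\cC$ already contains some Type IIIc boundary of the form $\overline{X} = X^D_{\III c}((4,3,1,2), \ell_1, \ell_2, \tau, C, \mathbf{C})$; the task is to modify $\overline{X}$ by a sequence of $R$ and $U$ moves, using \Cref{Prop:DIIIcmove} and \Cref{Prop:DIIIbmove}, until it satisfies the required inequality (and, when $e_4>1$, until $\ell_1=\ell_2$), and finally to deduce the Type IIIb boundary by one more application of $U$.

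For the first part, the key observation is the identity
\[
C + D \;=\; \kappa \;=\; e_3 + e_4 - 1 + \sum_{i=1}^{\ell_1} b_{\tau(i)},
\]
so
\[
\min(C,D) \;\leq\; \tfrac{1}{2}\bigl(e_3+e_4-1\bigr) + \tfrac{1}{2}\!\sum_{i=1}^{\ell_1} b_{\tau(i)}
\;\leq\; e_4 + \max\!\Bigl(c_{\ell_1},\; \sum_{i=1}^{\ell_1} D_{\tau(i)}\Bigr).
\]
If this inequality is strict, I can put the smaller of $C,D$ into the role of $C$ by swapping levels (i.e.\ applying $U$ once at a prong-matching that produces the mirror Type IIIc boundary of type $((3,4,2,1),\dots)$ and relabelling). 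Equality forces $e_3=e_4$, $C=D$, and the palindromic identity $(C_{\tau(i)},D_{\tau(i)})=(D_{\tau(\ell_1+1-i)},C_{\tau(\ell_1+1-i)})$, i.e.\ the top level is hyperelliptic with respect to the involution exchanging the two nodes. Since $\cC$ is assumed non-hyperelliptic, the palindromic structure must fail either on the bottom-level angle data or at the level of the prong-matching; I would then follow the closing argument of \Cref{lm:typeB_nonhyp_char}, locating the smallest index $k$ where symmetry breaks and applying a short $UR^{\pm 2}U$ sequence to transfer exactly one residueless pole across levels, strictly decreasing the left-hand side of the inequality while preserving the signature data. Iterating terminates in a Type IIIc boundary with $C<e_4+c_{\ell_1}$.

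When $e_4>1$, the next task is to drain the residueless poles lying between the nodal pole and $q_3$ on the top level, moving them to the $q_4$-side, so that $\ell_1=\ell_2$. Because $e_4\geq 2$, the polar domain of $q_4$ contains at least one outgoing prong not aligned with any saddle connection, so one can repeatedly apply an analog of \Cref{lm:IIIc_push_pole_standard_to_bot} (with the roles of the two pairs of non-residueless poles swapped): choose a prong-matching whose plumbing crosses into the half-open angle adjacent to $q_4$, apply $UR^{\pm 2}U$, and verify via the case list in \Cref{Prop:DIIIcmove} that the pole in question has been relocated while the $C<e_4+c_{\ell_1}$ bound is preserved (or can be restored by a further cosmetic swap as in Step~1). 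Once $\ell_1=\ell_2$, the inequality together with $e_4>1$ guarantees the existence of a prong-matching $u$ with $u+C$ lying strictly inside the half-infinite cylinder of $q_4$; then by \Cref{Prop:DIIIcmove}(1), $U\cdot\overline{X}(u)$ is adjacent to a Type IIIb boundary of the form $X^D_{\III b}((4,3,1,2),0,\ell_2,\tau',C',\mathbf{C}')$, as required.

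The main obstacle will be the case analysis inside Step~1 when the angle data exhibits palindromic symmetry on both levels yet the global boundary is still non-hyperelliptic (because the two palindromic involutions either conflict or are incompatible with the prong-matching). In that situation one cannot simply transfer a pole across levels without first breaking symmetry, and one must chain several $R$- and $U$-moves while tracking the transformation of $(\ell_1,\ell_2,\tau,\mathbf{C},C)$ explicitly. This bookkeeping—together with the exceptional configurations where the stratum has very few residueless poles, which may need to be handled by hand (paralleling the exclusion $\cP(a,a\mid-2\mid\dots\mid-2\mid-(a-n+2)^2)$ in \Cref{Prop:Bnonhyper})—is the technical heart of the argument.
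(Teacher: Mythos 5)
The main gap is in your Step~1 treatment of the equality case. Equality in your displayed bound forces only $e_3=e_4$, $C=D$ and $c_{\ell_1}=\sum_{i\le \ell_1}D_{\tau(i)}$ (equal angle \emph{totals} on the top level); it does not force the palindromic identity, so the inference that the top level is hyperelliptic is unjustified (also, a Type IIIc boundary has a single node, and its combinatorial data carries no prong-matching class, so ``fails at the level of the prong-matching'' has no content here). Even with the corrected equality conditions, the asymmetry witnessing non-hyperellipticity of $\cC$ may sit entirely on the bottom level, which for a D-signature Type IIIc boundary still contains $q_1,q_2$ (possibly with $e_1\neq e_2$) and up to $n-4-\ell_1$ marked residueless poles; in that situation your plan of ``locating the smallest index $k$ where symmetry breaks'' on the top level has nothing to act on. The paper resolves exactly this point: in the equality case it first applies $U\cdot\overline{X}(0)$ to pass to a Type IIIc boundary with all residueless poles on the top level ($\ell_1=\ell_2=n-4$), so that the bottom level is forced to be symmetric, and only then does non-hyperellipticity force ${\bf C}$ to be non-palindromic, after which a single $U$-move at the first asymmetric index yields $C<e_4+c_{\ell_1}$. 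Your proposal omits this reduction. Relatedly, your ``swap $C$ and $D$ via the mirror $(3,4,2,1)$ encoding'' does not produce the stated inequality: the mirrored boundary satisfies a bound involving $e_3$ and the $D$-sums, and rewriting it back in the $(4,3,1,2)$ form undoes the swap; the paper instead proves explicit equivalences between boundaries with different $\underline{h}$ (passing through Type IIIb boundaries, and handling the case $C>e_3+c_{\ell_1}$, $e_4>1$ directly by \Cref{Prop:DIIIcmove}(2)), which is what legitimizes ``ignoring $\underline{h}$''.

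For the ``moreover'' clause your reading of the data is off: $\ell_1=\ell_2$ means there is no marked residueless pole on the bottom level between $q_1$ and the nodal pole $s^\bot$; it says nothing about the chain on the top level, so ``draining poles to the $q_4$-side of the top level'' (and an analog of \Cref{lm:IIIc_push_pole_standard_to_bot}) neither addresses it nor is needed. In the paper both the normalization $\ell_1=\ell_2$ and the Type IIIb boundary come from a pair of one-move equivalences valid when $e_4>1$: a Type IIIc boundary satisfying \Cref{eq:non-hyp} yields a Type IIIb boundary by \Cref{Prop:DIIIcmove}(1) (your final step is essentially this), and conversely any Type IIIb boundary $\overline{Y}=X^D_{\III b}((4,3,1,2),\ell'_1,\ell'_2,\tau',C',{\bf C'})$ yields, via $U\cdot\overline{Y}(-(d')^2_{\ell_2})$ and \Cref{Prop:DIIIbmove}(2), a Type IIIc boundary satisfying \Cref{eq:non-hyp} with $\ell_1=\ell_2$, from which the form $X^D_{\III b}((4,3,1,2),0,\ell_2,\tau',C',{\bf C'})$ follows. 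A minor point: the correct identity is $C+D=\kappa+1=e_3+e_4+\sum_{i\le\ell_1}b_{\tau(i)}$, not $C+D=\kappa$.
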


\begin{proof}
    If $e_4>1$, we show first that the existence of any Type IIIb boundary is equivalent to the existence of the desired Type IIIc boundary. Let $\overline{Y}=X^D_{\III b} ((4,3,1,2),\ell'_1,\ell'_2,\tau',C',{\bf C'})$. Then by (2) of \Cref{Prop:DIIIbmove}, $U\cdot \overline{Y}(-(d')^2_{\ell_2})$ is adjacent to a Type IIIc boundary satisfying \Cref{eq:non-hyp}. Conversely, if $\overline{X}=X^D_{\III c} (4,3,1,2),\ell_1,\ell_2,\tau,C,{\bf C})$ satisfies \Cref{eq:non-hyp}, then there exists $u\in \ZZ/\kappa\ZZ$ such that $-e_4<u<0$ and $u+C\leq c_{\ell_1}$. By (1) of \Cref{Prop:DIIIcmove}, $U\cdot \overline{X}(u)$ is adjacent to a Type IIIb boundary. 

    For any $e_4>0$, we claim that the existence of the desired Type IIIc boundary is also equivalent to the existence of Type IIIc boundary of the form $X^D_{\III c} (1,2,4,3),\ell'_1,\ell'_2,\tau',C',{\bf C'})$, which contains $q_1,q_2$ on the top level component, satisfying $C'>e_2+c'_{\ell'_1}$. That is, we can ignore the data $\underline{h}$ in this proof. If such $\overline{X'}$ exists and $e_2>1$, then we have a Type IIIb boundary $\overline{Y'}=X^D_{\III b} ((2,1,4,3),\ell''_1,\ell''_2,\tau'',C'',{\bf C''})$. Then $U\cdot \overline{Y'}(-1)$ is adjacent to another Type IIIb boundary $\overline{Y}$ as in the previous paragraph, so the proof is complete. If $(e_1,e_2)=(1,1)$, then $U\cdot \overline{Y'}(-1)$ is adjacent to the desired Type IIIc boundary $\overline{X}$. 

    By \Cref{lm:TypeD_IIIc_ex}, $\cC$ has some Type IIIc boundary $\overline{X}=X^D_{\III c} ((4,3,1,2),\ell_1,\ell_2,\tau,C,{\bf C})$. First, assume that $C > e_3+c_{\ell_1}$. If $e_4>1$, then there exists $-e_4\leq u<0$ such that $c_{\ell_1}+e_3<u+C\leq \kappa-e_4$. By (2) of \Cref{Prop:DIIIcmove}, we obtain a Type IIIb boundary by $U\cdot \overline{X}(u)$, so the proof is complete. 
    
    Now assume that $C\leq e_3+c_{\ell_1}$. If $e_3<e_4$, then we have \Cref{eq:non-hyp}. So may assume that $e_3=e_4$ and $C=e_4+c_{\ell_1}$. In particular, since $e_3>2$, we can also reduce to the case $C=d_{\ell_1}=e_4+\sum_{i=1}^{\ell_1} D_{\tau(i)}$. That is, $C=D=d_{\ell_1}=e_4+c_{\ell_1}$. Consider $U\cdot \overline{X}(0)$. This is adjacent to a Type IIIc boundary containing all residueless poles on the top level component. So we can further assume that $\ell_1=\ell_2=n-4$. In particular, the bottom level component $X_{-1}$ of $\overline{X}$ does not contain any residueless pole and satisfies $C=D=\frac{1}{2}\sum_i b_i$, thus hyperelliptic. Since $\cC$ is non-hyperelliptic, the top level component $X_0$ of $\overline{X}$ must not be hyperelliptic. If $C_i=D_{n-3-i}$ for each $i$, then the top level component is hyperelliptic. So we can find smallest integer $k$ such that $C_k\neq D_{n-3-k}$ or $D_k\neq C_{n-3-k}$. In particular, $b_i=b_{n-3-i}$ for each $i=1,\dots, k-1$. Suppose that $C_k>D_{n-3-k}$. Other cases will follow symmetrically. Then $U\cdot \overline{X}(c_{k-1}+D_{n-3-k})$ is adjacent to a Type IIIc boundary $\overline{X'}$ such that the top level component contains $p_k,\dots, p_{n-4-k}$ and $c'_{\ell'_1}=C-\sum_{i=1}^k b_{n-3-i} =\frac{1}{2}\left(\sum_{i=k}^{n-4-k}b_i - b_{n-3-k}\right)<\frac{1}{2}\sum_{i=k}^{n-4-k}b_i$. So this satisfies \Cref{eq:non-hyp}. 
\end{proof}

As a consequence, we can obtain a certain Type IIIa boundary by the following

\begin{corollary} \label{cor:non_hyp_IIIa_ex}
    Suppose that $e_4>1$. Any non-hyperelliptic component $\cC$ of $\cP(\mu^{\fR})$ contains some Type IIIa boundary of the form $X^D_{\III a} ((4,3),0,n-4,\tau,{\bf C},Pr)$.
\end{corollary}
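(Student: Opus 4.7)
The starting point is Lemma \ref{lm:non-hyp_D_IIIc_char}: since $e_4 > 1$ and $\cC$ is non-hyperelliptic, there exists a Type IIIb boundary $\overline{Y} = X^D_{\III b}((4,3,1,2), 0, \ell_2, \tau, C, \mathbf{C}) \in \partial\overline{\cC}$. The plan is to produce from $\overline{Y}$, via equatorial moves, a Type IIIa boundary of the stated form. Since $\dim\cP(\mu^\fR) = 1$, following an equatorial half-arc from $\overline{Y}$ automatically lands at some other boundary point of the same connected component, so all that needs to be done is to identify the combinatorial type of the terminal boundary for a suitable choice of prong-matching.

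The key mechanism is Proposition \ref{Prop:DIIIamove}: applying $U$ to a Type IIIa boundary of the form $X^D_{\III a}((4,3), 0, n-4, \Id, \mathbf{C'}, [(u,v)])$ with $0 < u < e_4$ and $c^2_j < v \leq c^2_{j+1}$ produces a specific Type IIIb boundary of the shape $X^D_{\III b}((4,3,1,2), 0, n-4-j, \tau^*, C^*, \mathbf{C}^*)$, where $\tau^*$ reverses the first $j+1$ residueless-pole labels and is the identity afterward. Reversing this correspondence along the equatorial arc, any Type IIIb boundary with $\ell_1 = 0$ and a permutation of this prescribed shape $\tau^*$ is connected via one equatorial move to a Type IIIa boundary of the required form $X^D_{\III a}((4,3), 0, n-4, \Id, \cdot, \cdot)$.

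Hence it suffices to deform $\overline{Y}$ within $\partial\overline{\cC}$ to a Type IIIb boundary whose permutation has the prescribed shape $\tau^*$, with $j = n-4-\ell_2$. I would accomplish this by traversing a chain of equatorial arcs between Type IIIb boundaries: Proposition \ref{Prop:DIIIbmove} describes how $U$ moves between Type IIIb boundaries (potentially passing through Type IIIc boundaries as intermediate endpoints, as in Proposition \ref{Prop:DIIIcmove}), and concatenating such moves with rotations $R$ permits modifying $\tau$, $C$, and $\mathbf{C}$ while remaining in $\partial\overline{\cC}$. A direct combinatorial check, modeled on the analogous rearrangement arguments in Section \ref{Sec:Bsignature} (cf.\ the proof of \Cref{Prop:Bhyper} for permuting poles via repeated $U, R$ compositions), shows these operations generate enough flexibility to bring $\tau$ to the desired shape.

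The main obstacle is combinatorial: carefully tracking how $(\ell_2, \tau, C, \mathbf{C})$ transforms under each $U$ and $R$ step and verifying that one can always achieve a permutation of the form $\tau^*$ at the right value of $\ell_2$. In the hyperelliptic setting this would be blocked by symmetry constraints (compare \Cref{Prop:Dhyper}), but the non-hyperellipticity of $\cC$ prevents these obstructions from appearing. Once the Type IIIb boundary of the specific shape is in place, a single application of $U$ at the reverse equatorial half-arc produces the desired Type IIIa boundary, completing the argument.
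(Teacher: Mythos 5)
Your first and last steps match the paper: you invoke \Cref{lm:non-hyp_D_IIIc_char} to obtain a Type IIIb boundary $X^D_{\III b}((4,3,1,2),0,\ell_2,\tau,C,{\bf C})\in\partial\overline{\cC}$, and you intend to reach the Type IIIa boundary by following a single equatorial arc. The paper's proof is exactly that and nothing more: it chooses one specific prong-matching on this IIIb boundary (namely $u=C$) and observes, via \Cref{Prop:DIIIbmove}, that the corresponding $U$-move lands at a boundary of the form $X^D_{\III a}((4,3),0,n-4,\tau,{\bf C},Pr)$.

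The gap is the intermediate reduction you inserted. Reading \Cref{Prop:DIIIamove} literally with $\Id$ on the IIIa side, you conclude that you must first deform your IIIb boundary, through a chain of $U$ and $R$ moves, until its permutation attains the specific reversal shape $\tau^*$ at $j=n-4-\ell_2$, and you justify this only by asserting that such moves ``generate enough flexibility,'' by analogy with \Cref{Prop:Bhyper}. That assertion is precisely the kind of statement this paper never takes on faith: rearranging the permutation and angle data of a boundary while staying inside a fixed connected component is the technically hard content of Sections~\ref{Sec:Bsignature}--\ref{Sec:Dsignature} (compare the third step of \Cref{prop:TypeD_nonhyp_no_sim_pair}, which occupies a substantial argument even in the simplest setting), and your proposal does not carry out any of the required combinatorial verification, nor does the vague appeal to non-hyperellipticity substitute for it. Moreover, the step is unnecessary: the corollary asks for a IIIa boundary with \emph{arbitrary} $\tau$, ${\bf C}$ and $Pr$, and the normalization $\tau=\Id$ in \Cref{Prop:DIIIamove} is only a relabeling of the residueless poles, so there is no prescribed permutation shape to hit. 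After relabeling, a single suitably chosen half-arc emanating from the IIIb boundary produced by \Cref{lm:non-hyp_D_IIIc_char} already terminates at a IIIa boundary of the required form --- this is the paper's one-line proof. If you delete the permutation-rearrangement step and instead exhibit that one prong-matching (as the paper does with $u=C$), your argument becomes correct and coincides with the paper's.
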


\begin{proof}
    By \Cref{lm:non-hyp_D_IIIc_char}, we have some Type IIIb boundary of the form $\overline{X}=X^D_{\III b} ((4,3,1,2),0,\ell_2,\tau,C,{\bf C})$. By (2) of \Cref{Prop:DIIIbmove}, $U\cdot \overline{X}(C)$ is adjacent to a Type IIIa boundary of the form $X^D_{\III a} ((4,3),0,n-4,\tau,{\bf C},Pr)$. 
\end{proof}

Note that $X^D_{\III a} ((4,3),0,n-4,\tau,{\bf C},Pr)$ is determined by combinatorial data $\tau$, ${\bf C}$ and $Pr$. For the rest of this section, we navigate the boundary and control these data to classify the non-hyperelliptic components.

\paragraph{\bf No pair of simple poles}
    Assume $(e_1,e_2),(e_3,e_4)\neq(1,1)$. For any non-hyperelliptic component of $\cP(\mu^{\fR})$, we can find a certain boundary. Let ${\bf 1}$ denote the tuple $(1,\dots, 1)$. More precisely, we have

\begin{proposition} \label{prop:TypeD_nonhyp_no_sim_pair}
    Assume $(e_1,e_2),(e_3,e_4) \neq (1,1)$. Any non-hyperelliptic component contains the boundary $X^D_{\III a} ((4,3),0,n-4,\Id,{\bf 1},[(0,0)])$.
\end{proposition}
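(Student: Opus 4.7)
By \Cref{cor:non_hyp_IIIa_ex}, the component $\overline{\cC}$ contains some Type IIIa boundary $\overline{X} = X^D_{\III a}((4,3), 0, n-4, \tau, {\bf C}, [(u,v)])$, in which $X_0^1$ contains only the non-residueless poles $q_1, q_4$ and $X_0^2$ carries all $n-4$ residueless poles. The plan is to navigate via the equatorial transformations $R$ and $U$ to reach the standard form $X^D_{\III a}((4,3), 0, n-4, \Id, {\bf 1}, [(0,0)])$, proceeding in three stages: normalize the angle tuple ${\bf C}$, then the permutation $\tau$, and finally the prong-matching equivalence class.

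For the angle normalization, I would iteratively reduce each $C_{\tau(i)}$ to $1$. For each $i$ with $C_{\tau(i)} > 1$, one chooses a representative $(u_0, v_0) \in [(u,v)]$ so that $v_0$ is placed inside the polar domain of $p_{\tau(i)}$ on $X_0^2$, and applies a sequence of the form $UR^{2(C_{\tau(i)}-1)}U$. By \Cref{Prop:DIIIamove} the intermediate $U \cdot \overline{X}(u_0, v_0)$ is adjacent to a Type IIIb boundary in which only the $i$-th angle is altered, and the return by $U R^{2(C_{\tau(i)}-1)}$ lands on a Type IIIa boundary with $C_{\tau(i)}$ replaced by $1$ and all other data unchanged. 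Iterating yields a boundary of the form $X^D_{\III a}((4,3), 0, n-4, \tau, {\bf 1}, [(u', v')])$.

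For the permutation normalization, it suffices to realize adjacent transpositions $(\tau(i), \tau(i+1))$ on $X_0^2$. Using a sequence of the shape $UR^{2 D_{\tau(i)}}U$ applied at an appropriate prong-matching, one transits through a Type IIIb boundary by \Cref{Prop:DIIIbmove} and returns to a Type IIIa boundary with the two poles exchanged, in direct analogy with the swap argument in the proof of \Cref{prop:C_standard} for C-signature. Any angle perturbation introduced during the swap is erased by reapplying the first stage. Composing enough adjacent transpositions yields $\tau = \Id$ while maintaining ${\bf C} = {\bf 1}$.

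Finally, with $\tau = \Id$ and ${\bf C} = {\bf 1}$, the bottom-level component is the unique flat surface in $\cP(a - 2, -(\kappa_1 + 1), -(\kappa_2 + 1))$, and the prong-matching class $[(u'', v'')]$ lives in $(\mathbb{Z}/\kappa_1\mathbb{Z} \times \mathbb{Z}/\kappa_2\mathbb{Z})/\Delta$. I would shift this class to $[(0,0)]$ by a sequence of small degenerations through Type IIIb boundaries: each unit shift of $u$ or $v$ is realized by a $UR^{\pm 2}U$ loop that temporarily sends one pole to a neighboring position and returns it. The main technical obstacle, and the step requiring the most care, is verifying that every intermediate boundary encountered across the three stages has the non-hyperelliptic top level, so the equatorial path stays inside $\overline{\cC}$. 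This is where the hypothesis $(e_1,e_2),(e_3,e_4) \neq (1,1)$ enters decisively: it forces $e_2, e_4 \geq 2$, providing enough flexibility in the level rotation groups at both nodes to avoid the hyperelliptic prong-matching classes characterized in \Cref{prop:IIIa_hyper}. Once this non-hyperellipticity is maintained, the sequence of degeneration--undegeneration moves carries $\overline{X}$ to the desired standard form.
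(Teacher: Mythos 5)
Your overall strategy---start from \Cref{cor:non_hyp_IIIa_ex} and normalize ${\bf C}$, $\tau$, and the prong-matching by $R$,$U$-moves through Type IIIb boundaries---is the same kind of argument as the paper's, but the step that carries the real content is the prong-matching normalization, and there your proposal has a genuine gap. The prong-matching is only an equivalence class modulo the single diagonal level rotation (there is one lower level, so there are no independent rotations ``at both nodes''), and the basic move that changes the class, $R^2U\cdot\overline{X}(u,v)=U\cdot\overline{X'}(u,v-1)$, is available only when the class contains a representative $(u,v)$ with $0<u<e_4$ and $0<v<e_2$, i.e.\ with both image prongs strictly inside the polar domains of $q_4$ and $q_2$. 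Such a representative need not exist: the paper's corresponding step is an induction on $w$ with $(0,w)\in Pr$, and most of it is devoted to exactly the configurations where this fails, which force $e_1=e_2$ or $e_2=e_1+1$ (e.g.\ $(1,\kappa_1-1)\in Pr$) and require a further separate sequence of moves when $e_4=2$. Your assertion that ``each unit shift of $u$ or $v$ is realized by a $UR^{\pm2}U$ loop'' and that $e_2,e_4\geq 2$ give ``enough flexibility'' is precisely what must be proved, and the hard exceptional cases occur within $e_2,e_4\geq 2$; your proposal neither identifies nor handles them.

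A secondary problem is your diagnosis of the difficulty. Keeping intermediate boundaries away from hyperelliptic prong-matching classes is not an issue: $R$ and $U$ act on equatorial half-arcs of the fixed component, so every boundary reached from $\partial\overline{\cC}$ by such moves lies in $\partial\overline{\cC}$ automatically, and \Cref{prop:IIIa_hyper} is a B-signature statement about a different degeneration. The hypothesis $(e_1,e_2),(e_3,e_4)\neq(1,1)$ enters only through $e_2,e_4>1$, which makes \Cref{cor:non_hyp_IIIa_ex} applicable and provides prongs strictly inside the polar domains of $q_2,q_4$ for the moves. Finally, your ordering (angles and permutation before prong-matching) creates a simultaneity problem you do not resolve: choosing a representative with $v_0$ inside the polar domain of $p_{\tau(i)}$ determines $u_0$ up to the diagonal action, so the condition $0<u_0<e_4$ needed for \Cref{Prop:DIIIamove} cannot simply be imposed; the paper normalizes the prong-matching first precisely so that usable representatives (such as $(1,c^2_{k-1})$) are available in the later angle and permutation steps.
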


\begin{proof}
    By assumption, we have $e_2,e_4>1$. By \Cref{cor:non_hyp_IIIa_ex}, $\cC$ has a boundary of the form $X^D_{\III a} ((4,3),0,n-4,\tau,{\bf C},Pr)$. We find the following boundary of $\cC$ for each step:
    \begin{itemize}
        \item[(1)] $X^D_{\III a} ((4,3),0,n-4,\tau,{\bf C},[(0,1)])$. 
        \item[(2)] $X^D_{\III a} ((4,3),0,n-4,\tau,{\bf 1},[(0,1)])$.
        \item[(3)] $X^D_{\III a} ((4,3),0,n-4,\Id,{\bf 1},[(0,1)])$.
    \end{itemize}
    
    For the first step, let $\overline{X}=X^D_{\III a} ((4,3),0,n-4,\tau,{\bf C},Pr)\in \partial\overline{\cC}$. Choose $(0,w)\in Pr$. By level rotation action, we may assume that $0< w\leq \kappa_1$. We use induction on $w>0$. If $w=1$, then nothing to prove. Suppose that $(u,v)\in Pr$ for some $0<u<e_4$ and $0<v<e_2$. Let $\overline{X'}=X^D_{\III a} ((4,3),0,n-4,\tau,{\bf C},[(u,v-1)])$. Then $R^2U\cdot \overline{X}(u,v)=U\cdot \overline{X'}(u,v-1)$. Since $u+(v-1)=w-1$, the induction applies to $\overline{X'}$. So it is sufficient to find such $(u,v)\in Pr$. Assume the contrary that whenever $(u,v)\in Pr$ and $0<v<e_2$, then $e_4\leq u \leq \kappa_1$. Then $e_2-1\leq \kappa_1-e_4+1=e_1\leq e_2$. That is, $e_2=e_1$ or $e_2=e_1+1$. Moreover, if $e_2=e_1+1$, then we must have $(1,\kappa_1)\in Pr$. By level rotation action, we have $(0,1)\in Pr$. So assume that $e_1=e_2$. In this case, we have either $(1,\kappa_1)\in Pr$ or $(1,\kappa_1-1)\in Pr$. If $(1,\kappa_1)$, then we again have $(0,1)\in Pr$. Therefore we are reduced to the case $(1,\kappa_1-1)\in Pr$. In this case, we have $(1,-1)\in Pr$. By relabeling the poles, we may assume that $\tau=\Id$. If $e_4>2$, let $\overline{X'}=X^D_{\III a} ((4,3),0,n-4,\Id,{\bf C},[(2,-1)])$. Then $R^2UR^{-2(D_1+1)}U\cdot\overline{X}(1,-1)=UR^{-2(D_1+1)}U\cdot\overline{X'}(2,-1)$. Finally, suppose that $e_4=2$. If $D_1>1$, then we consider $UR^{2(D_1-1)}U\cdot \overline{X}(1,-1)$ and reduce to $D_1=1$. Let $\overline{X'}=X^D_{\III a} ((4,3),0,n-4,\Id,{\bf C},[(0,-1)])$. Then $UR^{-2D_1-1}U\cdot \overline{X}(1,-1)=R^4U\cdot\overline{X'}(0,-1)$. This completes the first step. 

    For the second step, let $\overline{X}=X^D_{\III a} ((4,3),0,n-4,\tau,{\bf C},[(0,1)])\in \partial\overline{\cC}$ and assume that $C_k>1$ for some $k$. It is sufficient to prove that $\overline{X'}=X^D_{\III a} ((4,3),0,n-4,\tau,{\bf C'},Pr)\in \partial\overline{\cC}$ where $C'_k=C_k-1$ and $C'_i=C_i$ for any $i\neq k$. By repeating this, we can reach the case $C_i=1$ for all $i$. By relabeling the poles, we may assume that $\tau=\Id$. Also, we may assume that $(1,c^2_{k-1})\in Pr$. Then $R^2U\cdot \overline{X} (1,c^2_{k-1}) = U\cdot \overline{X'}(1,c^2_{k-1})$, so $\overline{X'}\in \partial\overline{\cC}$ and this completes the second step.

    For the last step, let $\overline{X}=X^D_{\III a} ((4,3),0,n-4,\tau,{\bf 1},[(0,0)])\in \partial\overline{\cC}$. We may assume that $\tau=\Id$ and it is sufficient to prove that $\overline{X'}=X^D_{\III a} ((4,3),0,n-4,(k,k+1),{\bf 1},Pr')\in \partial\overline{\cC}$ for each $k$, since $\operatorname{Sym}_{n-4}$ is generated by transpositions $(k,k+1)$, $k=1,\dots, n-5$. Note that $c^2_i=i$ for each $i$. Again, by the first step, we may assume that $(1,k-1)\in Pr$ and $(1,k)\in Pr'$. Then $UR^{2(b_k-1)}U\cdot \overline{X} (1,k-1)$ and $UR^{2(b_{k+1}-1)}U\cdot \overline{X'}$ are adjacent to the same boundary. So $\overline{X'}\in \partial{\cC}$ and this completes the proof. 
\end{proof}

\paragraph{\bf One pair of simple poles}
Now we assume $(e_1,e_2)=(1,1)$ and $e_4>1$. Recall that the flat surfaces in $\cP(\mu^\fR)$ has a deformation invariant called {\em index}. This is an integer $I$ satisfying $1\leq I \leq \delta=\gcd(e_3,e_4,b_1,\dots, b_{n-4})$. The index of a connected component $\cC$ is the index of flat surfaces contained in $\cC$. We can compute the index of $\cC$ from the combinatorial data of its boundaries. For example, we have the following

\begin{lemma} \label{lm:index_D_IIIa}
     Assume $(e_1,e_2)=(1,1)$ and $e_4>1$. The index of a connected component $\cC$ containing the boundary $\overline{X}=X^D_{\III a} ((4,3),\ell_1,\ell_2,\tau,{\bf C},[(0,v)])$ is equal to 
    \begin{equation*}
          v+\sum_{i=\ell_1+1}^{\ell_2} C_{\tau(i)} \pmod \delta.
    \end{equation*}
\end{lemma}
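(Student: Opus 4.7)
The plan is to compute $\operatorname{Ind}(\cC)$ by plumbing both nodes of $\overline{X}$ with a small real parameter $t>0$ to obtain a flat surface $X_t\in\cC$, then gluing the pair of simple poles $\{q_1,q_2\}$ (whose residues are forced opposite by the $\fR$-GRC) to produce a genus-one flat surface $X_t'$ with a marked closed geodesic $\alpha$, the core curve of the new cylinder. By \Cref{Def:index}, $\operatorname{Ind}(\cC)\equiv\operatorname{Ind}\beta\pmod\delta$ for any closed curve $\beta$ on $X_t'$ with $[\alpha]\cdot[\beta]=1$, so it suffices to exhibit one such $\beta$ and evaluate $\operatorname{Ind}\beta$ modulo $\delta$.

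\paragraph{Construction of $\beta$.} Take $\beta$ to be a piecewise geodesic loop assembled as follows: start near the unique zero $z$ of $X_{-1}$; ascend through the plumbing tube at node 1 (prong-matching $u=0$) into $X_0^1$; pass directly through the sector of $X_0^1$ adjacent to $q_1$ without crossing any saddle connection; cross the $q_1$--$q_2$ cylinder once (meeting $\alpha$); traverse $X_0^2$ by successively crossing, at $z_2^\top$, the parallel saddle connections separating the residueless poles $p_{\tau(\ell_1+1)},\ldots,p_{\tau(\ell_2)}$; enter the plumbing tube at node 2 (prong-matching $v$); descend back to $z$ and close up inside $X_{-1}$. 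Because each segment is a straight geodesic in the flat metric, $\operatorname{Ind}\beta$ equals $\tfrac{1}{2\pi}$ times the total turning at the vertices where consecutive segments meet.

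\paragraph{Identifying the contributions and reducing mod $\delta$.} Using the prong labellings of Figure~\ref{tab:TypeD_pm} and the angle description of \Cref{zerodim2}, the crossing of $z_2^\top$ between the saddle connections bordering consecutive residueless poles contributes a turning of $2\pi C_{\tau(i)}$, giving a subtotal of $2\pi\sum_{i=\ell_1+1}^{\ell_2}C_{\tau(i)}$; and the identification through the tube at node 2 with prong-matching $v$ contributes an extra $2\pi v$, while the tube at node 1 with $u=0$ contributes no extra turn. All remaining vertex contributions---from the $X_0^1$ sector choice, the crossings inside $X_{-1}$, and the closure at $z$---are integer linear combinations of $e_3,e_4,\kappa_1+1,\kappa_2+1$, and the bottom-level residueless pole orders $b_{\tau(i)}$ for $i>\ell_2$. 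The level-graph balance condition gives $\kappa_1+\kappa_2=e_3+e_4+\sum_{i=1}^{\ell_2}b_{\tau(i)}$ and $a=e_3+e_4+\sum_i b_{\tau(i)}$, so each such quantity is divisible by $\delta=\gcd(e_3,e_4,\{b_i\})$, and the remaining contributions vanish modulo $\delta$. Collecting yields
\[
\operatorname{Ind}\beta\;\equiv\;v+\sum_{i=\ell_1+1}^{\ell_2}C_{\tau(i)}\pmod\delta,
\]
as desired.

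\paragraph{Main obstacle.} The chief technical difficulty is the careful bookkeeping of prong orientations at the two nodes, in particular verifying precisely how the $u=0$ matching produces no extra rotation while the $v$ matching produces exactly $v$ additional turns, and tracking the sign conventions for clockwise/counter-clockwise prong labellings consistently. A secondary concern is ensuring that the path $\beta$ through $X_{-1}$ and the closure at $z$ contribute only combinations of the pole orders that are divisible by $\delta$; this should follow from the level-graph balance and the fact that $a\equiv 0\pmod\delta$, but requires care at the cone point $z$ itself whose cone angle is $2\pi(a+1)$.
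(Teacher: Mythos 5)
Your overall strategy is in the same spirit as the paper's argument: the paper also works with an explicit flat surface near $\overline{X}$ (a surface on the equatorial arc $U\cdot\overline{X}(0,v)$, where all saddle connections are parallel), identifies the index with the angle between the two saddle connections $\alpha_0$ and $\alpha_{\ell_2+1}$ bounding the polar domains of $q_1$ and $q_2$, and evaluates that angle as $\operatorname{angle}(\alpha_0,\beta_0)=0$, $\operatorname{angle}(\beta_0,\beta_{n-4-\ell_2})=2\pi\sum_{i=\ell_1+1}^{\ell_2}C_{\tau(i)}$, $\operatorname{angle}(\beta_{n-4-\ell_2},\alpha_{\ell_2+1})=2\pi v$, using the prong labellings. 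However, your write-up has a genuine gap precisely at the point that constitutes the content of the lemma: the assertion that the $u=0$ matching at the first node contributes no turning, that the matching $v$ at the second node contributes exactly $2\pi v$, and that traversing $X_0^2$ contributes $2\pi\sum C_{\tau(i)}$, is stated but never verified (you yourself flag it as the ``main obstacle''). Without carrying out that prong-by-prong computation, the proof reduces to restating the formula to be proved.

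Moreover, the step by which you dismiss all remaining contributions is based on a false divisibility claim. You assert that the leftover turning is an integer combination of $e_3,e_4,\kappa_1+1,\kappa_2+1$ and the bottom-level pole orders, ``each divisible by $\delta$.'' But here $\kappa_1=e_4+\sum_{i\le\ell_1}b_{\tau(i)}$ and $\kappa_2=e_3+\sum_{\ell_1<i\le\ell_2}b_{\tau(i)}$, so $\kappa_1\equiv\kappa_2\equiv 0\pmod{\delta}$ and hence $\kappa_1+1\equiv\kappa_2+1\equiv 1\pmod{\delta}$; these are \emph{not} divisible by $\delta$ unless $\delta=1$. The quantities that can legitimately enter as ambiguities on the plumbed-and-glued surface are the prong counts $\kappa_1,\kappa_2$ (a full rotation of the prong-matching), the cone order $a$ of the unique zero, and the orders $e_3,e_4,b_i$, all of which are $\equiv 0\pmod\delta$ — the nodal pole orders $\kappa_j+1$ do not survive plumbing and should not appear. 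Since you never actually compute the contributions from the $X_0^1$ sector, the passage through $X_{-1}$, and the closure at $z$, the reader cannot tell whether they are of the harmless kind; as written, the justification offered for their vanishing modulo $\delta$ is incorrect, so this step needs to be replaced by the explicit angle computation (as in the paper) or by a correct accounting in terms of $\kappa_1,\kappa_2,a,e_3,e_4,b_i$.
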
 

\begin{proof}
    Consider a flat surface in the equatorial arc of $U\cdot\overline{X}(0,v)$. The index is given by the angle formed by two saddle connections $\alpha_0$ and $\alpha_{\ell_2+1}$, each bounding the polar domains of $q_1$ and $q_2$. The angle between $\alpha_0$ and $\beta_0$ is equal to $0$, the angle between $\beta_0$ and $\beta_{n-4-\ell_2}$ is equal to $2\pi \sum_{i=\ell_1+1}^{\ell_2} C_{\tau(i)}$, and the angle between $\beta_{n-4-\ell_2}$ and $\alpha_{\ell_2+1}$ is equal to $2\pi v$. Their sum is equal to $2\pi (v+\sum_{i=\ell_1+1}^{\ell_2} C_{\tau(i)})$, gives the formula for the index of $\cC$.
\end{proof}

In particular, if $\ell_1=\ell_2=0$, then the index is $v \pmod \delta$. For each $I$, the boundary $X^D_{\III a} ((4,3),0,n-4,\Id,{\bf 1},[(0,I)])$ is contained in the connected component with index $I$. If $n-4>0$, then this proves that there exists at least one non-hyperelliptic component with index $I$. Conversely, we have the following

\begin{proposition} \label{prop:TypeD_index}
    Assume $(e_1,e_2)=(1,1)$ and $e_4>1$. Any non-hyperelliptic component $\cC$ with index $I$ contains the boundary $X^D_{\III a} ((4,3),0,n-4,\Id,{\bf 1},[(0,I)])$.
\end{proposition}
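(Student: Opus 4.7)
The plan is to adapt the three--step normalization argument from the proof of Proposition~\ref{prop:TypeD_nonhyp_no_sim_pair} to the indexed setting. By Corollary~\ref{cor:non_hyp_IIIa_ex}, which applies since $e_4>1$, the non-hyperelliptic component $\cC$ contains a Type~IIIa boundary $\overline{X} = X^D_{\III a}((4,3),0,n-4,\tau,\mathbf{C},Pr)$. Lemma~\ref{lm:index_D_IIIa} expresses the index of $\cC$ as $v + \sum_{i=1}^{n-4} C_{\tau(i)} \pmod \delta$ for any $(0,v)\in Pr$. Our goal is to show that $\cC$ also contains the boundary $X^D_{\III a}((4,3),0,n-4,\Id,\mathbf{1},[(0,I)])$ by constructing a path in the equatorial net $\mathcal{A}(\mu^\fR)$ connecting the two boundaries; since such paths preserve connected components, this will suffice.

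I would proceed in three stages mirroring the three steps of Proposition~\ref{prop:TypeD_nonhyp_no_sim_pair}, but with careful bookkeeping of the index. Stage~A: reduce the permutation $\tau$ to $\Id$ using the swap move $U R^{2(b_k-1)} U \cdot \overline{X}(1,k-1) = U R^{2(b_{k+1}-1)} U \cdot \overline{X'}(1,k)$, which transposes two adjacent residueless poles. Stage~B: reduce the angle assignments $\mathbf{C}$ to $\mathbf{1}$ using the identity $R^2 U \cdot \overline{X}(1,c^2_{k-1}) = U \cdot \overline{X'}(1,c^2_{k-1})$, which decreases $C_k$ by one. Each such move alters the prong-matching representative $(0,v)$ by an amount that, together with the shift in $\sum C_{\tau(i)}$, must combine so that the index formula changes by a multiple of $\delta$, since the entire path lies in $\cC$. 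Stage~C: the remaining task is to bring the now-normalized prong-matching $(0,v_0)$ exactly to $(0,I)$, knowing that $v_0 + (n-4) \equiv I \pmod \delta$ but possibly $v_0$ differs from $I-(n-4)$ in $\mathbb{Z}/\gcd(\kappa_1,\kappa_2)\mathbb{Z}$.

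The main obstacle will be Stage~C. In the non-indexed case, the corresponding reduction is Step~1 of Proposition~\ref{prop:TypeD_nonhyp_no_sim_pair}, which uses a local move requiring $0<u<e_4$ and $0<v<e_2$; but in our setting $e_2=1$, so this local move is unavailable. To realize the necessary shifts of $v$, I will detour through Type~IIIb and Type~IIIc boundaries via Propositions~\ref{Prop:DIIIbmove} and~\ref{Prop:DIIIcmove}: a residueless pole $p_i$ can be temporarily pushed to the lower level through a Type~IIIc boundary, carried around there by a level rotation, and reinserted, producing a net $v$--shift by $b_i$; analogous loops through Type~IIIb boundaries realize $v$--shifts by $e_3$ and $e_4$. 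Since $\delta = \gcd(e_3,e_4,b_1,\dots,b_{n-4})$, the subgroup of achievable shifts is generated by these elements, so every $v$ consistent with the index class of $\cC$ is attainable. After each such detour, one re-applies Stages~A and~B to restore $\tau=\Id$ and $\mathbf{C}=\mathbf{1}$; verifying that this iterative procedure terminates and delivers exactly the target $(0,I)$, and that no intermediate configuration is obstructed by a hyperelliptic symmetry (which would be handled along the lines of Proposition~\ref{prop:IIIa_hyper}), is the technical heart of the argument.
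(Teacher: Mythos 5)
Your skeleton matches the paper's (start from \Cref{cor:non_hyp_IIIa_ex}, control the index via \Cref{lm:index_D_IIIa}, and normalize $\tau$, ${\bf C}$ and the prong-matching by equatorial moves), and you correctly identify that Step (1) of \Cref{prop:TypeD_nonhyp_no_sim_pair} is unavailable because $e_2=1$. However, the order in which you run the stages creates a genuine gap. The swap move of your Stage A needs a representative $(u,k)$ of the prong-matching class with $u\neq 0,1$, and the $C$-decreasing move of Stage B needs a representative such as $(1,c^2_{k-1})$; but within a fixed equivalence class the level rotation acts diagonally, so once the first coordinate is chosen the second is determined modulo $\gcd(\kappa_1,\kappa_2)=\gcd(e_4,\,e_3+\sum_i b_i)$, which is in general strictly coarser than $\delta$. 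Hence the representatives your Stages A and B require need not exist until one has already proved that $[(0,v)]$ can be moved through every value of $v$ in its index class modulo $\delta$. This is precisely why the paper proves the prong-matching normalization first (its step (1)) and then invokes it repeatedly inside steps (2) and (3); postponing it to Stage C and ``re-applying Stages A and B after each detour'' does not repair this, since those stages are not available to begin with.

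The second, larger gap is the permutation step in the degenerate cases. Stage A relies solely on the naive swap $UR^{2(b_k-1)}U$, which fails exactly when no prong-matching $(u,k)$ with $u\neq 0,1$ exists, i.e.\ when $\kappa_1=e_4$ is small; this is where the bulk of the paper's proof lives: the case analysis $\delta<\kappa_1$, $\delta=\kappa_1>2$, $\delta=\kappa_1=2$ with some $b_i>2$, and all $b_i=2$, which temporarily modifies ${\bf C}$ (using step (2)) and, in the last case, produces the explicit word $UR^{-2}UR^{-4}UR^{3}UR^{2}U$ of moves. None of this is addressed in your proposal, and it cannot be waved away as routine, since these are exactly the configurations in which both the swap and the generic prong-matchings are obstructed. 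Finally, your Stage C detours (``push a pole down through a Type IIIc boundary, rotate, reinsert'' giving shifts of $v$ by $b_i$, $e_3$, $e_4$) are asserted rather than verified; the paper's actual mechanism realizes shifts by the partial sums $r_k=\sum_{i\le k}b_i$ via $UR^{2\kappa_1+2}U$ applied at prong-matchings $(u,c^2_k)$, combined with level rotation, and it still needs a separate argument (connecting all $q=\kappa_1/\gcd(\kappa_1,r_k)$ admissible values of $u$, including the case $u=0$) to close the loop. A correct write-up along your lines would have to supply analogous verifications for your proposed loops and then still confront the small-$e_4$ case analysis.
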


\begin{proof} 
    By \Cref{cor:non_hyp_IIIa_ex}, $\cC$ has a boundary of the form $X^D_{\III a} ((4,3),0,n-4,\tau,{\bf C},Pr)$. Recall that $1\leq I \leq \delta=\gcd(e_3,e_4,b_1,\dots, b_{n-4})$. We find the following boundary of $\cC$ for each step:    
    \begin{itemize}
        \item[(1)] $X^D_{\III a} ((4,3),0,n-4,\tau,{\bf C},[(0,I)])$.
        \item[(2)] $X^D_{\III a} ((4,3),0,n-4,\tau,{\bf 1},[(0,I)])$.
        \item[(3)] $X^D_{\III a} ((4,3),0,n-4,\Id,{\bf 1},[(0,I)])$.
    \end{itemize}
    
    For the first step, choose $(0,v)\in Pr$. By \Cref{lm:index_D_IIIa}, $v\equiv I\pmod \delta$. It is sufficient to prove that $X^D_{\III a} ((4,3),0,n-4,\tau,{\bf C},[(0,v-d)])\in \partial\overline{\cC}$. By relabeling the poles, we may assume $\tau=\Id$. In this case, $\kappa_1=e_4$ and $\kappa_2=e_3+\sum_{i=1}^{n-4} b_i$. If $n-4=0$, then $\delta=\gcd(\kappa_1,\kappa_2)$ and we have $(0,v-d)\in [(0,v)]$ by level rotation action. So assume $n-4>0$. Denote $r_k=\sum_{i=1}^k b_i$ for $k=1,\dots, n-4$. Then $$\delta=\gcd(\kappa_1,\kappa_2,r_1,\dots, r_{n-4})$$ and it is sufficient to prove $X^D_{\III a} ((4,3),0,n-4,\tau,{\bf C},[(0,v-r_k)])\in \partial\overline{\cC}$ for each $k$. 
    
    Choose a prong-matching $(u,c^2_k)\in Pr$. Then $u+c^2_k\equiv I \pmod \delta$. Given this condition, we have $q\coloneqq \kappa_1/\gcd(\kappa_1,r_k)$ possible values for $u\in \ZZ/\kappa_1\ZZ$. If $u\neq 0$, then $UR^{2\kappa_1+2}U\cdot \overline{X}(u,c^2_k)$ is adjacent to another Type IIIa boundary $X^D_{\III a} ((4,3),0,n-4,\Id,{\bf C},[(u,-d^2_k+1)])$. Since $c^2_k+d^2_k=\sum_{i=1}^k b_i$, we have $(r_k,v)\in [(u,-d^2_k+1)]$. By repeating this, we can either obtain all $q$ values for $u$, or we reach the case $(0,c^2_k)\in Pr$. In any case, we can conclude that all $q$ cases are contained in the same connected component, completing the first step. 

    For the second step, let $\overline{X}=X^D_{\III a} ((4,3),0,n-4,\tau,{\bf C},[(0,I)])\in \partial\overline{\cC}$. We may assume $\tau=\Id$ and $C_k>1$ for some $k$. It is sufficient to prove that $X^D_{\III a} ((4,3),0,n-4,\Id,{\bf C'},Pr)\in \partial\overline{\cC}$ where $C'_k=C_k-1$ and $C'_i=C_i$ for any $i\neq k$. Choose a prong-matching $(u,c^2_{k-1})\in Pr$ such that $c^2_{k-1}<v<c^2_k$. If $u\neq 0$, then $R^2U\cdot \overline{X}(u,v)=U\cdot \overline{X'}(u,v)$. So assume $u=0$, $v=c^2_{k-1}+1$ and $C_k=2$. Then $UR^{2(\kappa_1+2)}U\cdot \overline{X}(-1,v+1)$ is adjacent to $\overline{X'}$.     
    
    For the last step, let $\overline{X}=X^D_{\III a} ((4,3),0,n-4,\tau,{\bf 1},[(0,I)])\in \partial\overline{\cC}$. We may assume $\tau=\Id$. It is sufficient to prove that $\overline{X'}=X^D_{\III a} ((4,3),0,n-4,(k,k+1),{\bf 1},[(0,I)])\in \partial\overline{\cC}$ for each $k=1,\dots, n-5$. Suppose that there exists $(u,k)\in Pr$ for some $u\neq 0,1$. Then $R^{2(b_k-1)}U\cdot \overline{X}(u,k)$ and $R^{2(b_k-1)} U \cdot \overline{X'}(u-1,k+1)$ are adjacent to the same boundary, thus the proof is complete. So assume that $(u,k)\in Pr$ for $u=0$ or $u=1$. 

    Recall that $\kappa_1=e_4$. We divide the proof into the following cases:
    \begin{enumerate}
        \item $\delta<\kappa_1$
        \item $\delta=\kappa_1>2$
        \item $\delta=\kappa_1=2$, $b_i>2$ for some $i$
        \item $\delta=\kappa_1=2$, $b_i=2$ for each $i$
    \end{enumerate}
    
    Note that $\kappa_1=e_4$. If $\delta<\kappa_1$, then by the first step, we have $(u+d,k)\in Pr$. Also, $u+d\leq 1+d\leq \kappa_1$. If $u+\delta=\kappa_1$, then $u=1$ and $\delta=\kappa_1-1$. Since $d\mid \kappa_1$, we have $\kappa_1=2$ and $\delta=1$. In this case, by the first step, we can take a prong-matching $(1,k+1)$ for $\overline{X'}$. Then $R^{2(b_k-1)}U\cdot \overline{X}(1,k)$ and $R^{2(b_k-1)} U \cdot \overline{X'}(1,k+1)$ are adjacent to the same boundary, thus the proof is complete. 
    
    If $\delta=\kappa_1>2$, then automatically we have $b_i\geq d>2$ for each $i$. By the second step, we can take any other ${\bf C}$ for $\overline{X}$ and $\overline{X'}$. Since $b_k>2$ we can set $C_k=2$ and $C_i=1$ for any $i\neq k$. If $u=0$ and $(0,k)\in Pr$, then $(-1,k+1),(-2,k+2)\in Pr$. Then $R^{2(b_k-2)}U\cdot \overline{X}(-1,k+1)$ and $R^{2(b_k-2)} U \cdot \overline{X'}(-2,k+2)$ are adjacent to the same boundary. If $u=1$ and $(1,k)\in Pr$, then $(-1,k+2)\in Pr$. So $R^{2(b_{k+1}-1)}U\cdot \overline{X}(-1,k+2)$ and $R^{2(b_{k+1}-1)} U \cdot \overline{X'}(1,k)$ are adjacent to the same boundary, thus the proof is complete.

    If $\delta=\kappa_1=2$, then we also have $e_3=2$ and $2\mid b_i$ for each $i$. By the first step, we may assume that $(0,0)\in Pr$ or $(0,1)\in Pr$. First, assume that $(0,0)\in Pr$. The proof for the case $(0,1)\in Pr$ is similar to this case. Then also by the first step, we can choose any prong-matching $(1,v)$ for odd $v$. In this case, let $\overline{X'}=X^D_{\III a} ((4,3),0,n-4,\tau,{\bf 1},[(0,0)])$ with $\tau=(k,k+2)$ for odd $k$. Then $R^{2(b_k-1)}U\cdot \overline{X}(1,k)$ and $R^{2(b_k-1)} U \cdot \overline{X'}(1,k+2)$ are adjacent to the same boundary. Similarly for $\tau=(k,k+2)(k+1,k+3)$, $R^{2(b_k+b_{k-1}-2)}U \cdot \overline{X}(1,k)$ and $R^{2(b_k+b_{k-1}-2)} U \cdot \overline{X'}(1,k+2)$ are adjacent to the same boundary. Therefore, $\overline{X'}\in \partial\overline{\cC}$ for any $\tau=(k,k+2)$. Note that $\operatorname{Sym}_{n-4}$ is generated by $(k,k+2)$ for each $k$ and $(1,2)$. So it remains to take care of the case $\tau=(1,2)$. If $b_2>2$, then by the second step, we may take $C_2=2$ and $C_i=1$ for each $i\neq 2$. Then $c^2_1=1$ and $c^2_2=3$ are both odd. Thus $R^{2(b_1-1)}U\cdot \overline{X}(1,1)$ and $R^{2(b_1-1)} U \cdot \overline{X'}(1,3)$ are adjacent to the same boundary. Similarly, if $b_1>2$, then by the same argument applied to $\overline{X'}$ instead of $\overline{X}$, we obtain the same result. If $b_1=b_2=2$, then $UR^{-2}UR^{-4} UR^3UR^2 U\cdot\overline{X}(\frac{1}{2},\frac{3}{2}) = \overline{X'}(\frac{1}{2},-\frac{1}{2})$. 
\end{proof}

Note that $X^D_{\III a} ((4,3),0,n-4,\Id,{\bf 1},[(0,I)])$ is not hyperelliptic unless $n-4=0$, $e_3=e_4$ and $I=\delta$. In the exceptional case, $\cP(\mu^\fR)=\cP(2b \mid -b^2 \mid-1^2)$ for some $b$ and $I=b$.

\paragraph{\bf Two pairs of simple poles}
Finally, we assume $(e_1,e_2)=(e_3,e_4)=(1,1)$. If all $b_i$ are even, recall that the flat surfaces in $\cP(\mu^\fR)$ has a deformation invariant called {\em spin parity}. The spin parity of a connected component $\cC$ is the spin parity of flat surfaces contained in $\cC$. We can compute the spin parity of $\cC$ from the combinatorial data of its boundaries. For example, we have the following

\begin{lemma} \label{lm:spin_D_IIIc}
    Assume $(e_1,e_2)=(e_3,e_4)=(1,1)$ and all $b_i$ are even. The spin parity of a connected component $\cC$ containing the boundary $\overline{X}=X^D_{\III c} ((4,3,1,2),\ell_1,\ell_2,\tau,C,{\bf C})$ is equal to 
    \begin{equation*}
          C+ \sum_{i=1}^{n-4} C_i \pmod 2.
    \end{equation*}
\end{lemma}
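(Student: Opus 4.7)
The plan is to compute the spin parity directly on the plumbed flat surface $\overline{X}_t(u)$ for a convenient prong-matching $u$, by exhibiting an explicit symplectic basis of the glued genus-$2$ surface and summing up the indices. Recall that the stratum is of even type, so the spin parity is defined after gluing the pair $(q_1,q_2)$ on the top level into a horizontal cylinder $C_1$ of core curve $\gamma_1$ and the pair $(q_3,q_4)$ on the bottom level into a horizontal cylinder $C_2$ of core curve $\gamma_2$. Since $\gamma_1,\gamma_2$ are horizontal closed geodesics in the flat metric, $\operatorname{Ind}\gamma_1\equiv\operatorname{Ind}\gamma_2\equiv 0\pmod{2}$, and the spin formula collapses to
\[
\operatorname{spin}(\overline{X}_t(u))\;\equiv\;(\operatorname{Ind}\delta_1+1)+(\operatorname{Ind}\delta_2+1)\;\equiv\;\operatorname{Ind}\delta_1+\operatorname{Ind}\delta_2\pmod{2},
\]
where $\delta_1,\delta_2$ are any chosen transverse curves completing $\{\gamma_1,\gamma_2\}$ to a symplectic basis of $H_1$.

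The key construction is to build $\delta_1$ and $\delta_2$ from the saddle connections $\alpha_0,\alpha_1,\dots,\alpha_{\ell_1}$ of the top level together with arcs crossing through the two cylinders $C_1,C_2$ and through the long plumbing cylinder at the node, arranged so that each $\delta_i$ crosses $C_i$ exactly once and misses $C_{3-i}$ entirely. Concretely, I would let $\delta_1$ be built from a straight segment crossing $C_1$ followed by the saddle configuration at the top level corresponding to the angles $C_{\tau(1)},\dots,C_{\tau(\ell_1)}$; and let $\delta_2$ be built from a straight segment crossing $C_2$ combined with a short arc around the plumbing region recording the angle $C$. With this choice, the Gauss map degree of $\delta_i$ decomposes into a horizontal contribution (equal to $0$ for the cylinder crossing) plus the half-integer jumps accumulated at each encounter with the singular zero, where each jump is determined by the corresponding entry of the combinatorial data.

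The technical core of the computation is that the contribution to $\operatorname{Ind}\delta_1$ from crossing the top-level saddle configuration is $\sum_{i=1}^{\ell_1}C_{\tau(i)}+c$ for some universal constant $c$ depending only on the topological model, while the contribution to $\operatorname{Ind}\delta_2$ from the plumbing region is $C+c'$ for another universal constant. Summing, the parity-dependent part is exactly $C+\sum_{i=1}^{\ell_1}C_{\tau(i)}\pmod{2}$; the analogous computation for the saddle connections on the bottom level contributes $\sum_{i=\ell_1+1}^{n-4}C_{\tau(i)}\pmod 2$ through $\delta_2$'s return path. Adding these yields $C+\sum_{i=1}^{n-4}C_i\pmod{2}$, as claimed.

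To make this rigorous rather than combinatorial bookkeeping, I would first verify the formula on a single simple base case (e.g.\ $\ell_1=\ell_2=0$ with $n=4$, where $\overline{X}$ is essentially a pair of cylinders joined by plumbing), computing $\operatorname{Ind}\delta_i$ directly. Then I would show that under each elementary move between Type IIIc boundaries within the same equatorial-net component described by \Cref{Prop:DIIIcmove}, both the spin parity and the quantity $C+\sum_i C_i\pmod{2}$ change by the same amount (in fact are preserved). The universal constants above cancel in any such comparison, so matching invariance at the level of single moves upgrades the local base-case verification to the full statement. The main obstacle I anticipate is controlling the sign conventions carefully when $\delta_i$ passes through $z$, since the relevant Gauss-map degree depends on whether the transverse arcs enter prongs immediately clockwise or counterclockwise to the chosen saddle connections $\alpha_j,\beta_j$; keeping track of these orientations through the prong-matching $u$ and the labeling conventions of \Cref{tab:TypeD_pm} is the delicate ingredient.
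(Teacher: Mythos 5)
Your overall framework coincides with the paper's: glue each pair of simple poles into a cylinder, observe that the two core curves are horizontal closed geodesics of index $0$, and reduce the spin parity to $\operatorname{Ind}\delta_1+\operatorname{Ind}\delta_2\pmod 2$ for the two cross curves. The paper then simply computes that the index of each cross curve is exactly the cone angle between the two simple poles of that pair divided by $2\pi$, namely $\sum_{i=1}^{\ell_1}C_{\tau(i)}$ for the pair on the top level and $C+\sum_{i=\ell_1+1}^{n-4}C_{\tau(i)}$ for the pair on the bottom level, which gives the formula outright. You stop short of this computation: you write the contributions as $\sum C_{\tau(i)}+c$ and $C+c'$ with ``universal constants'' that you never determine, and the prong/orientation bookkeeping that would pin them down is exactly the step you defer. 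Since the lemma asserts an exact parity with no free constant, the direct part of your argument is incomplete as written.

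The fallback you propose does not close this gap. Verifying the formula on one base case and checking that both the spin parity and $C+\sum_i C_i\pmod 2$ are preserved by the elementary moves of \Cref{Prop:DIIIcmove} only propagates the formula along a single connected component of the equatorial net of a single stratum. But the lemma must hold for an arbitrary Type IIIc boundary, in particular for boundaries lying in the \emph{other} spin component (whose existence is precisely what this lemma is used to detect) and in every stratum of D-signature, where your $n=4$ base case does not even live. To make the base-case-plus-invariance scheme work you would either have to prove that the constants $c,c'$ are genuinely independent of the stratum, the combinatorial data, and the component --- which requires carrying out the deferred index computation anyway --- or to show that every Type IIIc boundary is connected by moves to a normalized one in each component, which is essentially the content of \Cref{lm:DIIIcspin} and would make the argument circular. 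The clean fix is to do what the paper does: compute $\operatorname{Ind}\delta_i$ directly as the angle between the two simple poles of the corresponding pair (also note that for $\underline{h}=(4,3,1,2)$ it is $q_1,q_2$ that sit on the bottom level and $q_3,q_4$ on the top, the opposite of what you wrote, though this does not affect the total parity).
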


\begin{proof}
    The angle between $q_4$ and $q_3$ is equal to $2\pi \sum_{i=1}^{\ell_1}C_{\tau(i)}$. The angle between $q_1$ and $q_2$ is equal to $2\pi \left(C+ \sum_{i=\ell_1+1}^{n-4} C_{\tau(i)} \right)$. Consider a multi-scale differential of genus two obtained by gluing two pairs of half-infinite cylinders. 
    
    These two angles corresponds to the indices $I_1,I_2$ of the cross curves $\gamma_1,\gamma_2$ of two cylinders obtained by gluing two pairs of half-infinite cylinders, respectively, multiplied by $2\pi$. The core curves $\delta_1,\delta_2$ of the cylinders have index zero, and four curves form a symplectic basis of this genus two nodal curve. So the spin parity is equal to $(I_1+1)(0+1)+(I_2+1)(0+1) \equiv I_1+I_2=C+ \sum_{i=1}^{n-4} C_i \pmod 2$.
\end{proof}

In particular, if $\cC$ contains $X^D_{\III c} ((4,3,1,2),n-4,n-4,\tau,C,{\bf 1})$, then the spin parity of $\cC$ is equal to $n+C\pmod 2$. So each spin parity can be realized by the boundaries of this form with $C=1$ and $C=2$. Next, we prove the converse that each non-hyperelliptic component contains a boundary of this form. 

\begin{lemma} \label{lm:DIIIcspin}
    Assume $(e_1,e_2)=(e_3,e_4)=(1,1)$. Any non-hyperelliptic component $\cC$ of $\cP(\mu^\fR)$ contains a boundary $X^D_{\III c} ((4,3,1,2),n-4,n-4,\Id,C,{\bf 1})$ such that $C=1$ or $C=2$. 
\end{lemma}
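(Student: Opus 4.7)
The plan is to start from the Type~IIIc boundary furnished by \Cref{lm:non-hyp_D_IIIc_char} and normalize its combinatorial data step by step, following the template established in the proofs of \Cref{prop:TypeD_nonhyp_no_sim_pair} and \Cref{prop:TypeD_index}. By \Cref{lm:non-hyp_D_IIIc_char} applied with $e_4=1$, the component $\cC$ contains some $\overline{X}=X^D_{\III c}((4,3,1,2),\ell_1,\ell_2,\tau,C,{\bf C})$ with $C\leq c_{\ell_1}$ (the inequality is strict in the stated lemma, but since $e_4=1$, it reads $C\leq c_{\ell_1}$). In particular $\ell_1\geq 1$ whenever $C\geq 1$, and the top level genuinely contains $q_1,q_2$ and the residueless poles $p_{\tau(1)},\dots,p_{\tau(\ell_1)}$.

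The first reduction is to bring the residueless poles on the bottom component (those indexed $\ell_1+1,\dots,\ell_2$ and $\ell_2+1,\dots,n-4$) back up to the top. For each such pole one applies case~(3) or~(4) of \Cref{Prop:DIIIcmove}, choosing prong-matchings $u\equiv c_{j-1}\pmod{\kappa}$ and appropriate shifts $R^{2k}$ so that the resulting boundary is still of type~IIIc with $(4,3,1,2)$ but with $\ell_1$ increased by one. Iterating, we reach $\ell_1=\ell_2=n-4$, so the bottom level is entirely empty of marked residueless poles.

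Next, with $\ell_1=\ell_2=n-4$ fixed, we normalize $\tau$ and ${\bf C}$. To transpose $\tau$ by $(k,k+1)$ we imitate the third step of the proof of \Cref{prop:TypeD_index}: apply $UR^{2(b_{\tau(k)}-1)}U$ to $\overline{X}(c_{k-1})$ and compare with the analogous move on the candidate target. To reduce each $C_k$ to $1$ we apply $UR^{2(C_k-1)}U$ to a prong-matching adjacent to $v^+_{c_k-1}$, exactly as in \Cref{prop:C_standard}. These moves preserve $\ell_1=n-4$ and do not change the class of $\cC$. At this point we have some $\overline{X'}=X^D_{\III c}((4,3,1,2),n-4,n-4,\Id,C',{\bf 1})\in\partial\overline{\cC}$ for some $C'$ with $1\leq C'\leq \kappa=a$.

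Finally, we must reduce $C'$ to lie in $\{1,2\}$. Here $\kappa=a$ and all residueless poles are on the top, so the available $U$-moves (Cases (1) and (2) of \Cref{Prop:DIIIcmove}) produce IIIb boundaries with $q_{h_1}$ or $q_{h_2}$ swapped to the other side; combining such a move with its inverse at a shifted prong matching via $R^{2k}$ changes $C'$ by $\pm 2$ (in analogy with how the second step in the proof of \Cref{prop:TypeD_index} shifts a $C_k$ by one using a $R^{2(\kappa_1+2)}$ rotation). Iterating, we may replace $C'$ by any value in the same parity class modulo~$2$, and in particular by $1$ or $2$. The parity is forced: when all $b_i$ are even, \Cref{lm:spin_D_IIIc} says the spin parity of $\cC$ equals $C'+(n-4)\pmod 2$, and this already selects the unique $C'\in\{1,2\}$ compatible with $\cC$; when some $b_i$ is odd, $\cC$ is the unique non-hyperelliptic component by \Cref{Prop:Dspin} and either $C'$ is acceptable.

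The main obstacle will be this last step: verifying that the $\pm 2$ shift of $C'$ can indeed be realized by a concrete composition of $U$-moves, and checking the boundary cases $C'=1,2$ or $C'=a-1,a$ where the intermediate IIIb detour may degenerate. One has to carry out the prong bookkeeping with care, but since $n-4\geq 1$ whenever $\dim\cP(\mu^\fR)>1$, there is always enough room on the top level to perform the detour. The exceptional small cases $\mu^\fR=(2\mid -1^2\mid-1^2)$ and $(4\mid-2\mid -1^2\mid-1^2)$ listed in \Cref{Prop:Dspin} are consistent with our statement, and are handled either by direct inspection or by noting that one of the two target boundaries lies in a hyperelliptic component (so the corresponding spin parity is absent from the list of non-hyperelliptic components).
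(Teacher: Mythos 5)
There is a genuine gap in your final step, and it is aggravated by the order in which you normalize the data. You first force $\ell_1=\ell_2=n-4$, $\tau=\Id$ and ${\bf C}={\bf 1}$, and only then try to reduce $C'$ by repeated shifts of $\pm 2$ while keeping ${\bf C}={\bf 1}$ fixed. No such move is exhibited, and it is unlikely to exist: the only $U$-moves that decrease the angle $C$ at the node (the ones actually used in the paper, of the form $UR^{2}U\cdot\overline{X}(-\tfrac12)$) simultaneously decrease some $C_k$ by one, so with ${\bf C}={\bf 1}$ already imposed there is no room left to trade against, and iterating the move twice changes two entries of ${\bf C}$ rather than leaving it untouched. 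The paper avoids exactly this trap by doing the reductions in the opposite order: first $C$ is brought down to $C\leq 2$ while ${\bf C}$ is allowed to vary (an induction on $C$ with several subcases, e.g. $C-1=\sum_{i=1}^{k-1}C_i$, $C>b_1$, $C\leq b_1$, each handled by a different composition of $U$ and $R$), then $\tau$ is normalized, and only at the end is ${\bf C}$ reduced to ${\bf 1}$ by an induction on $\sum_i C_i$ using moves that toggle $C$ between $1$ and $2$ while decreasing a single $C_i$ — precisely so that the invariant $C+\sum_i C_i\pmod 2$ of \Cref{lm:spin_D_IIIc} is respected. Your parity bookkeeping at the end is consistent with that invariant, but the deformation realizing it is the missing content, not a routine verification.

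A secondary inaccuracy: in your first reduction you claim that cases (3) and (4) of \Cref{Prop:DIIIcmove} return a Type IIIc boundary ``still of type $(4,3,1,2)$ with $\ell_1$ increased by one''; as stated in the paper those moves produce boundaries with $\underline{h}=(1,2,4,3)$ and substantially different $\ell$-data, so the pole-by-pole lift needs an extra argument (or a relabeling step) that you have not supplied. The paper's own step here is a short two-move detour: since $e_4=1$ forces $\kappa_1=1$ on the intermediate Type IIIa boundary, a level rotation makes $(0,-1)$ an admissible prong-matching and a single further $U$-move lands on $X^D_{\III c}((4,3,1,2),n-4,n-4,\tau,C,{\bf C})$ with all residueless poles on top at once. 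So the overall strategy is the right one, but both the first and, more seriously, the last step need to be reworked along the paper's lines before this constitutes a proof.
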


\begin{proof}
    By \Cref{lm:non-hyp_D_IIIc_char}, $\cC$ contains a Type IIIc boundary $\overline{X}=X^D_{\III c} ((4,3,1,2),\ell_1,\ell_2,\tau,C,{\bf C})$. We find the following boundary of $\cC$ for each step.    
    \begin{itemize}
        \item[(1)] $X^D_{\III c} ((4,3,1,2),n-4,n-4,\tau,C,{\bf C})$.
        \item[(2)] $X^D_{\III c} ((4,3,1,2),n-4,n-4,\tau,C',{\bf C})$ with $C'\leq 2$. Furthermore, $C'\leq 1$ if $b_i>1$ for some $i$. 
        \item[(3)] $X^D_{\III c} ((4,3,1,2),n-4,n-4,\Id,C',{\bf C})$ with $C'\leq 2$.
        \item[(4)] $X^D_{\III c} ((4,3,1,2),n-4,n-4,\Id,C',{\bf 1})$ with $C'\leq 2$.
    \end{itemize}
    
    For the first step, suppose that $\sum_{i=1}^{k-1} C_i\leq C<\sum_{i=1}^k C_i$ for some $i$. Then $U\cdot \overline{X}(-\frac{1}{2})$ is adjacent to a Type IIIa boundary $\overline{Y}$ of the form $X^D_{\III a} ((4,3),0,\ell_2,\tau',{\bf C'},Pr)$. Since $\kappa_1=1$, by level rotation action, we can obtain $(0,-1)\in Pr$. Then $U\cdot \overline{Y}(\frac{1}{2},-\frac{1}{2})$ gives a desired boundary in the first step. 

    For the second step, we assume that $\overline{X}=X^D_{\III c} ((4,3,1,2),n-4,n-4,\tau,C,{\bf C})$. If $C\leq 2$, then we are done. Assume that $C>2$ and use induction on $C$. By relabeling the poles, we may assume $\tau=\Id$. Let $k$ be the smallest index such that $C-1< \sum_{i=1}^k C_i$. By assumption, $k>1$. If $\sum_{i=1}^{k-1} C_i < C-1$, then $UR^2 U\cdot \overline{X}(-\frac{1}{2})$ is adjacent to $X^D_{\III c} ((4,3,1,2),n-4,n-4,\tau,C-1,{\bf C'})$ where $C'_k=C_k-1$ and $C'_i=C_i$ for any $i\neq k$. So the induction hypothesis applies. Now assume that $C-1=\sum_{i=1}^{k-1} C_i$. If $C>b_1$, then $C-D_1\geq C_1$. If $C-D_1=\sum_{i=1}^j C_i$, then we take $UR^{-2}U\cdot \overline{X}(-\frac{1}{2})$ instead, reducing to the case $C-D_1=1+\sum_{i=1}^j C_i$. So we may assume that $\sum_{i=1}^j C_i<C-D_1<\sum_{i=1}^{j+1} C_i$ for some $j$. Then $UR^2U \overline{X}(-D_1-\frac{1}{2})$ is adjacent to a boundary of the form $X^D_{\III c} ((4,3,1,2),n-4,n-4,\tau,C-b_1,{\bf C'})$. So the induction hypothesis applies. Now it reduces to the case $C=\sum_{i=1}^{k-1} C_i +1 \leq b_1$. Then $0<C-C_1\leq D_1$ and $UR^2U\overline{X}(C_1-C-\frac{1}{2})$ is adjacent to a Type IIIc boundary $\overline{X'}=X^D_{\III c} ((4,3,1,2),n-4,n-4,\tau,C',{\bf C'})$ such that $D'=a-C'=b_1-C<D_1$. Then $UR^{2(b_1-C)}U\cdot\overline{X'}(-\frac{1}{2})$ is adjacent to a desired element with $C'\leq 2$. Assume further that $b_i>2$ for some $i$ and $C=2$. Let $k$ be the smallest index such that $b_k>2$. If $k=1$, then $C-1<C_1$ or $C-2<D_1$, so by above argument, we can decrease $C$ by one. So assume that $b_1=2$ and $k>1$. Also, suppose that $C_k>1$. The case $D_k>1$ follows by the symmetry. Then $\sum_{i=1}^{k-1}C_i=k-1$ and $\sum_{i=1}^k C_i\geq k+1$. By the above argument, for each even $v$, we can find some $X^D_{\III c} ((4,3,1,2),n-4,n-4,\tau,v,{\bf C'})\partial\overline{\cC}$. If $k$ is odd, then take $v=k+1$. Since $\sum_{i=1}^{k-1}C_i< k < \sum_{i=1}^k C_i$, we can decrease $v$ by one. By reducing $v$ by $b_1=2$ repeatedly, we obtain the desired boundary. If $k$ is even, then take $v=k+2$. If $k+1 < \sum_{i=1}^k C_i$, then we are done by the same argument. If $C_k=2$ and $k+1=\sum_{i=1}^k C_i$. Since $D_1=1$, we can increase $v$ by one and obtain the odd $k$ case. 
    
    For the third step, we assume that $\overline{X'}=X^D_{\III c} ((4,3,1,2),n-4,n-4,\Id, C,{\bf C})$ with $C\leq 2$. If $C=1$, for any $1\leq k\leq n-4$, $UR^{2d_k}UR^{2(d_{n-4}-d_k)}UR^{-2D_k}U\cdot \overline{X}(-\frac{1}{2})$ is adjacent to $X^D_{\III c} ((4,3,1,2),n-4,n-4,\tau_k, 1,{\bf C})$ for \begin{equation*}
        \tau(i) = \begin{cases}
                i+k &\text{if $ 1\leq i\leq n-4-k$}  \\
                (k+1)-(i-(n-4-k)) &\text{otherwise}. 
                \end{cases}
    \end{equation*}

    Note that $(k,k+1)=\tau_{k-1}^{-1}\circ \tau_1^{-2}\circ \tau_2\circ \tau_{k-1}$. Therefore, $\tau_k$ for each $k$ generate $\operatorname{Sym}_{n-4}$ and therefore $\overline{X}$ and $\overline{X'}$ are contained in the same connected component. If all $b_i=2$ and $C=2$, then for any $2\leq k\leq n-4$, we can obtain $X^D_{\III c} ((4,3,1,2),n-4,n-4,\tau_k, 2,{\bf C})$ by the similar argument above. So it remains to deal with the case $\overline{X'}=X^D_{\III c} ((4,3,1,2),n-4,n-4,(1,2), 2,{\bf C})$. We have $UR^{-2}UR^4 URU\cdot\overline{X}(1)=\overline{X'}(-\frac{7}{2})$, so $\overline{X'}\in \partial\overline{\cC}$. 
    
    For the final step, let $\overline{X}=X^D_{\III c} ((4,3,1,2),n-4,n-4,\Id,C',{\bf C})$ with $C'\leq 2$. We use the induction on $\sum_i C_i \geq n-4$. If $\sum_i C_i=n-4$, then $C_i=1$ for each $i$ and the proof is complete. So assume that $\sum_i C_i>n-4$ and $C_i>1$ for some $i$. By relabeling the poles, we may assume that $C_1>1$. If $C'=1$, then let $\overline{X'}=X^D_{\III c} ((4,3,1,2),n-4,n-4,\Id,2,{\bf C'})$ where $C'_1=C_1-1$ and $C'_i=C_i$ for each $i\neq 1$. Then $R^2U\cdot \overline{X}(-\frac{5}{2}) = U\cdot \overline{X'}(-\frac{3}{2})$. So the induction applies to $\overline{X'}$. If $C'=2$, then let $\overline{X'}=X^D_{\III c} ((4,3,1,2),n-4,n-4,\Id,1,{\bf C'})$ where $C'_1=C_1-1$ and $C'_i=C_i$ for each $i\neq 1$. Then $R^{-5}U\cdot \overline{X}(-\frac{3}{2})=U\cdot \overline{X'}(1)$. Again the induction applies to $\overline{X'}$. Note that the parity of $C'+\sum_i C_i$ is remained invariant throughout this last step. 
\end{proof}

\begin{proposition} \label{prop:TypeD_spin}
    Any non-hyperelliptic component $\cC$ of $\cP(\mu^{\fR})$ contains a Type IIIc boundary $X^D_{\III c} ((4,3,1,2),n-4,n-4,\Id,C,{\bf 1})$ for $C=1$ or $C=2$. If some $b_i$ is odd, then both boundaries are contained in the same non-hyperelliptic component. 
\end{proposition}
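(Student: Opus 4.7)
The first assertion is immediate from Lemma~\ref{lm:DIIIcspin}: every non-hyperelliptic component $\cC$ of $\cP(\mu^{\fR})$ already contains some $X^D_{\III c}((4,3,1,2),n-4,n-4,\Id,C,\mathbf{1})$ with $C\in\{1,2\}$. So the only content is the second assertion: when some $b_i$ is odd, the two candidate boundaries $\overline{X}_1=X^D_{\III c}((4,3,1,2),n-4,n-4,\Id,1,\mathbf{1})$ and $\overline{X}_2=X^D_{\III c}((4,3,1,2),n-4,n-4,\Id,2,\mathbf{1})$ lie in the same non-hyperelliptic component.

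The strategy is to exhibit an explicit equatorial path from $\overline{X}_1$ to $\overline{X}_2$ that exploits the parity of $b_i$. First, by the permutation moves in step~(3) of the proof of Lemma~\ref{lm:DIIIcspin} (which permute the residueless poles freely while staying in the same component), I may assume $b_1$ is odd, so $b_1\geq 3$. Recall from step~(4) of that proof that one has moves $R^2 U\cdot\overline{X}(-\tfrac{5}{2})=U\cdot\overline{X'}(-\tfrac{3}{2})$ and $R^{-5}U\cdot\overline{X}(-\tfrac{3}{2})=U\cdot\overline{X'}(1)$, each of which changes $(C,C_1)$ by $(\pm 1,\mp 1)$ and therefore preserves the total $C+\sum_i C_i\pmod 2$ appearing in Lemma~\ref{lm:spin_D_IIIc}. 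Crucially, step~(2) of the same proof contains a different move, namely $UR^2U\cdot\overline{X}(-D_1-\tfrac{1}{2})$, which shifts $C$ by $-b_1$ (while altering a single entry of $\mathbf{C}$). If $b_1$ is odd, this ``shift by $b_1$'' move changes the parity of $C+\sum_i C_i$ by an odd amount.

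Concretely, I would proceed as follows. Starting from $\overline{X}_1$, first run the step~(4) moves in reverse to reach an intermediate Type IIIc boundary $\overline{Y}=X^D_{\III c}((4,3,1,2),n-4,n-4,\Id,C^\ast,\mathbf{C}^\ast)$ with $C^\ast>b_1$ and $\mathbf{C}^\ast$ engineered so that the numerical hypotheses of the shift-by-$b_1$ move are satisfied, namely $C^\ast-1=\sum_{i<k} C^\ast_i$ for some $k$ and $\sum_{i\leq j}C^\ast_i<C^\ast-D^\ast_1<\sum_{i\leq j+1}C^\ast_i$ for some $j$. Applying $UR^2U\cdot\overline{Y}(-D^\ast_1-\tfrac{1}{2})$ then yields a Type IIIc boundary $\overline{Z}$ with $C=C^\ast-b_1$. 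Finally, running step~(4) in the forward direction reduces $\overline{Z}$ to standard form $\mathbf{C}=\mathbf{1}$, arriving at some $X^D_{\III c}((4,3,1,2),n-4,n-4,\Id,C^{\ast\ast},\mathbf{1})$. Since step~(4) preserves $C+\sum_i C_i\pmod 2$ while the shift contributes an odd change, the parity is flipped relative to $\overline{X}_1$, forcing $C^{\ast\ast}=2$, i.e.\ $\overline{X}_2$.

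The main obstacle is the first segment: guaranteeing that the prescribed intermediate $\overline{Y}$ can actually be reached from $\overline{X}_1$ by inverting step~(4) moves. Since step~(4) was formulated as a reduction procedure, some care is needed to show its reversibility in the situations we need, particularly when $n-4$ is small. I anticipate splitting into the subcases $n-4=1$ (where the constraints force $\mathbf{C}^\ast=(C^\ast_1)$ with $C^\ast_1=b_1-1$, and one verifies the shift-by-$b_1$ move directly from Proposition~\ref{Prop:DIIIcmove}) and $n-4\geq 2$ (where there is enough flexibility in $\mathbf{C}^\ast$ to satisfy the numerical interval condition for some $j$). A brief additional check is required to confirm that the intermediate configurations produced by the shift move are genuine Type IIIc boundaries --- i.e.\ that no $C'_i$ accidentally becomes zero or exceeds $b_i-1$; otherwise, one must route the path through a nearby Type IIIa or IIIb boundary using Propositions~\ref{Prop:DIIIamove} and~\ref{Prop:DIIIbmove}, which is always possible since the exceptional cases listed in Proposition~\ref{Prop:Dspin} require all $b_i$ even and are therefore excluded by hypothesis.
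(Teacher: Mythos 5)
Your reduction of the problem to exhibiting a single parity-flipping move is the right idea, and your bookkeeping with the invariant $C+\sum_i C_i \pmod 2$ matches what the paper does. But the concrete path you propose breaks at its first segment. The step~(4) moves of \Cref{lm:DIIIcspin} only toggle $C$ between $1$ and $2$ while changing $C_1$ by $\mp 1$; consequently, reversing them from $\overline{X}_1=X^D_{\III c}((4,3,1,2),n-4,n-4,\Id,1,\mathbf{1})$ can only produce boundaries with $C\in\{1,2\}$ and a large $C_1$, never the intermediate $\overline{Y}$ with $C^\ast>b_1\geq 3$ that you need in order to invoke the shift-by-$b_1$ move from step~(2). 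To reach such a $\overline{Y}$ you would instead have to reverse the step-(2) decrement moves, whose interval hypotheses (and the constraints $1\leq C_i\leq b_i-1$) you would then have to verify along the way --- exactly the reachability issue you flag but do not resolve. Moreover, the crucial claim that the shift move $UR^2U\cdot\overline{X}(-D_1-\tfrac{1}{2})$ changes $C+\sum_i C_i$ by an odd amount is asserted, not checked: that move changes $C$ by $-b_1$ \emph{and} alters an entry of $\mathbf{C}$ (the paper's statement of step~(2) does not even record which entry and by how much), so the net parity change is precisely what needs to be computed and is left open. Since the parity flip is the entire content of the second assertion, this is a genuine gap rather than a routine verification.

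For comparison, the paper avoids all of this with one explicit short move: assuming $b_1$ odd, it shows $URUR^{2b_1-1}U\cdot\overline{X}_1(-\tfrac{1}{2})$ is adjacent to $X^D_{\III c}((4,3,1,2),n-4,n-4,\Id,1,\mathbf{C})$ with $C_1=b_1-1$ and $C_i=1$ otherwise, so that $C+\sum_i C_i$ jumps from $n-3$ to $n-3+(b_1-2)$, an odd change; the parity-preserving reduction of step~(4) then forces this boundary to reduce to the standard form with $C=2$. If you want to salvage your route, you should either verify the parity effect and admissibility of the step-(2) shift move in the configurations you can actually reach, or replace the whole middle of your argument by a single explicit equatorial move of the paper's type.
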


\begin{proof}
    The first statement is immediate by \Cref{lm:DIIIcspin}. If some $b_i$ is odd, by relabeling the poles, we may assume that $b_1$ is odd. Let $\overline{X}=X^D_{\III c} ((4,3,1,2),n-4,n-4,\Id,1,{\bf 1})$. Then $URUR^{2b_1-1}U\cdot \overline{X}(-\frac{1}{2})$ is adjacent to $X^D_{\III c} ((4,3,1,2),n-4,n-4,\Id,1,{\bf C})$ where $C_1=b_1-1$ and $C_i=1$ for $i\neq 1$. The last step in the proof of \Cref{lm:DIIIcspin} preserves the parity $C+\sum_iC_i$. Since $n-4+1\not\equiv n-3+(b_1-1)+1\pmod 2$, $\cC$ also contains $X^D_{\III c} ((4,3,1,2),n-4,n-4,\Id,2,{\bf 1})$. 
\end{proof}

Note that if all $b_i$ are even, the boundary $\overline{X}=X^D_{\III c} ((4,3,1,2),n-4,n-4,\Id,C,{\bf 1})$ with $C=1$ and $C=2$ are contained in two distinct connected components by \Cref{lm:spin_D_IIIc}. If some $b_i>2$, then the top level component $X_0$ of $\overline{X}$ is non-hyperelliptic, so indeed they are non-hyperelliptic components. If all $b_i=2$, then $X_0$ is hyperelliptic and $\kappa=2(n-4)+1$. The bottom level component $X_{-1}$ of $\overline{X}$ is hyperelliptic if only if $C=\frac{\kappa+1}{2}=n-3$. So there are two non-hyperelliptic components distinguished by spin parity, except the following cases:
\begin{itemize}
    \item If $\mu^{\fR}=(2\mid-1^2\mid-1^2)$, then we must have $C=1$ since $1\leq C\leq \kappa-1=1$. However, $X^D_{\III c} ((4,3,1,2),0,0,\Id,1,{\bf 1})$ is contained in a hyperelliptic component. Thus $\cP(2\mid-1^2\mid-1^2)$ has {\em no} non-hyperelliptic component.
    \item If $\mu^{\fR}=(4\mid-2\mid-1^2,-1^2)$, then we can have $C=1,2$ since $\kappa=3$. However, $X^D_{\III c} ((4,3,1,2),1,1,\Id,2,{\bf 1})$ is contained in a hyperelliptic component. Thus $C=1$ gives a {\em unique} non-hyperelliptic component, which is of even parity. 
\end{itemize}

This completes the classification of connected components of strata of D-signatures. Now we can prove \Cref{Cor:Dssc}. 

\begin{proof}[Proof of \Cref{Cor:Dssc}] 
    If $e_2>1$, then there exists a unique non-hyperelliptic component and it contains the boundary $X^D_{\III b} (2,1,4,3),0,n-4,\Id,1,{\bf 1})$. The bottom level component contains a multiplicity one saddle connection bounding the polar domain of $q_1$. If $b_i>2$ for some $i$, $e_2>2$ or $e_3<e_4$, then the top level component is non-hyperelliptic. 

    If $(e_1,e_2)=(1,1)$, then any non-hyperelliptic component contains a Type IIIc boundary $X^D_{\III c} ((4,3,1,2),n-4,0,\Id,C,{\bf 1})$ for some $C$. The bottom level component has a pair of multiplicity two saddle connections bounding the polar domains of $q_1$ and $q_2$. If $b_i>2$ for some $i$ or $e_3<e_4$, then the top level component is non-hyperelliptic. 
\end{proof}

\bibliography{bib}

\begin{thebibliography}{10}

\bibitem{bainbridge2018compactification}
{\sc M.~Bainbridge, D.~Chen, Q.~Gendron, S.~Grushevsky, and M.~M{\"o}ller}, {\em Compactification of strata of abelian differentials}, Duke Mathematical Journal, 167 (2018).

\bibitem{bainbridge2019moduli}
\leavevmode\vrule height 2pt depth -1.6pt width 23pt, {\em The moduli space of multi-scale differentials}, arXiv preprint arXiv:1910.13492,  (2019).

\bibitem{benirschke2022boundary}
{\sc F.~Benirschke}, {\em The boundary of linear subvarieties}, Journal of the European Mathematical Society, 25 (2022), pp.~4521--4582.

\bibitem{benirschke2022equations}
{\sc F.~Benirschke, B.~Dozier, and S.~Grushevsky}, {\em Equations of linear subvarieties of strata of differentials}, Geometry \& Topology, 26 (2022), pp.~2773--2830.

\bibitem{boissy2015connected}
{\sc C.~Boissy}, {\em Connected components of the strata of the moduli space of meromorphic differentials}, Commentarii Mathematici Helvetici, 90 (2015), pp.~255--286.

\bibitem{chan2021tropical}
{\sc M.~Chan, S.~Galatius, and S.~Payne}, {\em Tropical curves, graph complexes, and top weight cohomology of $\mathcal{M}_g$}, Journal of the American Mathematical Society, 34 (2021), pp.~565--594.

\bibitem{chan2022topology}
\leavevmode\vrule height 2pt depth -1.6pt width 23pt, {\em Topology of moduli spaces of tropical curves with marked points}, Facets of algebraic geometry, 1 (2022), pp.~77--131.

\bibitem{chen2019principal}
{\sc D.~Chen and Q.~Chen}, {\em Principal boundary of moduli spaces of abelian and quadratic differentials}, in Annales de l'Institut Fourier, vol.~69, 2019, pp.~81--118.

\bibitem{chengendron2022towards}
{\sc D.~Chen and Q.~Gendron}, {\em Towards a classification of connected components of the strata of k-differentials}, Documenta mathematica Journal der Deutschen Mathematiker-Vereinigung, 27 (2022), pp.~1031--1100.

\bibitem{costantini2022chern}
{\sc M.~Costantini, M.~M{\"o}ller, and J.~Zachhuber}, {\em The chern classes and euler characteristic of the moduli spaces of abelian differentials}, in Forum of Mathematics, Pi, vol.~10, Cambridge University Press, 2022, p.~e16.

\bibitem{eskin2003moduli}
{\sc A.~Eskin, H.~Masur, and A.~Zorich}, {\em Moduli spaces of abelian differentials: the principal boundary, counting problems, and the siegel-veech constants}, Publications Math{\'e}matiques de l'IH{\'E}S, 97 (2003), pp.~61--179.

\bibitem{kontsevich2003connected}
{\sc M.~Kontsevich and A.~Zorich}, {\em Connected components of the moduli spaces of abelian differentials with prescribed singularities}, Inventiones mathematicae, 153 (2003), pp.~631--678.

\bibitem{lee2023connected}
{\sc M.~Lee}, {\em Connected components of strata of residueless meromorphic differentials}, Geometriae Dedicata, 218 (2024).

\bibitem{lee2023one}
{\sc M.~Lee and G.~Tahar}, {\em One-dimensional strata of residueless meromorphic differentials}, arXiv preprint arXiv:2310.13128,  (2023).

\bibitem{payne2013boundary}
{\sc S.~Payne}, {\em Boundary complexes and weight filtrations}, Michigan Mathematical Journal, 62 (2013), pp.~293--322.

\bibitem{Tah}
{\sc G.~Tahar}, {\em Counting saddle connections in flat surfaces with poles of higher order}, Geometriae Dedicata, 196 (2018), pp.~145--186.

\end{thebibliography}
\bibliographystyle{siam}

\end{document}